\newcommand{\bbF}{\mathbb{F}}
\newcommand{\bbZ}{\mathbb{Z}}
\newcommand{\bG}{\mathbf{G}}
\newcommand{\bH}{\mathbf{H}}
\newcommand{\bc}{\mathbf{c}}
\newcommand{\fS}{\mathfrak{S}}
\newcommand{\tG}{\tilde{G}}
\newcommand{\tR}{\tilde{R}}
\newcommand{\tH}{\tilde{H}}
\newcommand{\cF}{\mathcal{F}}
\newcommand{\cE}{\mathscr{E}}
\def\B{\operatorname{B}}
\def\C{\operatorname{C}}
\def\E{\operatorname{E}}
\newcommand{\Ad}{\operatorname{Ad}} 
\newcommand{\Aut}{\operatorname{Aut}}
\newcommand{\diag}{\operatorname{diag}}
\newcommand{\disc}{\operatorname{disc}} 
\newcommand{\GL}{\operatorname{GL}}
\newcommand{\GU}{\operatorname{GU}}
\newcommand{\IBr}{\operatorname{IBr}}
\newcommand{\Alp}{\operatorname{Alp}}
\newcommand{\rdz}{\operatorname{rdz}}
\newcommand{\dz}{\operatorname{dz}}  
\newcommand{\id}{\operatorname{id}}  
\newcommand{\Ind}{\operatorname{Ind}} 
\newcommand{\Int}{\operatorname{Int}} 
\newcommand{\Irr}{\operatorname{Irr}}
\newcommand{\rO}{\operatorname{O}}
\newcommand{\Out}{\operatorname{Out}}
\newcommand{\PGL}{\operatorname{PGL}} 
\newcommand{\PO}{\operatorname{PO}}
\newcommand{\rank}{\operatorname{rank}} 
\newcommand{\Res}{\operatorname{Res}}
\newcommand{\SL}{\operatorname{SL}}
\newcommand{\SO}{\operatorname{SO}}
\newcommand{\Sp}{\operatorname{Sp}}
\newcommand{\spann}{\operatorname{span}}
\newcommand{\Spin}{\operatorname{Spin}}
\newcommand{\Syl}{\operatorname{Syl}}
\newcommand{\ty}{\operatorname{ty}}
\newcommand{\tp}[1]{\mathbf{#1}}
\def\embed{\hookrightarrow}
\let\ka=\kappa
\let\ga=\gamma
\let\ti=\times
\let\eps=\epsilon
\let\la=\lambda
\let\al=\alpha
\let\Ga=\Gamma
\theoremstyle{theorem}
\newtheorem{mainthm}{Theorem}
\newtheorem{thm}{Theorem}[section]
\newtheorem{lem}[thm]{Lemma}
\newtheorem{prop}[thm]{Proposition}
\newtheorem{cor}[thm]{Corollary}
\newtheorem{assump}[thm]{Assumption}
\newtheorem{ques}[thm]{Question}
\theoremstyle{definition}
\newtheorem{defn}[thm]{Definition}
\newtheorem{cons}[thm]{Construction}
\newtheorem{rmk}[thm]{Remark}
\numberwithin{equation}{section}
\begin{document}

\title[Radical subgroups]{Radical subgroups of finite reductive groups}

\author{Zhicheng Feng}
\address[Zhicheng Feng]{Shenzhen International Center for Mathematics and Department of Mathematics, Southern 
University of Science and Technology, Shenzhen 518055, China}
\makeatletter
\email{fengzc@sustech.edu.cn} 
\makeatother

\author{Jun Yu}
\address[Jun Yu]{Beijing International Center for Mathematical Research and School of Mathematical Sciences, 
Peking University, No. 5 Yiheyuan Road, Beijing 100871, China} 
\makeatletter
\email{junyu@bicmr.pku.edu.cn} 
\makeatother

\author{Jiping Zhang}
\address[Jiping Zhang]{School of Mathematical Sciences, Peking University, No. 5 Yiheyuan Road, Beijing 100871, China}
\makeatletter
\email{jzhang@pku.edu.cn}
\makeatother

\thanks{Zhicheng Feng's research is partially supported by the NSFC Grant 11901028. Jun Yu's research is partially 
supported by the NSFC Grant 11971036. Jiping Zhang's research is partially supported by the National Key R\&D Program 
of China (Grant No. 2020YFE0204200).}

\begin{abstract}
Radical subgroups play an important role in both finite group theory and representation theory. This is the first of a 
series of papers of ours in classifying radical $p$-subgroups of finite reductive groups and in verifying the inductive 
blockwise Alperin weight condition for them, contributing to the program of proving the Alperin weight conjecture by 
verifying its inductive condition for finite simple groups. In this paper we present a uniform method for classifying 
radical subgroups of finite reductive groups. As applications, we investigate the inductive blockwise 
Alperin weight condition for classical groups, as well as groups of type $\tp F_4$.   
\end{abstract}  

\keywords{Radical subgroups, finite reductive groups, Alperin weight conjecture, inductive blockwise 
Alperin weight condition, classical groups, Chevalley groups of type $\tp F_4$.} 

\subjclass[2020]{20G40, 20C20, 20C33, 20D06, 20G41.}   

\date{\today}

\maketitle
  
\tableofcontents 



\section{Introduction}\label{S:Intro}  

Let $G$ be a finite group and $p$ be a prime. As usual, let $O_p(G)$ denote the largest normal $p$-subgroup of $G$. 
A \emph{radical $p$-subgroup} (or \emph{$p$-radical subgroup}) of $G$ is a subgroup $Q\le G$ with $Q=O_p(N_G(Q))$. 
Radical subgroups are essential in both group theory and representation theory of finite groups; for example, in 
the study of Brown's complex, Quillen's conjecture, Alperin's weight conjecture, and Dade's conjectures, etc. 

\vspace{0.5em}
\noindent {\bf Brown's complex and Quillen's conjecture.} 
To the poset of non-identity $p$-subgroups of a finite group $G$ ordered by inclusion, we can associate a simplicial 
complex, which was first introduced by Brown \cite{Br75} in his work on Euler characteristics and cohomology for 
discrete groups and is called Brown’s complex. Quillen's conjecture \cite{Qu78} asserts that Brown’s complex is 
contractible if and only if $O_p(G)>1$. This conjecture was proved by Quillen for solvable groups, but is still 
open in general. In 1984, Bouc \cite{Bo84a,Bo84b} considered the poset involving only non-identity radical 
$p$-subgroups of $G$, and proved that its associated simplicial complex is homotopy equivalent to Brown’s complex. 
Thus, the classification of radical $p$-subgroups are closely connected with $p$-local geometries (see also 
Yoshiara \cite{Yo98}). Starting with the work of Kn\"orr and Robinson \cite{KR89}, the above simplicial complexes 
are widely used in the study of local-global conjectures. 

\vspace{0.5em}
\noindent {\bf Local-global conjectures.}
In the representation theory of finite groups, local-global conjectures relate the representation theory of finite 
groups with that of their local subgroups. For example, the McKay conjecture, the Alperin weight conjecture and Dade's 
conjectures, etc., are deep local-global conjectures. They are central to the representation theory of finite groups. 
Starting with the reduction of the McKay conjecture by Isaacs--Malle--Navarro \cite{IMN07} in 2007, many important 
local-global conjectures have been reduced to questions on finite simple groups. Then, the representation theory of 
quasi-simple groups, especially finite reductive groups, turns more and more into focus. The classification of radical 
subgroups of finite reductive groups is thus essential for the study of some local-global conjectures, such as the Alperin 
weight conjecture and Dade's conjectures.

\vspace{0.5em}
\noindent {\bf Radical subgroups.}
By the work of Borel and Tits \cite{BT71}, radical subgroups of finite reductive groups in defining characteristic are 
known: they are just unipotent radicals of parabolic subgroups. In the non-defining characteristic situation, 
Alperin--Fong \cite{AF90} initiated the classification of radical subgroups of finite reductive groups in 1990, 
which was pursued by An and his collaborators (see, for instance 
\cite{An92,An93a,An93b,An93c,An94,An94b,An95,An98,AD14, ADH14, ADH18, AH13}). However, a general classification of radical 
subgroups and weights is still unavailable to the present day, especially for groups of exceptional type. In their survey 
paper \cite[\S4]{KM19}, Kessar and Malle pointed out that ``for exceptional groups of larger rank that seems quite challenging 
at the moment".  

In 1989, Alperin \cite{al90} presented an analogy between finite group theory and Lie group theory and speculated a Lie approach 
to finite groups. Alperin called the normalizer of a $p$-radical subgroup a \emph{$p$-parabolic subgroup}. The notions of 
$p$-parabolic subgroups and Brown's complexes are analogies of parabolic subgroups and buildings in Lie theory. Note that 
radical subgroups of complex reductive groups are connected to the combinatorial structure of the root datum of a complex 
reductive group, and they play essential roles in representation theory via induced representations, etc. With the method in 
this paper, it comes the great hope of classifying radical subgroups of finite reductive groups in general. We would like to 
mention a question: do radical subgroups of finite reductive groups have similar structural and representation theoretical 
meaning, just as radical subgroups of complex reductive groups?    

\vspace{0.5em}
\noindent {\bf Method of classification.} 
In this paper, we present a uniform method of classifying radical subgroups of finite reductive groups in non-defining 
characteristic. Let us explain the method. Let $1\neq R$ be a radical $p$-subgroup in a finite reductive 
group $G(\mathbb{F}_{q})$ with $q$ a prime power coprime to $p$. For simplicity, assume that $G$ is of adjoint type 
(otherwise we consider the adjoint quotient of it instead). Then, $A(R):=\Omega_{1}(Z(R))$ is an elementary abelian $p$-subgroup. 
By viewing $A(R)$ as a subgroup of $G(\bar{\mathbb{F}}_{q})$ and considering all $\mathbb{F}_{q}$ forms of it, we understand 
the conjugacy class of $A(R)$ in $G(\mathbb{F}_{q})$. By the ``process of reduction modulo $\ell$" (cf. \cite[Thm. A.12]{GR98}, 
due to Larsen), it is sufficient to classify elementary abelian $p$-subgroups of complex linear groups $G(\mathbb{C})$. It is 
also well-known that elementary abelian $p$-subgroups of a complex reductive group and that of its compact form are equivalent 
objects. Note that, the classification of elementary abelian $p$-subgroups of complex (or compact) simple groups are systematically 
investigated in \cite{Gr91, AGMV08, Yu13}. In \cite{Yu13}, the second author classified elementary abelian 
2-subgroups of exceptional compact simple Lie groups by dividing them into different families and defining numerical invariants 
to parametrize conjugacy classes of subgroups in each family.   

We observe that: $R$ is a radical $p$-subgroup of $Z_{G(\mathbb{F}_{q})}(A(R))$. Since $Z_{G(\mathbb{F}_{q})}(A(R))$ 
is always smaller than $G$, this gives a way of reduction for the classification of radical $p$-subgroups. However, the 
structure of $Z_{G(\mathbb{F}_{q})}(A(R))$ could be complicated. We have another important observation: for certain subgroup 
$B$ of $A(R)$, $R$ is also a radical 2-subgroup of $Z_{G(\mathbb{F}_{q})}(B)$. This brings great advantage in the reduction: 
the conjugacy class of $B$ has few possibilities and the structure of $Z_{G(\mathbb{F}_{q})}(B)$ is much simpler. With this 
reduction method, we give a new proof of the classification of radical $p$-subgroups of classical groups, which was first 
done in a series of papers of Alperin--Fong and An \cite{AF90,An92,An93a,An93b,An94} in the 1990s. We treat the classification 
of radical subgroups of classical groups in characteristic 2, which was missing in literature. We also correct some mistakes 
in the list of radical $p$-subgroups in the literature.  

Using our reduction method and based on the classification of elementary 2-subgroups in \cite{Yu13}, we classify radical 
2-subgroups of the Chevalley groups $\tp F_4(q)$ when $q$ is odd. The classification of radical $p$-subgroups (for all 
primes $p$) of exceptional groups of types $\tp E_6$, $\tp E_7$, $\tp E_8$ will be pursued in our ongoing works, 
along with the investigation of verifying the inductive blockwise Alperin weight condition for them.  

We shall mention An--Dietrich--Litterick's paper \cite{ADL23} which makes progress in classifying elementary abelian 
$2$-subgroups of finite exceptional groups of types $\tp F_4$ and $\tp E_6$--$\tp E_{8}$, that is also based on 
the second author's classification of elementary abelian 2-subgroups \cite{Yu13}.  

\vspace{0.5em}
\noindent {\bf Alperin weight conjecture.}
The Alperin weight conjecture, which was proposed by Alperin \cite{Al87} in the 1980s, predicts that the number 
of modular irreducible representations (up to isomorphism) of a finite group should equal the number of weights (up to conjugacy). 
Moreover, this equality should hold blockwisely. The conjecture has created great interest and its truth would have 
significant consequences. The motivation for the concept of weights of finite groups comes from connections and analogies with 
the theory of Lie groups. On the other hand, the investigation of weights of finite reductive groups points towards some 
hidden theories in the local representation theory of finite groups. For example, the classification of weights of symmetric 
groups and general linear groups by Alperin--Fong \cite{AF90} highlights a well-known analogy between the representation 
theory of symmetric and general linear groups. 

The Alperin weight conjecture was reduced to a verification in finite simple groups by Navarro and Tiep \cite{NT11} for 
the non-blockwise version in 2011, and by Sp\"ath \cite{Sp13} for the blockwise version in 2013. They proved that if all 
non-abelian simple groups satisfy the inductive blockwise Alperin weight (BAW) condition, then the blockwise Alperin weight 
conjecture holds for all finite groups. So far, the inductive BAW condition has been verified for certain families of 
simple groups; for the recent developments we refer to the survey papers of Kessar--Malle \cite{KM19}, and of
the first and third authors \cite{FZ22}. However, it is still a serious challenge to establish it for finite reductive groups,  
especially for spin groups and for groups of exceptional type.    

In this paper, we consider the spin groups at the prime 2 and prove the following theorem.

\begin{mainthm}\label{mainthm:BAW-BD}
Suppose that Assumption \ref{A-infinity} holds for types $\tp B_n$, $\tp D_n$ and $^2\tp D_n$ at the prime $p=2$. 
Then the inductive blockwise Alperin weight condition holds for every 2-block of every simple group of Lie type $\tp B_n$, 
$\tp D_n$ or $^2\tp D_n$.
\end{mainthm}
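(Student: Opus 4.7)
The plan is to verify Sp\"ath's inductive blockwise Alperin weight condition block-by-block for the 2-blocks of each simple group $S=\mathrm{P}\Omega_{2n+1}(q)$, $\mathrm{P}\Omega_{2n}^{+}(q)$ or $\mathrm{P}\Omega_{2n}^{-}(q)$. In defining characteristic (even $q$) the Borel--Tits theorem reduces radical $2$-subgroups to unipotent radicals of parabolics and the condition follows from existing work, so we may restrict to odd $q$. The strategy is then to lift to the Schur covers $\widetilde{G}=\Spin_{2n+1}(q)$ or $\Spin_{2n}^{\pm}(q)$ and treat each 2-block $B$ of $\widetilde{G}$ by identifying its radical 2-subgroups and weights via the reduction method developed in the earlier sections of this paper.

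First, I would invoke the Jordan decomposition of 2-blocks in the Bonnaf\'e--Dat--Rouquier / Cabanes--Enguehard form to reduce an arbitrary 2-block $B$ of $\widetilde{G}$ to a quasi-isolated 2-block of a possibly smaller Levi subgroup. Assumption \ref{A-infinity} is precisely what supplies the needed character-theoretic input for the quasi-isolated building blocks. At that stage, I would bring in our classification of radical 2-subgroups $R\le\widetilde{G}$, the explicit structure of the normalizers $N_{\widetilde{G}}(R)/R$, and their defect-zero modular characters, to construct a candidate bijection $\Omega_{B}\colon \IBr(B)\to\mathcal{W}(B)$ between modular irreducibles and weights of $B$.

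Next I would upgrade $\Omega_{B}$ to an $\Out(S)$-equivariant bijection. This decomposes into tracking the effect of diagonal, field and graph automorphisms on the set of $\widetilde{G}$-conjugacy classes of radical 2-subgroups (a direct byproduct of our classification), on their normalizers and weights, and on the characters of $B$. To secure Sp\"ath's cocycle/central-character condition, which demands compatible extensions of characters to their stabilizers in $\Out(S)$, I would use Assumption \ref{A-infinity} as a black box for the cohomological compatibility on the character side and compare with extensions constructed on the weight side via Clifford theory with respect to $N_{\widetilde{G}}(R)$, aided by generalized Gelfand--Graev characters to realize the required extensions canonically.

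The main obstacle will be the equivariance and cocycle comparison for the graph automorphisms of type $\tp D_n$ — in particular the triality of $\tp D_4$ and the delicate behavior of $\Spin_{2n}^{\pm}(q)$ under graph-field and diagonal-graph automorphisms — together with the fact that the Schur multiplier of $\mathrm{P}\Omega_{2n}^{\pm}(q)$ is not always cyclic for $n$ even, which forces a refined analysis of central characters and of how weights lift through the Schur cover. Once these are under control, the remainder is a systematic but lengthy case-by-case verification organized along the classification of radical 2-subgroups produced in this paper.
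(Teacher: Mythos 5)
Your plan captures the correct overall shape — Borel--Tits in defining characteristic, reduction to quasi-isolated blocks via the Jordan decomposition reduction of \cite{FLZ22}, then the classification of radical $2$-subgroups from Section~\ref{S:radical} together with Assumption~\ref{A-infinity} — and you rightly flag triality for $\tp D_4$ and the non-cyclic center of $\Spin_{2n}^{\pm}(q)$ as the delicate points. But you are missing the single observation that collapses the problem to a tractable size: for $\bG$ of type $\tp B_n$, $\tp D_n$, $^2\tp D_n$ and $p=2$, the classification of quasi-isolated semisimple elements (Bonnaf\'e) shows that the only quasi-isolated $2'$-element is the identity, so quasi-isolated $2$-blocks are exactly the unipotent $2$-blocks, and by \cite[Thm.~21.14]{CE04} there is precisely one of these, namely the principal $2$-block. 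Without this, "reduce to quasi-isolated blocks, then treat each one" leaves a daunting family of blocks in view, whereas the paper only has to handle $B_0(\Spin(V))$.

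Given that simplification, the actual verification also does not proceed by constructing a bijection $\Omega_B$ ab initio. Rather, one proves the numerical Alperin weight statement for $B_0(\SO(V))$ by comparing generating functions for weights (built from the basic-subgroup classification and $2$-core towers) with the count of unipotent conjugacy classes (Chaneb's unitriangular basic set identifies $|\IBr(B_0)|$ with the number of unipotent classes), and then transfers to $\Spin(V)$ and $\Omega(V)$ by careful Clifford theory, tracking the extra Brauer characters and weights that appear when passing up or down through the $2$-group quotients. The $\Aut$-equivariant bijection is then assembled from the canonical bijection of a unitriangular basic set, Taylor's GGGC construction (which you do anticipate), and the explicit parity computations on radical subgroups — exactly the bookkeeping your last paragraph gestures at, but it must be organised around the single principal block rather than around a general family.
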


Since the verification of the inductive BAW condition for groups of Lie type in non-defining characteristic was reduced to 
quasi-isolated blocks by Li and the first and third authors \cite{FLZ22}, we only need to treat the principal 2-blocks of 
the spin groups here. Note that Assumption~\ref{A-infinity} is the modular representation version of the so-called $A(\infty)$ 
property (defined by Cabanes--Sp\"ath \cite{CS19}), and it was one of the main ingredients for the previous verifications of 
the inductive BAW condition. It is practical for constructing projective representations of certain classes of groups associated 
to representations of groups of Lie type and enables us to explicitly compute 2-cocycles, and it is essential in the reduction 
theorem to quasi-isolated blocks in \cite{FLZ22}. As the $A(\infty)$ property has been established by Cabanes and Sp\"ath 
(cf. \cite{CS19,Sp23}), Assumption~\ref{A-infinity} is implied by the existence assumption of unitriangular basic sets of groups 
of Lie type, which is widely believed to hold. For unipotent blocks, the unitriangularity is known for good primes by the work of 
Brunat--Dudas--Taylor \cite{BDR17}, for classical groups and bad primes 
when the component groups of unipotent elements are abelian by the work of Chaneb \cite{Ch20}, and for simple adjoint exceptional groups and bad primes by Roth \cite{Ro24}. 
However, for spin groups, this is open and remains a challenge now.

The determination of radical subgroups is crucial in the verification of the inductive BAW condition.
For example, in the works \cite{AHL21, FLZ21, FLZ23,Sc16}, groups of types 
$\tp A_n$, $^2\tp A_n$,  $\tp F_4$, $\tp G_2$ and $^3\tp D_4$ are considered and the inductive BAW 
condition is established except for groups of type $\tp F_4$ at the prime 2. Also, there is still no substantive breakthrough 
for exceptional groups of types $\tp E_6$--$\tp E_{8}$ so far. What prevents people from extending to those situations:  
the missing classification of the radical subgroups!

After classifying the radical 2-subgroups of the Chevalley groups $\tp F_4(q)$ (with odd $q$), we are able to prove the following.

\begin{mainthm}\label{mainthm:BAW-F4}
The inductive blockwise Alperin weight condition holds for the Chevalley groups $\tp F_4(q)$ (with odd $q$) and the prime 2.
\end{mainthm}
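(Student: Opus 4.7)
The plan is to combine the classification of radical 2-subgroups of $\tp F_4(q)$ established earlier in this paper (for odd $q$) with the reduction theorem of \cite{FLZ22}, by which it suffices to verify the inductive blockwise Alperin weight condition for the quasi-isolated 2-blocks of $\tp F_4(q)$. Since $\tp F_4$ is both simply connected and of adjoint type, the center of $\tp F_4(q)$ is trivial and $\Out(\tp F_4(q))$ is cyclic, generated by a field automorphism for odd $q$, which makes the equivariance analysis considerably lighter than in the classical group cases also treated in the paper. The $A(\infty)$-type input of Assumption~\ref{A-infinity} is moreover already available for $\tp F_4$ at $p=2$, by combining the unitriangular basic set theorem of Roth \cite{Ro24} with the extension machinery of \cite{CS19,Sp23}, so we do not need to retain it as a hypothesis in the statement.

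First I would enumerate the quasi-isolated 2-blocks of $\tp F_4(q)$ using Bonnaf\'e's classification of quasi-isolated semisimple elements of the dual group; this yields the principal block together with a short list of further blocks labelled by isolated 2-elements. For each such block $B$, the radical subgroup classification of the paper provides, up to $\tp F_4(q)$-conjugacy, a complete list of radical 2-subgroups $R$; from these I would read off $\Alp(B)$ by computing $N_{\tp F_4(q)}(R)/R$ and identifying its 2-blocks of defect zero lying above $B$. On the global side $\IBr(B)$ is controlled by Bonnaf\'e--Rouquier Morita equivalence with an isolated block of the centralizer Levi, combined with the unitriangular basic set from \cite{Ro24}. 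A block-by-block matching then produces the numerical equality $|\IBr(B)|=|\Alp(B)|$ together with a natural bijection compatible with both parametrizations.

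I would then upgrade this bijection to an $\Out(\tp F_4(q))_B$-equivariant bijection. Because $\Out(\tp F_4(q))$ is cyclic and the radical subgroups, Levi subgroups and tori appearing in the classification can all be chosen stable under a standard Frobenius, the equivariance on both sides is governed by the same combinatorial data and can be checked parameter by parameter. Finally, for the cohomological part of the inductive condition I would construct projective representations of the relevant stabilizers using the extension theorems of \cite{CS19,Sp23} and compare the resulting 2-cocycles with those arising on the weight side from the normalizer analysis; in rank $4$ these cocycles take values in very small cyclic groups and in most situations split.

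The hard part will be this last cohomological step for the Sylow 2-subgroup and its nearest neighbours among the radical 2-subgroups, where the normalizer quotient has the most involved extension structure and where a direct cocycle comparison requires tight control on the Schur multipliers of the local subgroups. A secondary difficulty is the handling of the non-principal quasi-isolated 2-blocks, where a Jordan decomposition of blocks must be combined with a compatibility statement under field automorphisms in order to transport the local bijection from an isolated Levi back to $\tp F_4(q)$.
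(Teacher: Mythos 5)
Your high-level architecture matches the paper's: reduce via \cite{FLZ22} to quasi-isolated 2-blocks, enumerate them via Bonnaf\'e's classification, compute $\Alp(B)$ from radical subgroups and $\IBr(B)$ from basic sets, and build an equivariant bijection. But your assessment of where the difficulty lies is essentially inverted, and this matters. The ``hard cohomological step'' you anticipate at the end does not exist for $\tp F_4(q)$ with $q$ odd: the simple group has trivial Schur multiplier and cyclic outer automorphism group (no graph automorphism in odd characteristic), so by a standard criterion (the paper invokes \cite[Lemma~2.10]{Sc16}) the inductive BAW condition reduces to exhibiting an $\Aut(G)$-equivariant bijection between $\IBr(B)$ and $\Alp(B)$ --- no projective representations and no 2-cocycle comparison are required. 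Likewise, invoking Roth \cite{Ro24} for Assumption~\ref{A-infinity} is superfluous here: since $Z(\bG)$ is connected, the assumption holds automatically (as the paper states), and the reduction theorem is applied without retaining it as a hypothesis for type $\tp F_4$.

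The place where real work is needed, and where your plan is too thin, is the step you call ``read off $\Alp(B)$'' and ``block-by-block matching.'' In the paper this is Section~5.II and~5.III: for the principal block one must split principal weight subgroups into the cases $\rank B(R)=1$ (reduction to $\Spin_9(q)$, Table~\ref{Ta2}) and $\rank B(R)=2$ (reduction to $\Spin_8(q)$, Table~\ref{Ta3}, with the extra $\fS_3$-action of the normalizer and a careful analysis of the $S$-orbits in Proposition~\ref{P:weight-r2}), leading to $|\Alp(B_0)|=19+7=26$; and for the non-unipotent quasi-isolated block (which exists only when $3\nmid q$) one needs the Jordan decomposition for weights via Brauer-pair analysis (Lemmas~\ref{lem:cen-ele}--\ref{lem:nor-brauer-pair} and Proposition~\ref{prop:bij-quasi-isolated}), giving $|\Alp(B)|=9$. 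None of this follows by merely listing radical subgroups; the normalizer quotients and the defect-zero count in them are where the substance lives. Finally, the reduction statement you give is slightly incomplete: \cite[Thm.~5.7]{FLZ22} requires handling quasi-isolated 2-blocks for all the types $\tp F_4$, $\tp A_1$, $\tp A_2$, $^2\tp A_2$, $\tp C_2$, $\tp C_3$, $\tp B_3$, not just $\tp F_4$; types $\tp A$ and $\tp C$ are covered by existing work, but the $\tp B_3$ case needed a new argument (Corollary~\ref{cor-BAW-Spin7}), since for classical types the quasi-isolated 2-blocks are the principal ones.
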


This completes the proof of the inductive BAW condition for finite simple groups of type $\tp F_4$, since the odd primes situation 
has been dealt with by An--Hiss--L\"ubeck \cite{AHL21}. As a further important ingredient, we obtain a Jordan decomposition 
for the weights of quasi-isolated 2-blocks of the groups $\tp F_4(q)$ along the way; see Remark~\ref{rmk:Jordan-weights}. Many 
arguments in classifying weights for quasi-isolated blocks have variants in other exceptional types. This will be explained 
in our ongoing works.

\vspace{0.5em}
\noindent {\bf Structure of this paper.}
The organization of this paper is as follows. After collecting some preliminary results in Section~\ref{S:Preli}, we give 
a new proof of the classification of radical 2-subgroups of classical groups in Section~\ref{S:radical}. In particular, 
we correct errors in the list of radical 2-subgroups in \cite{An93a}, list the radical subgroups providing 2-weights, 
as well as determine the action of field automorphisms. Section \ref{S:F4} is devoted to classifying radical 2-subgroups 
of the Chevalley group $\tp F_4(q)$. The process in Sections \ref{S:radical} and \ref{S:F4} indicates how our strategy works 
for general finite reductive groups. In Section \ref{S:BAW}, we investigate the inductive BAW condition for 2-blocks of 
classical groups, determine weights of quasi-isolated 2-blocks of the groups $\tp F_4(q)$, and prove Theorems 
\ref{mainthm:BAW-BD} and \ref{mainthm:BAW-F4}. 



\section{Preliminaries}\label{S:Preli}

\noindent {\bf Notation.}
Let $G$ be a group. We denote by $\Aut(G)$ the automorphism group of $G$, and by $\Aut^0(G)$ the subgroup of $\Aut(G)$ acting 
trivially on $Z(G)$. Let $H$ be a subgroup of $G$. We denote by $N_G(H)$ (resp. $Z_G(H)$) the normalizer (resp. centralizer) 
of $H$ in $G$. For a finite abelian group $G$ and a prime $p$, write $G_{p}$ for the unique Sylow $p$-subgroup of $G$. 

Write $|X|$ or $\sharp{X}$ for the cardinality of a set $X$. Denote by $v_p$ the exponential valuation associated to the prime 
$p$, normalized so that $v_p(p)=1$. For a non-zero integer $n$, we write $n_p=p^{v_p(n)}$.   

\subsection{The Alperin weight conjecture}
Let $G$ be a finite group and $p$ be a prime. If $N\unlhd G$, sometimes we identify characters of $G$ with kernel containing 
$N$ with characters of $G/N$. Let $\dz(G)$ denote the set of irreducible characters $\chi$ of $G$ that are of $p$-defect zero 
(i.e., $v_p(\chi(1))=v_p(|G|)$).  

For $N\unlhd G$ and $\theta\in\Irr(N)$, we set 
\[\rdz(G\mid\theta)=\{\,\chi\in\Irr(G\mid\theta)\mid v_p(\chi(1)/\theta(1))=v_p(|G/N|)\,\}.\]

\begin{lem}\label{ext-rdz}
	Let $N\unlhd G$ be finite groups, and let $\theta\in\Irr(N)$.
\begin{enumerate}[\rm(1)]
\item Let $H$ be a normal subgroup of $G$ containing $N$. If $\chi\in \rdz(G\mid\theta)$ and $\vartheta\in\Irr(H\mid\chi)$, then $\vartheta\in\rdz(H\mid\theta)$.
\item If $O_p(G_\theta/N)\ne 1$, then $\rdz(G\mid\theta)=\emptyset$.
\end{enumerate}
\end{lem}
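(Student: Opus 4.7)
The plan is to base both parts on Clifford theory combined with two standard divisibilities of character degrees: if $K \unlhd L$ and $\mu \in \Irr(K)$ is a constituent of some $\nu \in \Irr(L)$, then $\nu(1)/\mu(1)$ divides $[L:K]$; and if $\mu$ lies above $\theta \in \Irr(N)$ with $N \unlhd K$, then $\mu(1)/\theta(1)$ divides $|K/N|$. For (1), applying these to $N \unlhd H \unlhd G$ with $\vartheta \in \Irr(H\mid\chi)$ lying above $\theta$, I obtain
\[ v_p(\chi(1)/\theta(1)) = v_p(\chi(1)/\vartheta(1)) + v_p(\vartheta(1)/\theta(1)) \le v_p([G:H]) + v_p(|H/N|) = v_p(|G/N|). \]
The hypothesis $\chi \in \rdz(G\mid\theta)$ says the two sides are equal, so both intermediate inequalities are in fact equalities; the second equality is exactly $\vartheta \in \rdz(H\mid\theta)$.

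For (2), I would argue by contradiction. Given $\chi \in \rdz(G\mid\theta)$, Clifford induction writes $\chi = \psi^G$ with $\psi \in \Irr(G_\theta\mid\theta)$ and $\chi(1) = [G:G_\theta]\psi(1)$; matching $p$-valuations against $v_p(|G/N|) = v_p([G:G_\theta]) + v_p(|G_\theta/N|)$ forces $\psi \in \rdz(G_\theta\mid\theta)$. Set $M/N := O_p(G_\theta/N)$ and take an irreducible constituent $\eta \in \Irr(M\mid\psi)$. Part (1) applied to $N \unlhd M \unlhd G_\theta$ gives $\eta \in \rdz(M\mid\theta)$, that is, $v_p(\eta(1)/\theta(1)) = v_p(|M/N|)$. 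On the other hand, $\theta$ is $M$-invariant (since $M \le G_\theta$), so $\eta(1)/\theta(1)$ coincides with the degree of an irreducible projective representation of the $p$-group $M/N$ for a fixed cocycle class; the associated twisted group algebra is semisimple of dimension $|M/N|$, whence $(\eta(1)/\theta(1))^2 \le |M/N|$. This yields $v_p(\eta(1)/\theta(1)) \le v_p(|M/N|)/2 < v_p(|M/N|)$ since $M/N$ is a nontrivial $p$-group, contradicting the preceding equality.

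The main obstacle, such as there is one, lies in (2): the bare divisibility $\eta(1)/\theta(1) \mid |M/N|$ is too weak, and one must invoke the sharper sum-of-squares bound on irreducible projective degrees over a $p$-group to produce the strict inequality $v_p(\eta(1)/\theta(1)) < v_p(|M/N|)$. Everything else reduces to routine bookkeeping in Clifford theory with $p$-adic valuations.
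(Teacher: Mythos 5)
Your proposal is correct and follows the same route as the paper: for (1), the $p$-valuation bookkeeping via Clifford theory; for (2), reduce to $G_\theta$ and examine the preimage $M$ of $O_p(G_\theta/N)$, then show no constituent of $\psi|_M$ can belong to $\rdz(M\mid\theta)$ because its degree over $\theta$ is forced too small relative to $|M/N|$. The only (cosmetic) difference is that the paper invokes the sharper divisibility $(\psi(1)/\theta(1))^2\mid|M/N|$ from Clifford theory (Huppert, Thm.\ 19.3), while you use the weaker but sufficient inequality $(\eta(1)/\theta(1))^2\le|M/N|$ obtained from the dimension of the twisted group algebra of $M/N$.
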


\begin{proof}	 
By Clifford theory, one has that $\chi(1)/\vartheta(1)$ divides the order of $G/H$, and thus (1) holds. For (2) we note that 
$\rdz(G\mid\theta)=\emptyset$ if and only if $\rdz(G_\theta\mid\theta)=\emptyset$, and thus we may assume that $G=G_\theta$. 
Suppose $O_p(G/N)\ne 1$ and let $M\unlhd G$ such that $N\subset M$ and $M/N=O_p(G/N)$. For any $\psi\in\Irr(M\mid\theta)$, 
we get that  $(\psi(1)/\theta(1))^2\mid\sharp{(M/N)}$ from Clifford theory (see, e.g., \cite[Thm. 19.3]{Hu98}). 
So $\rdz(M\mid\theta)=\emptyset$ which implies that $\rdz(G\mid\theta)=\emptyset$.
\end{proof} 

A consequence of Lemma~\ref{ext-rdz} is that $\dz(G)\ne \emptyset$ occurs only when $O_p(G)=1$.

A \emph{($p$-)weight} of a finite group $G$ means a pair $(R,\varphi)$ where $R$ is a $p$-subgroup of $G$ and $\varphi\in\dz(N_G(R)/R)$ 
(i.e., $\varphi\in\rdz(N_G(R)\mid 1_R)$). We often identify characters in $\Irr(N_G(R)/R)$ with their inflations to $N_G(R)$. 
Denote the conjugacy class containing $(R,\varphi)$ by $\overline{(R,\varphi)}$.
For a $p$-subgroup $R$ of $G$, we call $R$ a \emph{weight subgroup} of $G$ if $R$ provides a weight $(R,\varphi)$  of $G$. 
By Lemma~\ref{ext-rdz}, weight subgroups are necessarily radical subgroups. 

Each weight  may be assigned to a unique block. Let $B$ be a $p$-block of $G$. A weight $(R,\varphi)$ of $G$ is called 
a \emph{$B$-weight} if $b^G=B$, where $b$ is the block of $N_G(R)$ containing the character $\varphi$ and $b^G$ denotes 
the Brauer induced block of $b$. If a $p$-subgroup $R$ provides a $B$-weight of $G$, then $R$ is called a 
\emph{$B$-weight subgroup} of $G$.  Denote the set of conjugacy classes of $B$-weights of $G$ by $\Alp(B)$. The blockwise 
version of the Alperin weight conjecture (cf. \cite{Al87}) asserts that  
\[|\IBr(B)|=|\Alp(B)|\ \textrm{for every $p$-block $B$ of every finite group}.\] 

Let $G$ be a finite group. A \emph{Brauer pair} of $G$ is a pair $(R,b)$ where $R$ is a $p$-subgroup of $G$ and $b$ is a 
block of $Z_G(R)$. A Brauer pair $(R,b)$ is called \emph{self-centralizing} if $Z(R)$ is the defect group of the block $b$. 
The set of Brauer pairs of $G$ is partially ordered; we can define the inclusion relation ``$\le$'' on the set of Brauer 
pairs (see, for example, \cite[\S18 and \S41]{Th95}). Let $B$ be a block of $G$. Then a Brauer pair $(R,b)$ is called a 
\emph{$B$-Brauer pair} of $G$ if $(1,B)\le (R,b)$.  

By Clifford theory, weights can be constructed in the following manner (see for instance \cite[p.~373--374]{Al87} or \cite[p.~3]{AF90}).

\begin{cons}\label{construction-of-weights}
Let $B$ be a $p$-block of a finite group $G$. For a radical $p$-subgroup $R$ of $G$ and a self-centralizing $B$-Brauer 
pair $(R,b)$, we denote by $\theta$ the canonical character of $b$. As $\theta$ can be regarded as a character of 
$Z_G(R)/Z(R)\cong RZ_G(R)/R$, we denote the inflation of $\theta\in\Irr(RZ_G(R)/R)$ to $RZ_G(R)$ by $\vartheta$. Then 
for every $\phi\in\rdz(N_G(R,b)\mid\vartheta)$, the pair $(R,\Ind^{N_G(R)}_{N_G(R,b)}(\phi))$ is a $B$-weight of $G$. 
As a consequence of the Clifford correspondence, distinct characters $\phi$ yield distinct weights. A complete set of 
representatives of the $G$-conjugacy classes of $B$-weights of $G$ are then obtained by letting $(R,b)$ run through 
a complete set of representatives of the $G$-conjugacy classes of self-centralizing $B$-Brauer pairs of $G$ such that 
$R$ is a radical $p$-subgroup of $G$, and for each such $(R,b)$ letting $\phi$ run through $\rdz(N_G(R,b)\mid\vartheta)$. 
\end{cons} 

Let $B_0(G)$ denote the principal $p$-block of $G$. A $B_0(G)$-weight of $G$ is also called a \emph{principal $p$-weight} 
of $G$, while a $B_0(G)$-weight subgroup of $G$ is called a \emph{principal $p$-weight subgroup} of $G$. 

\begin{lem}\label{lem:B_0-wei}
	Let $G$ be a finite group, $R$ a $p$-subgroup of $G$ and $\varphi\in\Irr(N_G(R))$.
	Then $(R,\varphi)$ is a principal weight of $G$ if and only if $Z(R)\in\Syl_p(Z_G(R))$, $R Z_G(R)\subset\ker(\varphi)$ and $\varphi\in\dz(N_G(R)/RZ_G(R))$.
	\end{lem}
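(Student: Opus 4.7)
My plan is to directly unwind Construction~\ref{construction-of-weights} for the principal block $B_0(G)$, using Brauer's third main theorem as the main structural input.

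First I would invoke Brauer's third main theorem to identify the $B_0(G)$-Brauer pairs: a pair $(R,b)$ is a $B_0(G)$-Brauer pair precisely when $b=B_0(Z_G(R))$, the principal block of $Z_G(R)$. Since every automorphism of a finite group preserves its principal block (the unique block containing the trivial character), each element of $N_G(R)$ stabilizes $b$, so $N_G(R,b)=N_G(R)$. The pair $(R,b)$ is self-centralizing precisely when $Z(R)$ is a defect group of $b$, and for the principal block this is equivalent to $Z(R)\in\Syl_p(Z_G(R))$. Moreover, the canonical character of $B_0(Z_G(R))$ is the trivial character, so in the notation of Construction~\ref{construction-of-weights} one has $\theta=1_{Z_G(R)}$ and hence $\vartheta=1_{RZ_G(R)}$.

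Substituting these three identifications into Construction~\ref{construction-of-weights} gives: $(R,\varphi)$ is a principal weight if and only if $Z(R)\in\Syl_p(Z_G(R))$ and $\varphi=\Ind^{N_G(R)}_{N_G(R,b)}(\phi)=\phi$ lies in $\rdz(N_G(R)\mid 1_{RZ_G(R)})$. The latter membership unwinds as the conjunction of $RZ_G(R)\subset\ker(\varphi)$ (so that $1_{RZ_G(R)}$ is a constituent of $\varphi|_{RZ_G(R)}$) together with $v_p(\varphi(1))=v_p(|N_G(R)/RZ_G(R)|)$, i.e.\ $\varphi\in\dz(N_G(R)/RZ_G(R))$. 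This yields both directions of the equivalence.

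The argument involves no serious obstacle; it is essentially bookkeeping on top of Construction~\ref{construction-of-weights} and Brauer's third main theorem. The main care needed is to keep straight the groups $R$, $Z(R)$, $Z_G(R)$ and $RZ_G(R)$, and to note that the Sylow condition $Z(R)\in\Syl_p(Z_G(R))$ ensures $p\nmid |RZ_G(R)/R|$, so that a $p$-defect-zero character of $N_G(R)/RZ_G(R)$ is automatically of $p$-defect zero when inflated to $N_G(R)/R$, consistent with the definition of a weight.
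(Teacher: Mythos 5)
Your argument is essentially the same as the paper's: both directions hinge on Brauer's Third Main Theorem to identify the relevant Brauer pairs with $(R,B_0(Z_G(R)))$, on the canonical character of the principal local block being trivial, and on the self-centralizing condition becoming $Z(R)\in\Syl_p(Z_G(R))$. Your version is a bit more explicit about unwinding Construction~\ref{construction-of-weights} (in particular spelling out that $N_G(R,b)=N_G(R)$ and that membership in $\rdz(N_G(R)\mid 1_{RZ_G(R)})$ for the normal subgroup $RZ_G(R)$ forces $RZ_G(R)\subset\ker\varphi$ by Clifford theory), but the substance and the key lemma are identical.
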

	
	\begin{proof}
	Let $\theta\in\Irr(Z_G(R)\mid\varphi)$ and $b$ be the $p$-block of $Z_G(R)$ containing $\theta$. 
	
	First assume that $Z(R)\in\Syl_p(Z_G(R))$, $R Z_G(R)\subset\ker(\varphi)$ and $\varphi\in\dz(N_G(R)/RZ_G(R))$. Then $\varphi\in\dz(N_G(R)/R)$, $\theta=1_{Z_G(R)}$ and $b$ is the principal $p$-block of $Z_G(R)$. So $(R,b)$ is a $B_0(G)$-Brauer pair of $G$ by Brauer's Third Main Theorem (see e.g. \cite[Thm.~(6.7)]{Na98}), which implies that $(R,\varphi)$ is a principal weight of $G$.
	
	Conversely, we suppose that $(R,\varphi)$ is a principal weight of $G$.
	Then $(R,b)$ is a self-centralizing $B_0(G)$-Brauer pair of $G$ and $\theta$ is the canonical character of $b$. In addition, Brauer's Third Main Theorem implies that $b$ is the principal block of $Z_G(R)$.
	Therefore, $Z(R)\in\Syl_p(Z_G(R))$, $\theta=1_{Z_G(R)}$, and thus $\varphi\in\dz(N_G(R)/RZ_G(R))$.
	\end{proof}	
	
	Let $N\unlhd G$ be finite groups. In \cite[\S2]{BS22}, Brough and Sp\"ath defined a relationship ``cover" for the weights between $G$ and $N$ and established Clifford theory for weights.
	Let $(R,\varphi)$ be a weight of $G$ and $(S,\psi)$ a weight of $N$, then we say $\overline{(R,\varphi)}$ \emph{covers} $\overline{(S,\psi)}$ if $(R,\varphi)$ covers $(S,\psi)^g$ for some $g\in G$. Denote by $\Alp(N\mid\overline{(R,\varphi)})$ the elements of $\Alp(N)$ covered by $\overline{(R,\varphi)}$.

	\subsection{Properties of radical subgroups and weight subgroups}

	For a finite group $G$ and a prime $p$, we denote by $\Re_{p}(G)$ the set of radical $p$-subgroups of $G$, and by $\Re_{w}(G)$ the set of $p$-weight subgroups of $G$.   
	
	\begin{lem}\label{L:R1}
	Let $H\subset G$ be finite groups. If $R$ is a radical $p$-subgroup of $G$, then it is a radical $p$-subgroup of $H$ whenever 
	$H\supset N_{G}(R)$. 
	\end{lem}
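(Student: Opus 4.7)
The plan is to observe that the hypothesis $H\supset N_G(R)$ collapses the two normalizers $N_G(R)$ and $N_H(R)$ into the same group, after which the conclusion is immediate from the definition of a radical $p$-subgroup.

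First I would note that since $R\subset N_G(R)\subset H$, the subgroup $R$ is indeed a $p$-subgroup of $H$, so the statement ``$R$ is a radical $p$-subgroup of $H$'' makes sense. The next step is the key computation:
\[
N_H(R)=\{h\in H : hRh^{-1}=R\}=N_G(R)\cap H=N_G(R),
\]
where the last equality uses exactly the hypothesis $N_G(R)\subset H$.

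Having identified $N_H(R)$ with $N_G(R)$, I would conclude by applying the assumption that $R$ is radical in $G$, giving
\[
O_p(N_H(R))=O_p(N_G(R))=R,
\]
which is the required equality. There is essentially no obstacle here; the lemma is a definitional consequence of the hypothesis $H\supset N_G(R)$, and it is recorded presumably for repeated later use in transporting radicality between a group and an intermediate overgroup of the normalizer.
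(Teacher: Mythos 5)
Your proof is correct and is essentially identical to the paper's: both identify $N_H(R)=N_G(R)\cap H=N_G(R)$ from the hypothesis $N_G(R)\subset H$ and then conclude directly from the definition of radical subgroup.
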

	
	\begin{proof}
	Since $N_{G}(R)\subset H\subset G$, we have $R\subset N_{H}(R)=N_{G}(R)$. Then, $R$ is a radical $p$-subgroup of $H$ as well.
	\end{proof}
	
	\begin{lem}[{\cite[Lemma 2.1]{OU95}}]\label{L:R2}
	If $R$ is a radical $p$-subgroup of a finite group $G$, then $O_{p}(G)\subset R$. 
	\end{lem}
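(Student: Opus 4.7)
The plan is to argue by contradiction: suppose $O_p(G) \not\subseteq R$ and derive a contradiction from the radical assumption $R = O_p(N_G(R))$. The two main ingredients will be the ``normalizers grow'' property of proper subgroups in a finite $p$-group, together with the normality of $O_p(G)$ in $G$.

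First, since $O_p(G)\trianglelefteq G$, the product $P:=R\cdot O_p(G)$ is a subgroup of $G$, and being the product of two $p$-subgroups (one of which is normal in the ambient group) it is itself a $p$-subgroup of $G$. By hypothesis $R\subsetneq P$, so by the standard fact that proper subgroups of finite $p$-groups are strictly smaller than their normalizers, we can pick an element $x\in N_P(R)\setminus R$.

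Next, I would record the key observation that $O_p(G)\cap N_G(R)$ is a normal $p$-subgroup of $N_G(R)$: it is a $p$-subgroup as a subgroup of $O_p(G)$, and it is normal in $N_G(R)$ because $O_p(G)$ is normal in the ambient group $G$. Hence by the definition of $O_p$, together with the radical hypothesis, we get
\[
O_p(G)\cap N_G(R)\subseteq O_p(N_G(R))=R.
\]

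Finally, write the chosen element as $x=ar$ with $a\in O_p(G)$ and $r\in R$. Since $r\in R\subseteq N_G(R)$ and $x\in N_P(R)\subseteq N_G(R)$, we have $a=xr^{-1}\in N_G(R)$, so $a\in O_p(G)\cap N_G(R)\subseteq R$ by the previous paragraph, which forces $x=ar\in R$, contradicting $x\notin R$. The argument is short, and the only subtlety is choosing to apply the normalizer-growth phenomenon inside the auxiliary $p$-group $P=R\cdot O_p(G)$ rather than inside $G$ itself; once that move is made, the rest is a direct manipulation using normality of $O_p(G)$ and the radical hypothesis.
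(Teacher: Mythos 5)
Your argument is correct. The paper gives no proof of this lemma, citing it instead to Olsson--Uno \cite[Lemma 2.1]{OU95}; your argument is the standard one, using that $R O_p(G)$ is a $p$-group so normalizers grow, together with the observation that $O_p(G)\cap N_G(R)$ is a normal $p$-subgroup of $N_G(R)$ and hence lands inside $O_p(N_G(R))=R$.
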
 
	
	\begin{lem}[{\cite[Lemma 2.1]{OU95}, \cite[Lemma 2.2]{Fe19} and \cite[Corollary~2.12]{BS22}}]\label{L:R3} 
	Let $H$ be a normal subgroup of $G$. Then the map \[R\mapsto R\cap H\] gives a surjection $\Re_{p}(G)\rightarrow\Re_{p}(H)$. 
	Moreover, this also gives a surjection $\Re_{w}(G)\rightarrow\Re_{w}(H)$.
	\end{lem}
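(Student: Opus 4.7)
The plan is to establish the radical subgroup assertion first and then deduce the weight subgroup assertion by combining it with standard Clifford-theoretic input for weights.

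\textbf{Well-definedness of $R\mapsto R\cap H$ on radicals.} Given $R\in\Re_p(G)$, I set $Q:=R\cap H$; clearly $Q$ is a $p$-subgroup of $H$ with $Q\subseteq P:=O_p(N_H(Q))$. For the reverse inclusion I would exploit that $N_H(Q)=N_G(Q)\cap H\trianglelefteq N_G(Q)$ since $H\trianglelefteq G$, so $P$, being characteristic in $N_H(Q)$, is already normal in $N_G(Q)$. Any $g\in N_G(R)$ normalizes $R$ and $H$, hence $Q$, which gives $N_G(R)\subseteq N_G(Q)$; therefore $P$ is normalized by $N_G(R)$, and $RP$ is a normal $p$-subgroup of $N_G(R)$. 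The maximality of $R=O_p(N_G(R))$ then forces $RP\subseteq R$, i.e.\ $P\subseteq R\cap H=Q$, as required.

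\textbf{Surjectivity for radicals.} For $S\in\Re_p(H)$, I would take $R:=O_p(N_G(S))$. The inclusion $S\subseteq R$ is clear since $S\trianglelefteq N_G(S)$ is a $p$-subgroup, while $R\cap H$ is a normal $p$-subgroup of $N_H(S)=N_G(S)\cap H$ and thus contained in $O_p(N_H(S))=S$; hence $R\cap H=S$. The key observation is now $N_G(R)\subseteq N_G(S)$ (elements of $N_G(R)$ normalize both $R$ and $H$, hence $S=R\cap H$), so $N_G(R)=N_{N_G(S)}(R)=N_G(S)$ because $R\trianglelefteq N_G(S)$. Consequently $O_p(N_G(R))=O_p(N_G(S))=R$, so $R\in\Re_p(G)$ and it maps to $S$.

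\textbf{Weight subgroups.} For well-definedness, if $(R,\varphi)$ is a weight of $G$ then $Q=R\cap H\in\Re_p(H)$ by the radical part, and the Clifford theory for weights of Brough--Sp\"ath \cite[Cor.~2.12]{BS22} shows that $\overline{(R,\varphi)}$ covers some weight $(S,\psi)\in\Alp(H)$ with $S$ conjugate to $Q$, forcing $Q\in\Re_w(H)$. For surjectivity, starting from a weight $(S,\psi)\in\Alp(H)$, one uses the radical lift $R=O_p(N_G(S))$ from above and then must construct $\varphi\in\dz(N_G(R)/R)$ lying above $\psi$. I expect this last step to be the main obstacle; I would navigate it via Lemma~\ref{ext-rdz} together with the Brough--Sp\"ath covering machinery (cf.\ \cite[Lem.~2.2]{Fe19}), which jointly guarantee a defect-zero constituent of the induced character above $\psi$ in the required quotient, completing the surjection $\Re_w(G)\to\Re_w(H)$.
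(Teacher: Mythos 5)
Your argument for the radical surjection $\Re_p(G)\to\Re_p(H)$ is correct and complete; it is essentially the standard Olsson--Uno argument that the paper cites (the paper supplies no proof of its own for this lemma, giving only references).

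For the weight assertion there is, however, a genuine gap, and you yourself flag it. Well-definedness of $R\mapsto R\cap H$ on $\Re_w$ by appeal to the Brough--Sp\"ath notion of covering is fine. But your proposed surjectivity strategy --- take the radical lift $R=O_p(N_G(S))$ of a weight subgroup $S$ of $H$ and then manufacture a defect-zero character above $\psi$ --- is not just technically unfinished, it is unsalvageable as stated: the radical lift of a weight subgroup of $H$ need not be a weight subgroup of $G$. Take $G=\fS_5$, $H=\fA_5$, $p=2$, $S=1$. Then $1$ is a weight $2$-subgroup of $\fA_5$ (the degree-$4$ character has $2$-defect zero), but its radical lift is $O_2(N_{\fS_5}(1))=O_2(\fS_5)=1$, and $\fS_5$ has no irreducible character of degree divisible by $8$, so $1\notin\Re_w(\fS_5)$. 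Surjectivity is nevertheless true here because a \emph{different} radical $2$-subgroup with trivial intersection with $\fA_5$, namely $\langle(12)\rangle$, \emph{is} a weight subgroup of $\fS_5$ (its normalizer quotient is $\fS_3$, which has a defect-zero character of degree $2$). This is exactly why the weight surjection is subtler than the radical one: the preimage must be produced via the Clifford-theory-for-weights machinery of \cite[\S2]{BS22} (Fong--Reynolds reduction to the stabilizer of $\psi$ and the Dade--Glauberman--Nagao correspondence, producing $R$ as a group of the form $QS$ with $Q$ an appropriate Sylow in a stabilizer), not by taking $O_p$ of the normalizer. As written, your proof establishes only the radical half and should either carry out that construction or rely entirely on the citation for the weight half.
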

	
	\begin{lem}[{\cite[Lemma 2.2]{OU95} and \cite[Lemma 2.3]{NT11}}]\label{L:R6}
	If $G=G_{1}\times G_{2}$, then the map $(R_{1},R_{2})\mapsto R_{1}\times R_{2}$ gives a bijection 
	$\Re_{p}(G_{1})\times\Re_{p}(G_{2})\rightarrow\Re_{p}(G)$.   
	\end{lem}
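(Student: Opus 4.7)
The plan is to prove the bijection by separately handling the forward map (well-definedness and injectivity) and the backward direction (surjectivity). For well-definedness, I would use the direct-product factorizations $N_G(R_1 \times R_2) = N_{G_1}(R_1) \times N_{G_2}(R_2)$ and $O_p(A \times B) = O_p(A) \times O_p(B)$ to obtain $O_p(N_G(R_1 \times R_2)) = R_1 \times R_2$ whenever each $R_i$ is radical in $G_i$; injectivity is trivial since $\pi_i$ recovers $R_i$ from $R_1 \times R_2$.

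The substantive step is surjectivity: given $R \in \Re_p(G)$, setting $S_i := R \cap G_i$ and $T_i := \pi_i(R)$, I need to show $R = S_1 \times S_2$. Lemma~\ref{L:R3} already gives $S_i \in \Re_p(G_i)$ and the inclusions $S_1 \times S_2 \subseteq R \subseteq T_1 \times T_2$, so the goal reduces to $T_i = S_i$. The approach is to exploit that $N_G(R)$ acts on each quotient $T_i/S_i$ by automorphisms: any $(g_1,g_2) \in N_G(R)$ normalizes $T_i$ (by applying $\pi_i$ to the conjugation $gRg^{-1}=R$) and $S_i$ (since $G_i \unlhd G$), and therefore preserves the center $Z(T_i/S_i)$. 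Letting $Z_i \subseteq T_i$ be the preimage of $Z(T_i/S_i)$, I would then verify that $(z,1)$ for $z \in Z_1$ (and likewise $(1,z)$ for $z \in Z_2$) normalizes $R$: the key is that conjugation by such $z$ acts trivially on $T_i/S_i$ and so respects the Goursat-graph description of $R$ inside $T_1 \times T_2$. This makes $Z_1 \times Z_2$ a $p$-subgroup of $N_G(R)$ which is itself normalized by $N_G(R)$, so $Z_1 \times Z_2 \subseteq O_p(N_G(R)) = R$. Intersecting with $G_i$ yields $Z_i \subseteq S_i$; since the center of any nontrivial $p$-group is nontrivial, this is possible only if $T_i/S_i = 1$, forcing $T_i = S_i$ as required.

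The hard part will be recognising how to close the gap between the formal inclusions $S_1 \times S_2 \subseteq R \subseteq T_1 \times T_2$ using only that $R = O_p(N_G(R))$; the clever step is producing $Z_1 \times Z_2$ as a normal $p$-subgroup of $N_G(R)$ via the preimages of the centers of the Goursat quotients, so that the radicality hypothesis on $R$ can actually be brought to bear.
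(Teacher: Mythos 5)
The paper does not prove this lemma itself; it is stated with a citation to \cite[Lemma 2.2]{OU95} and \cite[Lemma 2.3]{NT11}, so there is no in-paper argument to compare against. Your proof is correct and complete: the forward direction via $N_G(R_1\times R_2)=N_{G_1}(R_1)\times N_{G_2}(R_2)$ and $O_p(A\times B)=O_p(A)\times O_p(B)$ is routine, and for surjectivity the key step --- taking $Z_i$ to be the preimage in $T_i=\pi_i(R)$ of $Z(T_i/(R\cap G_i))$, checking via the Goursat description that $Z_1\times Z_2$ is a normal $p$-subgroup of $N_G(R)$, and then using $Z_1\times Z_2\le O_p(N_G(R))=R$ together with the nontriviality of centers of $p$-groups to force $T_i=R\cap G_i$ --- is exactly what is needed, since $T_1\times T_2$ itself need not normalize $R$.
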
 
	
	\begin{lem}\label{L:R4}
	Suppose that $N$ is a nilpotent normal subgroup of a finite group $G$ such that $O_{p'}(N)\le Z(G)$. Then the map $R\mapsto RN/N$ 
	gives a bijection $\Re_{p}(G)\rightarrow\Re_{p}(G/N)$. 
	\end{lem}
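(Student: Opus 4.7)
The plan is to exploit the nilpotency of $N$ to decompose $N = P \times Q$ with $P := O_p(N)$ and $Q := O_{p'}(N)$. Since $P$ is characteristic in $N \unlhd G$, it is normal in $G$ and hence lies in $O_p(G)$, while by hypothesis $Q \le Z(G)$. This splits the problem into two controlled pieces: quotienting by a normal $p$-subgroup contained in $O_p(G)$, and quotienting by a central $p'$-subgroup.

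For any radical $p$-subgroup $R$ of $G$, Lemma~\ref{L:R2} gives $P \subseteq O_p(G) \subseteq R$, while $R \cap Q = 1$ on order grounds. Thus $RN = RQ$ with $Q$ central and $R \cap Q = 1$, so $R$ is the unique Sylow $p$-subgroup of $RN$. Any element of $G$ normalizing $RN$ must preserve this unique Sylow, and conversely any element normalizing $R$ automatically normalizes the central $Q$; combined with $N \subseteq N_G(R)$ (from $P \subseteq R$ and $Q \subseteq Z(G)$), this yields $N_{G/N}(RN/N) = N_G(R)/N$. Consequently,
\[
N_{G/N}(RN/N)\bigm/(RN/N) \;\cong\; \bigl(N_G(R)/R\bigr)\bigm/(RN/R),
\]
and $RN/R \cong Q$ is a central $p'$-subgroup of $N_G(R)/R$. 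Since taking $O_p$ is unaffected by quotienting out a central $p'$-subgroup, the right-hand side has trivial $O_p$ if and only if $O_p(N_G(R)/R) = 1$, i.e.\ if and only if $R$ is radical in $G$. Hence the map is well-defined and radicalness transfers in both directions.

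For surjectivity, take $\bar R \in \Re_p(G/N)$ and write it as $S/N$ with $N \subseteq S$ and $S/N$ a $p$-group. Because $Q \subseteq Z(S)$ is a central $p'$-subgroup and $S/Q$ is a $p$-group (its quotient by $P$ is $S/N$), $S$ has a unique Sylow $p$-subgroup $R$ with $S = R \times Q$, and necessarily $R \supseteq P$; then $RN = S$, so $RN/N = \bar R$, and the previous paragraph shows $R \in \Re_p(G)$. Injectivity is immediate from the same uniqueness of the Sylow $p$-subgroup of $RN$. The only step requiring genuine care — though not deep — is the normalizer identity $N_G(RN) = N_G(R)$ together with the behavior of $O_p$ under quotient by a central $p'$-subgroup; once these bookkeeping items are in place, the proof is formal.
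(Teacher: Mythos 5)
Your proof is correct and rests on the same underlying idea as the paper's: write $N = O_p(N) \times O_{p'}(N)$, observe that the $p$-part lands inside every radical $p$-subgroup (via Lemma~\ref{L:R2}) while the $p'$-part is central, and then track how $O_p$ of the relevant normalizer quotient behaves. The paper packages this as a two-step reduction (quotient by $N_p$, then by the central $p'$-image $N/N_p$), whereas you carry out both pieces in a single direct computation of $N_G(RN) = N_G(R)$ and the isomorphism $N_{G/N}(RN/N)/(RN/N) \cong (N_G(R)/R)/(RN/R)$ with $RN/R \cong O_{p'}(N)$ a central $p'$-subgroup; the mathematical content is the same, and your version makes the well-definedness, injectivity, and surjectivity checks slightly more explicit.
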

	
	\begin{proof}
	Choose a Sylow $p$-subgroup $N_{p}$ of $N$. Since $N$ is nilpotent, then $N_{p}$ is a normal subgroup of $N$. By considering the 
	surjections $G\rightarrow G/N_{p}$ and $G/N_{p}\rightarrow G/N$ separately, we may assume that $N$ is a $p$-subgroup or a central
	$p'$-subgroup. 
	
	Suppose that $N$ is a normal $p$-subgroup of $G$. By Lemma \ref{L:R2}, all $R\in\Re_{p}(G)$ contain $N$. Then, the conclusion is 
	clear. 
	
	Suppose that $N$ is a central $p'$-subgroup of $G$. Then, the map $R\mapsto RN/N$ gives a bijection between the set of $p$-subgroups 
	of $G$ and the set of $p$-subgroups of $G/N$. Now let $R$ be a $p$-subgroup of $G$ and put $R'=RN/N\subset G/N$. It suffices to 
	show that: \[R\in\Re_{p}(G)\Leftrightarrow R'\in\Re_{p}(G/N).\] This follows from the statement $N_{G}(RN)=N_{G}(R)$.    
	\end{proof}
	
	We mention that Lemma~\ref{L:R4} is also proved in \cite[Prop.~2.7]{Da94} when $N$ is a $p$-group, and is proved in 
\cite[Lemma~2.3 (c)]{NT11} when $N$ is a $p'$-group. 
	
	\begin{lem}\label{L:R5}
	Let $R$ be a radical $p$-subgroup of a finite group $G$. If $A$ is a subgroup of $Z(R)$ which is stable under the conjugation 
	action of all elements $n\in N_{G}(R)$, then $R$ is a radical $p$-subgroup of both $N_{G}(A)$ and $Z_{G}(A)$.  
	\end{lem}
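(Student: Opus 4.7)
The plan is to show $O_p(N_H(R)) = R$ for $H \in \{N_G(A), Z_G(A)\}$, exploiting the hypothesis on $A$ in two different ways.

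First I would record the basic containments. Since $A \subset Z(R)$, the group $R$ centralizes $A$, so $R \subset Z_G(A) \subset N_G(A)$. Since $A$ is stable under conjugation by $N_G(R)$, we have the containment $N_G(R) \subset N_G(A)$.

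For the case $H = N_G(A)$, the argument is immediate. Indeed, the previous containment yields
\[
N_{N_G(A)}(R) = N_G(R) \cap N_G(A) = N_G(R),
\]
so $O_p(N_{N_G(A)}(R)) = O_p(N_G(R)) = R$, using that $R$ is radical in $G$. Hence $R \in \Re_p(N_G(A))$. (One could also invoke Lemma~\ref{L:R1} directly with $H = N_G(A)$, since $H \supset N_G(R)$.)

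For the case $H = Z_G(A)$, Lemma~\ref{L:R1} no longer applies directly (elements of $N_G(R)$ need not centralize $A$), so the key step is to upgrade $N_{Z_G(A)}(R)$ to a \emph{normal} subgroup of $N_G(R)$. The crucial observation is that $Z_G(A) \unlhd N_G(A)$, and intersecting with $N_G(R) \subset N_G(A)$ gives
\[
N_{Z_G(A)}(R) = Z_G(A) \cap N_G(R) \;\unlhd\; N_G(R).
\]
Set $P := O_p(N_{Z_G(A)}(R))$; this is characteristic in $N_{Z_G(A)}(R)$ and hence normal in $N_G(R)$, so $P \subset O_p(N_G(R)) = R$. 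Conversely, $R \subset Z_G(A)$ and $R \unlhd N_G(R) \supset N_{Z_G(A)}(R)$, so $R$ is a normal $p$-subgroup of $N_{Z_G(A)}(R)$, giving $R \subset P$. Thus $P = R$, and $R \in \Re_p(Z_G(A))$.

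The only nontrivial step is the $Z_G(A)$ case, and the main (small) obstacle is recognizing that the normality $Z_G(A) \unlhd N_G(A)$, combined with $N_G(R) \subset N_G(A)$, is exactly what is needed to transfer the radicality of $R$ through the $O_p(\cdot)$-characteristic trick.
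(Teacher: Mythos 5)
Your proof is correct, and the overall strategy matches the paper's: first observe $N_G(R)\subset N_G(A)$ and apply Lemma~\ref{L:R1} to get radicality in $N_G(A)$, then use the normality $Z_G(A)\unlhd N_G(A)$ to descend. The only difference is that where the paper simply invokes Lemma~\ref{L:R3} (applied to the normal subgroup $Z_G(A)\unlhd N_G(A)$, together with $R\cap Z_G(A)=R$) for the descent step, you reprove that descent inline by the characteristic-subgroup argument $O_p(N_{Z_G(A)}(R))\unlhd N_G(R)$; this is exactly the mechanism used later in the paper's proof of Lemma~\ref{L:R8}, so nothing is gained or lost, just a self-contained rendering of a cited auxiliary lemma.
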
  
	
	\begin{proof}
	By assumption, we have $N_{G}(R)\subset N_{G}(A)$. By Lemma \ref{L:R1}, $R$ is a radical $p$-subgroup of $N_{G}(A)$. We have 
	$R\subset Z_{G}(A)$ and $Z_{G}(A)$ is a normal subgroup of $N_{G}(A)$. By Lemma \ref{L:R3}, $R$ is also a radical $p$-subgroup 
	of $Z_{G}(A)$.    
	\end{proof}
	
	\begin{lem}\label{L:R8}
	Let $R$ be a radical $p$-subgroup of a finite group $G$. Let $H$ be a subgroup of $G$ which contains $R$ and is normalized by 
	$N_{G}(R)$. Then, $R$ is a radical $p$-subgroup of $H$.  
	\end{lem}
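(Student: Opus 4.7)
The plan is to show the two containments $R\subset O_p(N_H(R))$ and $O_p(N_H(R))\subset R$ separately, with the key observation that $N_H(R)=N_G(R)\cap H$ is itself stable under conjugation by $N_G(R)$.

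First I would note that $R$ is normal in $N_G(R)$, hence normal in the subgroup $N_H(R)=N_G(R)\cap H$, so $R\subset O_p(N_H(R))$. This direction is automatic.

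For the reverse inclusion, set $Q:=O_p(N_H(R))$. The main step is to prove that $Q$ is normalized by all of $N_G(R)$. For any $g\in N_G(R)$, the hypothesis that $N_G(R)$ normalizes $H$ (together with $g\in N_G(R)$) gives
\[
g\,N_H(R)\,g^{-1}=g(N_G(R)\cap H)g^{-1}=N_G(R)\cap gHg^{-1}=N_G(R)\cap H=N_H(R).
\]
Consequently $gQg^{-1}$ is a normal $p$-subgroup of $N_H(R)$, and by maximality $gQg^{-1}\subset Q$; applying the same to $g^{-1}$ shows $gQg^{-1}=Q$. Thus $Q$ is normalized by $N_G(R)$, and since $Q\subset N_H(R)\subset N_G(R)$, it is in fact a normal $p$-subgroup of $N_G(R)$. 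The hypothesis that $R$ is radical in $G$ then forces $Q\subset O_p(N_G(R))=R$.

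There is no serious obstacle here: once one recognizes the identity $N_H(R)=N_G(R)\cap H$ and that $N_G(R)$ acts on this intersection via its action on $H$, the argument is a one-line application of the radical property in $G$. The only point to watch is to invoke the assumption ``$H$ is normalized by $N_G(R)$'' at the correct place, namely to ensure the conjugation stability of $N_H(R)$ under $N_G(R)$.
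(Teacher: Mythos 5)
Your proof is correct and follows essentially the same route as the paper's: show $N_G(R)$ normalizes $N_H(R)$, deduce that $O_p(N_H(R))$ is a normal $p$-subgroup of $N_G(R)$, and conclude $O_p(N_H(R))\subset O_p(N_G(R))=R$, the reverse containment being automatic. The only cosmetic difference is that you spell out the conjugation computation $gN_H(R)g^{-1}=N_H(R)$ and the characteristic-subgroup step, whereas the paper states these in a single sentence.
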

	
	\begin{proof}
	Since $N_{G}(R)$ normalizes $H$ and $R$, then it normalizes $N_{H}(R)$. Thus, $O_{p}(N_{H}(R))$ is a normal subgroup of 
	$N_{G}(R)$. Hence, $O_{p}(N_{H}(R))\subset O_{p}(N_{G}(R))$. As $R$ is a radical $p$-subgroup of $G$, we have $O_{p}(N_{G}(R))=R$. 
	Therefore, \[R\subset O_{p}(N_{H}(R))\subset O_{p}(N_{G}(R))=R.\] Hence, $O_{p}(N_{H}(R))=R$ and $R$ is a radical $p$-subgroup 
	of $H$.
	\end{proof}
	
Let $c\in\mathbb{Z}_{\geq 1}$. Then $\fS_{2^{c}}$ has an elementary abelian 2-subgroup $A_{c}$ generated by the permutations 
\[x\mapsto \phi_{i}(x),\ 1\leq x\leq 2^{c}\] ($1\leq i\leq c$), where $\phi_{i}(x)\equiv x+2^{i-1}\pmod{2^{i}}$ and 
$\lceil\frac{\phi_{i}(x)}{2^{i}}\rceil=\lceil\frac{x}{2^{i}}\rceil$.  
	
	\begin{lem}\label{L:R7} 
	Let $G=H^{2^{c}}\rtimes A_{c}=H\wr A_{c}$ and $R$ be a radical 2-subgroup of $G$ meeting with all cosets of $H^{2^{c}}$ in $G$. 
	Then $R$ is conjugate to some $R'^{2^{c}}\rtimes A_{c}$, where $R'\in\Re_2(H)$.   
	\end{lem}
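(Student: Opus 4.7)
The plan is to work along the filtration $R\cap P\unlhd R$ with $P=H^{2^c}$ the base of the wreath product: first describe $R\cap P$ via the direct product structure, then lift a splitting on top of it. Since $P\unlhd G$ and $R\in\Re_2(G)$, Lemma~\ref{L:R3} gives $Q:=R\cap P\in\Re_2(P)$, and iterating Lemma~\ref{L:R6} yields a factorization $Q=R_1\times\cdots\times R_{2^c}$ with each $R_i\in\Re_2(H)$. The hypothesis that $R$ meets every coset of $P$ means $R$ surjects onto $G/P=A_c$. For each $a\in A_c$, conjugation by a lift $r_a\in R$ preserves $Q$ while permuting the factors of $P$ by the regular action of $a$ on $\{1,\ldots,2^c\}$, composed with $H$-inner automorphisms on each coordinate, so $R_{a(i)}$ is $H$-conjugate to $R_i$ for every $i$. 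Transitivity of $A_c$ then forces all $R_i$ to be $H$-conjugate to a common $R'\in\Re_2(H)$, and after conjugating $R$ by a suitable element of $P$ we may assume $Q=R'^{2^c}$.

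Now $R\subset N_G(Q)=N_H(R')^{2^c}\rtimes A_c=N_H(R')\wr A_c$. Write $\bar N:=N_H(R')/R'$ and let $\pi\colon N_H(R')\wr A_c\twoheadrightarrow\bar N\wr A_c$ denote the canonical projection. Since $R\cap N_H(R')^{2^c}\subset R\cap P=R'^{2^c}$, the image $\pi(R)$ meets the base $\bar N^{2^c}$ trivially while still surjecting onto $A_c$, so $\pi(R)$ is a complement of $\bar N^{2^c}$ in $\bar N\wr A_c$. Such a complement is described by a non-abelian $1$-cocycle $h\colon A_c\to\bar N^{2^c}$, $h(a)=(h_1(a),\ldots,h_{2^c}(a))$, satisfying $h_i(ab)=h_i(a)h_{a^{-1}(i)}(b)$. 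Because $A_c$ acts \emph{regularly} on $\{1,\ldots,2^c\}$, Shapiro's lemma (valid in its non-abelian form) makes $H^1(A_c,\bar N^{2^c})$ trivial; explicitly, setting $y_1:=1$ and $y_i:=h_1(a_i)$, where $a_i\in A_c$ is the unique element with $a_i(i)=1$, the cocycle identity gives $h_i(a)=y_i^{-1}y_{a^{-1}(i)}$. Lifting $y$ to $N_H(R')^{2^c}\subset G$ and conjugating, $R$ is pushed into $\pi^{-1}(A_c)=R'^{2^c}\rtimes A_c$; since $R\supset R'^{2^c}$ and surjects onto $A_c$, equality follows.

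The main subtle step is the cocycle trivialization at the end: it relies on the $A_c$-action on $\{1,\ldots,2^c\}$ being \emph{regular}, not merely transitive, so that the point stabilizers are trivial and the relevant cohomology vanishes. The radicality hypothesis on $R$ is used only in the opening step, via Lemmas~\ref{L:R3} and~\ref{L:R6}, to guarantee that $Q$ is an honest direct product of radical 2-subgroups of $H$; everything after that is structural.
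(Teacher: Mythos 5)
Your proof is correct and follows essentially the same route as the paper's: reduce $R\cap H^{2^c}$ to $R'^{2^c}$ via Lemmas~\ref{L:R3} and~\ref{L:R6} and pairwise conjugacy, then conjugate the complement into $A_c$. The only difference is that where the paper asserts ``one can show that it is conjugate to $A_c$,'' you supply the actual argument — passing to the quotient $\bar N\wr A_c$, recognizing $\pi(R)$ as a complement, and trivializing the corresponding $1$-cocycle using regularity of the $A_c$-action (which, for abelian groups, coincides with transitivity) — and your explicit computation $h_i(b)=y_i^{-1}y_{b^{-1}(i)}$ with $y_i=h_1(a_i)$ does check out.
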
  
	
	\begin{proof}
	By Lemma \ref{L:R3}, $R\cap H^{2^{c}}\in\Re_2(H^{2^{c}})$. By Lemma \ref{L:R6}, $R\cap H^{2^{c}}=\prod_{1\leq i\leq 2^{c}}R_{i}$, 
	where each $R_{i}\in\Re_2(H)$ ($1\leq i\leq 2^{c}$). As it is assumed that $R$ intersects with each coset of $H^{2^{c}}$ in $G$, 
	such $R_{i}$ must be pairwise conjugate. Replacing $R$ by a conjugate one if necessary, we assume that all $R_{i}=R'$ where 
	$R'\in\Re_2(H)$. Then, \[R\subset N_{G}(R'^{2^{c}})=(N_{H}(R')^{2^{c}})\rtimes A_{c},\] $R\cap N_{H}(R')^{2^{c}}=R'^{2^{c}}$ and 
	$R/R'^{2^{c}}\cong A_{c}$. Considering $R/R'^{2^{c}}$ as a subgroup of $((N_{H}(R')/R')^{2^{c}})\rtimes A_{c}$, one can show 
	that it is conjugate to $A_{c}$. Then, $R$ is conjugate to $R'^{2^{c}}\rtimes A_{c}$. 
	\end{proof}
	
	The above lemma still holds while $A_{c}$ is substituted by any other transitive abelian 2-subgroup of $\fS_{2^{c}}$. If we 
	consider radical $p$-subgroups, it holds true if $2^{c}$ is substituted by $p^{c}$ and $A_{c}$ is substituted by a transitive 
	abelian $p$-subgroup of $\fS_{p^{c}}$. The same proof is valid for these slight generalization. 
	
	\begin{lem}\label{L:R9}
	Let $R$ be a radical $p$-subgroup of $G$. Put $D=Z_{G}(R)$. Then, $R$ is a radical $p$-subgroup of $Z_{G}(D)$. 
	\end{lem}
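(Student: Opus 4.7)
The plan is to reduce this directly to Lemma~\ref{L:R8}. That lemma tells us it suffices to verify two things: that $R$ sits inside $Z_G(D)$, and that $N_G(R)$ normalizes $Z_G(D)$.

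First I would observe the inclusion $R \subset Z_G(D)$. By definition, every element of $D = Z_G(R)$ commutes with every element of $R$, so $R$ centralizes $D$, i.e., $R \subset Z_G(D)$.

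Next I would check the normalization property. For any $n \in N_G(R)$, conjugation by $n$ satisfies
\[
n D n^{-1} = n Z_G(R) n^{-1} = Z_G(n R n^{-1}) = Z_G(R) = D,
\]
so $n \in N_G(D)$. Since $N_G(D)$ acts on $D$ by conjugation, it also normalizes $Z_G(D)$. Hence $N_G(R) \subset N_G(D) \subset N_G(Z_G(D))$, i.e., $N_G(R)$ normalizes $Z_G(D)$.

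With these two facts in hand, Lemma~\ref{L:R8} (applied with $H = Z_G(D)$) immediately yields that $R$ is a radical $p$-subgroup of $Z_G(D)$. There is really no serious obstacle here; the only point to be careful about is the standard identity $n Z_G(R) n^{-1} = Z_G(nRn^{-1})$ used in the second step.
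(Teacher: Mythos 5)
Your proof is correct, but it routes through a different lemma than the paper does. The paper's own argument observes that $N_G(R)\subset N_G(D)$, applies Lemma~\ref{L:R1} to conclude $R$ is radical in $N_G(D)$, and then uses Lemma~\ref{L:R3} (with $Z_G(D)\unlhd N_G(D)$ and $R\subset Z_G(D)$) to pass down. You instead verify the two hypotheses of Lemma~\ref{L:R8} directly --- containment $R\subset Z_G(D)$ and normalization of $Z_G(D)$ by $N_G(R)$ --- and invoke that lemma once. Both arguments are sound, and your verification of the hypotheses is careful and complete, including the standard identity $nZ_G(R)n^{-1}=Z_G(nRn^{-1})$. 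The Lemma~\ref{L:R8} route is arguably more streamlined, since it packages what the paper does in two steps into a single application; the trade-off is that Lemma~\ref{L:R8} itself requires a short direct $O_p$ computation in its proof, while the paper's chain of Lemmas~\ref{L:R1} and~\ref{L:R3} leans on somewhat more elementary statements. Either way, nothing is missing and the argument holds.
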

	
	\begin{proof}
	It is clear that $N_{G}(R)\subset N_{G}(D)$. By Lemma \ref{L:R1}, $R$ is a radical $p$-subgroup of $N_{G}(D)$. We have 
	$R\subset Z_{G}(D)$ and $Z_{G}(D)$ is a normal subgroup of $N_{G}(D)$. By Lemma \ref{L:R3}, $R$ is a radical $p$-subgroup 
	of $Z_{G}(D)$. 
	\end{proof}
	
	\begin{lem}\label{L:R10}
	Let $G$ be a finite group and $R$ be a $p$-subgroup of $G$ with $Z(R)\in \Syl_p(Z_G(R))$. If $O_p(N_G(R)/RZ_G(R))=1$, then $R$ is a radical $p$-subgroup of $G$.
	\end{lem}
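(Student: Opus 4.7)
The plan is to show both inclusions between $O_p(N_G(R))$ and $R$. One direction is automatic: since $R$ is a $p$-subgroup that is normal in $N_G(R)$, we always have $R\subseteq O_p(N_G(R))$. So the real content is to prove $O_p(N_G(R))\subseteq R$ under the two hypotheses.

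First I would verify that $RZ_G(R)$ is a normal subgroup of $N_G(R)$. This rests on the standard observation that $Z_G(R)\trianglelefteq N_G(R)$, because conjugation by any $g\in N_G(R)$ sends a centralizer of $R$ to a centralizer of $gRg^{-1}=R$. Together with $R\trianglelefteq N_G(R)$, this makes $RZ_G(R)$ normal in $N_G(R)$, so the quotient $N_G(R)/RZ_G(R)$ is a well-defined group and the hypothesis $O_p(N_G(R)/RZ_G(R))=1$ makes sense.

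Next, set $P=O_p(N_G(R))$. Its image $\bar P$ in $N_G(R)/RZ_G(R)$ is a normal $p$-subgroup, hence trivial by assumption. Therefore $P\subseteq RZ_G(R)$. To conclude $P\subseteq R$ I would now exploit the Sylow hypothesis: since $R\cap Z_G(R)=Z(R)$, one has
\[
 |RZ_G(R):R|=|Z_G(R):Z(R)|,
\]
which is prime to $p$ because $Z(R)\in\Syl_p(Z_G(R))$. Since $R$ is a normal $p$-subgroup of $RZ_G(R)$ with index prime to $p$, it is the unique Sylow $p$-subgroup of $RZ_G(R)$, and so contains the $p$-subgroup $P$. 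Combined with the reverse inclusion above, this gives $P=R$, which is the definition of $R$ being radical.

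I do not anticipate a real obstacle: the argument is a short combination of the normality of $Z_G(R)$ in $N_G(R)$, the elementary quotient computation, and the Sylow hypothesis forcing $R$ to absorb the $p$-part of $RZ_G(R)$. The only place where care is needed is confirming that $RZ_G(R)$ is genuinely normal in $N_G(R)$, so that taking the quotient by it and invoking the hypothesis on $O_p(N_G(R)/RZ_G(R))$ is legitimate.
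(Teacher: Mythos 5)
Your proof is correct and is essentially the same argument as the paper's, just run directly rather than as a proof by contradiction: the paper argues that if $R\subsetneq O_p(N_G(R))$ then the Sylow hypothesis forces $RZ_G(R)\subsetneq O_p(N_G(R))Z_G(R)$, giving a nontrivial normal $p$-subgroup of $N_G(R)/RZ_G(R)$, while you show the contrapositive by pushing $O_p(N_G(R))$ into $RZ_G(R)$ and then into its unique Sylow $p$-subgroup $R$. The key computation $|RZ_G(R):R|=|Z_G(R):Z(R)|$ being prime to $p$ is exactly what the paper's compressed middle step is using.
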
	
	\begin{proof}
	Suppose that $R$ is not radical, which means $R\subsetneq O_p(N_G(R))$.
	By $Z(R)\in \Syl_p(Z_G(R))$, one has $RZ_G(R)\subsetneq O_p(N_G(R))Z_G(R)$, which implies 
	$O_p(N_G(R)/RZ_G(R))\ne1$.
	This is a contradiction and thus $R$ must be radical.
	\end{proof}

	\begin{lem}\label{lem:weight-quo}
		Let $G$ be a finite group and $N$ be a normal $p$-subgroup of $G$. 
		If $(R,\varphi)$ is a $p$-weight of $G$, then $N\subset R$. The character $\varphi$ gives naturally a character $\bar\varphi$ of $N_G(R)/N$.
			Then the map $(R,\varphi)\to (R/N,\bar\varphi)$ induces a bijection $\Alp(G)\to\Alp(G/N)$. 
	
		In particular, if $N$ is a $p$-subgroup of $Z(G)$, then this bijection also preserves blocks.	
		\end{lem}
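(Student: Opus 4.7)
My plan is to verify the three assertions in order. First, the inclusion $N \subseteq R$ is immediate from Lemma~\ref{L:R2}: since $(R,\varphi)$ is a weight, $R$ is a radical $p$-subgroup of $G$, so $O_p(G) \subseteq R$, and the normality of the $p$-subgroup $N$ gives $N \subseteq O_p(G) \subseteq R$.

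For the bijection $\Alp(G) \to \Alp(G/N)$, I would first apply Lemma~\ref{L:R4} with the nilpotent normal $p$-subgroup $N$ (for which $O_{p'}(N) = 1$) to obtain the bijection $R \mapsto R/N$ between radical $p$-subgroups of $G$ containing $N$ and radical $p$-subgroups of $G/N$. Since $N \subseteq R$, the correspondence theorem yields $N_G(R)/N = N_{G/N}(R/N)$, and hence a natural isomorphism $N_G(R)/R \cong N_{G/N}(R/N)/(R/N)$. Under this identification, $\varphi \in \dz(N_G(R)/R)$ corresponds to $\bar\varphi \in \dz(N_{G/N}(R/N)/(R/N))$, and the $p$-defect-zero condition is manifestly preserved. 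Moreover, $G$-conjugation of weights descends to $G/N$-conjugation through the surjection $G \to G/N$, and every $G/N$-conjugator lifts to $G$, so the map descends to a bijection on conjugacy classes.

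For the block statement, I assume $N \subseteq Z(G)$. Since $N$ is a normal $p$-subgroup of $G$, every simple $\overline{\mathbb{F}}_p G$-module has $N$ in its kernel, so inflation gives a canonical identification $\IBr(G) = \IBr(G/N)$; the same applies to $N_G(R)/N = N_{G/N}(R/N)$ because $N \subseteq Z(N_G(R))$. This induces natural block bijections $B \leftrightarrow \bar B$ between $p$-blocks of $G$ and $G/N$, and analogously $b \leftrightarrow \bar b$ at the normalizer level. It then suffices to check that Brauer induction commutes with the quotient by $N$, i.e., $b^G = B$ if and only if $\bar b^{G/N} = \bar B$. I plan to derive this via the Brauer pair description: for any $b$-Brauer pair $(Q, e_Q)$ of $N_G(R)$, the pair $(Q/N, \bar e_Q)$ is a $\bar b$-Brauer pair of $N_{G/N}(R/N)$; since $N$ is central and contained in every relevant defect group, the Brauer correspondence, defect groups, and canonical characters all descend cleanly modulo $N$, and the asserted block matching follows.

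The main obstacle is the compatibility of Brauer induction with the quotient by $N$ in the last step. The other steps reduce to direct applications of Lemma~\ref{L:R4}, the correspondence theorem, and standard inflation arguments; once the descent of Brauer pairs modulo a central $p$-subgroup is set up, the block preservation follows formally.
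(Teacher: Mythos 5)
Your proof is correct and follows the same route the paper has in mind: the paper simply cites Lemma~\ref{L:R2} and leaves the rest (the $R \mapsto R/N$ bijection via Lemma~\ref{L:R4}, the identification $N_G(R)/R \cong N_{G/N}(R/N)/(R/N)$, and the compatibility of Brauer induction with central $p$-quotients) as routine, which is exactly what you spell out. Your sketch of the block-preservation step is the standard argument and is sound; no gap.
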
	
		
		\begin{proof}
	This follows by Lemma \ref{L:R2}.
			\end{proof}
	
	\begin{lem}\label{lem:center-weisub}
		Let $G$ be a finite group, and $B$ a block of $G$.	If $R$ is a $B$-weight subgroup of $G$, then there exists a defect group $D$ of $B$ such that $Z(D)\subset Z(R)\subset R\subset D$.
		\end{lem}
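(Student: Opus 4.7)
My plan is to argue directly using the theory of Brauer pairs. First, since $R$ is a $B$-weight subgroup of $G$, Construction~\ref{construction-of-weights} provides a self-centralizing $B$-Brauer pair $(R,b)$ of $G$; by definition $Z(R)$ is the defect group of the block $b$ of $Z_G(R)$. I then appeal to the standard fact that every $B$-Brauer pair is contained in some maximal $B$-Brauer pair, and that the first component of any maximal $B$-Brauer pair is a defect group of $B$. Choosing a maximal $B$-Brauer pair $(D,b_D)$ with $(R,b)\le (D,b_D)$ yields a defect group $D$ of $B$ together with the inclusion $R\subset D$.

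The remaining task is to show $Z(D)\subset Z(R)$. The key ingredient will be another standard result about Brauer pairs: for any $B$-Brauer pair $(Q,e)\le (D,b_D)$ with $(D,b_D)$ maximal, the defect group of $e$ as a block of $Z_G(Q)$ coincides with $Z_D(Q)=D\cap Z_G(Q)$. Applied to $(R,b)$, together with the self-centralizing hypothesis that the defect group of $b$ is $Z(R)$, this gives the equality $D\cap Z_G(R)=Z(R)$. Since $R\subset D$, any element of $Z(D)$ centralizes $R$ and therefore lies in $D\cap Z_G(R)=Z(R)$. Combined with the trivial inclusions $Z(R)\subset R\subset D$, this produces the desired chain $Z(D)\subset Z(R)\subset R\subset D$.

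The principal obstacle is the identification of the defect group of $e$ with $Z_D(Q)$ for a Brauer pair $(Q,e)$ sitting below a maximal $B$-Brauer pair. The proof of that fact invokes Brauer homomorphisms, block idempotents, and the characterization of maximal Brauer pairs, and belongs to the standard machinery of block theory (cf.\ the monographs of Th\'evenaz and of Linckelmann). Everything else in the argument reduces to elementary manipulation of centralizers, so no further difficulty is expected.
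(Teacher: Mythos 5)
Your overall route is sound and in the right spirit (the paper reaches the same conclusion via a one-line citation to \cite[Thm.~(9.24)]{Na98}), but the ``standard result'' you invoke in the third step is false as you have stated it. It is \emph{not} true that for every $B$-Brauer pair $(Q,e)\le(D,b_D)$ with $(D,b_D)$ maximal, the group $Z_D(Q)=D\cap Z_G(Q)$ is a defect group of $e$; this holds precisely when $Q$ is fully centralized in the fusion system of $B$ on $D$, and in general $Z_D(Q)$ is only a proper subgroup of a defect group of $e$. For a concrete counterexample take $G=\fS_5$, $p=2$, $B$ the principal block, $D=\langle(1234),(13)\rangle$ a Sylow $2$-subgroup, and $Q=\langle(12)(34)\rangle\le D$. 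Then $Z_G(Q)=\langle(12),(34),(13)(24)\rangle$ is a $2$-group of order~$8$, so the block $e$ in the $B$-Brauer pair $(Q,e)$ (the principal block of $Z_G(Q)$, by Brauer's Third Main Theorem) has defect group of order~$8$, whereas $Z_D(Q)=\{1,(12)(34),(13)(24),(14)(23)\}$ has order only~$4$.

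Fortunately the repair is small and does not change the architecture of your argument. The correct one-sided statement, which really is standard, is that $Z_D(Q)$ is always \emph{contained} in a defect group of $e$, and that is all you need. Indeed, because $Z(R)\subset R\subset D$ and $Z(R)$ centralizes $R$, we have the trivial containment $Z(R)\subset Z_D(R)$; the self-centralizing hypothesis says every defect group of $b$ has order $|Z(R)|$; and the corrected fact gives $|Z_D(R)|\le|Z(R)|$. Together these force $Z_D(R)=Z(R)$, and then $Z(D)\subset Z_D(R)=Z(R)$, as required. (In passing, this shows that a self-centralizing $B$-Brauer pair is automatically fully centralized with respect to every maximal $B$-Brauer pair above it, which is exactly why your overstated version happens to produce the right answer here and why the counterexample above, where $Z(Q)$ has order $2$ while the defect group of $e$ has order $8$, is not self-centralizing.) With that one correction to the quoted ingredient, the proposal goes through.
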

		
		\begin{proof}
		Let $(R,\varphi)$ be a $B$-weight of $G$, $\theta\in\Irr(Z_G(R)\mid\varphi)$ and $b$ be the block of $Z_G(R)$ containing $\theta$. Then $b$ is a $B$-Brauer pair of $G$.
		Following \cite[Thm.~(9.24)]{Na98}, one sees that there exists a defect group $D$ of $B$ such that $Z(D)\subset Z(R)\subset R\subset D$.
		\end{proof}
	
	\begin{lem}\label{lem:weight-subgp-sub}
	Let $G$ be a finite group, $H$ be a subgroup of $G$ and $R$ be a $p$-subgroup of $H$. Suppose that $N_H(R)\unlhd N_G(R)$.
	If $R$ is a $p$-weight subgroup of $G$, then it is also a $p$-weight subgroup of $H$.
	\end{lem}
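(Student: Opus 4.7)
The plan is to verify the two defining properties of a weight subgroup of $H$ separately: that $R$ is radical in $H$, and that $\dz(N_H(R)/R)$ is non-empty.

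For radicality, I would argue as in Lemma \ref{L:R8}. The subgroup $O_p(N_H(R))$ is characteristic in $N_H(R)$, so the hypothesis $N_H(R)\unlhd N_G(R)$ promotes it to a normal $p$-subgroup of $N_G(R)$. Hence $O_p(N_H(R))\subseteq O_p(N_G(R))=R$, using that $R$ is radical in $G$; the reverse containment is automatic because $R$ is itself a normal $p$-subgroup of $N_H(R)$, and therefore $O_p(N_H(R))=R$.

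For the character, I would pick $\varphi\in\dz(N_G(R)/R)$ witnessing that $R$ is a weight subgroup of $G$, and let $\psi$ be any irreducible constituent of the restriction $\varphi|_{N_H(R)}$. Since $R\subseteq\ker\varphi$ and $R\subseteq N_H(R)$, the character $\psi$ has $R$ in its kernel, so it inflates from $N_H(R)/R$. Applying Lemma \ref{ext-rdz}(1) to the chain $R\unlhd N_H(R)\unlhd N_G(R)$ with $\theta=1_R$ and $\chi=\varphi$ then yields $\psi\in\rdz(N_H(R)\mid 1_R)=\dz(N_H(R)/R)$, so $(R,\psi)$ is a $p$-weight of $H$ and $R\in\Re_w(H)$.

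I do not anticipate any real obstacle. The hypothesis $N_H(R)\unlhd N_G(R)$ is tailored exactly to make both the characteristic-subgroup step and the invocation of Lemma \ref{ext-rdz}(1) go through, and the only minor point to double-check — that $\psi$ really factors through $N_H(R)/R$ — is immediate from $R\subseteq\ker\varphi$ via standard Clifford-theoretic considerations. In fact, one could even bypass the radicality argument entirely, since the existence of $\psi\in\dz(N_H(R)/R)$ forces $O_p(N_H(R)/R)=1$ by Lemma \ref{ext-rdz}(2), which already gives radicality of $R$ in $H$.
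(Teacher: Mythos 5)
Your proof is correct and is exactly the paper's intended argument: the paper's entire proof is the one-liner ``This follows by Lemma~\ref{ext-rdz},'' which your application of Lemma~\ref{ext-rdz}(1) to the chain $R\unlhd N_H(R)\unlhd N_G(R)$ unpacks. Your closing remark — that radicality of $R$ in $H$ is automatic from Lemma~\ref{ext-rdz}(2) once a defect-zero character of $N_H(R)/R$ exists — is also the paper's own observation (``weight subgroups are necessarily radical subgroups''), so the separate radicality step was indeed superfluous.
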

	\begin{proof}
	This follows by Lemma \ref{ext-rdz}.
	\end{proof}	
	
	\begin{rmk}\label{rmk:voe}
	Keep the hypothesis and setup of Lemma~\ref{lem:weight-subgp-sub}. If, moreover, $R$ is a principal weight subgroup of $G$, then it is also a principal weight subgroup of $H$.
	In fact, in the proof of Lemma~\ref{lem:weight-subgp-sub}, we have that $\varphi$ lie in the principal block of $N_G(R)$ by Brauer's Third Main Theorem. From this $\phi$ lies in the principal block of $N_H(R)$, which implies that $(R,\phi)$ is a principal weight of $H$.
	\end{rmk}
	
	\begin{rmk}\label{rmk:voe2}
	Let $R$ be a principal weight subgroup of $G$. Suppose that $B$ is a subgroup of $Z(R)$ such that $B\unlhd N_G(Z(R))$. Write $H=Z_G(B)$.
	Then $N_H(R)\unlhd N_G(R)$, and by Remark \ref{rmk:voe}, $R$ is also a principal weight subgroup of $H$.
	\end{rmk}

			\begin{rmk}\label{rmk:extension}
				Let $\tG$ be a finite group, and $G$ be a normal subgroup of $\tG$ such that $\tG/G$ is solvable. 
			 If $R$ is a principal weight subgroup of $G$, then there exists a principal weight subgroup $\tilde R$ of $\tG$ such that $\tilde R\cap G=R$.
				In fact, by induction we may assume that $\tG/G$ is cyclic.
				Let $(R,\varphi)$ be a principal weight of $G$ and let $(\tR,\tilde\varphi)$ be a $p$-weight of $\tG$ covering  $(R,\varphi)$. Suppose that $(\tilde R,\tilde\varphi)$ is a $\tilde B$-weight of $\tG$, where $\tilde B$ is a block of $\tG$.
			Then by \cite[Lemma~2.3]{KS15}, $\tilde B$ covers $B_0(G)$.
			So $B_0(\tG)=\la\otimes\tilde B$, where $\la\in\Irr(\tG/G)$ is of $p'$-order, and thus $(\tR,\Res_{N_{\tG}(\tR)}^{\tG}
			(\la)\tilde\varphi)$ is a principal weight of $\tG$ covering $(R,\varphi)$ by \cite[Lemma~2.15]{BS22}.
			Thus the map $\tR\mapsto\tR\cap G$ gives a surjection from the principal weight subgroups of $\tG$ onto the principal weight subgroups of $G$.
			\end{rmk}
			
			For arbitrary group $X$ and a subgroup $Y$ of $X$, we denote by $[X/Y]$ a complete set of representatives of left $Y$-cosets in $X$. 
	
			\begin{lem}\label{lem:act-wei}
				Suppose that $G\unlhd \tG$ are finite groups, and $(\tR,\tilde\varphi)$ is a $p$-weight of $\tG$. 
			 Let $A$ be a subgroup of $\Aut(\tG)$ such that $G$ is $A$-stable and $(\tR,\tilde\varphi)$ is $A$-invariant. Assume further that there exists an $A$-invariant element in $\Irr(N_G(\tR)\mid\tilde\varphi)$. 
			 Then there exists an $A$-invariant element in $\Alp(G\mid \overline{(\tR,\tilde\varphi)})$.
			
			 If moreover $A$ acts trivially on $\tG/G$, then every element of $\Alp(G\mid \overline{(\tR,\tilde\varphi)})$ is $A$-invariant. 
			\end{lem}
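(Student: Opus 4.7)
The strategy is to apply Brough--Sp\"ath's Clifford theory for weights (cf.\ \cite[\S2]{BS22}), which parametrizes $\Alp(G\mid\overline{(\tR,\tilde\varphi)})$ in terms of $\Irr(N_G(\tR)\mid\tilde\varphi)$ modulo the natural $N_{\tG}(\tR)$-action, and then to carefully track the $A$-action through this parametrization.

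For the first assertion, fix an $A$-invariant $\psi\in\Irr(N_G(\tR)\mid\tilde\varphi)$ as furnished by the hypothesis. Note that $A$ fixes $\tR$ (hence $N_{\tG}(\tR)$), preserves $G$ (hence $N_G(\tR)=N_{\tG}(\tR)\cap G$), and fixes $\tilde\varphi$, so $A$ does indeed act on $\Irr(N_G(\tR)\mid\tilde\varphi)$. The Brough--Sp\"ath construction assigns to $\psi$ a weight $(R,\varphi)$ of $G$ with $\overline{(R,\varphi)}\in\Alp(G\mid\overline{(\tR,\tilde\varphi)})$, where $R$ is a radical $p$-subgroup of $G$ determined canonically from $\tR$ (in practice a suitable conjugate of $\tR\cap G$) and $\varphi\in\dz(N_G(R)/R)$ is produced from $\psi$ through the weight version of the Clifford correspondence applied to the normal inclusion $N_G(\tR)\unlhd N_{\tG}(\tR)$. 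Since $\tR$, $\tilde\varphi$ and $\psi$ are all $A$-stable and the construction involves only canonical operations (conjugation, induction, restriction) respecting the $A$-action, the resulting $G$-conjugacy class $\overline{(R,\varphi)}$ is $A$-invariant.

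For the second assertion, assume additionally that $A$ acts trivially on $\tG/G$. By the first assertion we may fix an $A$-invariant class $\overline{(R_0,\varphi_0)}\in\Alp(G\mid\overline{(\tR,\tilde\varphi)})$, so that for every $a\in A$ there is $g_a\in G$ with $(R_0,\varphi_0)^a=(R_0,\varphi_0)^{g_a}$. By Clifford theory for weights (analogous to the fact that all irreducible constituents of $\chi|_G$ for $\chi\in\Irr(\tG)$ are $\tG$-conjugate), the set $\Alp(G\mid\overline{(\tR,\tilde\varphi)})$ consists of the $G$-orbits inside a single $\tG$-orbit of weights of $G$. Thus every $\overline{(R,\varphi)}\in\Alp(G\mid\overline{(\tR,\tilde\varphi)})$ has a representative of the form $(R_0,\varphi_0)^{\tilde g}$ for some $\tilde g\in\tG$, and for $a\in A$ the general identity $(y^{\tilde g})^a=(y^a)^{a(\tilde g)}$ gives
\[
\bigl((R_0,\varphi_0)^{\tilde g}\bigr)^a=\bigl((R_0,\varphi_0)^a\bigr)^{a(\tilde g)}=(R_0,\varphi_0)^{g_a\,a(\tilde g)}.
\]
Since $A$ acts trivially on $\tG/G$, we have $a(\tilde g)\in\tilde g\,G$, and together with $g_a\in G$ and $G\unlhd\tG$ this yields $g_a\,a(\tilde g)=\tilde g\,h$ for some $h\in G$. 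Hence $((R_0,\varphi_0)^{\tilde g})^a=((R_0,\varphi_0)^{\tilde g})^h$ is $G$-conjugate to $(R_0,\varphi_0)^{\tilde g}=(R,\varphi)$, proving $\overline{(R,\varphi)}^a=\overline{(R,\varphi)}$.

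The main technical obstacle is the first assertion, namely verifying the $A$-equivariance of the Brough--Sp\"ath parametrization so that $A$-invariance of $\psi$ transfers to $A$-invariance of $\overline{(R,\varphi)}$; once this is in place, the second assertion reduces to the short conjugacy computation above.
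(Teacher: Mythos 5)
Your overall plan matches the paper's: parametrize $\Alp(G\mid\overline{(\tR,\tilde\varphi)})$ through $\Irr(N_G(\tR)\mid\tilde\varphi)$, transfer the $A$-invariance from the hypothesized $\psi$, and use the $\tG$-orbit structure for the second claim. The second assertion is handled correctly; your conjugacy calculation spells out what the paper leaves implicit, after invoking the $\tG$-orbit fact from \cite[Lemma~2.7]{BS22}. That part is fine.

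The gap is in the first assertion. You assert that the correspondence producing $(R,\varphi)$ from $\psi$ "involves only canonical operations (conjugation, induction, restriction) respecting the $A$-action," and conclude that $A$-invariance transfers automatically. This is not justified, and it is in fact the crux of the lemma. The bijection
$\Phi\colon\{\varphi_0^g\mid g\in[N_{\tG}(\tR)N_G(R)/N_{\tG}(R)_{\varphi_0}]\}\to\Irr(N_G(\tR)\mid\tilde\varphi)$
used here is \emph{not} a composition of conjugation, induction, and restriction; it is the Dade--Glauberman--Nagao correspondence (the paper cites \cite[\S5]{NS14}), a Glauberman-type correspondence arising from the action of the $p$-group $\tR/R$ on $N_G(R)$. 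You have mislabeled it as "the weight version of the Clifford correspondence applied to $N_G(\tR)\unlhd N_{\tG}(\tR)$." Its $A$-equivariance is a nontrivial theorem, not a formal consequence of naturality, and it is precisely that equivariance that allows you to pull the $A$-invariant $\psi$ back to an $A$-invariant $\varphi$ in the $N_{\tG}(\tR)$-orbit of $\varphi_0$, yielding the $A$-invariant weight $(R,\varphi)$ (here $R=\tR\cap G$, which is $A$-stable since both $\tR$ and $G$ are). Without naming the DGN correspondence and invoking its equivariance, the key transfer step is unsupported, so the first assertion is not actually proved.
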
	
			
			\begin{proof}
				Let $(R,\varphi_0)$ be a weight of $G$ covered by $(\tR,\tilde\varphi)$.
			The Dade--Glauberman--Nagao correspondence (cf. \cite[\S5]{NS14}) induces a bijection 
			\[\Phi\colon\{\varphi_0^g\mid g\in[N_{\tG}(\tR)N_G(R)/N_{\tG}(R)_{\varphi_0}]\} \to \Irr(N_G(\tR)\mid\tilde\varphi).\]
			By the equivariance of the Dade--Glauberman--Nagao correspondence, there exists an $A$-invariant element $\varphi$ of the $N_{\tG}(\tR)$-orbit containing $\varphi_0$.
			Thus $(R,\varphi)$ is an $A$-invariant weight of $G$ covered by $(\tR,\tilde\varphi)$.
			By \cite[Lemma~2.7]{BS22}, the elements in $\Alp(G\mid \overline{(\tR,\tilde\varphi)})$ form a $\tG$-orbit.
			If $A$ acts trivially on $\tG/G$, then every element of $\Alp(G\mid \overline{(\tR,\tilde\varphi)})$ is $A$-invariant. 
				\end{proof}

	\begin{lem}\label{lem:stab-cover}
		Suppose that $G\unlhd \tG$ are finite groups such that $\tG/G$ is a nilpotent group. Let $(\tR,\tilde\varphi)$ be a weight of $\tG$, and let $(R,\varphi)$ be a weight of $G$ covered by $(\tR,\tilde\varphi)$ so that $R=\tR\cap G$.  Then $N_G(R)\tR/N_G(R)$ is the (unique normal) Sylow $p$-subgroup of $N_{\tG}(R)_\varphi /N_G(R)$.
	\end{lem}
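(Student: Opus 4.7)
Plan. Write $N:=N_G(R)$ and $\tilde N:=N_{\tilde G}(R)$. The strategy breaks into three steps: (a) show $\tilde N_\varphi/N$ is nilpotent; (b) show $\tilde R\subseteq\tilde N_\varphi$, so that $N\tilde R/N$ is a $p$-subgroup of $\tilde N_\varphi/N$; (c) show $[\tilde N_\varphi : N\tilde R]$ is coprime to $p$. Together these identify $N\tilde R/N$ with the unique normal Sylow $p$-subgroup of $\tilde N_\varphi/N$.

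For (a), the identity $\tilde N\cap G=N$ produces an embedding $\tilde N/N\hookrightarrow\tilde G/G$; since $\tilde G/G$ is nilpotent, so is its subgroup $\tilde N/N$, and hence the further subgroup $\tilde N_\varphi/N$. A nilpotent group has a unique normal Sylow $p$-subgroup, namely the set of all its $p$-elements.

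For (b), $R=\tilde R\cap G$ is normal in $\tilde R$ (because $G\unlhd\tilde G$), so $\tilde R\subseteq\tilde N$ and $\tilde R\cap N=\tilde R\cap G=R$, giving $N\tilde R/N\cong\tilde R/R$ as a $p$-group. To see $\tilde R\subseteq\tilde N_\varphi$, I would observe that $\tilde R\unlhd N_{\tilde G}(\tilde R)$ acts on $\tilde\varphi$ by inner automorphisms and therefore fixes $\tilde\varphi$; via the equivariance of the Dade--Glauberman--Nagao bijection $\Phi$ recalled in the proof of Lemma~\ref{lem:act-wei} under the $N_{\tilde G}(\tilde R)N_G(R)$-action, this stabilization transports to the $G$-side and gives $\tilde R\subseteq\tilde N_\varphi$.

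For (c), first reduce to the case where $\tilde G/G$ is a $p$-group by decomposing the nilpotent group $\tilde G/G$ as the direct product of its Sylow subgroups; the $p'$-part of $\tilde N_\varphi/N$ is unaffected by replacing $\tilde G$ with the preimage of the Sylow $p$-part of $\tilde G/G$. In the reduced setting, Sylow-type conjugacy of the $p$-subgroups of $\tilde G$ that extend $R$ (available because $\tilde G/G$ is now a $p$-group) can be used to arrange $H:=N_{\tilde G}(\tilde R)N_G(R)=\tilde N$, so the $\tilde N$-orbit of $\varphi$ coincides with the $H$-orbit. By $\Phi$ this orbit is in bijection with $\Irr(N_G(\tilde R)\mid\tilde\varphi)$, which is a single $N_{\tilde G}(\tilde R)$-orbit. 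Matching orbit-stabilizers on the two sides and using the radicality $\tilde R=O_p(N_{\tilde G}(\tilde R))$ to kill any residual $p$-content in the Clifford stabilizer then forces $[\tilde N_\varphi : N\tilde R]$ to be coprime to $p$.

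The main obstacle is step (c): carrying out the Sylow-conjugacy reduction cleanly inside the $p$-group extension $\tilde G/G$, and then bookkeeping the $p$-parts of the stabilizers on the two sides of $\Phi$ so that the radicality $\tilde R=O_p(N_{\tilde G}(\tilde R))$ rules out any $p$-content in $\tilde N_\varphi/N$ beyond $\tilde R/R$.
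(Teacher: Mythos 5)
Your steps (a) and (b) are essentially fine: the embedding $N_{\tilde G}(R)/N_G(R)\hookrightarrow \tilde G/G$ gives nilpotency, and the containment $\tilde R\subseteq N_{\tilde G}(R)_\varphi$ is indeed forced by the very setup of the Dade--Glauberman--Nagao correspondence (which only applies to $\tilde R$-invariant characters of $N_G(R)/R$). But step (c), which you yourself flag as the crux, has a genuine gap and is not where the paper's argument lives.

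The difficulty is that neither Sylow conjugacy nor the radicality $\tilde R=O_p(N_{\tilde G}(\tilde R))$ directly controls the $p$-part of $N_{\tilde G}(R)_\varphi/N_G(R)$: an element $g\in N_{\tilde G}(R)_\varphi$ need not normalize $\tilde R$, and to run your Frattini-type argument you would already need to know that $\tilde R\cap G$-overgroups of a fixed image in $\tilde N_\varphi/N$ are $N_G(R)$-conjugate, which presupposes that $N_G(R)\tilde R/N_G(R)$ is Sylow --- the very thing to be shown. The ``matching of orbit-stabilizers'' across $\Phi$ does not escape this circularity, because it identifies the orbit $\{\varphi^g\}$ with a Clifford orbit in $\Irr(N_G(\tilde R)\mid\tilde\varphi)$ without yielding any $p$-arithmetic on $N_{\tilde G}(R)_\varphi/N_G(R)\tilde R$ by itself. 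What the paper actually uses is a representation-theoretic input you have not brought into play: by the extension-plus-bijection theorem of Sp\"ath resp.\ Brough--Sp\"ath, the covering hypothesis produces a nonempty set $\rdz(N_{\tilde N}(M)\mid\hat\varphi)$ (where $M=N_G(R)\tilde R$ and $\hat\varphi$ is an invariant extension of $\varphi$ to $M$), and Lemma~\ref{ext-rdz}(2) then forces $O_p(N_{\tilde N}(M)/M)=1$. Combined with nilpotency of $\tilde N/N$, this gives $p\nmid |N_{\tilde N}(M)/M|$, i.e.\ $M/N$ is Sylow in its own normalizer in $\tilde N/N$, hence Sylow in $\tilde N/N$ by the standard Sylow argument, and then normal by nilpotency. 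Without the $\rdz\neq\emptyset\Rightarrow O_p=1$ step, the coprimality in your (c) does not follow, so the proposal is not a valid alternative proof.
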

	
	\begin{proof}
	Write $N=N_G(R)$, $\tilde N=N_{\tG}(R)_\varphi$ and $M=N\tR$.
	Then $M\subset \tilde N$, and $\tR/R$ is a defect group of the unique block of $M/R$ covering the block of $N/R$ containing $\varphi$.
	Suppose that $\varphi'\in\Irr(N_{N/R}(\tR/R))$ is the Dade--Glauberman--Nagao correspondent of $\varphi$.
	By \cite[Thm.~3.8]{Sp13} or \cite[Thm.~2.10]{BS22}, there exists an $N_G(M)$-invariant extension $\hat\varphi$ of $\varphi$ to $M/R$ and a bijection \[\rdz(N_{\tilde N}(M)\mid\hat\varphi)\to\rdz(N_{\tG}(\tR)/\tR\mid \varphi').\] Note that $\rdz(N_{\tG}(\tR)/\tR\mid \varphi')\subset \dz(N_{\tG}(\tR)/\tR)$.
	Since $(\tR,\tilde\varphi)$ covers $(R,\varphi)$, we have $\tilde\varphi\in\rdz(N_{\tG}(\tR)/\tR\mid \varphi')$ by construction, which implies that $\rdz(N_{\tilde N}(M)\mid\hat\varphi)\ne\emptyset$.
	According to Lemma~\ref{ext-rdz}, we see $O_p(N_{\tilde N}(M)/M)=1$.
	If $p$ divides the order of $N_{\tilde N}(M)/M$, then $O_p(N_{\tilde N}(M)/M)\ne 1$ since $\tG/G$ is nilpotent.
	So \[N_{\tilde N}(M)/M\cong N_{\tilde N/N}(M/N)/(M/N)\] is a $p'$-group, that is, $M/N$ is a Sylow $p$-subgroup of $N_{\tilde N/N}(M/N)$. However, this forces that $M/N$ is a Sylow $p$-subgroup of $\tilde N/N$.
	Hence by the nilpotency again, $M/N\unlhd \tilde N/N$, which completes the proof.
	\end{proof}
	
	\begin{cor}\label{cor:wei-cov-solquo}
	Suppose that $G\unlhd \tG$ are finite groups such that $\tG/G$ is a $p$-group. 
	Let $(\tR,\tilde\varphi)$ be a $p$-weight of $\tG$, and let $(R,\varphi)$ be a weight of $G$ covered by $(\tR,\tilde\varphi)$.
	\begin{enumerate}[\rm(1)]
	 \item $N_{\tG}(R)_\varphi=N_G(R)\tR$ and $\varphi$ extends to $N_{\tG}(R)_\varphi$.
	 \item $|\Alp(G\mid \overline{(\tR,\tilde\varphi)})|=|\tG|/|\tR G|$. Precisely, 
	 \[\Alp(G\mid \overline{(\tR,\tilde\varphi)})=\{\,\overline{(R,\varphi^x)^g}\mid x\in [N_{\tG}(R)/N_G(R)\tR],g\in [\tG/N_{\tG}(R)G]\,\}.\]
	\end{enumerate}
	\end{cor}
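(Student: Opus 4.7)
The plan is to deduce part (1) directly from Lemma~\ref{lem:stab-cover} (whose nilpotency hypothesis is satisfied since $\tG/G$ is a $p$-group), and then to derive part (2) by a standard Clifford-theoretic orbit-counting argument, using \cite[Lemma~2.7]{BS22} for the fact that $\Alp(G\mid\overline{(\tR,\tilde\varphi)})$ forms a single $\tG$-orbit on $G$-conjugacy classes of weights.

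For (1), after replacing $(R,\varphi)$ by a $G$-conjugate we may assume $R=\tR\cap G$, so that $\tR\subset N_{\tG}(R)$ and $\tR\cap N_G(R)=R$. The projection $N_{\tG}(R)\to\tG/G$ has kernel $N_G(R)$, so $N_{\tG}(R)/N_G(R)$ embeds into $\tG/G$ and is thus a $p$-group; its subgroup $N_{\tG}(R)_\varphi/N_G(R)$ is then equal to its own Sylow $p$-subgroup. By Lemma~\ref{lem:stab-cover} this Sylow subgroup is $N_G(R)\tR/N_G(R)$, giving $N_{\tG}(R)_\varphi=N_G(R)\tR$. The extension of $\varphi$ to $N_{\tG}(R)_\varphi$ is already produced in the proof of Lemma~\ref{lem:stab-cover}, where an explicit $N_G(M)$-invariant extension $\hat\varphi$ of $\varphi$ from $N_G(R)/R$ to $M/R$ is constructed for $M=N_G(R)\tR$; by what we just proved, $M=N_{\tG}(R)_\varphi$, so this gives the desired extension.

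For (2), \cite[Lemma~2.7]{BS22} gives that $\Alp(G\mid\overline{(\tR,\tilde\varphi)})$ is a single $\tG$-orbit on $G$-classes. The $\tG$-stabilizer of $\overline{(R,\varphi)}^G$ is, by a direct calculation, $N_{\tG}(R)_\varphi G$, which by (1) equals $N_G(R)\tR G=\tR G$. Hence $|\Alp(G\mid\overline{(\tR,\tilde\varphi)})|=[\tG:\tR G]=|\tG|/|\tR G|$. For the explicit enumeration one uses the chain $\tR G\subset N_{\tG}(R)G\subset\tG$ together with the identity $N_{\tG}(R)\cap\tR G=N_G(R)\tR$, which yields $N_{\tG}(R)G/\tR G\cong N_{\tG}(R)/N_G(R)\tR$. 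This lets one write each coset of $\tR G$ in $\tG$ uniquely as $xg\cdot\tR G$ with $x\in[N_{\tG}(R)/N_G(R)\tR]$ and $g\in[\tG/N_{\tG}(R)G]$, and since $(R,\varphi)^{xg}=(R,\varphi^x)^g$ (as $x\in N_{\tG}(R)$ fixes $R$), the asserted parametrization follows.

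The main obstacle here is not the logic but the bookkeeping: establishing $N_{\tG}(R)\cap\tR G=N_G(R)\tR$ cleanly (one inclusion is trivial; the other uses that $\tR$ normalizes $N_G(R)$ since $N_G(R)=N_{\tG}(R)\cap G\unlhd N_{\tG}(R)$) and then checking that the listed pairs $(x,g)$ yield pairwise $G$-non-conjugate weights which exhaust the $\tG$-orbit. The extension aspect of (1) is immediate from the proof of Lemma~\ref{lem:stab-cover}, and the orbit-size formula in (2) follows by a one-line index computation once the stabilizer is identified.
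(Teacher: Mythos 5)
Your proposal is correct and follows essentially the same route as the paper, which simply cites Lemma~\ref{lem:stab-cover} together with \cite[Lemma~2.7 and Thm.~2.10]{BS22}: you get $N_{\tG}(R)_\varphi=N_G(R)\tR$ from the Sylow statement of Lemma~\ref{lem:stab-cover} because $N_{\tG}(R)_\varphi/N_G(R)$ is already a $p$-group, extract the extension of $\varphi$ from the $\hat\varphi$ constructed in that lemma's proof (which is where \cite[Thm.~2.10]{BS22} enters), and then do the orbit-stabilizer count for (2) using \cite[Lemma~2.7]{BS22}. The index computation $\mathrm{Stab}_{\tG}(\overline{(R,\varphi)})=N_{\tG}(R)_\varphi G=\tR G$ and the identity $N_{\tG}(R)\cap\tR G=N_G(R)\tR$ (which you prove correctly, using $\tR\le N_{\tG}(R)$ and $G\cap N_{\tG}(R)=N_G(R)$) give the explicit parametrization; the only minor stylistic quibble is that the extension could also be invoked directly from \cite[Thm.~2.10]{BS22} rather than by reaching into the proof of Lemma~\ref{lem:stab-cover}, but this is the same mathematical content.
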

	
	\begin{proof}
	This follows by Lemma~\ref{lem:stab-cover} and \cite[Lemma~2.7 and Thm.~2.10]{BS22}.
	\end{proof}


\section{Radical $p$-subgroups of classical groups}\label{S:radical}  

Radical subgroups of most classical groups were classified by Alperin--Fong and An (cf. \cite{AF90,An92,An93a,An93b,An94}) 
in the early 1990s. Using our strategy, we give a uniform proof of this classification in this section. As there are errors 
in the list of radical 2-subgroups and weight 2-subgroups in \cite{An93a}, our proof here also provides some corrections 
to literatures (see Remark~\ref{rmk:c1}, Proposition~\ref{prop:weight-subgp-class-sym} and its proof, and Remark~\ref{rmk:c2}).  

When $p$ is odd and $q$ is a power of~2, the classification of radical $p$-subgroups of classical groups over the finite field 
$\mathbb F_q$ is missing in literatures; the results of \cite{An94} are all under the assumption that $q$ is even. 
In \S\ref{SSS:classical-odd}, our result (Theorem~\ref{subsec:odd-radical-subgp-class}) covers this case. In this way, 
we complete the classification of radical subgroups of classical groups.  

In this section, we classify radical $p$-subgroups of classical groups for all primes $p$. We will emphasize more on the case 
of $p=2$ for two reasons: first, the method of classification while $p=2$ can be adapted to treat all primes, and odd primes 
case is even easier than the prime 2 case; second, in later sections of the paper which verify the iBAW condition for groups 
of  $\tp F_{4}$, $\tp B_{n}$, $\tp D_{n}$, $^{2}\tp D_{n}$ and the prime 2, it requires detailed knowledge of 
radical 2-subgroups.      

We define a notion of parity for radical 2-subgroups of an orthogonal group, which can be used to classify radical 2-subgroups 
of spin groups. We study weight 2-subgroups and principal weight 2-subgroups of orthogonal groups. These results will be used 
in later sections while showing the iBAW condition at the prime 2 for orthogonal groups and Chevalley groups of type 
$\tp F_{4}$.

\subsection{Radical $p$-subgroups of general linear and unitary groups}\label{SS:GL2}
First we give some notation for the case $p=2$ and consider the radical 2-subgroups of general linear and unitary groups.

\noindent {\bf Notation.}
Let $D_{8}$ (resp. $Q_{8}$) be the dihedral group (resp. quaternion group) of order 8. Let $\ga\ge 1$ be an integer. 
Denote by $E_{\eta}^{2\ga+1}$ the extraspecial 2-group of order $2^{2\ga+1}$ with type $\eta\in\{+,-\}$. Note that 
$E_+^{2\ga+1}\cong D_8\circ_2 D_8\circ_2\cdots\circ_2 D_8$ is a central product of $D_8$ ($\ga$ times), while 
$E_-^{2\ga+1}\cong D_8\circ_2 D_8\circ_2\cdots\circ_2 D_8\circ_2 Q_8$ is a central product of $D_8$ ($\ga-1$ times) and 
$Q_8$ (once). Define the semidihedral group $S_{2^{\beta}}$ (with $\beta\ge4$), dihedral group $D_{2^{\beta}}$ (with $\beta\ge3$) 
and generalized quaternion group $Q_{2^{\beta}}$ (with $\beta\ge3$) by generators and relations:  
\[S_{2^{\beta}}=\langle x,y: x^{2^{\beta-1}}=1, yxy^{-1}=x^{2^{\beta-2}-1}, y^{2}=1\rangle;\] 
\[D_{2^{\beta}}=\langle x,y: x^{2^{\beta-1}}=1, yxy^{-1}=x^{-1}, y^{2}=1\rangle;\]
\[Q_{2^{\beta}}=\langle x,y: x^{2^{\beta-1}}=1, yxy^{-1}=x^{-1}, y^{2}=x^{2^{\beta-2}}\rangle.\]

Let $p=2$ and fix an odd prime power $q$.
Write \[\varepsilon=(-1)^{\frac{q-1}{2}}.\] Then, $4|(q-\varepsilon)$. Put \[a=v_{2}(q-\varepsilon)=
v_{2}(q^{2}-1)-1\geq 2.\] Many groups considered in this paper are finite groups of Lie type over $\mathbb{F}_{q}$. For this
reason, we often omit the coefficient field $\mathbb{F}_{q}$. That is to say, when no coefficient field is written, 
$\tp F_{4}$, $\Spin_{8,+}$, $\rO_{8,+}$, $\Omega_{8,+}$, etc mean $\tp F_{4}(\mathbb{F}_{q})$, $\Spin_{8,+}(\mathbb{F}_{q})$, 
$\rO_{8,+}(\mathbb{F}_{q})$, $\Omega_{8,+}(\mathbb{F}_{q})$ respectively.  

For a matrix $X\in\GL_{n}(q^{s})$, write $F_{q}(X)\in\GL_{n}(q^s)$ for the matrix whose $(i,j)$-entry is the $q$-th power of the 
$(i,j)$-entry of $X$. We know that \[\underbrace{F_{q}\circ\cdots\circ F_{q}}_{s}(X)=X,\ \textrm{for all} \ X\in\GL_{n}(q^{s}).\] Let 
$X^{tr}$ denote the transposition of a matrix $X$. Let $I_n$ denote the identity matrix of degree $n$. Let \[I_{p,q}=\left(\begin{array}{cc}-I_{p}&\\&I_{q}\\\end{array}\right),\quad  
J_{n}=\left(\begin{array}{cc}&I_{n}\\-I_{n}&\\\end{array}\right),\quad 
J'_{n}=\left(\begin{array}{cc}&I_{n}\\I_{n}&\\\end{array}\right).\]   

Let $G=\GL_n(\eps q)$ with $\eps\in\{\pm 1\}$. Here $\GL_n(-q)$ denotes the general unitary group 
\[\GU_n(q)=\{ M\in\GL_n(q^2)\mid F_q(M)=(M^{-1})^{tr}\}.\] Write $\bar{G}=G/Z(G)$ and let 
\[\pi: G\rightarrow\bar{G}\] be the projection map. 

We first classify radical 2-subgroups. Assume that $q$ is coprime to 2. Let $R$ be a radical 2-subgroup of $G$. 
Define \[A(R)=R\cap \pi^{-1}(\Omega_{1}(Z(\pi(R)))).\] It is clear that \[A(R)/(A(R)\cap Z(\GL_{n}(\epsilon q)))
=\Omega_{1}(Z(\pi(R)))\] and \[A(R)\cap Z(\GL_{n}(\epsilon q))=R\cap Z(\GL_{n}(\epsilon q))=
Z(\GL_{n}(\epsilon q))_{2}=\GL_{1}(\epsilon q)_{2}I_{n}.\] Note that $\GL_{1}(\epsilon q)_{2}$ is a cyclic 
group of order $2^{v_{2}(q-\epsilon)}$.     

\begin{lem}\label{L:m0}
An element $X$ of $R$ is contained in $A(R)$ if and only if \[YXY^{-1}=\pm{X},\ \forall Y\in R\] and 
$X^{2}\in\GL_{1}(\epsilon q)_{2}I$.   
\end{lem}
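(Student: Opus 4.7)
The plan is to unpack $A(R) = R \cap \pi^{-1}(\Omega_1(Z(\pi(R))))$ as two explicit matrix conditions. Since $\pi(R)$ is a 2-group with abelian center, membership of $\pi(X)$ in $\Omega_1(Z(\pi(R)))$ is equivalent to the pair of conditions that $\pi(X)$ commute with every $\pi(Y)$ for $Y\in R$ (which, using $\ker\pi=Z(G)$, becomes $[Y,X]:=YXY^{-1}X^{-1}\in Z(G)$) and that $\pi(X)^{2}=1$ (which becomes $X^{2}\in Z(G)$). So the task is to upgrade these two \emph{a priori} conclusions to the sharper statements $[Y,X]=\pm I$ and $X^{2}\in \GL_{1}(\epsilon q)_{2}I$ claimed in the lemma.

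The backward implication is immediate from these translations, so the content is in the forward implication. For the scalar condition on $X^{2}$, I would use that $X\in R$ combined with the equality $R\cap Z(G)=\GL_{1}(\epsilon q)_{2}I$ recorded in the paragraph preceding the lemma (which itself rests on the radical property of $R$, forcing $Z(G)_{2}\subseteq R$). For the sign condition on $[Y,X]$, I would write $YXY^{-1}=\mu(Y)X$ with $\mu(Y)\in Z(G)=\GL_{1}(\epsilon q)I$ a scalar, and then exploit the key observation that conjugation by $Y$ fixes $X^{2}$ because $X^{2}$ is already scalar: squaring $YXY^{-1}=\mu(Y)X$ gives $X^{2}=YX^{2}Y^{-1}=\mu(Y)^{2}X^{2}$, and cancelling the invertible scalar $X^{2}$ forces $\mu(Y)^{2}=1$, hence $\mu(Y)=\pm 1$.

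The argument is essentially a bookkeeping exercise with no genuine obstacle. The one point worth highlighting is that, without using the scalarity of $X^{2}$, the map $\mu\colon R\to Z(G)$ (which is easily seen to be a homomorphism) would only be guaranteed \emph{a priori} to land in the full 2-part $\GL_{1}(\epsilon q)_{2}$ rather than in $\{\pm 1\}$; it is precisely this squaring trick that collapses the target to $\{\pm 1\}$ and delivers the stated characterization.
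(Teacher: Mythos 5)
Your proof is correct and follows essentially the same route as the paper's. The key squaring trick --- conjugating $YXY^{-1}=\mu(Y)X$ and using that $X^{2}$ is central to force $\mu(Y)^{2}=1$ --- is exactly the computation in the paper; the only cosmetic difference is that for the scalar condition the paper argues directly that $X^{2}$, as a power of a $2$-element, has $2$-power order, while you invoke the set equality $R\cap Z(G)=\GL_{1}(\epsilon q)_{2}I$ from the preceding paragraph. Both are two phrasings of the same fact.
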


\begin{proof}
Necessarity. Assume that $X\in A(R)$. Since $\pi(X)\in\Omega_{1}(Z(\pi(R)))$, we have $\pi(X)^{2}=1$ and $\pi(X)\in Z(\pi(R))$. 
Thus, $X^{2}=t'I$ for some $t'\in\bar{\mathbb{F}}_{q}^{\times}$; for any $Y\in R$, $YXY^{-1}=tX$ for some 
$t\in\bar{\mathbb{F}}_{q}^{\times}$. Since $X\in\GL_{n}(\epsilon q)$, we must have $t'\in\GL_{1}(\epsilon q)$. Moreover, as 
the order of $X$ is a power of 2, we have $t'\in\GL_{1}(\epsilon q)_{2}$. 
From \[t^{2}t'I=(tX)^{2}=(YXY^{-1})^{2}=YX^{2}Y^{-2}=t'I,\] we get $t=\pm{1}$.  

The sufficiency is clear. 
\end{proof}

By Lemma \ref{L:m0}, define a pairing $m: A(R)\times R\rightarrow\{\pm{1}\}$ by \[m(X,Y)=t,\ \forall(X,Y)\in A(R)\times R,\] 
where $YXY^{-1}X^{-1}=tI$. The following lemma is clear.  

\begin{lem}\label{L:m1}
For any $X,X'\in A(R)$ and $Y,Y'\in R$, we have \[m(XX',Y)=m(X,Y)m(X',Y)\] and 
\[m(X,YY')=m(X,Y)m(X,Y').\]   
\end{lem}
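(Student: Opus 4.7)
The plan is to verify both identities by direct computation, unpacking the definition of $m$ and exploiting the fact that the values $m(X,Y) \in \{\pm 1\}$ are central scalars and hence commute with every matrix in $\GL_n(\eps q)$. Recall that for $X \in A(R)$ and $Y \in R$, the scalar $m(X,Y)$ is characterized by the relation $YXY^{-1} = m(X,Y)\,X$.

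For the first identity, I would fix $X, X' \in A(R)$ (so that $XX' \in A(R)$ since $A(R)$ is a subgroup) and $Y \in R$, and simply compute
\[
Y(XX')Y^{-1} = (YXY^{-1})(YX'Y^{-1}) = m(X,Y)\,X \cdot m(X',Y)\,X' = m(X,Y)m(X',Y)\,XX',
\]
where the rearrangement in the last step is legal because $m(X',Y) = \pm 1$ is a scalar. Comparing with the defining relation $Y(XX')Y^{-1} = m(XX',Y)\,XX'$ yields $m(XX',Y) = m(X,Y)m(X',Y)$.

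For the second identity, I would fix $X \in A(R)$ and $Y, Y' \in R$ (so $YY' \in R$) and iterate the conjugation:
\[
(YY')X(YY')^{-1} = Y\bigl(Y'XY'^{-1}\bigr)Y^{-1} = Y\bigl(m(X,Y')\,X\bigr)Y^{-1} = m(X,Y')\,m(X,Y)\,X,
\]
again using that the scalar $m(X,Y')$ passes through the outer conjugation by $Y$. Comparing with $(YY')X(YY')^{-1} = m(X,YY')\,X$ gives the desired multiplicativity.

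There is no real obstacle here — the lemma is essentially a bookkeeping statement and the authors note it is ``clear''. The only mild point to record is the well-definedness: one must confirm that $XX' \in A(R)$ and $YY' \in R$ so that both sides of each equation make sense, which follows immediately from the fact that $A(R)$ and $R$ are subgroups.
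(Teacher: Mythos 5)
Your proof is correct and is precisely the computation the authors have in mind when they describe the lemma as clear: both identities follow by unwinding the definition $YXY^{-1}=m(X,Y)X$ and using that the scalars $m(\cdot,\cdot)=\pm1$ commute past all matrices. Nothing further is needed.
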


By Lemma \ref{L:m0} again, define a map $\mu: A(R)\rightarrow\{\pm{1}\}$ by 
\[\mu(X)=t'^{\frac{q-\epsilon}{2}},\ \forall X\in A(R),\] where $X^{2}=t'I$.    
 
\begin{lem}\label{L:m2}
For any $X,X'\in A(R)$, we have \[\mu(XX')=\mu(X)\mu(X')m(X,X')^{\frac{q-\epsilon}{2}}.\] 
\end{lem}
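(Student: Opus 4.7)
The plan is a direct calculation: compute $(XX')^2$ using the commutation rule coming from the definition of $m$, and then apply the definition of $\mu$ factor-by-factor.

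First, I would unpack what is available. By Lemma~\ref{L:m0}, every element of $A(R)$ is centralized modulo $\pm I$ by all of $R$, so since $X,X'\in A(R)\subset R$, the pairing $m$ gives $X'XX'^{-1} = m(X,X')X$, which is equivalent to the commutation relation $X'X = m(X,X')\,XX'$. I would also record that $X^2 = t'I$ and $X'^2 = s'I$ with $t',s'\in\GL_1(\epsilon q)_2$, so that $\mu(X) = t'^{(q-\epsilon)/2}$ and $\mu(X') = s'^{(q-\epsilon)/2}$.

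Next I would compute
\[
(XX')^2 = X(X'X)X' = X\bigl(m(X,X')XX'\bigr)X' = m(X,X')\,X^2X'^2 = m(X,X')\,t's'\,I,
\]
where in the last step I used that the scalar $m(X,X')\in\{\pm 1\}$ commutes with everything and that $X^2,X'^2$ are scalar. Thus $XX'\in A(R)$ has associated scalar $u' := m(X,X')\,t's'$, and by definition
\[
\mu(XX') = u'^{(q-\epsilon)/2} = m(X,X')^{(q-\epsilon)/2}\cdot t'^{(q-\epsilon)/2}\cdot s'^{(q-\epsilon)/2} = m(X,X')^{(q-\epsilon)/2}\mu(X)\mu(X'),
\]
which is the claimed formula.

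There is no real obstacle here beyond bookkeeping: the exponent $(q-\epsilon)/2$ is an integer, and $m$ takes values in $\{\pm 1\}$, so raising $m(X,X')$ to that power is unambiguous; the only place one must be careful is the commutation step, where it matters that $m(X,X')$ is central (as a scalar) so that it can be pulled out of the product $X\cdot X'X\cdot X'$. Once that is checked, the identity follows from the already-established Lemma~\ref{L:m0} and the definitions of $m$ and $\mu$.
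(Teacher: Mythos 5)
Your proof is correct and follows essentially the same route as the paper's: both compute $(XX')^2$ directly by pushing $X'$ past $X$ using the commutation relation encoded by $m$, obtaining a scalar $m(X,X')\,t's'$, and then apply the definition of $\mu$. The only difference is cosmetic bookkeeping in how the commutator is inserted.
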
 

\begin{proof}
Write $X^{2}=tI$, $X'^{2}=t'I$, $XX'X^{-1}X'^{-1}=\lambda I$, where $t,t'\in\GL_{1}(\epsilon q)_{2}$ and $\lambda=\pm{1}$. 
Then, \[(XX')^{2}=X^{2} X^{-1}(X'XX'^{-1}X^{-1})X X'^{2}=tt'\lambda I.\] Thus, \[\mu(XX')=(tt'\lambda)^{\frac{q-\epsilon}{2}}=
t^{\frac{q-\epsilon}{2}}t'^{\frac{q-\epsilon}{2}}\lambda^{\frac{q-\epsilon}{2}}=\mu(X)\mu(X')m(X,X')^{\frac{q-\epsilon}{2}}.\]
\end{proof}

Define \[A_{1}(R)=\{X\in A(R): m(X,Y)=1,\forall Y\in R\}=A(R)\cap Z(R)\] and \[A_{2}(R)=\{X\in A(R): m(X,Y)=1,\forall Y\in A\}
=Z(A(R)).\] Note that $\mu: A_{2}(R)\rightarrow\{\pm{1}\}$ is a group homomorphism. Define 
\[A'_{i}(R)=\{X\in A_{i}(R):\mu(X)=1\},\quad i=1,2.\] We also note that $m$ descends to a non-degenerate pairing 
\[m: A(R)/A_{2}(R)\times A(R)/A_{2}(R)\rightarrow\{\pm{1}\}.\]

\subsubsection{The $A'_{1}$ reduction: direct product.}

\begin{lem}\label{L:A1}
Let $R$ be a radical 2-subgroup of $\GL_{n}(\epsilon q)$. \begin{enumerate}
\item[(1)]The subgroup $A'_{1}(R)$ is diagonalizable and any element $X\in A'_{1}(R)$ is conjugate to some $tI_{p,n-p}$,  
where $t\in\GL_{1}(\epsilon q)_{2}$ and $p\in\{0,1,\dots,n\}$. 
\item[(2)]Assume that $A'_{1}=A'_{1}(R)$ is diagonal. Then, \[Z_{G}(A'_{1})=\prod_{1\leq i\leq s}\GL_{n_{i}}(\epsilon q),\] 
where $(n_{1},\dots,n_{s})$ is a partition of $n$.  
\item[(3)]$R$ is a radical 2-subgroup of $Z_{G}(A'_{1})$. Write $R_{i}$ for the intersection of $R$ with the $i$-th factor 
$\GL_{n_{i}}(\epsilon q)$. Then, \[R=\prod_{1\leq i\leq s}R_{i}\] with each $R_{i}$ ($1\leq i\leq s$) a radical 2-subgroup 
of $\GL_{n_{i}}(\epsilon q)$.  
\item[(4)]For each $i$ ($1\leq i\leq s$), we have $A'_{1}(R_{i})\subset Z(\GL_{n_{i}}(\epsilon q))$. 
\end{enumerate} 
\end{lem}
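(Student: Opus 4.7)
The plan is to use simultaneous diagonalizability of $A'_1(R)$ to split the ambient space, and then to exploit the maximality of the resulting joint eigenspace decomposition to establish the delicate part (4).

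For (1), the subgroup $A'_1(R)\subset Z(R)$ is abelian. For $X\in A'_1(R)$, Lemma~\ref{L:m0} gives $X^2=\tau I$ with $\tau\in\GL_1(\epsilon q)_2$, and $\mu(X)=\tau^{(q-\epsilon)/2}=1$ forces $\tau$ to be a square in the cyclic $2$-group $\GL_1(\epsilon q)_2$; write $\tau=t^2$ with $t\in\GL_1(\epsilon q)_2$. Then $X$ has eigenvalues $\pm t$ inside $\GL_1(\epsilon q)$, so $X$ is diagonalizable over the ground field and is conjugate to $tI_{p,n-p}$ for some $p$. Since $A'_1(R)$ is a commuting family of such matrices, they are simultaneously diagonalizable, yielding a joint eigenspace decomposition $\mathbb{F}_q^n=V_1\oplus\cdots\oplus V_s$ with $n_i=\dim V_i$. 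For (2), this decomposition gives $Z_G(A'_1)=\prod_i\GL(V_i)\cong\prod_i\GL_{n_i}(\epsilon q)$; in the unitary case $\epsilon=-1$ one additionally uses that eigenspaces for distinct eigenvalues are pairwise orthogonal under the Hermitian form. Part (3) then follows from Lemmas~\ref{L:R5} and \ref{L:R6}: $A'_1(R)$ is invariant under $N_G(R)$ by construction and lies in $Z(R)$, so Lemma~\ref{L:R5} yields that $R$ is a radical $2$-subgroup of $Z_G(A'_1(R))$, and Lemma~\ref{L:R6} then gives $R=\prod_iR_i$ with each $R_i\in\Re_2(\GL_{n_i}(\epsilon q))$.

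Part (4) is the main step, and I would argue by contradiction. Suppose $X_i\in A'_1(R_i)$ is non-scalar, and write $X_i^2=\tau_iI_{n_i}$ with $\tau_i\in\GL_1(\epsilon q)_2$. Since $\mu(X_i)=1$, there exists $\lambda\in\GL_1(\epsilon q)_2$ with $\lambda^2=\tau_i$, and Lemma~\ref{L:R2} gives $\lambda I_n\in Z(G)_2\subset O_2(G)\subset R$, so each block $\lambda I_{n_j}$ lies in $R_j$. Form the element $g\in\prod_j\GL_{n_j}$ whose $i$-th component is $X_i$ and whose other components are $\lambda I_{n_j}$. Then $g\in R$, $g^2=\tau_iI_n\in\GL_1(\epsilon q)_2 I_n$, $g\in Z(R)$, and $\mu(g)=\mu(X_i)=1$, so $g\in A'_1(R)$. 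But $g$ has two distinct eigenvalues $\pm t$ (with $t^2=\tau_i$) on $V_i$, contradicting the fact that $V_i$ is a single joint eigenspace of $A'_1(R)$. The main subtlety is recognizing that the scalar $\lambda I_n$ is available in $R$ for patching the construction across factors, which crucially uses both $\mu(X_i)=1$ (so that $\tau_i$ is a square in $\GL_1(\epsilon q)_2$) and the inclusion $Z(G)_2\subset R$ coming from Lemma~\ref{L:R2}.
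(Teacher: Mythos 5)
Your proposal is correct and follows essentially the same route as the paper's proof: diagonalize $A'_1(R)$ using parts (1) and the commutativity, apply Lemmas~\ref{L:R5} and \ref{L:R6} for (3), and derive (4) by producing from any non-scalar $X_i\in A'_1(R_i)$ a non-scalar element of $A'_1(R)$, contradicting the joint eigenspace decomposition. The only difference is cosmetic: you pad the non-scalar block $X_i$ with the scalar $\lambda I_{n_j}$ (noting via Lemma~\ref{L:R2} that $\lambda I_n\in Z(G)_2\subseteq O_2(G)\subseteq R$, so the padded element lies in $R$), whereas the paper implicitly reduces first to an involution $I_{p,n_i-p}$ and pads with $I_{n_j}$; both variants are valid and equivalent.
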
 

\begin{proof}
(1)By the definition of $A'_{1}(R)$, we have $X^2=t'I$ where $t'\in\GL_{1}(\epsilon q)_{2}$. Choose $t\in\GL_{1}(\epsilon q)$ 
such that $t^2=t'$. Then, $(t^{-1}X)^{2}=I$. Thus, $t^{-1}X\sim I_{p,n-p}$ where $p\in\{0,1,\dots,n\}$. Moreover, $A'_{1}(R)$ 
is an abelian subgroup of $\GL_{n}(\epsilon q)$. Thus, it is diagonalizable.    

(2)It follows from (1) trivially. 

(3)By the definition of $A'_{1}$, it follows that $A'_{1}\subset Z(R)$ and it is stable under the adjoint action of $N_{G}(R)$. 
Then, $R$ is a radical 2-subgroup of $Z_{G}(A'_{1})$ by Lemma \ref{L:R5}. By Lemma \ref{L:R6}, each $R_{i}$ is a radical 
2-subgroup of $\GL_{n_{i}}(\epsilon q)$ and $R=\prod_{1\leq i\leq s}R_{i}$. 

(4)Suppose that $A'_{1}(R_{i})\not\subset Z(\GL_{n_{i}}(\epsilon q))$. Then, $A'_{1}(R_{i})$ contains an element conjugate to 
$I_{p,n_{i}-p}$ where $1\leq p\leq n_{i}-1$. By this, $A_{1}(R)$ contains an element conjugate to $I_{p,n-p}$. This contradicts 
with the given form of $Z_{G}(A'_{1})$ in (2).   
\end{proof}

By Lemma \ref{L:A1}, consider $R_{i}\subset\GL_{n_{i}}(\epsilon q)$ ($1\leq i\leq s$) instead. It reduces to classify radical 
2-subgroups of $\GL_{n}(\epsilon q)$ such that $A'_{1}(R)\subset Z(\GL_{n}(\epsilon q))$.

\subsubsection{The $A'_{2}$ reduction: wreath product} 

Let \[Z_{G}(A'_{2}/A'_{1})=Z_{G}(A'_{1})\cap\{Y\in\GL_{n}(\epsilon q): YXY^{-1}=\pm{X},\forall X\in A'_{2}(R)\}.\]   

\begin{lem}\label{L:A2}
Let $R$ be a radical 2-subgroup of $\GL_{n}(\epsilon q)$ such that $A'_{1}(R)\subset Z(\GL_{n}(\epsilon q))$. 
\begin{enumerate}
\item[(1)]The subgroup $A'_{2}(R)$ is diagonalizable and any coset in $A'_{2}(R)/A'_{1}(R)$ contains an element conjugate 
to $I_{\frac{n}{2},\frac{n}{2}}$.  
\item[(2)]Assume that $A'_{2}(R)$ is diagonal and write $c=\log_{2}|A'_{2}/A'_{1}|$. Then,  
\[Z_{G}(A'_{2}/A'_{1})=\GL_{n/2^{r}}(\epsilon q)^{2^{c}}\rtimes(Z_{2})^{c},\] where 
$\GL_{n/2^{r}}(\epsilon q)^{2^{c}}=Z_{G}(A'_{2})$ and $(Z_{2})^{c}$ acts on $\GL_{n/2^{c}}(\epsilon q)^{2^{c}}$ by 
permutations.  
\item[(3)]$R$ is a radical 2-subgroup of $Z_{G}(A'_{2}/A'_{1})$ and $R$ is conjugate to some 
\[R'\wr (Z_{2})^{c}=R'^{2^{c}}\rtimes(Z_{2})^{c},\] where $R'$ is a radical 2-subgroup of $\GL_{n/2^{c}}(\epsilon q)$.  
\item[(4)]We have $A'_{1}(R')\subset Z(\GL_{n/2^{c}}(\epsilon q))$.   
\end{enumerate}  
\end{lem}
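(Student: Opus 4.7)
The plan is to mimic the $A'_{1}$-reduction of Lemma~\ref{L:A1} at one level up, now extracting a wreath-product structure from elements that lie in $A'_{2}\setminus A'_{1}$. The new ingredient is that by the very definition of $A'_{1}$, such elements must fail to commute with some $Y\in R$, so by Lemma~\ref{L:m0} we have $YXY^{-1}=-X$; this $\pm$-swap drives the equal eigenspace decomposition and the permutation action that eventually appears.

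Part~(1) is then immediate: $A'_{2}\subset A_{2}(R)=Z(A(R))$ is abelian hence diagonalisable, and for $X\in A'_{2}$ one writes $X^{2}=t'I$ and uses $\mu(X)=1$ to extract a square root $t\in\GL_{1}(\epsilon q)_{2}$ of $t'$, making $t^{-1}X$ an involution conjugate to some $I_{p,n-p}$. If in addition $X\notin A'_{1}$, a $Y$ as above exists and interchanges the $\pm$-eigenspaces of $t^{-1}X$, so $p=n/2$. For~(2), assume $A'_{2}$ is diagonal and decompose the natural module by characters of $A'_{2}/A'_{1}$ as $V=\bigoplus_{\psi}V_{\psi}$. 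The equal-eigenspace statement of~(1) becomes $\sum_{\psi(X)=1}\dim V_{\psi}=\sum_{\psi(X)=-1}\dim V_{\psi}$ for every $X\in A'_{2}\setminus A'_{1}$, and Fourier inversion on $\widehat{A'_{2}/A'_{1}}$ forces $\dim V_{\psi}=n/2^{c}$ for all $\psi$, yielding $Z_{G}(A'_{2})=\GL_{n/2^{c}}(\epsilon q)^{2^{c}}$ (using that elements of $A'_{2}$ have eigenvalues in the base field and, in the unitary case, that the Hermitian form restricts appropriately on each $V_{\psi}$). An element of $Z_{G}(A'_{2}/A'_{1})$ induces a character $\epsilon_{Y}\in\widehat{A'_{2}/A'_{1}}$ via $YXY^{-1}=\epsilon_{Y}(X)X$, permuting the $V_{\psi}$ by translation by $\epsilon_{Y}$; since all $V_{\psi}$ are mutually isomorphic, this map is surjective onto $(Z_{2})^{c}$, and a choice of matching bases splits the sequence so that $(Z_{2})^{c}$ acts as a regular transitive permutation group on the $2^{c}$ factors.

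For~(3), one checks $R\subset Z_{G}(A'_{2}/A'_{1})$ from Lemma~\ref{L:m0}, and a short computation using that $N_{G}(R)$ stabilises both $A'_{1}$ and $A'_{2}$ shows $N_{G}(R)$ also normalises $Z_{G}(A'_{2}/A'_{1})$; Lemma~\ref{L:R8} then yields $R\in\Re_{2}(Z_{G}(A'_{2}/A'_{1}))$. The crucial step is showing that $R$ meets every coset of the base group $\GL_{n/2^{c}}(\epsilon q)^{2^{c}}$; should it not, then a non-trivial subgroup $B/A'_{1}\subset A'_{2}/A'_{1}$ would pair trivially with $R$ under $m$, placing $B\subset A_{1}(R)$, and since $B\subset A'_{2}\subset\ker\mu$ even $B\subset A'_{1}$, contradicting $B\not\subset A'_{1}$. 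Lemma~\ref{L:R7} then delivers the wreath decomposition, and~(4) follows by diagonal embedding: a non-scalar $X'\in A'_{1}(R')$ would produce a non-scalar $\Delta X'=(X',\ldots,X')\in A'_{1}(R)$, violating the standing hypothesis $A'_{1}(R)\subset Z(\GL_{n}(\epsilon q))$. The main obstacle is the coset-meeting step above, where the interplay of the pairing $m$, the map $\mu$, and the assumption $A'_{1}(R)\subset Z(\GL_{n}(\epsilon q))$ must be exploited together; the remainder is packaged by Lemmas~\ref{L:R7} and~\ref{L:R8}.
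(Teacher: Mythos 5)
Your proposal is correct and follows exactly the paper's route: diagonalize $A'_{2}$, use the failure of commutation with some $Y\in R$ to force equal $\pm1$ eigenspaces, apply Lemma~\ref{L:R8} via the $N_{G}(R)$-stability of $Z_{G}(A'_{2}/A'_{1})$, verify the coset-meeting condition through the pairing $m$ and the constraint $\mu\equiv 1$ on $A'_{2}$, and close with Lemma~\ref{L:R7}. The only difference is one of exposition: where the paper dismisses part~(2) as ``follows from (1) trivially,'' you spell out the Fourier-inversion step on $\widehat{A'_{2}/A'_{1}}$ that forces all isotypic pieces $V_{\psi}$ to have equal dimension, and you note the Hermitian-compatibility needed in the unitary case --- both are genuine (if small) gaps in the paper's wording that your write-up fills in.
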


\begin{proof}
(1)Any coset in $A'_{2}/A'_{1}$ contains an element $X$ with $X^{2}=I$ and $YXY^{-1}=-X$ for some $Y\in R$. Then, 
$X\sim I_{\frac{n}{2},\frac{n}{2}}$. By the definition of $m$ and $A'_{2}$, the subgroup $A'_{2}$ is abelian. Thus, 
$A'_{2}$ is diagonalizable. 

(2)It follows from (1) trivially. 

(3)Since $N_{G}(R)$ normalizes $A'_{1}$ and $A'_{2}$, then it normalizes $Z_{G}(A'_{2}/A'_{1})$. By Lemma \ref{L:R8}, $R$ 
is a radical 2-subgroup of $Z_{G}(A'_{2}/A'_{1})$. For any element $X\in A'_{2}-A'_{1}$, there must exist $Y\in R$ such 
that $m(X,Y)=-1$. Then, $R$ intersects with each coset of $Z_{G}(A'_{2})$ in $Z_{G}(A'_{2}/A'_{1})$. By Lemma \ref{L:R7}, 
$R$ is conjugate to $R'\wr (Z_{2})^{c}$, where $R'$ is a radical 2-subgroup of $\GL_{n/2^{c}}(\epsilon q)$.  

(4)It is clear that $\Delta(A'_{1}(R'))\subset A'_{1}(R)$, where $\Delta:\GL_{n/2^{c}}(\epsilon q)\rightarrow
\GL_{n}(\epsilon q)$ is the diagonal map. Since it is assumed that $A'_{1}(R)\subset Z(\GL_{n_{i}}(\epsilon q))$, 
we get $A'_{1}(R')\subset Z(\GL_{n/2^{c}}(\epsilon q))$.  
\end{proof}

However, it is not necessarily that $A'_{2}(R')\subset Z(\GL_{n/2^{c}}(\epsilon q))$ in the above conclusion. We need to 
take a successive process of wreath products $R_{0}\wr(Z_{2})^{c_{t}}\wr\cdots\wr(Z_{2})^{c_{1}}$ so as to reach a radical 
2-subgroup $R_{0}$ of $\GL_{n/2^{c_{1}+\cdots+c_{t}}}(\epsilon q)$ with 
$A'_{2}(R_{0})\subset Z(\GL_{n/2^{c_{1}+\cdots+c_{t}}}(\epsilon q))$. By Lemma \ref{L:A2}, it reduces to classify radical 
2-subgroups of $\GL_{n}(\epsilon q)$ such that $A'_{2}(R)\subset Z(\GL_{n}(\epsilon q))$.

\subsubsection{Radical 2-subgroups with trivial $A'_{2}$} 

Suppose that $A'_{1}(R)=A'_{2}(R)=\GL_{1}(\epsilon q)_{2}I$. Then, there are three possibilities for $A_{1}=A_{1}(R)$ and 
$A_{2}=A_{2}(R)$: \begin{enumerate}
\item[(1)]$A_{1}/A'_{1}=A_{2}/A'_{2}=1$;  
\item[(2)]$A_{1}/A'_{1}=1$ and $A_{2}/A'_{2}\cong Z_{2}$; 
\item[(3)]$A_{1}/A'_{1}=A_{2}/A'_{2}\cong Z_{2}$. 
\end{enumerate}    

\begin{lem}\label{L:GL1}
Let $n$ be an even positive integer and $\delta\in\GL_{1}(\epsilon q)$ be a given element. Let $X,Y\in\GL_{n}(\epsilon q)$ 
be two matrices such that $XYX^{-1}Y^{-1}=-I$, $X^{2}=I$, $Y^{2}=\delta I$. 

(1) We have \[(X,Y)\sim(\left(\begin{array}{cc}-I_{n/2}&\\&I_{n/2}\end{array}\right),\left(\begin{array}{cc}&
I_{n/2}\\\delta I_{n/2}&\end{array}\right)).\] 

(2) We have \[Z_{\GL_{n}(\epsilon q)}(\{X,Y\})\cong\GL_{n/2}(\epsilon q).\]
\end{lem}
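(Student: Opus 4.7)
The plan for part (1) is to pass to the natural $n$-dimensional module $V$ (that is, $\mathbb{F}_q^n$ when $\epsilon=1$ and $\mathbb{F}_{q^2}^n$ when $\epsilon=-1$) and exploit the action of $\langle X,Y\rangle$. Since $X^2=I$ and $q$ is odd, $V$ splits as $V_+\oplus V_-$, the $\pm 1$-eigenspaces of $X$. The relation $XYX^{-1}Y^{-1}=-I$ means $YXY^{-1}=-X$, so $Y$ interchanges $V_+$ and $V_-$; in particular $\dim V_+ = \dim V_- = n/2$.

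Next, I would exhibit a basis realizing the normal form: pick any basis $f_1,\ldots,f_{n/2}$ of $V_+$ and set $e_i := Y(f_i)\in V_-$; this is a basis of $V_-$ because $Y$ is invertible, and $Y(e_i) = Y^2(f_i) = \delta f_i$. In the ordered basis $(e_1,\ldots,e_{n/2},f_1,\ldots,f_{n/2})$, the matrices of $X$ and $Y$ are exactly $\operatorname{diag}(-I_{n/2},I_{n/2})$ and $\bigl(\begin{smallmatrix}0&I_{n/2}\\ \delta I_{n/2}&0\end{smallmatrix}\bigr)$. When $\epsilon=1$ the change-of-basis matrix sits in $\GL_n(q)$ and we are done.

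When $\epsilon=-1$ the conjugating element must lie in $\GU_n(q)$, which is the main obstacle. Here I use that $X$ preserves the defining hermitian form $H$, so $V_+\perp V_-$ and each eigenspace carries a non-degenerate hermitian form. Because $Y\in\GU_n(q)$, the map $Y\colon V_+\to V_-$ is an isometry, so in the basis constructed above the Gram matrix of $H$ is block-diagonal of the shape $\operatorname{diag}(H_1,H_1)$. Since all non-degenerate hermitian forms on $\mathbb{F}_{q^2}^{n/2}$ are equivalent, I may choose the basis $f_1,\ldots,f_{n/2}$ of $V_+$ so that $H_1$ equals the relevant block of the fixed defining Gram matrix; the resulting change-of-basis matrix then lies in $\GU_n(q)$, giving the claimed simultaneous conjugacy.

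For part (2), once $(X,Y)$ is assumed to be in the normal form above, any matrix $M$ commuting with $X=\operatorname{diag}(-I_{n/2},I_{n/2})$ must preserve the eigenspaces and hence has the block form $M=\operatorname{diag}(P,Q)$ with $P,Q\in\GL_{n/2}(\epsilon q)$ (the unitary condition restricts each block to $\GU_{n/2}(q)$ in the case $\epsilon=-1$). Imposing $M Y = Y M$ for the anti-diagonal $Y$ gives $\bigl(\begin{smallmatrix}0&P\\ \delta Q&0\end{smallmatrix}\bigr)=\bigl(\begin{smallmatrix}0&Q\\ \delta P&0\end{smallmatrix}\bigr)$, forcing $P=Q$. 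Hence $Z_{\GL_n(\epsilon q)}(\{X,Y\})=\{\operatorname{diag}(P,P)\mid P\in\GL_{n/2}(\epsilon q)\}\cong\GL_{n/2}(\epsilon q)$, completing the proof. The only non-routine point in the whole argument is the hermitian-form bookkeeping in part (1) for the unitary case; everything else is linear algebra.
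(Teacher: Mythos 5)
Your proof is correct and follows essentially the same route as the paper's: decompose the natural module into the $\pm 1$ eigenspaces of $X$, deduce that $Y$ swaps them (forcing each to have dimension $n/2$), and then normalize $Y$'s off-diagonal blocks. The one place where you differ in flavor is the unitary case: you show the conjugating element lives in $\GU_n(q)$ by appealing to equivalence of non-degenerate hermitian forms, whereas the paper just says ``taking conjugation if necessary'' and leaves implicit that the conjugating block-diagonal matrix $\diag(I,Y_1)$ is automatically in $\GU_n(q)$ (since $\bar{Y}^{tr}Y=I$ forces $\bar{Y_1}^{tr}Y_1=I$). Both are valid; yours is a little more explicit about the bookkeeping the paper treats as routine.
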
 

\begin{proof}
(1) Since $X^{2}=I$, then $X\sim\left(\begin{array}{cc}-I_{p}&\\&I_{n-p}\end{array}\right)$ for some $p\in\{0,1,\dots,n\}$. Since 
$-X=YXY^{-1}\sim X$, we must have $p=\frac{n}{2}$. Assume that $X=\left(\begin{array}{cc}-I_{p}&\\&I_{n-p}\end{array}\right)$ 
for simplicity. Since $YXY^{-1}=-X$, we have \[Y=\left(\begin{array}{cc}&Y_{1}\\Y_{2}&\end{array}\right),\] where 
$Y_{1},Y_{2}\in\GL_{n/2}(\epsilon q)$. Taking conjugation if necessary, we may assume that $Y_{1}=I$. Then, 
$\delta I=Y^{2}=\diag\{Y_{2},Y_{2}\}$. Thus, $Y_{2}=\delta I_{n/2}$.  

(2) This is clear. 
\end{proof}

\begin{lem}\label{lem:square-sum}
	Let $q'$ be a prime power, $\eps\in\{\pm1\}$ and $\la\in\mathbb F_{q'}$. Then there exist elements $b,b'\in\mathbb{F}_{{q'}^2}$ such that 
	$b^{2}+b'^{2}=\la$, ${b}^{q'}=\eps b$ and ${b'}^{q'}=\eps b'$. 
	\end{lem}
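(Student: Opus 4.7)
The plan is to reduce the problem to showing that $\la$ lies in the sumset $S+S$ where $S$ is a certain explicit subset of $\mathbb{F}_{q'}$ of size roughly $q'/2$, and then finish by the standard pigeonhole/counting argument that any two subsets of $\mathbb{F}_{q'}$ whose sizes add up to more than $q'$ must overlap.

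First I would dispense with the easy cases. If $q'$ is a power of $2$, then $\eps = 1 = -1$, so the constraint $b^{q'}=\eps b$ reduces to $b\in\mathbb{F}_{q'}$, and since the Frobenius $x\mapsto x^2$ is surjective on $\mathbb{F}_{q'}$ we may simply take $b=\sqrt{\la}$ and $b'=0$. So from now on assume $q'$ is odd. Set $V_\eps:=\{b\in\mathbb{F}_{q'^2}:b^{q'}=\eps b\}$; this is a one-dimensional $\mathbb{F}_{q'}$-subspace of $\mathbb{F}_{q'^2}$. When $\eps=1$ one has $V_1=\mathbb{F}_{q'}$; when $\eps=-1$, fixing any non-square $\delta\in\mathbb{F}_{q'}^\times$ and a square root $\xi\in\mathbb{F}_{q'^2}$ of $\delta$, one has $\xi^{q'}=-\xi$ and $V_{-1}=\mathbb{F}_{q'}\cdot\xi$.

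Next I would compute the set of squares $S_\eps:=\{b^2:b\in V_\eps\}$. For $\eps=1$ this is the set of squares in $\mathbb{F}_{q'}$, which has size $(q'+1)/2$. For $\eps=-1$ a general element of $V_{-1}$ is $c\xi$ with $c\in\mathbb{F}_{q'}$, and its square is $c^2\delta\in\mathbb{F}_{q'}$; thus $S_{-1}=\delta\cdot\{c^2:c\in\mathbb{F}_{q'}\}$, which again has size $(q'+1)/2$ and is contained in $\mathbb{F}_{q'}$. In both cases $S_\eps\subseteq\mathbb{F}_{q'}$ and $|S_\eps|=(q'+1)/2$.

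Finally I would apply the counting argument inside $\mathbb{F}_{q'}$: the two subsets $S_\eps$ and $\la-S_\eps$ both have cardinality $(q'+1)/2$, so
\[
|S_\eps|+|\la-S_\eps|=q'+1>|\mathbb{F}_{q'}|,
\]
forcing $S_\eps\cap(\la-S_\eps)\neq\emptyset$. Any element $s$ in the intersection gives $s=b^2$ and $\la-s={b'}^2$ with $b,b'\in V_\eps$, completing the proof. The only mild subtlety — which I expect to be the only substantive point — is verifying that the set $S_\eps$ sits inside $\mathbb{F}_{q'}$ and has the correct size $(q'+1)/2$ when $\eps=-1$; this follows directly from the identification $V_{-1}=\mathbb{F}_{q'}\cdot\xi$ with $\xi^2\in\mathbb{F}_{q'}^\times$, so there is no real obstacle.
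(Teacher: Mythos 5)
Your proof is correct and takes essentially the same route as the paper's: reduce to the case $\eps=-1$ with $q'$ odd, observe that the set of squares of elements in $V_{-1}=\{x:x^{q'}=-x\}$ is a subset of $\mathbb{F}_{q'}$ of size $(q'+1)/2$ (namely $\{0\}$ together with the non-squares), and conclude by the pigeonhole overlap argument. The only difference is cosmetic: you treat the cases $\eps=1$ and even $q'$ explicitly where the paper dismisses them as well-known.
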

		
	\begin{proof}
	We assume that $\eps=-1$ and $q'$ is odd, as it is well-known when $\eps=1$ or $q'$ is even. Let $Q=\{x^2\mid x\in \mathbb{F}_{{q'}^2}, x^{q'}=-x\}$. Then $Q\subseteq\mathbb F_{q'}$, 
	and for any $y\in\mathbb F_{q'}$ one has that $y\in Q$ if and only if $y=0$ or $y$ is a non-square. Thus $|Q|=\frac{q'+1}{2}$, and 
	so $Q$ and $\{\la-y\mid y\in Q\}$ intersect. This clearly forces that there exist elements $b,b'\in\mathbb{F}_{{q'}^2}$ such that 
	$b^{2}+b'^{2}=\la$, ${b}^{q'}=-b$ and ${b'}^{q'}=-b'$.
	\end{proof}

Choose $b,b'\in\mathbb{F}_{q^2}$ such that $b^{2}+b'^{2}=-1$, ${b}^q=\eps b$, ${b'}^q=\eps b$ 
and $b\neq 0$. In particular, if $\eps=1$, then $b,b'\in\mathbb{F}_{q}$.	

\begin{lem}\label{L:GL2}
	Suppose that $4|q+\epsilon$. Let $n$ be an even positive integer. Let $X,Y\in\GL_{n}(\epsilon q)$ be two matrices such that 
$XYX^{-1}Y^{-1}=-I$, $X^{2}=Y^{2}=-I$.  

(1)We have \[(X,Y)\sim(\left(\begin{array}{cc}&I_{n/2}\\-I_{n/2}&\end{array}\right),\left(\begin{array}{cc}bI_{n/2}&
b'I_{n/2}\\b'I_{n/2}&-bI_{n/2}\end{array}\right)).\] 

(2)We have \[Z_{\GL_{n}(\epsilon q)}(\{X,Y\})\cong\GL_{n/2}(\epsilon q).\] 
\end{lem}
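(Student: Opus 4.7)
The plan is to recognize $\langle X,Y\rangle$ abstractly: the relations $X^{2}=Y^{2}=-I$ and $XYX^{-1}Y^{-1}=-I$ identify $\langle X,Y\rangle$ with the quaternion group $Q_{8}$, acting via its unique faithful 2-dimensional irreducible representation. Over $\bar{\mathbb{F}}_{q}$ this already pins $(X,Y)$ down up to conjugacy as $n/2$ copies of that irrep. The substantive work is to realize the conjugating matrix inside $\GL_{n}(\epsilon q)$ and to identify the canonical form given in the statement; this is where the hypothesis $4\mid q+\epsilon$ and Lemma~\ref{lem:square-sum} enter.

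First I would normalize $X$. Since $X^{2}=-I$, $X$ is semisimple with eigenvalues $\pm\sqrt{-1}$, and $YXY^{-1}=-X$ forces both eigenspaces to have dimension $n/2$. When $\epsilon=1$ (so $q\equiv 3\pmod{4}$), $t^{2}+1$ is irreducible over $\mathbb{F}_{q}$ and the rational canonical form gives $X\sim J_{n/2}$ in $\GL_{n}(q)$. When $\epsilon=-1$ (so $q\equiv 1\pmod{4}$), $\sqrt{-1}\in\mathbb{F}_{q}$ but $\sqrt{-1}\cdot\overline{\sqrt{-1}}=-1\neq 1$, so each eigenspace of $X$ is totally isotropic for the Hermitian form; the resulting hyperbolic decomposition again yields $X\sim J_{n/2}$ in $\GU_{n}(q)$. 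After conjugating I assume $X=J_{n/2}$. A direct block computation then shows that $Z_{\GL_{n}(\epsilon q)}(X)$ consists of matrices $\begin{pmatrix}A&B\\-B&A\end{pmatrix}$, while $YXY^{-1}=-X$ forces $Y=\begin{pmatrix}A&B\\B&-A\end{pmatrix}$, and $Y^{2}=-I$ is then equivalent to $A^{2}+B^{2}=-I$ together with $AB=BA$.

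To conclude part~(1) I would conjugate such a $Y$ to the stated canonical form $Y_{0}=\begin{pmatrix}bI_{n/2}&b'I_{n/2}\\b'I_{n/2}&-bI_{n/2}\end{pmatrix}$ using the centralizer $Z_{\GL_{n}(\epsilon q)}(X)$. Under the identification of $Z_{\GL_{n}(\epsilon q)}(X)$ with $\GL_{n/2}$ over $\mathbb{F}_{q}[\sqrt{-1}]$, the element $Y$ becomes an $F_{q}$-semilinear automorphism $z\mapsto T\,F_{q}(z)$ satisfying $T\,F_{q}(T)=-I$, and any two such $T$'s are transported to each other by Lang--Steinberg (equivalently, Hilbert~90 for $\GL_{n/2}$ over the degree-2 extension); Lemma~\ref{lem:square-sum} provides the explicit scalar $\zeta=b+b'\sqrt{-1}$ of norm $-1$ corresponding to $Y_{0}$. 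For part~(2), imposing $MY=YM$ on $M=\begin{pmatrix}A&B\\-B&A\end{pmatrix}$ and comparing $(1,1)$-blocks yields $2b'B=0$; since $b'\neq 0$ (the case $b'=0$ is ruled out in both cases because no square root of $-1$ in $\mathbb{F}_{q^{2}}$ satisfies the Frobenius condition $b^{q}=\epsilon b$) and $q$ is odd, $B=0$, so $M=\diag(A,A)$ with $A\in\GL_{n/2}(\epsilon q)$, giving $Z_{\GL_{n}(\epsilon q)}(\{X,Y\})\cong\GL_{n/2}(\epsilon q)$.

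The main obstacle will be the transitivity step in part~(1): while the abstract $Q_{8}$-module structure immediately determines $(X,Y)$ up to $\GL_{n}(\bar{\mathbb{F}}_{q})$-conjugacy, descending to $\GL_{n}(\epsilon q)$-conjugacy requires Lang--Steinberg applied to the connected centralizer $\GL_{n/2}$, and converting this abstract vanishing of the cohomological obstruction into the explicit canonical form $Y_{0}$ is exactly what the rational scalars $b,b'$ of Lemma~\ref{lem:square-sum} accomplish.
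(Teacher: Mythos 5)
Your argument is correct in substance, but it takes a genuinely different path from the paper's. The paper never normalizes $X$ by itself; instead it observes that after multiplying by $\lambda\in\mathbb{F}_{q^2}$ with $\lambda^2=-1$ one lands in the situation of Lemma~\ref{L:GL1} over $\GL_n(q^2)$, which immediately gives that $(X,Y)$ and the model pair $(X_1,Y_1)$ are conjugate in $\GL_n(q^2)$; it then applies the Lang--Steinberg theorem once, to the \emph{simultaneous} centralizer $Z_{\GL_n(q^2)}(\{X_1,Y_1\})\cong\Delta(\GL_{n/2}(q^2))$, to descend this conjugacy to $\GL_n(\epsilon q)$. Your route instead normalizes $X$ to $J_{n/2}$ first (via rational canonical form for $\epsilon=1$, Witt's theorem for $\epsilon=-1$), identifies $Z_{\GL_n(\epsilon q)}(X)$ with $\GL_{n/2}(q^2)$, and only then applies a twisted Hilbert~90 / Lang--Steinberg argument inside this centralizer to move $Y$ to $Y_0$. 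Both routes rest on the same engine (connectedness of $\GL_{n/2}$ and Lang--Steinberg), but the paper's bootstrap through Lemma~\ref{L:GL1} is cleaner, bypassing the intermediate analysis of $Z(X)$; yours makes the module structure over $Q_8$ and the centralizer chain more transparent, at the cost of the extra case analysis. Your explicit computation for part (2) via $2b'B=0$ is essentially what the paper's ``this is clear'' must amount to, and your check that $b'\neq 0$ is a worthwhile observation the paper left implicit.

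One small imprecision to repair: when $\epsilon=-1$ (so $q\equiv 1\pmod 4$), $\sqrt{-1}$ already lies in $\mathbb{F}_q$, so the identification of $Z_{\GU_n(q)}(J_{n/2})$ with $\GL_{n/2}(q^2)$ is \emph{not} via ``$\GL_{n/2}$ over $\mathbb{F}_q[\sqrt{-1}]$'' (which would be $\mathbb{F}_q$). It instead arises because the two $\mathbb{F}_{q^2}$-rational eigenspaces of $J_{n/2}$ are totally isotropic and dually paired by the Hermitian form, so the unitary condition identifies the two $\GL_{n/2}(q^2)$-blocks with one another. Once this identification is set up correctly, your Hilbert~90 reduction goes through unchanged; but as written, the phrase conflates the $\epsilon=1$ and $\epsilon=-1$ mechanisms.
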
 

\begin{proof}
Write \[X_{1}=\left(\begin{array}{cc}&I_{n/2}\\-I_{n/2}&\end{array}\right)\quad\textrm{ and }\quad Y_{1}=
\left(\begin{array}{cc}bI_{n/2}&b'I_{n/2}\\b'I_{n/2}&-bI_{n/2}\end{array}\right).\] Then, $X_{1},Y_{1}\in\GL_{n}(\epsilon q)$,  
$X_{1}Y_{1}X_{1}^{-1}Y_{1}^{-1}=-I$ and $X_{1}^{2}=Y_{1}^{2}=-I$. Embed $\GL_{n}(\epsilon q)$ into $\GL_{n}(q^{2})$. Choose 
$\lambda\in\mathbb{F}_{q^{2}}$ with $\lambda^{2}=-1$. Then, $(\lambda X)^{2}=(\lambda Y)^{2}=I$. By Lemma \ref{L:GL1}, there 
exists a unique conjugacy class of such pairs $(\lambda X,\lambda Y)$ in $\GL_{n}(q^{2})$, so does for such pairs $(X,Y)$. 
Then, there exists $g\in\GL_{n}(q^{2})$ such that $(X,Y)=(gX_{1}g^{-1},gY_{1}g^{-1})$. Write $\sigma$ for  
the Frobenius automorphism on $\GL_{n}(q^{2})$ with respect to the field extension $\mathbb{F}_{q^{2}}/\mathbb{F}_{q}$. 
Define \[\sigma'(X)=(\sigma(X)^{t})^{-1},\ \forall X\in\GL_{n}(q^{2}).\] Then, $\GL_{n}(q)$ (resp. $\GL_{n}(-q)$) is the 
fixed element subgroup of $\sigma$ (resp. $\sigma'$). Write $\tau=\sigma$ (or $\sigma'$) for $\GL_{n}(q)$ (or $\GL_{n}(-q)$). 
Then, $(X,Y)=(gX_{1}g^{-1},gY_{1}g^{-1})\subset\GL_{n}(\epsilon q)$ is equivalent to: 
\[g^{-1}\tau(g)\in Z_{\GL_{n}(\epsilon q)}(\{X_{1},Y_{1}\})=\Delta(\GL_{n/2}(q^{2})).\] By the Lang--Steinberg theorem 
(\cite{Lang56},\cite{Steinberg68}, see also \cite[Thm.~21.7]{MT11}), there exists $h\in\Delta(\GL_{n/2}(\bar{\mathbb{F}}_{q}))$ such that 
\[g^{-1}\tau(g)=h^{-1}\tau(h).\] Put $g'=gh^{-1}$. Then, $g'\in\GL_{n}(\epsilon q)$ and 
\[(X,Y)=(gX_{1}g^{-1},gY_{1}g^{-1})=(g'X_{1}g'^{-1},g'Y_{1}g'^{-1}).\] This shows the conclusion of this lemma. 
\end{proof} 

\begin{prop}\label{P:GL3}
Let $R$ be a radical 2-subgroup of $\GL_{n}(\epsilon q)$ with \[A_{1}(R)=A_{2}(R)=\GL_{1}(\epsilon q)_{2}I\] and write $\gamma=\frac{1}{2}\log_{2}|A(R)/A_{2}(R)|$. Then $R=A(R)$. Moreover, \begin{itemize} 
\item[(i)] if $4|q-\epsilon$ or $\gamma\leq 1$, then there exists a unique conjugacy class of such radical 2-subgroups for 
given values of $n$ and $\gamma$ (with $2^{\gamma}|n$); 
\item[(ii)] if $4|q+\epsilon$ and $\gamma\geq 2$, then there are two conjugacy classes of such radical 2-subgroups for given 
values of $n$ and $\gamma$ (with $2^{\gamma}|n$).   
\end{itemize} 
\end{prop}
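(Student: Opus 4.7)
The plan is to prove $R=A(R)$ first and then classify $A(R)$ up to $G$-conjugacy; both parts are handled by induction on $\gamma$.

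For $R=A(R)$, I observe that for each $r\in R$ and $x\in A(R)$ we have $rxr^{-1} = m(x,r)\,x\in\{\pm x\}$, giving a homomorphism $\phi\colon R\to V^{*}$ where $V := A(R)/A_{2}(R)$ and $V^{*} := \operatorname{Hom}(V,\{\pm 1\})$. Non-degeneracy of $m$ makes $\phi|_{A(R)}$ surjective, so $R = A(R)\cdot R_{0}$ for $R_{0} := \ker\phi = R\cap Z_{G}(A(R))$. Since $A_{2}(R) = \GL_{1}(\epsilon q)_{2}I$ acts by scalars, $A(R)$ acts absolutely irreducibly on a $2^{\gamma}$-dimensional $\bar{\mathbb{F}}_{q}$-module, and Schur's lemma identifies $Z_{G}(A(R))$ with $\GL_{m}(\epsilon q)$ where $m := n/2^{\gamma}$. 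Because $N_{G}(R)\subseteq N_{G}(A(R))\cap N_{G}(R_{0})$ and normalizes any characteristic subgroup of $N_{\GL_{m}(\epsilon q)}(R_{0})$, radicality of $R$ forces $R_{0}$ to be a radical 2-subgroup of $\GL_{m}(\epsilon q)$. A direct check gives $A(R_{0})\subseteq A(R)\cap R_{0} = A_{2}(R) = \GL_{1}(\epsilon q)_{2}I_{m}$, so $R_{0}$ satisfies the proposition's hypotheses with $\gamma_{0} = 0$; the trivial base case then gives $R_{0} = \GL_{1}(\epsilon q)_{2}I\subseteq A(R)$, whence $R = A(R)$.

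For the classification, the base case $\gamma = 0$ is trivial. For $\gamma\geq 1$, pick $X\in A(R)\setminus A_{2}(R)$ and $Y\in A(R)$ with $m(X,Y) = -1$; rescaling by $A_{2}(R)$, I may assume $X^{2}, Y^{2}\in\{\pm I\}$. Lemmas \ref{L:GL1} and \ref{L:GL2} normalize $(X,Y)$ to a $G$-conjugacy standard form determined by $(X^{2},Y^{2})$ and give $Z_{G}(\{X,Y\})\cong\GL_{n/2}(\epsilon q)$. The subgroup $A' := Z_{A(R)}(\{X,Y\})\subseteq\GL_{n/2}(\epsilon q)$ is of the same type with invariant $\gamma - 1$, so by induction its conjugacy class is fixed, determining that of $A(R) = \langle X, Y, A'\rangle$. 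In case (i), an element $\lambda\in\GL_{1}(\epsilon q)$ with $\lambda^{2} = -1$ lets me reduce to $X^{2} = Y^{2} = I$, so Lemma \ref{L:GL1} gives a single normal form and one conjugacy class. In case (ii) with $\gamma\geq 2$, the refinement $\bar\mu\colon V\to\mathbb{F}_{2}$ of $m$ is a genuine quadratic form refining $m$, and its Arf invariant is the decisive conjugacy invariant: both values arise by mixing rank-$1$ ``hyperbolic'' pieces (Lemma \ref{L:GL1}, $X^{2}=Y^{2}=I$) with ``elliptic'' pieces (Lemma \ref{L:GL2}, $X^{2}=Y^{2}=-I$), and the relation $Q_{8}\circ Q_{8}\cong D_{8}\circ D_{8}$ collapses the total to exactly two. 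For $\gamma = 1$ in case (ii), only one of the two standard forms of $(X,Y)$ actually yields a radical 2-subgroup, accounting for the uniqueness.

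The main obstacle will be the counting in case (ii): I must verify that the Arf invariant is well-defined on the $G$-conjugacy class of $A(R)$, independently of the inductive choice of $(X,Y)$, and that both Arf values are genuinely realized by radical 2-subgroups of $\GL_{n}(\epsilon q)$. The $\gamma = 1$ collapse requires determining which rank-$1$ type (depending on $q$ and $\epsilon$) becomes absorbed into a strictly larger radical 2-subgroup of the centralizer. Throughout, the passage between $\bar{\mathbb{F}}_{q}$- and $\mathbb{F}_{q}$-rational conjugacy will be handled by Lang--Steinberg arguments in the spirit of the proof of Lemma \ref{L:GL2}.
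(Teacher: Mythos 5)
Your overall strategy (the pairing $m$, the function $\mu$, Lemmas~\ref{L:GL1} and~\ref{L:GL2}, induction on $\gamma$) is the same as the paper's, and your identification of $\bar\mu$ as a quadratic refinement of $m$ in case~(ii) with its Arf invariant as the distinguishing invariant is exactly the right picture. But there is a genuine gap, and it is not only in case~(ii) as you anticipate — it is already present in case~(i).

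You claim that in case~(i), rescaling by $\lambda$ with $\lambda^2=-1$ lets you assume $X^2=Y^2=I$. This is false in general. If $4\mid(q-\epsilon)$ and $X^2=\zeta^k I$ with $\zeta$ a primitive $2^a$-th root of unity in $\GL_1(\epsilon q)_2$ and $k$ odd, then for every $s\in\GL_1(\epsilon q)_2$ the square $(sX)^2=s^2\zeta^k I$ has $s^2\zeta^k$ still an odd power of $\zeta$, hence never equal to $\pm 1$; equivalently, $\mu(X)=-1$ is unchanged by rescaling within the coset $X A_2(R)$. When $4\mid(q-\epsilon)$, the map $\mu$ descends to a \emph{homomorphism} $\bar\mu\colon A(R)/A_2(R)\to\{\pm 1\}$, and whether $\bar\mu$ is trivial or not is a genuine $\GL_n(\epsilon q)$-conjugacy invariant of $A(R)$ — so for each $(n,\gamma)$ with $\gamma\geq 1$ there are \emph{two} conjugacy classes of candidate groups $A(R)$, not one. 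The substance of the paper's proof is precisely to rule out the nontrivial-$\bar\mu$ case: letting $B=\ker\mu\leq A(R)$ (an index-$2$ subgroup normalized by $N_G(R)$), the paper shows that
\[
S:=\{\,g\in N_G(R)\,:\,[g,Z_G(R)]=1\ \text{and}\ [g,B]\subset\{\pm I\}\,\}
\]
is a normal $2$-subgroup of $N_G(R)$ with $S/R\cong Z_2$, so $R\ne O_2(N_G(R))$ and $R$ is not radical. Your proposal has no mechanism for detecting this; as written it would count one conjugacy class of $A(R)$ and equate that with one class of radical subgroups, silently conflating the two. The same $S$-subgroup argument is what settles the case $\gamma=1$, $4\mid(q+\epsilon)$ (which is also case~(i), not case~(ii)), a step you flag as an obstacle but do not carry out; and ruling \emph{in} both Arf values when $\gamma\geq 2$ in case~(ii) also requires an explicit radicality check that is not supplied.

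Two smaller points. First, your sentence justifying that $R_0=R\cap Z_G(A(R))$ is radical — ``normalizes any characteristic subgroup of $N_{\GL_m(\epsilon q)}(R_0)$'' — does not parse as stated, since $N_G(R)$ is not contained in $N_{\GL_m(\epsilon q)}(R_0)$; the intended argument goes through via Lemma~\ref{L:R8} applied to $H=Z_G(A(R)/A_1(R))$ and then Lemma~\ref{L:R3} for the normal subgroup $Z_G(A(R))$. Second, in case~(ii) identifying the Arf invariant as a conjugacy invariant and showing both values are realized by subgroups of $\GL_n(\epsilon q)$ does need the Lang--Steinberg rationality input you mention (as in Lemma~\ref{L:GL2}); this is fine in outline, but the $Q_8\circ Q_8\cong D_8\circ D_8$ heuristic by itself only gives abstract isomorphism, not $\GL_n$-conjugacy, so it is not a substitute for that argument.
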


\begin{proof}
Assume that $\gamma=0$. Then, $A(R)=A_{1}(R)=\GL_{1}(\epsilon q)_{2}I$ and $R/\GL_{1}(\epsilon q)_{2}I$ is a radical 
2-subgroup of $\PGL_{n}(\epsilon q)$ with $\Omega_{1}(R/\GL_{1}(\epsilon q)_{2}I)=A(R)/\GL_{1}(\epsilon q)_{2}I=1$. Thus, 
$R=\GL_{1}(\epsilon q)_{2}I$.     

Assume that $4|q-\epsilon$ and $\gamma\geq 1$. By Lemma \ref{L:m2}, the map $\mu: A(R)\rightarrow\{\pm{1}\}$ is a homomorphism. 
Using Lemmas \ref{L:m1} and \ref{L:m2}, one finds elements $x_{1},x_{2},\dots,x_{2\gamma-1},x_{2\gamma}\in A(R)$ generating 
$A(R)/A_{2}(R)$ and such that: \[m(x_{2i-1},x_{2j-1})=m(x_{2i-1},x_{2j})=m(x_{2i},x_{2j-1})=m(x_{2i},x_{2j})=1,\ i\neq j,\] 
\[m(x_{2i-1},x_{2i})=-1,\ 1\leq i\leq\gamma,\] \[\mu(x_{2\gamma})=\mu(x_{2\gamma-1})=\cdots=\mu(x_{2})=1,\ \delta:=\mu(x_{1})=
\pm{1}.\] Moreover, we could assume that $x_{i}^{2}=I$ ($2\leq i\leq 2\gamma$) and $x_{1}^{2}=\delta I$. By Lemma \ref{L:GL1}, 
the conjugacy class of each $(x_{2i-1},x_{2i})$ ($2\leq i\leq\gamma$) is determined and the conjugacy class of $(x_{1},x_{2})$ 
is determined by $\delta$. Moreover, we always have \[Z_{\GL_{n}(\epsilon q)}(\{x_{2i-1},x_{2i}\})\sim
\Delta(\GL_{n/2}(\epsilon q)).\] By induction, one shows that $2^{\gamma}|n$ and there is a unique conjugacy classes of $A(R)$ 
for fixed values of $n$, $\gamma$, $\delta$. Moreover, we have \[Z_{G}(A(R)/A_{1}(R))=G'\cdot A(R),\] where 
$G'\cong\GL_{n/2^{\gamma}}(\epsilon q)$. Put $R'=R\cap G'$. By Lemmas \ref{L:R5} and \ref{L:R4}, $R'$ is a radical 2-subgroup 
of $G'$. It is easy to show that $A(R')=\GL_{1}(\epsilon q)_{2}I$. Then, one shows $R'=A(R')=\GL_{1}(\epsilon q)_{2}I$ by the 
same argument as in the first paragraph. Thus, $R=A(R)$. When $\delta=-1$, put $\B=\{x\in A(R):\mu(x)=1\}$. Then, $B$ is an 
index 2 subgroup of $A(R)$ normalized by $N_{G}(R)$. Put \[S=\{g\in N_{G}(R): [g,Z_{G}(R)]=1\textrm{ and }
[g,B]\subset\{\pm{I}\}\}.\] Then, $S$ is a normal 2-subgroup of $N_{G}(R)$ containing $R$ with $S/R\cong Z_{2}$. Thus, 
$O_{2}(N_{G}(R))\neq R$ and hence $R$ is not a radical 2-subgroup. The $\delta=1$ case does give a radical 2-subgroup.   

Assume that $4|q+\epsilon$ and $\gamma\geq 1$. Using Lemmas \ref{L:m1} and \ref{L:m2}, one finds elements $x_{1},x_{2},\dots,
x_{2\gamma-1},x_{2\gamma}\in A(R)$ generating $A(R)/A_{2}(R)$ and such that: \[m(x_{2i-1},x_{2j-1})=m(x_{2i-1},x_{2j})=
m(x_{2i},x_{2j-1})=m(x_{2i},x_{2j})=1,\ i\neq j,\] \[m(x_{2i-1},x_{2i})=-1,\ 1\leq i\leq\gamma,\] \[\mu(x_{2\gamma})=
\mu(x_{2\gamma-1})=\cdots=\mu(x_{3})=1,\ \delta:=\mu(x_{2})=\mu(x_{1})=\pm{1}.\] By Lemmas \ref{L:GL1} and \ref{L:GL2}, 
one shows that $2^{\gamma}|n$ and there is a unique conjugacy class of $A(R)$ for fixed values of $n$, $\gamma$, $\delta$.   
The rest of the proof is similar to the $4|q-\epsilon$ case. When $\gamma=1$ and $\delta=1$, put $\B=\{x\in A(R):\mu(x)=1\}$. 
Then, $B$ is an index 2 subgroup of $A(R)$ normalized by $N_{G}(R)$. Put \[S=\{g\in N_{G}(R): [g,Z_{G}(R)]=1\textrm{ and }
[g,B]\subset\{\pm{I}\}\}.\] Then, $S$ is a normal 2-subgroup of $N_{G}(R)$ containing $R$ with $S/R\cong Z_{2}$. Thus, 
$O_{2}(N_{G}(R))\neq R$ and hence $R$ is not a radical 2-subgroup. When $\gamma\geq 2$ or $\epsilon'=-1$, the resulting 
subgroup is actually a radical 2-subgroup.  
\end{proof}

Let \[Z_{G}(A/A'_{1})=Z_{G}(A'_{1})\cap\{Y\in\GL_{n}(\epsilon q): YXY^{-1}=\pm{X},\forall X\in A(R)\}.\]   

\begin{prop}\label{P:GL4}
Let $R$ be a radical 2-subgroup of $\GL_{n}(\epsilon q)$ with \[A_{1}(R)=A'_{1}(R)=A'_{2}(R)=\GL_{1}(\epsilon q)_{2}I
\quad\textrm{ and }\quad A_{2}(R)/A'_{2}(R)\cong Z_{2}\] and write $\gamma=\frac{1}{2}\log_{2}|A(R)/A_{2}(R)|$. Then 
$4|q+\epsilon$ and there is a unique conjugacy class of such $R$ for fixed values of $n$ and $\gamma$ 
(with $2^{\gamma+1}|n$).      
\end{prop}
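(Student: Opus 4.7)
Proof proposal. The hypothesis provides an $x_0\in A_2(R)\setminus A'_2(R)$ with $\mu(x_0)=-1$, so $x_0^2=tI$ for some $t\in\GL_1(\epsilon q)_2$ that is not a square in $\GL_1(\epsilon q)$. When $4\mid q-\epsilon$ the group $\GL_1(\epsilon q)_2\cong Z_{2^a}$ ($a\ge 2$) and $t$ must be a generator, forcing $x_0$ to have order $2^{a+1}$; when $4\mid q+\epsilon$ one has $t=-1$ and $x_0^2=-I$, so $x_0$ has order $4$. So the dichotomy hinges on which case the element $x_0$ can actually exist in while $R$ stays radical.

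To rule out $4\mid q-\epsilon$, I would follow the $\delta=-1$ sub-case of the proof of Proposition~\ref{P:GL3}. Since $(q-\epsilon)/2$ is even, Lemma~\ref{L:m2} gives that $\mu\colon A(R)\to\{\pm 1\}$ is a group homomorphism, non-trivial (as $\mu(x_0)=-1$), and $G$-invariant (it depends only on the scalar $X^2$). Put $B:=\ker\mu$, an $N_G(R)$-stable subgroup of $A(R)$ of index~$2$, and define
\[
S \;:=\; \{\,g\in N_G(R) : [g,Z_G(R)]=1 \text{ and } [g,B]\subseteq\{\pm I\}\,\}.
\]
Lemma~\ref{L:m0} shows $R\subseteq S$; $S$ is $N_G(R)$-normal because its defining conditions are; and $S$ is a 2-group since modulo $Z_G(R)\langle\pm I\rangle$ its elements act trivially on an elementary abelian 2-group. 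The crux is to establish $R\subsetneq S$, which I expect to carry out exactly as in Proposition~\ref{P:GL3}: one computes $|Z_G(R)|$ and the index of $RZ_G(R)$ in $S$ directly, obtaining $|S/R|=2$. Once $R\subsetneq S\subseteq O_2(N_G(R))$, $R$ fails to be radical, contradicting the hypothesis.

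For $4\mid q+\epsilon$, Lemma~\ref{L:GL2} (whose standing assumption is satisfied) produces a unique $G$-conjugacy class of pairs $(x_0,y)$ with $x_0^2=y^2=-I$ and $yx_0y^{-1}=-x_0$, and supplies $Z_G(\{x_0,y\})\cong\GL_{n/2}(\epsilon q)$. Inside this centralizer I apply Proposition~\ref{P:GL3} in its case $4\mid q+\epsilon$ to obtain generators $x_1,\dots,x_{2\gamma}$ with $x_i^2=I$ realizing the symplectic pairing on $A(R)/A_2(R)$; this needs $2^\gamma\mid n/2$, hence $2^{\gamma+1}\mid n$. Define
\[
R \;:=\; \langle\, A'_1,\,x_0,\,y,\,x_1,\dots,x_{2\gamma}\,\rangle.
\]
Uniqueness requires one extra point: the two classes $\delta=\pm 1$ from Proposition~\ref{P:GL3} collapse here. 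Indeed, since $(q-\epsilon)/2$ is odd and $m(x_0,x_1)=1$, Lemma~\ref{L:m2} gives $\mu(x_0x_1)=\mu(x_0)\mu(x_1)=-\mu(x_1)$, so replacing $x_1$ by $x_0x_1$ (still a valid generator, as $x_0\in R$) switches $\delta$. Radicality of $R$ is then verified via Lemma~\ref{L:R10} by computing $Z(R)=A'_1$, identifying $Z_G(R)$, and checking that $O_2(N_G(R)/RZ_G(R))=1$.

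The main obstacle is Step~2, the existence of $z\in S\setminus R$. Although the general template is inherited from Proposition~\ref{P:GL3}, filling in the counting argument requires a careful analysis of the centralizer $Z_G(x_0)\cong\GL_{n/2}(q^2)$ and of the interaction of the $(\pm)$-actions on $A(R)$ with the normalizer $N_G(R)$; all other steps amount to routine adaptation of the arguments already used in Propositions~\ref{P:GL3} and Lemma~\ref{L:GL2}.
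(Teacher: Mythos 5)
There are two genuine problems here, and they stem from the same missing step: the paper's proof passes through the centralizer $Z_G(A(R)/A'_1(R)) = G'\cdot D$ with $G'\cong\GL_{n/2^{\gamma+1}}(q^2)\rtimes\langle\tau\rangle$ (a group over $\mathbb{F}_{q^2}$ with an extra involution $\tau$), and you replace this with $Z_G(\{x_0,y\})\cong\GL_{n/2}(\epsilon q)$ coming from Lemma~\ref{L:GL2}. That shortcut loses the whole cyclic piece of $R$. Concretely, take $\gamma=0$ and $4\mid q+\epsilon$: your $R=\langle A'_1, x_0, y\rangle$ with $x_0^2=y^2=-I$, $yx_0y^{-1}=-x_0$ is a quaternion group $Q_8$ of order $8$ (with $A'_1=\{\pm I\}$ absorbed as its center). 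For this group $A(R)=R$, hence $A_2(R)=Z(A(R))=Z(Q_8)=\{\pm I\}$; and $\mu(\pm I)=1$ because $(\pm I)^2=I$, so $A'_2(R)=\{\pm I\}=A_2(R)$ and $A_2(R)/A'_2(R)=1$, \emph{not} $Z_2$. Your candidate simply fails the hypothesis of the proposition; it is in fact the group $R^{1,-}_{m,0,1}$ from Proposition~\ref{P:GL3}, not a new class. The correct representative for $\gamma=0$ is $\Delta(\tilde R)$ with $\tilde R$ a Sylow $2$-subgroup of $\GL_2(\epsilon q)$ -- a semidihedral group of order $2^{a+2}\geq 16$ with a cyclic subgroup of order $2^{a+1}$. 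That cyclic factor is produced in the paper by descending to $R'=R\cap\GL_{n/2^{\gamma+1}}(q^2)$, where $\GL_1(q^2)_2$ has order $2^{a+1}$ because $4\mid q^2-1$ always; there is no way to see it from inside $\GL_{n/2}(\epsilon q)$.

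The $4\mid q-\epsilon$ exclusion has the same shape of gap. You set up $B=\ker\mu$ and $S=\{g\in N_G(R):[g,Z_G(R)]=1,\ [g,B]\subset\{\pm I\}\}$ as in the $\delta=-1$ sub-case of Proposition~\ref{P:GL3}, but there $R=A(R)$ and is abelian, whereas here $R$ is strictly larger than $A(R)$ and contains $\tau$-type elements not constrained by $[g,B]\subset\{\pm I\}$; it is not clear $S$ is a $2$-group, let alone that $R\subsetneq S$, and you flag the existence of $z\in S\setminus R$ as unresolved. The paper instead reduces inductively through $G'\cong\GL_{n/2^{\gamma+1}}(q^2)\rtimes\langle\tau\rangle$ down to the base case ($k=1$, $\gamma=0$), where the relation $yxy^{-1}=x^{2^a+1}$ exhibits concretely that $O_2(N_G(R)/R)\cong Z_2$, so $R$ is not radical -- and this same descent simultaneously pins down $4\mid q+\epsilon$, $k=1$, and the unique representative $\Delta(\tilde R)\cdot D$. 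Your separate observation that $\delta$ can be flipped by replacing $x_1$ with $x_0x_1$ (using $m(x_0,x_1)=1$ and $(q-\epsilon)/2$ odd in Lemma~\ref{L:m2}) is correct and is a nice explanation for why the two $\delta$-classes collapse, but it does not repair either of the two gaps above.
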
 

\begin{proof}
By assumption, there exists an element $x_{0}\in A_{2}(R)$ with $\mu(x_{0})=-1$. Using Lemmas \ref{L:m1} and \ref{L:m2}, 
one finds elements $x_{1},x_{2},\dots,x_{2\gamma-1},x_{2\gamma}\in A(R)$ generating $A(R)/A_{2}(R)$ and such that: \[m(x_{2i-1},x_{2j-1})=m(x_{2i-1},x_{2j})=m(x_{2i},x_{2j-1})=m(x_{2i},x_{2j})=1,\ i\neq j,\] 
\[m(x_{2i-1},x_{2i})=-1,\ 1\leq i\leq\gamma,\] \[\mu(x_{2\gamma})=\mu(x_{2\gamma-1})=\cdots=\mu(x_{2})=\mu(x_{1})=1.\] We must 
have \[x_{0}\sim\left(\begin{array}{cc}&I_{n/2}\\\delta_{0}I_{n/2}&\end{array}\right),\] where $\delta_{0}$ is a generator of 
$\GL_{1}(\epsilon q)_{2}$. By Lemma \ref{L:GL1}, one shows that $2^{\gamma+1}|n$ and there is a unique conjugacy classes of 
$A(R)$ for fixed values of $n$ and $\gamma$. Moreover, calculating common centralizers of 
$x_{1},y_{1},x_{2},y_{2},\dots,x_{2\gamma},y_{2\gamma}$ inductively we get 
\[Z_{\GL_{n}(\epsilon q)}(A(R)/A'_{1}(R))=G'\cdot D,\] where 
\[D=\langle x_{1},y_{1},x_{2},y_{2},\dots,x_{2\gamma},y_{2\gamma}\rangle\subset A(R)\] and it generates $A(R)/A_{2}(R)$, 
$D\cap A_{2}(R)=\{\pm{I}\}$ and $D/\{\pm{I}\}\cong(Z_{2})^{2\gamma}$. Moreover,
 \[G'\cong\GL_{n/2^{\gamma+1}}(q^{2})\rtimes\langle\tau\rangle\] where $\tau^{2}=1$ and  
\[\tau X\tau^{-1}=\left\{\begin{array}{cc}F_{q}(X)\quad\textrm{ when }\epsilon=1;\\(F_{q}(X)^{t})^{-1}\textrm{ when }
\epsilon=-1.\end{array}\right.\] Put $R''=R\cap G'$. By Lemmas \ref{L:R5} and \ref{L:R4}, $R''$ is a radical 2-subgroup of 
$G'$. Put $R'=R''\cap\GL_{n/2^{\gamma+1}}(q^{2})$. By Lemma \ref{L:R3}, $R'$ is a radical 2-subgroup of 
$\GL_{n/2^{\gamma+1}}(q^{2})$. Considering the action of an element $x\in R''-R'$ on $A(R')$, one shows that $A(R')=A_{2}(R')$ 
and $A_{1}(R')=A'_{1}(R')=A'_{2}(R')=\GL_{1}(q^{2})I$. Put $R_{0}=R$, $R_{1}=R'$. Taking induction, we get a sequence of  
radical 2-subgroups $R_{0},R_{1},\dots$. There must exist $k\geq 1$ such that $A_{1}(R_{k-1})/A'_{1}(R_{k-1})=1$, 
$A_{2}(R_{k-1})/A'_{2}(R_{k-1})\cong Z_{2}$, and $A_{1}(R_{k})/A'_{1}(R_{k})=A_{2}(R_{k})/A'_{2}(R_{k})=1$.  

First, suppose that $k=1$ and $\gamma=0$ for simplicity. We must have $R'=\GL_{1}(q^{2})_{2}I$ and $R/R'\cong Z_{2}$. When 
$4|q+\epsilon$, we have: $R=\langle x,y\rangle$ and $R'=\langle x\rangle$, where $x^{2^{a}}=-I$, $yxy^{-1}=x^{\epsilon q}=
x^{2^{a}-1}$, $y^{2}=I$. Embed $\GL_{n/2}(q^{2})\rtimes\langle\tau\rangle$ into $\GL_{n}(\epsilon q)$ and identify $x$ with 
some matrix $\left(\begin{array}{cc}aI&bI\\-bI&aI\end{array}\right)$. Using Lemma \ref{L:GL1} one shows that $y\sim\tau$. 
Such an $R$ is conjugate to some $\Delta(\tilde{R})$, where $\tilde{R}$ is a Sylow 
2-subgroup of $\GL_{2}(\epsilon q)$. It is actually a radical 2-subgroup. When $4|q-\epsilon$, we have: $R=\langle x,y\rangle$ 
and $R'=\langle x\rangle$, where $x^{2^{a}}=-I$, $yxy^{-1}=x^{\epsilon q}=x^{2^{a}+1}$, $y^{2}=I$. Again, we have $y\sim\tau$.  
One shows that $N_{G}(R)/Z_{G}(R)R\cong Z_{2}$ and $O_{2}(N_{G}(R)/R)\cong Z_{2}$. Hence, $R$ is not a 2-radical subgroup.  

In general, note that we always have $4|q^{2}-1$. By considering $R_{k-1}$ and $R_{k}$, the above argument shows that 
$4|q+\epsilon$ and $k=1$. Moreover, the above argument leads to a unique conjugacy class of radical 2-subgroups with a 
representative $R=(\Delta(\tilde{R}))\cdot D$, where $\tilde{R}$ is a Sylow 2-subgroup of $\GL_{2}(\epsilon q)$ and $D$ 
is as in the first paragraph.   
\end{proof} 

\begin{prop}\label{P:GL5}
Let $R$ be a radical 2-subgroup of $\GL_{n}(\epsilon q)$ such that \[A'_{1}(R)=A'_{2}(R)=
\GL_{1}(\epsilon q)_{2}I\quad\textrm{ and }\quad A_{1}(R)/A'_{1}(R)=A_{2}(R)/A'_{2}(R)\cong Z_{2}.\] Write 
$\gamma=\frac{1}{2}\log_{2}|A(R)/A_{2}(R)|$ and $|R|=2^{2\gamma+\alpha+a}$. Then, there is a unique conjugacy class of such 
radical 2-subgroups for given values of $n$, $\gamma$, $\alpha$ (with $2^{\gamma+\alpha}|n$).    
\end{prop}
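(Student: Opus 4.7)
The plan is to adapt the reduction scheme used in the proofs of Propositions \ref{P:GL3} and \ref{P:GL4} to the present setting. The new feature, compared with Proposition \ref{P:GL4}, is that here $A_1(R)=A_2(R)$, so a generator $x_0$ of $A_1(R)/A'_1(R)$ with $\mu(x_0)=-1$ is central in $R$.

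First, by Lemmas \ref{L:m1} and \ref{L:m2} together with the non-degeneracy of $m$ on $A(R)/A_2(R)$, I would choose $x_0\in A_1(R)$ with $\mu(x_0)=-1$ and $x_0^2=tI$ for a fixed non-square generator $t$ of $\GL_1(\eps q)_2$, plus symplectic generators $x_1,y_1,\ldots,x_\gamma,y_\gamma\in A(R)$ projecting to a symplectic basis of $A(R)/A_2(R)$, normalized so that $\mu(x_i)=\mu(y_i)=1$, $m(x_i,y_j)=-\delta_{ij}$, $m(x_i,x_j)=m(y_i,y_j)=1$ for $i\neq j$, and $x_i^2=y_i^2=I$. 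Iteratively applying Lemma \ref{L:GL1} to the pairs $(x_i,y_i)$ pins down their joint $\GL_n(\eps q)$-conjugacy class and yields $Z_G(x_1,y_1,\ldots,x_\gamma,y_\gamma)\cong\Delta(\GL_{n/2^{\gamma}}(\eps q))$. Since $x_0\in Z(R)$ centralizes each $x_i,y_i$, it has the form $x_0=\Delta(x_0')$ with $x_0'\in\GL_{n/2^{\gamma}}(\eps q)$ and $(x_0')^2=tI$.

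By Lemmas \ref{L:R3}, \ref{L:R4} and \ref{L:R5}, $R':=R\cap\Delta(\GL_{n/2^{\gamma}}(\eps q))$ is a radical 2-subgroup of $\GL_{n'}(\eps q)$ (with $n'=n/2^{\gamma}$) whose $A(R')$ equals the cyclic group $\langle x_0',\GL_1(\eps q)_2I\rangle\cong Z_{2^{a+1}}$ contained in $Z(R')$. This reduces the problem to classifying such $R'$ with $\gamma(R')=0$ and $|R'|=2^{\alpha+a}$. Since $A(R')\subseteq Z(R')$ and $\pi(A(R'))=\Omega_1(Z(\pi(R')))$ has order $2$, the group $\pi(R')$ admits a unique involution and is therefore either cyclic or generalized quaternion. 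I would rule out the generalized quaternion alternative by a direct normalizer computation (its cyclic index-2 subgroup would lift to a normal $2$-subgroup of $N_G(R')$ strictly larger than $R'$, contradicting radicality). This forces $\pi(R')$ cyclic of order $2^\alpha$, and then a size count forces $R'=\langle y\rangle$ cyclic of order $2^{a+\alpha}$, with $y^{2^{\alpha-1}}$ equal to $x_0'$ modulo scalars.

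Such a cyclic $R'=\langle y\rangle$ is realized uniquely up to $\GL_{n'}(\eps q)$-conjugacy as the Sylow 2-subgroup of the non-split maximal torus $\mathbb{F}_{q^{2^\alpha}}^{\times}\hookrightarrow\GL_{2^\alpha}(\eps q)$ diagonally embedded into $\GL_{n'}(\eps q)\supset\GL_{2^\alpha}(\eps q)^{n'/2^\alpha}$: the existence of an element of order $2^{a+\alpha}$ in $\GL_{n'}(\eps q)$ forces $2^\alpha\mid n'$, hence $2^{\gamma+\alpha}\mid n$, and the conjugacy class of $R'$ is then determined by the minimal polynomial of $y$ over $\mathbb{F}_q$ (the minimal polynomial of a primitive $2^{a+\alpha}$-th root of unity). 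Finally, radicality is confirmed by computing $N_G(R')/R'$ as an extension built from the quotient torus $\mathbb{F}_{q^{2^\alpha}}^{\times}/\langle y\rangle$ (odd order), the Galois group $\operatorname{Gal}(\mathbb{F}_{q^{2^\alpha}}/\mathbb{F}_q)\cong Z_{2^\alpha}$ normalizing $\langle y\rangle$, and the symmetric group $\fS_{n'/2^\alpha}$ permuting diagonal copies; a direct Sylow analysis yields $O_2(N_G(R')/R')=1$. The main obstacle I anticipate is the structural step above, where the generalized quaternion alternative must be carefully excluded, and the subsequent radicality verification, which must treat the linear ($\eps=+1$) and unitary ($\eps=-1$) cases uniformly since in the unitary case the non-split torus embeds into $\GU_{n'}(q)$ via a twisted structure and the Frobenius--Galois interaction requires separate attention.
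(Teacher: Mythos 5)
Your approach diverges structurally from the paper's, and it has a genuine gap at the key step.

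The paper handles the central element $x_0$ (with $\mu(x_0)=-1$) by passing to $G'=Z_G(A(R))\cong\GL_{n/2^{\gamma+1}}(q^2)$, so that $x_0$ \emph{becomes a scalar} over the bigger field $\mathbb{F}_{q^2}$. The quotient $R'=R\cap G'$ then sits in a group where $4\mid q^2-1$ always holds, and the three possibilities $(A_1(R')/A'_1(R'),A_2(R')/A'_2(R'))\in\{(1,1),(1,Z_2),(Z_2,Z_2)\}$ are disposed of by Proposition~\ref{P:GL3} (the $(1,1)$ case), Proposition~\ref{P:GL4} (which shows $(1,Z_2)$ is impossible since $4\mid q^2-1$), and iteration (the $(Z_2,Z_2)$ case), respectively. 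The paper never needs an abstract structure theorem for 2-groups. You instead take the centralizer of only the $x_i,y_i$, landing in $\Delta(\GL_{n/2^\gamma}(\eps q))$ over the \emph{same} field, and then try to classify $R'$ directly. Note that since $x_0'\in Z(R')$, your $R'$ is in fact contained in $Z_{\GL_{n'}(\eps q)}(x_0')\cong\GL_{n'/2}(q^2)$, i.e., your $R'$ \emph{is} the paper's $R'$ — you've just not changed coordinates to see it.

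The gap is the assertion ``the group $\pi(R')$ admits a unique involution.'' What you actually know from $A(R')=\langle x_0'\rangle$ (itself stated without proof) is that $\Omega_1(Z(\pi(R')))\cong Z_2$, i.e., that $\pi(R')$ has a unique \emph{central} involution. This does not imply a unique involution: dihedral 2-groups, for example, have a unique central involution but $2^{k-1}+1$ involutions in all. The standard theorem (unique involution $\Rightarrow$ cyclic or generalized quaternion) requires the stronger, unjustified hypothesis. You would need to separately show that $\pi(R')$ contains no non-central involutions $\bar y$ — equivalently, no $y\in R'$ with $y^2$ scalar that fails to commute with $R'$ up to $\pm 1$ — and this is not obvious in $\GL_{n'}(\eps q)$. (It is true here, as the final answer confirms, but the route to it is precisely the change of field and the appeal to Proposition~\ref{P:GL4} that the paper uses.) I'd also flag that your radicality check at the end only addresses radicality of $R'$ in $\GL_{n'}(\eps q)$, whereas the Proposition concerns $R$ in $\GL_n(\eps q)$; one needs to assemble $R$ back from $R'$ and the $x_i,y_i$ and confirm radicality there. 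The cleanest fix is to observe that $R'\subset Z_{\GL_{n'}(\eps q)}(x_0')\cong\GL_{n'/2}(q^2)$, regard $x_0'$ as a scalar there, and then use Propositions~\ref{P:GL3} and~\ref{P:GL4} inductively, exactly as the paper does.
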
 

\begin{proof}
By assumption, there exists an element $x_{0}\in A_{1}(R)$ with $\mu(x_{0})=-1$. Using Lemmas \ref{L:m1} and \ref{L:m2}, one 
finds elements $x_{1},x_{2},\dots,x_{2\gamma-1},x_{2\gamma}\in A(R)$ generating $A(R)/A_{1}(R)$ and such that:  \[m(x_{2i-1},x_{2j-1})=m(x_{2i-1},x_{2j})=m(x_{2i},x_{2j-1})=m(x_{2i},x_{2j})=1,\ i\neq j,\] \[m(x_{2i-1},x_{2i})=-1,
\ 1\leq i\leq\gamma,\] \[\mu(x_{2\gamma})=\mu(x_{2\gamma-1})=\cdots=\mu(x_{2})=\mu(x_{1})=1.\] We must have 
\[x_{0}\sim\left(\begin{array}{cc}&I_{n/2}\\\delta_{0}I_{n/2}&\end{array}\right),\] where $\delta_{0}$ is a generator of 
$\GL_{1}(\epsilon q)_{2}$. By Lemma \ref{L:GL1}, one shows that $2^{\gamma+1}|n$ and there is a unique conjugacy classes of 
$A(R)$ for fixed values of $n$ and $\gamma$. Moreover, we have \[Z_{\GL_{n}(\epsilon q)}(A(R)/A_{1}(R))=G'\cdot A(R),\] 
where $G'\cong\GL_{n/2^{\gamma+1}}(q^{2})$. Put $R'=R\cap G'$. By Lemmas \ref{L:R5} and \ref{L:R4}, $R'$ is a radical 
2-subgroup of $G'$. One can show easily that $A(R')=A_{2}(R')$ and $A'_{1}(R')=A'_{2}(R')=\GL_{1}(q^{2})_{2}I$. Then, \[(A_{1}(R')/A'_{1}(R'),A_{2}(R')/A'_{2}(R'))\cong(1,1), (1,Z_{2})\textrm{ or }(Z_{2},Z_{2}).\] In the $(1,1)$ case, 
it reduces to Proposition \ref{P:GL3}. Due to $4|q^{2}-1$, by Proposition \ref{P:GL4} the $(1,Z_{2})$ case can not happen. 
In the $(Z_{2},Z_{2})$ case, one takes further induction. Finally, we obtain a unique conjugacy class of radical 2-subgroups 
for any given values of $n$, $\gamma$, $\alpha$.    
\end{proof} 

Now we summarize the above construction of radical 2-subgroups of general linear and unitary groups.
First, we give embeddings of the groups $D_8$ and $Q_8$ into $\GL_2(\eps q)$ as follows. The group $D_8$ can be embedded into 
$\GL_2(\eps q)$, generated by $\begin{pmatrix} -1 & 0 \\ 0 & 1 \end{pmatrix}$ and $\begin{pmatrix} 0 & 1 \\ 1 & 0 \end{pmatrix}$, 
while the group $Q_8$ can be embedded into $\SL_2(\eps q)$, generated by $\begin{pmatrix} 0 & 1 \\ -1 & 0 \end{pmatrix}$ and 
$\begin{pmatrix} b & b' \\ b' & -b \end{pmatrix}$. By taking tensor products, the extraspecial 2-group $E_{\eta}^{2\ga+1}$ can be embedded into 
$\GL_{2^\ga}(\eps q)$ such that its center is embedded into $Z(\GL_{2^\ga}(\eps q))$. For convenience, we also allow $\ga=0$. 
In this case, we interpret $E_\eta^{2\ga+1}$ as the group of order 2.   

Let $\al\in\bbZ_{\ge 0}$. First suppose that $4\mid (q-\eps)$ or $\al\ge 1$.
Let $Z_\al$ be a cyclic group of order $2^{a+\al}$.
Then $Z_\al\circ_2 E_{+}^{2\ga+1}\cong  Z_\al\circ_2 E_{-}^{2\ga+1}$. Thus we assume that $\eta=+$.
The group $Z_\al\circ_2 E_{+}^{2\ga+1}$ can be embedded into $\GL_{2^{\ga}}((\eps q)^{2^\al})$ such that $Z_\al$ is identified
with $Z(\GL_{2^{\ga}}((\eps q)^{2^\al}))_2$. Let $R_{\al,\ga}^1$ be the image of $Z_\al\circ_2 E_{+}^{2\ga+1}$ under the 
embedding
\begin{equation}\label{equ:embedding-cox}
\GL_{2^{\ga}}((\eps q)^{2^\al})\embed\GL_{2^{\al+\ga}}(\eps q).
\addtocounter{thm}{1}\tag{\thethm}
\end{equation}

Now suppose that $4\mid (q+\eps)$ and $\al=0$. \begin{itemize}
\item[(i)] Set $R_{0,0}^1=\{\pm 1\}$, the Sylow 2-subgroup of $\GL_1(\eps q)$. 
\item[(ii)] Write $R_{0,\ga}^1$ for the image of $E_{\eta}^{2\ga+1}$ in $\GL_{2^\ga}(\eps q)$. Here we let $\ga\ge2$ for $\eta=+$ 
while let $\ga\ge1$ for $\eta=-$. Note that here $R_{0,\ga}^1$  stands for two different groups since $\eta=+,-$. Sometimes we also write 
it as $R_{0,\ga}^{1,\eta}$ to indicate $\eta$.
\item[(iii)] We consider the embedding $\GL_2(\eps q)\embed \GL_{2^{\ga}}(\eps q)$, $M\mapsto  I_{2^{\ga-1}}\otimes M$, and denote 
the image of a Sylow 2-subgroup $\GL_2(\eps q)$ under this embedding by $S_{\ga}$ (then $S_{\ga}$ is a semidihedral group of 
order $2^{a+2}$). Here we let $\ga\ge 1$. From this, $S_{\ga}\circ_2 E_{+}^{2\ga-1}\cong S_{\ga}\circ_2 E_{-}^{2\ga-1}$ can be 
embedded into  $\GL_{2^{\ga}}(\eps q)$, and its image is denoted by $R_{0,\ga}^{2}$. 
\end{itemize}

As above, we have defined $R_{\al,\ga}^i$ with $i\in\{1,2\}$. Note that $R_{\al,\ga}^2$ is defined only when $4\mid(q+\eps)$ and 
$\al=0$. Let $m$ be a positive integer. We consider the $m$-fold diagonal embedding 
\begin{equation}\label{equ:m-fold}
\GL_{2^{\al+\ga}}(\eps q)\embed \GL_{m2^{\al+\ga}}(\eps q),\quad 
g\mapsto  \diag(g,g,\ldots,g),
\addtocounter{thm}{1}\tag{\thethm}
\end{equation}
and write $R_{m,\al,\ga}^i$ for the image of  $R_{\al,\ga}^i$ under this embedding. 

With above notation, radical 2-subgroups obtained in Proposition \ref{P:GL3} are 
as follows ($m=\frac{n}{2^{\gamma}}$ below): \begin{itemize} 
\item[(1)]when $\ga=0$, they are $R_{m,0,0}^{1}$;
\item[(2)]when $4|q-\epsilon$ and $\gamma\geq 1$, they are $R_{m,0,\gamma}^{1}$;
\item[(3)]when $4|q+\epsilon$ and $\gamma=1$, they are $R_{m,0,\gamma}^{1,-}$;
\item[(4)]when $4|q+\epsilon$ and $\gamma\geq 2$, they are $R_{m,0,\gamma}^{1,\pm{}}$. 
\end{itemize}
Rradical 2-subgroup obtained in Proposition \ref{P:GL4} 
are $R_{m,0,\gamma+1}^{2}$, where $m=\frac{n}{2^{\gamma+1}}$. 
Radical 2-subgroup obtained in Proposition \ref{P:GL5} 
are $R_{m,\alpha,\gamma}^{1}$, where $\alpha\geq 1$ and $m=\frac{n}{2^{\gamma+\alpha}}$.  

Let $c\ge 0$ be an integer. Denote by $A_c$ the elementary abelian 2-group of order $2^c$ in its left regular representation.
For a sequence of positive integers $\bc=(c_t,\ldots,c_1)$, we define $A_{\bc}=A_{c_1}\wr A_{c_2}\wr\cdots \wr A_{c_t}$. Then 
$A_{\bc}\le\fS_{2^{|\bc|}}$, where  $|\bc|:=c_1+c_2+\cdots+c_t$. For convenience, we also allow $\bc$ to be $\emptyset$.  

Put $R_{m,\al,\ga,\bc}^i=R_{m,\al,\ga}^i\wr A_{\bc}$. We say that these groups $R_{m,\al,\ga,\bc}^i$ are \emph{basic subgroups} 
of $\GL_{m 2^{\al+\ga+|\bc|}}(\eps q)$ except when $4\mid(q+\eps)$, $i=1$, $\al=\ga=0$ and $c_1=1$.

\begin{thm}\label{SS:2-rad-GL}
Let $G=\GL_n(\eps q)=\GL(V)$ or $\GU(V)$ with odd $q$. If $R$ is a radical 2-subgroup of $G$, then 
there exists a corresponding decomposition  
\begin{align*}
V&=V_1\oplus \cdots\oplus V_u \ \textrm{(for $\eps=1$) or} \ V=V_1\perp \cdots\perp V_u \ \textrm{(for $\eps=-1$)},\\
R&=R_1\ti\cdots\ti R_u,
\end{align*}
 where $R_i$ ($1\le i\le u$) are basic subgroups of $G_i$, for $G_i=\GL(V_i)$ or $\GU(V_i)$.
\end{thm}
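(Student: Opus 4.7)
The plan is to run the reduction machinery developed in this subsection in three stages, producing successively the direct product decomposition, the wreath product factors, and finally the innermost basic piece.

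First I would apply the $A_1'$-reduction of Lemma \ref{L:A1}. Given a radical 2-subgroup $R\le G$, the subgroup $A_1'(R)\subset Z(R)$ is diagonalizable, and its eigenspace decomposition yields an orthogonal (or direct) decomposition $V=V_1\oplus\cdots\oplus V_s$ with $Z_G(A_1'(R))=\prod_i G_i$, where $G_i=\GL(V_i)$ or $\GU(V_i)$. Since $A_1'(R)\subset Z(R)$ is $N_G(R)$-stable, Lemma \ref{L:R5} and Lemma \ref{L:R6} give $R=\prod R_i$ with each $R_i$ a radical 2-subgroup of $G_i$ satisfying $A_1'(R_i)\subset Z(G_i)$. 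This already produces the outer direct product decomposition in the theorem statement; it remains to show that each factor $R_i$ is a basic subgroup.

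Next, for a single factor with $A_1'(R_i)\subset Z(G_i)$, I would iterate the $A_2'$-reduction of Lemma \ref{L:A2}: so long as $A_2'(R_i)\not\subset Z(G_i)$, Lemma \ref{L:A2} exhibits $R_i$ as a wreath product $R_i'\wr A_{c}$ (with the permutation action being that of $A_c\le\fS_{2^c}$ in its regular representation) where $R_i'$ is a radical 2-subgroup of the smaller group $\GL_{n/2^c}(\eps q)$ still satisfying $A_1'(R_i')\subset Z$. Repeating this, and observing that the ambient rank strictly drops each time, the process terminates at some $R_0$ with $A_2'(R_0)\subset Z$. Collecting the successive factors $(c_t,\ldots,c_1)=\bc$, this identifies $R_i$ with $R_0\wr A_{\bc}$.

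Finally, for $R_0$ itself, we are reduced to classifying radical 2-subgroups with $A_2'(R_0)$ central, for which the three cases of $(A_1(R_0)/A_1'(R_0),A_2(R_0)/A_2'(R_0))$ are handled respectively by Propositions \ref{P:GL3}, \ref{P:GL4} and \ref{P:GL5}. In each case the proposition forces $R_0$ to be conjugate to a diagonal embedding $\Delta(R_{\al,\ga}^i)$, i.e.\ the image of $R_{\al,\ga}^i$ under the $m$-fold diagonal embedding \eqref{equ:m-fold}, which is exactly $R_{m,\al,\ga}^i$. Combining, $R_i=R_0\wr A_{\bc}=R_{m,\al,\ga,\bc}^i$ is a basic subgroup of $G_i$, giving the theorem.

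The main obstacle is really already absorbed into the three propositions: namely, showing that certain candidate pairs $(A(R_0),R_0)$ actually do give radical subgroups (and excluding those that do not, such as the $\ga=1$, $\delta=1$ case in the $4\mid q+\eps$ situation of Proposition \ref{P:GL3} and the $4\mid q-\eps$ possibility in Proposition \ref{P:GL4}). Assuming those propositions, the present theorem is a direct assembly: the $A_1'$-step gives the outer product, the iterated $A_2'$-step gives the wreath part, and the propositions identify the innermost piece with one of the $R_{\al,\ga}^i$.
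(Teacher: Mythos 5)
Your proposal is correct and follows essentially the same route as the paper, which cites precisely Lemmas~\ref{L:A1}, \ref{L:A2} and Propositions~\ref{P:GL3}, \ref{P:GL4}, \ref{P:GL5}; you have simply made explicit the assembly that the paper leaves to the reader (the $A_1'$-reduction giving the outer product, the iterated $A_2'$-reduction giving $A_{\bc}$, and the three propositions pinning down the innermost factor as $R_{m,\al,\ga}^i$).
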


\begin{proof}
This follows by Lemmas~\ref{L:A1}, \ref{L:A2}, and Propositions \ref{P:GL3}, \ref{P:GL4}, \ref{P:GL5}.
\end{proof}

\subsubsection{Radical $p$ ($>2$) subgroups of $\GL_{n}(\epsilon q)$}\label{SSS:GL-odd}

Now we consider radical $p$-subgroups. Assume that $p>2$ and $q$ is coprime to $p$. Let $R$ be a radical $p$-subgroup 
of $G$. Define \[A(R)=R\cap\pi^{-1}(\Omega_{1}(Z(\pi(R)))).\]  

When $p\nmid q-\epsilon$, the map $R\rightarrow\pi(R)$ is an isomorphism. Then, $A(R)=\Omega_{1}(Z(R))$. Let $e$ be the 
order of $\epsilon q\pmod{p}$. Then, \[Z_{G}(A(R))\cong\GL_{n_{1}}((\epsilon q)^{e})\times\cdots\GL_{n_{f}}((\epsilon q)^{e})
\times\GL_{n_{0}}(\epsilon q),\] where $f=\frac{p-1}{e}$ and $n_{0}+\sum_{1\leq i\leq f}en_{i}=n$. By Lemma \ref{L:R5}, 
$R$ is a radical $p$-subgroup of $Z_{G}(A(R))$. By Lemma \ref{L:R6}, $R=R_{1}\times\cdots\times R_{f}\times R_{0}$ 
with $R_{i}$ a radical $p$-subgroup of $\GL_{n_{i}}((\epsilon q)^{e})$ ($1\leq i\leq f$) and $R_{0}$ a radical $p$-subgroup 
of $\GL_{n_{0}}(\epsilon q)$. We must have $R_{0}=1$. For each $R_{i}$ ($1\leq i\leq f$), it reduces to the case 
that $p|q-\epsilon$. 

When $p\mid q-\epsilon$, the treatment is similar to the case of $p=2$. Let 
\[Z_{0}=\{\lambda\in\GL_{1}(\epsilon q): \lambda^{p}=1\}.\] Define a pairing $m: A(R)\times R\rightarrow Z_0$ by 
\[m(X,Y)=t,\ \forall(X,Y)\in A(R)\times R,\] where $YXY^{-1}X^{-1}=tI$. Define a map $\mu: A(R)\rightarrow Z_0$ by 
\[\mu(X)=t'^{\frac{q-\epsilon}{p}},\ \forall X\in A(R),\] where $X^{p}=t'I$. 
Define \[A_{1}(R)=\{X\in A(R): m(X,Y)=1,\forall Y\in R\}=A(R)\cap Z(R)\] and 
\[A_{2}(R)=\{X\in A(R): m(X,Y)=1,\forall Y\in A\}=Z(A(R)).\] Note that $\mu: A_{2}(R)\rightarrow Z_{0}$ is a group 
homomorphism. Define \[A'_{i}(R)=\{X\in A_{i}(R):\mu(X)=1\},\quad i=1,2.\] Then, $m$ descends to a non-degenerate pairing 
\[m: A(R)/A_{2}(R)\times A(R)/A_{2}(R)\rightarrow Z_{0}.\] 

The $A'_{1}$-reduction and $A'_{2}$-reduction are the same as in the case of $p=2$. After that, it reduces to the case that 
$A'_{1}(R)=A'_{2}(R)=1$. Then, we have similar statement as Propositions \ref{P:GL3} and \ref{P:GL5}. However, 
the radical 2-subgroups occurring in Proposition \ref{P:GL4} have no analogue while $p>2$. A bit different with the proof of 
Proposition \ref{P:GL4}, the group $G'$ takes the form \[G'\cong\GL_{n/p^{\gamma+1}}(\epsilon q^{p})\rtimes\langle
\tau\rangle\] where $\tau^{p}=1$ and $\tau X\tau^{-1}=F_{q}(X)$. In the argument as there, the subgroup $R_{k-1}$ could not 
be a radical subgroup while $p>2$.  

\smallskip 

Radical $p$-subgroups could be constructed as follows. Let $M$ be a non-abelian group of order $p^3$ and exponent $p$, that is 
\[M=\langle x,y,z\mid x^p=y^p=z^p= [x,z]=[y,z]=1, [x,y]=z\rangle.\] Then $M$ can be embedded into $\GL_p(\eta q')$ for a sign 
$\eta\in\{\pm1\}$ and any prime power $q'$ with $p\mid (q'-\eta)$ by letting $x=\diag(1,\zeta,\zeta^2,\ldots,\zeta^{p-1})$ 
and $y=\left(\begin{array}{cc}&1\\I_{p-1}&\\\end{array}\right)$. For $\ga\in\mathbb Z_{\ge 1}$, we denote by $E^{2\ga+1}$ the 
extraspecial $p$-group of order $p^{2\ga+1}$ and exponent $p$, that is, $E^{2\ga+1}=M\circ_p M\circ\cdots\circ_pM$ is a 
central product of $M$ ($\ga$ times). For convenience, we also allow $\ga=0$, and in this case we interpret $E^{2\ga+1}$ as 
the group of order $p$. By taking tensor products, the group $E^{2\ga+1}$ can be embedded into $\GL_{p^\ga}(\eta q')$ such 
that its center is embedded into $Z(\GL_{p^\ga}(\eta q'))$.

Let $e$ be the multiplicative order of $\eps q$ modulo $p$, and let $a=v_p((\eps q)^e-1)$. For $\al\in\mathbb Z_{\ge 0}$, 
let $Z_\alpha$ denote the cyclic group of order $p^{a+\al}$. Then the group $Z_\al\circ_p E^{2\ga+1}$, the central product 
of $Z_\al$ and $E^{2\ga+1}$ over $\Omega_1(Z_\al)=Z(E^{2\ga+1})$, can be embedded into $\GL_{p^\ga}((\eps q)^{ep^{\alpha}})$ 
such that $Z_\al$ is identical to $Z(\GL_{p^\ga}((\eps q)^{ep^{\alpha}}))_p$. Let $R_{\al,\ga}$ be the image of 
$Z_\al\circ_p E_{+}^{2\ga+1}$ under the embedding $\GL_{p^{\ga}}((\eps q)^{ep^\al})\embed\GL_{ep^{\al+\ga}}(\eps q)$. 
Entirely analogously as the case $p=2$, we can define the \emph{basic subgroups} $R_{m,\al,\ga,\bc}$ of 
$\GL_{mep^{\al+\ga+|\bc|}}(\eps q)$. It follows that the definition and structure of the above group $R_{m,\al,\ga,\bc}$ is 
just similar as those of the basic 2-subgroup $R_{m,\al,\ga,\bc}^1$ of $\GL_{m 2^{\al+\ga+|\bc|}}(\eps q)$ for the case $p=2$ 
and $4\mid q-\eps$.

\begin{thm}\label{SS:p-rad-GL}
	Let $G=\GL_n(\eps q)=\GL(V)$ or $\GU(V)$ with $p\nmid q$. If $R$ is a $p$-radical subgroup of $G$, then 
	there exists a corresponding decomposition  
	\begin{align*}
	V&=V_0\oplus V_1\oplus \cdots\oplus V_u \ \textrm{(for $\eps=1$) or} \ V=V_0\perp V_1\perp \cdots\perp V_u \ \textrm{(for $\eps=-1$)},\\
	R&=R_0\ti R_1\ti\cdots\ti R_u,
	\end{align*}
	 where $R_i$ ($1\le i\le u$) are basic subgroups of $G_i$, for $G_i=\GL(V_i)$ or $\GU(V_i)$ for $i\ge 1$, and $R_0$ 
is the trivial subgroup of $\GL(V_0)$ or  $\GU(V_0)$. In particular, $i=0$ occurs only when $p\nmid (q-\eps)$.
	\end{thm}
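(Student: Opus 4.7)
The plan is to mirror the classification scheme used for $p=2$ (Theorems \ref{SS:2-rad-GL} and the propositions leading to it), adapted to odd $p$ as outlined in \S\ref{SSS:GL-odd}. Let $R$ be a nontrivial radical $p$-subgroup of $G=\GL(V)$ or $\GU(V)$ and set $A(R)=R\cap\pi^{-1}(\Omega_1(Z(\pi(R))))$ as before.

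\textbf{Step 1: Reduce to $p\mid(q-\eps)$.} If $p\nmid(q-\eps)$, then $Z(G)$ has trivial $p$-part (or the relevant part) and $\pi\colon R\to\pi(R)$ is an isomorphism, so $A(R)=\Omega_1(Z(R))$ is an elementary abelian $p$-subgroup. As explained before the theorem, $Z_G(A(R))$ decomposes as a product
\[\GL_{n_1}((\eps q)^e)\ti\cdots\ti\GL_{n_f}((\eps q)^e)\ti\GL_{n_0}(\eps q),\]
and by Lemmas \ref{L:R5} and \ref{L:R6} the subgroup $R$ splits accordingly as $R_1\ti\cdots\ti R_f\ti R_0$. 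Since $p\nmid |\GL_{n_0}(\eps q)|_p^{\text{relevant}}$ forces $R_0=1$ only in an initial absorption — more precisely, we let $V_0$ absorb the $\GL_{n_0}$-summand (with $R_0$ trivial) and push each $R_i$ into the regime $p\mid((\eps q)^e-\eps_i)$ with $\eps_i=1$. This reduces the problem to the case $p\mid(q-\eps)$ after replacing the base field.

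\textbf{Step 2: The $A'_1$- and $A'_2$-reductions.} Assuming $p\mid(q-\eps)$, define $m,\mu,A_i(R),A'_i(R)$ as in \S\ref{SSS:GL-odd}. The arguments of Lemmas \ref{L:A1} and \ref{L:A2} carry over verbatim with $\{\pm 1\}$ replaced by $Z_0$: $A'_1(R)$ is diagonalizable with centralizer a direct product of $\GL_{n_i}(\eps q)$, giving $R=\prod R_i$ with each $R_i$ having $A'_1(R_i)\subset Z(\GL_{n_i}(\eps q))$; then the action of $R$ on $A'_2(R)/A'_1(R)$ gives the wreath product $R\sim R'\wr A_{\bc}$ for a succession of transitive elementary abelian $p$-subgroups of symmetric groups. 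After these two reductions it suffices to classify radical $p$-subgroups $R$ with $A'_1(R)=A'_2(R)=Z(G)_p$.

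\textbf{Step 3: The base case $A'_1=A'_2=Z(G)_p$.} Here I choose symplectic-type generators $x_1,y_1,\ldots,x_\ga,y_\ga$ of $A(R)/A_2(R)$ (using Lemmas \ref{L:m1}, \ref{L:m2}), obtaining elements of order $p$ pairing nontrivially under $m$. A direct analogue of Lemma \ref{L:GL1} over $\eps q$ (using Jordan form of order-$p$ matrices, which behave uniformly at odd $p$ since each $p$-th root of unity lies in $\mathbb F_{q}$) shows each pair $(x_i,y_i)$ is unique up to conjugacy and its centralizer is a block-diagonal copy of $\GL_{n/p}(\eps q)$. Induction on $\ga$ pins down the conjugacy class of $A(R)$ and proves $p^\ga\mid n$, and then exactly as in Proposition \ref{P:GL3} one shows $R=A(R)$. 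The outcome is the basic subgroup $R_{m,0,\ga}$; extending by inductive application with $A_1(R)/A'_1(R)$ nontrivial yields $R_{m,\al,\ga}$ and the wreath factor comes from Step 2, producing $R_{m,\al,\ga,\bc}$.

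\textbf{Step 4: Ruling out the $p=2$ exceptional case.} The main obstacle — and the one genuinely new point compared to $p=2$ — is showing that no analogue of Proposition \ref{P:GL4} (the ``$R_{m,0,\ga}^2$'' family from the semidihedral Sylow 2-subgroup of $\GL_2(\eps q)$ for $4\mid(q+\eps)$) appears. Following the proof of Proposition \ref{P:GL4} but replacing $\{\pm 1\}$ by $Z_0$, the reduction produces a subgroup $R_{k-1}$ together with an index-$p$ extension living inside $\GL_{n/p^{\ga+1}}(\eps q^p)\rtimes\langle\tau\rangle$, where $\tau$ has order $p$ and acts by the Frobenius $F_q$. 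For this to yield a radical subgroup one would need an element in $Z(R)$ forcing a cyclic factor of order divisible by $p^{a+1}$ not already contained in $Z(G)_p$; the nontrivial identity $p\mid q^2-1\Rightarrow p\mid q-1$ (valid for odd $p$ but failing for $p=2$) makes such an extension automatically split off as a nonradical quotient. Concretely, one checks that in the resulting $N_G(R)/R$, the image of this candidate extension contributes a nontrivial normal $p$-subgroup, so $R\subsetneq O_p(N_G(R))$, contradicting radicality. This step — verifying that the defining arithmetic gap between $p=2$ and odd $p$ kills the exceptional family — is where I expect the most care to be required.

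\textbf{Step 5: Assembly.} Combining Steps 1--4 and untwisting the successive wreath products and direct products, one obtains a decomposition $V=V_0\oplus V_1\oplus\cdots\oplus V_u$ (orthogonal in the unitary case) and $R=R_0\ti R_1\ti\cdots\ti R_u$, where each $R_i$ ($i\ge 1$) is a basic subgroup $R_{m_i,\al_i,\ga_i,\bc_i}$ of $\GL(V_i)$ or $\GU(V_i)$, and $R_0$ is trivial. The case $V_0\ne 0$ arises exactly when Step 1 leaves a $\GL_{n_0}(\eps q)$-summand, which happens precisely when $p\nmid(q-\eps)$, proving the final clause of the theorem.
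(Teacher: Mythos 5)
Your overall route matches the paper's: reduce to $p\mid(q-\eps)$ via the centralizer of $A(R)$ (Step 1), run the $A'_1$/$A'_2$ reductions verbatim (Step 2), settle the base case as in Proposition~\ref{P:GL3}/\ref{P:GL5} (Step 3), and rule out the exceptional family of Proposition~\ref{P:GL4} (Step 4). Steps 1--3 and 5 are fine. The problem is Step 4, which you correctly identify as the one genuinely new point and the one requiring the most care — and the arithmetic you supply there is false.

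You invoke the ``identity'' $p\mid q^2-1\Rightarrow p\mid q-1$ for odd $p$, which is simply not true (take $p=3$, $q=2$). What actually distinguishes $p=2$ from odd $p$ here is the following. In the proof of Proposition~\ref{P:GL4}, the index-$p$ extension of a cyclic $p$-group $R'=\langle x\rangle\le\GL_1((\eps q)^p)_p$ (of order $p^{a+1}$) by $\langle y\rangle$ with $yxy^{-1}=x^{\eps q}$ has two possible isomorphism types when $p=2$, according to whether $\eps q\equiv 1$ or $\eps q\equiv -1\pmod 4$: the first (equivalently $4\mid q-\eps$) gives a modular maximal-cyclic group, which the paper shows is not radical, and the second (equivalently $4\mid q+\eps$) gives a semidihedral group, which is radical and yields $R^2_{m,\al,\ga}$. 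For $p$ odd, once you have reduced to $p\mid(q-\eps)$ you always have $q\equiv\eps\pmod{p^a}$, hence $\eps q\equiv 1\pmod{p^a}$, and the alternative $\eps q\equiv -1\pmod{p}$ is impossible since $1\not\equiv -1\pmod p$ for $p>2$. So for odd $p$ you are forced into the analogue of the $4\mid(q-\eps)$ case, and the resulting $R_{k-1}$ is never radical — which is exactly the paper's assertion. Your sentence about the index-$p$ extension ``automatically splitting off as a nonradical quotient'' because of a cyclic factor of order $p^{a+1}$ doesn't capture this: that cyclic factor of order $p^{a+1}$ exists on both sides of the dichotomy (it is present at $p=2$ too), so it cannot be the discriminating feature. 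To complete Step 4 honestly you need to show, by the same normalizer computation as in the $4\mid(q-\eps)$ branch of the proof of Proposition~\ref{P:GL4}, that $O_p(N_G(R_{k-1})/R_{k-1})\neq 1$, which is precisely what the cited paper leaves as an assertion.

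Minor point in Step 3: for $\eps=-1$ the $p$-th roots of unity lie in $\mathbb F_{q^2}$, not $\mathbb F_q$; what you actually use is that $\GL_1(\eps q)$ has elements of order $p$, which holds since $p\mid(q-\eps)$.
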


\subsection{Radical $p$-subgroups of orthogonal and symplectic groups}\label{SS:classical2} 

Let $(V,f)$ be a non-degenerate finite dimensional symplectic or orthogonal space over $\bbF_q$ (with odd $q$), 
where $f: V\times V\rightarrow\mathbb{F}_{q}$ is a non-degenerate anti-symmetric (or symmetric) bilinear 2-form. 
Write $I(V)$ for the group of isometries of $V$. 

In the symplectic case, $f$ has a unique isomorphism class. We take $f$ with matrix 
\[\left(\begin{array}{cc}&I_{n/2}\\-I_{n/2}&\end{array}\right)\] under a standard basis of $V$ and write 
$\Sp_{n}(\mathbb{F}_{q})$ or $\Sp_n(q)$ for the corresponding symplectic group, where $n=\dim V$.   

In the orthogonal case, there are two isomorphism classes distinguished by an invariant $\ty(V)\in\{\pm{1}\}$ (type) or 
$\disc(V)\in\{\pm{1}\}$ (discriminant). We have $f\sim\sum_{1\leq i\leq n}a_{i}x_{i}^{2}$ for some 
$a_{1},\dots,a_{n}\in\mathbb{F}_{q}^{\times}$, then $\disc(V):=(a_{1}\cdots a_{n})^{\frac{q-1}{2}}$. When $\dim V=n=2m$ 
is even, we have \[f\sim\sum_{1\leq i\leq m-1}(x_{2i-1}^{2}-x_{2i}^{2})+(x_{2m-1}^{2}-\lambda x_{2m}^{2}),\] where 
$\lambda\in\mathbb{F}_{q}$. Then, $\ty(V):=\lambda^{\frac{q-1}{2}}$ and $\disc(V)=((-1)^{m}\lambda)^{\frac{q-1}{2}}$. 
When $\dim V=n=2m+1$ is odd, we have \[f\sim\sum_{1\leq i\leq m}(x_{2i-1}^{2}-x_{2i}^{2})+\lambda x_{2m+1}^{2},\] where 
$\lambda\in\mathbb{F}_{q}$. Then, $\ty(V):=\lambda^{\frac{q-1}{2}}$ and $\disc(V)=((-1)^{m}\lambda)^{\frac{q-1}{2}}$. 
Hence, it always holds true that \[\disc(V)=\ty(V)(-1)^{\lfloor\frac{n}{2}\rfloor\frac{q-1}{2}}=\ty(V)
\varepsilon^{\lfloor\frac{n}{2}\rfloor}.\] Take $f=\sum_{1\leq i\leq n}x_{i}^{2}$ and write $\rO_{n,+}(q)$ for the 
corresponding orthogonal group. Take $f=\sum_{1\leq i\leq n-1}x_{i}^{2}+\delta_{0}x_{n}^{2}$ with $\delta_{0}$ a generator 
of $\mathbb{F}_{q}^{\times}$ and write $\rO_{n,-}(q)$ for the corresponding orthogonal group. 
Moreover, we also write $\rO_{n}^+(q)$ (resp. $\rO_{n}^-(q)$) for the corresponding orthogonal groups $\rO(V)$ when $\ty(V)=+$ (resp. $-$).

Similar as in last subsection, we first consider the prime 2.
Let $G=I(V)$, where $V$ is an $n$-dimensional vector space over $\mathbb{F}_{q}$ with a symmetric or anti-symmetric 
non-degenerate bilinear form $f$. Write $\bar{G}=G/Z(G)$ and let $\pi: G\rightarrow\bar{G}$ be the projection map. Let $R$ 
be a radical 2-subgroup of $G$. Define \[A=A(R)=\pi^{-1}(\Omega_{1}(Z(\pi(R)))).\] It is clear that $A(R)\subset R$ and 
$A(R)/\{\pm{I}\}=\Omega_{1}(Z(\pi(R)))$. The subgroup $A(R)$ has the following characterization.    

\begin{lem}\label{L:m0-2}
Let $X$ be an element of $R$. Then $X\in A(R)$ if and only if \[YXY^{-1}=\pm{X},\ \forall Y\in R\] and 
$X^{2}=\pm{I}$. 
\end{lem}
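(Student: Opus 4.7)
The plan is to unravel the definition $A(R)=\pi^{-1}(\Omega_{1}(Z(\pi(R))))$ and use the fact that for $G=I(V)$ with $V$ a non-degenerate symplectic or orthogonal space, the center satisfies $Z(G)=\{\pm I\}$. This simplifies things considerably compared to the general linear case of Lemma~\ref{L:m0}, where the center can be a nontrivial cyclic 2-group.

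First I would justify that the statement makes sense, i.e.\ that $A(R)\subset R$, so that asking whether $X\in R$ lies in $A(R)$ is genuine. Since $\{\pm I\}$ is a normal 2-subgroup of $G$, Lemma~\ref{L:R2} yields $\{\pm I\}=O_{2}(G)\subset R$, and consequently $\pi^{-1}(\pi(R))=R\cdot Z(G)=R$. Hence $A(R)=\pi^{-1}(\Omega_{1}(Z(\pi(R))))$ is in fact contained in $R$, matching the preceding remark in the excerpt.

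Next I would translate membership in $A(R)$ into the two conditions in the statement. For $X\in R$, we have $X\in A(R)$ if and only if $\pi(X)\in \Omega_{1}(Z(\pi(R)))$, which splits as (a) $\pi(X)^{2}=1$ and (b) $\pi(X)$ commutes with every element of $\pi(R)$. Using $Z(G)=\{\pm I\}$, condition (a) is equivalent to $X^{2}\in\{\pm I\}$, i.e.\ $X^{2}=\pm I$, while condition (b) is equivalent to $YXY^{-1}X^{-1}\in Z(G)=\{\pm I\}$ for all $Y\in R$, i.e.\ $YXY^{-1}=\pm X$ for all $Y\in R$. Both directions (necessity and sufficiency) drop out of this equivalence immediately.

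There is no real obstacle; the only thing to be careful about is confirming $Z(I(V))=\{\pm I\}$ in all cases at hand (symplectic of any even dimension, and orthogonal once $\dim V\geq 2$), so that the ambient central subgroup is exactly the 2-element group $\{\pm I\}$ that appears in the statement. After that, the proof is a one-line translation through $\pi$.
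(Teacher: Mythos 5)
Your proposal is correct and is exactly the unraveling one would expect; the paper itself states this lemma without proof (as "clear"), since it is just the translation of $\pi(X)\in\Omega_{1}(Z(\pi(R)))$ through $\ker\pi=Z(I(V))=\{\pm I\}$. The one genuine point worth making explicit — and you do make it — is that $A(R)\subset R$, which follows because $\{\pm I\}$ is a normal $2$-subgroup of $I(V)$ (here $q$ is odd), hence $\{\pm I\}\subset O_{2}(I(V))\subset R$ by Lemma~\ref{L:R2}, so $\pi^{-1}(\pi(R))=R$; this is noticeably simpler than the $\GL_n(\epsilon q)$ analogue (Lemma~\ref{L:m0}), where $A(R)$ had to be defined as an intersection with $R$ because $Z(G)_2$ can be a larger cyclic group.
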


By Lemma \ref{L:m0-2}, define a pairing $m: A(R)\times R\rightarrow\{\pm{1}\}$ by 
\[m(X,Y)=\lambda,\ \forall (X,Y)\in A(R)\times R,\] where $XYX^{-1}Y^{-1}=\lambda I$.

The following lemma is clear. 

\begin{lem}\label{L:m1-2}
For any $X,X'\in A(R)$ and $Y,Y'\in R$, we have \[m(XX',Y)=m(X,Y)m(X',Y)\] and \[m(X,YY')=m(X,Y)m(X,Y').\] 
\end{lem}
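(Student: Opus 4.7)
The plan is a direct computation, exploiting the fact that $m$ takes values in $\{\pm 1\}\subset Z(G)$, so each value of $m$ commutes with every element of $G$. The only subtlety is that we should first note that $A(R)$ is indeed a subgroup (being the $\pi$-preimage of the subgroup $\Omega_{1}(Z(\pi(R)))$ of $\bar G$), so $XX'\in A(R)$ whenever $X,X'\in A(R)$, and Lemma~\ref{L:m0-2} guarantees that $m$ is well-defined on $A(R)\times R$.

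For the first identity, I would rewrite the defining relation $XYX^{-1}Y^{-1}=m(X,Y)I$ as $XYX^{-1}=m(X,Y)Y$, and similarly $X'YX'^{-1}=m(X',Y)Y$. Then
\[
(XX')Y(XX')^{-1}=X(X'YX'^{-1})X^{-1}=m(X',Y)\,XYX^{-1}=m(X,Y)m(X',Y)\,Y,
\]
where the central scalar $m(X',Y)$ is pulled past $X$. Right-multiplying by $Y^{-1}$ yields $m(XX',Y)=m(X,Y)m(X',Y)$.

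For the second identity, I would similarly compute
\[
X(YY')X^{-1}=(XYX^{-1})(XY'X^{-1})=m(X,Y)m(X,Y')\,YY',
\]
again using centrality to collect the scalars. Right-multiplying by $(YY')^{-1}$ gives $m(X,YY')=m(X,Y)m(X,Y')$. There is no real obstacle here; the statement is just the standard fact that a commutator map taking values in the center is bimultiplicative, and the only input beyond pure group theory is Lemma~\ref{L:m0-2} to ensure $m$ is defined.
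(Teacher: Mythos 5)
Your proof is correct and is exactly the routine computation the paper has in mind when it says the lemma is clear: bimultiplicativity of a commutator map with central values. Nothing more to add.
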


By Lemma \ref{L:m0-2} again, define a map $\mu: A(R)\rightarrow\{\pm{1}\}$ by \[\mu(X)=t,\ \forall X\in A(R),\] where 
$X^{2}=tI$.  
 
\begin{lem}\label{L:m2-2}
For any $X,X'\in A(R)$, we have \[\mu(XX')=\mu(X)\mu(X')m(X,X').\] 
\end{lem}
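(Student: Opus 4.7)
The plan is to mimic the proof of Lemma \ref{L:m2} almost verbatim, but in a simpler form since here $\mu$ takes values directly in $\{\pm 1\}$ rather than being defined via a power $t'^{(q-\epsilon)/2}$. In particular, the exponent $(q-\epsilon)/2$ that appears in the $\GL_n$ version disappears, so the pairing $m(X,X')$ enters without being raised to any power.

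Concretely, I would introduce notation as follows: by Lemma~\ref{L:m0-2} applied to $X, X' \in A(R) \subset R$, write $X^{2} = tI$, $X'^{2} = t'I$ with $t, t' \in \{\pm 1\}$, and set $\lambda = m(X,X') \in \{\pm 1\}$, so that $XX'X^{-1}X'^{-1} = \lambda I$, equivalently $X'X = \lambda^{-1} XX' = \lambda XX'$ (using $\lambda = \pm 1$). I would then compute
\[
(XX')^{2} \;=\; X\,(X'X)\,X' \;=\; X\,(\lambda\, XX')\,X' \;=\; \lambda\, X^{2} X'^{2} \;=\; \lambda\, t t'\, I.
\]
By the definition of $\mu$, this means $\mu(XX') = \lambda t t' = m(X,X')\,\mu(X)\,\mu(X')$, which is the claim.

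Along the way, one must verify that $\mu$ is well defined on $A(R)A(R)$, i.e.\ that $XX' \in A(R)$ whenever $X, X' \in A(R)$; this is immediate from Lemma~\ref{L:m0-2} together with Lemma~\ref{L:m1-2}, which show that $A(R)$ is a subgroup and that the square of any element of $A(R)$ is $\pm I$. Apart from this trivial bookkeeping, there is no real obstacle: the entire proof is a three-line commutator manipulation, and in fact it is strictly simpler than Lemma~\ref{L:m2} because the absence of the exponent removes the need to track $\lambda^{(q-\epsilon)/2}$.
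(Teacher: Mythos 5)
Your proof is correct and follows exactly the computation the paper uses for the analogous Lemma~\ref{L:m2} (the $\GL_n$ case); the paper simply omits the proof of Lemma~\ref{L:m2-2}, implicitly leaving it as the simpler variant where the exponent $(q-\epsilon)/2$ disappears. No discrepancy.
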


Let \[A_{1}=A_{1}(R)=\{X\in A: m(X,Y)=1,\forall Y\in R\}=A\cap Z(R)\] and 
\[A_{2}=A_{2}(R)=\{X\in A: m(X,Y)=1,\forall Y\in A\}=Z(A).\] Note that $\mu: A_{2}(R)\rightarrow\{\pm{1}\}$ is a group 
homomorphism. Define \[A'_{i}=A'_{i}(R)=\{X\in A_{i}:\mu(X)=1\},\quad i=1,2.\] Note that $m$ descends to a perfect pairing 
\[m: A/A_{2}\times A/A_{2}\rightarrow\{\pm{1}\}.\]

\subsubsection{The $A'_{1}$ reduction: direct product} \label{SS:A1'redu}

The $A'_{1}$ reduction for orthogonal and symplectic groups is the same as for general linear groups. After this, it reduces 
to classify radical 2-subgroups with $A'_{1}(R)=\{\pm{I}\}$. 

\begin{lem}\label{L:C11}
Assume that $V$ is orthogonal. If there exists $X\in I(V)$ such that $X^{2}=-I$, then $n$ is even and $\disc(V)=1$. 
\end{lem}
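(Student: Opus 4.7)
\medskip

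\noindent\textbf{Proof plan.} The strategy is to peel off $X$-invariant non-degenerate $2$-dimensional subspaces each having discriminant $1$, and induct on $\dim V$.

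First I would pick $v\in V$ with $f(v,v)=a\neq 0$; such a $v$ exists because $q$ is odd, so the polarization identity forces a non-zero symmetric form to have some anisotropic vector. Next, using that $X$ is an isometry and $X^{2}=-I$, I would compute
\[f(v,Xv)=f(Xv,X^{2}v)=f(Xv,-v)=-f(v,Xv),\]
so $f(v,Xv)=0$ (since $\mathrm{char}\,\mathbb F_q\neq 2$), while $f(Xv,Xv)=f(v,v)=a$. I would then check that $v$ and $Xv$ are linearly independent: if $Xv=\lambda v$, then $\lambda^{2}=-1$ (from $X^{2}v=-v$), and the isometry condition would give $a=f(Xv,Xv)=\lambda^{2}f(v,v)=-a$, forcing $a=0$, a contradiction.

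Set $W=\langle v,Xv\rangle$. By the previous paragraph, $W$ is $X$-invariant of dimension $2$, with Gram matrix $\mathrm{diag}(a,a)$ in the basis $\{v,Xv\}$; in particular $W$ is non-degenerate and
\[\disc(W)=(a\cdot a)^{(q-1)/2}=a^{q-1}=1.\]
The orthogonal complement $W^{\perp}$ is also $X$-invariant: for $u\in W^{\perp}$ we have $f(Xu,Xv)=f(u,v)=0$ and $f(Xu,X(Xv))=f(u,Xv)=0$, so $Xu\in W^{\perp}$. The restriction $X|_{W^{\perp}}$ still satisfies $X^{2}=-I$ and lies in $I(W^{\perp})$, so we may induct on dimension.

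This inductive decomposition $V=W_{1}\perp W_{2}\perp\cdots\perp W_{n/2}$ with each $W_{i}$ two-dimensional of discriminant $1$ shows that $n$ is even and, by multiplicativity of the discriminant on orthogonal sums, $\disc(V)=\prod_{i}\disc(W_{i})=1$. There is no serious obstacle; the only point needing care is the linear independence of $v,Xv$, which is precisely what forbids the case when $\mathbb F_q$ already contains a square root of $-1$ from producing a $1$-dimensional $X$-invariant non-degenerate subspace.
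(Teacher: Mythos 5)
Your proof is correct and follows essentially the same approach as the paper's: pick an anisotropic vector $v$, show $\langle v,Xv\rangle$ is a non-degenerate $X$-invariant $2$-plane with Gram matrix $\diag(a,a)$ and discriminant $1$, and induct on the orthogonal complement. The paper phrases this as assembling a single basis $\{v_1,\dots,v_{2m}\}$ with $(v_{2i-1},v_{2i-1})=(v_{2i},v_{2i})=b_i$ and computing $\disc(V)=(b_1^2\cdots b_m^2)^{(q-1)/2}=1$ directly, but the content is identical; your explicit check that $v$ and $Xv$ are linearly independent is a small detail the paper leaves implicit.
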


\begin{proof} 
Let $X\in I(V)$ have $X^{2}=-I$. For any $v\in V$, we have $(v,Xv)=(Xv,X^{2}v)=(Xv,-v)=-(v,Xv)$. Thus, $(v,Xv)=0$. Choose an 
element $v_{1}\in V$ with $(v_{1},v_{1})\neq 0$. Put $v_{2}=Xv_{1}$. Then, 
$(v_{1},v_{2})=0$ and $(v_{1},v_{1})=(v_{2},v_{2})\neq 0$. Hence, $\spann\{v_{1},v_{2}\}$ is non-degenerate. Considering the 
orthogonal complement of $\spann\{v_{1},v_{2}\}$ and taking induction, we find a basis $\{v_{1},\dots,v_{2m}\}$ of $V$ such 
that \[(v_{i},v_{j})=0,\ i\neq j; b_{i}:=(v_{2i-1},v_{2i-1})=(v_{2i},v_{2i})\neq 0,\ 1\leq i\leq m.\] Then, $n=2m$ is even and 
$\disc(V)=(b_{1}^{2}\cdots b_{m}^{2})^{\frac{q-1}{2}}=(b_{1}\cdots b_{m})^{q-1}=1$.    
\end{proof} 

\begin{lem}\label{L:C12} 
Assume that $V$ is orthogonal. If there exists $X\in I(V)$ such that $X^{2}=I$ and $X\sim -X$, then $n$ is even and 
$\disc(V)=1$.  
\end{lem}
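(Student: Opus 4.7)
The plan is to decompose $V$ into $X$-eigenspaces and exploit the conjugation relation $X\sim -X$ to compare them.

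First, since $X^{2}=I$ and $2\nmid q$, we have an eigenspace decomposition $V=V_{+}\oplus V_{-}$, where $V_{\pm}=\ker(X\mp I)$. I would check that this decomposition is orthogonal: for $v\in V_{+}$ and $w\in V_{-}$,
\[
(v,w)=(Xv,Xw)=(v,-w)=-(v,w),
\]
so $(v,w)=0$. Hence $V=V_{+}\perp V_{-}$, and each $V_{\pm}$ inherits a non-degenerate orthogonal form from $f$.

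Next, I would use the hypothesis $X\sim -X$. Choose $g\in I(V)$ with $gXg^{-1}=-X$. Since the $+1$-eigenspace of $-X$ equals $V_{-}$ and the $-1$-eigenspace equals $V_{+}$, conjugation by $g$ sends $V_{+}$ bijectively onto $V_{-}$. As $g$ is an isometry of $(V,f)$, the restriction $g|_{V_{+}}\colon V_{+}\to V_{-}$ is an isometry between the orthogonal spaces $(V_{+},f|_{V_{+}})$ and $(V_{-},f|_{V_{-}})$. In particular, $\dim V_{+}=\dim V_{-}$, so $n=\dim V$ is even, as claimed.

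Finally, for the discriminant, the multiplicativity under orthogonal direct sums gives $\disc(V)=\disc(V_{+})\disc(V_{-})$. Since $g$ provides an isometry $V_{+}\cong V_{-}$, the discriminants agree: $\disc(V_{+})=\disc(V_{-})$. Therefore
\[
\disc(V)=\disc(V_{+})^{2}=1,
\]
because $\disc(V_{\pm})\in\{\pm 1\}$. This completes the proof. There is no real obstacle here; the only small point to verify carefully is that $g$ genuinely restricts to an isometry between $V_{+}$ and $V_{-}$, which follows immediately from $g\in I(V)$ and the swap of eigenspaces.
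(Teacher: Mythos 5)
Your proof is correct and follows essentially the same route as the paper: decompose $V$ into the $\pm 1$-eigenspaces of $X$, observe that a conjugating isometry interchanges them, and conclude both equal dimension and equal discriminant. Your write-up simply fills in the orthogonality of the eigenspace decomposition and the discriminant computation that the paper leaves implicit.
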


\begin{proof} 
Let $X\in I(V)$ have $X^{2}=I$ and $X\sim -X$. Then, there exists $Y\in I(V)$ such that $YXY^{-1}=-X$. Put 
$V_{\pm}=\{v\in V: Xv=\pm{v}\}$. Then, $V=V_{+}\oplus V_{-}$ and $Y: X_{+}\rightarrow X_{-}$ is an isomorphism. Thus, 
$n$ is even and $\disc(V)=1$. 
\end{proof}  

\begin{lem}\label{L:C-disc}
Assume that $V$ is orthogonal. Let $R$ be a radical 2-subgroup of $I(V)$ with $A'_{1}(R)=\{\pm{I}\}$. If $n=\dim V$ is odd or 
$\disc(V)=-1$, then $R=\{\pm{I}\}$.
\end{lem}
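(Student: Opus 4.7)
The plan is to argue by contradiction: assume $R \supsetneq \{\pm I\}$ and exhibit an element that must lie in $A'_{1}(R) \setminus \{\pm I\}$. Since $\{\pm I\}$ is a central 2-subgroup of $I(V)$, Lemma~\ref{L:R2} gives $\{\pm I\} \subset O_{2}(I(V)) \subset R$. Thus $\pi(R)$ is a nontrivial 2-group, its center is nontrivial, and hence $\Omega_{1}(Z(\pi(R))) = A(R)/\{\pm I\}$ is nontrivial. So I would fix any $X \in A(R) \setminus \{\pm I\}$.

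The next step is to run $X$ through the characterization in Lemma~\ref{L:m0-2}: we have $X^{2} = \pm I$ and $YXY^{-1} = \pm X$ for every $Y \in R$. The hypothesis ($n$ odd or $\disc(V) = -1$) allows me to eliminate the ``$-$'' sign in both instances. If $X^{2} = -I$, Lemma~\ref{L:C11} would force $n$ even and $\disc(V) = 1$, a contradiction; so $X^{2} = I$, i.e., $\mu(X) = 1$. If, moreover, $YXY^{-1} = -X$ for some $Y \in R$, then the involution $X$ would be conjugate in $I(V)$ to $-X$, and Lemma~\ref{L:C12} would again force $n$ even and $\disc(V) = 1$, impossible. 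Hence $YXY^{-1} = X$ for every $Y \in R$, giving $m(X, Y) = 1$ for all $Y \in R$, so $X \in A_{1}(R)$.

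Combining $\mu(X) = 1$ with $X \in A_{1}(R)$ yields $X \in A'_{1}(R) = \{\pm I\}$, contradicting $X \notin \{\pm I\}$. Therefore $R = \{\pm I\}$. The argument is essentially a direct application of Lemmas~\ref{L:R2}, \ref{L:C11}, and~\ref{L:C12} to the definitions of $A(R)$, $A_{1}(R)$ and $\mu$, so I do not anticipate a real obstacle; the only small subtlety is the use of Lemma~\ref{L:R2} at the outset to guarantee $\{\pm I\} \subset R$, which is what makes $\pi(R)$ nontrivial and allows us to extract the element $X$ that drives the contradiction.
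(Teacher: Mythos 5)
Your argument is correct and takes essentially the same route as the paper: the paper shows $A(R)=A'_1(R)=\{\pm I\}$ via Lemmas~\ref{L:C11} and~\ref{L:C12} and then notes that a nontrivial radical $2$-subgroup of $I(V)/\{\pm I\}$ would have nontrivial center (so $\Omega_1(Z(\pi(R)))\neq 1$), forcing $\pi(R)=1$; you simply run this contrapositively by extracting an element $X\in A(R)\setminus\{\pm I\}$ and showing it must land in $A'_1(R)$. One tiny remark: the appeal to Lemma~\ref{L:R2} to get $\{\pm I\}\subset R$ is harmless but unnecessary, since $\{\pm I\}=A'_1(R)\subset R$ is already part of the hypothesis.
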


\begin{proof}
Suppose that $n$ is odd or $\disc(V)=-1$. By Lemmas \ref{L:C11} and \ref{L:C12}, one shows that $A(R)=A'_{1}(R)=\{\pm{I}\}$. 
Then $R/\{\pm{I}\}$ is a radical 2-subgroup of $I(V)/\{\pm{I}\}$ with $\Omega_{1}(R/\{\pm{I}\})=A(R)/\{\pm{I}\}=1$. Thus, 
$R=\{\pm{I}\}$. 
\end{proof}

Under the assumption $A'_{1}(R)=\{\pm{I}\}$, Lemma \ref{L:C-disc} indicates that orthogonal groups with interesting radical 
2-subgroups occur only when $\dim V$ is even and $\disc(V)=1$.

\subsubsection{The $A'_{2}$ reduction: wreath product} \label{SS:wreath}

The $A'_{2}$ reduction for orthogonal and symplectic groups is similar to general linear groups. We take a successive process 
of wreath product and reduces it to classify radical 2-subgroups with $A'_{2}(R)=\{\pm{I}\}$.

\subsubsection{Radical 2-subgroups with trivial $A'_{2}$}  

Assume that $A'_{1}(R)=A'_{2}(R)=\{\pm{I}\}$. There are three possibilities for $A_{1}(R)$ and $A_{2}(R)$: 
\begin{enumerate} 
\item[(1)]$A_{1}(R)/A'_{1}(R)=A_{2}(R)/A'_{2}(R)=1$;  
\item[(2)]$A_{1}(R)/A'_{1}(R)=1$ and $A_{2}(R)/A'_{2}(R)\cong Z_{2}$; 
\item[(3)]$A_{1}(R)/A'_{1}(R)=A_{2}(R)/A'_{2}(R)\cong Z_{2}$.  
\end{enumerate}

\begin{lem}\label{L:C1} 
(1)If $X,Y\in I(V)$ satisfies $XYX^{-1}Y^{-1}=-I$ and $X^{2}=Y^{2}=I$, then there exists an orthogonal decomposition 
$V=V_{+}\oplus V_{-}$ with $V_{\pm{}}$ nondegenerate and $\dim V_{\pm{}}=\frac{1}{2}\dim V$ such that: 
\begin{itemize}
\item[(i)]$X|_{V_{\pm{}}}=\pm{1}$, $Y(V_{\pm{}})=V_{\mp{}}$; 
\item[(ii)]$Y: V_{+}\rightarrow V_{-}$ and $Y: V_{-}\rightarrow V_{+}$ are inverse to each other. 
\end{itemize} 

(2)\[Z_{I(V)}(\{X,Y\})=\{\phi\in I(V):\phi(V_{\pm{}})=V_{\pm{}}\textrm{ and }\phi|_{V_{-}}=Y'\phi|_{V_{+}}Y'^{-1}\}\cong I(V_{+}),\] 
where $Y':=Y|_{V_{+}}:V_{+}\rightarrow V_{-}$. 

(3)The conjugacy class of the pair $(X,Y)$ and the orbit $I(V)\cdot V_{+}\subset V$ determine each other. In the symplecic case, 
there exists a unique conjugacy class; in the orthogonal case, there are two conjugacy classes distinguished by $\disc(V_{+})$.   
\end{lem}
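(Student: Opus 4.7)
The plan is to exploit the commutation/anticommutation relations to build the decomposition and the centralizer directly. For part (1), I would start with the eigenspace decomposition $V = V_{+}\oplus V_{-}$ of $X$ (with $\pm 1$ eigenvalues, since $X^{2}=I$). The relation $YXY^{-1}=-X$ forces $Y(V_{\pm})=V_{\mp}$, and $Y^{2}=I$ then gives that $Y|_{V_{+}}$ and $Y|_{V_{-}}$ are mutually inverse. To see that $V_{+}$ and $V_{-}$ are orthogonal and each nondegenerate, I would use that $X\in I(V)$: for $v_{+}\in V_{+}$ and $v_{-}\in V_{-}$, $f(v_{+},v_{-})=f(Xv_{+},Xv_{-})=-f(v_{+},v_{-})$, so the pairing between them vanishes. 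Since $V=V_{+}\oplus V_{-}$ and $f$ is nondegenerate on $V$, both $V_{\pm}$ are nondegenerate. The equality $\dim V_{+}=\dim V_{-}$ is immediate from the fact that $Y'\colon V_{+}\to V_{-}$ is an isomorphism (indeed an isometry, since $Y\in I(V)$ and the restriction of an isometry to an invariant pair of subspaces interchanged by it is itself an isometry between them).

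For part (2), centralizing $X$ is equivalent to preserving each eigenspace $V_{\pm}$. Given such a $\phi$, centralization of $Y$ translates into $\phi|_{V_{-}}\circ Y'=Y'\circ\phi|_{V_{+}}$, i.e., $\phi|_{V_{-}}=Y'\phi|_{V_{+}}Y'^{-1}$. The map $\phi\mapsto \phi|_{V_{+}}$ is then an isomorphism onto $I(V_{+})$: any $\phi_{+}\in I(V_{+})$ lifts by setting $\phi_{-}:=Y'\phi_{+}Y'^{-1}$, and the resulting $\phi=\phi_{+}\oplus\phi_{-}$ is an isometry of $V$ because $Y'$ is an isometry from $V_{+}$ onto $V_{-}$.

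For part (3), I would first observe that the conjugacy class of the pair $(X,Y)$ depends only on the $I(V)$-orbit of the subspace $V_{+}$. On one hand, given two pairs $(X,Y_{1})$ and $(X,Y_{2})$ sharing the same $X$ (hence the same decomposition), any $g\in I(V)$ commuting with $X$ splits as $(g_{+},g_{-})\in I(V_{+})\times I(V_{-})$ by part (2), and the condition $gY_{1}g^{-1}=Y_{2}$ becomes $g_{-}=Y_{2}'g_{+}(Y_{1}')^{-1}$; taking $g_{+}=\id$ and $g_{-}=Y_{2}'(Y_{1}')^{-1}$ furnishes the required element of $I(V)$. On the other hand, if $g\in I(V)$ carries $V_{+}$ onto the $+1$-eigenspace of some $X'$, then $gXg^{-1}=X'$ and the previous argument reduces the problem to matching the second coordinates. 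Finally, by Witt's extension theorem, the $I(V)$-orbits of nondegenerate subspaces of dimension $n/2$ in $V$ are classified by their isometry type: in the symplectic case there is a unique such class (so a unique conjugacy class of pairs), while in the orthogonal case the isometry type of $V_{+}$ is determined by $\disc(V_{+})\in\{\pm 1\}$, giving exactly two classes.

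The main technical point is part (3): one must make precise that the conjugacy class of $(X,Y)$ is determined solely by the $I(V)$-orbit of $V_{+}$, and then invoke Witt's extension theorem to enumerate the orbits of nondegenerate half-dimensional subspaces of $V$. Parts (1) and (2) are essentially direct consequences of the anticommutation relation combined with the isometry property.
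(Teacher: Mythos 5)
Your proposal is correct and follows essentially the same approach as the paper: take $V_{\pm}$ to be the $\pm 1$-eigenspaces of $X$, observe that they are orthogonal because $f(v_{+},v_{-})=f(Xv_{+},Xv_{-})=-f(v_{+},v_{-})$, that $Y$ swaps them and induces an isometry $Y'\colon V_{+}\to V_{-}$, and then reduce part~(3) to the classification of $I(V)$-orbits of half-dimensional nondegenerate subspaces via Witt's theorem. The paper's own proof is a terse sketch of exactly these steps; yours merely spells out the reduction---including the explicit conjugating element $g_{+}=\mathrm{id}$, $g_{-}=Y_{2}'(Y_{1}')^{-1}$ in the ``same~$X$'' case---in more detail.
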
 

\begin{proof}
(1)Take $V_{\pm{}}$ be the $\pm{1}$ eigenspace of $X$. The assertions in (1) follow from the conditions $XYX^{-1}Y^{-1}=-I$ and 
$X^{2}=Y^{2}=I$. 

(2)This is clear,  

(3)Apparently, the conjugacy class of the pair $(X,Y)$ determines the orbit $I(V)\cdot V_{+}\subset V$. On the other hand, 
given $V_{+}$, one constructs $(X,Y)$ by taking the orthogonal complement $V_{-}$ and an isometry $\phi: V_{+}\rightarrow V_{-}$. 
Thus, the orbit $I(V)\cdot V_{+}\subset V$ also determines the conjugacy class of $(X,Y)$. In the symplectic case, 
$\frac{1}{2}\dim V$-dimensional non-degenerate subspaces form one $I(V)$ orbit; in the orthogonal case, 
$\frac{1}{2}\dim V$-dimensional non-degenerate subspaces split into two $I(V)$ orbits. Hence, we get the conclusion.  
\end{proof}

\begin{lem}\label{L:C1-2}
In Lemma \ref{L:C1}, let $V'_{+}$ the fixed point subspace of $Y$. Then 
\[\disc(V'_{+})=2^{\frac{q-1}{2}\dim V_{+}}\disc(V_{+}).\]
\end{lem}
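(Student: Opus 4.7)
The plan is to construct an explicit isomorphism $V_+\to V'_+$ and compute how the bilinear form pulls back, then read off the discriminants from an orthogonal basis.

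First I would note that, from Lemma~\ref{L:C1}, $Y$ restricts to mutually inverse isometries $Y|_{V_+}\colon V_+\to V_-$ and $Y|_{V_-}\colon V_-\to V_+$. So for any $v\in V$, writing $v=v_++v_-$ according to $V=V_+\oplus V_-$, the equation $Yv=v$ forces $v_-=Yv_+$. This gives a linear bijection
\[
\Phi\colon V_+\longrightarrow V'_+,\qquad v_+\longmapsto v_++Yv_+,
\]
so $\dim V'_+=\dim V_+=\tfrac{1}{2}\dim V$.

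Next I would pull back the symmetric bilinear form. Since the decomposition $V=V_+\oplus V_-$ from Lemma~\ref{L:C1} is orthogonal, $(v_+,Yv_+)=0=(Yv_+,v_+)$; and since $Y$ is an isometry, $(Yv_+,Yv_+)=(v_+,v_+)$. Therefore
\[
(\Phi(v_+),\Phi(v_+))=(v_+,v_+)+(Yv_+,Yv_+)=2(v_+,v_+).
\]
Thus, up to the isomorphism $\Phi$, the form on $V'_+$ is exactly twice the form on $V_+$.

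Finally I would compute discriminants. Pick an orthogonal basis $e_1,\dots,e_m$ of $V_+$ with $(e_i,e_i)=a_i$, where $m=\dim V_+$. Then $\Phi(e_1),\dots,\Phi(e_m)$ is an orthogonal basis of $V'_+$ with $(\Phi(e_i),\Phi(e_i))=2a_i$. By the definition of discriminant recalled at the start of \S\ref{SS:classical2},
\[
\disc(V'_+)=\Bigl(\prod_{i=1}^{m}2a_i\Bigr)^{\!\frac{q-1}{2}}=2^{\frac{q-1}{2}m}\Bigl(\prod_{i=1}^{m}a_i\Bigr)^{\!\frac{q-1}{2}}=2^{\frac{q-1}{2}\dim V_+}\disc(V_+),
\]
which is the claimed formula. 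No real obstacle arises; the only point to keep straight is that the orthogonal decomposition in Lemma~\ref{L:C1}(1) makes the cross terms vanish, so the calculation is just a tidy two-line bookkeeping argument rather than anything conceptual.
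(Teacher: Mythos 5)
Your argument is correct and essentially identical to the paper's, which also computes $(v+Yv,v+Yv)=2(v,Xv)$ (with $Xv=v$ since $v\in V_+$, this is exactly your $2(v_+,v_+)$) and reads off the discriminant. The only thing you added is the explicit check that $\Phi$ sends an orthogonal basis to an orthogonal basis, which the paper leaves implicit.
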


\begin{proof}
It is clear that \[V'_{+}=\{v+Yv: V\in V_{+}\}.\] For any $v\in V_{+}$, we have \[(v+Yv,v+Yv)=2(v,Xv).\]  
Then, it follows that $\disc(V'_{+})=2^{\frac{q-1}{2}\dim V_{+}}\disc(V_{+})$. 
\end{proof} 

As in Lemma \ref{lem:square-sum}, choose $b,b'\in\mathbb{F}_{q}$ with $b^{2}+b'^{2}=-1$ and $b\neq 0$.  

\begin{lem}\label{L:C2}
Assume that $V$ is symplectic. 

(1)If $X,Y\in I(V)$ satisfies $XYX^{-1}Y^{-1}=-I$ and $X^{2}=Y^{2}=-I$, then there exists a decomposition $V=V_{+}\oplus V_{-}$ 
with $V_{\pm{}}$ totally isotropic such that: \begin{itemize}
\item[(i)]$X(V_{\pm{}})=V_{\mp{}}$; 
\item[(ii)]$X|_{V_{-}}=-\phi^{-1}$, $Y|_{V_{+}}=bI+b'\phi$, $Y|_{V_{-}}=-bI+b'\phi^{-1}$, where 
$\phi:=X|_{V_{+}}:V_{+}\rightarrow V_{-}$.  
\end{itemize}

(2)$g(u,v):=f(u,Xv)$ defines a non-degenerate symmetric bilinear 2-form on $V_{+}$ and we have  \[Z_{I(V)}(\{X,Y\})=
\big\{\psi\in I(V):\psi(V_{\pm{}})=V_{\pm{}}, \psi|_{V_{-}}=\phi\psi|_{V_{+}}\phi^{-1}\big\}\cong I(V_{+},g).\]   

(3)There are two conjugacy classes of such pairs $(X,Y)$ distinguished by $\disc(V_{+},g)$.  
\end{lem}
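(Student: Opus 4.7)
The plan is to exploit the Clifford-algebra structure generated by $X$ and $Y$. The relations $X^2=Y^2=-I$ and $XY+YX=0$ together with the existence of $b,b'\in\mathbb F_q$ satisfying $b^2+b'^2=-1$ imply that the subalgebra of $\operatorname{End}(V)$ generated by $X$ and $Y$ is split and isomorphic to $M_2(\mathbb F_q)$. Under the identification $X\leftrightarrow\left(\begin{smallmatrix}0&1\\-1&0\end{smallmatrix}\right)$ and $Y\leftrightarrow\left(\begin{smallmatrix}b&b'\\b'&-b\end{smallmatrix}\right)$, the two rank-one idempotents of $M_2(\mathbb F_q)$ give projections producing a decomposition $V=V_+\oplus V_-$ with $V_\pm$ of equal dimension, realised concretely as $V_+=\ker(Y-bI-b'X)$ and $V_-=\ker(Y+bI+b'X)$. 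The swap $X(V_\pm)=V_\mp$ is immediate from the matrix picture, and can also be checked directly: for $v\in V_+$ a short computation using $XY=-YX$ and $X^2=-I$ gives $(Y+bI+b'X)(Xv)=0$. The relation $X|_{V_-}=-\phi^{-1}$ is forced by $X^2=-I$, and the formulas for $Y|_{V_\pm}$ are just the restrictions of the defining kernels.

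To show $V_+$ is totally isotropic I would use $Y\in I(V)$ and evaluate $f(Yu,Yv)=f(u,v)$ for $u,v\in V_+$. Expanding $Yu=bu+b'Xu$ and $Yv=bv+b'Xv$, and using the identity $f(Xu,v)=-f(u,Xv)$ that follows from $X\in I(V)$ combined with $X^{-1}=-X$, the cross terms vanish and one obtains $f(u,v)=(b^2+b'^2)f(u,v)=-f(u,v)$, whence $f(u,v)=0$ in odd characteristic; $V_-$ is totally isotropic since $V_-=X(V_+)$ and $X$ is symplectic. For part (2), the same identity $f(Xu,v)=-f(u,Xv)$ shows $g(u,v)=f(u,Xv)$ is symmetric, and non-degeneracy on $V$ is inherited from $f$. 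To see $g|_{V_+}$ is non-degenerate, observe that for $u\in V_+$ and $w\in V_-$ one has $Xw\in X(V_-)=V_+$, so $g(u,w)=f(u,Xw)=0$ by $f$-isotropicity of $V_+$; hence $V=V_+\oplus V_-$ is $g$-orthogonal, and both restrictions are non-degenerate. The centralizer description is now mechanical: any $\psi$ commuting with $X$ and $Y$ commutes with $Y-bI-b'X$, so preserves $V_\pm$; commutation with $X$ forces $\psi|_{V_-}=\phi\,\psi|_{V_+}\,\phi^{-1}$; and $\psi|_{V_+}\in I(V_+,g)$ because $g$ is built from $f$ and $X$. Conversely, any $\alpha\in I(V_+,g)$ extends to $\psi=\alpha\oplus(\phi\alpha\phi^{-1})$ in the centralizer, and a direct check using the formulas of (1) confirms it preserves $f$, $X$ and $Y$.

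For (3), part (2) shows that the conjugacy class of $(X,Y)$ determines and is determined by the isometry class of $(V_+,g)$. Conversely, given any non-degenerate symmetric space $(V_+,g)$ of dimension $\tfrac12\dim V$, one constructs $(V,f,X,Y)$ by taking $V=V_+\oplus V_+$ with the summands totally $f$-isotropic and paired via $g$, defining $X$ to swap them as in (1) and $Y$ by the stated formulas. Over $\mathbb F_q$ two non-degenerate symmetric forms on a space of fixed dimension are isometric iff their discriminants agree, so there are exactly two conjugacy classes, distinguished by $\disc(V_+,g)$. The main obstacle is establishing the direct-sum decomposition $V=V_+\oplus V_-$ cleanly; the most conceptual route passes through the identification of the algebra $\langle X,Y\rangle$ with $M_2(\mathbb F_q)$, which automatically forces $2\mid\dim V$ and produces the projections. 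The remaining work is sign-bookkeeping with the anti-commutation relations and the routine verification that the reconstructed $\psi$ preserves $f$.
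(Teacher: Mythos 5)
Your proof is correct and follows essentially the same route as the paper's: both define $V_\pm$ by twisting $Y$ with $\pm(bI+b'X)$, both prove isotropy of $V_+$ by expanding $f(Yu,Yv)=f(u,v)$ and using the anticommutation relation, and both identify $g$ and the centralizer in the same way. Your Clifford-algebra/$M_2(\mathbb F_q)$ framing is packaging rather than a new idea, though it does make the evenness of $\dim V$ and the direct-sum decomposition transparent without the ad hoc verifications of $V_+ \cap V_- = 0$ and surjectivity.

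One point worth flagging: your kernel formulation $V_{-}=\ker(Y+bI+b'X)$ is actually a small \emph{correction} to the paper. The paper sets $Z_\pm=\tfrac1b(Y\mp b'X)$ and declares $V_\pm=(I+Z_\pm)(V)$, i.e.\ both as $+1$-eigenspaces. But from $X^{-1}Z_+X=-Z_-$ one computes $Z_-(Xv)=-Xv$ for $v\in V_+$, so $X(V_+)$ is the $-1$-eigenspace of $Z_-$, not the $+1$-eigenspace; the paper's stated $V_-$ does not satisfy $X(V_+)=V_-$. Your $V_-=\ker(Y+bI+b'X)=\ker(Z_-+I)$ is the $-1$-eigenspace and is the one that makes the swap work. (The paper's $V_-$ should be read as $(I-Z_-)(V)$; this also repairs a degeneracy when $b'=0$, where the paper's two spaces would coincide.) Your direct check that $(Y+bI+b'X)(Xv)=0$ for $v\in V_+$ settles the point cleanly.
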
  

\begin{proof}
(1)Put $Z_{\pm{}}=\frac{1}{b}(Y\mp{}b'X)$. Then, $Z_{\pm{}}^{2}=I$. Thus, \[V=(bI+Y)(V)=(I+Z_{+})(V)+(I+Z_{-})(V)=V_{+}+V_{-}.\] 
It is easy to show that $V_{+}\cap V_{-}=0$. Thus, $V=V_{+}\oplus V_{-}$. We have $X^{-1}Z_{\pm{}}X=-Z_{\mp}$, which implies 
that $X(V_{\pm{}})=V_{\mp{}}$. For any $u,v\in V_{+}$, we have \[(Xu,Yv)+(Yu,Xv)=(X^{2}u,XYv)+(Y^{2}u,YXv)=(-u,(XY+YX)v)=0.\] 
Then, \[(bu,bv)=((Y-b'X)u,(Y-b'X)v)=(Yu,Yv)+b'^{2}(Xu,Xv)=(1+b'^{2})(u,v).\] Thus, $(u,v)=0$ and hence $V_{+}$ is totally 
isotropic. Similarly, one show that $V_{-}$ is totally isotropic. (ii) is clear. 

(2)$u,v\in V_{+}$, we have \[(u,Xv)=(Xu,X^{2}v)=(Xu,-v)=(v,Xu).\] Thus, $g$ is symmetric. Due to $X(V_{+})=V_{-}$, one shows that 
$g$ is non-degenerate on $V_{+}$. Other assertions in (2) are clear. 

(3)It is clear that the conjugacy class of the pair $(X,Y)$ determines the isomorphism class of the orthogonal space $(V_{+},g)$, 
as well as the discriminant of $(V_{+},g)$. On the other hand, given the discriminant of $(V_{+},g)$, one determines the triple 
$(V_{+},V_{-},\phi)$ giving the orthogonal space $(V_{+},g)$ up to conjugacy. From that, the pair $(X,Y)$ is determined.  
\end{proof}  

\begin{lem}\label{L:C2-2}
In Lemma \ref{L:C2}, let $(V'_{+},g')$ the orthogonal space associated to the pair $(Y,X)$. Then 
\[\disc(V'_{+},g')=2^{\frac{q-1}{2}\dim V_{+}}\disc(V_{+},g).\]
\end{lem}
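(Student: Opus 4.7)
The plan is to compute $\disc(V'_+,g')$ directly by writing down an explicit basis of $V'_+$ obtained from a $g$-orthogonal basis of $V_+$, and then expanding the Gram matrix of $g'$ in that basis. The argument runs in parallel with (though is more involved than) the computation giving the factor $2^{(q-1)\dim V_+/2}$ in Lemma~\ref{L:C1-2}.

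First, I would pick a $g$-orthogonal basis $\{e_1,\dots,e_m\}$ of $V_+$, so that $g(e_i,e_j)=c_i\delta_{ij}$ with $c_i\in\bbF_q^\times$, and set $f_i:=\phi(e_i)=Xe_i\in V_-$. Because $V_\pm$ are totally isotropic and $f(e_i,Xe_j)=g(e_i,e_j)$ by definition of $g$, the Gram matrix of $f$ in the basis $\{e_1,\dots,e_m,f_1,\dots,f_m\}$ is $\bigl(\begin{smallmatrix}0&D\\-D&0\end{smallmatrix}\bigr)$ with $D=\diag(c_1,\dots,c_m)$, and Lemma~\ref{L:C2}(1)(ii) gives explicit formulas for $X$ and $Y$ on this basis.

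Next, I would identify $V'_+$. Applying Lemma~\ref{L:C2} to the swapped pair $(Y,X)$, the subspace $V'_+$ is the $+1$-eigenspace of $W_+:=\tfrac{1}{b}(X-b'Y)$. Since $X$ and $Y$ both preserve each two-dimensional plane $\spann\{e_i,f_i\}$, so does $W_+$; a direct $2\times 2$ computation using $b^2+b'^2=-1$ shows that the $+1$-eigenvector in that plane is proportional to $e'_i:=e_i+\tfrac{1+b'}{b}f_i$, and thus $\{e'_1,\dots,e'_m\}$ is a basis of $V'_+$. Then I would expand $g'(e'_i,e'_j)=f(e'_i,Ye'_j)$ using the formulas for $Y$ and the Gram matrix above. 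After simplification with $b^2+b'^2=-1$ the computation collapses to
\[g'(e'_i,e'_j)=\frac{2c_i}{b^2}\delta_{ij}.\]
Since $b\in\bbF_q^\times$ gives $(b^2)^{(q-1)/2}=b^{q-1}=1$, the factor $1/b^2$ contributes trivially to the discriminant and one concludes
\[\disc(V'_+,g')=\prod_{i=1}^{m}\Bigl(\tfrac{2c_i}{b^2}\Bigr)^{(q-1)/2}=2^{(q-1)m/2}\,\disc(V_+,g),\]
which is the claimed identity with $m=\dim V_+$.

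The only delicate point will be the algebraic simplification in the final step: the cancellation producing exactly the factor $2/b^2$ relies on the identity $b^2+b'^2=-1$ together with the anticommutation $XY=-YX$ (from $XYX^{-1}Y^{-1}=-I$) and $X^2=Y^2=-I$, and one has to keep careful track of all four contributions $f(e_i,e_j)$, $f(e_i,f_j)$, $f(f_i,e_j)$, $f(f_i,f_j)$. Beyond that, everything is elementary linear algebra once the adapted basis is in place.
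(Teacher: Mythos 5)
Your proposal is correct and takes essentially the same route as the paper: both identify $V'_{+}$ via the parametrization $v\mapsto bv+(1+b')Xv$ (equivalently the $+1$-eigenspace of $\frac{1}{b}(X-b'Y)$) and then expand $g'$ directly on that model using $b^2+b'^2=-1$. The paper carries out the computation basis-freely, obtaining $g'$ as the pullback of $2g$ along this linear isomorphism, whereas you work in a $g$-orthogonal basis and pick up a spurious factor of $1/b^2$ from your normalization of the eigenvector, which you correctly discard via $b^{q-1}=1$.
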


\begin{proof}
As in Lemma \ref{L:C2}, we have a decomposition $V=V_{+}\oplus V_{-}$. From the forms of $X$ and $Y$ there, one shows that the 
fixed point subspace $V'_{+}$ of $\frac{1}{b}(X-b'Y)$ is equal to \[\{bv+(b'+1)Xv: V\in V_{+}\}.\] For any $v\in V_{+}$, we have 
\begin{eqnarray*}&&(bv+(b'+1)Xv,Y(bv+(b'+1)Xv))\\&=&(bv+(b'+1)Xv,b(bv+b'Xv)+(b'+1)(b'v-bXv))\\
&=&(bv+(b'+1)Xv,(b'-1)v-bXv)\\&=&(-b^{2}-(b'+1)(b'-1))(v,Xv)\\&=&2(v,Xv).  
\end{eqnarray*}
Then, it follows that $\disc(V'_{+},g')=2^{\frac{q-1}{2}\dim V_{+}}\disc(V_{+},g)$. 
\end{proof}

The following lemma is an analogue of Lemma \ref{L:C2}. The proof is similar. 

\begin{lem}\label{L:C3}
Assume that $V$ is orthogonal.  

(1)If $X,Y\in I(V)$ satisfies $XYX^{-1}Y^{-1}=-I$ and $X^{2}=Y^{2}=-I$, then there exists a decomposition $V=V_{+}\oplus V_{-}$ 
with $V_{\pm{}}$ totally isotropic such that: \begin{itemize}
\item[(i)]$X(V_{\pm{}})=V_{\mp{}}$; 
\item[(ii)]$X|_{V_{-}}=-\phi^{-1}$, $Y|_{V_{+}}=bI+b'\phi$, $Y|_{V_{-}}=-bI+b'\phi^{-1}$, where 
$\phi:=X|_{V_{+}}:V_{+}\rightarrow V_{-}$.  
\end{itemize}

(2)$g(u,v):=f(u,Xv)$ defines a non-degenerate anti-symmetric bilinear 2-form on $V_{+}$ and we have \[Z_{I(V)}(\{X,Y\})=
\{\psi\in I(V):\psi(V_{\pm{}})=V_{\pm{}}, \psi|_{V_{-}}=\phi\psi|_{V_{+}}\phi^{-1}\}\cong I(V_{+},g).\]    

(3)There is a unique conjugacy class of such pairs $(X,Y)$.  
\end{lem}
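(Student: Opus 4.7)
The plan is to mirror the argument of Lemma~\ref{L:C2} with the symmetric and anti-symmetric roles of the form interchanged, since the algebraic identities among $X$ and $Y$ are the same in both settings.

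First, I would set $Z_\pm = \frac{1}{b}(Y \mp b'X)$ and verify $Z_\pm^2 = I$ using $b^2 + b'^2 = -1$, $X^2 = Y^2 = -I$, and $XY = -YX$ (the latter being a restatement of $XYX^{-1}Y^{-1} = -I$). Defining $V_\pm = (I + Z_\pm)(V)$ gives a decomposition $V = V_+ \oplus V_-$, and the conjugation identity $XZ_+X^{-1} = -Z_-$ forces $X(V_\pm) = V_\mp$. The formulas for $Y|_{V_\pm}$ in part~(ii) then come from rewriting $Y = bI + b'X + (Y - bI - b'X)$ and tracking the action on each eigenspace, exactly as in the proof of Lemma~\ref{L:C2}.

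The totally isotropic property in part~(i) is established by the same calculation as in the symplectic case: for $u, v \in V_+$, one computes
\[
(bu, bv) = ((Y - b'X)u, (Y - b'X)v) = (u,v) + b'^2(u,v) - b'\bigl[(Yu, Xv) + (Xu, Yv)\bigr],
\]
and uses $(Xu, Yv) = (X^2 u, XYv) = -(u, XYv)$ together with the analogous identity for $(Yu, Xv)$ to conclude that the bracketed sum equals $-(u, (XY+YX)v) = 0$. Since $1 + b'^2 = -b^2$, this yields $2b^2(u,v) = 0$, hence $(u,v) = 0$. This step works identically whether $f$ is symmetric or anti-symmetric, because the vanishing of $XY + YX$ is purely a consequence of the commutator relation.

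For part~(2), the novelty is that $g(u,v) = f(u, Xv)$ is now anti-symmetric: for $u,v \in V_+$, the identity $(u, Xv) = (Xu, X^2 v) = -(Xu, v) = -(v, Xu)$, which uses the symmetry of $f$ at the last step, gives $g(u,v) = -g(v,u)$. Non-degeneracy of $g$ on $V_+$ follows because a vector $u \in V_+$ with $g(u, V_+) = 0$ satisfies $u \perp X(V_+) = V_-$, while $u \perp V_+$ by the totally isotropic property, hence $u = 0$. The centralizer description follows from the fact that any $\psi \in Z_{I(V)}(\{X,Y\})$ commutes with $Z_\pm$, preserves $V_\pm$, and its action on $V_-$ is determined by conjugation by $\phi$ from its action on $V_+$. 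For part~(3), once the isomorphism class of $(V_+, g)$ is identified as the invariant of the conjugacy class of $(X,Y)$, uniqueness follows because symplectic spaces of any given even dimension form a single isomorphism class. The main point to be careful about is the sign bookkeeping when verifying that $g$ is anti-symmetric on $V_+$; this is the only substantive divergence from Lemma~\ref{L:C2}, and it is what makes the conjugacy class unique here rather than split into two.
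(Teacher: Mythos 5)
Your proposal is correct and is exactly the argument the paper intends: it explicitly labels Lemma~\ref{L:C3} as ``an analogue of Lemma~\ref{L:C2}'' with a ``similar'' proof, and you have carried out precisely that adaptation, including the crucial sign flip that makes $g$ anti-symmetric rather than symmetric and hence gives a unique conjugacy class in part~(3) instead of two.
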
 

In \cite[\S 2]{Yu13}, we defined three groups $\Sp(s;\epsilon,\delta)$ as the automorphism group of a symplectic 
metric space $(\mathbb{F}_{2}^{\epsilon+2(s+\delta)},m,\mu)$. Here we remark that there are isomorphisms 
\[\Sp(s;1,0)\cong\Sp_{2s}(2)\cong\SO_{2s+1}(2);\] \[\Sp(s;0,0)\cong\rO_{2s}^{+}(2);\] 
\[\Sp(s;0,1)\cong\rO_{2s+2}^{-}(2).\] We also recall that the derived subgroup of $\rO_{2s}^{+}(2)$ 
(resp. $\rO_{2s}^{-}(2)$) is $\SO_{2s}^{+}(2)$ (resp. $\SO_{2s}^{-}(2)$), which is an index 2 subgroup. 

\begin{prop}\label{P:C4}
Let $R$ be a radical 2-subgroup of $I(V)$ such that \[A_{1}(R)=A_{2}(R)=\{\pm{I}\}.\] Write 
$\gamma=\frac{1}{2}\log_{2}|A(R)/A_{2}(R)|$. Then, $R=A(R)$ and $2^{\gamma}|n$. Moreover, there exist one or two conjugacy 
class of such radical 2-subgroups for any given values of $n$, $q$ and $\gamma$. 
\end{prop}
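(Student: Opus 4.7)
The plan is to mimic the strategy of Proposition~\ref{P:GL3}, replacing Lemma~\ref{L:GL1} by the orthogonal/symplectic analogues Lemmas~\ref{L:C1}, \ref{L:C2}, and \ref{L:C3}. Since $A_{1}(R)=A_{2}(R)=\{\pm I\}$, the pairing $m$ on $A(R)/\{\pm I\}$ is non-degenerate and $\mu\colon A_{2}(R)\to\{\pm 1\}$ is trivial. Using Lemmas~\ref{L:m1-2} and \ref{L:m2-2}, I would first choose a symplectic basis $x_{1},y_{1},\dots,x_{\gamma},y_{\gamma}$ of $A(R)$ modulo $\{\pm I\}$ satisfying
\[m(x_{i},y_{j})=(-1)^{\delta_{ij}},\quad m(x_{i},x_{j})=m(y_{i},y_{j})=1,\]
and, by modifying each generator if necessary, arrange that $\mu(x_{i}),\mu(y_{i})\in\{\pm 1\}$ take prescribed values. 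This reduces the classification of $A(R)$ to an inductive description of mutually commuting pairs $(x_{i},y_{i})$ satisfying $x_{i}^{2}=\mu(x_{i})I$, $y_{i}^{2}=\mu(y_{i})I$, and $x_{i}y_{i}x_{i}^{-1}y_{i}^{-1}=-I$.

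Next, I would peel off one pair at a time. Fixing $(x_{1},y_{1})$, Lemmas~\ref{L:C1}, \ref{L:C2}, and \ref{L:C3} give an orthogonal/symplectic decomposition $V=V_{+}\oplus V_{-}$ with $\dim V_{\pm}=n/2$ and identify $Z_{I(V)}(\{x_{1},y_{1}\})$ with an isometry group $I(V_{+})$ (or $I(V_{+},g)$ in the symplectic cases where $g$ is the auxiliary bilinear form constructed there). The remaining pairs $(x_{i},y_{i})$, $i\geq 2$, together with the commutator $-I$, lie in this centralizer. By induction on $\gamma$, this forces $2^{\gamma}\mid n$ and shows that the $I(V)$-conjugacy class of $A(R)$ is determined by the choices of $\mu(x_{i}),\mu(y_{i})$ and the discriminant/type data for the nested spaces supplied by Lemmas~\ref{L:C1}, \ref{L:C2}, \ref{L:C3} (and the transformation rules in Lemmas~\ref{L:C1-2} and \ref{L:C2-2}). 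The many potential combinations collapse, after applying these discriminant identities and the fact that the outer isometry group $I(V)$ acts transitively on most of the orbits, to at most two $I(V)$-conjugacy classes of $A(R)$ for given $(n,q,\gamma)$.

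To finish, I would prove $R=A(R)$ in parallel with Proposition~\ref{P:GL3}. Setting $R'=R\cap Z_{I(V)}(A(R)/A_{1}(R))$ and using Lemmas~\ref{L:R5} and~\ref{L:R4}, $R'$ becomes a radical $2$-subgroup of an isometry group $I(V')$ of strictly smaller dimension with $A'_{1}(R')=A(R')=\{\pm I\}$. The argument then reduces (via Lemma~\ref{L:C-disc} and an obvious induction) to $R'=\{\pm I\}$, which together with the centralizer description yields $R=A(R)$. It remains to rule out those candidate $A(R)$'s which are not actually radical: exactly as in Proposition~\ref{P:GL3}, one checks the radical property by constructing, when the $\mu$-values are misaligned, an element of $N_{G}(R)\setminus R$ which normalizes $R$ and contributes to $O_{2}(N_{G}(R)/R)$, forcing one class to be discarded.

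The main obstacle I anticipate is the bookkeeping for the discriminant/type data across the inductive step. Both Lemmas~\ref{L:C1-2} and \ref{L:C2-2} show that swapping the roles of $X$ and $Y$ multiplies the discriminant by $2^{\frac{q-1}{2}\dim V_{+}}$, so the discriminants of the nested spaces twist in a controlled but non-trivial way as one varies the symplectic basis of $A(R)/\{\pm I\}$. Tracking these twists carefully is what determines whether the ``one or two'' alternative in the statement is realized, and it is also what dictates when the constructed $A(R)$ fails to be radical. Once this discriminant accounting is in place, the remaining arguments are essentially formal, following the template of Propositions~\ref{P:GL3}--\ref{P:GL5}.
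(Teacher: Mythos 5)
Your proposal follows the same architecture as the paper's proof: build a symplectic basis for $A(R)/A_{2}(R)$ using Lemmas~\ref{L:m1-2} and \ref{L:m2-2}, peel off hyperbolic pairs via Lemmas~\ref{L:C1}--\ref{L:C3} to obtain $2^{\gamma}\mid n$ and classify $A(R)$, then use Lemmas~\ref{L:R5}, \ref{L:R4} and the centralizer structure to force $R=A(R)$, and finally decide which candidates are genuinely radical. The discriminant-swap bookkeeping you anticipate is indeed what Lemmas~\ref{L:C1-2} and \ref{L:C2-2} supply. (Minor: your $R'$ should be $R\cap I(V')$ for the isometry factor $I(V')$ of $Z_{I(V)}(A(R)/A_{1}(R))=I(V')\cdot A(R)$, not $R\cap Z_{I(V)}(A(R)/A_{1}(R))$, which equals $R$.)

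The one place where your sketch diverges substantively is the criterion for discarding a non-radical candidate. You attribute the failure to the $\mu$-values being ``misaligned,'' but after the normalization all $\mu$-values can be taken to be $1$ except on a single distinguished pair whose common sign $\delta:=\mu(x_{1})=\mu(x_{2})\in\{\pm1\}$ is the only free parameter; there is no further alignment to speak of. In the paper's proof the failing candidate has $\delta=1$ (all $\mu$ coherent), occurs only when $\gamma=1$, and only when $\frac{n}{2^{\gamma}}\frac{q^{2}-1}{8}$ is even — precisely the case where $N_{G}(R)/RZ_{G}(R)\cong\rO_{2}^{+}(2)\cong Z_{2}$ degenerates and the subgroup $S=\{g\in N_{G}(R):[g,Z_{G}(R)]=1\}$ strictly contains $R$, forcing $O_{2}(N_{G}(R))\neq R$. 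That is a $\gamma=1$ edge case controlled by a divisibility condition, not by $\mu$-parities, and the analogous exclusions in Propositions~\ref{P:GL3} are not a reliable template here. Similarly, whether you land on one class or two in cases (D2)/(C2) is decided by whether $N_{G}(R)/RZ_{G}(R)$ has full image $\rO_{2\gamma}^{\pm}(2)$ or the index-$2$ subgroup $\SO_{2\gamma}^{\pm}(2)$, which again turns on the parity of $\frac{n}{2^{\gamma}}\frac{q^{2}-1}{8}$; pattern-matching on the general linear case would give you the wrong count. You would need to run the discriminant computation from Lemmas~\ref{L:C1-2}/\ref{L:C2-2} through to this index statement before your sketch becomes a proof.
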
  

\begin{proof}
When $\gamma=0$, we have $A(R)=A_{2}(R)=\{\pm{I}\}$. Then, $R/\{\pm{I}\}$ is a radical 2-subgroup of $I(V)/\{\pm{I}\}$ with $\Omega_{1}(R/\{\pm{I}\})=A(R)/\{\pm{I}\}=1$. Thus, $R=\{\pm{I}\}$.   

Assume that $\gamma\geq 1$. Using Lemmas \ref{L:m1-2} and \ref{L:m2-2}, one finds elements \[x_{1},x_{2},\dots,x_{2\gamma-1},
x_{2\gamma}\in A(R)\] generating $A(R)/A_{2}(R)$ and such that:  \[m(x_{2i-1},x_{2j-1})=m(x_{2i-1},x_{2j})=m(x_{2i},x_{2j-1})
=m(x_{2i},x_{2j})=1,\ i\neq j,\] \[m(x_{2i-1},x_{2i})=-1,\ 1\leq i\leq\gamma,\] \[\mu(x_{2\gamma})=\mu(x_{2\gamma-1})=
\cdots=1,\quad \delta:=\mu(x_{2})=\mu(x_{1})=\pm{1}.\] By Lemmas \ref{L:C1}, \ref{L:C2} and \ref{L:C3}, one shows that  
$2^{\gamma}|n$ and identify the conjugacy class of the tuple $(x_{1},\dots,x_{2\gamma})$ according to the value of $\delta$ 
and the discriminant of an orthogonal space $(V_{0},g)$ while $V$ is symplectic and $\delta=-1$ or $V$ is orthogonal and 
$\delta=1$. When $V$ is symplectic and $\delta=-1$, $(V_{0},g)$ is constructed in Lemma \ref{L:C2} from the action of 
the ordered pair $(x_{2\gamma-1},x_{2\gamma})$ on the common fixed point subspace $V_{1}$ of $x_{1},x_{3},\dots,x_{2\gamma-3}$; 
when $V$ is orthogonal and $\delta=1$, $(V_{0})$ is equal to the common fixed point subspace of $x_{1},x_{3},\dots,x_{2\gamma-1}$ 
and $g=f|_{V_{0}}$. Note that \[Z_{G}(A(R)/A_{1}(R))=I(V')\cdot A(R),\] where $V'$ is an $\frac{n}{2^{\gamma}}$-dimensional symplectic 
or orthogonal space. Put $R'=R\cap I(V')$. By Lemmas \ref{L:R5} and \ref{L:R4}, $R'$ is a radical 2-subgroup of $I(V')$. It is 
easy to show that $A(R')=\{\pm{I}\}$. Then, $R'=\{\pm{I}\}$ and hence $R=A(R)$. 

(D1) When $V$ is orthogonal and $\delta=-1$, we have $2^{\gamma+1}|n$ and there exists a unique conjugacy class of ordered 
tuples $(x_{1},\dots,x_{2\gamma})$. Then, there exists a unique conjugacy class of such radical 2-subgroups $R$ 
(while $2^{\gamma+1}|n$) and $N_{G}(R)/RZ_{G}(R)$ is isomorphic to the group of automorphisms of $R/\{\pm{I}\}$ preserving 
the form $m$ and the function $\mu$. That is to say (cf. \cite[\S 2]{Yu13}) , \[N_{G}(R)/RZ_{G}(R)\cong\Sp(\gamma-1;0,1)
\cong\rO_{2\gamma}^{-}(2).\]

(D2) When $V$ is orthogonal and $\delta=1$, we have $2^{\gamma}|n$ and there exist two conjugacy classes of ordered tuples 
$(x_{1},\dots,x_{2\gamma})$ distinguished by the discriminant of the common fixed point subspace $V_{0}$ of 
$x_{1},x_{3},\dots,x_{2\gamma-1}$. Then, there is a natural inclusion $N_{G}(R)/RZ_{G}(R)\subset\Sp(\gamma;0,0)\cong
\rO_{2\gamma}^{+}(2)$ with image a subgroup of index 1 or 2. If the above inclusion has full image, then there are two 
conjugacy classes of such radical 2-subgroups; if the above inclusion has image of index 2, then there is a unique conjugacy 
class of such radical 2-subgroups. Write $s\in\Sp(\gamma;0,0)\cong\rO_{2\gamma}^{+}(2)$ for an automorphism of $R/\{\pm{I}\}$ 
defined by \[s(x_{i})=\left\{\begin{array}{ccc}x_{i}\textrm{ if }i\leq 2\gamma-2;\\x_{2\gamma}\textrm{ if }i=2\gamma-1;\\
x_{2\gamma-1}\textrm{ if }i=2\gamma.\\\end{array}\right.\] Note that, the conjugates of $s$ generate $\rO_{2\gamma}^{+}(2)$ 
and $ss'$ ($s'$ run over all conjugates of $s$) generate $\SO_{2\gamma}^{+}(2)$. Therefore, if $s\in N_{G}(R)/RZ_{G}(R)$, then 
$N_{G}(R)/RZ_{G}(R)=\rO_{2\gamma}^{+}(2)$; if $s\not\in N_{G}(R)/RZ_{G}(R)$, then $N_{G}(R)/RZ_{G}(R)=\SO_{2\gamma}^{+}(2)$. 
Write $V'_{0}$ for the common fixed point subspace of $s\cdot x_{1},s\cdot x_{3},\dots,s\cdot x_{2\gamma-1}$. 
By Lemma \ref{L:C1-2}, we have \[\disc(V'_{0})=2^{(\frac{n}{2^{\gamma}})\frac{q-1}{2}}\disc(V_{0})=
(-1)^{(\frac{n}{2^{\gamma}})\frac{q^{2}-1}{8}}\disc(V_{0}).\] Thus, \[s\in N_{G}(R)/RZ_{G}(R)\Leftrightarrow
\disc(V'_{0})=\disc(V_{0})\Leftrightarrow(-1)^{(\frac{n}{2^{\gamma}})\frac{q^{2}-1}{8}}=1\Leftrightarrow
\frac{n}{2^{\gamma}}\frac{q^{2}-1}{8}\textrm{ is even}.\] The last condition of $\frac{n}{2^{\gamma}}\frac{q^{2}-1}{8}$ being 
even happens when $\frac{n}{2^{\gamma}}$ is even or $a=v_{2}(q^{2}-1)-1>2$. Finally, we have: \begin{itemize}
\item if $\frac{n}{2^{\gamma}}$ is even or $a=v_{2}(q^{2}-1)-1>2$, then $N_{G}(R)/RZ_{G}(R)=\rO_{2\gamma}^{+}(2)$ and there 
are two conjugacy classes of such radical 2-subgroups;  
\item if $\frac{n}{2^{\gamma}}$ is odd and $a=v_{2}(q^{2}-1)-1=2$, then $N_{G}(R)/RZ_{G}(R)=\SO_{2\gamma}^{+}(2)$ and there 
is a unique conjugacy class of such radical 2-subgroups.   
\end{itemize}

(C1) When $V$ is symplectic and $\delta=1$, we have $2^{\gamma+1}|n$ and there exists a unique conjugacy class of ordered tuples 
$(x_{1},\dots,x_{2\gamma})$. Then, there exists a unique conjugacy class of such radical 2-subgroups $R$ (while $2^{\gamma}|n$) 
and $N_{G}(R)/RZ_{G}(R)$ is isomorphic to the group of automorphisms of $R/\{\pm{I}\}$ preserving the form $m$ and the function 
$\mu$. That is to say (cf. \cite[\S 2]{Yu13}) , \[N_{G}(R)/RZ_{G}(R)\cong\Sp(\gamma;0,0)\cong\rO_{2\gamma}^{+}(2).\]

(C2) When $V$ is symplectic and $\delta=-1$, we have $2^{\gamma}|n$ and there exist two conjugacy classes of ordered tuples 
$(x_{1},\dots,x_{2\gamma})$ distinguished by the discriminant of an orthogonal space $(V_{0},g)$ associated to the ordered 
tuple $x_{1},x_{3},\dots,x_{2\gamma-1}$, which is constructed from the action of the pair $(x_{2\gamma-1},x_{2\gamma})$ on the 
common fixed point subspace $V_{1}$ of $x_{1},x_{3},\dots,x_{2\gamma-3}$ as in Lemma \ref{L:C3}. Then, there is a natural 
inclusion $N_{G}(R)/RZ_{G}(R)\subset\Sp(\gamma-1;0,1)\cong\rO_{2\gamma}^{-}(2)$ with image a subgroup of index 1 or 2. If the 
above inclusion has full image, then there are two conjugacy classes of such radical 2-subgroups; if the above inclusion has 
image of index 2, then there is a unique conjugacy class of such radical 2-subgroups. Write 
$s\in\Sp(\gamma-1;0,1)\cong\rO_{2\gamma}^{-}(2)$ for an automorphism of $R/\{\pm{I}\}$ defined by 
\[s(x_{i})=\left\{\begin{array}{ccc}x_{i}\textrm{ if }i\leq 2\gamma-2;\\x_{2\gamma}\textrm{ if }i=2\gamma-1;\\x_{2\gamma-1}
\textrm{ if }i=2\gamma.\\\end{array}\right.\] Note that, the conjugates of $s$ generate $\rO_{2\gamma}^{-}(2)$ and $ss'$ 
($s'$ run over all conjugates of $s$) generate $\SO_{2\gamma}^{-}(2)$. Therefore, if $s\in N_{G}(R)/RZ_{G}(R)$, then $N_{G}(R)/RZ_{G}(R)=\rO_{2\gamma}^{-}(2)$; if $s\not\in N_{G}(R)/RZ_{G}(R)$, then $N_{G}(R)/RZ_{G}(R)=\SO_{2\gamma}^{-}(2)$. 
Write $(V'_{0},g')$ for the orthogonal space associated to the ordered tuple 
$s\cdot x_{1},s\cdot x_{3},\dots,s\cdot x_{2\gamma-1}$. By Lemma \ref{L:C2-2}, we have \[\disc(V'_{0},g')=
2^{(\frac{n}{2^{\gamma}})\frac{q-1}{2}}\disc(V_{0},g)=(-1)^{(\frac{n}{2^{\gamma}})\frac{q^{2}-1}{8}}\disc(V_{0},g).\]
Thus, \[s\in N_{G}(R)/RZ_{G}(R)\Leftrightarrow\disc(V'_{0},g')=\disc(V_{0},g)\Leftrightarrow
(-1)^{\frac{n}{2^{\gamma}}\frac{q^{2}-1}{8}}=1\Leftrightarrow\frac{n}{2^{\gamma}}\frac{q^{2}-1}{8}\textrm{ is even}.\] The 
last condition of $\frac{n}{2^{\gamma}}\frac{q^{2}-1}{8}$ being even happens when $\frac{n}{2^{\gamma}}$ is even or 
$a=v_{2}(q^{2}-1)-1>2$. Finally, we have: \begin{itemize}
\item if $\frac{n}{2^{\gamma}}$ is even or $a=v_{2}(q^{2}-1)-1>2$, then $N_{G}(R)/RZ_{G}(R)=\rO_{2\gamma}^{-}(2)$ and there 
are two conjugacy classes of such radical 2-subgroups;  
\item if $\frac{n}{2^{\gamma}}$ is odd and $a=v_{2}(q^{2}-1)-1=2$, then $N_{G}(R)/RZ_{G}(R)=\SO_{2\gamma}^{-}(2)$ and there 
is a unique conjugacy class of such radical 2-subgroups.   
\end{itemize}

When $\delta=1$, $\gamma=1$ and $\frac{n}{2^{\gamma}}\frac{q^{2}-1}{8}$ is even, we have $N_{G}(R)/RZ_{G}(R)\cong Z_{2}$. 
Put \[S=\{g\in N_{G}(R): [g,Z_{G}(R)]=1\}.\] Then, $S$ is a normal 2-subgroup of $N_{G}(R)$ containing $R$ with 
$S/R\cong Z_{2}$. Thus, $O_{2}(N_{G}(R))\neq R$ and hence $R$ is not a radical 2-subgroup. In any other case, the 
resulting subgroup is actually a radical 2-subgroup.     
\end{proof}

\begin{lem}\label{L:C4}
Let $X\in I(V)$ have $X^{2}=-I$. Then, \[\{Y\in I(V):YXY^{-1}=\pm{X}\}\cong\GL_{n/2}(\varepsilon q)\rtimes\langle\tau\rangle,\] 
where $\tau^{2}=\epsilon'I$ with $\epsilon'=\left\{\begin{array}{cc}1\textrm{ if V is orthgonal}\\
-1\textrm{ if V is symplectic,}\\\end{array}\right.$ and  
\[\tau Y\tau^{-1}=(Y^{t})^{-1},\ \forall Y\in\GL_{n/2}(\varepsilon q). \]  
\end{lem}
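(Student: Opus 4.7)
\medskip

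\noindent\textbf{Proof proposal.} The key observation is that the hypothesis $X^{2}=-I$ together with $X\in I(V)$ makes $V$ into a module over the $\mathbb{F}_{q}$-algebra $R_q:=\mathbb{F}_{q}[t]/(t^{2}+1)$, and the sign $\varepsilon=(-1)^{(q-1)/2}$ determines whether $R_q\cong\mathbb{F}_{q}\oplus\mathbb{F}_{q}$ (when $\varepsilon=1$) or $R_q\cong\mathbb{F}_{q^{2}}$ (when $\varepsilon=-1$). The plan is to read off both the centralizer and a compatible lift realizing the $\mathbb{Z}/2$-extension from this $R_q$-module structure.

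First I would compute $Z_{I(V)}(X)$. When $\varepsilon=1$, choose $i\in\mathbb{F}_{q}$ with $i^{2}=-1$, decompose $V=V_{+}\oplus V_{-}$ into $\pm i$-eigenspaces, and use $f(Xu,Xv)=f(u,v)$ to show that $V_{\pm}$ are totally isotropic (both in the orthogonal and symplectic case) and are paired perfectly by $f$. Then $Z_{\GL(V)}(X)=\GL(V_{+})\times\GL(V_{-})$, and intersecting with $I(V)$ identifies the $V_{-}$-component as the contragredient of the $V_{+}$-component through $f$, giving $Z_{I(V)}(X)\cong\GL(V_{+})\cong\GL_{n/2}(q)$. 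When $\varepsilon=-1$, $X$ turns $V$ into an $n/2$-dimensional $\mathbb{F}_{q^{2}}$-vector space, and the formula $h(u,v):=f(u,v)-Xf(u,Xv)$ defines a (Hermitian or anti-Hermitian, hence unitary-equivalent) sesquilinear form on $V$ whose isometries are exactly $Z_{I(V)}(X)\cong\GU_{n/2}(q)=\GL_{n/2}(-q)$.

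Next I construct $\tau$. Since $X$ and $-X$ have the same $\mathbb{F}_{q}$-rational canonical form, they are conjugate in $\GL(V)$; to promote this to $I(V)$, I would write things down explicitly. In the case $\varepsilon=1$, pick dual bases $\{e_{j}\}$ of $V_{+}$ and $\{f_{j}\}$ of $V_{-}$ with $f(e_{j},f_{k})=\delta_{jk}$; set $\tau(e_{j})=f_{j}$ and $\tau(f_{j})=e_{j}$ in the orthogonal case (so $\tau^{2}=I$) and $\tau(e_{j})=f_{j}$, $\tau(f_{j})=-e_{j}$ in the symplectic case (so $\tau^{2}=-I$). A direct check shows $\tau\in I(V)$ and $\tau X\tau^{-1}=-X$, and the antisymmetry of $f$ forces the sign $\epsilon'=-1$ in the symplectic case. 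In the case $\varepsilon=-1$, choose $\tau$ to be an $\mathbb{F}_{q^{2}}$-antilinear map (with respect to the $X$-structure) that preserves $h$ up to Galois conjugation; this can be produced from a Hermitian basis, and the same dichotomy $\tau^{2}=\pm I$ appears, coming from symmetric vs.\ antisymmetric $f$.

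Finally, the normalizer is generated by $Z_{I(V)}(X)$ and $\tau$: any $Y$ satisfying $YXY^{-1}=\pm X$ is, modulo $Z_{I(V)}(X)$, either trivial or represented by $\tau$. To identify $\tau Y\tau^{-1}=(Y^{t})^{-1}$ for $Y\in Z_{I(V)}(X)\cong\GL_{n/2}(\varepsilon q)$, I would note that when $\varepsilon=1$, $\tau$ swaps $V_{+}$ and $V_{-}$ and conjugation transports $g\in\GL(V_{+})$ to the contragredient acting on $V_{-}\cong V_{+}^{*}$, which is the transpose-inverse in the chosen dual bases; when $\varepsilon=-1$, $\tau$ implements the Galois involution of $\mathbb{F}_{q^{2}}/\mathbb{F}_{q}$ on $\GU_{n/2}(q)$, and by the defining relation $F_{q}(g)=(g^{t})^{-1}$ for $g\in\GU_{n/2}(q)$ this again reads $\tau Y\tau^{-1}=(Y^{t})^{-1}$. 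The main obstacle in this plan is bookkeeping: getting the value of $\epsilon'$ correct simultaneously in all four combinations (orthogonal/symplectic $\times$ $\varepsilon=\pm 1$), especially in the $\varepsilon=-1$ symplectic case where the ``Hermitian'' form is anti-Hermitian and one must convert between conventions without losing the sign.
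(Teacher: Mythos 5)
Your argument is correct in substance but follows a genuinely different route from the paper. The paper works purely with explicit matrices: it fixes a standard form ($f=\sum x_i^2$ in the orthogonal case, the standard symplectic form otherwise), takes $X=\left(\begin{smallmatrix}&I\\-I&\end{smallmatrix}\right)$, writes down a concrete $\tau$ in one line (diagonal $\pm 1$'s in the orthogonal case; the $\left(\begin{smallmatrix}bI&b'I\\b'I&-bI\end{smallmatrix}\right)$ matrix with $b^2+b'^2=-1$ in the symplectic case), and verifies the relations directly. This treatment never mentions the eigenspace decomposition or the $\mathbb{F}_{q^2}$-structure and, in particular, does not split on whether $-1$ is a square. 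Your proposal instead organizes everything around the $R_q=\mathbb{F}_q[t]/(t^2+1)$-module structure and splits by $\varepsilon=\pm 1$; that is conceptually cleaner and makes the identification $Z_{I(V)}(X)\cong\GL_{n/2}(\varepsilon q)$ visible rather than something "one calculates", but it is also more work. Two small issues: (i) the formula $h(u,v)=f(u,v)-Xf(u,Xv)$ uses $X$ simultaneously as an endomorphism and as the scalar $\zeta\in\mathbb{F}_{q^2}$ with $\zeta^2=-1$; write $h(u,v)=f(u,v)-\zeta f(u,Xv)$ to avoid confusion. (ii) In the $\varepsilon=-1$, symplectic case the reasoning for $\tau^2=-I$ is missing a step: with $h$ anti-Hermitian and $\mu\in\mathbb{F}_{q^2}^\times$ chosen so $\mu^q=-\mu$, the coordinate-wise Frobenius $\sigma_0$ attached to an orthonormal $\mathbb{F}_{q^2}$-basis of $(\,V,\mu h\,)$ satisfies $f(\sigma_0 u,\sigma_0 v)=-f(u,v)$, so it is an anti-isometry, not an isometry. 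One must set $\tau=\nu\sigma_0$ with $\nu^{q+1}=-1$; that restores $\tau\in\Sp(V)$ and gives $\tau^2=\nu^{q+1}I=-I$ automatically, while leaving the relation $\tau Y\tau^{-1}=F_q(Y)=(Y^t)^{-1}$ intact since $\nu$ is central. This is exactly the "bookkeeping" obstacle you flag; it is genuine and needs to be carried out.
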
  

\begin{proof}
Assume that $V$ is orthogonal. By Lemma \ref{L:C11}, $n=\dim V$ is even and $\disc(V)=1$. Take $f=\sum_{1\leq i\leq n}x_{i}^{2}$. 
We may assume that $X=\left(\begin{array}{cc}&I_{n/2}\\-I_{n/2}&\end{array}\right)$. Take 
$\tau=\left(\begin{array}{cc}I_{n/2}&\\&-I_{n/2}\end{array}\right)$. Then, $\tau^{2}=I$ and $\tau X\tau^{-1}=-X$. 
One calculates that \[\{Y\in I(V):YXY^{-1}=\pm{X}\}\cong\GL_{n/2}(\varepsilon q)\rtimes\langle\tau\rangle,\] where 
$\tau X\tau^{-1}=(X^{t})^{-1}$ ($\forall X\in\GL_{n/2}(\varepsilon q)$).   

Assume that $V$ is symplectic. Take 
\[f((x_{1},\dots,x_{n}),(y_{1},\dots,y_{n}))=\sum_{1\leq i\leq n/2}(x_{i}y_{i+n/2}-y_{i}x_{i+n/2}).\] We may assume that $X=\left(\begin{array}{cc}&I_{n/2}\\-I_{n/2}&\end{array}\right)$. Take $\tau=\left(\begin{array}{cc}bI_{n/2}&b'I_{n/2}\\b'I_{n/2}&-bI_{n/2}\end{array}\right)$. Then, $\tau^{2}=-I$ and 
$\tau X\tau^{-1}=-X$. One calculates that \[\{Y\in I(V):YXY^{-1}=\pm{X}\}\cong\GL_{n/2}(\varepsilon q)\rtimes\langle\tau\rangle,\] 
where $\tau X\tau^{-1}=(X^{t})^{-1}$ ($\forall X\in\GL_{n/2}(\varepsilon q)$).  
\end{proof} 

Let $q$ be a power of an odd prime and $\alpha\geq 1$. Consider the group $\GL_{n}(q^{2^{\alpha}})$. Define 
\[\sigma(X)=(X^{t})^{-1},\ \forall X\in\GL_{n}(p^{2^{\alpha}}).\] For any $Y\in\GL_{n}(p^{2^{\alpha}})$, define 
\[\Ad_{Y}(X)=YXY^{-1},\ \forall X\in\GL_{n}(q^{2^{\alpha}}).\]

\begin{lem}\label{L:C5} 
(1)Any involutive automorphism of $\GL_{n}(q^{2^{\alpha}})$ is of the form  
\[\Ad_{Y}\circ\sigma^{k}\circ F_{q^{2^{\alpha-1}}}^{l},\] where $k,l\in\{0,1\}$. 
 
(2)Let $\theta_{1}$ and $\theta_{2}$ be two involutive automorphisms of $\GL_{n}(q^{2^{\alpha}})$. If 
\[\theta_{1}|_{\GL_{n}(\varepsilon q)}=\theta_{2}|_{\GL_{n}(\varepsilon q)}\textrm{ and }
\theta_{1}|_{\GL_{1}(q^{2^{\alpha}})_{2}I}=\theta_{2}|_{\GL_{1}(q^{2^{\alpha}})_{2}I},\] then $\theta_{1}=\theta_{2}$.    
\end{lem}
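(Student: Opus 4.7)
For (1), the plan is to invoke the standard description of $\Aut(\GL_n(\mathbb F_{q^{2^\alpha}}))$: every automorphism has the form $\theta = \Ad_Y\circ\sigma^k\circ F_q^m$ with $Y\in\GL_n(q^{2^\alpha})$, $k\in\{0,1\}$, and $0\le m<2^\alpha$. Using the commutation rules $\sigma\circ\Ad_Y=\Ad_{\sigma(Y)}\circ\sigma$, $F_q\circ\Ad_Y=\Ad_{F_q(Y)}\circ F_q$ and $\sigma\circ F_q=F_q\circ\sigma$, a direct calculation gives
\[
\theta^2 \;=\; \Ad_{Y\cdot(\sigma^kF_q^m)(Y)}\circ F_q^{2m}.
\]
Involutivity forces $F_q^{2m}$ to be an inner automorphism; since no nontrivial power of $F_q$ is inner on $\GL_n(q^{2^\alpha})$, this yields $2^{\alpha-1}\mid m$, so $F_q^m=F_{q^{2^{\alpha-1}}}^l$ for some $l\in\{0,1\}$, as claimed.

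For (2), set $\theta:=\theta_1\theta_2$; since $\theta_2$ is an involution this equals $\theta_1\theta_2^{-1}$, an automorphism of $\GL_n(q^{2^\alpha})$ fixing every element of $\GL_n(\varepsilon q)$ and of $\GL_1(q^{2^\alpha})_2 I$, and the goal is to show $\theta=\id$. Write $\theta=\Ad_Y\circ\sigma^k\circ F_q^m$ as in (1) (now possibly without the involutivity constraint on $m$). First I would exploit triviality on $\GL_n(\varepsilon q)$. For $\varepsilon=1$ one has $F_q^m|_{\GL_n(q)}=\id$, so the condition reads $\sigma^k(X)=Y^{-1}XY$ on $\GL_n(q)$; since the graph automorphism is non-inner on $\GL_n(q)$ for $n\ge 3$, this forces $k=0$, and then $Y$ centralizes $\GL_n(q)$ inside $\GL_n(q^{2^\alpha})$. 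Absolute irreducibility of the natural module together with Schur's lemma then pins $Y$ down to a scalar, so $\theta=F_q^m$. For $\varepsilon=-1$ the defining identity $F_q(M)=\sigma(M)$ on $\GU_n(q)$ rewrites the triviality condition as $\sigma^{k+m}(M)=Y^{-1}MY$, and the analogous dichotomy (absorbing $\sigma$ into the inner part in the small-rank case where it is inner on $\GU_n(q)$) yields $\theta=\sigma^k\circ F_q^m$ with $k+m$ even.

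Finally, the second hypothesis on the scalar 2-subgroup is applied: for a generator $\zeta$ of $\GL_1(q^{2^\alpha})_2$, the equality $\theta(\zeta I)=\zeta I$ reads $\zeta^{(-1)^kq^m-1}=1$, i.e.\ the full 2-part $(q^{2^\alpha}-1)_2$ divides $(-1)^kq^m-1$. A lifting-the-exponent estimate for $v_2(q^j\pm 1)$ (handled separately according to whether $4\mid q-1$ or $4\mid q+1$) shows that this is impossible for $0<m<2^\alpha$, forcing $m=0$ and then $k=0$, so $\theta=\id$. The main obstacle will be this final 2-adic valuation case analysis, uniform in $\varepsilon\in\{\pm 1\}$ and in $q\bmod 4$, together with tracking the small-rank cases $n\le 2$ where the graph automorphism is already inner on $\GL_n(\varepsilon q)$ and the intermediate reductions need to be reorganized accordingly.
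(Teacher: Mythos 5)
Your proof of (1) correctly unpacks what the paper treats as ``well-known.'' For (2) you set $\theta=\theta_1\theta_2^{-1}$ exactly as the paper does, but you process the two hypotheses in the opposite order: you first exploit triviality on $\GL_n(\varepsilon q)$ to kill the graph part $\sigma^k$ and pin down $Y$, and only afterwards apply the scalar condition on $\GL_1(q^{2^\alpha})_2 I$ to eliminate the field part. The paper reverses this. It writes $\theta=\Ad_Y\circ\sigma^k\circ F_{q^{2^{\alpha-1}}}^l$ with $k,l\in\{0,1\}$ (legitimate because a product of two automorphisms of the form in (1) again has that form, since $F_{q^{2^{\alpha-1}}}^2=\id$), evaluates on a generator $\delta$ of $\GL_1(q^{2^\alpha})_2$ to get $2^{a+\alpha}\mid q^{l2^{\alpha-1}}-(-1)^k$, and using $a=v_2(q-\varepsilon)\ge 2$ and $\alpha\ge 1$ concludes $k=l=0$ at once; the hypothesis on $\GL_n(\varepsilon q)$ then only has to show that $Y$ is scalar, which is Schur's lemma and works uniformly for all $n\ge 1$. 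The paper's ordering buys a rank-independent argument and a $2$-adic check over just two values $l\in\{0,1\}$, rather than the $v_2(m)$-stratified lifting-the-exponent analysis over all $0<m<2^\alpha$ that your route requires because you did not carry (1)'s constraint on the field part forward.

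The concrete gap, which you flag but do not close, is your step ``since the graph automorphism is non-inner on $\GL_n(q)$ for $n\ge3$, this forces $k=0$.'' As written it says nothing for $n\le 2$, and even for $n\ge 3$ it is not quite the right invocation: what you need is that $\sigma^k|_{\GL_n(\varepsilon q)}$ cannot equal conjugation by any $Y\in\GL_n(q^{2^\alpha})$, which is stronger than non-innerness within $\GL_n(\varepsilon q)$. Both difficulties disappear if you test the identity on central elements: for $\lambda\in\GL_1(\varepsilon q)$ the equation $\theta(\lambda I)=\lambda I$ gives $\lambda^{(-1)^k}=\lambda$ when $\varepsilon=1$, resp.\ $\lambda^{(-1)^{k+m}}=\lambda$ when $\varepsilon=-1$, and since $|\GL_1(\varepsilon q)|=q-\varepsilon\ge 4$ (from $a\ge 2$), this forces $k=0$, resp.\ $k+m$ even, for every $n$. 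But once you do this, you have in effect rediscovered the paper's scalar step as your own opening move, which is precisely why the paper leads with it.
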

 
\begin{proof}
(1) This is well-known. 

(2) Put $\theta=\theta_{1}\circ\theta_{2}^{-1}$. It suffices to show that $\theta=\id$. Choose a generator $\delta_{0}$ 
of $\GL_{1}(\varepsilon q)_{2}$, which has order equal to $2^{a}$. Then, $\GL_{1}(q^{2^{\alpha}})_{2}$ has a generator $\delta$ 
such that $\delta^{2^{\alpha}}=\delta_{0}$. By (1), there exists $k,l\in\{0,1\}$ such that 
$\theta=\Ad_{Y}\circ\sigma^{k}\circ F_{q^{2^{\alpha-1}}}^{l}$. From \[\delta I=\theta|_{\delta I}=
(\Ad_{Y}\circ\sigma^{k}\circ F_{q^{2^{\alpha-1}}}^{l})(\delta I)=\delta^{(-1)^{k}q^{l2^{\alpha-1}}},\] we get 
$2^{a+\alpha}|q^{l2^{\alpha-1}}-(-1)^{k}$. Note that $a\geq 2$, $\alpha\geq 1$ and $v_{2}(q-\varepsilon)=a$. We must have 
$k=l=0$. Then, $\theta=\Ad_{Y}$. From $\theta|_{\GL_{n}(\varepsilon q)}=\id$, $Y$ commutes with $\GL_{n}(\varepsilon q)$. 
Then, $Y$ is a central element and $\theta=\id$.  
\end{proof} 

The following lemma is well-known. 

\begin{lem}\label{L:C6} 
Any involutive automorphism of $\GL_{n}(q)$ is conjugate to one of the following: \begin{itemize}
\item[(1)] $\Ad_{I_{p,n-p}}$;
\item[(2)] $\sigma$; 
\item[(3)] $\Ad_{J_{n/2}}$; 
\item[(4)] $\Ad_{J_{n/2}}\circ\sigma$; 
\item[(5)] $F_{q^{\frac{1}{2}}}$;
\item[(6)] $\sigma\circ F_{q^{\frac{1}{2}}}$. 
\end{itemize} 
\end{lem}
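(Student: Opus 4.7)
The plan is to invoke the standard description of $\Aut(\GL_{n}(q))$ and then analyze each coset of the inner automorphism subgroup separately. For $n\geq 3$ (the small cases $n=2$ and small $q$ can be checked directly; note that all diagonal automorphisms of $\GL_{n}$ are already inner, unlike for $\PGL_{n}$ or $\PSL_{n}$), one has $\Aut(\GL_{n}(q))=\operatorname{Inn}(\GL_{n}(q))\rtimes\langle\sigma, F_{p}\rangle$, where $\sigma(X)=(X^{t})^{-1}$ is the graph automorphism and $F_{p}$ generates the cyclic Galois group of order $f$ with $q=p^{f}$. Every automorphism is therefore of the form $\theta=\Ad_{Y}\circ\sigma^{k}\circ F_{p}^{l}$; for $\theta$ to be an involution the ``outer part'' $\sigma^{k}F_{p}^{l}$ must itself be of order at most $2$ modulo $\operatorname{Inn}$, which pins the relevant pairs to $(\sigma^{k},F_{p}^{l})\in\{(\id,\id),(\sigma,\id),(\id,F_{q^{\frac{1}{2}}}),(\sigma,F_{q^{\frac{1}{2}}})\}$, with the last two requiring $q$ to be a square.

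For the inner case $\theta=\Ad_{Y}$, the condition $\theta^{2}=\id$ yields $Y^{2}=\lambda I$ for some $\lambda\in\bbF_{q}^{\times}$. Rescaling $Y$ by a scalar (which preserves $\Ad_{Y}$) reduces $\lambda$ modulo squares to $1$ or to a fixed non-square $\delta$. If $\lambda=1$ then $Y$ is conjugate in $\GL_{n}(q)$ to some $I_{p,n-p}$, yielding (1). If $\lambda=\delta$, Galois stability of the $\pm\sqrt{\delta}$-eigenspaces of $Y$ forces equal multiplicity, so $n$ is even and $Y$ becomes conjugate to $J_{n/2}$ (which satisfies $J_{n/2}^{2}=-I$; this case contributes beyond (1) only when $-1$ is a non-square in $\bbF_{q}$), yielding (3).

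For the graph case $\theta=\Ad_{Y}\circ\sigma$, using $\sigma\Ad_{Y}\sigma^{-1}=\Ad_{\sigma(Y)}$ one computes $\theta^{2}=\Ad_{Y\sigma(Y)}=\Ad_{Y(Y^{t})^{-1}}$, so $Y^{t}=\lambda Y$ with $\lambda^{2}=1$. Under $\Ad_{Z}$-conjugation $\theta$ transforms by $Y\mapsto ZYZ^{t}$, the congruence action on bilinear forms, together with the freedom $Y\mapsto\mu Y$. For $\lambda=1$, $Y$ encodes a nondegenerate symmetric form, reducible to $I$ after rescaling, yielding (2); for $\lambda=-1$, $Y$ is alternating and nondegenerate (forcing $n$ even), reducible to $J_{n/2}$, yielding (4). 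For the two field cases, $\theta^{2}=\id$ forces $Y$ to satisfy a Hermitian-type identity, and $\Ad_{Z}$-conjugation acts by an $F_{q^{\frac{1}{2}}}$-twisted (respectively $\sigma F_{q^{\frac{1}{2}}}$-twisted) version of the same formula. For $(\id,F_{q^{\frac{1}{2}}})$, Lang--Steinberg applied to $\GL_{n}(\bar{\bbF}_{q})$ with Frobenius $F_{q^{\frac{1}{2}}}$ collapses the orbit to $Y=I$, yielding (5); for $(\sigma,F_{q^{\frac{1}{2}}})$, the uniqueness of the nondegenerate Hermitian form over $\bbF_{q}/\bbF_{q^{\frac{1}{2}}}$ collapses the orbit to $Y=I$, yielding (6).

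The main obstacle is bookkeeping: one must verify that the twisted-conjugation action does collapse each orbit space to a single class (modulo the residual scaling $Y\mapsto\mu Y$). This reduces to standard cohomological vanishing ($H^{1}(\Gamma,\GL_{n})=1$ by Lang--Steinberg and Hilbert~90) together with the classification of bilinear and Hermitian forms over finite fields, and matching these uniform reductions with the specific representatives in (1)--(6) involves no new ideas beyond the form-theoretic ones already used in Section~\ref{SS:GL2}.
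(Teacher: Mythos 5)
The paper offers no proof of this lemma at all: it is stated with the line ``The following lemma is well-known'' and left at that, so there is no argument in the paper to compare yours against. Your overall plan --- decompose $\Aut(\GL_n(q))$ as $\operatorname{Inn}\rtimes\langle\sigma,F_p\rangle$, reduce to the four outer cosets of order dividing $2$, and in each coset analyze the set of admissible $Y$ up to the twisted-conjugation action $Y\mapsto Z\,Y\,{}^{\ast}\!Z$ together with scalar rescaling --- is the standard and correct framework.

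However, there is a genuine gap in your treatment of the graph coset when $n$ is even. You claim that for $\lambda=1$ the symmetric matrix $Y$ is ``reducible to $I$ after rescaling.'' Over $\bbF_q$ with $q$ odd, nondegenerate symmetric bilinear forms on an $n$-space fall into two congruence classes distinguished by the discriminant $\det Y$ modulo squares; the congruence action $Y\mapsto ZYZ^t$ preserves this square class, and the rescaling $Y\mapsto \mu Y$ multiplies it by $\mu^n$. For odd $n$ this merges the two classes; for even $n$ it does not, since $\mu^n$ is always a square. The two resulting automorphisms $\Ad_Y\circ\sigma$ really are non-conjugate in $\Aut(\GL_n(q))$: their fixed-point subgroups are the orthogonal groups of the two Witt types $\rO_n^{\pm}(q)$, which have different orders. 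Neither $\sigma$- nor $F_p$-conjugation fixes this, as both also preserve the square class of $\det Y$. So your argument papers over a missing representative (a $Y$ of non-square discriminant), and as a consequence the list (1)--(6) is in fact incomplete for even $n$; the same difficulty reappears in the inner coset when $-1$ is a square, where $\Ad_{J_{n/2}}$ becomes conjugate to $\Ad_{I_{n/2,n/2}}$ while the class with $Y^2=\delta I$ for $\delta$ a non-square has no representative on the list. A correct proof would have to either enlarge the list (e.g.\ allow $\Ad_Y\circ\sigma$ with $Y$ of non-square discriminant, and $\Ad_Y$ with $Y^2$ a non-square scalar) or pin down the restricted context in which the paper applies the lemma and show those extra classes cannot arise there. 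Your field-case arguments ((5) and (6)) via Hilbert~90 / Lang--Steinberg and the uniqueness of nondegenerate Hermitian forms are correct.
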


Of course, involutions in items (3)-(4) occur only when $n$ is even; that in items (5)-(6) occur only when $q$ is a square.

Let \[Z_{I(V)}(A/A_{1})=Z_{I(V)}(A_{1})\cap\{Y\in I(V): YXY^{-1}=\pm{X},\forall X\in A\}.\] 

\begin{prop}\label{P:C6}
Let $R$ be a radical 2-subgroup of $I(V)$ such that \[A'_{1}(R)=A'_{2}(R)=\{\pm{I}\}\quad\textrm{ and }
\quad A_{1}(R)/A'_{1}(R)=A_{2}(R)/A'_{2}(R)\cong Z_{2}.\] Write $\gamma=\frac{1}{2}\log_{2}|A(R)/A_{2}(R)|$ and 
$|R|=2^{a+\alpha+1+2\gamma}$. Then, $\alpha\geq 0$ and $2^{\alpha+\gamma+1}|n$. There is a unique conjugacy 
classes of such radical 2-subgroups for any given values of $n,\gamma,\alpha$.   
\end{prop}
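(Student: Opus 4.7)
The plan is to mirror the proof of Proposition~\ref{P:GL5}, exploiting the extra order-$4$ element supplied by the hypothesis $A_1(R)/A'_1(R)\cong Z_2$ to reduce the classification to the corresponding problem in a smaller general linear (or unitary) group.

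First I would pick an element $x_0\in A_1(R)\setminus A'_1(R)$, which then satisfies $\mu(x_0)=-1$, i.e.\ $x_0^2=-I$. Lemma~\ref{L:C11} forces $n$ to be even (and $\disc(V)=1$ in the orthogonal case), and we have $\langle x_0\rangle=A_1(R)=A_2(R)$. Since $x_0\in Z(R)$ and $\langle x_0\rangle$ is normalized by $N_{I(V)}(R)$, Lemma~\ref{L:R5} gives that $R$ is a radical 2-subgroup of $Z_{I(V)}(x_0)$, which by Lemma~\ref{L:C4} is isomorphic to $\GL_{n/2}(\eps q)$, with $\tau$ providing an extra involutive automorphism inside $N_{I(V)}(\langle x_0\rangle)$.

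Next, using Lemmas~\ref{L:m1-2} and~\ref{L:m2-2}, I would choose representatives $x_1,y_1,\ldots,x_\gamma,y_\gamma\in A(R)$ of a symplectic basis of $A(R)/A_1(R)$ with respect to $m$, all satisfying $\mu=1$. Applying Lemmas~\ref{L:C1}, \ref{L:C2}, and~\ref{L:C3} inductively to the successive pairs $(x_i,y_i)$, one identifies the conjugacy class of the ordered tuple $(x_0,x_1,y_1,\ldots,x_\gamma,y_\gamma)$, obtains the divisibility $2^{\gamma+1}\mid n$, and establishes
\[
Z_{I(V)}(A(R)/A_1(R))\cong G'\cdot D,
\]
where $G'\cong\GL_{n/2^{\gamma+1}}(\eps q)\rtimes\langle\tau\rangle$ and $D$ is the finite 2-group generated by $x_0$ and lifts of the $x_i,y_i$. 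Setting $R''=R\cap G'$ and $R'=R''\cap\GL_{n/2^{\gamma+1}}(\eps q)$, Lemmas~\ref{L:R8}, \ref{L:R4}, and~\ref{L:R3} ensure $R'$ is a radical 2-subgroup of $\GL_{n/2^{\gamma+1}}(\eps q)$, and a direct computation of invariants gives $A(R')=A_2(R')$ and $A'_1(R')=A'_2(R')=\GL_1(\eps q)_2 I$. Matching orders via $|R|=2^{a+\alpha+1+2\gamma}$, the residual $R'$ falls either into the trivial $(1,1)$-configuration of Proposition~\ref{P:GL3} at $\gamma=0$ (when $\alpha=0$) or into the $(Z_2,Z_2)$-configuration of Proposition~\ref{P:GL5} with parameter $\alpha$ (when $\alpha\geq 1$); each yields a single conjugacy class and supplies the extra divisibility $2^{\alpha}\mid n/2^{\gamma+1}$, combining to $2^{\alpha+\gamma+1}\mid n$.

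The main obstacle will be ensuring uniqueness of the $I(V)$-conjugacy class and ruling out an apparent analogue of the $(1,Z_2)$-case of Proposition~\ref{P:GL4}. The $\tau$-action from Lemma~\ref{L:C4} induces an involutive automorphism of $\GL_{n/2^{\gamma+1}}(\eps q)$ whose form is pinned down by Lemma~\ref{L:C5}; I would combine this with a Lang--Steinberg argument in the spirit of Lemma~\ref{L:GL2} to collapse any a priori distinct $\GL$-classes into a single $I(V)$-class. The exclusion of a $(1,Z_2)$-type configuration would follow the pattern of the last paragraph of Proposition~\ref{P:GL4}: any putative such $R$ would admit a strictly larger normal 2-subgroup in $N_{I(V)}(R)$, contradicting radicality; this must be verified directly using the explicit presence of $x_0$ and the structure of $N_{I(V)}(\langle x_0\rangle)$.
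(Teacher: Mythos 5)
Your overall plan matches the paper's: pick $x_0\in A_1(R)$ with $\mu(x_0)=-1$, use a symplectic basis of $A(R)/A_1(R)$, reduce via the centralizer to a radical $2$-subgroup $R'$ of a general linear/unitary group, and invoke Propositions~\ref{P:GL3}, \ref{P:GL4}, \ref{P:GL5} to classify $R'$. However, there are two genuine slips.

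First, the claim that $Z_{I(V)}(A(R)/A_1(R))\cong G'\cdot D$ with $G'\cong\GL_{n/2^{\gamma+1}}(\eps q)\rtimes\langle\tau\rangle$ is wrong for this proposition. By definition, every element of $Z_{I(V)}(A(R)/A_1(R))$ must \emph{commute} with $A_1(R)=\langle x_0\rangle$ (since $A_1(R)\subset Z(R)$); the $\tau$ of Lemma~\ref{L:C4} anticommutes with $x_0$ and hence does not belong here. The group is simply $\GL_{n/2^{\gamma+1}}(\eps q)\cdot A(R)$, and $R'=R\cap\GL_{n/2^{\gamma+1}}(\eps q)$ directly; the intermediate $R''$ you introduce is vacuous. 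You have imported the $\rtimes\langle\tau\rangle$ structure from the proof of Proposition~\ref{P:C5}, where $\mu(x_0)=-1$ occurs for $x_0\in A_2(R)\setminus A_1(R)$ and therefore \emph{does} admit elements of $R$ anticommuting with it.

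Second, you treat the exclusion of the $(1,Z_2)$-configuration as the ``main obstacle,'' to be handled by a radicality contradiction analogous to the last paragraph of Proposition~\ref{P:GL4}. The paper dispenses with this by a one-line arithmetic observation: the radical subgroups of Proposition~\ref{P:GL4} exist only when $4\mid q+\eps'$, but here the reduction lands in $\GL_{n/2^{\gamma+1}}(\varepsilon q)$ with $\varepsilon=(-1)^{(q-1)/2}$ chosen so that $4\mid q-\varepsilon$, so the $(1,Z_2)$ case is simply unavailable. Your proposed contradiction argument would require verification and is not needed. Similarly, the Lang--Steinberg uniqueness step you flag is superfluous once the centralizer structure is corrected.
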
 

\begin{proof}
By assumption, there exists an element $x_{0}\in A_{1}(R)$ with $\mu(x_{0})=-1$. Using Lemmas \ref{L:m1-2} and \ref{L:m2-2}, 
one finds elements $x_{1},x_{2},\dots,x_{2\gamma-1},x_{2\gamma}\in A(R)$ generating $A(R)/A_{2}(R)$ and such that: \[m(x_{2i-1},x_{2j-1})=m(x_{2i-1},x_{2j})=m(x_{2i},x_{2j-1})=m(x_{2i},x_{2j})=1,\ i\neq j,\] 
\[m(x_{2i-1},x_{2i})=-1,\ 1\leq i\leq\gamma,\] \[\mu(x_{2\gamma})=\mu(x_{2\gamma-1})=\cdots=\mu(x_{2})=\mu(x_{1})=1.\] By Lemmas 
\ref{L:C1} and \ref{L:C11}, one shows that $2^{\gamma+1}|n$ and there is a unique conjugacy classes of $A(R)$ for fixed values 
of $n$ and $\gamma$. Moreover, we have \[Z_{I(V)}(A(R)/A_{1}(R))=\GL_{n/2^{\gamma+1}}(\varepsilon q)\cdot A(R).\] Put 
\[R'=R\cap\GL_{n/2^{\gamma+1}}(\varepsilon q).\] By Lemmas \ref{L:R5} and \ref{L:R4}, $R'$ is a radical 2-subgroup of 
$\GL_{n/2^{\gamma+1}}(\varepsilon q)$. One shows that $A(R')=A_{2}(R')$, $A'_{1}(R')=A'_{2}(R')=\GL_{1}(\varepsilon q)_{2}I$.  
Due to $4|q-\varepsilon$, by Proposition \ref{P:GL4} the case \[(A_{1}(R')/A'_{1}(R'),A_{2}(R')/A'_{2}(R'))\cong(1,Z_{2})\] can 
not happen. By Propositions \ref{P:GL3} and \ref{P:GL5}, $R'$ is conjugate to the image of 
$\GL_{1}((\varepsilon q)^{2^{\alpha}})_{2}$ under the composition of homomorphisms 
\[\GL_{1}((\varepsilon q)^{2^{\alpha}})\rightarrow\GL_{2^{\alpha}}(\varepsilon q)\rightarrow\GL_{n/2^{\gamma+1}}(\varepsilon q),\] 
where $2^{\alpha+\gamma+1}|n$ and the second homomorphism above is diagonal map. Then, the conclusion follows.  
\end{proof} 

Let \[Z_{I(V)}(A/A'_{1})=Z_{I(V)}(A'_{1})\cap\{Y\in I(V): YXY^{-1}=\pm{X},\forall X\in A\}.\]   

\begin{prop}\label{P:C5}
Let $R$ be a radical 2-subgroup of $I(V)$ such that \[A_{1}(R)=A'_{1}(R)=A'_{2}(R)=\{\pm{I}\}\quad\textrm{ and }
\quad A_{2}(R)/A'_{2}(R)\cong Z_{2}.\] Write $\gamma=\frac{1}{2}\log_{2}|A(R)/A_{2}(R)|$ and $|R|=2^{a+\alpha+2+2\gamma}$. Then, 
$\alpha\geq 0$ and $2^{\alpha+\gamma+1}|n$. There are 1-3 conjugacy classes of such radical 2-subgroups for any given values 
of $n$, $\gamma$ and $\alpha$.    
\end{prop}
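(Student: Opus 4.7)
The plan is to adapt the proof of Proposition \ref{P:C6} by replacing the role of the central element of $R$ of square $-I$ with a conjugate-flipped element of square $-I$, thus combining ideas from Proposition \ref{P:C6} and Proposition \ref{P:GL4}. First, using Lemmas \ref{L:m1-2} and \ref{L:m2-2}, I choose $x_{0}\in A_{2}(R)$ with $\mu(x_{0})=-1$ (so $x_{0}^{2}=-I$), and symplectic generators $x_{1},\dots,x_{2\gamma}\in A(R)$ of $A(R)/A_{2}(R)$ satisfying $m(x_{2i-1},x_{2i})=-1$, all other pairings trivial, and $\mu(x_{i})=1$. The hypothesis $A_{1}(R)=\{\pm I\}$ guarantees the existence of $y_{0}\in R$ with $y_{0}x_{0}y_{0}^{-1}=-x_{0}$, so $x_{0}$ plays the role of the scalar-squared generator in Proposition \ref{P:GL4}.

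Next, applying Lemma \ref{L:C4} to $x_{0}$ forces $n$ to be even and places $\{Y\in I(V):Yx_{0}Y^{-1}=\pm x_{0}\}$ inside $\GL_{n/2}(\varepsilon q)\rtimes\langle\tau\rangle$; iteratively centralizing the symplectic pairs $\{x_{2i-1},x_{2i}\}$ via Lemma \ref{L:C1} then shows $2^{\gamma+1}\mid n$ and identifies
\[
Z_{I(V)}(A(R)/A'_{1}(R))=G'\cdot A(R),\quad G'\cong\GL_{n/2^{\gamma+1}}(\varepsilon q)\rtimes\langle\tau\rangle.
\]
Setting $R'=R\cap\GL_{n/2^{\gamma+1}}(\varepsilon q)$ and invoking Lemmas \ref{L:R5}, \ref{L:R4} and \ref{L:R3}, I find that $R'$ is a radical 2-subgroup of $\GL_{n/2^{\gamma+1}}(\varepsilon q)$ with $A_{1}(R')=A'_{1}(R')=A'_{2}(R')=\GL_{1}(\varepsilon q)_{2}I$ and $A_{2}(R')/A'_{2}(R')\cong Z_{2}$. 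By Proposition \ref{P:GL4}, this forces $4\mid q+\varepsilon$, pins down the conjugacy class of $R'$ in $\GL_{n/2^{\gamma+1}}(\varepsilon q)$, and delivers both the inequality $\alpha\ge 0$ and the divisibility $2^{\alpha+\gamma+1}\mid n$.

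The main obstacle is the final count of $I(V)$-conjugacy classes of $R$ itself. Once $R'$ is fixed, $R$ is an extension of $R'$ inside $G'\cdot A(R)$ by elements implementing the outer involution $\tau$ together with the flipping element $y_{0}$, and one must count $N_{I(V)}(R')$-orbits on the admissible lifts. Using Lemmas \ref{L:C5} and \ref{L:C6} to classify the involutive automorphisms of $\GL_{n/2^{\gamma+1}}(\varepsilon q)$ preserving $R'$, and tracking the discriminant of the common fixed-point subspaces of the $x_{2i-1}$ as in the $(D1), (D2), (C1), (C2)$ branches of Proposition \ref{P:C4}, I expect 1, 2, or 3 conjugacy classes for each fixed triple $(n,\gamma,\alpha)$, the exact count depending on $\varepsilon$, on the parity of $n/2^{\alpha+\gamma+1}$, and on whether $a=v_{2}(q^{2}-1)-1$ equals $2$ or is larger. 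The delicate part is verifying in each branch that the constructed $R$ is genuinely radical (i.e.\ that $O_{2}(N_{I(V)}(R))=R$) rather than being properly contained in a larger normal 2-subgroup of its normalizer, which is done by the same ``$S=\{g\in N_{G}(R):[g,Z_{G}(R)]=1\}$'' test used in Propositions \ref{P:GL4} and \ref{P:C4}.
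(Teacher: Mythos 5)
Your overall reduction scheme (build the symplectic tuple $x_1,\dots,x_{2\gamma}$ and a flipped generator $x_0$ of $A_2(R)$ with $\mu(x_0)=-1$, pass via Lemmas~\ref{L:C1} and \ref{L:C4} to the subgroup $(\GL_{n/2^{\gamma+1}}(\varepsilon q)\rtimes\langle\tau\rangle)\cdot A(R)$, then study $R'=R\cap\GL_{n/2^{\gamma+1}}(\varepsilon q)$) matches the paper. But your identification of $R'$ is wrong, and this is a genuine gap rather than a cosmetic one. You assert that $R'$ satisfies $A_1(R')=A'_1(R')=A'_2(R')=\GL_1(\varepsilon q)_2 I$ with $A_2(R')/A'_2(R')\cong Z_2$: this is exactly the hypothesis of Proposition~\ref{P:GL4}, and you then read off ``$4\mid q+\varepsilon$'' from it. However, $\varepsilon=(-1)^{(q-1)/2}$ was fixed at the start of \S\ref{SS:GL2} precisely so that $4\mid q-\varepsilon$ always holds, so the condition $4\mid q+\varepsilon$ never holds in this setting, and Proposition~\ref{P:GL4} does not apply. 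What the paper actually shows is that $A(R')=A_2(R')$ and $A'_1(R')=A'_2(R')=\GL_1(\varepsilon q)_2 I$, leaving the three possibilities $(A_1(R')/A'_1(R'),A_2(R')/A'_2(R'))\cong(1,1),(1,Z_2),(Z_2,Z_2)$; the middle case is then ruled out \emph{because} $4\mid q-\varepsilon$, and the remaining two are handled by Propositions~\ref{P:GL3} and \ref{P:GL5}, giving that $R'$ is cyclic (the image of $\GL_1((\varepsilon q)^{2^\alpha})_2$), not the dihedral/semidihedral-type group Proposition~\ref{P:GL4} would deliver.

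This misidentification propagates into your final paragraph: the extension analysis of $R$ over $R'$ is built on the wrong base group and cannot be salvaged as written. In the correct setup, with $R'=\langle x\rangle$ a Singer-type cyclic group, one writes $R''=R\cap(\GL_{n/2^{\gamma+1}}(\varepsilon q)\rtimes\langle\tau\rangle)=\langle x,y\rangle$ and distinguishes the two cases $yxy^{-1}=\pm x^{-1}$. The case $yxy^{-1}=-x^{-1}$ forces $\alpha\ge1$ and gives one class; the case $yxy^{-1}=x^{-1}$ gives $\tau'^2=\pm\epsilon' I$ and, via Lemmas~\ref{L:C5} and \ref{L:C6}, up to two classes $\langle\delta,\tau'\rangle$ and $\langle\delta,J_{n/2^{\alpha+\gamma+2}}\tau'\rangle$. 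Your description of the count as depending on ``$\varepsilon$, the parity of $n/2^{\alpha+\gamma+1}$, and $a$'' does not reflect this; those invariants govern the branches of Proposition~\ref{P:C4}, not Proposition~\ref{P:C5}. Also, you should first show that $R''$ is radical in $\GL_{n/2^{\gamma+1}}(\varepsilon q)\rtimes\langle\tau\rangle$ (Lemmas~\ref{L:R5}, \ref{L:R4}) and only then apply Lemma~\ref{L:R3} to pass to $R'$; invoking all three lemmas directly on $R\cap\GL_{n/2^{\gamma+1}}(\varepsilon q)$ skips the step that actually justifies the radicality.
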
 

\begin{proof}
By assumption, there exists a generator $x_{0}$ of $A_{2}(R)$ with $\mu(x_{0})=-1$. Using Lemmas \ref{L:m1-2} and \ref{L:m2-2}, 
one finds elements $x_{1},x_{2},\dots,x_{2\gamma-1},x_{2\gamma}\in A(R)$ generating $A(R)/A_{2}(R)$ with:  \[m(x_{2i-1},x_{2j-1})=m(x_{2i-1},x_{2j})=m(x_{2i},x_{2j-1})=m(x_{2i},x_{2j})=1,\ i\neq j,\] 
\[m(x_{2i-1},x_{2i})=-1,\ 1\leq i\leq\gamma,\] \[\mu(x_{2\gamma})=\mu(x_{2\gamma-1})=\cdots=\mu(x_{2})=\mu(x_{1})=1.\] By Lemma 
\ref{L:C1} and \ref{L:C11}, one shows that $2^{\gamma+1}|n$ and there is a unique conjugacy classes of $A(R)$ for fixed values 
of $n$ and $\gamma$. By Lemmas \ref{L:C1} and \ref{L:C4}, we have \[Z_{I(V)}(A(R)/A'_{1}(R))=(\GL_{n/2^{\gamma+1}}(\varepsilon q)
\rtimes\langle\tau\rangle)\cdot A(R),\] where $\tau^{2}=\epsilon'I$ and $\tau X\tau^{-1}=(X^{t})^{-1}$ 
($\forall X\in\GL_{n/2^{\gamma+1}}(\varepsilon q)$), $x_{0}=\delta^{2^{a-2}}I\in\GL_{n/2^{\gamma+1}}(\varepsilon q)$ with $\delta$ 
a generator of $\GL_{1}(\varepsilon q)$. Put \[R''=R\cap(\GL_{n/2^{\gamma+1}}(\varepsilon q)\rtimes\langle\tau\rangle).\] By Lemmas 
\ref{L:R5} and \ref{L:R4}, $R''$ is a radical 2-subgroup of $\GL_{n/2^{\gamma+1}}(\varepsilon q)\rtimes\langle\tau\rangle$. Put \[R'=R''\cap\GL_{n/2^{\gamma+1}}(\varepsilon q).\] By Lemma \ref{L:R3}, $R'$ is a radical 2-subgroup of $\GL_{n/2^{\gamma+1}}(q)$. 
Considering the action of an element $x\in R''-R'$ on $A(R')$, one shows that $A(R')=A_{2}(R')$ and 
$A'_{1}(R')=A'_{2}(R')=\GL_{1}(\varepsilon q)I$. Due to $4|q-\varepsilon$, by Proposition \ref{P:GL4} the case \[(A_{1}(R')/A'_{1}(R'),A_{2}(R')/A'_{2}(R'))\cong(1,Z_{2})\] can not happen. By Propositions \ref{P:GL3} and \ref{P:GL5}, 
$R'$ is conjugate to the image of $\GL_{1}((\varepsilon q)^{2^{\alpha}})_{2}$ under the composition of homomorphisms 
\[\GL_{1}((\varepsilon q)^{2^{\alpha}})\rightarrow\GL_{2^{\alpha}}(\varepsilon q)\rightarrow\GL_{n/2^{\gamma+1}}(\varepsilon q),\] 
where $2^{\alpha+\gamma+1}|n$ and the second homomorphism above is diagonal map. 

Replacing $R$ by a conjugate one if necessary, we may assume that $R'$ has a generator \[x=\delta=\left(\begin{array}{ccccc}
0_{\frac{n}{2^{\alpha+\gamma+1}}}&I_{\frac{n}{2^{\alpha+\gamma+1}}}&&&\\&0_{\frac{n}{2^{\alpha+\gamma+1}}}&
I_{\frac{n}{2^{\alpha+\gamma+1}}}&&\\&&0_{\frac{n}{2^{\alpha+\gamma+1}}}&I_{\frac{n}{2^{\alpha+\gamma+1}}}&\\&&&\ddots&
\ddots\\\delta_{0}I_{\frac{n}{2^{\alpha+\gamma+1}}}&&&&0_{\frac{n}{2^{\alpha+\gamma+1}}}\end{array}\right)\in
\GL_{n/2^{\gamma+1}}(\varepsilon q),\] where $\delta_{0}$ is a generator of $\GL_{1}(\varepsilon q)_{2}$. Note that the 
centralizer of $\delta$ in $\GL_{n/2^{\gamma+1}}(\varepsilon q)$ is isomorphic to 
$\GL_{n/2^{\alpha+\gamma+1}}((\varepsilon q)^{2^{\alpha}})$. Since $yxy^{-1}\sim(x^{t})^{-1}$ and $y^{2}\in R'$ for any 
$y\in R''-R'$. We must have $R''=\langle x,y\rangle$, where $yxy^{-1}=x^{2^{a+\alpha-1}-1}=-x^{-1}$ or $yxy^{-1}=x^{-1}$. 
Then, $y^{2}=\pm{I}$. Write $yxy^{-1}=\epsilon'' x^{-1}$, where $\epsilon''=\pm{1}$.  

When $\epsilon''=-1$, we must have $\alpha\geq 1$. Take \[\tau'=\left(\begin{array}{ccccc}&&&&I_{\frac{n}{2^{\alpha+\gamma+1}}}
\\&&&-I_{\frac{n}{2^{\alpha+\gamma+1}}}&\\&&\iddots&&\\&I_{\frac{n}{2^{\alpha+\gamma+1}}}&&&\\-I_{\frac{n}{2^{\alpha+\gamma+1}}}
&&&&\end{array}\right)\tau.\] Then, we have $\tau'\delta\tau'^{-1}=-x^{-1}$ and $\tau'^{2}=-\epsilon'I$. Thus, $y$ lies in 
the outer coset of a subgroup isomorphic to \[\GL_{n/2^{\alpha+\gamma+1}}((\varepsilon q)^{2^{\alpha}})\rtimes\langle\tau'\rangle.\]  
By Lemma, \ref{L:C5} we get $\tau' X\tau'^{-1}=F_{q^{2^{\alpha-1}}}((X^{t})^{-1})$ 
($\forall X\in\GL_{n/2^{\alpha+\gamma+1}}(\varepsilon^{2^{\alpha}}q)$). By Lemma \ref{L:C6}, we get $y\sim\delta^{k}\tau'$ for 
some $k\in\mathbb{Z}$. Then, \[R\sim\langle\delta,\tau'\rangle.\]   

When $\epsilon''=1$, take \[\tau'=\left(\begin{array}{ccccc}&&&&I_{\frac{n}{2^{\alpha+\gamma+1}}}\\&&&
I_{\frac{n}{2^{\alpha+\gamma+1}}}&\\&&\iddots&&\\&I_{\frac{n}{2^{\alpha+\gamma+1}}}&&&\\I_{\frac{n}{2^{\alpha+\gamma+1}}}&&&&
\end{array}\right)\tau.\] Then, we have $\tau'\delta\tau'^{-1}=\delta^{-1}$ and $\tau'^{2}=\epsilon'I$. Thus, $y$ lies in the 
outer coset of a subgroup isomorphic to \[\GL_{n/2^{\alpha+\gamma+1}}((\varepsilon q)^{2^{\alpha}})\rtimes\langle\tau'\rangle.\]  
When $\alpha\geq 1$, by Lemma \ref{L:C5} we get $\tau' X\tau'^{-1}=(X^{t})^{-1}$ 
($\forall X\in\GL_{n/2^{\alpha+\gamma+1}}(\varepsilon^{2^{\alpha}}q)$); when $\alpha=0$, it is clear. By Lemma \ref{L:C6}, we get 
$y\sim\delta^{k}\tau'$ or $\delta^{k}J_{n/2^{\alpha+\gamma+2}}\tau'$ for some $k\in\mathbb{Z}$. Then, 
$R\sim\langle\delta,\tau'\rangle$ or $\langle\delta,J_{n/2^{\alpha+\gamma+2}}\tau'\rangle$.    
\end{proof} 

Now we summarize the above construction of radical 2-subgroups of general symplectic and orthogonal groups.
Let $\ga\in\mathbb Z_{\ge 1}$ and $E_\eta^{2\ga+1}$ be the extraspecial group.

Suppose that $V$ is symplectic. Assume that $\dim V=2^\ga$ or $2^{\ga+1}$ according as $\eta=-$ or $+$. Then, $E_\eta^{2\ga+1}$ 
can be embedded as a subgroup of $I(V)$. Moreover, if $\dim V=2^{\ga+1}$, then $I(V)$ has one conjugacy class of 
$E_\eta^{2\ga+1}$. If $\dim V=2^\ga$, then $I(V)$ has one or two conjugacy classes of $E_\eta^{2\ga+1}$ according as $a=2$ 
or $a\ge 3$. 

Suppose that $V$ is orthogonal and $\disc(V)=1$. Assume that $\dim V=2^\ga$ or $2^{\ga+1}$ according as $\eta=+$ or $-$. 
Then, $E_\eta^{2\ga+1}$ can be embedded as a subgroup of $I(V)$. Moreover, if $\dim V=2^{\ga+1}$, then $I(V)$ has one 
conjugacy class of $E_\eta^{2\ga+1}$. If $\dim V=2^\ga$, then $I(V)$ has one or two conjugacy classes of $E_\eta^{2\ga+1}$ 
according as $a=2$ or $a\ge 3$.  

Let $V_{0,\ga}^0$ be the underlying space $V$ in the above two paragraphs. If $I(V_{0,\ga}^0)$ has one class of $E_\eta^{2\ga+1}$, 
then denote $R_{0,\ga}^0$ the image of $E_\eta^{2\ga+1}$ in $I(V_{0,\ga}^0)$. If $I(V_{0,\ga}^0)$ has two classes of 
$E_\eta^{2\ga+1}$, then denote $R_{0,\ga}^0$ and $R_{1,\ga}^0$ the representatives of the two different classes of 
$E_\eta^{2\ga+1}$ in $I(V_{0,\ga}^0)$, and set $V_{1,\ga}^0=V_{0,\ga}^0$. Note that $R_{\al,\ga}^0$ stands for two different groups, 
since $\eta=+, -$. Sometimes we also write it as $R_{\al,\ga}^{0,\eta}$ to indicate $\eta$.

Let $\al\in\bbZ_{\ge 0}$ and $Z_\al$ be a cyclic group of order $2^{a+\al}$ ($\ge 4$) and $ Z_\al \circ_2E_\eta^{2\ga+1}$ a 
central product of $E_\eta^{2\ga+1}$ and $Z_\al$. Here we suppose that $\ga\ge 0$ and in the case $\ga=0$ we set 
$E_\eta^{2\ga+1}$ to be the group of order 2. Note that $Z_\al\circ_2 E_+^{2\ga+1}\cong  Z_\al \circ_2E_-^{2\ga+1}$.
So we assume that $\eta=+$.

Recall that $Z_\al\circ_2 E_+^{2\ga+1}$ can be embedded into $\GL_{2^\ga}((\varepsilon q)^{2^\al})$. Let $V_{\al,\ga}^1$ be a 
symplectic or orthogonal space such that $\dim V_{\al,\ga}^1=2^{\al+\ga+1}$ and $\disc(V_{\al,\ga}^1)=1$ if $V_{\al,\ga}^1$ 
is orthogonal. We denote by $R_{\al,\ga}^1$ the image of $Z_\al\circ_2 E_+^{2\ga+1}$ under the embedding
\begin{equation}\label{equ:embedding-cox-I}
\GL_{2^\ga}((\varepsilon q)^{2^\al})\embed I(V_{\al,\ga}^1).
\addtocounter{thm}{1}\tag{\thethm}
\end{equation} 

Let $M$ be an orthogonal space over $\bbF_q$ such that $\dim M=2^\ga$ and $\disc(M)=1$. Then 
$E_\eta^{2\ga+1}\le I(M)$. Here we suppose that $\ga\ge 0$ and in the case $\ga=0$ we set $E_\eta^{2\ga+1}$ to be the group of 
order 2.

Recall that $S_{2^\beta}$, $D_{2^\beta}$ and $Q_{2^\beta}$ denote respectively semidihedral, dihedral and generalized quaternion 
groups of order $2^\beta$. If $\beta\ge4$, then \[S_{2^\beta}\circ_2 E_+^{2\ga+1}\cong S_{2^\beta}\circ_2 E_-^{2\ga+1},\] 
\[D_{2^\beta}\circ_2 E_+^{2\ga+1}\cong D_{2^\beta}\circ_2 E_-^{2\ga+1},\] 
\[Q_{2^\beta}\circ_2 E_+^{2\ga+1}\cong Q_{2^\beta}\circ_2 E_-^{2\ga+1}.\] 
Let $\al\in\bbZ_{\ge 0}$. 

Let $P=S_{2^{a+\al+1}}$. Let $W$ be a symplectic or orthogonal space over $\bbF_q$ such that $\dim W=2^{\al+1}$ and $\disc(W)=1$ 
if $W$ is orthogonal. The group $P$ can be embedded as a subgroup of $I(W)$. Set $V_{\al,\ga}^2=W\otimes M$. Then the group 
$P\circ_2 E_ +^{2\ga+1}$ can be embedded as a subgroup of $I(V_{\al,\ga}^2)$ by taking tensor products. Denote by $R_{\al,\ga}^2$ 
its image. 

Let $P=D_{2^{a+\al+1}}$ or $Q_{2^{a+\al+1}}$. Let $W$ a symplectic or orthogonal space over $\bbF_q$ according as 
$P=D_{2^{a+\al+1}}$ or $Q_{2^{a+\al+1}}$  such that $\dim W=2^{\al+2}$ and $\disc(W)=1$ if $W$ is orthogonal. The group $P$ 
can be embedded as a subgroup of $I(W)$. Set $V_{\al,\ga}^3=W\otimes M$. Then the group $P\circ_2 E_ +^{2\ga+1}$ can be embedded 
as a subgroup of $I(V_{\al,\ga}^3)$. Denote by $R_{\al,\ga}^3$ its image. If $a=2$, then $R_{0,\ga}^3$ is identical to 
$R_{0,\ga+1}^{0,+}$ or $R_{0,\ga+1}^{0,-}$ according as $W$ is symplectic or orthogonal. 

Let $P=Q_{2^{a+\al+1}}$ or $D_{2^{a+\al+1}}$. Let $W$ a symplectic or orthogonal space over $\bbF_q$ according as 
$P=Q_{2^{a+\al+1}}$ or $D_{2^{a+\al+1}}$ such that $\dim W=2^{\al+1}$ and $\disc(W)=1$ if $W$ is orthogonal. The group $P$ 
can be embedded as a subgroup of $I(W)$. Set $V_{\al,\ga}^4=W\otimes M$. Then the group $P\circ_2 E_ +^{2\ga+1}$ can be 
embedded as a subgroup of $I(V_{\al,\ga}^4)$. Denote by $R_{\al,\ga}^4$ its image. If $a=2$, then $R_{0,\ga}^4$ is identical 
to $R_{0,\ga+1}^{0,+}$ or $R_{0,\ga+1}^{0,-}$ according as $W$ is orthogonal or symplectic. 

Thus, we have defined $R^i_{\al,\ga}$ for $i=0,1,2,3,4$ as above. Note that if $i=0$, then $\al\in\{0,1\}$ where $\al=1$ occurs 
only when $\dim(I(V_{\al,\ga}^0))=2^{\ga}$ and $a\ge 3$. For the case $\dim(I(V_{\al,\ga}^0))=2^{\ga+1}$ or $a= 2$, sometimes 
we also use the symbol $R^{0,\eta}_{1,\ga}$ to denote $R^{0,\eta}_{0,\ga}$ for convenience.

For each $m\in\bbZ_{\ge 1}$ let $V^i_{m,\al,\ga}=V^i_{\al,\ga}\perp\cdots\perp V^i_{\al,\ga}$ ($m$ times) for $i=0,1,2,3,4$. 
Denote by $R^i_{m,\al,\ga}$ the image of $R^i_{\al,\ga}$ under the $m$-fold diagonal mapping 
\begin{equation}\label{equ:embedding-m-I}
I(V^i_{\al,\ga})\embed I(V^i_{m,\al,\ga}),\quad  
g\mapsto \diag(g,g,\ldots,g)
\addtocounter{thm}{1}\tag{\thethm}
\end{equation}
for $i\ne 0$ or $\al=0$ or $m$ is odd. 

Suppose that $i=0$, $\al=1$ and $m$ is even. Set $V^0_{m,1,\ga}=W\otimes V^0_{0,\ga}$, where $V^0_{0,\ga}$ has dimension 
$2^\ga$ and $W$ is an $m$-dimensional orthogonal space with $\disc(W)=-1$. We denote by $R^0_{m,1,\ga}$ the image of 
$R_{0,\ga}^{0,\eta}$ under the embedding 
\begin{equation}\label{equ:embedding-m-II}
I(V^0_{0,\ga})\to I(V^0_{m,1,\ga}), g\mapsto I_m\otimes g.
\addtocounter{thm}{1}\tag{\thethm}
\end{equation}
When $i=0$ and $\ga\ge1$, sometimes we also write $R^0_{m,\al,\ga}$ as $R^{0,\eta}_{m,\al,\ga}$ to indicate $\eta$.

Let $i=\ga=0$. For symplectic cases, we define $V_{m,0,0}^0$ to be an $m$-dimensional symplectic and 
$R_{m,0,0}^0=\{\pm 1_{V_{m,0,0}^0}\}$. For orthogonal cases, we define $V_{m,0,0}^0$ (resp. $V_{m,1,0}^0$) to be an 
$m$-dimensional orthogonal space with discriminant $1$ (resp. $-1$) and $R_{m,\alpha,0}^0=\{\pm 1_{V_{m,\alpha,0}^0}\}$ 
(with $\alpha\in\{0,1\}$). Sometimes we also write $R_{m,\alpha,0}^0$ as $R_{m,\alpha,0}^{0,+}$ or $R_{m,\alpha,0}^{0,-}$ 
for convenience. 

The radical 2-subgroups obtained in Proposition \ref{P:C4} are  $R_{m,0,\gamma}^{0,\eta}$ and 
$R_{m,1,\gamma}^{0,\eta}$. Moreover, we have 
\[n=\left\{\begin{array}{cccc}m\cdot 2^{\gamma}\ \textrm{when }V\textrm{ is orthogonal and }\eta=+;\\
m\cdot 2^{\gamma+1}\ \textrm{when }V\textrm{ is orthogonal and }\eta=-;\\
m\cdot 2^{\gamma+1}\ \textrm{when }V\textrm{ is symplectic and }\eta=+;\\
m\cdot 2^{\gamma}\ \textrm{when }V\textrm{ is symplectic and }\eta=-.\\\end{array}\right.\] 

\begin{rmk}\label{rmk:c1}
Note that when $m$ is even, the subgroups $R^0_{m,1,\ga}$ are missing in \cite{An93a}. Moreover, for symplectic groups, the sign in the centralizer of $R^0_{m,\al,\ga}$ in \cite[(1J)(b)]{An93a} is incorrect, and this will be corrected in the proof of Proposition~\ref{prop:weight-subgp-class-sym}.

Radical 2-subgroups obtained in the proof of Proposition \ref{P:C6} are $R_{m,\alpha,\gamma}^{1}$. 

Radical 2-subgroups obtained in the proof of Proposition \ref{P:C5} are $R_{m,\alpha,\gamma}^{i}$ 
($i=2,3,4$). Moreover, \begin{itemize}
\item[(1)] when $yxy^{-1}=-x^{-1}$, $o(x)=2^{a+\alpha}$, these are $R_{m,\alpha,\gamma}^{2}$, where $\alpha\geq 1$ 
and  $m=\frac{n}{2^{\alpha+\gamma+1}}$.  
\item[(2)] when $yxy^{-1}=x^{-1}$, $o(x)=2^{a+\alpha}$, $y^{2}=-\epsilon' I$, these are $R_{m,\alpha,\gamma}^{3}$, where 
$\alpha\geq 0$ and $m=\frac{n}{2^{\alpha+\gamma+2}}$.   
\item[(3)]when $yxy^{-1}=x^{-1}$, $o(x)=2^{a+\alpha}$ and $y^{2}=\epsilon' I$, these are $R_{m,\alpha,\gamma}^{4}$, where 
$\alpha\geq 0$ and $m=\frac{n}{2^{\alpha+\gamma+1}}$. 
\end{itemize}  
\end{rmk}

As before, for a sequence of positive integer $\bc=(c_t,\ldots,c_1)$, we define 
$A_{\bc}=A_{c_1}\wr A_{c_2}\wr \cdots\wr A_{c_t}$. Then $A_{\bc}\le\fS_{2^{|\bc|}}$, where  $|\bc|:=c_1+c_2+\cdots+c_t$. 
For convenience, we also allow $\bc$ to be $(0)$. Let $V^i_{m,\al,\ga,\bc}=V^i_{m,\al,\ga}\perp\cdots\perp V^i_{m,\al,\ga}$ 
($2^{|\bc|}$ times) and put $R_{m,\al,\ga,\bc}^i=R_{m,\al,\ga}^i\wr A_{\bc}$ for $i=0,1,2,3,4$. Note that 
$\disc(V^i_{m,\al,\ga,\bc})=1$ if $i+\ga+|\bc|>0$. When $i=0$, sometimes we also write $R^0_{m,\al,\ga,\bc}$ as 
$R^{0,\pm}_{m,\al,\ga,\bc}$ if $R_{m,\al,\ga,\bc}^{0,\pm}=R_{m,\al,\ga}^{0,\pm}\wr A_{\bc}$. We say that $R_{m,\al,\ga,\bc}^i$ 
is a \emph{basic subgroups} of $I(V^i_{m,\al,\ga,\bc})$ except when $i=\ga=0$ and $c_1=1$. In addition, we suppose that 
$\ga\ne 1$ if $i=0$ and $\eta=+$, and suppose that $\al\ge1$ if $i=2$.    

\begin{thm}\label{subsec:radical-subgp-class}
Let $V$ be a non-degenerate finite dimensional symplectic or orthogonal space over $\bbF_q$ (with odd $q$) and let $R$ 
be a radical 2-subgroup of $I(V)$. Then there exists a corresponding decomposition \[V=V_1\perp \cdots\perp V_t\] and  
\[R=R_1\ti \cdots\ti R_t\] such that $R_i$ are basic subgroups of $I(V_i)$.
\end{thm}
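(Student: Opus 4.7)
The plan is to follow the same template as Theorem \ref{SS:2-rad-GL} for general linear and unitary groups: iterate the $A_1'$- and $A_2'$-reductions developed above until one lands in the ``terminal'' situation $A_1'(R)=A_2'(R)=\{\pm I\}$ already classified in Propositions \ref{P:C4}, \ref{P:C5}, \ref{P:C6}, and then read off the building blocks.

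First I would perform the $A_1'$-reduction of \S\ref{SS:A1'redu}. Since $A_1'(R)\subset Z(R)$ is normalized by $N_G(R)$, Lemma \ref{L:R5} lets us view $R$ inside $Z_G(A_1'(R))$. Because every $X\in A_1'(R)$ satisfies $X^2=I$ (as $\mu(X)=1$), the subgroup $A_1'(R)$ is simultaneously diagonalizable with eigenvalues $\pm1$, and its joint eigenspaces give an orthogonal decomposition $V=V^{(1)}\perp\cdots\perp V^{(s)}$ with $Z_G(A_1'(R))=\prod_j I(V^{(j)})$. Lemmas \ref{L:R5} and \ref{L:R6} then force $R=\prod_j R^{(j)}$ with each $R^{(j)}$ a radical $2$-subgroup of $I(V^{(j)})$ satisfying $A_1'(R^{(j)})\subset\{\pm I_{V^{(j)}}\}$. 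Lemma \ref{L:C-disc} then kills the factors where $V^{(j)}$ is orthogonal of odd dimension or of discriminant $-1$ (down to $\{\pm I\}$), justifying the restriction to the favorable eigenspaces.

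Next, on each surviving factor I would run the $A_2'$-reduction of \S\ref{SS:wreath}, iterated as indicated in the paragraph following Lemma \ref{L:A2}. This repeatedly applies Lemma \ref{L:R7} (the wreath-product lemma) inside the centralizer of the $A_2'$-eigenspace decomposition, and eventually presents each factor as $R_0\wr A_{\bc}$ with $R_0$ a radical $2$-subgroup of a smaller $I(V_0)$ satisfying $A_2'(R_0)=\{\pm I\}$. For such $R_0$, the possibilities for the pair $(A_1(R_0)/A_1'(R_0),A_2(R_0)/A_2'(R_0))$ are $(1,1)$, $(1,Z_2)$ and $(Z_2,Z_2)$, which are precisely the cases settled by Propositions \ref{P:C4}, \ref{P:C5} and \ref{P:C6} respectively. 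Each of those classifications matches $R_0$ with one of the tabulated building blocks $R^i_{\alpha,\gamma}$ for $i\in\{0,1,2,3,4\}$; restoring the wreath products via the embeddings \eqref{equ:embedding-cox-I}--\eqref{equ:embedding-m-II} yields the basic subgroups $R^i_{m,\alpha,\gamma,\bc}$, and re-assembling across the $A_1'$-decomposition gives the advertised $V=V_1\perp\cdots\perp V_t$, $R=R_1\times\cdots\times R_t$.

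The main obstacle will be bookkeeping at the boundary between the wreath reductions and the terminal classifications. Concretely, one must track the type/discriminant of each summand through every reduction step to ensure it lies in the $\disc=1$ regime that the embeddings \eqref{equ:embedding-cox-I}--\eqref{equ:embedding-m-II} require, and one must verify that the ``exclusion list'' baked into the definition of basic subgroup (the degenerate case $i=\gamma=0$, $c_1=1$; the case $i=0$, $\eta=+$, $\gamma=1$; and $\alpha=0$ for $i=2$) corresponds exactly to the configurations shown to be non-radical at the end of Propositions \ref{P:C4} and \ref{P:C5}. Finally, the split $R^{0,+}_{m,\alpha,\gamma}$ vs.\ $R^{0,-}_{m,\alpha,\gamma}$ and the parity conditions on $n/2^\gamma$ and $a$ arising from Lemmas \ref{L:C1-2} and \ref{L:C2-2} inside Proposition \ref{P:C4} must be propagated consistently, so that the count of conjugacy classes in each factor is preserved under the final gluing.
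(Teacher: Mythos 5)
Your proposal matches the paper's own proof, which is a one-line citation of the $A_1'$- and $A_2'$-reduction arguments (\S\ref{SS:A1'redu}, \S\ref{SS:wreath}) together with Propositions \ref{P:C4}, \ref{P:C6}, and \ref{P:C5} for the three terminal cases. Your expansion correctly identifies the role of Lemma \ref{L:C-disc} in pruning odd-dimensional and $\disc=-1$ orthogonal summands, the iterated wreath step via Lemma \ref{L:R7}, and the matching of $(A_1/A_1',A_2/A_2')\in\{(1,1),(1,Z_2),(Z_2,Z_2)\}$ with the three propositions; the bookkeeping concerns you flag are real but are exactly what those propositions resolve.
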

	
\begin{proof}
This follows by arguments in \S\ref{SS:A1'redu}, \S\ref{SS:wreath} and Propositions \ref{P:C4}, \ref{P:C6}, \ref{P:C5}.
\end{proof}	 

\begin{lem}\label{L:gamma1}
Let $V$ be an orthogonal space. We have the following assertions: \begin{enumerate}
\item if $m$ is even or $a>2$, then $R_{m,\alpha,0,(\mathbf{c'},1)}^{0,+}\sim R_{m,\alpha,1,\mathbf{c'}}^{0,+}$ are not 
radical 2-subgroups of $I(V)$ ($\alpha=0$ or 1);
\item if $m$ is odd and $a=2$, \[R_{m,0,0,(\mathbf{c'},1)}^{0,+}\sim R_{m,0,1,\mathbf{c'}}^{0,+}
\sim R_{m,1,0,(\mathbf{c'},1)}^{0,+}\sim R_{m,1,1,\mathbf{c'}}^{0,+}\] are radical 2-subgroups of $I(V)$.  
\end{enumerate}  
\end{lem}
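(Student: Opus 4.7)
The plan is to reduce both parts to the classification of basic $\ga=1$ subgroups in Proposition~\ref{P:C4}~(D2), via the identification of $R^{0,+}_{m,\al,0}\wr A_1$ with an extraspecial copy $E^{3}_{+}\cong D_8$. Since $R^{0,+}_{m,\al,0}=\{\pm I\}$ on $V^{0}_{m,\al,0}$, the wreath $R^{0,+}_{m,\al,0}\wr A_1$ is a group of order $8$; I would take as generators $x_1=\diag(-I,I)$ and $x_2=$ the swap of the two copies of $V^{0}_{m,\al,0}$. These satisfy $x_1^2=x_2^2=I$ and $x_2 x_1 x_2^{-1}=-x_1$, so in the notation of \S\ref{SS:classical2} we have $m(x_1,x_2)=-1$ and $\mu(x_1)=\mu(x_2)=1$, placing the pair in the $\delta=1$ orthogonal family of Proposition~\ref{P:C4}~(D2). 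Moreover the fixed subspace $V_0=\ker(x_1-I)$ is a copy of $V^{0}_{m,\al,0}$, whence $\disc(V_0)=(-1)^{\al}$, which is exactly the invariant pinning down the $A(R)$-class of $R^{0,+}_{m,\al,1}$. Since $V^{0}_{m,\al,0}\perp V^{0}_{m,\al,0}$ and $V^{0}_{m,\al,1}$ are both $2m$-dimensional orthogonal spaces of discriminant $1$ and hence isometric, the matching of invariants gives $R^{0,+}_{m,\al,0}\wr A_1\sim R^{0,+}_{m,\al,1}$. Wreathing both sides with $A_{\mathbf{c'}}$ yields $R^{0,+}_{m,\al,0,(\mathbf{c'},1)}\sim R^{0,+}_{m,\al,1,\mathbf{c'}}$ for $\al\in\{0,1\}$.

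For part (1), when $m$ is even or $a>2$, the integer $m(q^2-1)/8$ is even, and the closing paragraph of the proof of Proposition~\ref{P:C4}~(D2) constructs a normal $2$-subgroup $S$ of $N_{I(V^{0}_{m,\al,1})}(R^{0,+}_{m,\al,1})$ with $S/R^{0,+}_{m,\al,1}\cong\bbZ_2$, showing that $R^{0,+}_{m,\al,1}$ is not radical. This obstruction extends: $S\wr A_{\mathbf{c'}}$ is a normal $2$-subgroup of the normalizer of $R^{0,+}_{m,\al,1}\wr A_{\mathbf{c'}}$ strictly containing it, so the wreathed group is not radical either, proving (1).

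For part (2), when $m$ is odd and $a=2$, the same reference gives the quotient $\SO_{2}^{+}(2)=1$, so $R^{0,+}_{m,\al,1}$ is radical; the wreath construction preserves radicality (as in the proof of Theorem~\ref{subsec:radical-subgp-class}), yielding radicality of $R^{0,+}_{m,\al,1,\mathbf{c'}}$. Further, in this regime Proposition~\ref{P:C4}~(D2) asserts a single $I(V^{0}_{m,0,1})$-conjugacy class of such subgroups, hence $R^{0,+}_{m,0,1}\sim R^{0,+}_{m,1,1}$ (consistent with the convention in the excerpt identifying the two when $a=2$). Chaining these conjugacies with the first-paragraph conjugacies for $\al=0$ and $\al=1$ respectively gives the full string of conjugate radical $2$-subgroups asserted in (2).

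The main technical step is the identification in the first paragraph: one must verify that the invariants $\delta=1$ and $\disc(V_0)=(-1)^{\al}$ of the wreath realization match the parametrization of $R^{0,+}_{m,\al,1}$. Once the generators $x_1,x_2$ are chosen explicitly, this is a short direct computation, and the remaining statements are invocations of Proposition~\ref{P:C4}~(D2) and standard wreath-product principles for radicality.
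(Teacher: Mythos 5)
Your proposal takes the same route as the paper: reduce to $\bc'=\emptyset$, identify $R^{0,+}_{m,\al,0}\wr A_1$ with the $\gamma=1$ group $R^{0,+}_{m,\al,1}$ by matching the invariants $\delta=1$ and $\disc(V_0)=(-1)^\al$, and then invoke Proposition~\ref{P:C4}(D2). The first paragraph is in fact a bit more detailed than the paper, which dismisses the identification with ``from the construction of basic subgroups''; your explicit choice of $x_1,x_2$ and the appeal to Lemma~\ref{L:C1}(3)/Proposition~\ref{P:C4} for the $\disc(V_0)$-classification is a correct unpacking of that.

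The gap is in part (2). Knowing $N_G(R)/RZ_G(R)\cong\SO^+_{2}(2)=1$ does not by itself give radicality: one must still check $O_2(N_G(R)/R)=1$, i.e.\ $O_2(Z_G(R)/Z(R))=1$. The closing sentence of Proposition~\ref{P:C4} (``In any other case, the resulting subgroup is actually a radical 2-subgroup'') is asserted there without verification, and Lemma~\ref{L:gamma1}(2) is precisely where the paper carries out that verification for $\gamma=1$: one takes $R=\langle I_{m,m},J'_m\rangle$, computes $Z_G(R)=\Delta(\rO_{m,\pm}(q))$, deduces $N_G(R)/R\cong\PO_{m,\pm}(q)\cong\SO_{m,\pm}(q)$ (using that $m$ is odd), and notes $O_2(\SO_{m,\pm}(q))=1$. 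Your proposal hands this step back to Proposition~\ref{P:C4}, which is where the reasoning becomes circular. You should supply this calculation of $Z_G(R)$ and its $O_2$ directly. Your reduction to $\bc'=\emptyset$ (both the claim that wreathing preserves non-radicality in (1) and that it preserves radicality in (2)) is asserted at the same level of brevity as the paper's ``It reduces to the case $c'=\emptyset$,'' so I will not press on that point.
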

	
\begin{proof}
It reduces to the case that $c'=\emptyset$. From the construction of basic subgroups, we see that $R_{m,\alpha,0,(1)}^{0,+}
\sim R_{m,\alpha,1,\emptyset}^{0,+}$. Thus, it suffices to consider $R_{m,\alpha,1,\emptyset}^{0,+}$ ($\alpha=0$ or 1).
	
(1)Suppose that $m$ is even or $a>2$. In the proof of Proposition \ref{P:C4}, we have shown that $R_{m,0,1,\emptyset}^{0,+}$ 
and $R_{m,1,1,\emptyset}^{0,+}$ are non-conjugate; and for each of them (denoted by $R$), we have $N_{G}(R)/RZ_{G}(R)\cong
\rO_{2}^{+}(2)\cong Z_{2}$. Let \[S=\{g\in N_{G}(R): [g,Z_{G}(R)]=1\}.\] Then, $S$ is a normal subgroup of $N_{G}(R)$ 
and $S/R\cong Z_{2}$. Thus, $R$ is not a radical 2-subgroup. 
	
(2)Suppose that $m$ is odd and $a=2$. In the proof of Proposition \ref{P:C4}, we have shown that $R_{m,0,1,\emptyset}^{0,+}
\sim R_{m,1,1,\emptyset}^{0,+}$; and for each of them (denoted by $R$), we have $N_{G}(R)=RZ_{G}(R)$. Note that 
$R=\langle I_{m,m},J'_{m}\rangle\subset\rO_{2m,+}(q)$ and $Z_{G}(R)=\Delta(\rO_{m,\pm{}}(q))$. Then, $N_{G}(R)/R 
\cong\PO_{m,\pm{}}(q)\cong\SO_{m,\pm{}}(q)$. As $O_{2}(\SO_{m,\pm{}}(q))=1$. Then, $R$ is a radical 2-subgroup.  
\end{proof} 
	
By Propositions \ref{P:C4}, \ref{P:C6}, \ref{P:C5} and Lemma \ref{L:gamma1}, we get the following Lemma \ref{L:O2-O8}.    

\begin{lem}\label{L:O2-O8}
Let $R$ be a radical 2-subgroup of $\rO_{k,+}$ ($k=2,4,6,8$) with $A'_{1}(R)=\{\pm{I}\}$. \begin{itemize}
\item[(i)] When $k=2$, $R$ is conjugate to one of the following: 
\begin{enumerate} 
\item[(a)]$R_{1,0,0,\emptyset}^{1}$.     
\item[(b)]$R_{1,0,0,\emptyset}^{4}$. 
\end{enumerate}
\item[(ii)] When $k=4$, $R$ is conjugate to one of the following: 
\begin{enumerate}
\item[(a)]$R_{m,0,\gamma,\mathbf{c}}^{0,+}$ or $R_{m,1,\gamma,\mathbf{c}}^{0,+}$, where $(m,\gamma,\mathbf{c})$ is one 
of \[(1,2,\emptyset),\ (1,0,(2)).\]    
\item[(b)]$R_{1,0,1,\emptyset}^{0,-}$.  
\item[(c)]$\{\pm{I}\}$. 
\item[(d)]$R_{m,\alpha,\gamma,\mathbf{c}}^{1}$, where $(m,\alpha,\gamma,\mathbf{c})$ is one of 
\[(2,0,0,\emptyset),\ (1,1,0,\emptyset),\ (1,0,1,\emptyset),\ (1,0,0,(1)).\]   
\item[(e)]$R_{1,1,0,\emptyset}^{2}$.  
\item[(f)]$R_{1,0,0,\emptyset}^{3}$. 
\item[(g)]$R_{m,\alpha,\gamma,\mathbf{c}}^{4}$, where $(m,\alpha,\gamma,\mathbf{c})$ is one of  
\[(2,0,0,\emptyset),\ (1,1,0,\emptyset),\ (1,0,1,\emptyset),\ (1,0,0,(1)).\]    
\end{enumerate}
\item[(iii)] When $k=6$, $R$ is conjugate to one of the following: 
\begin{enumerate}
\item[(a)]$\{\pm{I}\}$. 
\item[(b)]$R_{3,0,0,\emptyset}^{1}$.     
\item[(c)]$R_{3,0,0,\emptyset}^{4}$. 
\end{enumerate}
\item[(iv)] When $k=8$, $R$ is conjugate to one of the following: 
\begin{enumerate}
\item[(a)]$R_{m,0,\gamma,\mathbf{c}}^{0,+}$ or $R_{m,1,\gamma,\mathbf{c}}^{0,+}$, where $(m,\gamma,\mathbf{c})$ is one 
of \[(1,3,\emptyset),\ (1,2,(1)),\ (1,0,(3)),\ (1,0,(1,2)),\ (2,2,\emptyset),\ (2,0,(2)).\]  
\item[(b)]$R_{m,0,\gamma,\mathbf{c}}^{0,-}$, where $(m,\gamma,\mathbf{c})$ is one of 
\[(2,1,\emptyset),\ (1,2,\emptyset)\ (1,1,(1)).\]     
\item[(c)]$\{\pm{I}\}$.  
\item[(d)]$R_{m,\alpha,\gamma,\mathbf{c}}^{1}$, where $(m,\alpha,\gamma,\mathbf{c})$ is one of 
\[(4,0,0,\emptyset),\ (2,1,0,\emptyset),\ (1,2,0,\emptyset),\] 
\[(2,0,1,\emptyset),\ (1,1,1,\emptyset),\ (1,0,2,\emptyset),\] 
\[(2,0,0,(1)),\ (1,1,0,(1)),\ (1,0,1,(1)),\] 
\[(1,0,0,(1,1)),\ (1,0,0,(2)).\]   
\item[(e)]$R_{m,\alpha,\gamma,\mathbf{c}}^{2}$, where $(m,\alpha,\gamma,\mathbf{c})$ is one of 
\[(2,1,0,\emptyset),\ (1,2,0,\emptyset),\ (1,1,1,\emptyset),\ (1,1,0,(1)).\]      
\item[(f)]$R_{m,\alpha,\gamma,\mathbf{c}}^{3}$, where $(m,\alpha,\gamma,\mathbf{c})$ is one of 
\[(2,0,0,\emptyset),\ (1,1,0,\emptyset),\ (1,0,1,\emptyset),\ (1,0,0,(1)).\]   
\item[(g)]$R_{m,\alpha,\gamma,\mathbf{c}}^{4}$, where $(m,\alpha,\gamma,\mathbf{c})$ is one of 
\[(4,0,0,\emptyset),\ (2,1,0,\emptyset),\ (1,2,0,\emptyset),\] 
\[(2,0,1,\emptyset),\ (1,1,1,\emptyset),\ (1,0,2,\emptyset),\] 
\[(2,0,0,(1)),\ (1,1,0,(1)),\ (1,0,1,(1)),\] 
\[(1,0,0,(1,1)),\ (1,0,0,(2)).\]      
\end{enumerate}
\end{itemize}
\end{lem}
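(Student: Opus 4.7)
The plan is to invoke Theorem \ref{subsec:radical-subgp-class}, which decomposes any radical 2-subgroup $R$ of $I(V)$ as $R=R_1\times\cdots\times R_t$ compatibly with an orthogonal decomposition $V=V_1\perp\cdots\perp V_t$, with each $R_i$ a basic subgroup of $I(V_i)$. The hypothesis $A'_1(R)=\{\pm I\}$ prohibits any non-trivial direct product (each additional basic factor would enlarge $A'_1(R)$ beyond $\{\pm I\}$, since distinct factors contribute independent central involutions of square $+I$), so $R$ must itself be a single basic subgroup $R^i_{m,\al,\ga,\bc}$, or else $R=\{\pm I\}$.

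Next, for each $k\in\{2,4,6,8\}$ I would enumerate all tuples $(i,m,\al,\ga,\bc)$ such that the underlying space $V^i_{m,\al,\ga,\bc}$ is orthogonal of dimension $k$ with $\disc=+$. I would read off the dimension from the definitions: $\dim V^0_{m,\al,\ga,\bc}=m\cdot 2^{\ga+|\bc|}$ when $\eta=+$ (resp.\ $m\cdot 2^{\ga+1+|\bc|}$ when $\eta=-$), and $\dim V^i_{m,\al,\ga,\bc}=m\cdot 2^{\al+\ga+1+|\bc|}$ for $i\in\{1,2,4\}$ (resp.\ $m\cdot 2^{\al+\ga+2+|\bc|}$ for $i=3$). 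For each $i$ I would solve $\dim=k$ in non-negative integers subject to the existence conventions preceding Theorem \ref{subsec:radical-subgp-class}, namely $\al\in\{0,1\}$ if $i=0$, $\al\ge 1$ if $i=2$, $\ga\ne 1$ if $i=0,\eta=+$, and the exclusion of the degenerate case $i=\ga=0$, $c_1=1$. Compatibility with orthogonal type then restricts the auxiliary space $W$ appearing in the definitions of $R^2_{m,\al,\ga},R^3_{m,\al,\ga},R^4_{m,\al,\ga}$.

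Candidates surviving these constraints are then pruned by Lemma \ref{L:gamma1}: the putative families $R^{0,+}_{m,\al,0,(\bc',1)}\sim R^{0,+}_{m,\al,1,\bc'}$ are discarded when $m$ is even or $a>2$, and collapsed to a single conjugacy class when $m$ is odd and $a=2$. Finally, for each surviving $(i,m,\al,\ga,\bc)$ I would check which of Propositions \ref{P:C4}, \ref{P:C6}, \ref{P:C5} the subgroup arises from, so as to confirm that the radicality obstructions appearing in the proofs of those propositions (the $\gamma=1,\delta=1,\tfrac{n}{2^\gamma}\tfrac{q^2-1}{8}$-even exception in Proposition \ref{P:C4}, and the analogous normalizer computations) do not eliminate it.

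The main obstacle is the bookkeeping for $k=8$: each of the five types $i\in\{0,1,2,3,4\}$ contributes several valid tuples, and in contrast to $k=2,4,6$, the solutions for $|\bc|$ range over partitions $\bc\in\{\emptyset,(1),(2),(1,1)\}$, forcing one to check the orthogonal-type conditions on the tensor factors $W$ and on each level of the wreath tower individually. The $i=0$ cases additionally require distinguishing $R^{0,+}$ from $R^{0,-}$ families and treating the twisted embedding $R^0_{m,1,\ga}$ for even $m$ that was absent in \cite{An93a}.
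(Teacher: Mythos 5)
Your proposal matches the paper's own proof. The paper also reduces to a single basic subgroup via the $A'_1$-reduction and then enumerates tuples $(i,m,\alpha,\gamma,\mathbf{c})$ by solving the same dimension equations (e.g.\ $m2^{\gamma+|\mathbf{c}|}=8$ for $i=0$, $\eta=+$; $m2^{\alpha+\gamma+|\mathbf{c}|}=4$ for $i\in\{1,2,4\}$; $m2^{\alpha+\gamma+|\mathbf{c}|}=2$ for $i=3$) subject to the existence/radicality constraints $\gamma\ne1$, $(\gamma,\mathbf{c})\ne(0,(\mathbf{c}',1))$, $\alpha\ge1$ for $i=2$, coming from Propositions~\ref{P:C4}, \ref{P:C6}, \ref{P:C5} and Lemma~\ref{L:gamma1}.
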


\begin{proof}
Let's explain how we get all possible parameters $(m,\alpha,\gamma,\mathbf{c})$ when $k=8$. In item (a), $m2^{\gamma+|\mathbf{c}|}=8$,  
$\gamma\neq 1$ and $(\gamma,\mathbf{c})\neq (0,(\mathbf{c}',1))$; in item (b), $m2^{\gamma+|\mathbf{c}|}=4$ and $\gamma\geq 1$; 
in items (d), $m2^{\alpha+\gamma+|\mathbf{c}|}=4$; in item (e), $\alpha\geq 1$ and $m2^{\alpha+\gamma+|\mathbf{c}|}=4$; in item (f), $m2^{\alpha+\gamma+|\mathbf{c}|}=2$; in item (g), $m2^{\alpha+\gamma+|\mathbf{c}|}=4$. The cases when $k=2,4,6$ are shown similarly. 
\end{proof}

Let's explain the notations $R_{m,0,\gamma,\mathbf{c}}^{0,+}$ or $R_{m,1,\gamma,\mathbf{c}}^{0,+}$ in Lemma \ref{L:O2-O8}. 
For such a radical 2-subgroup, we first obtain a subspace $V'$ and a radical 2-subgroup $R'$ of $I(V')$ such that 
\[R=(\cdots(R'\wr Z_{2}^{c_{t}})\cdots)\wr Z_{2}^{c_{1}}\] with $\mathbf{c}=(c_{1},\dots,c_{t})$ and $R'$ a radical 2-subgroup of 
$I(V')$. Then, we obtain a subspace $V_{0}$ of $V'$ as the common 1-eigenspace of a tuple $(x_{1},\dots,x_{\gamma})$ in $R'$ acting 
on $V'$ (cf. proof of Proposition \ref{P:C4}). In particular, when $\gamma=0$, we have $V_{0}=V'$. Then, the associated subspace 
$V_{0}$ of $R_{m,0,\gamma,\mathbf{c}}^{0,+}$ (resp. $R_{m,1,\gamma,\mathbf{c}}^{0,+}$) has discriminant $\disc(V_{0})=1$ 
(resp. $\disc(V_{0})=-1$).

\subsubsection{Radical $p$ ($>2$) subgroups of orthogonal and symplectic groups}\label{SSS:classical-odd} 

Let $G=I(V)$ where $V$ is an $n$-dimensional linear space over $\mathbb{F}_{q}$. Let $p>2$ be a prime coprime to $q$, 
and $R$ be a radical $p$-subgroup of $G$. Put \[A(R)=\Omega_{1}(\pi(R))).\] Let $e$ be the order of $q^{2}\pmod{p}$. 
Then \[Z_{G}(A(R))\cong \GL_{n_{1}}(q^{2e})\times\cdots\GL_{n_{f}}(q^{2e}),\] where $f=\frac{p-1}{2e}$ and 
$\sum_{1\leq i\leq f}2en_{i}=n$. By Lemma \ref{L:R5}, $R$ is a radical $p$-subgroup of $Z_{G}(A(R))$. By Lemma \ref{L:R6}, 
$R=R_{1}\times\cdots\times R_{f}$ with $R_{i}$ a radical $p$-subgroup of $\GL_{n_{i}}(q^{2e})$ ($1\leq i\leq f$). It 
reduces to the case of $\GL_{\ast}(q)$.   

Let $e$ be the multiplicative order of $q^2$ modulo $p$. Let $a$ be the positive integer and $\varepsilon\in\{ \pm 1\}$ such that $p^a=(q^e-\varepsilon)_p$.
Let $V_{\al,\ga}$ be a symplectic or orthogonal space over $\mathbb{F}_{q}$ with $\dim(V_{\al,\ga})=2ep^{\al+\ga}$, and we assume further that $\disc(V_{\al,\ga})=1$ or $-1$ according as $e$ is odd or even if $V_{\al,\ga}$ is orthogonal.
As for general linear and unitary groups, we let $E^{2\ga+1}$ be the extraspecial $p$-group of order $p^{2\ga+1}$ and exponent $p$ and $Z_\alpha$ denote the cyclic group of order $p^{a+\al}$ for $\al,\ga\in\mathbb Z_{\ge 0}$.
Define $R_{\al,\ga}$ to be the image of $Z_\al\circ_p E^{2\ga+1}$ under the embedding \[Z_\al\circ_p E^{2\ga+1}\hookrightarrow \GL_{p^\ga}(\varepsilon q^{ep^\al}) \hookrightarrow I(V_{\al,\ga}).\]
Entirely analogously as the case $p=2$ and $i=1$, we can define the \emph{basic subgroups} $R_{m,\al,\ga,\bc}$.
It follows that the definition and structure of the above group $R_{m,\al,\ga,\bc}$ is just similar as those of the basic 2-subgroup $R_{m,\al,\ga,\bc}^1$ of symplectic and orthogonal groups for the case $p=2$ and $i=1$.

\begin{thm}\label{subsec:odd-radical-subgp-class}
	Let $V$ be a non-degenerate finite dimensional symplectic or orthogonal space over $\bbF_q$ (with $p\nmid q$) and let $R$ 
	be a radical $p$-subgroup of $I(V)$. Then there exists a corresponding decomposition \[V=V_0\perp V_1\perp \cdots\perp V_t\] and  
	\[R=R_0\ti R_1\ti \cdots\ti R_t\] such that $R_i$ are basic subgroups of $I(V_i)$ for $i\ge 1$, and $R_0$ is the trivial subgroup of $I(V_0)$.
	\end{thm}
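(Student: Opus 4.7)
The proof will follow the same overall strategy as Theorem \ref{subsec:radical-subgp-class} for $p=2$, but is considerably simpler: for odd $p$ the 2-torsion in $Z(I(V))$ plays no role, and all sign/pairing complications (the auxiliary maps $m$ and $\mu$ with values in $\{\pm 1\}$) disappear. The argument essentially collapses to one $A(R)$-reduction followed by an appeal to Theorem \ref{SS:p-rad-GL}.

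The plan is to first set $A = A(R) = \Omega_{1}(Z(R))$ and diagonalize $A$ over $\bar{\mathbb{F}}_q$. Because the bilinear form on $V$ is $A$-invariant, non-trivial eigenspaces pair as $V_\lambda \leftrightarrow V_{\lambda^{-1}}$, and the Frobenius $F_q$ combined with the involution $\lambda\mapsto\lambda^{-1}$ acts on the set of non-trivial eigenvalues in orbits of length $2e$, where $e$ is the order of $q^{2}$ modulo $p$. This produces an orthogonal decomposition $V = V_0 \perp V_1 \perp \cdots \perp V_f$, with $V_0$ the 1-eigenspace of $A$, and the centralizer decomposition
\[
Z_G(A) \cong I(V_0) \times \prod_{i=1}^{f} \GL_{n_i}(q^{2e})
\]
recorded in the paragraph preceding the theorem. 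Lemma \ref{L:R5} then shows $R$ is a radical $p$-subgroup of $Z_G(A)$, and Lemma \ref{L:R6} gives a product decomposition $R = R_0 \times R_1 \times \cdots \times R_f$ with $R_i$ radical in the $i$-th factor.

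Next I would show $R_0 = 1$: any $z \in \Omega_{1}(Z(R_0))$ lies in $\Omega_{1}(Z(R)) = A$, so it acts trivially on $V_0$ by construction; but $z \in I(V_0)$ already acts trivially on $V_1 \perp \cdots \perp V_f$, so $z = 1$. Since $R_0$ is a $p$-group with $\Omega_{1}(Z(R_0)) = 1$, it must be trivial. For each $i \geq 1$, I would invoke Theorem \ref{SS:p-rad-GL} to decompose $R_i$ as a product of basic $\GL$- or $\GU$-subgroups; since the basic subgroups of $I(V_i)$ are defined in \S\ref{SSS:classical-odd} precisely as images of basic $\GL$-subgroups under the embedding $\GL_{p^{\ga}}(\varepsilon q^{ep^{\al}}) \hookrightarrow I(V_{\al,\ga})$, each such factor transfers directly to a basic subgroup of a sub-isometry group of $V_i$. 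Assembling these with the trivial factor on $V_0$ yields the claimed decomposition.

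The only substantive technical point is verifying that, for each paired eigenspace $V_\lambda \oplus V_{\lambda^{-1}}$ together with its Galois descent to $\mathbb{F}_q$, the induced form structure matches the specific model space $V_{\al,\ga}$ used to define basic $I$-subgroups in \S\ref{SSS:classical-odd}; this is a direct linear-algebra computation analogous to Lemma \ref{L:C1}, using that each individual eigenspace is totally isotropic/Lagrangian and the form pairs the two halves perfectly.
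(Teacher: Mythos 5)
Your proposal follows essentially the same strategy as the paper's (terse) argument preceding the theorem --- reduce via $A(R)=\Omega_1(Z(R))$ and Lemmas~\ref{L:R5}, \ref{L:R6} to a product of general linear/unitary factors, kill the fixed-space factor $R_0$, and invoke Theorem~\ref{SS:p-rad-GL} --- and you are in fact a little more careful than the paper's gloss, which omits the $I(V_0)$ term from its stated centralizer. One minor correction, inherited from the paper's own text: the centralizer factors should be $\GL_{n_i}(\varepsilon q^e)$ (that is, $\GL_{n_i}(q^e)$ if $p\mid q^e-1$ and $\GU_{n_i}(q^e)$ if $p\mid q^e+1$) rather than $\GL_{n_i}(q^{2e})$, as a Levi-centralizer computation shows and as is required for consistency with the model embedding $\GL_{p^\gamma}(\varepsilon q^{ep^\alpha})\hookrightarrow I(V_{\alpha,\gamma})$ used to define the basic subgroups; this does not affect your argument, since Theorem~\ref{SS:p-rad-GL} covers both $\GL$ and $\GU$.
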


\subsection{Weight subgroups}\label{SS:classical-weight}

\begin{prop}\label{prop:wei-glgu}
	Let $G=\GL_n(\eps q)=\GL(V)$ or $\GU(V)$ (with odd $q$) and $R$ be 2-weight subgroup $R$ of $G$. Suppose that 
	\begin{align*}
		V&=V_1\oplus \cdots\oplus V_u \ \textrm{(for $\eps=1$) or} \ V=V_1\perp \cdots\perp V_u \ \textrm{(for $\eps=-1$)},\\
		R&=R_1\ti\cdots\ti R_u,
		\end{align*} are corresponding decompositions in Theorem~\ref{SS:2-rad-GL} such that $R_i$ ($1\le i\le u$) is a basic subgroup of $G_i$ where $G_i=\GL(V_i)$ or $\GU(V_i)$. 
		Then for any $1 \le i \le u$ we have the following.
\begin{enumerate}[\rm(1)]
\item If $4\mid (q-\eps)$, then $R_i=R^1_{m,\al,\ga,\bc}$ with odd $m$.
\item If $4\mid (q+\eps)$, then one of the following holds.
\begin{enumerate}[\rm(a)]
	\item $R_i=R^1_{m,\al,\ga,\bc}$, where $\al\ge 1$ and $m$ is odd.
	\item $R_i=R^1_{m,0,0,\bc}$, where $c_1\ne 1$ and $m$ is odd.
	\item  $R_i=R^{1,-}_{m,0,1,\bc}$ with odd $m$.
	\item $R_i=R^2_{m,0,\ga,\bc}$ with odd $m$.
\end{enumerate}
\end{enumerate}
\end{prop}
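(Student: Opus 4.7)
The plan is to use that, by Lemma~\ref{ext-rdz}(2), a weight subgroup $R$ must satisfy $O_2(N_G(R)/R)=1$, together with the stronger requirement that $\dz(N_G(R)/R)\ne\emptyset$. I would first exploit the decomposition $R=R_1\times\cdots\times R_u$ from Theorem~\ref{SS:2-rad-GL}, noting that $Z_G(R)$ preserves the block decomposition $V=\bigoplus V_i$ and hence equals $\prod_i Z_{G_i}(R_i)$. Grouping the $R_i$ into isomorphism classes yields a description of $N_G(R)$ as an iterated wreath product, and the vanishing of $O_2$ then reduces, one ``isotypic factor'' at a time, to the condition that each $R_i$ is itself a 2-weight subgroup of $G_i$. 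This reduces the problem to classifying weight subgroups among the individual basic groups $R^i_{m,\al,\ga,\bc}$.

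For each such basic subgroup I would use the concrete centralizer and normalizer computations already established in \S\ref{SS:GL2}. For $R=R^1_{m,\al,\ga}$ one finds $Z_G(R)\cong\GL_m((\eps q)^{2^\al})$ with $Z(R)=Z_\al$ equal to the 2-part of $Z(\GL_m((\eps q)^{2^\al}))$, and $N_G(R)/RZ_G(R)\cong\Sp_{2\ga}(\mathbb{F}_2)$; adjoining $A_{\bc}$ introduces an extra wreath permutation. The $\Sp_{2\ga}(\mathbb{F}_2)$ factor contributes its Steinberg character (of 2-defect zero, degree $2^{\ga^2}$), so the real obstruction sits inside $\GL_m((\eps q)^{2^\al})/Z_\al$. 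For $R=R^2_{m,0,\ga,\bc}$ (only occurring when $4\mid(q+\eps)$) an analogous computation replaces $\Sp_{2\ga}(\mathbb{F}_2)$ by an extension involving the outer automorphism $\tau$ and a $\GL$-quotient of half the dimension.

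The decisive restriction I expect is that $m$ must be odd. For $m$ even, the image of $Z(\GL_m((\eps q)^{2^\al}))$ in $N_G(R)/R$ has nontrivial 2-part beyond $Z_\al$ and generates a nontrivial normal 2-subgroup, violating $O_2=1$; for $m$ odd, $Z_\al$ exhausts the 2-part of the centre and regular semisimple characters of $\GL_m$ in general position should supply the needed defect-zero character in $N_G(R)/R$. The split between $4\mid(q-\eps)$ and $4\mid(q+\eps)$ I would handle by tracking which Levi of $\GL_n(\eps q)$ accommodates the cyclic torus underlying $Z_\al$: when $4\mid(q+\eps)$ the degenerate case $\al=\ga=0$, $c_1=1$ is already excluded from the definition of basic subgroup, while the additional families $R^{1,-}_{m,0,1,\bc}$ and $R^2_{m,0,\ga,\bc}$ arise precisely because the Sylow 2-subgroup of $\GL_2(\eps q)$ is semidihedral rather than cyclic in this case, allowing extraspecial or quaternion structures that cannot appear when $4\mid(q-\eps)$.

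The main obstacle I anticipate is the careful verification that $O_2(N_G(R)/R)=1$ when the wreath factor $A_{\bc}$ is present, and the exhibition of an explicit defect-zero character beyond merely checking the $O_2$-condition. For the $A_{\bc}$ bookkeeping one has to track how the symplectic form $m$ on $A(R)/A_2(R)$ from \S\ref{SS:GL2} interacts with the permutation action, and for the defect-zero character one will likely need an appeal to Deligne--Lusztig theory in the spirit of Alperin--Fong, or an explicit Brauer-character computation in the relevant Levi of $\GL_m((\eps q)^{2^\al})$.
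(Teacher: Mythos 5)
The paper's proof of this proposition is a one-line citation to An's classification of $2$-weights for general linear and unitary groups (\cite[\S3]{An92}, \cite[\S3]{An93b}); you are trying to reconstruct what is actually proved there. The top-level reduction to individual basic factors is fine in spirit (the self-centralizing-Brauer-pair condition for $R = \prod R_i$ does factor as ``$Z_{G_i}(R_i)$ has a block with defect group $Z(R_i)$'' on each factor), and your identification of the centralizers $Z_{G_i}(R_i)\cong\GL_m((\eps q)^{2^{\al}})$ and normalizer quotients is consistent with \S\ref{SS:GL2}.

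However, the explanation you give for the decisive constraint $m$ odd is wrong, and this is the heart of the matter. You write that for $m$ even ``the image of $Z(\GL_m((\eps q)^{2^\al}))$ in $N_G(R)/R$ has nontrivial $2$-part beyond $Z_\al$ and generates a nontrivial normal $2$-subgroup.'' But the Sylow $2$-subgroup of $Z(\GL_m(Q))\cong\GL_1(Q)$ has order $(Q\mp 1)_2$ \emph{independently of the parity of} $m$, so $Z_\al$ already exhausts the $2$-part of the centre whether $m$ is even or odd; no extra normal $2$-subgroup arises from the centre. The correct mechanism is block-theoretic, not an $O_2$-computation: one must show that $\GL_m(Q)$ admits a $2$-block whose defect group is the cyclic group $Z(\GL_m(Q))_2$ precisely when $m$ is odd. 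For $m$ even, every maximal torus $T$ of $\GL_m(Q)$ has $|T|_2 > (Q-1)_2$ (e.g.\ the Coxeter torus contributes a factor $(Q^m-1)_2$, which strictly exceeds $(Q-1)_2$ when $m$ is even), so by Fong--Srinivasan/Brou\'e--Michel no block has such a small defect group; for $m$ odd the Coxeter torus has $2$-part exactly $(Q-1)_2$ and yields the required block. This is precisely the argument the paper itself runs in the orthogonal/symplectic analogue (Lemma~\ref{prop:2-defect-zero-SO} and Remark~\ref{rmk:2-defect-zero-Sp}), and it is what is embedded in An's work that the proposition cites. As written, your sketch replaces the necessary defect-group analysis by a centre-of-$\GL_m$ computation that in fact gives no information, so the proof would not go through.
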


\begin{proof}
This follows by \cite[\S3]{An92} and \cite[\S3]{An93b}.
\end{proof}

\begin{rmk}\label{rmk:wei-sub-glgu}
	Keep the hypothesis and setup of Proposition~\ref{prop:wei-glgu}. Now we assume further that $R$ is a principal 2-weight subgroup of $G$. 
	
	(1) We have $m=1$ and $\al=0$ by \cite{An92} and \cite{An93b}. In particular, the situation (2.a) of Proposition~\ref{prop:wei-glgu} does not occur.
	
	(2) Rewrite the decomposition in Proposition~\ref{prop:wei-glgu} as \[R=R_1^{u_1}\ti \cdots\ti R_s^{u_s}\] such that $R_i$ and $R_j$ are different basic subgroups if $i\ne j$.
	Then for any $1\le i\le s$, one has that $u_i$ is a triangular number, \emph{i.e.}, $u_i=\frac{k(k+1)}{2}$ for some non-negative integer $k$. In fact, by the proof of \cite[(3D)]{An92} and \cite[(3D)]{An93b}, $\fS_{u_i}$ possesses a defect zero irreducible character, which implies that there is a 2-core partition of $u_i$. 
	\end{rmk}	

\begin{lem}\label{prop:2-defect-zero-SO}
	The group $G=\SO_{m,\eps}(q)$ (with $m\ge 3$ and $\eps\in\{+,-\}$) has a 2-block with a defect group $Z(G)$ only when $m$ is even and $\eps=-$.
	\end{lem}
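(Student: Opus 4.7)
The plan is to reduce the statement to a question about defect-zero characters of $\bar G := G/Z(G)$. Note that $Z(G)$ has order $1$ (for $m$ odd) or $2$ (for $m$ even), and in both cases coincides with $O_2(G)$ (since $\Omega_{m,\eps}(q)/(\Omega_{m,\eps}(q)\cap Z(G))$ is non-abelian simple for $m \ge 3$). By the standard bijection between $2$-blocks of $G$ with defect group $O_2(G)$ and defect-zero $2$-blocks of $G/O_2(G)$, the claim becomes: $\bar G$ has no $2$-defect-zero irreducible character whenever $m$ is odd or $\eps = +$. By Brauer's defect-zero criterion combined with a standard coprime-order lifting (using that $Z(G)$ is $2$-primary while the lifted element has odd order, so the preimage of the centralizer is an honest centralizer), this amounts to showing no element $h \in G$ of odd order satisfies $|Z_G(h)|_2 = |Z(G)|_2$.

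The next step is to analyze centralizers of semisimple odd-order elements $h\in G$ via the orthogonal decomposition $V = V_1 \perp V_{\neq}$, with $V_1$ the fixed subspace and $V_{\neq}$ the sum of hyperbolic pieces indexed by Frobenius orbits of eigenvalue pairs $\{\lambda,\lambda^{-1}\}$ with $\lambda \neq 1$ (no $\lambda = -1$ arises since $|h|$ is odd). One obtains
\[ Z_G(h) \;\cong\; \SO(V_1)\;\times\;\prod_i \GL^{(\eta_i)}_{a_i}(q^{b_i}), \]
where each $\GL/\GU$-factor has $2$-part at least $2$ (since $(q^{b_i}\pm 1)_2 \ge 2$ for $q$ odd) and $|\SO(V_1)|$ has $2$-part at least $2$ whenever $\dim V_1 \ge 2$. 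A non-semisimple odd-order $h = h_sh_u$ is reduced to the semisimple case by noting that $h_u$ is a $p$-element with $p$ odd, so it sits inside the connected reductive group $Z_G(h_s)^\circ$; the $2$-local structure of the latter forces the same lower bound on $|Z_G(h)|_2$.

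For $m$ odd, $\dim V_1$ is odd and so $\ge 1$; either $V_{\neq} = 0$ (hence $h = I$ and $|Z_G(h)| = |G|$ is even) or at least one $\GL/\GU$-factor appears with even order; in either case $|Z_G(h)|_2 \ge 2 > 1 = |Z(G)|_2$, ruling out such $h$. For $m = 2k$ even with $\eps = +$, the requirement $|Z_G(h)|_2 = 2$ forces $V_1 = 0$ and exactly one centralizer factor of $2$-part $2$, namely $\GL_1(q^k)$ (corresponding to Witt type $+$ on $V$) or $\GU_1(q^k)$ (Witt type $-$), subject to the $2$-adic congruence $(q^k \mp 1)_2 = 2$, i.e. $q^k \equiv 3$ or $q^k \equiv 1 \pmod 4$ respectively. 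Using the identity $\ty(V) = \pm\,\varepsilon^{\lfloor m/2\rfloor}$ from Section~\ref{SS:classical2} (the $\pm$ being paper's subscript $\eps$) to translate Witt type into $\eps$, a direct four-way case analysis on $q \bmod 4$ and the parity of $k$ shows that neither possibility is compatible with $\eps = +$, completing the argument.

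The main obstacle I anticipate is the notational bookkeeping needed to match the paper's labelling convention $\eps$ (attached to the specific choice of form $\sum x_i^2$ or $\sum x_i^2 + \delta_0 x_n^2$) with the Witt type $\ty(V)$ that naturally appears in the centralizer decomposition, since these differ by factors $\varepsilon^{\lfloor m/2\rfloor}$ depending on $q\bmod 4$ and $m/2$. A secondary difficulty is ruling out $2$-defect-zero characters arising from non-regular Lusztig series in the excluded cases; this can be handled by similar $2$-part bounds applied to centralizers of semisimple elements together with known estimates on the $2$-parts of unipotent character degrees of Levi subgroups, which never compensate for the excess $2$-part of $|Z_{\bar G^*}(s)^F|$ outside the allowed case.
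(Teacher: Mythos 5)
Your proposal takes a genuinely different route from the paper's proof. The paper works entirely block-theoretically: it writes $B=\cE_2(G,s)$, notes $s$ cannot be quasi-isolated unless $s=1$, and then invokes the Bonnaf\'e--Dat--Rouquier splendid Rickard equivalence \cite[Thm.~7.7]{BDR17} to transport the defect group of $B$ to the principal block of the Levi $L$ dual to $Z_{G^*}(s)$, forcing $Z(G)\in\Syl_2(L)$; the sign $\eps$ is then pinned down by the Fong--Srinivasan data. You instead want an element-theoretic argument via Brauer's necessary condition for defect-zero blocks, applied in $\bar G=G/O_2(G)$.

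The element-theoretic plan has a fatal gap. Brauer's criterion is only a necessary condition, and for orthogonal groups it is \emph{always} met by unipotent elements, so it cannot separate the allowed case $\eps=-$ from the excluded ones. Concretely, take $u$ a regular unipotent element of $\SO_m(q)$. For $m=2k+1$ odd, $u$ has Jordan type $(2k+1)$, $Z_{\bG}(u)$ is connected unipotent of dimension $k$, and $|Z_G(u)|=q^k$, so $|Z_G(u)|_2=1=|Z(G)|_2$; this is already a $2$-defect-zero class. For $m=2k$ even (either type), $u$ has Jordan type $(2k-1,1)$ with $A_{\bG}(u)\cong\bbZ/2$ generated by $-I$, so $|Z_G(u)|=2q^k$ and $|Z_G(u)|_2=2=|Z(G)|_2$; again a $2$-defect-zero class, and this happens for both $\eps=+$ and $\eps=-$. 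Your centralizer formula $Z_G(h)\cong\SO(V_1)\times\prod\GL/\GU$ is valid only for semisimple $h$, and your side remark that "the $2$-local structure of $Z_G(h_s)^\circ$ forces the same lower bound" is false precisely in the case $h_s=1$, $h_u$ regular unipotent, which is the worst case. Since the necessary condition you rely on holds in every case you are trying to exclude, the chain of implications breaks at the very last step, and the proposal does not prove the lemma. (A secondary, smaller issue: $O_2(G)=Z(G)$ fails for small $q$, e.g.\ $\SO_3(3)\cong\fS_4$ has $O_2=V_4\ne Z=1$, and similarly several cases with $q=3$ or $m=4$, so even the initial reduction needs caveats.)

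The hedging paragraph about "non-regular Lusztig series" signals you sensed some insufficiency, but misdiagnoses it: the problem is not Lusztig series but unipotent classes, and a purely group-theoretic $2$-defect-zero analysis cannot recover the statement. To fix the argument one essentially must do what the paper does (or an equivalent derived-equivalence/Jordan-decomposition-of-blocks argument) to get direct control on the defect group of the block rather than merely a necessary centralizer condition.
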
	
	
	\begin{proof}
	Note that $G=\SO_{m,\eps}(q)$.		
	Let $B$ be a 2-block of $G$ with the defect group $Z(G)$.
	According to \cite[Thm.~21.14]{CE04}, we have that $B=\cE_2(G,s)$, where $s$ is a semisimple $2'$-element of $G^*$.
	Here $G^*=\Sp_{m-1}(q)$ if $m$ is odd, while $G^*=G$ if $m$ is even.
	Let $s=1$ so that $B$ is the principal block of $G$. If $Z(G)$ is a defect group of $B$, then $Z(G)$ is the Sylow 2-subgroup of $G$, which does not occur when $m>2$.
	Now we assume that $s\ne 1$ which implies that $s$ is not quasi-isolated by \cite[Prop.~4.11]{Bo05}.
	Let $V$ be the underlying space of $G^*$, that is, $V$ is symplectic and of dimension $m-1$ if $m$ is odd, while $V$ is orthogonal, of dimension $m$ and of discriminant $\eps$ if $m$ is even.
	
	We denote by $\Irr(\bbF_q[x])$ the set of all monic irreducible polynomials over the field $\bbF_q$.
	For any $\Delta\in \Irr(\bbF_q[x])\setminus\{x\}$, let $\Delta^*$ denote the polynomial in $\Irr(\bbF_q[x])$ whose roots are the inverses of the roots of $\Delta$.
	Following \cite{FS89}, we let
	\begin{align*}
		\cF_{0}&=\left\{ x-1,x+1 ~\right\},\\
		\cF_{1}&=\left\{ \Delta\in\Irr(\bbF_{q}[x])\mid \Delta\notin \cF_0,\ \Delta\neq x,\ \Delta=\Delta^* ~\right\},\\
		\cF_{2}&=\left\{~ \Delta\Delta^* ~|~ \Delta\in\Irr(\bbF_{q}[x])\setminus \cF_0,\ \Delta\neq x,\ \Delta\ne\Delta^* ~\right\},
	\end{align*}
	and let $\cF=\cF_0\cup\cF_1\cup\cF_2$.	
	For $\Gamma\in\cF_1\cup \cF_2$, denote by $d_\Gamma$ its degree and by $\delta_\Gamma:=\frac{d_\Gamma}{2}$.
	Since the polynomials in $\cF_1\cup \cF_2$ have even degree, $\delta_\Gamma$ is an integer.
	For $\Ga\in\cF_1\cup\cF_2$, we define $\varepsilon_\Ga=1$, $-1$ according as $\Ga\in\cF_2$ or $\cF_1$.
	 
	For the semisimple element $s\in G^*$, we have unique orthogonal decompositions
	$V=\sum\limits_{\Gamma\in\cF} V_\Gamma(s)$ and $s=\prod\limits_{\Gamma\in\cF}s_\Ga$, where $V_\Gamma(s)$ are non-degenerate subspaces of $V$, $s_\Ga\in \SO(V_\Gamma(s))$, and $s_\Gamma$ has minimal polynomial $\Gamma$.
	Let $m_\Ga(s)$ be the multiplicity of $\Ga$ in $s_\Ga$.
	Note that $\Ga\ne x+1$ since $s$ is of $2'$-order.
	Write $V_+=V_{x-1}(s)$ and $m_+=\dim V_+$ for short. In addition, $m_+$ is even. Since $s\ne 1$, we have $m_+<\dim V$.
	Then
	 \[Z_{G^*}(s)\cong I_0(V_+)\ti\prod_{\Ga\in\cF_1\cup\cF_2}\GL_{m_\Ga(s)}(\varepsilon_\Ga q^{\delta_\Ga})\]  where $I_0=\Sp$ or $\SO$ according as $V$ is symplectic or orthogonal. 
	
	Let $L$ be the subgroup of $G$ which is in duality with $Z_{G^*}(s)$, then it is isomorphic to $\SO_{m_++1}(q)\ti\prod_{\Ga\in\cF}\GL_{m_\Ga(s)}(\varepsilon_\Ga q^{\delta_\Ga})$ or $\SO_{m_+,\pm}(q)\ti\prod_{\Ga\in\cF}\GL_{m_\Ga(s)}(\varepsilon_\Ga q^{\delta_\Ga})$ according as $m$ is odd or even. 
	According to \cite[Thm.~7.7]{BDR17}, the block $B$ is splendid Rickard equivalent to the block $\cE_2(L,s)$ of $L$.
	In particular, these two blocks share a common defect group.
	Since $s$ is central in $Z_{G^*}(s)$, we have that the block $\cE_2(L,s)$ is isomorphic to the unipotent block (i.e., the principal block) of $L$.
	Thus $Z(G)$ is the Sylow 2-subgroup of $L$, which forces that $m$ is even, $Z_{G^*}(s)$ has only one component and $m_+=0$.
	
	Now we assume that $m$ is even. Then $L\cong Z_{G^*}(s)\cong \GL_{1}(\varepsilon_\Ga q^{\delta_\Ga})$ for some $\Ga\in\cF_1\cup\cF_2$ and $4\nmid(q^{\delta_\Ga}-\varepsilon_\Ga)$. Moreover, by \cite[(1.12)]{FS89}, one has that $\delta_\Ga=\frac{m}{2}$ and $\varepsilon_\Ga=\ty(V)=\varepsilon^{\frac{m}{2}}\eps$. 
	In particular, $4\mid(q^{\frac{m}{2}}+\varepsilon^{\frac{m}{2}}\eps)$.
	From this, one shows that $\eps=-$ (using, for example, \cite[Lemma 5.3 (b)]{FLZ21}), which completes the proof.
	\end{proof}
	
\begin{rmk}\label{rmk:2-defect-zero-Sp}	
	Let $G=\Sp_{2n}(q)$ and let $s$ be a semisimple $2'$-element of $G^*$.
Similarly as the proof of Lemma~\ref{prop:2-defect-zero-SO}, one shows that if the block $B=\cE_2(G,s)$ has the defect group $Z(G)$, then $Z_{G^*}(s)\cong \GL_{1}(\varepsilon_\Ga q^{\delta_\Ga})$ for some $\Ga\in\cF_1\cup\cF_2$ and $4\nmid(q^{\delta_\Ga}-\varepsilon_\Ga)$.
\end{rmk}

	\begin{cor}\label{cor:2-defect-zero-O}
		The orthogonal group $\rO_{m,\eps}(q)$ (with $m\ge 2$ and $\eps\in\{+,-\}$) has a 2-block with a defect group $\{\pm I_m\}$ only when $m$ is even and $\eps=-$.
		\end{cor}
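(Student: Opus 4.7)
The plan is to reduce the statement to Lemma~\ref{prop:2-defect-zero-SO} via the index-two normal subgroup $\SO_{m,\eps}(q)\unlhd\rO_{m,\eps}(q)$. Throughout, set $\rO=\rO_{m,\eps}(q)$ and $\SO=\SO_{m,\eps}(q)$, and suppose $B$ is a $2$-block of $\rO$ with defect group $\{\pm I_m\}$.

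When $m$ is odd, $\det(-I_m)=-1$ yields the direct product decomposition $\rO=\SO\times\{\pm I_m\}$. Hence every $2$-block of $\rO$ has the form $b\boxtimes B_0(\{\pm I_m\})$ for a unique $2$-block $b$ of $\SO$, with defect group $D(b)\times\{\pm I_m\}$. The hypothesis on $B$ forces $D(b)=1$, \emph{i.e.}, $b$ has $2$-defect zero. But $Z(\SO)=1$ in the odd case, so Lemma~\ref{prop:2-defect-zero-SO} (applicable since $m\ge 3$) rules this out, a contradiction.

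For even $m\ge 4$ we have $-I_m\in\SO$ and $Z(\SO)=\{\pm I_m\}$. Let $b$ be any $2$-block of $\SO$ covered by $B$. By the classical theorem on defect groups under block covering (see, e.g., \cite[Thm.~(9.26)]{Na98}), one can choose a defect group $D$ of $B$ so that $D\cap\SO$ is a defect group of $b$. Since $\{\pm I_m\}$ is central in $\rO$, all defect groups of $B$ equal $\{\pm I_m\}\subseteq\SO$, whence $D\cap\SO=\{\pm I_m\}=Z(\SO)$. Lemma~\ref{prop:2-defect-zero-SO} then forces $\eps=-$, as asserted.

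The remaining case $m=2$ falls outside the range of Lemma~\ref{prop:2-defect-zero-SO} and must be treated directly. Here $\rO_{2,\eps}(q)$ is dihedral of order $2(q-\varepsilon\eps)$ with $\varepsilon=(-1)^{(q-1)/2}$, and $\SO_{2,\eps}(q)$ is its cyclic rotation subgroup. Applying Fong reduction along the odd part of $\SO_{2,\eps}(q)$, the defect groups of $2$-blocks of $\rO_{2,\eps}(q)$ are either a Sylow $2$-subgroup (for blocks covering the trivial character of the odd part) or a cyclic subgroup of $\SO_{2,\eps}(q)$ of order $2^{v_2(q-\varepsilon\eps)}$. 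When $\eps=+$ this last order is $2^a\ge 4$, so no $2$-block has defect group of order $2$, and in particular none has defect group $\{\pm I_2\}$. The main obstacle is invoking the correct defect-group-under-covering result in the even case; once that is in hand, the odd and $m=2$ cases are purely formal.
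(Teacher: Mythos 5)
Your proof is correct and takes essentially the same route as the paper: for $m\ge 3$, cover down to $\SO_{m,\eps}(q)$ using the defect-group-under-covering theorem \cite[Thm.~(9.26)]{Na98} and invoke Lemma~\ref{prop:2-defect-zero-SO}, and for $m=2$ argue directly with the dihedral structure of $\rO_{2,\eps}(q)$. The only small difference is that you treat odd $m$ separately via the direct product $\rO=\SO\times\{\pm I_m\}$ rather than applying \cite[Thm.~(9.26)]{Na98} uniformly, which is just a more explicit version of the same Clifford-theoretic step (there $D\cap\SO=1$ and $Z(\SO)=1$, so the lemma rules it out all the same).
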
	
		
	\begin{proof}	
	If $m=2$, then the group $\rO_{2,\eps}(q)$ is isomorphic to the dihedral group of order $2(q-\varepsilon\eps)$, and thus we can get this assertion directly.
	If $m\ge 3$, then this follows from Lemma~\ref{prop:2-defect-zero-SO} and \cite[Thm. (9.26)]{Na98} immediately.
	\end{proof}
	
	\begin{prop}\label{prop:weight-subgp-class}
		Let $V$ be an orthogonal space with $\dim V\ge 2$ and $R$ a 2-weight subgroup of $I(V)$.
		Suppose that \[V=V_1\perp \cdots\perp V_t\] and \[R=R_1\ti \cdots\ti R_t\] are corresponding decompositions in Theorem~\ref{subsec:radical-subgp-class} such that $R_i$ are basic subgroups of $I(V_i)$. Then for any $1\le i\le t$ we have the following.
		\begin{enumerate}[\rm(1)]
			\item If $R_i=\{\pm 1_{V_i}\}$, then one of the following holds.
			\begin{enumerate}[\rm(a)]
				\item $\dim V_i=1$.
				\item $\dim V_i$ is even and $\disc(V_i)=-$. 
			\end{enumerate}
			\item If $R_i\ne\{\pm 1_{V_i}\}$, then $\disc(V_i)=+$ and one of the following holds.
			\begin{enumerate}[\rm(a)]
				\item $R_i=R^{0}_{1,0,0,\bc}$ or $R^{0}_{1,1,0,\bc}$,  where $c_1\ge2$.
				\item $R_i=R^{0}_{m,0,0,\bc}$ or $R^{0}_{m,1,0,\bc}$, where $m$ is even and $c_1\ge2$.
				\item $R_i=R^{0,-}_{m,0,1,\bc}$.
				\item $R_i=R_{m,\al,\ga,\bc}^1$ or $R_{m,\al,\ga,\bc}^2$, where $m$ is odd.
				\item $R_i=R_{1,0,\ga,\bc}^4$.
		\end{enumerate}
		\end{enumerate}
		\end{prop}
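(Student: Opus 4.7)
The plan is to verify the weight condition componentwise and then analyze each basic-subgroup type separately. By Lemma~\ref{L:R6} applied to the decomposition $I(V)\supset I(V_1)\times\cdots\times I(V_t)$, we have $N_{I(V)}(R)=\prod_i N_{I(V_i)}(R_i)$ and $Z_{I(V)}(R)=\prod_i Z_{I(V_i)}(R_i)$, so $\dz(N_{I(V)}(R)/R)\ne\emptyset$ if and only if $\dz(N_{I(V_i)}(R_i)/R_i)\ne\emptyset$ for every $i$. Thus it suffices to determine, for each basic subgroup $R_i$ of $I(V_i)$ appearing in Theorem~\ref{subsec:radical-subgp-class}, whether $R_i$ is itself a 2-weight subgroup of $I(V_i)$.

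For case (1) where $R_i=\{\pm 1_{V_i}\}$, the quotient $N_{I(V_i)}(R_i)/R_i$ is $I(V_i)/\{\pm 1\}$, and the existence of a defect zero character lifts to the existence of a 2-block of $I(V_i)$ with defect group $\{\pm I_{V_i}\}$. By Corollary~\ref{cor:2-defect-zero-O}, this occurs precisely when $\dim V_i=1$ (the trivial one-dimensional case, where $I(V_i)=\{\pm 1\}$) or $\dim V_i$ is even with $\disc(V_i)=-$, giving exactly the two subcases (1.a) and (1.b).

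For case (2) where $R_i\ne\{\pm 1_{V_i}\}$, I would exploit the explicit description of $N_{I(V_i)}(R_i)/R_iZ_{I(V_i)}(R_i)$ and of $Z_{I(V_i)}(R_i)$ provided by Propositions~\ref{P:C4}, \ref{P:C6} and \ref{P:C5}, together with the extra wreath-product factor $A_{\bc}$ contributing $\prod_j\fS_{2^{c_j}}$. The defect zero condition $\dz(N_{I(V_i)}(R_i)/R_i)\ne\emptyset$ then factors through the structure of $Z_{I(V_i)}(R_i)/Z(R_i)$, which for a basic subgroup is (up to a central quotient) a classical group $I(V')$, $\GL_{\ast}$, $\GU_{\ast}$, or a general linear group over an extension field, together with the quotient $N/RZ(R)$ lying in some $\rO_{2\gamma}^{\pm}(2)$ or $\SO_{2\gamma}^{\pm}(2)$. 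Applying Lemma~\ref{prop:2-defect-zero-SO} (and its analogue for symplectic/general linear factors from the preceding subsection together with the classical defect zero criteria for $\fS_{2^{c}}$ via 2-core partitions), I would go through each type $R^i_{m,\al,\ga,\bc}$: for $i=0$ the centralizer is $I(V'')$ with $\dim V''=m$, so the centralizer condition forces $\disc(V_i)=+$ and singles out precisely (2.a)--(2.c); for $i=1,2$ the centralizer is of $\GL$/$\GU$-type, and the argument used in Proposition~\ref{prop:wei-glgu} combined with Remark~\ref{rmk:wei-sub-glgu} shows $m$ must be odd, giving (2.d); for $i=3,4$ the quaternion/dihedral central product forces $m=1$ for $i=4$ (case (2.e)) and excludes $i=3$ entirely when nontrivial weight characters are required.

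The main obstacle is the careful bookkeeping in case (2) to decide for which triples $(\al,\ga,\bc)$ the group $N_{I(V_i)}(R_i)/R_i$ actually admits a defect zero character: the wreath product with $A_{\bc}$ imposes that each $\fS_{2^{c_j}}$ must contribute a 2-core partition (hence the condition $c_1\ge 2$ in (2.a)--(2.b), paralleling Remark~\ref{rmk:wei-sub-glgu}(2)), and the parity of $m$ together with the sign $\eta$ must be compatible with the existence of a defect zero character on the orthogonal/symplectic centralizer factor (yielding the asymmetry between $R^1$ and $R^2$ versus $R^3$, and the sign $-$ in (2.c)). I expect that this verification will mostly reduce to careful application of Lemma~\ref{prop:2-defect-zero-SO}, Remark~\ref{rmk:2-defect-zero-Sp} and Proposition~\ref{prop:wei-glgu}, performed case by case in parallel with the case analysis of Lemma~\ref{L:O2-O8}, with the sign computations of type (D1)--(D2) and (C1)--(C2) from the proof of Proposition~\ref{P:C4} providing the key parity constraints.
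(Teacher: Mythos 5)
Your componentwise reduction is broken at the first step. In general one does \emph{not} have $N_{I(V)}(R)=\prod_i N_{I(V_i)}(R_i)$: whenever several factors $(R_j,V_j)$ coincide, the normalizer of $R$ contains the symmetric group permuting those factors. This is precisely why Remark~\ref{rmk:wei-sub-orth}(2) records the triangular-number constraint on multiplicities. As a consequence the equivalence you state, that $\dz(N_{I(V)}(R)/R)\ne\emptyset$ iff $\dz(N_{I(V_i)}(R_i)/R_i)\ne\emptyset$ for every $i$, is false: for a repeated basic subgroup of multiplicity $2$, each factor may afford a defect-zero character of its local normalizer quotient while the ambient $\fS_2$ kills defect zero (indeed $\fS_2$ itself has no $2$-regular nonidentity class), so $R$ is not a weight subgroup of $I(V)$. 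The converse direction of your equivalence is equally dubious. You therefore cannot reduce to asking whether each $R_i$ is a weight subgroup of $I(V_i)$.

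The paper sidesteps this by using the \emph{centralizer}, which genuinely factors as $Z_{I(V)}(R)=\prod_i Z_{I(V_i)}(R_i)$, together with the necessary condition coming from self-centralizing Brauer pairs (Lemma~\ref{lem:center-weisub} and Construction~\ref{construction-of-weights}): if $R$ is a weight subgroup of $I(V)$ then $Z_{I(V_i)}(R_i)$ must possess a block with defect group $Z(R_i)$, for each $i$. This centralizer-with-defect-group criterion is what actually restricts the allowed basic subgroups; it is a necessary condition, not a characterization, which is all the Proposition asserts. For case (1), this criterion says $I(V_i)$ has a block with defect group $\{\pm I_{V_i}\}$, which by Corollary~\ref{cor:2-defect-zero-O} forces $\dim V_i=1$ or ($\dim V_i$ even and $\disc(V_i)=-$); this matches your conclusion, but your derivation should route through the factoring centralizer rather than the (false) normalizer factorization. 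For case (2), your sketch defers the actual case analysis to other propositions and remarks; the paper itself simply cites \cite[\S4 and \S6]{An93a} here, so a deferral is acceptable, but again the intermediate condition you should be tracking is the centralizer-defect-group condition rather than any statement about ``$R_i$ being a weight subgroup of $I(V_i)$''.
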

		
		\begin{proof}
		 Since $R$ is a weight subgroup, the group $Z_{I(V)}(R)$ possesses a block with a defect group $Z(R)$. Note that $Z_{I(V)}(R)=\prod_{i=1}^tZ_{I(V_i)}(R_i)$. So $Z_{I(V_i)}(R_i)$ possesses a block with a defect group $Z(R_i)$. If $R_i=\{\pm 1_{V_i}\}$ and $\dim V_j>1$, then by Corollary~\ref{cor:2-defect-zero-O}, we get that $\dim V_i$ is even and $\disc (V_i)=-$. 
		For (2), we assume that $R_i\ne\{\pm 1_{V_i}\}$. Then this assertion follows by \cite[\S4 and \S6]{An93a}.
		\end{proof}

\begin{rmk}\label{rmk:wei-sub-orth}
Keep the hypothesis and setup of Proposition~\ref{prop:weight-subgp-class}. Now we assume further that $R$ is a principal 2-weight subgroup of $I(V)$. 

(1) The situations (1.b), (2.b), (2.c), (2.d) of Proposition~\ref{prop:weight-subgp-class} do not occur.
In fact, if $R_i\ne\{\pm 1_{V_i}\}$, then this follows by \cite{An93a} directly.
If $R_i=\{\pm 1_{V_i}\}$, then by Lemma~\ref{lem:B_0-wei}, $\{\pm 1_{V_i}\}$ is a Sylow 2-subgroup of $I(V_i)$, which forces that $\dim V_i=1$.

(2) We rewrite the decompositions in Proposition~\ref{prop:weight-subgp-class} as \[V=V_1^{u_1}\perp \cdots\perp V_s^{u_s}\] and \[R=R_1^{u_1}\ti \cdots\ti R_s^{u_s}\] such that $R_i$ and $R_j$ are different basic subgroups if $i\ne j$.
Then similar as in Remark~\ref{rmk:wei-sub-glgu}, one has that $u_i$ is a triangular number for any $1\le i\le s$. 
\end{rmk}	

\begin{prop}\label{prop:weight-subgp-class-sym}
		Let $V$ be a symplectic space with $\dim V\ge 2$ and $R$ a 2-weight subgroup of $I(V)$.
		Suppose that \[V=V_1\perp \cdots\perp V_t\] and \[R=R_1\ti \cdots\ti R_t\] are corresponding decompositions Theorem~\ref{subsec:radical-subgp-class} such that $R_i$ are basic subgroups of $I(V_i)$. Then for any $1\le i\le t$, one of the following holds.
				\begin{enumerate}[\rm(1)]
			\item $R_i=\{\pm 1_{V_i}\}$.
			\item $R_i=R^{0}_{m,0,0,\bc}$ with $c_1\ge 2$.
				\item $R_i=R^{0,-}_{1,0,1,\bc}$ or $R^{0,-}_{1,1,1,\bc}$ if $a\ge 3$.
				\item $R_i=R^{0,-}_{m,1,1,\bc}$, where $m$ is even.
				\item $R_i=R_{m,\al,\ga,\bc}^1$ or $R_{m,\al,\ga,\bc}^2$, where $m$ is odd.
				\item $R_i=R_{1,0,\ga,\bc}^4$.
		\end{enumerate}
\end{prop}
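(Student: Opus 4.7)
The plan is to mirror the strategy of Proposition~\ref{prop:weight-subgp-class} but with the symplectic versions of the defect‐zero criteria, and to correct along the way the sign error in \cite[(1J)(b)]{An93a} that affects the case analysis. Since $Z_{I(V)}(R)=\prod_{i=1}^{t}Z_{I(V_i)}(R_i)$ and $Z(R)=\prod_{i=1}^{t}Z(R_i)$, and since (by Construction~\ref{construction-of-weights}) a $p$-subgroup $R$ is a 2-weight subgroup iff $Z_{I(V)}(R)$ admits a 2-block with defect group $Z(R)$, it suffices to determine, for each basic subgroup $R^{j}_{m,\alpha,\gamma,\bc}$ of a symplectic space $V_i$, when $Z_{I(V_i)}(R_i)/Z(R_i)$ possesses a 2-block of defect zero.

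First I would compute the centralizer $Z_{I(V_i)}(R^{j}_{m,\alpha,\gamma,\bc})$ inductively through the wreath structure $R^{j}_{m,\alpha,\gamma}\wr A_{\bc}$, using the explicit descriptions from the proofs of Propositions~\ref{P:C4}, \ref{P:C6}, \ref{P:C5} and from Lemma~\ref{L:C4}. In each case the quotient takes the form of a product of classical groups (of the form $\Sp$, $\SO^{\pm}$, or $\GL_{m}(\eps q^{d})$ over various field extensions) together with a diagonal $\fS_{m}$-action coming from the $m$-fold diagonal embedding \eqref{equ:embedding-m-I}. The $\fS_{m}$-factor contributes a 2-defect-zero character iff $m$ is a triangular number, which in particular forces $m$ odd in cases (5) and (6); for the classical-group factors I would invoke Remark~\ref{rmk:2-defect-zero-Sp} for the symplectic components, Lemma~\ref{prop:2-defect-zero-SO} for orthogonal components, and the standard Alperin--Fong criterion for the $\GL/\GU$ components, to isolate the combinations yielding a defect-zero block.

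Two distinctively symplectic phenomena will drive the final list. First, in case (1) the centralizer equals $\Sp(V_i)$ itself, and $\Sp_{2n}(q)$ always has a 2-block of defect group $Z=\{\pm I\}$ (take for instance a Steinberg-type character of $\Sp(V_i)/Z$); hence no dimension restriction on $V_i$ is imposed, unlike the orthogonal case~(1.b). Second, in cases (3) and (4) the centralizer of $R^{0}_{m,\alpha,1,\bc}$ contains an orthogonal factor whose type is sensitive to the corrected sign: applying Lemma~\ref{prop:2-defect-zero-SO} with the right sign (the one forced by Remark~\ref{rmk:2-defect-zero-Sp}) singles out the minus-type subgroups $R^{0,-}_{1,0,1,\bc}$, $R^{0,-}_{1,1,1,\bc}$ (under $a\ge 3$) and $R^{0,-}_{m,1,1,\bc}$ (with $m$ even), and rules out the corresponding plus-type ones. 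Cases (5) and (6) then follow by the same arguments used in \cite[\S4]{An94} since those centralizers do not involve the disputed sign.

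The main obstacle is precisely this sign bookkeeping: correctly recomputing $Z_{I(V_i)}(R^{0}_{m,\alpha,\gamma})$ for symplectic $V_i$ (following Lemmas~\ref{L:C2}, \ref{L:C2-2}, \ref{L:C3}) so that the orthogonal sign appearing in the centralizer of a basic subgroup of type $R^{0}$ matches the discriminant convention used in Lemma~\ref{prop:2-defect-zero-SO}. Once that sign is fixed and propagated through the wreath iteration, the rest of the case-by-case verification amounts to routine application of the three defect-zero criteria recalled above, producing exactly the six families listed in the statement.
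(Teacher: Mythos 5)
Your overall strategy matches the paper's proof: both reduce the question factorwise to whether the centralizer of each basic subgroup $R_i$ has a $2$-block with defect group $Z(R_i)$, both lean on the defect-zero criteria for orthogonal factors (the paper cites Corollary~\ref{cor:2-defect-zero-O}, you cite its source Lemma~\ref{prop:2-defect-zero-SO} together with Remark~\ref{rmk:2-defect-zero-Sp}), and both identify the key correction to the sign in the centralizer of $R^0_{m,\al,\ga}$ from \cite[(1J)(b)]{An93a} as the step that changes the symplectic list relative to An's. The paper then quotes the explicit centralizer/normalizer structure of $R^{0,\eta}_{m,\al,\ga}$ coming from the proof of Proposition~\ref{P:C4} and defers to \cite[\S4 and \S6]{An93a} for the remaining bookkeeping, which is essentially what you propose.

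There is, however, a genuine error in your handling of case~(1). You claim that $\Sp_{2n}(q)$ ``always has a $2$-block of defect group $Z=\{\pm I\}$'' and justify this with ``a Steinberg-type character of $\Sp(V_i)/Z$.'' This is wrong on both counts. The Steinberg character has degree $q^{n^2}$, which is odd, so it lies in a $2$-block of full defect, not defect zero, of $\PSp_{2n}(q)$. And the stronger assertion fails outright: $\Sp_2(3)=\SL_2(3)$ has a normal Sylow $2$-subgroup $Q_8$, so every $2$-block has defect group $Q_8$ and there is no block with defect group $\{\pm I\}$. The correct constraint is the one in Remark~\ref{rmk:2-defect-zero-Sp}: $\Sp_{2n}(q)$ has a $2$-block with defect group $\{\pm I\}$ only when there is $\Gamma\in\cF_1\cup\cF_2$ with $\delta_\Gamma=n$ and $4\nmid(q^n-\varepsilon_\Gamma)$, which imposes a nontrivial condition on $q$ for small $n$. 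Fortunately this does not break the proof of the proposition as stated --- case~(1) lists $R_i=\{\pm 1_{V_i}\}$ with no further restriction, so you only need the trivial necessity direction, not the false universal sufficiency claim. You should simply delete the parenthetical justification rather than try to prove it.

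One further small point: for cases~(5) and (6) you cite \cite[\S4]{An94}, which is An's paper on weights at odd primes; for the prime $2$ the correct reference (and the one the paper uses) is \cite[\S4 and \S6]{An93a}.
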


\begin{proof}
This assertion follows by \cite[\S4 and \S6]{An93a} and Corollary~\ref{cor:2-defect-zero-O}. However, as mentioned in Remark~\ref{rmk:c1}, the sign in the centralizer of $R^0_{m,\al,\ga}$ in \cite[(1J)(b)]{An93a} is incorrect, and when $m$ is even, the basic subgroups $R^0_{m,1,\ga,\bc}$ are missing in \cite{An93a}. Using \cite[\S6]{An93a}, the list of weight subgroups can be corrected as soon as we determine the centralizer and normalizer of those basic subgroups.

By the proof of Proposition \ref{P:C4}, we can give the structure of centralizer and normalizer of basic subgroups $R^0_{m,\al,\ga}$ as follows.
Let $\eta\in\{+,-\}$ and $\ga\ge1$. Define $C^{0,\eta}_{\al,\ga}=Z_{I(V^{0}_{\al,\ga})}(R^{0,\eta}_{\al,\ga})$ and $H^{0,\eta}_{\al,\ga}=Z_{N_{I(V^0_{\al,\ga})}(R^{0,\eta}_{\al,\ga})}(C_{\al,\ga}^{0,\eta})$.
	Then $H^{0,\eta}_{\al,\ga}/R^{0,\eta}_{\al,\ga}\cong \rO^\eta_{2\ga}(2)$ if $a\ge 3$, while $H^{0,\eta}_{\al,\ga}/R^{0,\eta}_{\al,\ga}\cong \SO^\eta_{2\ga}(2)$ if $a=2$.
	Denote the image of $H^{0,\eta}_{\al,\ga}$ under the embedding (\ref{equ:embedding-m-I}) or (\ref{equ:embedding-m-II}) by $H^{0,\eta}_{m,\al,\ga}$.
	Let $C_{m,\al,\ga}^{0,\eta}$ and $N_{m,\al,\ga}^{0,\eta}$ be the centralizer and normalizer of $R_{m,\al,\ga}^{0,\eta}$ in $I(V_{m,\al,\ga}^{0})$ respectively.

	Suppose that $V_{m,\al,\ga}^0$ is symplectic or orthogonal according as $\eta=+$ or $-$.
	Then $C_{m,\al,\ga}^{0,\eta}\cong \Sp_{2m}(q)\otimes I_{2^\ga}$.
	Moreover, $H^{0,\eta}_{m,\al,\ga}=Z_{N^{0,\eta}_{m,\al,\ga}}(C^{0,\eta}_{m,\al,\ga})$, $H^{0,\eta}_{m,\al,\ga}\cap C^{0,\eta}_{m,\al,\ga}=Z(G)$ and $H^{0,\eta}_{m,\al,\ga}/R^{0,\eta}_{m,\al,\ga}\cong \rO_{2\ga}^\eta(2)$ or $\SO_{2\ga}^\eta(2)$ according as $a\ge 3$ or $a=2$.
	In the former case $N_{m,\al,\ga}^{0,\eta}=H_{m,\al,\ga}^{0,\eta}C_{m,\al,\ga}^{0,\eta}$ and in the latter case $N_{m,\al,\ga}^{0,\eta}/H_{m,\al,\ga}^{0,\eta}C_{m,\al,\ga}^{0,\eta}$ is of order 2.
	
	Suppose that $V_{m,\al,\ga}^0$ is symplectic or orthogonal according as $\eta=-$ or $+$.
	Then $C_{m,\al,\ga}^{0,\eta}\cong \rO_{m,+}(q)\otimes I_{2^\ga}$ if $m$ is odd or $\al=0$, while $C_{m,\al,\ga}^{0,\eta}\cong \rO_{m,-}(q)\otimes I_{2^\ga}$ if $m$ is even and $\al=1$. 
	Moreover, $H^{0,\eta}_{m,\al,\ga}=Z_{N^{0,\eta}_{m,\al,\ga}}(C^{0,\eta}_{m,\al,\ga})$, $H^{0,\eta}_{m,\al,\ga}\cap C^{0,\eta}_{m,\al,\ga}=Z(G)$ and $H^{0,\eta}_{m,\al,\ga}/R^{0,\eta}_{m,\al,\ga}\cong \rO_{2\ga}^\eta(2)$ or $\SO_{2\ga}^\eta(2)$ according as $a\ge 3$ or $a=2$.
	In addition, $N_{m,\al,\ga}^{0,\eta}/H_{m,\al,\ga}^{0,\eta}C_{m,\al,\ga}^{0,\eta}$ is of order 2 or 1 according as $a=2$ and $m$ is even or otherwise. In the former case, $N_{m,\al,\ga}^{0,\eta}/R_{m,\al,\ga}^{0,\eta}C_{m,\al,\ga}^{0,\eta}\cong\rO^\eta_{2\ga}(2)$.

	Therefore, using the above arguments, this assertion follows by \cite[\S4 and \S6]{An93a} when $i\ge1$, and follows by Corollary \ref{cor:2-defect-zero-O}  when $i=0$.	
\end{proof}

\begin{rmk}\label{rmk:c2}
We consider symplectic groups. 
The group $R_{\Ga,\ga,\bc}=E_{d_\Ga,1,1}\wr A_{\bc'}$ in \cite[p.~190]{An93a} is the basic subgroup $R^{0,-}_{d_\Ga,0,1,\bc'}$ under our notation.
However, by Proposition~\ref{prop:weight-subgp-class-sym}, this is not a weight subgroup. On the other hand, as mentioned in Remark~\ref{rmk:c1}, the basic subgroups $R^{0,\pm}_{m,1,\ga,\bc}$ are missing in \cite{An93a}.

From this, the groups $R_{\Ga,\ga,\bc}=E_{d_\Ga,1,1}\wr A_{\bc'}$ in \cite[p.~190]{An93a} and \cite[p.~11]{FM22} should be defined as $R_{\Ga,\ga,\bc}=R_{d_\Ga,1,1}^{0,-}\wr A_{\bc'}$, and the arguments in \cite{An93a} and \cite{FM22} still apply. 
\end{rmk}

\begin{rmk}
Keep the hypothesis and setup of Proposition~\ref{prop:weight-subgp-class-sym}. Now we assume further that $R$ is a principal 2-weight subgroup of $I(V)$. 
By a similar argument of Remark~\ref{rmk:wei-sub-orth}, one has that the situations (1), (2), (4), (5) of Proposition~\ref{prop:weight-subgp-class-sym} do not occur.
In addition, Remark~\ref{rmk:wei-sub-orth} (2) also holds.
\end{rmk}

\subsection{Parity}\label{SS:parity}

Let $V$ be an orthogonal space over $\mathbb{F}_{q}$ with $\dim V\geq 2$. Choose two vector $v,v'\in V$ with 
$(v,v)\in(\mathbb{F}_{q}^{\times})^{2}$ and $(v',v')\not\in(\mathbb{F}_{q}^{\times})^{2}$. Then, $\rO(V)/\Omega(V)\cong
(Z_{2})^{2}$ and $r_{v}^{t}r_{v'}^{t'}$ ($t,t'\in\{0,1\}$) represent all four cosets. We label the coset containing 
$r_{v}^{t}r_{v'}^{t'}$ by a pair $(t,t')$. 

\begin{defn}\label{D:parity}
\begin{enumerate}[\rm(1)] 
\item We call the $\Omega(V)$ coset containing $X\in\rO(V)$ the {\it parity} of $X$. 
\item We call parities of elements in a subgroup $R$ of $I(V)$ the {\it parity of $R$}, which is regarded as a subgroup of 
$(Z_{2})^{2}$.       
\end{enumerate}   
\end{defn}

It is clear that the parity of $X\in\rO(V)$ does not depend on the choice of the vectors $v$ and $v'$. If $V'$ is an orthogonal 
space containing $V$, then the parity of any element $X\in\rO(V)$ is the same while regarded as an element of $\rO(V)$ or $\rO(V')$.   

The following lemma is clear.  

\begin{lem}\label{L:parity0}
Let $W$ be an orthogonal subspace of $V$. Define $-I_{W}$ to be an element $X\in\rO(V)$ with $X|_{W}=-I$ and 
$X|_{W^{\perp}}=I$. Then, the parity of $-I_{W}$ is as follows: \begin{itemize}
\item if $\dim W$ is even and $\disc(W)=1$, it is equal to $(0,0)$; 
\item if $\dim W$ is even and $\disc(W)=-1$, it is equal to $(1,1)$;
\item if $\dim W$ is odd and $\disc(W)=1$, it is equal to $(1,0)$; 
\item if $\dim W$ is odd and $\disc(W)=-1$, it is equal to $(0,1)$.  
\end{itemize} 
\end{lem}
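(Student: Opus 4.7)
The plan is to identify the $(t,t')$ labeling of cosets in $\rO(V)/\Omega(V)$ with the pair consisting of the determinant and the spinor norm class of any representative, and then to apply this identification to $X = -I_W$. Since the reflection $r_u$ has determinant $-1$ and spinor norm $(u,u)\cdot(\mathbb{F}_q^\times)^2$, the representative $r_v^t r_{v'}^{t'}$ has determinant $(-1)^{t+t'}$ and spinor norm $(v,v)^t(v',v')^{t'}$, which lies in $(\mathbb{F}_q^\times)^2$ exactly when $t'$ is even (by our choice of $v,v'$). Consequently, the parity of an arbitrary $X \in \rO(V)$ is recovered from $\det X$ and the spinor norm class of $X$ via the two equivalences $t+t' \equiv 0 \pmod 2 \Leftrightarrow X \in \SO(V)$ and $t' \equiv 0 \pmod 2 \Leftrightarrow$ spinor norm of $X$ is a square.

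Next I would compute these two invariants for $X = -I_W$. Picking an orthogonal basis $\{w_1,\dots,w_k\}$ of $W$ with $k=\dim W$ and extending by an orthogonal basis of $W^{\perp}$, one gets the factorization $-I_W = r_{w_1} r_{w_2} \cdots r_{w_k}$ inside $\rO(V)$, where the reflections are understood to fix $W^{\perp}$ pointwise. Hence $\det(-I_W) = (-1)^{\dim W}$ and the spinor norm of $-I_W$ equals $\prod_{i=1}^{k}(w_i,w_i)$ modulo $(\mathbb{F}_q^\times)^2$. By the convention $\disc(W) = (\prod_i (w_i,w_i))^{(q-1)/2}$ recorded at the start of Section~\ref{SS:classical2}, this spinor norm is a square if and only if $\disc(W) = 1$.

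Combining the two computations, $t+t' \equiv \dim W \pmod 2$ and $t' \equiv 0 \pmod 2 \Leftrightarrow \disc(W)=1$; direct case analysis in the four sub-cases of the statement then reproduces the listed parities. I do not anticipate any real obstacle here: the argument is a short translation between two equivalent parametrizations of $\rO(V)/\Omega(V)$. The only delicate point to verify is that the spinor-norm convention implicit in the definition of $\Omega(V)$ matches the reflection formula used above (standard in odd characteristic), and that the comparison with $\disc(W)$ is modulo squares rather than with a shift — both immediate from the definitions recalled in the paper.
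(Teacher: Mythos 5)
The paper offers no proof here (``The following lemma is clear''), so there is no route to compare against; your argument is the natural justification and it is correct. Factoring $-I_W$ as $r_{w_1}\cdots r_{w_k}$ along an orthogonal basis of $W$, then reading the $\Omega(V)$-coset from $\det$ and the spinor-norm class, is exactly what makes the lemma ``clear'': from $r_v^t r_{v'}^{t'}$ one has $\det = (-1)^{t+t'}$ and square spinor norm iff $t'=0$, while $\det(-I_W)=(-1)^{\dim W}$ and the spinor norm of $-I_W$ is $\prod_i(w_i,w_i)$, a square iff $\disc(W)=1$ by the paper's convention $\disc(W)=\bigl(\prod_i(w_i,w_i)\bigr)^{(q-1)/2}$. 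The four cases then follow by the arithmetic you describe.
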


\begin{lem}\label{L:O2}
In $\rO_{2,+}(q)$, the parity of an element $\left(\begin{array}{cc}a&b\\-b&a\\\end{array}\right)$ is equal to $(t,t)$, 
where $t\in\{0,1\}$ is given by $(a+b\sqrt{-1})^{\frac{q-\varepsilon}{2}}=(-1)^{t}$.    
\end{lem}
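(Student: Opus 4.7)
The plan is to compute the parity of $X$ through the spinor norm. Since $\det X=a^{2}+b^{2}=1$, the matrix $X$ lies in $\SO(V)$, so its parity is automatically of the form $(t,t)$, and $t=0$ precisely when $X$ lies in the kernel $\Omega(V)$ of the spinor norm. It therefore suffices to write $X$ as a product of two reflections and decide when the product of the squared-norms of the defining vectors is a square in $\mathbb{F}_{q}^{\times}$.

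For the factorization I would take $v=e_{1}$, so that $r_{e_{1}}=\diag(-1,1)$. The product $r_{e_{1}}X$ is symmetric with trace $0$ and determinant $-1$, hence a reflection $r_{w}$; solving for its $(-1)$-eigenvector gives $w=(b,\,1-a)$, assuming $X\neq I$ (the case $X=I$ is trivial and fits the formula). Since $(e_{1},e_{1})=1$ and $(w,w)=b^{2}+(1-a)^{2}=2(1-a)$, the spinor norm of $X$ is the class of $2(1-a)$, and the lemma reduces to the equivalence
\[(2(1-a))^{(q-1)/2}=1\iff (a+b\sqrt{-1})^{(q-\varepsilon)/2}=1.\]

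The bridge I propose is the identity
\[2(1-a)=2-z-z^{-1}=-z^{-1}(z-1)^{2},\qquad\text{where }z:=a+b\sqrt{-1},\]
which follows from $z^{-1}=a-b\sqrt{-1}$ (valid because $a^{2}+b^{2}=1$). Raising to the $(q-1)/2$-th power and using $(-1)^{(q-1)/2}=\varepsilon$ gives $(2(1-a))^{(q-1)/2}=\varepsilon\,z^{-(q-1)/2}(z-1)^{q-1}$. The computation then splits along the two cases: if $\varepsilon=1$ then $z\in\mathbb{F}_{q}^{\times}$ so $(z-1)^{q-1}=1$; if $\varepsilon=-1$ then $z^{q}=z^{-1}$ (since $z\bar z=1$) gives $(z-1)^{q-1}=-z^{-1}$. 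In both cases the right-hand side simplifies to $z^{-(q-\varepsilon)/2}$, which equals $z^{(q-\varepsilon)/2}$ since this is a square root of $1$ lying in $\mathbb{F}_{q}$. The main obstacle is bookkeeping the signs and checking that the computation done in $\mathbb{F}_{q^{2}}$ actually descends to $\mathbb{F}_{q}$ in the $\varepsilon=-1$ case; once the identity above is in hand, nothing more than this routine verification is required.
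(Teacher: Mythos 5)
Your proof is correct and takes essentially the same approach as the paper's: decompose $X$ as a product of two reflections, compute the resulting spinor norm, and verify the algebraic identity linking it to $(a+b\sqrt{-1})^{(q-\varepsilon)/2}$ by splitting into the cases $\varepsilon=\pm 1$. The only difference is bookkeeping: the paper parametrizes the second reflection vector as $v=ce_1+de_2$ and verifies $(d-c\sqrt{-1})^{q-\varepsilon}=(c^2+d^2)^{q-(1+\varepsilon)/2}$, whereas you solve for the reflection vector $w=(b,1-a)$ directly in terms of $(a,b)$ and use the factorization $2(1-a)=-z^{-1}(z-1)^2$; noting $2(1-a)=4c^2/(c^2+d^2)$, these represent the same square class and the two computations coincide.
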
 

\begin{proof}
Write $e_{1}=(1,0)$ and $e_{2}=(0,1)$ for a standard orthogonal basis of $V=\mathbb{F}_{q}^{2}$ with 
$(e_{1},e_{1})=(e_{2},e_{2})=1$. Then, $r_{e_{2}}=\left(\begin{array}{cc}1&\\&-1\\\end{array}\right)$. There must exist 
a vector $0\neq v=c e_1+d e_{2}\in V$ such that \[r_{v}=\left(\begin{array}{cc}a&b\\b&-a\\\end{array}\right).\] 
Calculating $r_{v}(e_{1})$, we get $a=\frac{d^{2}-c^{2}}{c^{2}+d^{2}}$ and $b=\frac{-2cd}{c^{2}+d^{2}}$. Then, 
the parity of \[\left(\begin{array}{cc}a&b\\-b&a\\\end{array}\right)=r_{e_{2}}r_{v}\] is equal to $(t,t)$, where 
$t\in\{0,1\}$ is given by $(-1)^{t}=(c^{2}+d^{2})^{\frac{q-1}{2}}$. It suffices to verify that: 
\[(\frac{d^{2}-c^{2}}{c^{2}+d^{2}}+\frac{-2cd}{c^{2}+d^{2}}\sqrt{-1})^{\frac{q-\varepsilon}{2}}=(c^{2}+d^{2})^{\frac{q-1}{2}}.\] 
This is equivalent to: \[(d-c\sqrt{-1})^{q-\varepsilon}=(c^{2}+d^{2})^{q-\frac{1+\varepsilon}{2}}.\] When $\varepsilon=1$, 
both sides are equal to 1; When $\varepsilon=-1$, both sides are equal to $c^{2}+d^{2}$. 
\end{proof}  

Assume that $\dim W=2m$ is even and $\disc(W)=1$. Let $X_{0}\in I(W)$ be an element with $X_{0}^{2}=-I$. By Lemma 
\ref{L:C4}, we know that \[Z_{I(W)}(X_{0})\cong\GL_{m}(\varepsilon q).\] With this identification, one obtains a 
homomorphism \[\det: Z_{I(W)}(X_{0})\rightarrow\GL_{1}(\varepsilon q).\] 

\begin{lem}\label{L:parity1}
For any element $Y\in Z_{I(W)}(X_{0})$, the parity of $Y$ is equal to $(t,t)$, where $t\in\{0,1\}$ is determined by 
\[(-1)^{t}=\det(Y)^{\frac{q-\varepsilon}{2}}.\] 
\end{lem}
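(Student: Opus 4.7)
The plan is to exploit that both sides of the claimed equality define group homomorphisms from $Z_{I(W)}(X_{0})\cong\GL_{m}(\varepsilon q)$ into $\rO(W)/\Omega(W)\cong(Z_{2})^{2}$: the parity map is a quotient homomorphism, and $Y\mapsto\det(Y)^{(q-\varepsilon)/2}$ is the composition of the determinant with a power map, landing in the diagonal subgroup $\{(0,0),(1,1)\}$. Hence it suffices to check equality on a convenient set of generators of $\GL_{m}(\varepsilon q)$.

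First I would verify the formula on a maximal torus. Using the explicit diagonalization in the proof of Lemma~\ref{L:C11}, pick an orthogonal decomposition $W=W_{1}\perp\cdots\perp W_{m}$ into $X_{0}$-stable $2$-dimensional subspaces; each $W_{i}$ automatically has $\disc(W_{i})=1$. The isomorphism of Lemma~\ref{L:C4}, inspected in this block form, identifies the stabilizer of the decomposition with the diagonal torus $(\GL_{1}(\varepsilon q))^{m}\subset\GL_{m}(\varepsilon q)$, with the $i$-th factor acting on $W_{i}$ via the $m=1$ instance of the same isomorphism. Since the parity map is additive under orthogonal sums (remark following Definition~\ref{D:parity}), for $Y=\diag(y_{1},\dots,y_{m})$ one gets $\textrm{parity}(Y)=\sum_{i}\textrm{parity}(Y|_{W_{i}})$ in $(Z_{2})^{2}$. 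Applying Lemma~\ref{L:O2} to each $Y|_{W_{i}}\in\rO(W_{i})\cong\rO_{2,+}(q)$ yields $(t,t)$ with $(-1)^{t}=\prod_{i}y_{i}^{(q-\varepsilon)/2}=\det(Y)^{(q-\varepsilon)/2}$, confirming the formula on the torus.

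To extend to the full group, I would factor any $Y\in\GL_{m}(\varepsilon q)$ as $Y=\diag(\det(Y),1,\dots,1)\cdot s$ with $s\in\SL_{m}(\varepsilon q)$. The diagonal factor matches the right-hand side by the previous step, and neither side changes when multiplying by $s$: the right-hand side because $\det(s)=1$, and the left-hand side because the parity map is trivial on $\SL_{m}(\varepsilon q)$. The latter triviality is immediate for $m=1$; for $m\geq 2$, $\SL_{m}(\varepsilon q)$ is generated by transvections (ordinary or unitary), each of odd order $p$, so any homomorphism into the $2$-group $(Z_{2})^{2}$ must kill them and hence be trivial on $\SL_{m}(\varepsilon q)$.

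The main (minor) obstacle is verifying that the Lemma~\ref{L:C4} identification restricts correctly to each $2$-dimensional block, so that each coordinate $y_{i}\in\GL_{1}(\varepsilon q)$ acts on $W_{i}$ precisely as the element it represents under the $m=1$ case; this is a direct inspection of the explicit matrix model constructed in the proof of Lemma~\ref{L:C4}.
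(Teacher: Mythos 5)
Your proof is correct and takes essentially the same route as the paper's: parity is a homomorphism into the abelian $2$-group $(Z_2)^2$, so it suffices to check it on a $2$-dimensional block via Lemma~\ref{L:O2} and to show it vanishes on the determinant-one part. The only cosmetic difference is how the latter is justified---the paper observes $\SL_m(\varepsilon q)$ is the derived subgroup of $Z_{I(W)}(X_0)$ and hence dies in any abelian quotient, while you use generation by transvections of odd order---both are one-line observations of the same phenomenon.
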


\begin{proof}
Choose a 2-dimensional $X_{0}$-stable orthogonal subspace $W_{0}$ of $W$ with $\disc(W_{0})=1$. Note that the derived subgroup $[Z_{I(W)}(X_{0}),Z_{I(W)}(X_{0})]\cong\SL_{m}(\varepsilon q)$) and $[Z_{I(W)}(X_{0}),Z_{I(W)}(X_{0})]$ together with 
$Z_{I(W_{0})}(X_{0})$ generate $Z_{I(W)}(X_{0})$. The parity of $[Z_{I(W)}(X_{0}),Z_{I(W)}(X_{0})]$ must be equal to $(0,0)$. 
Then, the statement reduces to the case that $\dim W=2$. When $\dim W=2$, the conclusion follows from Lemma \ref{L:O2}. 
\end{proof} 

\begin{lem}\label{L:parity2}
(1)The parity of $R_{m,\alpha,\gamma,\mathbf{c}}^{i}$ ($i=1,2,3,4$) is the same as that of $R_{m,\alpha,\gamma,\emptyset}^{i}$.  

(2)The parity of $R_{m,\alpha,\gamma,\mathbf{c}}^{0,\pm{}}$ ($\alpha=0,1$) is the same as that 
of $R_{m,\alpha,\gamma,\emptyset}^{0,\pm{}}$.
\end{lem}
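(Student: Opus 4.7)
The plan is to write $R = R_{m,\al,\ga,\bc}^{i} = R_{m,\al,\ga}^{i}\wr A_{\bc}$ as the semidirect product of the base $B=(R_{m,\al,\ga}^{i})^{N}$ with $N=2^{|\bc|}$ and the permutation group $A_{\bc}$. Since parity is the image under the homomorphism $\rO(V_{m,\al,\ga,\bc}^{i})\to(Z_{2})^{2}$ determined by the determinant and the class of the spinor norm modulo squares, it suffices to compare parities of generators. Set $V=V_{m,\al,\ga}^{i}$ throughout.

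First I would handle $B$: if $g\in R_{m,\al,\ga}^{i}\subset I(V)$ is embedded as $(g,1,\dots,1)$ in $B$ and extended by identity on the remaining $N-1$ copies, both $\det$ and the spinor norm (computable from reflections along vectors of the first copy) are preserved, so its parity in $I(V_{m,\al,\ga,\bc}^{i})$ equals its parity in $I(V)$. By multiplicativity the parity of $B$ coincides with that of $R_{m,\al,\ga}^{i}$.

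Next I would handle $A_{\bc}$ by decomposing any element into disjoint transpositions of single copies of $V$. A basic swap $\sigma$ of two copies has $(-1)$-eigenspace the anti-diagonal $\{(v,-v):v\in V\}$ with induced form $2(\cdot,\cdot)_V$, so $\det(\sigma)=(-1)^{\dim V}$ and the spinor norm of $\sigma$ is congruent to $2^{\dim V}\disc(V)$ modulo squares. When $\dim V$ is even, $2^{\dim V}$ is a square and $\det(\sigma)=1$; comparing with Lemma~\ref{L:parity0} shows the parity of $\sigma$ equals the parity of $-I_V$, which lies in the parity of $R_{m,\al,\ga}^{i}$ since $-I\in R_{m,\al,\ga}^{i}$. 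A generator of $A_{c_{j}}$ with $j\ge 2$ permutes blocks of $2^{c_1+\cdots+c_{j-1}}\ge 2$ single copies, producing an even number of disjoint single-copy transpositions; a generator of the innermost $A_{c_1}$ with $c_1\ge 2$ is likewise an even product of $2^{c_1-1}\ge 2$ swaps. In either situation the generator has trivial parity, so the parity of $A_{\bc}$ is contained in that of $R_{m,\al,\ga}^{i}$. For case (1) with $i\in\{1,2,3,4\}$, and for case (2) with $i=0$ and $\ga\ge 1$, the dimension $\dim V$ is always even, so this argument suffices.

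The last step deals with the edge case $i=0$, $\ga=0$, $m$ odd and $c_1=1$, where $\dim V$ is odd and a single innermost swap can contribute a nontrivial parity. Here I would invoke Lemma~\ref{L:gamma1}, which asserts $R_{m,\al,0,(\bc',1)}^{0,\pm}\sim R_{m,\al,1,\bc'}^{0,\pm}$ in $I(V)$; since conjugate subgroups of $\rO(V)$ have identical image in $\rO(V)/\Omega(V)$, this reduces the parity computation to the previously settled case $\ga=1$ with even $\dim V$. The main obstacle is the explicit spinor-norm computation for the basic swap together with the case analysis ensuring that swaps of odd-dimensional copies only occur in configurations reducible via Lemma~\ref{L:gamma1}.
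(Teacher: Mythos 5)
Your proposal is correct and follows the same underlying strategy as the paper — reducing the parity of $R_{m,\al,\ga,\bc}^{i}$ to that of the base group plus the wreath permutations, and showing the permutations contribute trivially. The paper peels off one outer wreath layer per step of an induction, observing that the inner space $V_{m,\al,\ga,\bc'}^i$ is even-dimensional with discriminant~$1$ so swaps of its copies lie in $\Omega$; you instead write all of $A_{\bc}$ over the full base $(R^i_{m,\al,\ga})^{2^{|\bc|}}$ at once and decompose generators into single-copy transpositions. Your spinor-norm computation for a basic swap (yielding $\det=(-1)^{\dim V}$ and spinor norm $\equiv 2^{\dim V}\disc(V)$) and the resulting comparison with $-I_V$ via Lemma~\ref{L:parity0} are correct, and your treatment of the odd-dimensional situation $i=\ga=0$, $m$ odd via counting: generators of $A_{c_1}$ with $c_1\ge2$, and of $A_{c_j}$ with $j\ge2$, are even products of swaps — is actually more explicit than the paper's sketch and is exactly what is needed to cover part~(2) cleanly.

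One small inefficiency: the final paragraph about the edge case $i=0$, $\ga=0$, $m$ odd, $c_1=1$ is not needed, since by the paper's convention this parameter choice is excluded from being a basic subgroup, and the lemma is a statement about basic subgroups. Moreover the reduction you propose via Lemma~\ref{L:gamma1} would not directly close the loop as stated — it identifies $R_{m,\al,0,(\bc',1)}^{0,\pm}$ with $R_{m,\al,1,\bc'}^{0,\pm}$ and hence would compare parities against $R_{m,\al,1,\emptyset}^{0,\pm}$ rather than $R_{m,\al,0,\emptyset}^{0,\pm}$, so an extra step would be required. But since the case does not arise, this does not affect the correctness of the argument for the lemma as stated.
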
 

\begin{proof}
(1)Write $\mathbf{c}=(\gamma,\mathbf{c'})$. Then, there is an even-dimensional orthogonal space $V'$ with $\disc(V')=1$ and a basic 
subgroup $R'\sim R_{m,\alpha,\gamma,\mathbf{c'}}^{i}$ of $I(V')$ such that 
\[R_{m,\alpha,\gamma,\mathbf{c}}\sim R_{m,\alpha,\gamma,\mathbf{c'}}\wr (Z_{2})^{\gamma}.\] Since $\dim V'$ is even and 
$\disc(V')=1$, the above $(Z_{2})^{\gamma}$ subgroup has parity equal to $\{(0,0)\}$. Then the parity of 
$R_{m,\alpha,\gamma,\mathbf{c}}^{i}$ is equal to that of $R_{m,\alpha,\gamma,\mathbf{c'}}^{i}$. Taking induction one obtains 
the conclusion. 

(2)The proof for $R_{m,\alpha,\gamma,\mathbf{c}}^{0,\pm{}}$ ($\alpha=0,1$) is similar to the above proof for 
$R_{m,\alpha,\gamma,\mathbf{c}}^{i}$ ($i=1,2,3,4$).  
\end{proof}

\begin{lem}\label{L:parity3}
We have the following assertions: 
\begin{enumerate}
\item when $m$ is even and $|\mathbf{c}|\neq 1$, the parity of $R_{m,0,0,\mathbf{c}}^{0,+}$ (resp. $R_{m,1,0,\mathbf{c}}^{0,+}$) 
is equal to $\{(0,0)\}$ (resp. $\{(0,0),(1,1)\}$);
\item when $m$ is odd and $|\mathbf{c}|\neq 1$, the parity of $R_{m,0,0,\mathbf{c}}^{0,+}$ (resp. $R_{m,1,0,\mathbf{c}}^{0,+}$) 
is equal to $\{(0,0),(1,0)\}$ (resp. $\{(0,0),(0,1)\}$);  
\item The parity of $R_{m,0,1,\emptyset}^{0,-}$ is equal to $\{(0,0)\}$). 
\end{enumerate}
\end{lem}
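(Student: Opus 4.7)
The plan is to reduce parts (1) and (2) to the base case $\mathbf{c}=\emptyset$ via Lemma~\ref{L:parity2}(2), and then to read off the parity of $-I$ from Lemma~\ref{L:parity0}. For part (3), I will compute the parity of a generating set of $R_{m,0,1,\emptyset}^{0,-}\cong Q_{8}$ using Lemmas~\ref{L:C4} and~\ref{L:parity1}, together with the elementary observation that $(q-\varepsilon)/2$ is even.

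For parts (1) and (2), Lemma~\ref{L:parity2}(2) tells us that the parity of $R_{m,\alpha,0,\mathbf{c}}^{0,+}$ coincides with that of $R_{m,\alpha,0,\emptyset}^{0,+}$; the hypothesis $|\mathbf{c}|\neq 1$ is exactly what rules out the single transposition of copies which could otherwise contribute a nontrivial parity. By the definitions, $R_{m,\alpha,0,\emptyset}^{0,+}=\{\pm I_{V_{m,\alpha,0}^{0}}\}$, where $V_{m,0,0}^{0}$ has dimension $m$ and discriminant $+1$, while $V_{m,1,0}^{0}$ has dimension $m$ and discriminant $-1$. Applying Lemma~\ref{L:parity0} with $W=V_{m,\alpha,0}^{0}$, the parity of $-I_{W}$ is $(0,0)$, $(1,1)$, $(1,0)$, $(0,1)$ in the four cases (even dim / $\disc=+1$), (even dim / $\disc=-1$), (odd dim / $\disc=+1$), (odd dim / $\disc=-1$). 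Together with the trivial parity of the identity, this produces exactly the four parity subgroups claimed in (1) and (2).

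For part (3), the subgroup $R_{m,0,1,\emptyset}^{0,-}$ is, up to conjugacy, the diagonal image of $E_{-}^{3}\cong Q_{8}$ inside $I(V)$, where $V:=V_{m,0,1}^{0,-}$ is orthogonal of dimension $4m$ with $\disc(V)=+1$. Pick generators $x_{1},x_{2}$ satisfying $x_{1}^{2}=x_{2}^{2}=-I$ and $x_{2}x_{1}=-x_{1}x_{2}$, so that $\{-I,x_{1},x_{2}\}$ generates $R_{m,0,1,\emptyset}^{0,-}$. Since the parity map $I(V)\to I(V)/\Omega(V)$ is a group homomorphism, it is enough to check that each of $-I$, $x_{1}$, $x_{2}$ has parity $(0,0)$. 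Lemma~\ref{L:parity0} disposes of $-I$ ($\dim V=4m$ is even and $\disc(V)=+1$). For each $x_{i}$, Lemma~\ref{L:C4} identifies $Z_{I(V)}(x_{i})$ with $\GL_{2m}(\varepsilon q)\rtimes\langle\tau\rangle$. Since $x_{i}$ is central in its own centralizer and squares to $-I$, under this identification it corresponds to a scalar matrix $\sqrt{-1}\cdot I_{2m}$ in $\GL_{2m}(\varepsilon q)$, with determinant $(\sqrt{-1})^{2m}=(-1)^{m}$. Lemma~\ref{L:parity1} then gives that the parity of $x_{i}$ equals $(t,t)$ with $(-1)^{t}=((-1)^{m})^{(q-\varepsilon)/2}$. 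Since $a=v_{2}(q-\varepsilon)\geq 2$ always, the exponent $(q-\varepsilon)/2$ is even, forcing $t=0$ regardless of the parity of $m$. Consequently every generator has parity $(0,0)$, so the parity of $R_{m,0,1,\emptyset}^{0,-}$ is $\{(0,0)\}$.

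The main technical point is realizing $x_{i}$ as a scalar in the centralizer $\GL_{2m}(\varepsilon q)$ and computing its determinant correctly; from there the finish rests on the elementary observation that $(q-\varepsilon)/2$ is even whenever $a\geq 2$.
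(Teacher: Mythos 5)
Your proof is correct and is essentially a fleshed-out version of the paper's very terse proof, which reads in its entirety: ``(1) and (2) are clear; (3) follows from Lemma~\ref{L:parity1}.'' You supply exactly the missing steps: Lemma~\ref{L:parity2}(2) to drop $\mathbf{c}$, Lemma~\ref{L:parity0} for the parity of $-I$ in the four $(\dim,\disc)$ cases, and for part~(3) the identification of $x_i$ with a scalar in its own centralizer $\GL_{2m}(\varepsilon q)$ followed by Lemma~\ref{L:parity1} and the observation that $(q-\varepsilon)/2$ is even because $a\ge 2$. Two small imprecisions, neither fatal: the hypothesis $|\mathbf{c}|\neq 1$ only excludes $\mathbf{c}=(1)$, whereas your phrasing suggests it rules out every single-transposition contribution (e.g.\ $c_1=1$ with $t\ge 2$, which is instead excluded by the $c_1\ne1$ convention in the definition of basic subgroups, via Lemma~\ref{L:gamma1}); and Lemma~\ref{L:C4} identifies $\{Y:Yx_iY^{-1}=\pm x_i\}$, not the centralizer itself, with $\GL_{2m}(\varepsilon q)\rtimes\langle\tau\rangle$ --- the centralizer is the index-two subgroup $\GL_{2m}(\varepsilon q)$, which is what your subsequent determinant computation actually uses, so the slip is harmless.
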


\begin{proof}
(1) and (2) are clear; (3) follows from Lemma \ref{L:parity1}. 
\end{proof}

\begin{lem}\label{L:parity4}
(1)The parity of $R_{1,\alpha,0,\emptyset}^{1}$ is equal to $\{(0,0),(1,1)\}$.  

(2)The parity of $R_{1,\alpha,0,\emptyset}^{2}$ is equal to $\{(0,0),(1,1)\}$.  

(3)The parity of $R_{1,\alpha,0,\emptyset}^{3}$ is equal to $\{(0,0)\}$.  

(4)When $\alpha\geq 1$, the parity of $R_{1,\alpha,0,\emptyset}^{4}$ is equal to $\{(0,0),(1,1)\}$; the parity of 
$R_{1,0,0,\emptyset}^{4}$ is equal to $\{(0,0),(1,0),(0,1),(1,1)\}$.  
\end{lem}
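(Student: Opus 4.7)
My approach will be to apply Lemma~\ref{L:parity1} to reduce the parity calculation to a determinant computation inside $Z_{I(V)}(X_0)\cong\GL_{n/2}(\varepsilon q)$. First, I would identify, for each case $R^i:=R_{1,\alpha,0,\emptyset}^i$ with $i\in\{1,2,3,4\}$, the order-$4$ element $X_0:=x^{2^{a+\alpha-2}}$ (satisfying $X_0^2=-I$, using $a\geq 2$) and observe that the cyclic subgroup $\langle x\rangle\subseteq R^i$ of order $2^{a+\alpha}$ sits in $Z_{I(V)}(X_0)$ as the Sylow $2$-subgroup of a Coxeter torus of $\GL_{n/2}(\varepsilon q)$. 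Crucially, the embedding of this torus into $\GL_{n/2}(\varepsilon q)$ is the natural $1$-fold inclusion for $R^1, R^2, R^4$ (where $\dim V=2^{\alpha+1}$) but the $2$-fold diagonal $\GL_{2^\alpha}(\varepsilon q)\hookrightarrow\GL_{2^{\alpha+1}}(\varepsilon q)$ for $R^3$ (where $\dim V=2^{\alpha+2}$). Lemma~\ref{L:parity1} then gives the parity of $x$ as $(t,t)$ with $(-1)^t=\det(x)^{(q-\varepsilon)/2}$.

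Next, I would compute the parity of $x$ in each case. In cases (1), (2), (4) the element $\det(x)$ turns out to generate the Sylow $2$-subgroup of the cyclic group $\GL_1(\varepsilon q)$, hence is a non-square, so $\det(x)^{(q-\varepsilon)/2}=-1$ and the parity of $x$ is $(1,1)$. In case (3) the doubled embedding gives $\det(x)=(\text{$1$-fold determinant})^2$, a square, so the parity of $x$ is $(0,0)$. Part (1) follows immediately since $R^1=\langle x\rangle$. For part (3), I would handle the second generator $y$ (satisfying $y^2=-I$) by arguing that all square roots of $-I$ in $I(V)$ are $I(V)$-conjugate (each equips $V$ with an $\mathbb{F}_q[X]/(X^2+1)$-module structure compatible with $f$, unique up to isometry over a finite field), so $y$ is conjugate to $X_0$; then Lemma~\ref{L:parity1} applied to $X_0\leftrightarrow\lambda I_{n/2}$ with $\lambda^2=-1$ yields the parity of $X_0$ as $(0,0)$ (the exponent $2^\alpha(q-\varepsilon)$ of $\lambda$ is divisible by $4$, using $a\geq 2$), completing case (3).

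Finally, for parts (2) and (4), I would analyze the involution $y$ ($y^2=I$) satisfying $yxy^{-1}=-x^{-1}$ (case (2)) or $yxy^{-1}=x^{-1}$ (case (4)). Over $\bar{\mathbb{F}}_q$, $y$ pairs the $x$-eigenspaces $V_\lambda\leftrightarrow V_{-\lambda^{-1}}$ or $V_\lambda\leftrightarrow V_{\lambda^{-1}}$; the self-pairing equations $\lambda^2=\mp 1$ have no primitive $2^{a+\alpha}$-th root of unity solutions because $a+\alpha\geq 3$ for $R^2$ (via $\alpha\geq 1$ and $a\geq 2$) and $a+\alpha\geq 2$ for $R^4$ (automatic). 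Hence $\dim V_+(y)=\dim V_-(y)=n/2=2^\alpha$, so $\det(y)=(-1)^{2^\alpha}$, which equals $1$ if $\alpha\geq 1$ and $-1$ if $\alpha=0$. This will give $y\in\SO(V)$ with parity in $\{(0,0),(1,1)\}$ for $\alpha\geq 1$, yielding parity $\{(0,0),(1,1)\}$ of $R^i$ when combined with $(1,1)$ from $x$; and $y\notin\SO(V)$ with parity in $\{(1,0),(0,1)\}$ for $\alpha=0$ in (4), yielding the full $(\mathbb{Z}/2)^2$. The main hurdle, in my view, will be the careful bookkeeping of the $1$-fold versus $2$-fold embedding distinction between cases (1), (2), (4) and case (3), which is precisely what produces the square-vs-non-square dichotomy of $\det(x)$ and hence the anomalous trivial parity in case (3).
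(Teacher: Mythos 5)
Your proposal is correct, and the strategy for part~(1) coincides with the paper's: embed $\langle x\rangle$ into $Z_{I(V)}(X_0)\cong\GL_{n/2}(\varepsilon q)$ and observe via Lemma~\ref{L:parity1} that the parity is governed by whether $\det(x)$ is a square in $\GL_1(\varepsilon q)$. Where you diverge is in how you handle the second generator $y$ in parts (2)--(4). The paper factorizes the extra generator as $Y=(-I_W)Y'$ with $W$ a suitably sized subspace of discriminant~$1$ and $Y'\in Z_{I(W)}(X_0)$, computing the two parities separately from Lemmas~\ref{L:parity0} and~\ref{L:parity1}; the whole dichotomy between $\alpha\ge 1$ and $\alpha=0$ in part~(4) is encoded in whether $\dim W=2^\alpha$ is even or odd. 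You instead work spectrally: the relation $yxy^{-1}=\pm x^{-1}$ pairs the $x$-eigenspaces over $\overline{\mathbb F}_q$, and the absence of self-pairings (because the eigenvalues of $x$ are primitive $2^{a+\alpha}$-th roots of unity and $a+\alpha\ge 2$ or $\ge 3$ as appropriate) forces $\dim V_+(y)=\dim V_-(y)=2^\alpha$, giving $\det(y)=(-1)^{2^\alpha}$ directly. For part~(3) you avoid the factorization entirely by noting $y$ is a square root of $-I$, hence $I(V)$-conjugate to $X_0$, whose parity is an easy central-element computation. Your route pays for its avoidance of explicit matrix factorizations by requiring the eigenvalue bookkeeping (all eigenvalues of $x$ in the Coxeter-torus embedding are primitive $2^{a+\alpha}$-th roots of unity, both in $\GL_{2^\alpha}$ and after restricting the $\mathbb F_q$-form to $I(V)$), which is true but needs to be said; and the conjugacy-uniqueness of square roots of $-I$ in even-dimensional discriminant-$1$ orthogonal groups, which follows from Lemma~\ref{L:C4} and the Lang--Steinberg argument in the paper. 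Both approaches deliver the same parities, and your bookkeeping of the $1$-fold versus $2$-fold embedding in case~(3) matches the paper's $R^1_{1,\alpha,0}\otimes I_2$ decomposition.
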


\begin{proof}
(1)$R_{1,\alpha,0,\emptyset}^{1}$ contains an element $X_{0}$ with $X_{0}^{2}=-I$ and it is a cyclic group 
generated by an element $X\in Z_{I(W)}(X_{0})\cong\GL_{2^{\alpha}}(\varepsilon q)$ with $\det X$ a generator of 
$\GL_{1}(\varepsilon q)_{2}$. By Lemma \ref{L:parity1}, the parity of $X$ is equal to $(1,1)$. So is the parity of 
$R_{1,\alpha,0,\emptyset}^{1}$. 

(2)When $i=2$, we have $\alpha\geq 1$, $\dim W=2^{\alpha+1}$, and $R_{1,\alpha,0,\emptyset}^{2}$ contains 
$R_{1,\alpha,0,\emptyset}^{1}$ as an index 2 normal subgroup. We could choose an element 
$Y\in R_{1,\alpha,0,\emptyset}^{2}-R_{1,\alpha,0,\emptyset}^{1}$ of the form $Y=(-I_{W})Y'$, where $W$ is a 
$2^{\alpha}$-dimensional orthogonal subspace with $\disc(W)=1$ and $Y'\in Z_{I(W)}(X_{0})$ with $Y'^2=-I$. Then, the 
parities of $-I_{W}$ and $Y'$ are both equal to $(0,0)$. Thus, the parity of $R_{1,\alpha,0,\emptyset}^{2}$ 
is equal to $\{(0,0),(1,1)\}$.  

(3)When $i=3$, we have $\alpha\geq 0$, $\dim W=2^{\alpha+2}$, and $R_{1,\alpha,0,\emptyset}^{3}$ contains 
$R_{1,\alpha,0,\emptyset}^{1}\otimes I_{2}$ as an index 2 normal subgroup. Note that the parity of 
$R_{1,\alpha,0,\emptyset}^{1}\otimes I_{2}$ is equal to $(0,0)$. We could choose an element 
$Y\in R_{1,\alpha,0,\emptyset}^{3}-R_{1,\alpha,0,\emptyset}^{1}\otimes I_{2}$ of the form $Y=(-I_{W})Y'$, where $W$ is a 
$2^{\alpha+1}$-dimensional orthogonal subspace with $\disc(W)=1$ and $Y'\in Z_{I(W)}(X_{0})$ with $Y'^2=-I$. Then, the 
parities of $-I_{W}$ and $Y'$ are both equal to $(0,0)$. Thus, the parity of $R_{1,\alpha,0,\emptyset}^{3}$ is equal to 
$\{(0,0)\}$.   

(4)When $i=4$, we have $\alpha\geq 0$, $\dim W=2^{\alpha+1}$, and $R_{1,\alpha,0,\emptyset}^{4}$ contains 
$R_{1,\alpha,0,\emptyset}^{1}$ as an index 2 normal subgroup. We could choose an element 
$Y\in R_{1,\alpha,0,\emptyset}^{2}-R_{1,\alpha,0,\emptyset}^{1}$ of the form $Y=(-I_{W})Y'$, where $W$ is a 
$2^{\alpha}$-dimensional orthogonal subspace with $\disc(W)=1$ and $Y'\in Z_{I(W)}(X_{0})$. If $\alpha\geq 1$, 
then the parities of $-I_{W}$ is equal to $(0,0)$; if $\alpha=0$, then the parities of $-I_{W}$ is equal to $(1,0)$. 
We could specify a choice of $Y'$ with parity $(0,0)$. Thus, the conclusion follows. 
\end{proof}

\begin{lem}\label{L:parity5}
When $m$ is even or $\gamma\geq 1$, the parity of $R_{m,\alpha,\gamma,\emptyset}^{i}$ ($i=1,2,3,4$) is equal to $\{(0,0)\}$. 

When $m$ is odd, the parity of $R_{m,\alpha,0,\emptyset}^{i}$ ($i=1,2,3,4$) is the same as that of $R_{1,\alpha,0,\emptyset}^{i}$.  
\end{lem}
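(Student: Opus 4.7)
The plan is to split the proof into two regimes: the $m$-fold diagonal part (handling the $m$ odd and $m$ even statements) and the $\ga\ge 1$ part (where $m=1$ already suffices). First I will record the following multiplicativity observation for the embedding~(\ref{equ:embedding-m-I}): if $g\in I(V^{i}_{\al,\ga})$ factors as a product $r_{u_{1}}\cdots r_{u_{\ell}}$ of reflections, then $\diag(g,\ldots,g)$ factors as $\prod_{j,k} r_{u_{j}^{(k)}}$, where $u_{j}^{(k)}$ denotes $u_{j}$ placed in the $k$-th summand of $V^{i}_{m,\al,\ga}$. Since $(u_{j}^{(k)},u_{j}^{(k)})=(u_{j},u_{j})$, both the determinant and the spinor norm of $\diag(g,\ldots,g)$ are the $m$-th powers of those of $g$, so the parity of the diagonal element equals $m$ times the parity of $g$ in $(\bbZ/2\bbZ)^{2}$. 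This single remark yields both the second assertion ($m$ odd preserves parity) and the $m$ even case of the first assertion ($m\cdot(t,t')=(0,0)$). What remains is to show that, for $m=1$ and $\ga\ge 1$, every element of $R^{i}_{\al,\ga}$ has parity $(0,0)$.

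For $i=1$ with $\ga\ge 1$, I will apply Lemma~\ref{L:parity1}. The order-$4$ scalar $X_{0}\in Z_{\al}\subset R^{1}_{\al,\ga}$ satisfies $X_{0}^{2}=-I$, so by Lemma~\ref{L:C4} the subgroup $R^{1}_{\al,\ga}$ lies in $Z_{I(V^{1}_{\al,\ga})}(X_{0})\cong\GL_{2^{\al+\ga}}(\eps q)$, and the parity of $Y\in R^{1}_{\al,\ga}$ is $(0,0)$ exactly when $\det(Y)$ is a square in $\mathbb{F}_{\eps q}^{\times}$. Writing $Y=z\cdot e$ with $z\in Z_{\al}$ scalar and $e\in E_{+}^{2\ga+1}$ a tensor product of $D_{8}$-elements, and tracking determinants through the restriction of scalars $\GL_{2^{\ga}}((\eps q)^{2^{\al}})\embed\GL_{2^{\al+\ga}}(\eps q)$, I will verify that $\det z=N(\mu)^{2^{\ga}}$ is automatically a square (as $\ga\ge 1$), that $\det e=1$ when $\ga\ge 2$ (even tensor exponents) or when $\al\ge 1$ (since $N(\pm 1)=1$), and that the leftover case $\al=0$, $\ga=1$ gives $\det e\in\{\pm 1\}$.

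For $i\in\{2,3,4\}$ with $\ga\ge 1$, the embedding is the tensor product $V^{i}_{\al,\ga}=W\otimes M$ with $W$ an even-dimensional orthogonal space of discriminant $1$ and $M$ orthogonal with $\dim M=2^{\ga}$ and $\disc M=1$. Each generator of $R^{i}_{\al,\ga}$ has the shape $p\otimes I_{M}$ or $I_{W}\otimes e$. A short induction on $\dim M$ gives the identity $r_{v}\otimes I_{M}=\prod_{s}r_{v\otimes f_{s}}$ over an orthogonal basis $\{f_{s}\}$ of $M$; combined with $(v\otimes f,v\otimes f)=(v,v)(f,f)$ and multiplicativity of determinant and spinor norm, this yields $\det(p\otimes I_{M})=\det(p)^{\dim M}=1$ and spinor norm $\mathrm{sn}(p)^{\dim M}\disc(M)^{\ell(p)}$, a square since $\dim M$ is even and $\disc M=1$. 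The symmetric computation with $W$ handles $I_{W}\otimes e$ using $\dim W$ even and $\disc W=1$, so every generator has parity $(0,0)$.

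The most delicate step will be the corner case $i=1$, $\al=0$, $\ga=1$, where the $D_{8}$-generators of $E_{+}^{3}\subset\GL_{2}(\eps q)$ have literal determinant $\pm 1$ in $\mathbb{F}_{\eps q}^{\times}$, with no norm map available to absorb the sign. The resolution is that the standing hypothesis $a=v_{2}(q-\eps)\ge 2$ forces $-1$ to be a square in $\mathbb{F}_{\eps q}^{\times}$, equivalently $(-1)^{(q-\eps)/2}=1$, which is precisely the identity needed by Lemma~\ref{L:parity1} to conclude parity $(0,0)$.
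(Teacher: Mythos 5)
Your proposal is correct and, for the $m$-fold diagonal step, coincides with the paper's argument: both observe that $\det$ and spinor norm of $\diag(g,\ldots,g)$ are $m$-th powers of those of $g$, so parity scales by $m\pmod 2$. Where you genuinely diverge is in the treatment of $\ga\ge 1$. The paper's proof is a uniform two-line argument: it factors $R^i_{m,\al,\ga,\emptyset}\sim R_1\otimes R_2$ on $V_1\otimes V_2$ with $R_1\sim R^i_{m,\al,0,\emptyset}$ and $R_2\sim R^{0,+}_{1,0,\ga,\emptyset}$, notes that $\dim V_1$, $\dim V_2$ are both even with $\disc V_1=\disc V_2=1$, and concludes parity $\{(0,0)\}$ for all $i$ at once, leaving the spinor-norm-of-tensor computation implicit. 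You instead keep the construction's own tensor decomposition $W\otimes M$ only for $i\in\{2,3,4\}$ (where it is literally how $R^i_{\al,\ga}$ is built) and make the reflection/spinor-norm bookkeeping explicit; for $i=1$ you bypass any tensor factorization and apply Lemma~\ref{L:parity1} directly, reducing to the statement that $\det$ of every element of $R^1_{\al,\ga}\subset Z_{I(V)}(X_0)\cong\GL_{2^{\al+\ga}}(\varepsilon q)$ is a square, which you verify by tracking $\det$ through restriction of scalars and, in the corner case $(\al,\ga)=(0,1)$, by invoking $4\mid q-\varepsilon$ so that $-1$ is a square in $\GL_1(\varepsilon q)$. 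Your route is more computational and fills in a step the paper takes for granted (that parity of $g\otimes I_M$ is killed by $\dim M$ even and $\disc M=1$); the cost is a case split in $i$ that the paper's single tensor factorization avoids.
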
 

\begin{proof}
There is an orthogonal space $V'$ and a basic subgroup $R'$ ($\sim R_{1,\alpha,\gamma,\emptyset}^{i}$) of $I(V')$ such that 
$V=V^{'\oplus m}$ and $R_{m,\alpha,\gamma,\emptyset}^{i}\sim R'\otimes I_{m}$. If $m$ is even, this implies that the parity of $R_{m,\alpha,\gamma,\emptyset}^{i}$ ($i=1,2,3,4$) is equal to $\{(0,0)\}$; if $m$ is odd, this implies that the parity of 
$R_{m,\alpha,0,\emptyset}^{i}$ ($i=1,2,3,4$) is equal to that of $R_{1,\alpha,0,\emptyset}^{i}$. 

Assume that $\gamma\geq 1$. Then, there are orthogonal spaces $V_{1}$ and $V_{2}$, and a basic subgroup 
$R_{1}\sim R_{m,\alpha,0,\emptyset}^{i}$ of $I(V_{1})$ (resp. a basic subgroup $R_{2}\sim R_{1,0,\gamma,\emptyset}^{0,+}$ of 
$I(V_{2})$) such that $V=V_{1}\otimes V_{2}$ and $R_{m,\alpha,\gamma,\emptyset}^{i}\sim R_{1}\otimes R_{2}$. Since $\dim V_{1},  
\dim V_{2}$ are both even and $\disc(V_{1})=\disc(V_{2})=1$, it follows that the parity of $R_{m,\alpha,\gamma,\emptyset}^{i}$ 
($i=1,2,3,4$) is equal to $\{(0,0)\}$. 
\end{proof}

\begin{lem}\label{L:parity6}
When $\gamma\geq 2$, the parity of $R_{m,\alpha,\gamma,\emptyset}^{0,\pm{}}$ ($\alpha=0,1$) is equal to $\{(0,0)\}$.  
\end{lem}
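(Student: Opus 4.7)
The plan is to mimic the proof of Lemma \ref{L:parity5} by exhibiting a tensor decomposition of the basic subgroup whose two factors both live in even-dimensional orthogonal spaces of discriminant $1$; once we have that, the two factors inject with trivial parity, and so the whole subgroup will have parity $\{(0,0)\}$.

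More precisely, since $\gamma\geq 2$, we can split the extraspecial group as a central product
\[
E_{\eta}^{2\gamma+1}\cong E_{\eta}^{2\gamma-1}\circ_{2}E_{+}^{3},
\]
where $E_{+}^{3}\cong D_{8}$. Under the identifications used in \S\ref{SS:classical2}, this central product is realized via tensor product of underlying spaces. Thus there exist orthogonal spaces $V_{1}$, $V_{2}$ over $\bbF_{q}$ and basic subgroups $R_{1}\sim R^{0,\pm}_{m,\alpha,\gamma-1,\emptyset}\subset I(V_{1})$ and $R_{2}\sim R^{0,+}_{1,0,1,\emptyset}\subset I(V_{2})$ such that
\[
V^{0}_{m,\alpha,\gamma}\;=\;V_{1}\otimes V_{2},\qquad R^{0,\pm}_{m,\alpha,\gamma,\emptyset}\;\sim\;R_{1}\otimes R_{2}.
\]
By the dimension formulas recalled before Lemma \ref{L:parity5}, both $\dim V_{1}$ and $\dim V_{2}$ are even (since $\gamma-1\geq 1$ and $\dim V^{0}_{0,1}=2$), and $\disc(V_{1})=\disc(V_{2})=1$.

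Now every element of $R_{1}\otimes R_{2}$ is a product of elements of the form $g_{1}\otimes I_{V_{2}}$ and $I_{V_{1}}\otimes g_{2}$. Since $\dim V_{2}$ is even, the image of $I(V_{1})$ under $g_{1}\mapsto g_{1}\otimes I_{V_{2}}$ has parity $\{(0,0)\}$ (its elements are $\dim V_{2}$-fold ``diagonal copies'' of elements of $I(V_{1})$, and Lemma \ref{L:parity0} together with the additive behaviour of parity under even multiplicity annihilates the contribution). Symmetrically for $I(V_{1})\otimes I_{V_{2}}\hookrightarrow I(V)$ using that $\dim V_{1}$ is even. Hence $R^{0,\pm}_{m,\alpha,\gamma,\emptyset}$ has parity $\{(0,0)\}$.

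The main point I want to check is the first step, namely that the described tensor decomposition of the underlying orthogonal space respects the construction of $R^{0,\pm}_{m,\alpha,\gamma}$ given in \S\ref{SS:classical2}; this amounts to verifying that the embedding $E_{\eta}^{2\gamma+1}\hookrightarrow I(V^{0}_{0,\gamma})$ in the definition of $R^{0,\pm}_{0,\gamma}$ is compatible, up to $I(V^{0}_{0,\gamma})$-conjugacy, with the tensor product of the embeddings $E_{\eta}^{2\gamma-1}\hookrightarrow I(V_{1})$ and $E_{+}^{3}\hookrightarrow I(V_{2})$ used there. This is implicit in the inductive construction by tensor products given before Theorem \ref{subsec:radical-subgp-class}, so no new conjugacy computation is required; the only mild case distinction is between $\eta=+$ (where $\dim V^{0}_{0,\gamma}=2^{\gamma}$) and $\eta=-$ (where $\dim V^{0}_{0,\gamma}=2^{\gamma+1}$), but in both cases $\gamma\geq 2$ ensures the factor $V_{1}=V^{0}_{m,\alpha,\gamma-1}$ is even-dimensional with discriminant $1$, so the argument goes through uniformly.
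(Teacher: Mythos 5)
Your proof is correct and takes essentially the same approach as the paper. The paper's proof also exhibits the tensor decomposition $V=V_{1}\otimes V_{2}$ with $\dim V_{2}=2$, $R_{1}\sim R^{0,\pm}_{m,\alpha,\gamma-1,\emptyset}\subset I(V_{1})$, $R_{2}\sim R^{0,+}_{1,0,1,\emptyset}\subset I(V_{2})$, and concludes from both $\dim V_{1}$, $\dim V_{2}$ being even with discriminant $1$; your extra commentary on the central-product split $E_{\eta}^{2\gamma+1}\cong E_{\eta}^{2\gamma-1}\circ_{2}E_{+}^{3}$ and on why the tensor factors inject with trivial parity just makes explicit what the paper leaves to the reader.
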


\begin{proof}
There are orthogonal spaces $V_{1}$ and $V_{2}$ with $\dim V_{2}=2$ and 
$V=V_{1}\otimes V_{2}$, and a basic subgroup $R_{1}\sim R_{m,\alpha,\gamma-1,\emptyset}^{0,\pm{}}$ of $I(V_{1})$ 
(resp. a basic subgroup $R_{2}\sim R_{1,0,1,\emptyset}^{0,+}$ of $I(V_{2})$) such that 
$R_{m,\alpha,\gamma,\emptyset}^{0,\pm{}}\sim R_{1}\otimes R_{2}$. Since $\dim V_{1},\dim V_{2}$ are both even and 
$\disc(V_{1})=\disc(V_{2})=1$, it follows that the parity of $R_{m,\alpha,\gamma,\emptyset}^{0,\pm{}}$ ($\alpha=0,1$) 
is equal to $\{(0,0)\}$.  
\end{proof}

Define $I_{0}(W)=\{X\in I(W):\det X=1\}$. 

\begin{cor}\label{C:parity6}
(1)Assume that $\gamma\geq 0$. Then, $R_{m,\alpha,\gamma,\mathbf{c}}^{0,+}\not\subset I_{0}(W)$ happens only when $m$ is odd 
and $\gamma\leq 1$.  

(2)Assume that $\gamma\geq 1$. We always have $R_{m,0,\gamma,\mathbf{c}}^{0,-}\subset\Omega(W)$. 

(3)We always have $R_{m,\alpha,\gamma,\mathbf{c}}^{i}\subset I_{0}(W)$ ($i=1,2$). 

(4)Assume that $\gamma\geq 0$. We always have $R_{m,\alpha,\gamma,\mathbf{c}}^{3}\subset\Omega(W)$.  

(5)$R_{m,\alpha,\gamma,\mathbf{c}}^{4}\not\subset I_{0}(W)$ ($i=1,2,4$) happens only when $m$ is odd and $\alpha=\gamma=0$.  
\end{cor}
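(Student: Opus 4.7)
The plan is straightforward once we translate the membership conditions into conditions on parity: an element $X\in\rO(W)$ lies in $I_{0}(W)$ if and only if its parity lies in $\{(0,0),(1,1)\}$, and lies in $\Omega(W)$ if and only if its parity is $(0,0)$. (For the $I_0$ direction this uses only that reflections have determinant $-1$; for $\Omega$ this uses the spinor norm together with Lemma~\ref{L:parity0}.) Thus each part of the corollary reduces to reading off the parities of the relevant basic subgroups, all of which are already computed in Lemmas~\ref{L:parity2}--\ref{L:parity6}.

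For every item I first apply Lemma~\ref{L:parity2} to reduce to $\mathbf{c}=\emptyset$, since wreathing with $A_{\mathbf{c}}$ preserves parity for all five families considered. Next, the ``generic'' cases collapse immediately: if $\gamma\ge 2$ (for $i=0$) then Lemma~\ref{L:parity6} gives parity $\{(0,0)\}$; if $\gamma\ge 1$ or $m$ is even (for $i\in\{1,2,3,4\}$) then Lemma~\ref{L:parity5} again gives $\{(0,0)\}$. In both situations the subgroup lies in $\Omega(W)\subseteq I_{0}(W)$, disposing of assertions (1)--(5) outside the residual boundary cases.

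The remaining analysis treats, for each type, the small-$\gamma$ regime with $m$ odd. For (1), since the convention excludes $\gamma=1$ when $(i,\eta)=(0,+)$, only $\gamma=0$ survives, and Lemma~\ref{L:parity3}(2) gives parity $\{(0,0),(1,0)\}$ or $\{(0,0),(0,1)\}$, both missing from $I_{0}(W)$. For (2) the remaining case $\gamma=1$ is handled by Lemma~\ref{L:parity3}(3), which yields parity $\{(0,0)\}\subset\Omega(W)$. For (3)--(5), when $m$ is odd and $\gamma=0$, Lemma~\ref{L:parity5} reduces to $R^{i}_{1,\alpha,0,\emptyset}$, whose parities are tabulated in Lemma~\ref{L:parity4}: types $i=1,2$ give $\{(0,0),(1,1)\}\subset I_{0}(W)$; type $i=3$ gives $\{(0,0)\}\subset\Omega(W)$; and type $i=4$ gives $\{(0,0),(1,1)\}\subset I_{0}(W)$ when $\alpha\ge 1$ but the full parity group $\{(0,0),(1,0),(0,1),(1,1)\}$ when $\alpha=0$, which fails to be contained in $I_{0}(W)$.

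There is no real obstacle: the work is essentially bookkeeping. The only point requiring mild care is keeping track of the convention that $\gamma\ne 1$ for $(i,\eta)=(0,+)$ in part~(1), and matching the statement of part~(5) (written for $i\in\{1,2,4\}$) with the fact that for $i=1,2$ the conclusion already follows from (3), so the boundary case $m$ odd, $\alpha=\gamma=0$ only produces a genuine failure of $I_{0}(W)$-containment in type $i=4$.
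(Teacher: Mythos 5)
Your proposal is correct and follows exactly the route the paper intends: the paper's own proof of this corollary is the one-line ``These follow from Lemmas \ref{L:parity2}--\ref{L:parity6}'', and your argument fills in precisely what that one-liner leaves implicit, namely that membership in $I_0(W)$ corresponds to parity in $\{(0,0),(1,1)\}$ (by a determinant count of reflections) and membership in $\Omega(W)$ corresponds to parity $(0,0)$ (by definition of the parity labelling of the cosets $r_v^t r_{v'}^{t'}$ of $\Omega(W)$). The reduction to $\mathbf c=\emptyset$ via Lemma \ref{L:parity2} and the case split on $\gamma$ and $m$ are all as in the source.

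There is one small hole in the write-up, though it is trivially patched. Your ``generic'' dispatch handles $m$ even only for $i\in\{1,2,3,4\}$ (via Lemma \ref{L:parity5}), and your ``remaining analysis'' then explicitly restricts to $m$ odd. This leaves the case $i=0$, $\gamma=0$, $m$ even unaddressed for part (1), and it cannot be handled by Lemma \ref{L:parity6} (which needs $\gamma\ge 2$). You need to invoke Lemma \ref{L:parity3}(1) here: for $m$ even, $|\mathbf c|\ne 1$, the parity of $R^{0,+}_{m,0,0,\mathbf c}$ is $\{(0,0)\}$ and the parity of $R^{0,+}_{m,1,0,\mathbf c}$ is $\{(0,0),(1,1)\}$, both contained in $I_0(W)$, which is what the ``only when $m$ odd'' direction of (1) requires. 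With that citation added, the argument is complete and identical in substance to the paper's.
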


\begin{proof}
These follow from Lemmas \ref{L:parity2}-\ref{L:parity6}. 
\end{proof}

Proofs in Lemmas \ref{L:parity2}-\ref{L:parity6} actually give an algorithm to calculate the parity of a general element in 
any basic subgroup $R$ of an orthogonal group $I(W)$. With that, one can calculate $R\cap I_{0}(W)$ and $R\cap\Omega(W)$ and 
specify which elements lie outside $I_{0}(W)$ (or $\Omega(W)$). Doing so, we can classify radical 2-subgroups of $\Spin(W)$.

\subsection{Principal weights}\label{SS:prin-wei}

\begin{prop}\label{P:Ta1}  
	Basic subgroups giving principal weights are as in the following Table~\ref{Ta1}.    
	\end{prop}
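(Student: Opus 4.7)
The plan is to apply the criterion of Lemma \ref{lem:B_0-wei}: a basic subgroup $R$ of $G$ gives a principal $2$-weight if and only if (a) $Z(R)\in\Syl_{2}(Z_{G}(R))$ and (b) $N_{G}(R)/RZ_{G}(R)$ admits an irreducible character of $2$-defect zero. Combined with Remark \ref{rmk:voe}, both conditions descend to each tensor/direct factor, so it suffices to test a single basic subgroup $R^{i}_{m,\al,\ga,\bc}$ (or $R^{1}_{m,\al,\ga,\bc}$, $R^{2}_{m,0,\ga,\bc}$ in the linear/unitary case) in its ambient classical group. Thus I would first recast the statement as a case-by-case verification over the families produced by Theorems \ref{SS:2-rad-GL}, \ref{SS:p-rad-GL}, \ref{subsec:radical-subgp-class}, \ref{subsec:odd-radical-subgp-class}, and then intersect the answer with the weight-subgroup lists already obtained in Propositions \ref{prop:wei-glgu}, \ref{prop:weight-subgp-class}, \ref{prop:weight-subgp-class-sym}.

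The first step is the Sylow condition (a). Using the explicit centralizer descriptions worked out in the proofs of Propositions \ref{P:GL3}--\ref{P:GL5} and \ref{P:C4}--\ref{P:C6} (and summarized for classical groups in the proof of Proposition \ref{prop:weight-subgp-class-sym}), $Z_{G}(R)$ is always a direct product of a smaller classical or general linear group with a central torus, so that (a) forces the small classical factor to have a Sylow $2$-subgroup equal to its center. By Corollary \ref{cor:2-defect-zero-O} and its analogues (together with Lemma \ref{prop:2-defect-zero-SO} and Remark \ref{rmk:2-defect-zero-Sp}) this pins down the admissible $(m,\al,\ga,\eta,i)$ drastically: in particular $m$ must be odd when $Z_{G}(R)$ contains a $\GL$-factor with the canonical central torus, and the orthogonal-type invariants must be $+$.

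The second step is condition (b). The quotient $N_{G}(R)/RZ_{G}(R)$ decomposes, for each family, as a wreath product whose base is a finite classical group over $\mathbb{F}_{2}$ (a group of type $\rO^{\pm}_{2\ga}(2)$, $\SO^{\pm}_{2\ga}(2)$, $\Sp_{2\ga}(2)$, or $\GL_{\ga}(2)$ depending on the family $i$ and the ambient type) and whose top is the iterated symmetric-group action $A_{\bc}\le\fS_{2^{|\bc|}}$ together with an extra $\fS_{u}$ coming from the multiplicity $u$ of the given basic subgroup inside the full $R$. The existence of a $2$-defect zero character is then equivalent to (i) the base having such a character, which is a known question for groups of Lie type over $\mathbb{F}_{2}$ and restricts $\ga$ (respectively rules out the degenerate parameters listed in Lemma \ref{L:gamma1} and in Remark \ref{rmk:wei-sub-orth}), combined with (ii) $\fS_{u}$ having a $2$-defect zero irreducible, i.e.\ $u=\binom{k+1}{2}$ for some $k\ge 0$ (cf.\ Remark \ref{rmk:wei-sub-glgu}(2) and Remark \ref{rmk:wei-sub-orth}(2)). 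Cross-multiplying the constraints from (a) and (b) and matching them against the already-known weight-subgroup lists produces exactly the rows of Table \ref{Ta1}.

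The main obstacle is bookkeeping: one has to eliminate the small exceptional parameters where $N_{G}(R)/RZ_{G}(R)$ collapses to a $2$-group (e.g.\ the case $\gamma=1$, $m$ odd, $a=2$ of Lemma \ref{L:gamma1}, and the $\delta=1$, $\gamma=1$ subcase in the proof of Proposition \ref{P:C4}), and to verify that in every remaining case the base classical group over $\mathbb{F}_{2}$ really does carry a $2$-defect zero character compatible with the Dade--Glauberman--Nagao-type reduction used implicitly when passing from $G$ to the centralizer of a central subgroup. The $\GL/\GU$ portion of the table is then read off from Proposition \ref{prop:wei-glgu} and Remark \ref{rmk:wei-sub-glgu}, while the orthogonal and symplectic portions are read off from Propositions \ref{prop:weight-subgp-class}, \ref{prop:weight-subgp-class-sym} after imposing the principal-block restriction via Lemma \ref{lem:B_0-wei} and Brauer's third main theorem.
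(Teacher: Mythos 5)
Your route is considerably more elaborate than the paper's. The paper's proof of Proposition~\ref{P:Ta1} is essentially two sentences: the list of basic subgroups is obtained directly by combining Proposition~\ref{prop:weight-subgp-class} with Remark~\ref{rmk:wei-sub-orth} (which already carry out the weight-subgroup classification and the principal-block restriction, respectively), and the parity column of Table~\ref{Ta1} is then filled in by applying Lemmas~\ref{L:parity2}--\ref{L:parity6}. Your plan of running through Lemma~\ref{lem:B_0-wei} case-by-case over all basic-subgroup families, and then intersecting with Propositions~\ref{prop:wei-glgu}, \ref{prop:weight-subgp-class}, \ref{prop:weight-subgp-class-sym}, amounts to re-deriving results that the paper already has in hand; the intersection at the end essentially collapses your argument back to the paper's two-sentence one. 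Also note that Table~\ref{Ta1} concerns orthogonal groups only, so the $\GL/\GU$ and symplectic portions you invoke have no counterpart in the statement being proved.

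There is, however, a genuine gap: you never address the parity column. Table~\ref{Ta1} lists, for each basic subgroup, which subgroup of $(Z_2)^2\cong\rO(V)/\Omega(V)$ it meets, and this parity information is essential for what the table is used for later (namely, intersecting with $\Omega(V)$ to count principal weights of the spin group). The criterion of Lemma~\ref{lem:B_0-wei} says nothing about parities, and nothing in your proposal produces them; they must be computed separately via Lemmas~\ref{L:parity0}--\ref{L:parity6} (the relevant ones being Lemmas~\ref{L:parity2}--\ref{L:parity6}). Your argument for \emph{which} basic subgroups appear may in spirit be salvaged --- it is consistent with Proposition~\ref{prop:weight-subgp-class} and Remark~\ref{rmk:wei-sub-orth}, though you should be careful that $N_G(R)/RZ_G(R)$ does not factor over the basic pieces when a basic subgroup occurs with multiplicity $>1$, which is precisely where the triangular-number constraint on $u_i$ enters and which is not a direct consequence of applying Lemma~\ref{lem:B_0-wei} to each factor --- but as written the proof is incomplete because half the content of Table~\ref{Ta1} is left unverified.
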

	
	\begin{proof}
	By Proposition \ref{prop:weight-subgp-class} and Remark \ref{rmk:wei-sub-orth}, we get the list of basic subgroups. 
	By Lemmas \ref{L:parity2}--\ref{L:parity6}, we calculate their parities. 
	\end{proof}
	
	In Tables~\ref{Ta1}, we list those basic subgroups $R$ from Proposition~\ref{P:Ta1} and their parities in the 4th and 5th columns respectively, list the information of the underlying spaces $V$ in the 2nd and 3rd columns respectively, and list the number of such basic subgroups in the last column.
	In addition, we denote by 0, $(1,0)$, $(0,1)$, $(Z_2)^2$ the groups $\{(0,0)\}$, $\{(0,0),(1,0)\}$, $\{(0,0),(0,1)\}$, $\{(0,0),(1,0),(0,1),(1,1)\}$, respectively.  

	\begin{table}[!htb]
		\centering
		\renewcommand\arraystretch{1.3}
		\begin{tabular}{|c|c|c|c|c|c|}  
			\hline  
			& $\dim(V)$ & $\disc(V)$ & $R$ &  parity &  number  \\
		\hline		 
		1 &  $2^n$ (with $n\ge 2$)  & $+$ & $R^4_{1,0,\ga,\bc}$ with $\ga+|\bc|=n-1$ and $\ga\ge 1$  & 0  &   $2^{n-2}$ \\
		\hline	 	
		2 & $2^n$ (with $n\ge 2$) & $+$ & $R^0_{1,0,0,\bc}$ with $|\bc|=n$ and $c_1\ge 2$  &  $(1,0)$  &  $2^{n-2}$  \\
		\hline	
		3 & $2^n$ (with $n\ge 2$) & $+$ & $R^0_{1,1,0,\bc}$ with $|\bc|=n$ and $c_1\ge 2$ &  $(0,1)$ &  $2^{n-2}$  \\
		\hline	 
		4 & $2^n$ (with $n\ge 2$) & $+$ & $R^4_{1,0,0,\bc}$ with $|\bc|=n-1$ & $(Z_2)^2$  &   $2^{n-2}$ \\
		\hline
		5 & 2 & $+$ & $R_{1,0,0,\emptyset}^{4}$ &  $(Z_2)^2$ &  1  \\
		\hline
		6 & 1 & $+$ & $\{\pm 1_V\}$ & $(1,0)$  &   1 \\
		\hline
		7 & 1 & $-$ & $\{\pm 1_V\}$ & $(0,1)$  &   1 \\
		\hline
		\end{tabular} 
	\caption{Basic subgroups for principal weight subgroups of orthogonal groups}\label{Ta1}
	\end{table}
	
Suppose that $q$ is some power of an odd prime $r$ and $F_0$ is the standard Frobenius endomorphism which raises the entries 
of a matrix to the $r$-th power. 	

\begin{prop}\label{prop:act-fie-auto}
Let $G=\rO_{2^{|\bc|},+}(q)$, and $R=R^{0}_{1,0,0,\bc}$ or $R^{0}_{1,1,0,\bc}$, or $G=\rO_{2^{1+\ga+|\bc|},+}(q)$ and 
$R=R^{4}_{1,0,\ga,\bc}$. Then after a change of $R$ by a suitable $G$-conjugate, the following statements hold.
\begin{enumerate}[\rm(1)]
\item $R$ is $\langle F_0\rangle$-stable.
\item One of the following holds.
\begin{itemize}
\item[(a)] The parity of $N_G(R)$ is equal to $\{(0,0)\}$.
\item[(b)] The group $\langle F_0\rangle$ acts trivially on $N_G(R)/RZ_G(R)$.
\end{itemize}	
\end{enumerate}
\end{prop}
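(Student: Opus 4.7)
My plan is a case analysis over the four types of basic subgroups in the statement. For each case I shall (a) select an explicit $G$-conjugate of $R$, and then (b) verify claim (1) and one of the two alternatives in claim (2). The unifying guideline is to realize $R$ using matrix entries in the smallest feasible subfield of $\mathbb{F}_q$, motivated by the following elementary observation: if $R\subseteq G^{F_0}$, then for any $g\in N_G(R)$ and $r\in R$ we have
\[
F_0(g)\,r\,F_0(g)^{-1}=F_0\bigl(g\,F_0^{-1}(r)\,g^{-1}\bigr)=F_0(grg^{-1})=grg^{-1},
\]
so $g^{-1}F_0(g)\in Z_G(R)$, and hence $F_0$ acts trivially on $N_G(R)/RZ_G(R)$, giving (2b).

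For $R=R^{0}_{1,0,0,\bc}$ and $R=R^{0}_{1,1,0,\bc}$, I shall realize $G=\rO_{2^{|\bc|},+}(q)$ using the diagonal forms $\sum_i x_i^2$ and $\delta_0\sum_i x_i^2$ respectively; both have discriminant $+$ because $|\bc|\ge 2$ in Table~\ref{Ta1}. With these forms, $R$ can be taken as a group of signed permutation matrices with entries in $\{0,\pm1\}$, so $R\subseteq G^{F_0}$; this gives (1) and, via the observation above, also (2b). For $R=R^{4}_{1,0,\ga,\bc}$ I will build $R$ via the central-product and tensor-product construction of \S\ref{SS:classical2}, choosing $b,b'\in\mathbb{F}_r$ by Lemma~\ref{lem:square-sum} and using the standard $\{0,\pm 1\}$-entry models of the extraspecial factor $E_{+}^{2\ga+1}$ and of the permutation factor $A_{\bc}$. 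The only part that may require entries outside $\mathbb{F}_r$ is the cyclic generator of order $2^a$ inside the dihedral or quaternion piece $P$, but the cyclic subgroup it generates is $F_0$-stable (since $F_0$ permutes primitive $2^a$-th roots of unity), so all of $R$ is $F_0$-stable, giving (1). For claim (2) I split on $\ga$: when $\ga\ge 1$, Lemmas~\ref{L:parity2} and~\ref{L:parity5} give parity $\{(0,0)\}$ for $R$, and an inspection of the normalizer structure from Proposition~\ref{P:C5} together with Lemma~\ref{L:parity1} shows that lifts of $N_G(R)/RZ_G(R)$ may be chosen inside $\Omega(V)$, yielding (2a); when $\ga=0$ and $|\bc|=0$, one observes that the Sylow $2$-subgroup $R$ of the dihedral or quaternion group $I(W)$ (with $\dim W=2$) is self-normalizing as soon as the odd part of $q\mp 1$ exceeds $1$ (and equal to $G$ otherwise), so $N_G(R)/RZ_G(R)=1$ and (2b) holds trivially.

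The main obstacle is the remaining case $R=R^{4}_{1,0,0,\bc}$ with $|\bc|\ge 1$, where the parity is $(Z_2)^2$ so (2a) fails, and the cyclic generator of order $2^a$ in $P$ may force matrix entries outside $\mathbb{F}_r$ when $a\ge 3$, blocking a naive pointwise-fixed reduction. Here I shall argue that the $F_0$-action on the cyclic subgroup $\langle x\rangle\le R$ (which sends $x\mapsto x^r$) is realized by an inner automorphism of $R$ coming from the wreath-product factor $A_{\bc}$, using its permutation cycle structure to absorb the Galois twist on $\langle x\rangle$; this lift then sits inside $N_G(R)$, so that the induced action of $F_0$ on $N_G(R)/RZ_G(R)$ is inner-conjugation by an element of $R$ and hence trivial on the quotient. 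The technical heart of the verification is an explicit description of the image of $N_G(R)$ in $\Aut(R)$ and of the quotient $N_G(R)/RZ_G(R)$ from Proposition~\ref{P:C5}, combined with the fact that the constraints in Table~\ref{Ta1} force $|\bc|\ge 2$ in all remaining subcases where the obstacle really bites.
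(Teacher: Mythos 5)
Your unifying observation---that $R\subseteq G^{F_0}$ forces $g^{-1}F_0(g)\in Z_G(R)$ for all $g\in N_G(R)$, hence gives (2.b)---is correct and a nice simplification for the cases where $R$ can be realized over $\mathbb{F}_r$. Your treatment of $R^{0}_{1,0,0,\bc}$ and $R^{0}_{1,1,0,\bc}$ as signed permutation matrices (possibly with respect to the form $\delta_0\sum x_i^2$) is fine, and your observation that $N_G(R)=R$ when $R=R^4_{1,0,0,\emptyset}$ is a Sylow $2$-subgroup of the dihedral group $\rO_{2,+}(q)$ is also correct. But there are two genuine problems with the remaining cases of $R^4$.

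First, in the case you call ``the main obstacle'' ($R=R^4_{1,0,0,\bc}$ with $|\bc|\geq 1$, where parity is $(Z_2)^2$), your proposed mechanism does not work. You want to realize the Galois twist $x\mapsto x^r$ on the cyclic generator $x$ of the base dihedral factor by conjugation with an element of the wreath factor $A_\bc$. But $A_\bc$ acts by permuting the $2^{|\bc|}$ copies of the base factor, and every element of the diagonally-embedded cyclic group $\langle\diag(x,\ldots,x)\rangle$ is sent under such a permutation to itself, never to $\diag(x^r,\ldots,x^r)$. Within a single base factor $R^4_{1,0,0,\emptyset}\cong D_{2^{a+1}}$, the only automorphisms of $\langle x\rangle$ realized by conjugation in $R$ are $x\mapsto x^{\pm1}$, whereas $r\not\equiv\pm1\pmod{2^a}$ in general. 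So the Galois twist is not inner in $R$, and this step fails. The correct mechanism is much simpler and different: reduce to $\bc=\emptyset$ exactly as you did for (1) (permutation matrices are $F_0$-fixed), and use that at $\bc=\emptyset$ one has $N_{G_0}(R_0)=R_0$; in the wreath product the only contribution to $N_G(R)/RZ_G(R)$ then comes from the normalizer of $A_\bc$ in $\fS_{2^{|\bc|}}$, which consists of permutation matrices and so is $F_0$-fixed. That is the content of the paper's argument and is what you should substitute for the incorrect absorption claim.

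Second, for $R^4_{1,0,\ga,\bc}$ with $\ga\geq 1$, ``an inspection of the normalizer structure from Proposition~\ref{P:C5}'' is not a proof. Proposition~\ref{P:C5} does not supply the parity of $N_G(R)$, and the computation actually required is nontrivial. The paper's argument takes $\hat R=Z_R([R,R])$, writes $N_G(R)=N'R$ with $N'=Z_{N_G(R)}(Z(\hat R))$, notes that $Z(\hat R)$ contains an element squaring to $-I$ so Lemma~\ref{L:parity1} applies, and then reads off the parity from the determinant of the normalizer of $R_{1,0,\ga}$ in $\GL_{2^\ga}(\varepsilon q)$ as computed in \cite[Prop.~3.12]{FLZ21}; the case $a=2$ needs a separate argument via $R^4_{1,0,\ga}=R^0_{1,0,\ga+1}$, using the explicit generators from \cite[(1H)]{An93a} when $\ga=1$, and the parity computation via $\Omega_{2\ga+2,+}(2)$ and \cite[Prop.~4.6.8]{KL90} when $\ga\geq 2$. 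None of these external inputs appear in your sketch, and without them the claim ``lifts may be chosen inside $\Omega(V)$'' is unsubstantiated.
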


\begin{proof}
(1) Note that permutation matrices are $\langle F_0\rangle$-invariant. This statement will be proved once we prove it for the situation $\bc=\emptyset$.
If $i=0$, then $R$ is the canter of $G$ and thus it is $\langle F_0\rangle$-stable. Now let $i=4$. If $\ga=0$, then $R$ is a Sylow 2-subgroup of $G$. We may take $R$ to be the group generated by the image of the Sylow 2-subgroup of (\ref{equ:embedding-cox-I}) (where we let $\al=\ga=0$) and the permutation matrix of degree 2 and order 2. For $\ga>0$, one shows that $R$ is $\langle F_0\rangle$-stable by construction.

(2) Similar as above, it is sufficient to show this statement for the situation $\bc=\emptyset$.  The case $i=0$ is special by construction and we let $i=4$. If $\ga=0$, then $N_G(R)=R$, and so (2.b) holds. Now suppose that $\ga\ge 1$. Then the parity of $R$ is equal to $\{(0,0)\}$. First we assume that $a\ge 3$ and let $\hat R=Z_R([R,R])$. Then $N_G(R)=N'R$ with $N'=Z_{N_G(R)}(Z(\hat R))$. Note that $Z(\hat R)$ contains an element whose square is $-I$ and thus the parity of $N_G(R)$ can be computed by Lemma~\ref{L:parity1}. Now $Z_R(Z(\hat R))$ is a central product of a cyclic group of $2^a$ and a extraspecial 2-group of order $2^{2\ga+1}$. In addition, the group $Z_R(Z(\hat R))$, as a subgroup of $\GL_{2^\ga}(\varepsilon q)$, is $R_{1,0,\ga}$. The determinant of the normalizer of $\tR_{1,0,\ga}$ in $\GL_{2^\ga}(\varepsilon q)$ is determined in \cite[Prop.~3.12]{FLZ21}. From this, we see that the parity of $N_G(R)$ is equal to $\{(0,0)\}$. Finally we let $a=2$ in which situation $R^{4}_{1,0,\ga}=R^{0}_{1,0,\ga+1}$. If $\ga=1$, then a set of generators of $N_G(R)$ can be found in the proof of \cite[(1H)]{An93a}, from which we can check that $\langle F_0\rangle$ acts trivially on $N_G(R)/R$. If $\ga\ge 2$, then $N_G(R)=N'R$ with $N'/R\cong \Omega_{2\ga+2,+}(2)$ and thus by \cite[Prop.~4.6.8]{KL90}, the parity of $N_G(R)$ is equal to $\{(0,0)\}$. This completes the proof.
\end{proof}

\begin{cor}\label{cor:act-field-cla}
Let $B$ be the principal 2-block of one of the groups $\rO_{2n+1}(q)$, $\rO_{2n,\pm}(q)$, $\SO_{2n+1}(q)$, $\SO_{2n,\pm}(q)$, $\Omega_{2n+1}(q)$, $\Omega_{2n,\pm}(q)$ (with odd $q$). Then every element of $\Alp(B)$ is $\langle F_0\rangle$-invariant.
\end{cor}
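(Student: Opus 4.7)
My plan is to prove the statement first for the full orthogonal groups $\rO(V)$ (including the odd-dimensional case), and then deduce the $\SO(V)$ and $\Omega(V)$ cases by a covering argument. A key preliminary observation is that $\langle F_0\rangle$ acts trivially on the quotients $\rO(V)/\SO(V)$ and $\rO(V)/\Omega(V)$: for any $v\in V$, the equality $(F_0(v),F_0(v))^{(q-1)/2}=((v,v)^r)^{(q-1)/2}=(v,v)^{(q-1)/2}$ (since the values lie in $\{\pm 1\}\subseteq\bbF_r$) shows that $F_0$ preserves the discriminant of each vector and hence the parity class of each reflection. Combined with Remark~\ref{rmk:extension} and Corollary~\ref{cor:wei-cov-solquo} (applied to the 2-group quotient $\rO(V)/\Omega(V)$), every principal weight of $\Omega(V)$ is covered by a principal weight of $\rO(V)$; then Lemma~\ref{lem:act-wei} reduces the $\Omega(V)$ case (and similarly the $\SO(V)$ case) to the $\rO(V)$ case, provided we verify the hypothesis on the existence of an $F_0$-invariant element in the relevant $\Irr(N_{\Omega(V)}(\tR)\mid\tilde\varphi)$, which will follow from the $\rO(V)$ analysis below.

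Next, I would fix a principal weight $(R,\varphi)$ of $G=\rO(V)$. By Theorem~\ref{subsec:radical-subgp-class} together with Proposition~\ref{P:Ta1}, we can decompose $R=R_1\ti\cdots\ti R_t$ where each $R_i$ is a basic subgroup of one of the seven types listed in Table~\ref{Ta1}. After conjugating inside $G$, each non-trivial factor (types 1--4) can be made $\langle F_0\rangle$-stable by Proposition~\ref{prop:act-fie-auto}, while the trivial types 5--7 are $\langle F_0\rangle$-stable by construction; hence $R$ itself is $\langle F_0\rangle$-stable. The quotient $N_G(R)/RZ_G(R)$ then decomposes as a direct product of wreath products of the form $\bigl(N_{I(V_j)}(R_j)/R_jZ_{I(V_j)}(R_j)\bigr)\wr\fS_{u_j}$, indexed by conjugacy classes of the basic factors with multiplicity $u_j$. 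Since $F_0$ stabilises each factor $R_i$ and does not permute identical copies, it acts trivially on all the $\fS_{u_j}$ components, so its action on the whole quotient is determined by its action on each local piece.

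The final step is to show that $F_0$ fixes $\varphi$. For this I would appeal to the dichotomy in Proposition~\ref{prop:act-fie-auto}(2): when case (b) holds for a given local factor, $F_0$ acts trivially on that factor and hence fixes all its characters; when case (a) holds (parity of $N_G(R_i)$ equal to $\{(0,0)\}$), I would show that the induced $F_0$-action on $N_G(R_i)/R_iZ_G(R_i)$ is an inner automorphism, so again every character is preserved. Concretely, using the explicit descriptions in the proof of Proposition~\ref{prop:act-fie-auto}, the local factor in case (a) takes the form $\rO^\eta_{2\ga}(2)$, $\SO^\eta_{2\ga}(2)$ or $\Omega_{2\ga+2,+}(2)$, identified via the $\bbF_2$-action on $R_i/Z(R_i)$; since $N_G(R_i)\subseteq\Omega(V_i)$, the $F_0$-twist of any coset representative differs from the representative by an element of $Z_G(R_i)\cap\Omega(V_i)$, witnessing innerness.

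The main obstacle is precisely the case (a) verification: making the $F_0$-action concrete on the symplectic $\bbF_2$-space $R_i/Z(R_i)$ and exhibiting the conjugating element in $Z_G(R_i)\cap\Omega(V_i)$ requires going through each subtype ($i=4$ with $a\ge 3$, and $R^0_{1,0,\ga+1,\bc}$ with $a=2$ and $\ga\ge 2$) and tracking carefully how the $r$-th-power map interacts with the chosen generators of the extraspecial and central-product factors. The other steps, in contrast, are structural reductions using the classification results already established.
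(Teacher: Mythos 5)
Your overall reduction scheme—first handle $\rO(V)$, then descend to $\SO(V)$ and $\Omega(V)$ via the observation that $F_0$ acts trivially on $\rO(V)/\Omega(V)$ together with Lemma~\ref{lem:act-wei}—is essentially the same shape as the paper's argument for the $\Omega$/$\SO$ cases. The difference, and the genuine gap, is in how you handle the $\rO(V)$ case itself.

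The paper uses a crucial fact, cited from An's classification in \cite[\S6]{An93a}: a principal weight subgroup of $\rO(V)$ provides \emph{exactly one} principal weight. Once this is known, Proposition~\ref{prop:act-fie-auto}(1) alone finishes the $\rO(V)$ case: $R$ is $\langle F_0\rangle$-stable, $F_0$ permutes the principal weights supported on $R$, and since there is only one, $\overline{(R,\varphi)}$ is $F_0$-invariant. There is never any need to analyse the $F_0$-action on $N_G(R)/RZ_G(R)$ factor by factor. You do not invoke this uniqueness and instead try to prove directly that $F_0$ fixes $\varphi$ by showing the induced automorphism of $N_G(R_i)/R_iZ_G(R_i)$ is inner. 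That is a strictly harder claim, and the justification you offer for it is a non sequitur: from $N_G(R_i)\subseteq\Omega(V_i)$ (which is what parity $\{(0,0)\}$ means) it does not follow that $F_0(g)g^{-1}\in Z_G(R_i)R_i$ for coset representatives $g$. Containment in $\Omega(V_i)$ constrains where the elements live, not how the Frobenius moves them.

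Moreover, even with more care, the innerness route is delicate. The $F_0$-stable $R$ produced in Proposition~\ref{prop:act-fie-auto}(1) need not be pointwise fixed by $F_0$, so $F_0$ induces a possibly nontrivial orthogonal transformation $\sigma$ of $R_i/Z(R_i)$ over $\mathbb F_2$, and $F_0$ then acts on the subgroup $N_G(R_i)/R_iZ_G(R_i)\subseteq\rO(R_i/Z(R_i))$ by conjugation by $\sigma$. But this quotient is, in the $\ga\ge 2$ case, $\Omega_{2\ga+2,+}(2)$ or $\rO^\eta_{2\ga}(2)$—a \emph{proper} subgroup of the full $\mathbb F_2$-orthogonal group—and you have not shown that $\sigma$ lies inside that subgroup. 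If it does not, the automorphism is outer from the point of view of $N_G(R_i)/R_iZ_G(R_i)$, and innerness fails. The uniqueness-of-weights fact is precisely what makes all of this unnecessary, and omitting it leaves a hole that your sketch does not fill.
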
	

\begin{proof}
	Let $V$ be a non-degenerate finite-dimensional orthogonal space over $\mathbb F_q$ (with odd $q$). 

First we suppose that $B$ is the principal 2-block of the group $G=\rO(V)$. The principal weight subgroups of $G$ are listed in Theorem~\ref{subsec:radical-subgp-class}, Proposition \ref{prop:weight-subgp-class}, Remark \ref{rmk:wei-sub-orth}, and Table~\ref{Ta1}. By Proposition~\ref{prop:act-fie-auto}, In every conjugacy class of principal weight subgroups of $G$, there is a representative which is $\langle F_0\rangle$-stable. According to \cite[\S6]{An93a}, a principal weight subgroup of $G$ provide only one principal weight of $G$. Thus every element of $\Alp(B)$ is $\langle F_0\rangle$-invariant.

Suppose that $B$ is the principal 2-block of the group $X=\Omega(V)$. Let $G=\rO(V)$, and denote by $\tilde B$ the principal 2-block of $G$. Let $(S,\phi)$ be a $B$-weight of $X$. Then by Proposition~\ref{prop:act-fie-auto}, up to conjugacy, we may assume that $(S,\phi)$ is covered by a $\tilde B$-weight $(R,\varphi)$ of $G$ such that $R$ is $\langle F_0\rangle$-stable, and the parity of $N_G(R)$ is equal to $\{(0,0)\}$ or $\langle F_0\rangle$ acts trivially on $N_G(R)/RZ_G(R)$. Thus $S$ is $F_0$-stable. If $N_G(R)\subseteq X$, then $(S,\phi)=(R,\varphi)$ and thus $(S,\phi)$ is $F_0$-invariant. As $\langle F_0\rangle$ acts on $G/X$, the group $\langle F_0\rangle$ acts trivially on the $G$-orbits in $\Alp(B)$ containing $\overline{(S,\phi)}$. On the other hand, if $\langle F_0\rangle$ acts trivially on $N_G(R)/RZ_G(R)$, then $\langle F_0\rangle$ acts trivially on $\Irr(N_X(R)\mid \varphi)$, which implies that the group $\langle F_0\rangle$ acts trivially on the $G$-orbits in $\Alp(B)$ containing $\overline{(S,\phi)}$ by Lemma~\ref{lem:act-wei}.

If $B$ is the principal 2-block of one of the groups $\SO(V)$, then this assertion follows by the above arguments.
\end{proof}

\section{Radical 2-subgroups of $\tp F_{4}(q)$}\label{S:F4}   

Recall that The group $G:=\tp F_{4}(q)$ has two conjugacy classes of involutions (\cite{Gr91},\cite{HY13}). Write $\sigma_{1}$ 
and $\sigma_{2}$ (\cite{HY13}) for involutions with centralizers of types $\tp A_{1}\tp C_{3}$ and $\tp B_{4}$, respectively. 
Let $R$ be a radical 2-subgroup of $\tp F_{4}(\mathbb{F}_{q})$. Define \[A(R)=\Omega_{1}(Z(R)),\] which is an elementary abelian 
2-subgroup. Define \[B(R)=\{1\}\cup\{x\in A(R):x\sim\sigma_{2}\}.\] From \cite[\S 5]{Yu13}, we know that $B(R)$ is a 
subgroup of $A(R)$. It is clear that $A(R)$ and $B(R)$ are both normalized by $N_{G}(R)$. By Lemma \ref{L:R5}, $R$ 
is a radical 2-subgroup of both $Z_{G}(A(R))$ and $Z_{G}(B(R))$. Let $r$ and $s$ be the ranks of $B(R)$ and $A(R)/B(R)$ 
respectively. By results in \cite[\S 5]{Yu13} and the ``process of reduction modulo $\ell$" \cite[Thm. A.12]{GR98}, we 
know $r\leq 2$ and $s\leq 3$, and the pair $(r,s)$ determines the conjugacy class of $A(R)$ in $\tp F_{4}(\bar{\mathbb{F}}_{q})$. 
As in \cite[\S 5]{Yu13}, write $F_{r,s}$ for an elementary abelian 2-subgroup of $\tp F_{4}(\bar{\mathbb{F}}_{q})$ with 
$\rank B(R)=r$ and $\rank A(R)/B(R)=s$.

\subsection{The case of $r=0$}\label{SS:F4-1}

Assume that $r=0$, i.e., all non-identity elements of $A(R)$ are in the class of $\sigma_{1}$. When $(r,s)=(0,0)$, it is clear 
that $R=1$.  

\subsubsection{The case of $(r,s)=(0,1)$}\label{SS:F4-2}

Assume that $(r,s)=(0,1)$. Then, \[\tp F_{4}(\bar{\mathbb{F}}_{q})^{A(R)}\cong((\Sp(6,\bar{\mathbb{F}}_{q})\times
\Sp(2,\bar{\mathbb{F}}_{q}))/\langle(-I_{6},-I_{2})\] is connected. By the Lang-Steinberg theorem (\cite{Lang56}), there 
is a unique $\mathbb{F}_{q}$ form of $A(R)$, i.e., there is a unique conjugacy class of such $A(R)$ in $G=\tp F_{4}(q)$. Write 
$H=Z_{G}(A(R))$ and let $H^{0}$ be the derived subgroup of $H$. Then, $H$ is the fixed point subgroup of $F_{q}$ on 
$((\Sp(6,\bar{\mathbb{F}}_{q})\times\Sp(2,\bar{\mathbb{F}}_{q}))/\langle(-I_{6},-I_{2})$. Thus, 
\[H^{0}\cong((\Sp(6,\mathbb{F}_{q})\times\Sp(2,\mathbb{F}_{q}))/\langle(-I_{6},-I_{2}),\] and $H$ is generated by $H^{0}$ 
and an element \[\tau=(\diag\{\eta I_{3},\eta^{-1}I_{3}\},\diag\{\eta,\eta^{-1}\}),\] where $\eta\in\mathbb{F}_{q^{2}}$ 
and $\eta^{2}\in\mathbb{F}_{q}^{\times}-(\mathbb{F}_{q}^{\times})^{2}$. 

Let $R'=R\cap H^{0}$. By Lemma \ref{L:R3}, $R'$ is a radical 2-subgroup of $H^{0}$. By Lemmas \ref{L:R4} and \ref{L:R6}, 
$R'=(R_{1}\times R_{2})/\langle(-I_{6},-I_{2})\rangle$, where $R_{1}$ (resp. $R_{2}$) is a radical 2-subgroup of 
$\Sp(6,\mathbb{F}_{q})$ (resp. $\Sp(2,\mathbb{F}_{q})$). As in \S \ref{SS:classical2}, we define the subgroups 
$A'_{i}(R_{j})$ of $R_{j}$ ($i=1,2$, $j=1,2$). It is clear that $A'_{1}(R_{2})=\{\pm{I}_{2}\}$. In any case  
$A'_{1}(R_{1})$ is conjugate to one of the following: $\{\pm{I}_{6}\}$, $\langle I_{4,2},-I_{6}\rangle$, 
$\langle I_{4,2},I_{2,4},-I_{6}\rangle$. Note that $R/R'\cong 1$ or $Z_{2}$. When $A'_{1}(R_{1})\neq\{\pm{I}_{6}\}$, 
it always leads to $A(R)=\Omega_{1}(Z(R))$ having order $>2$. Thus, $A'_{1}(R_{1})=\{\pm{I}_{6}\}$. Since $\frac{6}{2}$ 
and $\frac{2}{2}$ are odd, one furthers shows that $A'_{2}(R_{1})=\{\pm{I}_{6}\}$ and $A'_{2}(R_{2})=\{\pm{I}_{2}\}$. 
Then, $R_{j}$ ($j=1,2$) is a basic subgroup of $\Sp(n_{j},\mathbb{F}_{q})$, where $n_{1}=6$ and $n_{2}=2$. More precisely, 
\begin{itemize}  
\item When $a=2$ (i.e., when $q^{2}-1$ is not a multiple of 16), the conjugacy class of $R_{j}$ has three possibilities: 
\begin{enumerate}
\item[(1)] $\{\pm{I_{n_{j}}}\}$; 
\item[(2)] ($Z_{4}\cong$) $\GL_{1}(\varepsilon q)_{2}\subset\Sp_{n_{j}}(q)$; 
\item[(3)] ($Q_{8}\cong$)  Sylow 2-subgroup of $\Sp_{2}(q)\subset\Sp_{n_{j}}(q)$. 
\end{enumerate} 
\item When $a>2$ (i.e., when $q^{2}-1$ is a multiple of 16), the conjugacy class of $R_{j}$ has five possibilities: 
\begin{enumerate}  
\item[(1)] $\{\pm{I_{6}}\}$; 
\item[(2)] a $Q_{8}$ subgroup; 
\item[(3)] another $Q_{8}$ subgroup; 
\item[(4)] $\GL_{1}(\varepsilon q)_{2}\subset\Sp_{n_{j}}(q)$; 
\item[(5)] Sylow 2-subgroup of $\Sp_{2}(q)\subset\Sp_{n_{j}}(q)$.
\end{enumerate}
\end{itemize}

Note that $R=O_{2}(N_{H}(R'))$ is always a radical 2-subgroup of $H$. However, there may exist other possibilities. To have 
a precise classification, one just calculates $N_{H}(R')$ and classify radical 2-subgroups of $N_{H}(R')/R'$ isomorphic 
to $1$ or $Z_{2}$. For example, when $R'=(\{\pm{I_{6}}\}\times\{\pm{I_{2}}\})/\langle(-I_{6},-I_{2})\rangle$, $R$ has four 
possible conjugacy classes: \begin{itemize} 
\item[(1)] $R'$; 
\item[(2)] $\langle(\left(\begin{array}{cc}&\eta I_{3}\\-\eta^{-1}I_{3}&\\\end{array}\right),I_{2})\rangle$; 
\item[(3)] $\langle(I_{6},\left(\begin{array}{cc}&\eta\\-\eta^{-1}&\\\end{array}\right) )\rangle$;
\item[(4)] $\langle(\left(\begin{array}{cc}&\eta I_{3}\\-\eta^{-1}I_{3}&\\\end{array}\right), 
\left(\begin{array}{cc}&\eta\\-\eta^{-1}&\\\end{array}\right) )\rangle$. 
\end{itemize}

\subsubsection{The case of $(r,s)=(0,2)$}\label{SS:F4-3}

Assume that $(r,s)=(0,2)$. Then, there are two $\mathbb{F}_{q}$ forms of $A(R)$, i.e., there are two conjugacy classes 
of such $A(R)$ in $G=\tp F_{4}(q)$. Write $H=Z_{G}(A(R))$ and let $H^{0}$ be the derived subgroup of $H$. 

(Inner class) Take \[A(R)=\langle(I_{6},-I_{2}),(\left(\begin{array}{cc}&I_{3}\\-I_{3}&\\\end{array}\right),
\left(\begin{array}{cc}&1\\-1&\\\end{array}\right))\rangle\] in $((\Sp(6,\bar{\mathbb{F}}_{q})\times
\Sp(2,\bar{\mathbb{F}}_{q}))/\langle(-I_{6},-I_{2})$. Then, \[H^{0}\cong(\GL_{3}(\varepsilon q)\times
\GL_{1}(\varepsilon q))/\langle(-I_{3},-1)\rangle),\] and $H$ is generated by $H^{0}$, 
\[\tau_{1}=(\eta\left(\begin{array}{cc}c I_{3}&d I_{3}\\-d I_{3}&c I_{3}\\\end{array}\right),
\eta\left(\begin{array}{cc}c&d\\-d&c\\\end{array}\right))\] and 
\[\theta_{1}=(\left(\begin{array}{cc}a I_{3}&b I_{3}\\b I_{3}&-a I_{3}\\\end{array}\right),
\left(\begin{array}{cc}a&b\\b&-a\\\end{array}\right)),\] where $a,b,c,d\in\mathbb{F}_{q}$ with $a^{2}+b^{2}=-1$ 
and $c^{2}+d^{2}=-\eta^{-2}$. Note that $\tau_{1}$ commutes with $H^{0}$ and 
\[\theta_{1}(X,\lambda)\theta_{1}^{-1}=((X^{t})^{-1},\lambda^{-1}),\quad\forall (X,\lambda)\in\GL_{3}\times\GL_{1}.\] 

(Outer class) Take \[A(R)=\langle(I_{6},-I_{2}),(\left(\begin{array}{cc}&\eta I_{3}\\-\eta^{-1}I_{3}&\\\end{array}\right),
\left(\begin{array}{cc}&\eta\\-\eta^{-1}&\\\end{array}\right))\rangle\] in $((\Sp(6,\bar{\mathbb{F}}_{q})\times
\Sp(2,\bar{\mathbb{F}}_{q}))/\langle(-I_{6},-I_{2})$. Then, \[H^{0}\cong(\GL_{3}(-\varepsilon q)\times
\GL_{1}(-\varepsilon q))/\langle(-I_{3},-1)\rangle),\] and $H$ is generated by $H^{0}$, 
\[\tau_{2}=(\left(\begin{array}{cc}&\eta I_{3}\\-\eta^{-1}I_{3}&\\\end{array}\right),
\left(\begin{array}{cc}&\eta\\-\eta^{-1}&\\\end{array}\right))\] and 
\[\theta_{2}=(\left(\begin{array}{cc}a I_{3}&b\eta^{2}I_{3}\\b I_{3}&-a I_{3}\\\end{array}\right),
\left(\begin{array}{cc}a&b\eta^{2}\\b&-a\\\end{array}\right)),\] where $a,b\in\mathbb{F}_{q}^{2}$ with 
$a^{2}+b^{2}\eta^{2}=-1$. Note that $\tau_{2}$ commutes with $H^{0}$ and 
\[\theta_{2}(X,\lambda)\theta_{2}^{-1}=((X^{t})^{-1},\lambda^{-1}),\quad\forall (X,\lambda)\in\GL_{3}\times\GL_{1}.\] 

The following statement is clear.  

\begin{lem}\label{L:sigma2-1} 
For any involution $x\in G=\tp F_{4}(q)$ in the conjugacy class of $\sigma_1$, $[G^{x},G^{x}]$ is an index 2 subgroup of 
$G^{x}$. If $A=\langle x,y\rangle$ is a rank 2 elementary abelian 2-subgroup of $G$ with non-identity elements all in 
the class of $\sigma_{1}$, then it is in the inner class if and only if $y\in [G^{x},G^{x}]$.  
\end{lem}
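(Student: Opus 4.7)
The plan is to read both assertions directly off the explicit structural description of $G^x$ in \S\ref{SS:F4-2} and the classification of $\mathbb{F}_q$-forms of $A$ in \S\ref{SS:F4-3}. Since $x$ is $G$-conjugate to $\sigma_1$ I may assume $x=\sigma_1$; then $H:=G^x=H^0\cdot\langle\tau\rangle$, where $H^0\cong(\Sp_6(\mathbb{F}_q)\times\Sp_2(\mathbb{F}_q))/\langle(-I_6,-I_2)\rangle$ and $\tau=(\diag(\eta I_3,\eta^{-1}I_3),\diag(\eta,\eta^{-1}))$ with $\eta^2\in\mathbb{F}_q^\times\setminus(\mathbb{F}_q^\times)^2$. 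Since $\eta$ and $-\eta=\eta^q$ both lie outside $\mathbb{F}_q$, the element $\tau$ lies outside $H^0$, while $\tau^2\in H^0$; hence $[H:H^0]=2$. Moreover $[H,H]\subseteq H^0$ because $H/H^0\cong Z_2$ is abelian, and $H^0$ is perfect since it is a central quotient of $\Sp_6(\mathbb{F}_q)\times\Sp_2(\mathbb{F}_q)$ whose factors are perfect for odd $q$ (the classical exception $\Sp_2(3)$ being handled by a separate direct check). Thus $[G^x,G^x]=H^0$ has index~$2$ in $G^x$, giving the first assertion.

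For the second assertion I would conjugate in $G$ so that $A=\langle x,y\rangle$ is one of the two representatives from \S\ref{SS:F4-3} with $x=\sigma_1=(I_6,-I_2)$. In the inner representative the second generator $y$ has all its matrix entries in $\mathbb{F}_q$, so $y$ lifts to $\Sp_6(\mathbb{F}_q)\times\Sp_2(\mathbb{F}_q)$ and thereby lies in $H^0=[G^x,G^x]$. In the outer representative the generator $y$ involves $\eta^{\pm 1}$, but a direct matrix multiplication shows that $y\tau^{-1}$ has entries only in $\eta^{\pm 2}\in\mathbb{F}_q$, so $y\tau^{-1}\in H^0$ and consequently $y\in\tau H^0\setminus H^0$. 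To see that the criterion does not depend on the choice of $y$ within $A\setminus\langle x\rangle$, I would note that $x$ itself lies in $H^0$: indeed $x\equiv(-I_6,I_2)$ modulo the center, and $-I_6\in[\Sp_6(\mathbb{F}_q),\Sp_6(\mathbb{F}_q)]$ by the same perfectness input, so $y\in H^0$ if and only if $xy\in H^0$.

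The one delicate point in the plan is the perfectness of $H^0$ at small $q$; everything else is direct matrix arithmetic on the representatives already produced in \S\ref{SS:F4-2}--\S\ref{SS:F4-3}, which is presumably why the paper records the statement as clear.
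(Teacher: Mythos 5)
Your approach matches what the paper has in mind: read both claims off the explicit description of $G^x=H^0\langle\tau\rangle$ from \S\ref{SS:F4-2} and off the two representatives of rank-2 subgroups from \S\ref{SS:F4-3}, using that the inner generator has $\mathbb F_q$-entries while the outer one only becomes $\mathbb F_q$-rational after multiplication by $\tau^{-1}$. The matrix arithmetic for the second assertion is correct, and the observation that the criterion is independent of the choice of $y$ in $A\setminus\langle x\rangle$ because $x\in H^0$ is the right way to make the criterion well-posed.

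The gap is in the first assertion, precisely at the point you flagged. You write that $H^0$ is perfect ``for odd $q$ (the classical exception $\Sp_2(3)$ being handled by a separate direct check)'', but for $q=3$ the group $H^0\cong(\Sp_6(3)\times\SL_2(3))/\langle(-I_6,-I_2)\rangle$ is genuinely \emph{not} perfect: $[\SL_2(3),\SL_2(3)]=Q_8$ has index~$3$, so $[H^0,H^0]=(\Sp_6(3)\times Q_8)/\langle(-I_6,-I_2)\rangle$ has index~$3$ in $H^0$. A ``separate direct check'' of perfectness would therefore fail, and your chain of reasoning ($H^0$ perfect $\Rightarrow$ $[H,H]\supseteq H^0$ $\Rightarrow$ $[H,H]=H^0$) does not go through at $q=3$.

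The desired equality $[G^x,G^x]=H^0$ is nevertheless still true at $q=3$, but the extra ingredient is the outer element $\tau$, not a perfectness computation. Since $\tau$ acts on $H^0$ by the diagonal automorphism, on the $\SL_2(3)$ factor it restricts to the outer automorphism, which acts by inversion on $\SL_2(3)/Q_8\cong Z_3$. Hence $\tau$ inverts $H^0/[H^0,H^0]\cong Z_3$, the commutators $[\tau,h]$ ($h\in H^0$) surject onto this $Z_3$, and $[G^x,G^x]\supseteq[H^0,H^0]\cdot[\tau,H^0]=H^0$. Combined with $[G^x,G^x]\subseteq H^0$ (since $G^x/H^0\cong Z_2$), this gives equality. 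So the correct ``separate check'' is this commutator-with-$\tau$ argument, not a verification that $\Sp_2(3)$ is perfect.
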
  

For the inner class, let $R'=R\cap H^{0}$. By Lemma \ref{L:R3}, $R'$ is a radical 2-subgroup of $H^{0}$. By Lemmas \ref{L:R4} 
and \ref{L:R6}, $R'=(R_{1}\times R_{2})/\langle(-I_{3},-1)\rangle$, where $R_{1}$ (resp. $R_{2}$) is a radical 2-subgroup of 
$\GL_{3}(\varepsilon q)$ (resp. $\GL_{1}(\varepsilon q)$). Like in the case of $(r,s)=(0,1)$, again we have $A'_{1}(R_{1})=
A'_{2}(R_{1})=\{\pm{I}_{3}\}$ and $A'_{1}(R_{2})=A'_{2}(R_{2})=\{\pm{1}\}$. Then, $R_{1}=\GL_{1}(\varepsilon q)_{2}I_{3}$ 
and $R_{2}=\GL_{1}(\varepsilon q)_{2}$. Hence, $R$ is conjugate to 
\[\langle(\GL_{1}(\varepsilon q)_{2}I_{3}\times\GL_{1}(\varepsilon q)_{2})/\langle(-I_{3},-1)\rangle,\tau_{1}\rangle\] or 
\[\langle(\GL_{1}(\varepsilon q)_{2}I_{3}\times\GL_{1}(\varepsilon q)_{2})/\langle(-I_{3},-1)\rangle,\tau_{1},\theta_{1}\rangle.\]  
  
For the outer class, similarly one shows that: $R$ is conjugate to 
\[\langle(\GL_{1}(-\varepsilon q)_{2}I_{3}\times\GL_{1}(-\varepsilon q)_{2})/\langle(-I_{3},-1)\rangle,\tau_{2}\rangle\] or 
\[\langle(\GL_{1}(-\varepsilon q)_{2}I_{3}\times\GL_{1}(-\varepsilon q)_{2})/\langle(-I_{3},-1)\rangle,\tau_{2},\theta_{2}\rangle.\]

\subsubsection{The case of $(r,s)=(0,3)$}\label{SS:F4-4}

\begin{lem}\label{L:sigma2-2}
There are two conjugacy classes of rank 3 elementary abelian 2-subgroups $A$ of $G$ with all non-identity elements in the 
conjugacy class of $\sigma_{1}$.
\end{lem}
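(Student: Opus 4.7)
The plan is to classify such rank 3 subgroups $A$ by building them as extensions of the rank 2 subgroups classified in \S\ref{SS:F4-3}. Any such $A$ contains a rank 2 subgroup $B$ with all non-identity elements in the $\sigma_1$-class, and by \S\ref{SS:F4-3} there are exactly two $G$-conjugacy classes of such $B$ (the \emph{inner class} and the \emph{outer class}). Fixing a representative $B$ in each class, I would enumerate the $N_G(B)$-orbits of elements $z$ in $H := Z_G(B)$ such that $z$ is an involution in the $\sigma_1$-class, $z \notin B$, and $zx$ lies in the $\sigma_1$-class for every $x \in B \setminus \{1\}$ (i.e., $\langle B, z\rangle$ is still of type $F_{0,3}$).

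First I would treat the inner class. Using the description $H^{0} \cong (\GL_{3}(\varepsilon q)\times\GL_{1}(\varepsilon q))/\langle(-I_{3},-1)\rangle$ with outer coset representatives $\tau_{1}$ and $\theta_{1}$ from \S\ref{SS:F4-3}, the involutions of $H$ come in a few explicit families, parametrized by pairs $(X,\lambda)$ with $X^{2}=\pm I_{3}$, $\lambda^{2}=\pm 1$ in $H^{0}$, together with analogous elements in the $\theta_{1}$-, $\tau_{1}$-, and $\tau_{1}\theta_{1}$-cosets. For each candidate $z$, I would use Lemma~\ref{L:sigma2-1} (and the known algebraic centralizer types for $\sigma_{1}$ vs.\ $\sigma_{2}$) to decide whether $z$ itself, and each product $zx$ for $x\in B\setminus\{1\}$, is of $\sigma_{1}$-type. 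Modding out by $N_{G}(B)$-conjugation, I expect exactly one $G$-conjugacy class of extensions $A\supset B$ in the inner case.

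Second I would repeat this analysis for the outer class, using $H^{0}\cong(\GL_{3}(-\varepsilon q)\times\GL_{1}(-\varepsilon q))/\langle(-I_{3},-1)\rangle$ and outer representatives $\tau_{2},\theta_{2}$; again I expect exactly one class. Finally, no inner extension can be $G$-conjugate to an outer extension, since such a conjugation would carry an inner rank 2 subgroup to an outer one, contradicting the $(0,2)$ classification. Adding the two cases gives exactly two conjugacy classes.

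The main obstacle will be the careful bookkeeping in Steps 1 and 2: verifying which involutions $z\in H$ are of $\sigma_{1}$-type rather than $\sigma_{2}$-type, and imposing the compatibility condition that every product $zx$ with $x\in B\setminus\{1\}$ remains in the $\sigma_{1}$-class. A possible sanity check is to pass to the algebraic group: over $\bar{\mathbb{F}}_{q}$ there is a unique conjugacy class of such subgroups $F_{0,3}$ (by \cite{Yu13}), and a Lang--Steinberg / $H^{1}(F,N/Z)$ argument on $N_{\bar G}(F_{0,3})/Z_{\bar G}(F_{0,3})$ (with $F$ acting trivially, since $\tp F_{4}$ is split) should independently recover the count of two $\mathbb{F}_{q}$-forms, matching the extension-based enumeration.
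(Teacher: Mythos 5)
The paper's own proof takes a quite different route: it defines a two-valued function $m\colon A\times A\to\{\pm 1\}$ using the inner/outer dichotomy of Lemma~\ref{L:sigma2-1}, observes that $m$ is a symmetric, multiplicative (hence alternating bilinear) form, deduces that its radical has rank $1$ or $3$, and then shows by a short direct computation that each of these two values is realised by a unique conjugacy class. This bilinear-form viewpoint is the key idea your proposal is missing, and without it your final counting step collapses.

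The concrete gap is in your Step~3. A rank~$3$ subgroup $A$ of type $F_{0,3}$ contains $7$ rank~$2$ subgroups, and their inner/outer types are governed by the form $m$. When the radical of $m$ is all of $A$, all $7$ are inner; when the radical has rank~$1$, exactly $3$ are inner and $4$ are outer. In particular \emph{every} such $A$ contains an inner rank~$2$ subgroup, so your ``inner-extension'' enumeration over $N_G(B)$-orbits must produce \emph{both} types of $A$ (two orbits, not one), while the ``outer-extension'' enumeration produces the type whose $m$ has rank-$1$ radical (one orbit, overlapping with one of the previous two). Your disjointness claim --- that conjugating an inner extension to an outer one ``would carry an inner rank~$2$ subgroup to an outer one'' --- is a non sequitur: conjugation does preserve the type of a given rank~$2$ subgroup, but the image $gBg^{-1}$ need not coincide with the designated outer subgroup $B'\subset A'$; one $A$ simply contains rank~$2$ subgroups of both types. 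As written, a naive count ``$1+1=2$'' only lands on the right number because your expected orbit count in the inner case is also wrong (it should be $2$); the two errors happen to cancel. The approach could be repaired --- e.g.\ by observing that the inner case alone already exhausts all classes, or by introducing precisely the pairing $m$ that the paper uses --- but in its current form the argument is not sound.
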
 

\begin{proof}
Define a function $m: A\times A\rightarrow\{\pm{1}\}$. For two elements $x,y\in A$, if $\langle x,y\rangle$ has rank $<2$, define 
$m(x,y)=1$; if $\langle x,y\rangle$ has rank $2$ and it is the inner case as in Subsection \ref{SS:F4-3}, define $m(x,y)=1$; 
otherwise define $m(x,y)=-1$. It is clear that $m$ is symmetric. By Lemma \ref{L:sigma2-1}, $m$ is multiplicative for both 
variables. Then, $\ker m=1$ or $3$. With direct calculation in $G^{x}$ ($1\neq x\in A$), one shows that there exists a unique 
conjugacy class of $A$ for either value of $\ker m$. 
\end{proof}

Now assume that $(r,s)=(0,3)$. By Lemma \ref{L:sigma2-2}, there are two $\mathbb{F}_{q}$ forms of $A(R)$. i.e., there are two 
conjugacy classes of such $A(R)$ in $G=\tp F_{4}(q)$. In any case, we have $Z_{G}(A(R))=G'\times A(R)$ with $G'\cong
\SO(3,\mathbb{F}_{q})$. Then, $R=A(R)$.   
  
\begin{lem}\label{P:r=0}
If $R$ is a radical subgroup of $G$ with $B(R)=1$, then $R$ is not a principal weight subgroup of $G$.
\end{lem}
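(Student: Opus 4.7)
The strategy is to invoke Lemma~\ref{lem:B_0-wei}: if $R$ were a principal weight subgroup, then $Z(R)$ would be a Sylow $2$-subgroup of $Z_G(R)$. Since $Z(R)$ is an abelian $2$-group with $\Omega_1(Z(R))=A(R)$ of $2$-rank $s$, the group $Z(R)$ itself has $2$-rank exactly $s$. Thus it suffices to show, in each of the four cases $(r,s)\in\{(0,0),(0,1),(0,2),(0,3)\}$ treated in \S\ref{SS:F4-1}--\S\ref{SS:F4-4}, that $Z_G(R)$ admits an elementary abelian $2$-subgroup of rank strictly greater than $s$.

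The cases $(0,0)$ and $(0,3)$ are immediate. For $(0,0)$ we have $R=1$ and $Z_G(R)=G$, which visibly has $2$-rank at least $4$. For $(0,3)$, the description in \S\ref{SS:F4-4} gives $Z_G(R)=G'\times A(R)$ with $G'\cong\SO(3,\bbF_q)$ of even order, so the $2$-rank of $Z_G(R)$ is at least $s+1=4$.

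For $(0,2)$, use the description $H^{0}\cong(\GL_3(\pm\varepsilon q)\times\GL_1(\pm\varepsilon q))/\langle(-I_3,-1)\rangle$ of the identity component of $H=Z_G(A(R))$ from \S\ref{SS:F4-3}. The diagonal $\{\pm 1\}$-torus of the $\GL_3$-factor produces three independent commuting involutions; each of them is scalar-compatible (hence centralises the scalar core of $R$), commutes with the outer element $\tau_i$ (which centralises $H^{0}$) and with $\theta_i$ (which acts by $X\mapsto(X^{t})^{-1}$, fixing $\pm 1$-diagonals). Their images modulo $\langle(-I_3,-1)\rangle$ remain independent, giving a rank-$3$ elementary abelian $2$-subgroup of $Z_G(R)$, exceeding $s=2$.

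The main case is $(0,1)$. Here $R\cap H^{0}=(R_1\times R_2)/\langle(-I_6,-I_2)\rangle$ with $R_1,R_2$ running through the explicit list of basic $2$-subgroups of $\Sp_6(q)$ and $\Sp_2(q)$ satisfying $A_1'(R_i)=A_2'(R_i)=\{\pm I\}$. In each possibility the centraliser $Z_{\Sp_6}(R_1)$ is a classical group (namely $\Sp_6(q)$, $\GL_3(\varepsilon q)$, or $\{\pm I\}\times\Sp_4(q)$, depending on $R_1$), all of whose $2$-ranks exceed $1$; combined with $Z_{\Sp_2}(R_2)\supseteq\{\pm I\}$, with the trivial action of $\tau$ on these diagonal tori, and with $\sigma_1\in Z(R)$, this supplies at least two commuting involutions in $Z_G(R)$, yielding $2$-rank at least $2>s=1$. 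The main obstacle is the bookkeeping through the basic-subgroup list and the outer coset of $H^0$, but each case reduces to a direct centraliser computation in the relevant classical factor.
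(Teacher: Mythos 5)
Your approach is sound in principle but takes a genuinely different route from the paper. You invoke Lemma~\ref{lem:B_0-wei} directly: a principal weight subgroup $R$ has $Z(R)\in\Syl_2(Z_G(R))$, so the $2$-rank of $Z_G(R)$ equals $\rank A(R)=s$ (since $r=0$), and you then try to exhibit an elementary abelian $2$-subgroup of rank $>s$ inside $Z_G(R)$. The paper instead applies Remark~\ref{rmk:voe2} to descend the weight subgroup property to $Z_G(A(R))$, then Lemma~\ref{lem:center-weisub} to obtain a Sylow $2$-subgroup $S$ of $Z_G(A(R))$ with $Z(S)\subset Z(R)\subset R\subset S$, and finally observes that any such $\Omega_1(Z(S))$ contains $\sigma_2$-class involutions, contradicting $B(R)=1$ (since $\Omega_1(Z(S))\subset A(R)$). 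The advantage of the paper's phrasing is that the Sylow $2$-subgroups of $Z_G(A(R))$ depend only on the pair $(r,s)$, so there are only three cases to check; your route requires controlling $Z_G(R)$ itself, and $R$ ranges over a substantially longer list (all radical $2$-subgroups with $B(R)=1$, not just the $A(R)$'s), which forces more intricate casework.

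The casework for $(r,s)=(0,0),(0,2),(0,3)$ works, but the $(0,1)$ case as written is incomplete. You list the possible centralizers $Z_{\Sp_6}(R_1)$, but the relevant group is $Z_G(R)$, where $R$ is equal to $R'=(R_1\times R_2)/\langle(-I_6,-I_2)\rangle$ or an extension of $R'$ by an outer coset element $y\in H\setminus H^0$. When $R\ne R'$, you must intersect $Z_{H^0}(R')$ with $Z_G(y)$, and $y$ is not always the scalar-diagonal $\tau$; it can be twisted by elements of $H^0$ (e.g.\ involving block anti-diagonal matrices as in the four-element list in \S\ref{SS:F4-2}), which do not act trivially on the ``diagonal tori'' you wish to retain. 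Furthermore, the combinations $(R_1,R_2)$ range over several quaternion, cyclic, and Sylow options including the $a=2$ and $a>2$ variants, and the $2$-rank of $(Z_{\Sp_6}(R_1)\times Z_{\Sp_2}(R_2))/\langle(-I_6,-I_2)\rangle\cap Z_G(y)$ must be checked to exceed $1$ in each. Your sketch asserts this follows from ``direct centraliser computation'' but does not carry it out; this is where the real content of the case $(0,1)$ lies, and until it is done the argument is a plan rather than a proof. If you prefer your route, I'd suggest organising that computation around the single observation that for every radical $R$ under consideration, $Z_G(R)$ contains $Z_{H^0}(S_0)$ for some $\langle y\rangle$-stable classical subgroup $S_0\supset R'$, and then bounding the rank uniformly; otherwise the paper's route via Lemma~\ref{lem:center-weisub} is considerably shorter.
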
 
	
\begin{proof}
Suppose that $R$ be a principal weight subgroup of $G$ with $B(R)=1$. Then $r=0$ and $s=\rank \Omega_1(Z(R))$. By Remark 
\ref{rmk:voe2}, $R$ is also a principal weight subgroup of $Z_{G}(A(R))$. According to Lemma~\ref{lem:center-weisub}, there 
is a Sylow 2-subgroup $S$ of $Z_{G}(A(R))$ such that $Z(S)\subset Z(R)\subset R\subset S$. From the above calculation for 
$Z_{G}(A(R))$, one shows that: in any case, $\Omega_{1}(Z(S))$ contains an element in the conjugacy class of $\sigma_{2}$. 
Hence, $Z(S)\subset Z(R)$ could not hold. This gives a contradiction and thus $R$ is not a principal weight subgroup of $G$. 
\end{proof}

\subsection{The case of $r=1$}\label{SS:Spin9} 

When $r=1$, we have $B(R)\cong Z_{2}$ and $R$ is a radical 2-subgroup of \[Z_{G}(B(R))\cong\Spin_{9,+}(q).\] 
Moreover, as $B(R)$ is normalized by $N_{G}(R)$. We have $N_{G}(R)\subset N_{G}(B(R))=Z_{G}(B(R))$. Therefore, any radical 
2-subgroup $R$ of $Z_{G}(B(R))$ (with giving $B(R)$) is also a radical 2-subgroup of $G$. Note that the condition of 
$\rank B(R)=1$ poses a constraint on $R$ as a radical 2-subgroup of $Z_{G}(B(R))\cong\Spin_{9,+}(q)$. 

Consider the projection $p:\Spin_{9,+}(q)\rightarrow\Omega_{9,+}(q)$ and inclusion $\Omega_{9,+}(q)\rightarrow\rO_{9,+}(q)$. 
Any radical 2-subgroup $R$ of $\Spin_{9,+}(q)$ is of the form $R=p^{-1}(R'\cap\Omega_{9,+})$ for some radical 2-subgroup 
$R'$ of $\rO_{9,+}(q)$. Any $R'$ is the form $R'=R'_{1}\times\cdots\times R'_{s}$, where $\mathbb{F}_{q}^{9}=
\bigoplus_{1\leq i\leq s} V_{i}$ is an orthogonal decomposition and $R'_{i}$ is a basic subgroup of $I(V_{i})$ 
($1\leq i\leq s$). Except when $\dim V_{i}$ is even and $\disc(V_{i})=1$, we have $R'_{i}=\{\pm{I}\}$. When $\dim V_{i}$ is 
even and $\disc(V_{i})=1$, $R'_{i}$ is classified as in Lemma \ref{L:O2-O8}. Using parities of $R'_{i}$ ($1\leq i\leq s$), 
we can calculate $R'\cap\Omega_{9,+}$ and then $R=p^{-1}(R'\cap\Omega_{9,+})$. We caution that some non-equal 
(resp. non-conjugate) $R'$ may lead to equal (resp. conjugate) $R$. After excluding those ones with $\rank B(R)>1$ and 
clarifying conjugate ones, we reach a classification of radical 2-subgroups of $G$ with $\rank B(R)=1$.  

In Table \ref{Ta2}, we list some radical subgroup of $G$, say $R_1$--$R_{21}$. 

\begin{lem}\label{P:Ta2}
If $R$ is a principal weight subgroup of $G$ with $|B(R)|=2$, then $R$ is conjugate to one of $R_k$ ($1\le k\le 21$) as in Table 
\ref{Ta2}. 
\end{lem}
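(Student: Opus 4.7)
The plan is to reduce the classification successively from $G=\tp F_{4}(q)$ down to $\rO_{9,+}(q)$, apply the orthogonal classification already obtained, and then filter by the constraint $|B(R)|=2$.

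First I would use the reductions set up in \S\ref{SS:Spin9}. Since $B(R)\subset Z(R)$ and $B(R)\unlhd N_{G}(Z(R))$, Remark~\ref{rmk:voe2} forces $R$ to be a principal weight subgroup of $Z_{G}(B(R))\cong\Spin_{9,+}(q)$. Applying Lemma~\ref{lem:weight-quo} to the central 2-subgroup $\ker p=\{\pm 1\}$, the image $p(R)$ is a principal weight subgroup of $\Omega_{9,+}(q)$. Since $\rO_{9,+}(q)/\Omega_{9,+}(q)$ is a 2-elementary group, hence solvable, Remark~\ref{rmk:extension} yields a principal weight subgroup $R'$ of $\rO_{9,+}(q)$ with $R'\cap\Omega_{9,+}(q)=p(R)$; thus $R=p^{-1}(R'\cap\Omega_{9,+}(q))$, and the classification of $R$ is equivalent to the classification of such $R'$ subject to the constraint $|B(R)|=2$.

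Next I would apply Theorem~\ref{subsec:radical-subgp-class}, Proposition~\ref{prop:weight-subgp-class} and Remark~\ref{rmk:wei-sub-orth} to decompose
\[
V=V_{1}\perp\cdots\perp V_{s},\qquad R'=R'_{1}\times\cdots\times R'_{s},
\]
where $\dim V=9$ and each $R'_{i}$ is a basic subgroup drawn from Table~\ref{Ta1}. The allowed dimensions of $V_{i}$ are $1$, $2$, and $2^{n}$ ($n\ge 2$) with discriminant prescribed as in Table~\ref{Ta1}, and multiplicities of isomorphic summands must be triangular numbers by Remark~\ref{rmk:wei-sub-orth}(2). Since $\dim V=9$ is odd, at least one $V_{i}$ must be $1$-dimensional; the remaining summands partition $8$ or less. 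Enumerating these multiset decompositions produces a finite list of candidate $R'$.

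For each candidate I would then compute $R'\cap\Omega_{9,+}(q)$ using the parity calculations of Subsection~\ref{SS:parity} (Lemmas~\ref{L:parity2}--\ref{L:parity6} and Corollary~\ref{C:parity6}) and lift to $R=p^{-1}(R'\cap\Omega_{9,+}(q))\subset\Spin_{9,+}(q)$. I would then determine $\Omega_{1}(Z(R))$ and, using the embedding $\Spin_{9,+}(q)\hookrightarrow\tp F_{4}(q)$ arising from $B(R)=\langle\sigma_{2}\rangle$, identify which non-identity elements of $\Omega_{1}(Z(R))$ lie in the $G$-class of $\sigma_{2}$ (recalling from \cite{Yu13} that centralizers of type $\tp B_{4}$ distinguish $\sigma_{2}$ from $\sigma_{1}$, whose centralizer is of type $\tp A_{1}\tp C_{3}$). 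Retaining only those candidates with $|B(R)|=2$, i.e.\ exactly one $\sigma_{2}$-involution in $\Omega_{1}(Z(R))$, and discarding duplicates up to $G$-conjugacy gives the list $R_{1},\ldots,R_{21}$.

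The main obstacle will be the last step: identifying $B(R)$ explicitly in each case. Involutions in $\Spin_{9,+}(q)$ split into several $\Spin_{9,+}(q)$-conjugacy classes parametrised by Witt-type decompositions of $\mathbb{F}_{q}^{9}$, and one must carefully track which of these classes fuse to the $\tp F_{4}(q)$-class of $\sigma_{2}$ under the inclusion $\Spin_{9,+}(q)=Z_{G}(\sigma_{2})\hookrightarrow G$. The bookkeeping must simultaneously discard any case where $\rank B(R)\ge 2$ (which would place $R$ inside a proper Levi of type $\tp A_{1}\tp A_{1}\tp A_{1}$ or similar, treated separately in \S\ref{SS:F4-3} and \S\ref{SS:F4-4}), and identify coincidences among the lifts $p^{-1}(R'\cap\Omega_{9,+})$ coming from non-conjugate $R'$.
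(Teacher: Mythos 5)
Your approach is essentially the paper's: use Remark~\ref{rmk:voe2} to pass to $Z_G(B(R))\cong\Spin_{9,+}(q)$, identify principal weights of $\Spin_{9,+}(q)$ with those of $\Omega_{9,+}(q)$, invoke Remark~\ref{rmk:extension} to lift to a principal weight subgroup $R'$ of $\rO_{9,+}(q)$ with $p(R)=R'\cap\Omega_{9,+}(q)$, and then apply the orthogonal classification (Proposition~\ref{P:Ta1} together with Remark~\ref{rmk:wei-sub-orth}) to enumerate the candidates. That is the entire content of the lemma's proof.

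One small overreach worth flagging: the lemma only asserts that $R$ is conjugate to one of the 21 candidates $R_1$--$R_{21}$, \emph{without} filtering by $|B(R)|=2$. Indeed, $R_2, R_4, R_5, R_6, R_{12}$ in Table~\ref{Ta2} turn out to have $\rank B(R)=2$ and are excluded in the discussion following the lemma, not inside its proof. So your final step --- computing $B(R)$ for each candidate via the parity calculus of \S\ref{SS:parity} and discarding those with $\rank B(R)\ge 2$ --- is correct as a plan, but it belongs to the subsequent paragraph, not to this lemma; including it would prove a strictly stronger statement than what is claimed. This is a matter of bookkeeping rather than a logical error: the list of 21 in Table~\ref{Ta2} is deliberately the raw output of the $\rO_{9,+}(q)$ classification before the $B(R)$ constraint is imposed.
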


\begin{proof}
Let $R$ be a principal weight subgroup of $G$ with $|B(R)|=2$. Let $H=Z_G(B(R))$ so that $H\cong\Spin_{9,+}(q)$. By Remark \ref{rmk:voe2}, $R$ is also a principal weight subgroup of $H$.  	
We identify the principal weights of $H$ with the principal weights of $\Omega_{9,+}(q)$, which implies that $p(R)$ is a principal weight subgroup of $\Omega_{9,+}(q)$. According to Remark~\ref{rmk:extension}, there 
is a principal weight subgroup $R'$ of $\rO_{9,+}$ such that $p(R)=R'\cap \Omega_{9,+}$. By Proposition \ref{P:Ta1} and Remark \ref{rmk:wei-sub-orth}, 
we deduce that $R$ is conjugate to one of  $R_1$--$R_{21}$.    
\end{proof}	

\begin{table}[!htb]                   
\centering
\renewcommand\arraystretch{1.3}
\begin{tabular}{|c|c|c|c|c|}    
\hline  
 &  $R'$  &  parity & $\log_{2}|R/\{\pm{1}\}|$ & $\log_{2}|Z(R'\cap\Omega_{9,+})|$  \\  
\hline	
$R_1$ & $R^0_{1,0,0,(3)}\ti R^0_{1,0,0,\emptyset}$ & (1,0) & 11 & 1   \\
$R_2$ & $R^0_{1,1,0,(3)}\ti R^0_{1,0,0,\emptyset}$ & $(Z_2)^2$ &  &   \\
$R_3$ & $R^0_{1,0,0,(1,2)}\ti R^0_{1,0,0,\emptyset}$ & (1,0) &  13 & 1  \\
$R_4$ & $R^0_{1,1,0,(1,2)}\ti R^0_{1,0,0,\emptyset}$ & $(Z_2)^2$  &  &   \\
$R_5$ & $R^4_{1,0,2,\emptyset}\ti R^0_{1,0,0,\emptyset}$ & (1,0) & &  \\
$R_6$ & $R^4_{1,0,1,(1)}\ti R^0_{1,0,0,\emptyset}$ &  (1,0) & &  \\  
$R_7$ & $R^4_{1,0,0,(1,1)}\ti R^0_{1,0,0,\emptyset}$ & $(Z_2)^2$ & $4a+6$ & 1  \\
$R_8$ & $R^4_{1,0,0,(2)}\ti R^0_{1,0,0,\emptyset}$ & $(Z_2)^2$ & $4a+5$ & 1  \\
\hline
$R_9$ & $R^0_{1,0,0,(2)}\ti R^0_{1,1,0,(2)}\ti R^0_{1,0,0,\emptyset}$ & $(Z_2)^2$ & 11 & 2  \\
$R_{10}$ & $R^0_{1,0,0,(2)}\ti R^4_{1,0,1,\emptyset}\ti R^0_{1,0,0,\emptyset}$ & (1,0) & $a+9$ & 2  \\
$R_{11}$ &  $R^0_{1,0,0,(2)}\ti R^4_{1,0,0,(1)}\ti R^0_{1,0,0,\emptyset}$    & $(Z_2)^2$   & $2a+8$ & 2     \\
$R_{12}$ & $R^0_{1,1,0,(2)}\ti R^4_{1,0,1,\emptyset}\ti R^0_{1,0,0,\emptyset}$ & $(Z_2)^2$ & & \\
$R_{13}$ & $R^0_{1,1,0,(2)}\ti R^4_{1,0,0,(1)}\ti R^0_{1,0,0,\emptyset}$ & $(Z_2)^2$ & $a+8$ & 2  \\
$R_{14}$ & $R^4_{1,0,1,\emptyset}\ti R^4_{1,0,0,(1)}\ti R^0_{1,0,0,\emptyset}$ & $(Z_2)^2$ & $3a+5$ & 2  \\
	 \hline
$R_{15}$ & $R^0_{1,0,0,(2)}\ti R^4_{1,0,0,\emptyset}\ti (R^0_{1,0,0,\emptyset})^3$ & $(Z_2)^2$ & $a+8$ & 4  \\
$R_{16}$ & $R^0_{1,1,0,(2)}\ti R^4_{1,0,0,\emptyset}\ti (R^0_{1,0,0,\emptyset})^3$ & $(Z_2)^2$ & $a+8$ & 4  \\
$R_{17}$ & $R^4_{1,0,1,\emptyset}\ti R^4_{1,0,0,\emptyset}\ti (R^0_{1,0,0,\emptyset})^3$ & $(Z_2)^2$ & $2a+5$ & 4  \\
$R_{18}$ & $R^4_{1,0,0,(1)}\ti R^4_{1,0,0,\emptyset}\ti (R^0_{1,0,0,\emptyset})^3$ & $(Z_2)^2$ & $3a+5$ & 4  \\
\hline
$R_{19}$ & $(R^4_{1,0,0,\emptyset})^3\ti (R^0_{1,0,0,\emptyset})^3$ & $(Z_2)^2$ & $3a+4$ & 5  \\
\hline
$R_{20}$ & $R^4_{1,0,0,\emptyset}\ti R^0_{1,0,0,\emptyset} \ti (R^{0}_{1,1,0,\emptyset})^6$ & $(Z_2)^2$ & $a+6$ & 6  \\
\hline
$R_{21}$ & $(R^0_{1,0,0,\emptyset})^3\ti (R^{0}_{1,1,0,\emptyset})^6$  & $(Z_2)^2$ & 7 & 7  \\
\hline
\end{tabular} 
\caption{Principal weight subgroups of $\Spin_{9,+}$}\label{Ta2}
\end{table} 

Among radical 2-subgroups in Table \ref{Ta2}, $R_2$, $R_4$, $R_5$, $R_6$, $R_{12}$ lead to subgroups $R$ with $B(R)\cong Z_{2}^{2}$, 
so should be excluded. All others lead to subgroups $R$ with $B(R)\cong Z_{2}$. Let $R'$ be one of $R_2$, $R_4$, $R_5$, $R_6$, 
$R_{12}$. Write $R=p^{-1}(R'\cap\Omega_{9,+})$. 
\begin{enumerate}
\item For $R'=R_{2}$, $R^0_{1,0,0,\emptyset}=\{\pm{1}\}$ has parity $(1,0)$ and $R^0_{1,1,0,(3)}$ has parity $(0,1)$. Then, 
$R'\cap\Omega_{9,+}(q)=R^0_{1,1,0,(3)}\cap\Omega_{8,+}(q)$. Therefore, $B(R)\cong(Z_{2})^{2}$. 
\item For $R'=R_{4}$, $R^0_{1,0,0,\emptyset}=\{\pm{1}\}$ has parity $(1,0)$ and $R^0_{1,1,0,(1,2)}$ has parity $(0,1)$. Then, 
$R'\cap\Omega_{9,+}(q)=R^0_{1,1,0,(1,2)}\cap\Omega_{8,+}(q)$. Therefore, $B(R)\cong(Z_{2})^{2}$. 
\item For $R'=R_{5}$, $R_{1,0,0,\emptyset}^{0}=\{\pm{1}\}$ has parity $(1,0)$ and $R_{1,0,2,\emptyset}^{4}$ has parity $(0,0)$. 
Then, $R'\cap\Omega_{9,+}(q)=R_{1,0,2,\emptyset}^{4}\cap\Omega_{8,+}(q)$. Therefore, $B(R)\cong(Z_{2})^{2}$. 
\item For $R'=R_{6}$, $R^0_{1,0,0,\emptyset}=\{\pm{1}\}$ has parity $(1,0)$ and $R_{1,0,1,(1)}^{4}$ has parity $(0,0)$. Then, 
$R'\cap\Omega_{9,+}(q)=R_{1,0,1,(1)}^{4}\cap\Omega_{8,+}(q)$. Therefore, $B(R)\cong(Z_{2})^{2}$.  
\item For $R'=R_{12}$, $R^0_{1,0,0,\emptyset}=\{\pm{1}\}$ has parity $(1,0)$, $R_{1,1,0,(2)}^{0}$ has parity $(0,1)$ and 
$R_{1,0,1,\emptyset}^{4}$ has parity $(0,0)$. Then, $R'\cap\Omega_{9,+}(q)=(R_{1,1,0,(2)}^{0}\times R_{1,0,1,\emptyset}^{4})
\cap\Omega_{8,+}(q)$. Therefore, $B(R)\cong(Z_{2})^{2}$.    
\end{enumerate} 

\begin{lem}\label{L:Spin9-conjugacy}
For any weight subgroups $R'$ of $\rO_{9,+}$ except $R_{2}, R_{4}, R_{5}, R_{6}, R_{12}$ in Table \ref{Ta2}. Write 
$R''=R'\cap\Omega_{9,+}$. Then $R'=O_{2}(N_{\rO_{9,+}}(R''))$. In particular, the conjugacy class of $R''$ determines that 
of $R'$.  
\end{lem}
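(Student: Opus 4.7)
The plan is to show, for each of the $16$ entries $R'$ in Table~\ref{Ta2} not in $\{R_2,R_4,R_5,R_6,R_{12}\}$, the identity $R'=O_2(N_H(R''))$, where $H:=\rO_{9,+}$ and $R'':=R'\cap\Omega_{9,+}$. Since $\Omega_{9,+}\unlhd H$, the subgroup $R''$ is characteristic in $R'$, so $N_H(R')\subseteq N_H(R'')$. As $R'$ is a radical $2$-subgroup of $H$ (being a weight subgroup), Lemma~\ref{L:R2} and its proof give $R'=O_2(N_H(R'))$. Therefore the statement reduces to the reverse containment $N_H(R'')\subseteq N_H(R')$, and once this is established the conclusion is immediate and, since $O_2(N_H(R''))$ depends only on the $H$-conjugacy class of $R''$, this also yields the final assertion that the class of $R''$ determines that of $R'$.

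The key step is to show that the parity subgroup $R'/R''\le H/\Omega_{9,+}\cong(\mathbb Z/2)^2$ is canonically determined by $R''$. I would approach this through the block decomposition $R'=R'_1\times\cdots\times R'_s$ coming from an orthogonal decomposition $\mathbb F_q^9=\bigoplus V_i$: the parity of $R'$ is the product of parities of the $R'_i$, and thus can be read off from the block decomposition of $R''$ plus the action of $N_H(R'')$ on the center $Z(R'')$. Concretely, using the explicit structure of $N_{I(V_i)}(R'_i)/R'_i$ computed in the proofs of Propositions~\ref{P:C4}, \ref{P:C6}, \ref{P:C5} (together with Remark~\ref{rmk:c1} and Proposition~\ref{prop:weight-subgp-class-sym}), I can write down $N_H(R'')$ block-by-block as a product of the normalizers of the $R'_i\cap\Omega(V_i)$ in $\rO(V_i)$ and describe their parity classes explicitly. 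For the excluded cases $R_2,R_4,R_5,R_6,R_{12}$, the $\Omega$-intersection strictly enlarges under relabelling of the one-dimensional block (because the parity of the auxiliary factor $R^0_{1,0,0,\emptyset}=\{\pm1\}$ coincides with $(1,0)$), producing $|B(R)|=4$ and hence a genuine failure of the equality $N_H(R'')=N_H(R')$; the five exceptions are exactly these.

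For the remaining $16$ entries the verification is a finite check and I would carry it out uniformly. For each entry, I would: (i) list the block decomposition and the parities of each block (using Lemmas~\ref{L:parity2}--\ref{L:parity6}); (ii) compute $R''$ as the preimage in the product of the kernel of the total parity map to $(Z_2)^2$; (iii) compute $N_H(R'')$ factor-by-factor, noting that the normalizer of each $R'_i\cap\Omega(V_i)$ in $\rO(V_i)$ is $N_{I(V_i)}(R'_i)$ because $R'_i=O_2(N_{I(V_i)}(R'_i\cap\Omega(V_i)))$ in each component (this is where the classification of basic subgroups with the correct parities is used); (iv) conclude that the largest normal $2$-subgroup is $R'$. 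The main obstacle will be step (iii): keeping track of the parity under the wreath-product structure in the entries with $\mathbf c\ne\emptyset$, in particular $R_1,R_3,R_7,R_8,R_{19},R_{20},R_{21}$, where an apparent parity extension might arise from a wreathing symmetry; but Proposition~\ref{prop:act-fie-auto} together with Lemma~\ref{L:parity3} confirms that any such symmetry already acts through $R'$, so no new normal $2$-element is introduced and $O_2(N_H(R''))$ coincides with $R'$. Once these checks are complete, the lemma follows, and the corresponding conjugacy statement for $R''$ and $R'$ is a direct consequence.
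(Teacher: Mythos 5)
Your high-level reduction is sound at the level of logic: since $R'$ is a radical $2$-subgroup of $H:=\rO_{9,+}$ we have $R'=O_2(N_H(R'))$ by definition (not by Lemma~\ref{L:R2}, which gives a different containment), and since $N_H(R')$ normalizes both $R'$ and the normal subgroup $\Omega_{9,+}$ it normalizes $R''=R'\cap\Omega_{9,+}$; so establishing $N_H(R'')=N_H(R')$ would indeed give the lemma. (Note that ``$R''$ is characteristic in $R'$'' is not the right justification — characteristic refers to invariance under all automorphisms of the abstract group $R'$, which you have not checked and do not need; what you use is that $N_H(R')$ normalizes both $R'$ and $\Omega_{9,+}$.)

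The substantial gap is in step (iii). You propose to compute $N_H(R'')$ ``factor-by-factor'' as a product of normalizers $N_{\rO(V_i)}(R'_i\cap\Omega(V_i))$. This breaks down because $R''$ is \emph{not} the direct product $\prod_i(R'_i\cap\Omega(V_i))$. The intersection with $\Omega(V)=\Omega_{9,+}$ is the kernel of the \emph{global} parity map $\prod_i\rO(V_i)\to\rO(V)/\Omega(V)\cong(Z_2)^2$, not the product of the kernels of the local parity maps $\rO(V_i)\to\rO(V_i)/\Omega(V_i)$. Concretely for $R_1=R^0_{1,0,0,(3)}\times\{\pm1_{V_2}\}$, both factors have parity group $\{(0,0),(1,0)\}$, and $R''$ contains elements such as $(\diag(-1,1,\dots,1),-1_{V_2})$ whose two components each have parity $(1,0)$ but whose product lands in $\Omega_{9,+}$; so $R''\supsetneq (R'_1\cap\Omega(V_1))\times(R'_2\cap\Omega(V_2))$. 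In other words $R''$ is a fiber product over $(Z_2)^2$, not a direct product, and the block-wise subgroups you want to use are not the actual factors of $R''$. The paper circumvents exactly this issue by first computing $Z(R'')$ (which does recover the block decomposition up to permutation) and $N_H(Z(R''))$, and only then computing $N_H(R'')$ inside that group; you do not engage with $Z(R'')$ at all.

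Two smaller points: (a) your reduction proves $N_H(R'')=N_H(R')$, which is sufficient for the lemma but a priori stronger than the actual statement $R'=O_2(N_H(R''))$; the paper's approach, calculating $O_2$ directly, avoids committing to this stronger equality. (b) Your claimed invocation of Proposition~\ref{prop:act-fie-auto} to dispose of the wreath-product cases is off-target — that proposition concerns field automorphisms, not the normalizer $N_H(R'')$; whatever you mean by ``such symmetry already acts through $R'$'' needs its own argument.
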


\begin{proof}
For each weight subgroups $R'$ of $\rO_{9,+}$ except $R_{2}, R_{4}, R_{5}, R_{6}, R_{12}$ in Table \ref{Ta2}, we can calculate  
$R''=R'\cap\Omega_{9,+}$ from parities of elements of $R'$. Then, we can calculate $Z(R'')$ and $N_{\rO_{9,+}}(Z(R''))$. 
It is clear that $N_{\rO_{9,+}}(R'')\subset N_{\rO_{9,+}}(Z(R''))$. Finally, we can calculate $N_{\rO_{9,+}}(R'')$ and 
see that $R'=O_{2}(N_{\rO_{9,+}}(R''))$.  
\end{proof}

\subsection{The case of $r=2$}\label{SS:Spin8}

When $r=2$, $R$ is a radical 2-subgroup of \[Z_{G}(B(R))\cong\Spin(V)\cong\Spin_{8,+}(q),\] where $V$ is an 8-dimensional 
orthogonal space over $\mathbb{F}_{q}$ with discriminant 1. Consider the projection $p:\Spin(V)\rightarrow\Omega(V)$. Any 
radical 2-subgroup $R$ of $\Spin(V)$ is of the form $R=p^{-1}(R'\cap\Omega(V))$ for some radical 2-subgroup $R'$ of $\rO(V)$. 
Note that \[N_{G}(B(R))/Z_{G}(B(R))\cong\fS_{3}.\]  

Choose an orthogonal basis $\{e_{1},\dots,e_{8}\}$ of $V$ with $(e_{i},e_{i})=1$ ($1\leq i\leq 8$). Define 
\[z:=e_{1}\cdots e_{8}\in\Spin(V).\] Then, $Z(\Spin(V))=\{1,-1,z,-z\}\cong Z_{2}\times Z_{2}$. We have $p(z)=-I\in\Omega(V)$. 
Note that field automorphisms and diagonal automorphisms fix $-1,z,-z$, and triality (graph automorphism) permutes them. There 
are four involution classes in $P\Omega(V)$ with representatives $y_{1},y_{2},y_{3},y_{4}$ as follows: 
\[y_{1}:=[I_{2,6}],\quad y_{2}:=[I_{4,4}],\quad y_{3}:=[J_{4}],\quad y_{4}:=\diag\{-J_{1},J_{3}\}.\] Let  
\[x_{1}=e_{1}e_{2},\] \[x_{2}=e_{1}e_{2}e_{3}e_{4},\]  \[x_{3}=\frac{1+e_{1}e_{5}}{\sqrt{2}}\frac{1+e_{2}e_{6}}{\sqrt{2}}\frac{1+e_{3}e_{7}}{\sqrt{2}}\frac{1+e_{4}e_{8}}{\sqrt{2}},\]
\[x_{4}=\frac{1-e_{1}e_{2}}{\sqrt{2}}\frac{1+e_{3}e_{6}}{\sqrt{2}}\frac{1+e_{4}e_{7}}{\sqrt{2}}\frac{1+e_{5}e_{8}}{\sqrt{2}}.\] 
Then, $x_{i}\in\Spin(V)$ is a pre-image of $y_{i}$ ($i=1,2,3,4$). We have $x_{1}^{2}=-1$, $x_{3}^{2}=z$, $x_{4}^{2}=-z$ 
and $x_{2}^{2}=1$. Note that, triality stabilizes the conjugacy class containing $y_{2}$ and permutes three conjugacy classes 
containing $y_{1}$, $y_{3}$, $y_{4}$, respectively. Let $\pi:\rO(V)\rightarrow\PO(V)$ be the projection map. Define 
$\tilde{A}(R)$ to be the pre-image of $\Omega_{1}(Z(\pi(p(R)))$.     

\begin{lem}\label{P:Ta3}
	If $R$ is a principal weight subgroup of $G$ with $\rank B(R)=2$, then $R$ is conjugate to one of $R_k$ ($22\le k\le 37$) as in Table 
	\ref{Ta3}. 
	\end{lem}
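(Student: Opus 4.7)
The plan is to reduce the classification to a classification of principal weight subgroups of $H := Z_G(B(R)) \cong \Spin_{8,+}(q)$, then lift through the projection $p: \Spin(V) \to \Omega(V)$ to principal weight subgroups of $\rO(V)$, and finally quotient by the triality action of $\fS_3 = N_G(B(R))/H$. Since $R$ normalizes $B(R)$ we have $N_G(R) \subseteq N_G(B(R))$, so by Remark \ref{rmk:voe2} the subgroup $R$ is a principal weight subgroup of $H$. Writing $R = p^{-1}(\bar R)$ for $\bar R$ a principal weight subgroup of $\Omega(V)$, Remark \ref{rmk:extension} produces a principal weight subgroup $R'$ of $\rO(V)$ with $\bar R = R' \cap \Omega(V)$, so the problem is pushed to $\rO_{8,+}(q)$.

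Next, I would enumerate the principal weight subgroups of $\rO_{8,+}(q)$ using Proposition \ref{P:Ta1} and Theorem \ref{subsec:radical-subgp-class}: each such $R'$ is a product $R'_1 \times \cdots \times R'_s$ along an orthogonal decomposition $V = V_1 \perp \cdots \perp V_s$, with each $R'_i$ drawn from Table \ref{Ta1} and the multiplicity constraints from Remark \ref{rmk:wei-sub-orth}. Using the parity computations of Lemmas \ref{L:parity2}--\ref{L:parity6}, I would determine $R' \cap \Omega_{8,+}(q)$ and hence the lifted group $R \subseteq \Spin_{8,+}(q)$. For each candidate I would compute $\tilde A(R)$ and identify $B(R) \subseteq \Omega_1(Z(R))$, keeping only those $R'$ for which $\rank B(R) = 2$; concretely this controls how $Z(R)$ meets the three nontrivial elements $-1, z, -z$ of $Z(\Spin_{8,+}(q))$, together with any further $\sigma_2$-class involutions appearing at the top of $Z(R)$.

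Then, as in Lemma \ref{L:Spin9-conjugacy}, I would verify case by case that $R' = O_2(N_{\rO_{8,+}}(R' \cap \Omega_{8,+}))$ on the surviving list, so that the $\rO_{8,+}(q)$-conjugacy class of $R'$ is determined by the $\Spin_{8,+}(q)$-conjugacy class of $R$. The final step is to factor out the triality action: $\fS_3 = N_G(B(R))/H$ permutes the three non-identity elements of $Z(H) \cong (Z_2)^2$, and therefore can fuse several $\Spin_{8,+}(q)$-conjugacy classes of $R$ into a single $G$-conjugacy class. Orbit representatives under this fusion form the list $R_{22}, \ldots, R_{37}$ in Table \ref{Ta3}.

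The main obstacle is precisely this last triality step. Unlike the case $r = 1$, where $N_G(B(R)) = Z_G(B(R))$ makes lifting from $\Spin_{9,+}(q)$ transparent, here the $\fS_3$-action on pairs $(R', B(R))$ must be made explicit via the three classes $y_1, y_3, y_4$ of involutions in $\PO(V)$ permuted by triality; one has to track how different orthogonal decompositions of $V$ into basic summands can become triality-equivalent, and confirm that the resulting sixteen orbit representatives exhaust all principal weight subgroups of $G$ with $\rank B(R) = 2$.
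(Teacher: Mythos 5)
Your reduction chain is exactly the paper's: $R$ is a principal weight subgroup of $H=Z_G(B(R))\cong\Spin_{8,+}(q)$ by Remark~\ref{rmk:voe2} (using $N_G(R)\subseteq N_G(B(R))$ since $B(R)$ is characteristic in $Z(R)$), then $p(R)$ is a principal weight subgroup of $\Omega_{8,+}(q)$ via Lemma~\ref{lem:weight-quo}, Remark~\ref{rmk:extension} lifts to a principal weight subgroup $R'$ of $\rO_{8,+}(q)$, and Proposition~\ref{P:Ta1} together with Remark~\ref{rmk:wei-sub-orth} enumerates all such $R'$ in dimension~$8$. The paper's proof is precisely ``Similar as the proof of Lemma~\ref{P:Ta2}'', and that is this argument.

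However, your final two steps rest on a misconception about what Table~\ref{Ta3} contains. The sixteen rows $R_{22},\ldots,R_{37}$ are not $\fS_3$-orbit representatives under the triality action of $N_G(B(R))/H$: they are the complete list of principal weight subgroups $R'$ of $\rO_{8,+}(q)$ produced by Proposition~\ref{P:Ta1} in dimension~$8$, one row per $R'$. Since conjugating $B(R)$ into a fixed position identifies $R$ as a principal weight subgroup of a fixed copy of $\Spin_{8,+}(q)$, the lemma is proved entirely inside $\Spin_{8,+}(q)$ and no quotient by $\fS_3$ is taken. The triality fusion you worry about is genuinely needed later, in Proposition~\ref{P:weight-r2}, where the $S$-orbits on these sixteen classes and their stabilizers are computed in order to count $B_0$-weights, but it plays no role in establishing Lemma~\ref{P:Ta3}. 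Declaring it ``the main obstacle'' of the lemma is where your write-up goes wrong.

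Two smaller points. First, the filtering step ``keeping only those $R'$ for which $\rank B(R)=2$'' is vacuous here, unlike in the $r=1$ case: since $Z(\Spin_{8,+}(q))\cong(Z_2)^2$ is a normal $2$-subgroup of $H$, it lies in $O_2(H)\subseteq R$ and hence in $\Omega_1(Z(R))$, and its three non-identity elements are in the $\sigma_2$-class of $G$, so $\rank B(R)\ge 2$ automatically; combined with $r\le 2$ this forces $\rank B(R)=2$ for every candidate. (In the $r=1$ case, by contrast, Table~\ref{Ta2} deliberately keeps the rank-$2$ candidates $R_2,R_4,R_5,R_6,R_{12}$, which are excluded only in the discussion after that lemma.) Second, the parity computations and the determination of $R'\cap\Omega_{8,+}(q)$ via Lemmas~\ref{L:parity2}--\ref{L:parity6} are not needed to prove the present lemma either; they enter later (Lemma~\ref{L:Spin8-conjugacy} and Proposition~\ref{P:weight-r2}) when one needs to compare the resulting $R$'s and analyze the fusion.
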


	\begin{proof}
Similar as the proof of Lemma \ref{P:Ta2}.
	\end{proof}	

\begin{table}[!htb]
	\centering
	\renewcommand\arraystretch{1.3}
	\begin{tabular}{|c|c|c|c|c|}  
		\hline  
        & $R'$ &  parity & $\log_{2}|R/Z(\Spin(V))|$ & $\log_{2}|\tilde{A}(R)/Z(\Spin(V))|$   \\
	\hline	
	    $R_{22}$	 & $R^0_{1,0,0,(3)}$   & (1,0)  & 9 & 3   \\
	    $R_{23}$		 & $R^0_{1,1,0,(3)}$  & (0,1)  & 9 & 3      \\
	    $R_{24}$		 & $R^0_{1,0,0,(1,2)}$  & (1,0)   & 11 & 1     \\
     $R_{25}$      	 & $R^0_{1,1,0,(1,2)}$ &  (0,1)   & 11  & 1  \\
	   $R_{26}$	 & $R^4_{1,0,2,\emptyset}$  & 0  & $a+4$ & 5       \\
	   $R_{27}$  	 & $R^4_{1,0,1,(1)}$    & 0 & $2a+6$ & 1   \\
	   $R_{28}$     & $R^4_{1,0,0,(1,1)}$ &   $(Z_2)^2$   & $4a+4$ & 1       \\
	   $R_{29}$  	 &  $R^4_{1,0,0,(2)}$  &  $(Z_2)^2$ & $4a+3$ & 2    \\ 
			 \hline
    $R_{30}$   &           $R^0_{1,0,0,(2)}\ti R^0_{1,1,0,(2)}$    & $(Z_2)^2$  & 9  & 1    \\
   $R_{31}$    &          $R^0_{1,0,0,(2)}\ti R^4_{1,0,1,\emptyset}$    &  (1,0) & $a+7$ & 1    \\
    $R_{32}$       &  $R^0_{1,0,0,(2)}\ti R^4_{1,0,0,(1)}$    &  $(Z_2)^2$ & $2a+6$ & 1    \\
     $R_{33}$   &         $R^0_{1,1,0,(2)}\ti R^4_{1,0,1,\emptyset}$ & (0,1)   & $a+7$  & 1    \\
    $R_{34}$    &         $R^0_{1,1,0,(2)}\ti R^4_{1,0,0,(1)}$ &   $(Z_2)^2$    & $2a+6$ & 1     \\
    $R_{35}$    &          $R^4_{1,0,1,\emptyset}\ti R^4_{1,0,0,(1)}$  &  $(Z_2)^2$  & $3a+3$ & 1      \\
	 \hline
   $R_{36}$ 	 & $R^4_{1,0,0,\emptyset}\ti (R^{0}_{1,0,0,\emptyset})^6$  & $(Z_2)^2$   & $a+4$ & 5    \\
		 \hline
    $R_{37}$ 	 & $R^4_{1,0,0,\emptyset}\ti (R^{0}_{1,1,0,\emptyset})^6$   &  $(Z_2)^2$    & $a+4$ & 5      \\
		\hline	 
	\end{tabular} 
\caption{Principal weight subgroups of $\Spin_{8,+}$}\label{Ta3}
\end{table}

The group $\tilde{A}(R)$ has the following characterization.  

\begin{lem}\label{L:Spin8-A}
An element $x$ of $R$ is contained in $\tilde{A}(R)$ if and only if \[yxy^{-1}x^{-1}\in\{1,-1,z,-z\} (\forall y\in R)\textrm{ and }  
x^{2}\in\{1,-1,z,-z\}.\]    
\end{lem}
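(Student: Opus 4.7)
\medskip

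\noindent\emph{Proof plan.} The statement is essentially a direct unwinding of the two-step projection $\Spin(V)\xrightarrow{p}\Omega(V)\hookrightarrow\rO(V)\xrightarrow{\pi}\PO(V)$. Set $Z=Z(\Spin(V))=\{1,-1,z,-z\}$. The plan is first to identify $Z$ as the kernel of the composite $\pi\circ p$: indeed $\ker p=\{1,-1\}$ while $\ker\pi=\{\pm I\}$, and since $p(z)=-I$ by construction, we have $(\pi\circ p)^{-1}(1_{\PO(V)})=p^{-1}(\{\pm I\})=Z$.

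Next I would unpack the membership condition $x\in\tilde A(R)$, which by the definition $\tilde A(R)=(\pi\circ p)^{-1}\bigl(\Omega_{1}(Z(\pi(p(R))))\bigr)$ decomposes into the two conditions: (a) $(\pi\circ p)(x)^{2}=1$, and (b) $(\pi\circ p)(x)$ centralises $(\pi\circ p)(R)$. Condition (a) reads $(\pi\circ p)(x^{2})=1$, which by the identification of the kernel above is equivalent to $x^{2}\in Z=\{1,-1,z,-z\}$. Condition (b), applied to an arbitrary $y\in R$, says $(\pi\circ p)(yxy^{-1}x^{-1})=1$, which is equivalent to $yxy^{-1}x^{-1}\in Z=\{1,-1,z,-z\}$.

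Combining, $x\in\tilde A(R)$ if and only if both the commutator condition and the squaring condition stated in the lemma hold, which is exactly the claim. No obstacle is expected: the only thing to verify carefully is the identification $(\pi\circ p)^{-1}(1)=Z$, which relies on $p(z)=-I$ and the explicit description of the centre given before the lemma.
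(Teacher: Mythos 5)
Your proof is correct, and it takes the only reasonable approach: unwinding the definition $\tilde A(R)=(\pi\circ p)^{-1}(\Omega_1(Z(\pi(p(R)))))$ together with the identification $\ker(\pi\circ p|_{\Spin(V)})=\{1,-1,z,-z\}$ (valid since $p(z)=-I\in\Omega(V)$). The paper treats this exactly as such a definitional consequence and states the lemma without a proof, so your argument simply supplies the expected omitted details.
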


Let $\Gamma\cong \fS_{4}$ be the subgroup of $\Aut(Z_{G}(B(R)))/\Int(Z_{G}(B(R)))$ generated by diagonal automorphisms and graph 
automorphism. We identify $\Gamma$ with $\fS_{4}$, identify \[\Gamma\supset N_{G}(B(R))/Z_{G}(B(R))\] with the subgroup 
\[S=\langle(12),(13)\rangle\cong \fS_{3},\] identify $\Gamma\supset\rO(V)/\Omega(V)$ with the subgroup  
\[E=\langle(12),(34)\rangle\cong(Z_{2})^{2}.\] Let \[\Gamma\supset N:=\langle(12)(34),(13)(24)\rangle\cong(Z_{2})^{2}\] and 
\[\Gamma\supset D:=\langle(12),(34),(13)(24)\rangle\cong D_{8}.\] Then, $N$ is a normal subgroup of $\Gamma$ and $D$ is a 
Sylow 2-subgroup of $\Gamma$. 

\begin{lem}\label{L:D4}
Let $\sigma\in D-E$. If $R$ is a radical 2-subgroup of $\Spin(V)$ (resp. $R$ appears in Table \ref{Ta3}), then so is 
$\sigma(R)$ (resp. $\sigma(R)$ is conjugate to one in Table \ref{Ta3}). For any $x\in R$, the parity of $\sigma(x)$ is 
$(t_{2},t_{1})$ if the parity of $x$ is $(t_{1},t_{2})$, where $t_{1},t_{2}\in\{0,1\}$. 
\end{lem}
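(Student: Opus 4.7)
\emph{Plan.} I would first observe that $\sigma \in \Gamma \subset \Out(\Spin(V))$ lifts to a genuine automorphism of $\Spin(V)$. Since automorphisms of a finite group preserve the collection of radical $p$-subgroups, this immediately yields the first clause: $\sigma(R)$ is a radical 2-subgroup of $\Spin(V)$ whenever $R$ is. For the parenthetical clause, Lemma~\ref{P:Ta3} exhibits Table~\ref{Ta3} as a complete set of $\Spin(V)$-conjugacy representatives of the principal weight subgroups of $\Spin(V)$ arising from $R \subset G$ with $\rank B(R)=2$; since $\sigma$ is an automorphism of $\Spin(V)$, it permutes this finite set, and hence $\sigma(R)$ is conjugate to some entry of Table~\ref{Ta3}.

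The substantive content is the parity statement. I would prove it by a direct computation in $\Gamma \cong \fS_4$: for each of the four elements of $D - E$ (namely $(13)(24)$, $(14)(23)$, $(1324)$, $(1423)$), conjugation on $E = \langle (12),(34)\rangle$ swaps the generators $(12)$ and $(34)$; for instance, $(13)(24)\cdot(12)\cdot(13)(24)^{-1} = (34)$, and similarly for the other three. Under the identification of $E$ with $\rO(V)/\Omega(V)$ — with $(12)$ corresponding to $r_v\Omega(V)$ and $(34)$ to $r_{v'}\Omega(V)$ — this swap means $\sigma$ interchanges the two reflection cosets. Hence an element $x$ in the coset $r_v^{t_1} r_{v'}^{t_2}\Omega(V)$ is sent by $\sigma$ to an element in $r_v^{t_2} r_{v'}^{t_1}\Omega(V)$, which by definition has parity $(t_2,t_1)$.

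The main obstacle will be reconciling the abstract $\fS_4$-calculation with the concrete action on elements $x \in R \subset \Spin(V)$. Parity is defined for elements of $\rO(V)$, so in order to speak of the parity of $x$ and of $\sigma(x)$ one must fix a compatible lift of $R$ to $\rO(V)$ (as was done when classifying radical 2-subgroups of $\Spin(V)$ via the projection $p$ in \S\ref{SS:Spin9}--\S\ref{SS:Spin8}) together with a lift of $\sigma$ that acts on this ambient group through the claimed swap of $E$. Once this identification is pinned down, the abstract conjugation calculation in $\Gamma$ transfers to the level of individual elements and gives the swap of parity coordinates, completing the proof.
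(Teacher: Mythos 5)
Your first two clauses are fine: $\sigma$ lifts to an automorphism of $\Spin(V)$, automorphisms preserve radical subgroups, and $\sigma$ permutes the finitely many conjugacy classes appearing in Table~\ref{Ta3}. The $\fS_4$ computation $(13)(24)\cdot(12)\cdot(13)(24)^{-1}=(34)$ (and its variants for the other three elements of $D-E$) is also correct. But the part you flag as ``the main obstacle'' is genuinely the whole content of the lemma, and your sketch does not close it. Two distinct compatibilities are being assumed: (i) that the paper's identification $E=\langle(12),(34)\rangle$ sends $(12)$ and $(34)$ to the two \emph{reflection} cosets $r_v\Omega(V)$ and $r_{v'}\Omega(V)$ of parities $(1,0)$ and $(0,1)$ rather than, say, sending $(34)$ to the coset of parity $(1,1)$; and (ii) that the conjugation action of $\sigma$ on $E$ inside $\Gamma$ agrees with the action of a lift of $\sigma$ on $\rO(V)/\Omega(V)$ via an automorphism of $\rO(V)$ (which is only clear once one shows $\sigma$ is realizable by $\GO(V)$-conjugation rather than genuine triality). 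Neither is addressed; they are true, but (i) needs an argument separating $(12),(34)\notin N$ from $(12)(34)\in N\cap E$, and (ii) needs a Lang--Steinberg-type realization. Without those, the inference from ``conjugation swaps $(12)$ and $(34)$'' to ``$\sigma$ swaps the parity coordinates $(t_1,t_2)\mapsto(t_2,t_1)$'' is not forced.

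The paper bypasses all of this by writing $\sigma=\Ad(tY)$ explicitly, with $Y\in\GL(V)$ a similitude of factor $\delta_0^{-1}$ for a non-square $\delta_0$, and computing directly: since $\sigma(r_v)=r_{Yv}$ and
\[(Yv,Yv)^{\frac{q-1}{2}}=\delta_0^{-\frac{q-1}{2}}(v,v)^{\frac{q-1}{2}}=-(v,v)^{\frac{q-1}{2}},\]
the conjugate of a ``square-type'' reflection is a ``non-square-type'' reflection and vice versa, which by multiplicativity gives the swap of parity coordinates for every $x\in R$ at once. This is shorter, does not require navigating the $\fS_4$-labeling, and automatically handles all of $D-E$ because elements of $D-E$ differ from one another by inner automorphisms of $\rO(V)$, which preserve parity. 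If you want to keep the abstract $\fS_4$ route, you must supply the two identifications above; otherwise the direct computation is the more efficient path.
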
 

\begin{proof}
The first statement is clear. Choose a generator $\delta_{0}$ of $\mathbb{F}_{q}^{\times}$ and choose an element 
$t\in\mathbb{F}_{q^{2}}$ with $t^{2}=\delta_{0}$. We may realize $\sigma$ as $\sigma=\Ad(tY)$, where $Y\in\GL(V)$ with 
\[(Yu,Yv)=\delta_{0}^{-1}(u,v),\quad\forall u,v\in V.\] For any $v\in V$ with $(v,v)\neq 0$, we have 
$(Yv,Yv)^{\frac{q-1}{2}}=\delta_{0}^{-\frac{q-1}{2}}(v,v)^{\frac{q-1}{2}}=-(v,v)^{\frac{q-1}{2}}$. This implies the 
second statement of this lemma. 
\end{proof}

We aim to determine $S$ orbits on conjugacy classes of weight subgroups in Table \ref{Ta3} and calculate stabilizers. To achieve 
this goal, we consider the action of the larger group $\Gamma$, find representatives of orbits and calculate stabilizers. Then, 
the analysis of double cosets $S\backslash\Gamma/\Gamma_{[R]}$ leads to our goal, where $[R]$ means the $Z_{G}(B(R))$ conjugacy 
class containing $R$; write $\Gamma_{[R]}$ (resp. $S_{[R]}$) for the stabilizer of $\Gamma$ (resp. $S$) at $[R]$. 

\begin{prop}\label{P:weight-r2}
The following are representatives of $S$ orbits of weight subgroups in Table \ref{Ta3} and their stabilizers in $\Gamma$ and 
$S$: \begin{enumerate} 
\item[(a1-1)] $R_{36}$: $\Gamma_{[R]}=E$, $\Gamma\cdot[R]$ consists of all conjugacy classes from $R_{26}$, $R_{36}$, $R_{37}$; 
$S_{[R]}=\langle(12)\rangle$, $S\cdot[R]$ consists of a conjugacy class from $R_{36}$ and two conjugacy classes from $R_{26}$.  
\item[(a1-2)] $R_{37}$: $\Gamma_{[R]}=E$, $\Gamma\cdot[R]$ consists of all conjugacy classes from $R_{26}$, $R_{36}$, $R_{37}$; 
$S_{[R]}=\langle(12)\rangle$, $S\cdot[R]$ consists of a conjugacy class from $R_{37}$ and two conjugacy classes from $R_{26}$.  
\item[(a2-1)] $R_{32}$: $\Gamma_{[R]}=E$, $\Gamma\cdot[R]$ consists of all conjugacy classes from $R_{27}$, $R_{32}$, $R_{34}$; 
$S_{[R]}=\langle(12)\rangle$, $S\cdot[R]$ consists of a conjugacy class from $R_{32}$ and two conjugacy classes from $R_{27}$.   
\item[(a2-2)] $R_{34}$: $\Gamma_{[R]}=E$, $\Gamma\cdot[R]$ consists of all conjugacy classes from $R_{27}$, $R_{32}$, $R_{34}$; 
$S_{[R]}=\langle(12)\rangle$, $S\cdot[R]$ consists of a conjugacy class from $R_{34}$ and two conjugacy classes from $R_{27}$. 
\item[(b1-1)] $R_{22}$: $\Gamma_{[R]}=S_{[R]}=S$, $\Gamma\cdot[R]$ consists of all conjugacy classes from $R_{22}$, $R_{23}$; 
$S\cdot[R]$ consists of a conjugacy classes from $R_{22}$.   
\item[(b1-2)] $R_{23}$: $\Gamma_{[R]}=\langle(13),(34)\rangle$, $\Gamma\cdot[R]$ consists of all conjugacy classes from $R_{22}$, 
$R_{23}$; $S_{[R]}=\langle(13)\rangle$, $S\cdot[R]$ consists of a conjugacy class from $R_{22}$ and two conjugacy classes from 
$R_{23}$. 
\item[(b2-1)] $R_{24}$: $\Gamma_{[R]}=S_{[R]}=S$, $\Gamma\cdot[R]$ consists of all conjugacy classes from $R_{24}$, $R_{25}$; 
$S\cdot[R]$ consists of a conjugacy classes from $R_{24}$.   
\item[(b2-2)] $R_{25}$: $\Gamma_{[R]}=\langle(13),(34)\rangle$, $\Gamma\cdot[R]$ consists of all conjugacy classes from $R_{24}$, 
$R_{25}$; $S_{[R]}=\langle(13)\rangle$, $S\cdot[R]$ consists of a conjugacy class from $R_{24}$ and two conjugacy classes from 
$R_{25}$.   
\item[(b3-1)] $R_{31}$: $\Gamma_{[R]}=S_{[R]}=S$, $\Gamma\cdot[R]$ consists of all conjugacy classes from $R_{31}$, $R_{33}$; 
$S\cdot[R]$ consists of a conjugacy classes from $R_{31}$.   
\item[(b3-2)] $R_{33}$: $\Gamma_{[R]}=\langle(13),(34)\rangle$, $\Gamma\cdot[R]$ consists of all conjugacy classes from $R_{31}$, 
$R_{33}$; $S_{[R]}=\langle(13)\rangle$, $S\cdot[R]$ consists of a conjugacy class from $R_{31}$ and two conjugacy classes from 
$R_{33}$. 
\item[(c1)] $R_{28}$: $\Gamma_{[R]}=\Gamma$ and $S_{[R]}=S$, $\Gamma\cdot[R]=S\cdot[R]$ consists of a conjugacy class from $R_{28}$. 
\item[(c2)] $R_{29}$: $\Gamma_{[R]}=\Gamma$ and $S_{[R]}=S$, $\Gamma\cdot[R]=S\cdot[R]$ consists of a conjugacy class from $R_{29}$. 
\item[(c3)] $R_{30}$: $\Gamma_{[R]}=\Gamma$ and $S_{[R]}=S$, $\Gamma\cdot[R]=S\cdot[R]$ consists of a conjugacy class from $R_{30}$. 
\item[(c4)] $R_{35}$: $\Gamma_{[R]}=\Gamma$ and $S_{[R]}=S$, $\Gamma\cdot[R]=S\cdot[R]$ consists of a conjugacy class from $R_{35}$. 
\end{enumerate}
\end{prop}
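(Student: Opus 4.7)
The plan is to deduce the $S$-action on $Z_G(B(R))$-conjugacy classes of principal weight subgroups from the structurally larger $\Gamma \cong \fS_4$-action, and then recover the $S$-orbits and stabilizers by analyzing the double coset decomposition $S \backslash \Gamma / \Gamma_{[R]}$. I will therefore first identify, for each of the conjugacy classes represented by $R_{22}, \ldots, R_{37}$ in Table~\ref{Ta3}, the stabilizer $\Gamma_{[R]}$; once this is known, the $\Gamma$-orbit size is $24/|\Gamma_{[R]}|$ and the $S$-suborbits within it are indexed by $S\backslash\Gamma/\Gamma_{[R]}$, from which $S_{[R']}$ for each $[R']$ in the orbit can be read off.

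The action of $\Gamma$ is built from two pieces. First, the diagonal subgroup $E = \rO(V)/\Omega(V)$ acts by Lemma~\ref{L:D4}: the element $(34) \in E \setminus (S \cap E)$ flips parities $(t_1,t_2) \leftrightarrow (t_2,t_1)$, hence swaps the mirror-parity pairs $R_{22}/R_{23}$, $R_{24}/R_{25}$, and $R_{31}/R_{33}$, and stabilizes each class whose parity set is $(34)$-invariant (parities $\{(0,0)\}$ and $(Z_2)^2$). Second, the triality component (the $3$-cycles in $\Gamma/N$) cyclically permutes the three natural $8$-dimensional orthogonal representations of $\Spin_{8,+}(q)$, equivalently the three embeddings $\Spin(V) \hookrightarrow \rO(V')$ determined by the three non-identity elements $-1,\ z,\ -z$ of $Z(\Spin(V))$. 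Matching subgroups by the invariants $|R|$, $|\tilde A(R)|$, and the action of $\tilde A(R)$ on $Z(\Spin(V))$ displayed in Table~\ref{Ta3}, I will show that $(R_{26}, R_{36}, R_{37})$ and $(R_{27}, R_{32}, R_{34})$ form triality triples (each member associated with exactly one of the three non-identity central elements), while $R_{22}, R_{23}, R_{24}, R_{25}, R_{31}, R_{33}$ are triality-stable (having a larger $\tilde A(R)\cap Z(\Spin(V))$ that uniquely distinguishes them from their triality images), and $R_{28}, R_{29}, R_{30}, R_{35}$ are fixed by all of $\Gamma$ (their $\tilde A(R)$ already contains all of $Z(\Spin(V))$, so there is no choice of distinguished central involution for triality to permute).

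An additional subtlety is that a single row of Table~\ref{Ta3} may split into several $Z_G(B(R))$-conjugacy classes in $\Spin(V)$, and $\Gamma$ may permute these nontrivially even when it stabilizes the underlying $\rO$-class. This is exactly what produces the asymmetry $\Gamma_{[R_{22}]} = S \neq \langle(13),(34)\rangle = \Gamma_{[R_{23}]}$ in (b1-1)/(b1-2): the row labeled $R_{23}$ actually comprises several $Z_G(B(R))$-conjugacy classes, and the triality $3$-cycle permutes these nontrivially while fixing the unique class labeled $R_{22}$. The analogous phenomenon accounts for (b2) and (b3). Having pinned down $\Gamma_{[R]}$ case by case, the passage to $S\backslash\Gamma/\Gamma_{[R]}$ is then a direct enumeration of double cosets in $\fS_4$ for the subgroups that actually occur, namely $\Gamma,\ D,\ S,\ \langle(13),(34)\rangle,\ E$; this yields the $S$-orbit decomposition and the stabilizers asserted in (a1)--(c4).

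The main obstacle will be Step~2, the explicit identification of the triality action on the given basic subgroups. This requires tracing triality on $R_{26} = R^4_{1,0,2,\emptyset}$, $R_{36} = R^4_{1,0,0,\emptyset} \times (R^0_{1,0,0,\emptyset})^6$, and $R_{37} = R^4_{1,0,0,\emptyset} \times (R^0_{1,1,0,\emptyset})^6$ (and likewise for the $R_{27}$ triple), and checking that the three possible choices of distinguished central involution in $Z(\Spin(V)) \setminus \{1\}$ realize exactly these three subgroups. I plan to carry this out by passing to the algebraic group $\Spin_{8}(\bar{\mathbb F}_q)$, where the three $8$-dimensional representations are interchanged by the outer automorphism group of type $\tp D_4$, and then descending to $\mathbb F_q$-points via the Lang--Steinberg theorem to reduce the verification to invariants of $\tilde A(R)$ already computed in \S\ref{SS:parity}.
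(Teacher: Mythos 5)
Your high-level framework—first pin down $\Gamma_{[R]}$ for each class, then read off the $S$-orbit decomposition and stabilizers via double cosets $S\backslash\Gamma/\Gamma_{[R]}$—is exactly the paper's framework, and the double-coset enumeration at the end is the same. Where you diverge is in how $\Gamma_{[R]}$ is determined, and here your plan has a genuine gap.

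The paper never computes the triality action on any of the subgroups $R_{22},\dots,R_{37}$. Instead it exploits Lemma~\ref{L:D4}: elements of $D\setminus E$ act on parity by swapping the two coordinates, so $D_{[R]}$ (hence $E_{[R]}$) is read off directly from the parity column of Table~\ref{Ta3}. From there $\Gamma_{[R]}$ is forced purely group-theoretically inside $\fS_4$: e.g.\ in case (a), $D_{[R]}=E$, and $E$ is the unique subgroup of $\fS_4$ whose intersection with $D$ is $E$; in case (b), if $\Gamma_{[R]}$ had order $2$ the $\Gamma$-orbit would have to contain $12$ classes of the same order, which Table~\ref{Ta3} rules out, so $\Gamma_{[R]}$ is conjugate to $S$. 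No explicit knowledge of triality is needed beyond Lemma~\ref{L:D4}.

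Your plan, by contrast, hinges on what you yourself call the main obstacle—tracing triality explicitly through $R^4_{1,0,2,\emptyset}$, $R^4_{1,0,0,\emptyset}\ti(R^0_{1,0,0,\emptyset})^6$, etc.\ by passing to $\Spin_8(\bar{\mathbb F}_q)$ and descending via Lang--Steinberg. You leave that step unexecuted, and the criterion you sketch for it does not work: you propose to separate the triality-stable classes from the triality triples by looking at $\tilde A(R)\cap Z(\Spin(V))$ or at whether $\tilde A(R)$ ``already contains all of $Z(\Spin(V))$.'' But $\tilde A(R)$ is defined as a full preimage in $\Spin(V)$ of a subgroup of $\PO(V)$, so $Z(\Spin(V))\subset\tilde A(R)$ for every row of Table~\ref{Ta3}; that intersection is the same in every case and distinguishes nothing. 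So the discriminating invariant you rely on in Step~2 is vacuous, and the step is not merely deferred but would fail as sketched. You should replace the explicit-triality plan with the paper's indirect route: use the parity column and Lemma~\ref{L:D4} to compute $D_{[R]}$, then combine with the order data of Table~\ref{Ta3} and the subgroup lattice of $\fS_4$ to pin down $\Gamma_{[R]}$ without ever touching triality directly.
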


\begin{proof}
Note that $a\geq 2$. By comparing orders of $R/Z(\Spin(V))$ and $A(R)/Z(\Spin(V))$, we divide possible $\Gamma$ conjugate radical  
subgroups in Table \ref{Ta3} into groups with few exceptions: \begin{enumerate}  
\item[(a1)]: $R_{26}$, $R_{36}$, $R_{37}$.  
\item[(a2)]: $R_{27}$, $R_{32}$, $R_{34}$. 
\item[(b1)]: $R_{22}$, $R_{23}$. 
\item[(b2)]: $R_{24}$, $R_{25}$. 
\item[(b3)]: $R_{31}$, $R_{33}$. 
\item[(c1)]: $R_{28}$. 
\item[(c2)]: $R_{29}$. 
\item[(c3)]: $R_{30}$. 
\item[(c4)]: $R_{35}$.  
\end{enumerate} 
The few exceptional cases are: (1) when $a=3$, those in groups (a2) and (c3) have the same orders; (2)when $a=4$, 
those in groups (b2) and (b3) have the same orders. These exceptions are distinguished in the following argument.  

(a) Take $R=R_{36}$ (resp. $R_{32}$) in group (a1) (resp. group (a2)). By Lemma \ref{L:D4}, we get $D_{[R]}=E$. Then, 
$\Gamma_{[R]}=E$ since no other subgroup of $\Gamma$ having intersection with $D$ being equal to $E$. Then, $\Gamma\cdot[R]$ 
consists of 6 conjugacy classes. Analyzing double cosets in $S\backslash\Gamma/D$, we get the conclusion. 

(b) Take $R$ one in groups (b1)-(b3). By Lemma \ref{L:D4}, we get $D_{[R]}=E_{[R]}=\langle(12)\rangle$, $\langle(34)\rangle$ 
or $E$. Using the argument in (a), one shows that $D_{[R]}=E_{[R]}=E$ could not happen. We further show that 
$\Gamma_{[R]}\sim S$. Otherwise, $\Gamma_{[R]}$ have order 2 and there should exist at least $|\Gamma/\Gamma_{[R]}|=12$ 
conjugacy classes of radical 2-subgroups in Table \ref{Ta3} having the same order as $R$, which is not the case. Then, 
$\Gamma\cdot[R]$ consists of 4 conjugacy classes. Analyzing double cosets in $S\backslash\Gamma/S$, we get the conclusion.  

(c) Take $R$ one in groups (c1)-(c4). By Lemma \ref{L:D4}, we get $D_{[R]}=D$. Moreover, we have $\Gamma_{[R]}=\Gamma$ since 
$\sigma([R])$ ($\sigma\in\Gamma-D$) could not be a conjugacy class in any other group. Then, $\Gamma\cdot[R]=[R]$.  
\end{proof}

\begin{lem}\label{L:Spin8-conjugacy}
For any weight subgroup $R'$ of $\rO_{8,+}$ in Table \ref{Ta3}, write $R''=R'\cap\Omega_{8,+}$. Then, the parity of 
$N_{\rO_{8,+}}(R'')$ is the same as that of $R'$.  
\end{lem}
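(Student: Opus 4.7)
The plan is to reduce the lemma to a statement about the normalizer of $R'$ itself, then read off parities from the structural description of basic subgroups in Section~\ref{SS:classical2}. Since $\Omega_{8,+}\unlhd \rO_{8,+}$, one immediately has $R''\unlhd R'$ and hence $R'\le N_{\rO_{8,+}}(R'')$, so the parity of $R'$ is contained in the parity of $N_{\rO_{8,+}}(R'')$. The real work is the reverse inclusion.

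The first step, paralleling the argument of Lemma~\ref{L:Spin9-conjugacy}, is to establish $R'=O_{2}(N_{\rO_{8,+}}(R''))$ for each of the sixteen entries of Table~\ref{Ta3}. I would use the parity computations in Lemmas~\ref{L:parity2}--\ref{L:parity6} to describe $R''$ explicitly as the kernel of the parity homomorphism on $R'$, then combine this with the centralizer and normalizer structure of basic subgroups from Propositions~\ref{P:C4}, \ref{P:C5}, \ref{P:C6} to read off $O_{2}(N_{\rO_{8,+}}(R''))$. This identification then forces $N_{\rO_{8,+}}(R'')\subseteq N_{\rO_{8,+}}(R')$, whence parity$(N_{\rO_{8,+}}(R''))\subseteq$ parity$(N_{\rO_{8,+}}(R'))$.

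The second step is to verify parity$(N_{\rO_{8,+}}(R'))=$ parity$(R')$. Writing $N_{\rO_{8,+}}(R')=R'\cdot Z_{\rO_{8,+}}(R')\cdot T$, where $T$ is a lift of $N_{\rO_{8,+}}(R')/R'Z_{\rO_{8,+}}(R')$ read off from Section~\ref{SS:classical2}, the parity of each factor is computable: for $R'$ it is listed in Table~\ref{Ta3}; for $Z_{\rO_{8,+}}(R')$ it follows from Lemma~\ref{L:parity1} applied to the $\GL$-type centralizer, which is trivial whenever the ambient summand has even dimension and discriminant $+1$; for $T$ it is controlled by the fact that the relevant outer automorphisms of $R'$ come from isometries of subspaces whose discriminants were already used in constructing $R'$.

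The main obstacle will be the case analysis, particularly for $R_{26}$ and $R_{27}$, whose parity is $\{(0,0)\}$: here one must verify that \emph{no} normalizing element escapes $\Omega_{8,+}$, which requires Lemma~\ref{L:parity1} and careful accounting of the outer structure in $\GL_{2^{\ga}}(\varepsilon q)\rtimes\langle\tau\rangle$ appearing in Proposition~\ref{P:C5}. The cases with parity $(Z_{2})^{2}$ (e.g., $R_{28},R_{29},R_{30},R_{35}$) are automatic, and those with parity $(1,0)$ or $(0,1)$ reduce to checking that the ``swap'' automorphism interchanging the two parity generators is not induced, which follows from the asymmetry of the orthogonal decomposition (typically one factor has discriminant $+1$ and another has discriminant $-1$, as recorded in Lemma~\ref{L:parity3}).
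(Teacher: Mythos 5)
The paper's actual proof is a one-liner that you are not reproducing: it reads the result directly off Proposition~\ref{P:weight-r2}. Under the identification $\rO(V)/\Omega(V)\cong E=\langle(12),(34)\rangle\le\Gamma$, the parity of $N_{\rO_{8,+}}(R'')$ is by definition the image of $N_{\rO_{8,+}}(R'')$ in $\rO(V)/\Omega(V)$, and this is exactly the stabiliser $E_{[R]}=E\cap\Gamma_{[R]}$ of the $\Omega(V)$-class of $R''$ (equivalently the $\Spin(V)$-class of $R=p^{-1}(R'')$). Since $\Gamma_{[R]}$ was already computed case by case in Proposition~\ref{P:weight-r2}, one just intersects with $E$ and compares with the parity column of Table~\ref{Ta3}: e.g.\ $\Gamma_{[R_{22}]}=S=\langle(12),(13)\rangle$ gives $E\cap S=\langle(12)\rangle$, matching $(1,0)$; for classes from $R_{26}$ one has $\Gamma_{[R]}$ conjugate to $E$ but landing in $\langle(13),(24)\rangle$ or $\langle(14),(23)\rangle$, both of which meet $E$ trivially, matching parity~$0$. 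Your route is instead the ``direct computation'' that the paper explicitly offers as an alternative but does not pursue.

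That said, your plan has a genuine gap in step~(1). You propose to establish $R'=O_2(N_{\rO_{8,+}}(R''))$ by combining the parity description of $R''$ with ``the centralizer and normalizer structure of basic subgroups from Propositions~\ref{P:C4}, \ref{P:C5}, \ref{P:C6}.'' But those propositions describe $N_{\rO_{8,+}}(R')$ and $Z_{\rO_{8,+}}(R')$, not $N_{\rO_{8,+}}(R'')$. The whole content of the lemma is to determine which of the four $\Omega_{8,+}$-cosets of $\rO_{8,+}$ contain elements normalising $R''$, and computing $O_2(N_{\rO_{8,+}}(R''))$ presupposes knowing $N_{\rO_{8,+}}(R'')$, hence its parity --- which is the thing being proved. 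To make this non-circular one must do what Lemma~\ref{L:Spin9-conjugacy} actually does: first compute $Z(R'')$ and $N_{\rO_{8,+}}(Z(R''))$ (a tractable classical-group calculation since $Z(R'')$ is small and explicit), then locate $N_{\rO_{8,+}}(R'')$ inside that. You never invoke $Z(R'')$, so the mechanism by which $N_{\rO_{8,+}}(R'')$ becomes accessible is not in your argument. Moreover your final remark about ruling out the ``swap'' automorphism for the $(1,0)$/$(0,1)$ cases is off-target: that swap is an element of $D\setminus E$ in the paper's $\mathfrak S_4$ picture, so it does not correspond to any coset of $\Omega(V)$ in $\rO(V)$; what one must actually rule out are normalising elements of parity $(0,1)$ or $(1,1)$, and the asymmetry of the decomposition does not by itself say this --- it is exactly what the $\Gamma_{[R]}$ computation encapsulates.
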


\begin{proof}
This is implied by the formula of $\Gamma_{[R]}$ shown above. Alternatively, one could perform direct computation to reach this 
statement.  
\end{proof}

With Lemma \ref{P:weight-r2}, we classify principal weight subgroups $R$ of $G$ with $\rank B(R)=2$. Similar method applies 
to classify radical 2-subgroups of $G$ with $\rank B(R)=2$ as well. Again, the crucial step is to classify $S$ orbits of radical 
2-subgroups and calculate their stabilizers in $\Gamma$ and $S$. Due to there are much more candidates of radical 2-subgroups, 
the required computing load is heavier.


\section{The inductive BAW condition for groups of types $\tp B_n$, $\tp D_n$, $^2\tp D_n$ and $\tp F_4$}\label{S:BAW} 

In this section, we investigate the inductive BAW condition for 2-blocks of quasi-simple groups of types $\tp B_n$, $\tp D_n$, $^2\tp D_n$ and $\tp F_4$ and prove Theorem~\ref{mainthm:BAW-BD} and Theorem~\ref{mainthm:BAW-F4}.

Let $\bG$ be a simple algebraic group over $\overline\bbF_q$ where $q$ is prime power, and $F:\bG\to\bG$ be a 
Frobenius endomorphism endowing an $\bbF_q$-structure on $\bG$. Recall that an element $t\in\bG$ is called 
\emph{quasi-isolated} in $\bG$ if the centralizer $\C_{\bG}(t)$ is not contained in a proper Levi subgroup of $\bG$. Quasi-isolated 
elements have been classified in \cite{Bo05}. 

Let $\bG^*$ be the Langlands dual of $\bG$ with corresponding Frobenius endomorphism also denoted by $F$. For a semisimple 
element $s$ of ${\bG^*}^F$, we denote the Lusztig series corresponding to $s$ by $\cE(\bG^F,s)$. Every element 
$s\in Z({\bG^*}^F)$ defines a linear character of $\bG^F$ (see \cite[(8.19)]{CE04}), which is denoted by $\hat s$. For a prime 
$p$ (with $p\nmid q$) and a semisimple $p'$-element $s$ of ${\bG^*}^F$, we denote by $\cE_p(\bG^F,s)$ the union of the Lusztig 
series $\cE(\bG^F, st)$, where $t$ runs through semisimple $p$-elements of ${\bG^*}^F$ commuting with $s$. By \cite{BM89}, the 
set $\cE_p(\bG^F,s)$ is a union of $p$-blocks of $\bG^F$. If a semisimple $p'$-element $s$ is quasi-isolated, then the blocks 
in $\cE_p(\bG^F,s)$ are called \emph{quasi-isolated blocks} of $\bG^F$. The quasi-isolated blocks of exceptional groups have 
been classified in \cite{KM13}. 

The following assumption comes from the reduction of the inductive BAW condition to quasi-isolated blocks; see Hypothesis~5.5 and Theorem~5.7 of \cite{FLZ22}.

\begin{assump}\label{A-infinity}
Suppose that $\bG$ is simple and of simply-connected type,  $\bG\hookrightarrow \tilde\bG$ is a regular embedding, and $D$ is the group consisting of field and graph automorphisms of $\bG^F$. 
Assume that $p\nmid q$.
In every $\tilde\bG$-orbit of $\IBr(\bG^F)$ there exists a Brauer character $\psi\in\IBr(\bG^F)$ such that $(\tilde\bG\rtimes D)_\psi=\tilde\bG_\psi\rtimes D_\psi$, and $\psi$ extends to $\bG^F\rtimes D_\psi$.
	\end{assump}

By the $A(\infty)$ property established by Cabanes and Sp\"ath (see  \cite[Thm.~B]{CS19} and \cite[Thm.~A]{Sp23}), Assumption~\ref{A-infinity} holds if $\bG^F$ has an $\Aut(\bG^F)$-stable unitriangular basic set.
Assumption~\ref{A-infinity} holds automatically if $Z(\bG)$ is connected, and it is known to hold if $\bG$ is of type $\tp A_n$ (\cite[Thm.~8.1]{FLZ21}), and if $\bG$ is of type $\tp C_n$ and $p=2$ (\cite[Cor.~4.6]{FM22}).

\subsection{Groups of type $\tp B_n$, $\tp D_n$ and $^2\tp D_n$}

Let $q$ be an odd prime power and let $V$ be a non-degenerate finite-dimensional orthogonal space over $\mathbb F_q$. 
Write $G=\Spin(V)$, $I=\rO(V)$, $H=\SO(V)$ and $w=\dim(V)$.
Let $F$ be a Frobenius endomorphism such that $G=\bG^F$ and $H=\bH^F$, where $\bG$ and $\bH$ are the spin group and special orthogonal group on $\mathbf V$ respectively.
Here $\mathbf V$ is a non-degenerate $w$-dimensional orthogonal space over $\overline{\mathbb F}_q$. 

We recall the parametrisation of unipotent conjugacy classes of $\bH$ (cf. \cite{Wa63}).
To every unipotent class of $\bH$ we have a corresponding partition $\la\vdash w$ such that any even part occurs an even number of times in $\la$.
Denote by $\mathscr P_w$ for such partitions $\la$.
Let $c_i$ denote the number of parts of $\la$ equals to $i$.
Let $a(\la)$ be the number of odd $i$ with $c_i>0$, and $\kappa(\la)$ be the largest number in the set $\{c_i\mid \textrm{$i$ is odd and $c_i>0$}\}$. 
In particular, $a(\la)>0$ and $\ka(\la)>0$ if $w$ is odd.
Write $\mathbf C_\la$ for the unipotent class of $\mathbf H$ parametrised by $\la$, where $\la$ is counted twice when $a(\la)=0$. Sometimes we also write $\mathbf C_\la$ as $\mathbf C_\la^\pm$ for the situation $a(\la)=0$ to distinguish those two conjugacy classes of $\bH$.
Note that $\mathbf C_\la$ is $I$-stable if $a(\la)>0$, while $\mathbf C_\la^+$ and $\mathbf C_\la^-$ are fused by $I$ when $a(\la)=0$.

For the unipotent conjugacy classes of $H$, we have that ${\mathbf C_{\la}}^F=\emptyset$ if and only if $\ty(V)=-$ and $a(\la)=0$ (see for example \cite[\S3 and \S7]{LS12}). 
Define $b(\la)=a(\la)-1$ or $0$ according as $a(\la)>0$ or $a(\la)=0$.
Moreover, ${\mathbf C_{\la}}^F$ consists of one conjugacy classes in $H$ if $\ty(V)=+$ and $a(\la)=0$, while ${\mathbf C_{\la}}^F$ consists of $2^{b(\la)}$ conjugacy classes in $H$ if $a(\la)>0$.

Denote by $u(X)$ the number of unipotent conjugacy classes of a finite reductive group $X$.
Now we determine $u(I)$, which also follows by \cite[(2.6.17)]{Wa63}.
First, $u(\rO_w^+(q))+u(\rO_w^-(q))$ is the coefficient of $t^{w}$ in the generating function
\begin{equation}\label{equ-5.1}
	(1+2\sum_{k=1}^\infty t^k)(\sum_{k=0}^\infty t^{4k})(1+2\sum_{k=1}^\infty t^{3k})(\sum_{k=0}^\infty t^{8k})\cdots
	=\prod_{k=1}^{\infty}\frac{(1+t^{2k-1})^2}{(1-t^{2k})}.
	\addtocounter{thm}{1}\tag{\thethm}
\end{equation}
Next, $u(\rO_w^+(q))-u(\rO_w^-(q))$ is the coefficient of $t^{w}$ in 
\begin{equation}\label{equ-5.1-1}
(\sum_{k=0}^\infty t^{4k})(\sum_{k=0}^\infty t^{8k})\cdots
	=\prod_{k=1}^{\infty}\frac{1}{1-t^{4k}}.
	\addtocounter{thm}{1}\tag{\thethm}
\end{equation}
In particular, $u(\rO_w^+(q))\ne u(\rO_w^-(q))$ occurs only when $4\mid w$.

Denote by $\omega_0(X)$ the number of conjugacy classes of principal weights of a finite group~$X$.
By the description of principal weights in \S\ref{S:radical}, we determine $\omega_0(I)$.
This is also calculated in \cite{AC95}, but for late use we still give some sketch.
By the construction of weights in \cite[\S6]{An93a}, a radical 2-subgroup $R$ of $I$ is a principal weight 2-subgroup if and only if $R$ is described as in Proposition~\ref{prop:weight-subgp-class} and Remark~\ref{rmk:wei-sub-orth}, and every radical 2-subgroup of $I$ affords at most one weight for a given block.
The associated basic subgroups are listed in Table~\ref{Ta1}.
In addition, the multiplicity of each basic subgroup is a triangular number.
Therefore, using the $2$-core tower structure of partitions (cf. \cite[(1A)]{AF90}), the principal 2-weights of $I$ are in bijection with the tuples $(\la_1,\la_2,\la_3,\la_4,\ka_{+},\ka_{-},\ka)$ such that $\la_1$, $\la_2$, $\la_3$, $\la_4$ are partitions, $\ka_+$, $\ka_-$, $\ka$ are $2$-cores such that $\disc(V)=(-1)^{|\kappa_-|}$ and 
\begin{equation}\label{equ-5.2}
4(|\la_1|+|\la_2|+|\la_3|+|\la_4|)+|\ka_+|+|\ka_-|+2|\ka|=w.	
\addtocounter{thm}{1}\tag{\thethm}
\end{equation}
So $\omega_0(\rO_{w,+}(q))+\omega_0(\rO_{w,-}(q))$ is the coefficient of $t^{w}$ in 
\begin{equation}\label{equ-5.3}
	(\sum_{k=0}^\infty t^{4k})^4(\sum_{k=0}^\infty t^{8k})^4\cdots\vartheta(t)^2\vartheta(t^2)=\prod_{k=1}^\infty \frac{1+t^k}{(1-t^k)(1+t^{2k})^2}.
	\addtocounter{thm}{1}\tag{\thethm}
\end{equation}
Here $\vartheta(t)=\sum\limits_{k=1}^\infty t^{\frac{k(k-1)}{2}}$, and we use Jacobi’s identity $\vartheta(t)=\prod\limits_{k=1}^\infty (1+t^k)(1-t^{2k})$.
Now we determine $\omega_0(\rO_{w,+}(q))-\omega_0(\rO_{w,-}(q))$. Since $\omega_0(\rO_{w,+}(q))=\omega_0(\rO_{w,-}(q))$ when $w$ is odd, we may assume that $w$ is even.
Note that $\sum\limits_\mu t^{|\mu|}=\frac{\vartheta(t)+\vartheta(-t)}{2}$ and  $\sum\limits_{\mu'} t^{|\mu'|}=\frac{\vartheta(t)-\vartheta(-t)}{2}$
where $\mu$, $\mu'$ run through all $2$-cores such that $|\mu|$ and $|\mu'|$ are even and odd respectively.
Thus $\omega_0(\rO_{w,+}(q))-\omega_0(\rO_{w,-}(q))$ is the coefficient of $t^{w}$ in the generating function
\begin{align*}
(\sum_{k=0}^\infty t^{4k})^4(\sum_{k=0}^\infty t^{8k})^4\cdots \vartheta(t^2)((\frac{\vartheta(t)+\vartheta(-t)}{2})^2-(\frac{\vartheta(t)-\vartheta(-t)}{2})^2)
\end{align*}
and thus using identities $\vartheta(t)=\prod\limits_{k=1}^\infty\frac{(1-t^{2k})^2}{1-t^k}$, $\prod\limits_{k=1}^\infty \frac{1}{1-t^{4k-2}}=\prod\limits_{k=1}^\infty (1+t^{2k})$ and $\prod\limits_{k=1}^\infty \frac{1}{(1-t^k)(1-(-t)^k)}=\prod\limits_{k=1}^\infty \frac{1+t^{2k}}{(1-t^{2k})^2}$, we see that the above generating function is equal to 
\begin{equation}\label{equ-5.3-2}
\prod_{k=1}^\infty \frac{(1-t^{2k})^3}{(1-t^{4k})^2}\prod\limits_{k=1}^\infty \frac{1+t^{2k}}{(1-t^{2k})^2}=\prod_{k=1}^{\infty}\frac{1}{1-t^{4k}}.
	\addtocounter{thm}{1}\tag{\thethm}
\end{equation}
In particular, $\omega_0(\rO_{w,+}(q))\ne \omega_0(\rO_{w,-}(q))$ occurs only when $4\mid w$, and this forces that $\omega_0(\rO_{w,\eps}(q))= \omega_0(\rO_{w}^\eps(q))$ for any $w$ and any $\eps\in\{\pm\}$.
Notice that $\prod\limits_{k=1}^\infty (1+t^k)^2=\prod\limits_{k=1}^\infty (1+t^{2k-1})^2(1+t^{2k})^2$, therefore, by (\ref{equ-5.1}), (\ref{equ-5.1-1}), (\ref{equ-5.3}) and (\ref{equ-5.3-2}), one gets that $|\omega_0(I)|=|u(I)|$.

\begin{lem}\label{BAWC-SO}
The blockwise Alperin weight conjecture holds for the principal 2-block of $H=\SO(V)$.
\end{lem}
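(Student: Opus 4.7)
The plan is to deduce the lemma from the corresponding statement for $I=\rO(V)$ by Clifford theory, since $H=\SO(V)$ has index~$2$ in $I$ and so all the subtleties of the prime~$2$ are concentrated in this quotient. The key input on the side of weights is already in hand: the generating function identities above give $\omega_0(I)=u(I)$ for each choice of discriminant, and by construction $|\Alp(B_0(I))|=\omega_0(I)$, so the weight side equals $u(I)$.

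First I would establish that $|\IBr(B_0(I))|=u(I)$. The idea is to use that the restrictions of the unipotent characters of $I$ lying in the principal $2$-block form a basic set of $B_0(I)$: this reduces the count to the number of unipotent characters of $I$ in $B_0(I)$. All unipotent characters lie in unipotent blocks, and at $p=2$ all unipotent characters of $I$ are known to lie in the principal $2$-block (since, by \cite{FS89}, the unipotent blocks at~$2$ all collapse to $B_0$). Finally, the number of unipotent characters of $I$ equals the number of unipotent classes of $I$ by Lusztig's symbol parametrisation together with the classical combinatorial identification of the two resulting generating functions. Combining these steps yields $|\IBr(B_0(I))|=u(I)=\omega_0(I)=|\Alp(B_0(I))|$, i.e.\ BAW for $B_0(I)$.

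Next I would pass from $I$ to $H$. Since $[I:H]=2=p$, every $\psi\in\IBr(B_0(H))$ is covered by some $\tilde\psi\in\IBr(B_0(I))$, and likewise every weight of $B_0(H)$ is covered by a weight of $B_0(I)$ via the Brough--Sp\"ath framework \cite{BS22}. If $a$ and $b$ denote respectively the number of $I$-stable elements of $\IBr(B_0(H))$ and the number of $I$-orbits of size~$2$, and similarly $a'$, $b'$ on the weight side, then Clifford theory and Corollary~\ref{cor:wei-cov-solquo} give
\[
|\IBr(B_0(I))|=2a+b, \quad |\IBr(B_0(H))|=a+2b,
\]
and the analogous equalities $|\Alp(B_0(I))|=2a'+b'$, $|\Alp(B_0(H))|=a'+2b'$. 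Thus once BAW holds for $B_0(I)$, the lemma for $H$ is equivalent to the single equality $a-a'=b-b'$, or equivalently $a+b=a'+b'$, i.e.\ to a matching of the number of $I$-orbits on $\IBr(B_0(H))$ and on $\Alp(B_0(H))$. This matching I would prove by constructing a bijection compatible with the $I$-action: on the weight side the $I$-action on principal weight subgroups and their covering weights is governed by the parity discussion of \S\ref{SS:parity} and by Proposition~\ref{prop:act-fie-auto} together with Corollary~\ref{cor:act-field-cla} (with $F_0$ replaced by the diagonal automorphism); on the character side the $I$-action on unipotent Brauer characters can be read off from the Lusztig parametrisation via symbols and its compatibility with the Springer correspondence for unipotent classes.

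The main obstacle is precisely this last step: ensuring that the $I/H$-action on $\IBr(B_0(H))$ and on $\Alp(B_0(H))$ are matched, not merely that totals agree. Concretely, one must verify that a unipotent character of $H$ is $I$-stable if and only if the corresponding principal weight class is $I$-stable, which requires tracking the $I/H$-twist through the combinatorial parametrisations. The parity computations of \S\ref{SS:parity} were designed to control exactly this dichotomy on the weight side, so the remaining work is to show that the analogous dichotomy on the character side is given by the same combinatorial invariant (non-triviality of the associated $2$-core $\kappa_{-}$ in the tuple $(\la_1,\ldots,\la_4,\kappa_+,\kappa_-,\kappa)$). Once this compatibility is in place, the lemma follows.
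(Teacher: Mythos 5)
Your proposal diverges substantially from the paper's proof, and it contains a genuine gap at the very first step. The paper does not go through $I=\rO(V)$ by Clifford theory at all; it invokes Chaneb's theorem (\cite[Prop.~2.7]{Ch20}) to obtain the equality $|\IBr(B_0(H))|=u(H)$ \emph{directly} for $H=\SO(V)$, computes the discrepancies $u(H)-u(I)$ and $\omega_0(H)-\omega_0(I)$ via the parity of weight subgroups and the $a(\la)=0$ partitions, and concludes from the already-established identity $\omega_0(I)=u(I)$. No Clifford descent from $I$ is needed on the Brauer character side.

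The step you propose in its place is not correct. You want to deduce $|\IBr(B_0(I))|=u(I)$ by asserting that the unipotent characters of $I$ lying in $B_0(I)$ form a basic set and that their number equals $u(I)$. This fails at $p=2$, which is a bad prime for classical types: the unipotent characters do \emph{not} form a basic set for the unipotent blocks, and their number is not $u(H)$ (or $u(I)$). For example, $\SO_5(q)$ has $u(\SO_5(q))=5$ unipotent conjugacy classes, while $B_2$ has six unipotent characters; Chaneb's theorem correctly gives $|\IBr(B_0(\SO_5(q)))|=5$. The entire point of Chaneb's basic set---built from generalised Gelfand--Graev data rather than unipotent characters---is to handle exactly this bad-prime situation, and it cannot be replaced by the good-prime statement you are implicitly using. (There is also the secondary issue that $I$ is not a connected reductive group, so "unipotent characters of $I$" and "unipotent blocks of $I$ as in \cite{FS89}" require the theory of disconnected groups and are not available in the form you cite.)

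A smaller but real error: your Clifford-theoretic bookkeeping $|\IBr(B_0(I))|=2a+b$ and $|\Alp(B_0(I))|=2a'+b'$ is wrong. Since $[I:H]=2=p$, the quotient $I/H$ is a $p$-group, and in characteristic $p$ each $I$-invariant $\psi\in\IBr(B_0(H))$ has a \emph{unique} Brauer-character extension to $I$ (rather than two); restriction gives a bijection from $\IBr(B_0(I))$ to the $I$-orbits on $\IBr(B_0(H))$, so the correct count is $|\IBr(B_0(I))|=a+b$, not $2a+b$. Analogously $|\Alp(B_0(I))|=a'+b'$, not $2a'+b'$. With the correct formulas, BAW for $I$ gives $a+b=a'+b'$, and to reach $a+2b=a'+2b'$ you must still prove $b=b'$, i.e.\ the orbit-matching statement you flag as "the main obstacle". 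The paper proves exactly that equality of discrepancies (both equal to the coefficient of $t^w$ in (\ref{equ-5.1-1}) when $4\mid w$ and $\ty(V)=+$, and both zero otherwise), but it does so without needing BAW for $I$ as an intermediate, precisely because Chaneb gives the Brauer count on $H$ directly.
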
	

\begin{proof}
By \cite[Prop. 2.7]{Ch20}, $\IBr(B_0(H))=u(H)$, and thus it suffices to show that $|\omega_0(H)|=|u(H)|$. First we calculate $u(H)$.
If $w$ is odd or $\ty(V)=-$, then $u(H)=u(I)$.
If $w$ is even and $\ty(V)=+$, then $u(H)-u(I)$ is equal to the number of the partitions $\la\in\mathscr P_w$ with $a(\la)=0$, that is, the coefficient of $t^w$ in  (\ref{equ-5.1-1}).
In particular,  $u(H)\ne u(I)$ occurs only when $4\mid w$ and $\ty(V)=+$.

Next, we determine $\omega_0(H)$.
Note that a principal weight of $I$ covers one or two principal weights of $H$, and every principal weight of $H$ is covered by a unique principal weight of $I$.
The parity of the group $H$ is $(1,1)$, thus by Corollary~\ref{cor:wei-cov-solquo} and Proposition~\ref{P:Ta1}, a principal weight $(R,\varphi)$ of $I$ covers two principal weights of $H$ if and only if the parity of $R$ is~0, which occurs only when $4\mid w$ and $\ty(V)=+$.
By Table~\ref{Ta1}, the principal weights of $I$ which covers two principal weights of $H$ are in bijection with the partitions $\la\in\mathscr P_w$ with $a(\la)=0$. 
Thus $\omega_0(H)=\omega_0(I)$ is also equal to the coefficient of $t^w$ in  (\ref{equ-5.1-1}). In particular,  $u(H)\ne u(I)$ occurs only when $4\mid w$ and $\ty(V)=+$.
From the arguments above, we get that $|\omega_0(H)|=|u(H)|$.
\end{proof}

Under the natural isogeny $\bG\twoheadrightarrow\bH$, the unipotent classes in these two algebraic groups are in bijection. From this we use the same parametrisation for unipotent conjugacy classes of $\bG$ and $\bH$.
Moreover, this isogeny induces a bijection between the unipotent classes of $G$ and of $\Omega(V)$. 
Define $\delta(\la)=1$ if $a(\la)=0$,  $\delta(\la)=a(\la)$ if $a(\la)>0$ and $\ka(\la)=1$, and $\delta(\la)=a(\la)-1$ if $a(\la)>0$ and $\ka(\la)>1$.

We write $A_{\bH,\la}$ (resp. $A_{\bG,\la}$) for the component group $A_{\bH}(u)=Z_{\bH}(u)/Z^\circ_{\bH}(u)$ (resp. $A_{\bG}(u)=Z_{\bG}(u)/Z^\circ_{\bG}(u)$) for $u\in \mathbf C_\la$.
The component group $A_{\bH,\la}$ is an elementary abelian 2-group of order $2^{b(\la)}$; see \cite[IV, 2.26 and 2.27]{SS70}.
In \cite[\S14]{Lu84}, Lusztig gave a description of the component group $A_{\bG,\la}$, which  has order $2^{\delta(\la)}$ and contains $A_{\bH,\la}$ as a subgroup.
In particular, $A_{\bG,\la}=A_{\bH,\la}$ if $a(\la)>0$ and $\ka(\la)>1$, 
$|A_{\bG,\la}/A_{\bH,\la}|=2$ if $a(\la)=0$ or $\ka(\la)=1$,
and $A_{\bG,\la}$ is non-abelian if $a(\la)>2$ and $\ka(\la)=1$.
By \cite[Lemma~2.39]{Tay12}, if $a(\la)>0$ and $\ka(\la)=1$, then the group $A_{\bG,\la}$ has $2^{b(\la)}+1$ or $2^{b(\la)}+2$ conjugacy classes according as $w$ is odd or even.

Let $X$ be arbitrary finite group and $\sigma\in\Aut(X)$, $x_1,x_2\in X$. We say that $x_1$ and $x_2$ are \emph{$\sigma$-conjugate} if there exists a $g\in X$ with $x_2=\sigma(g)x_1g^{-1}$.
The equivalence classes for this relation are called \emph{$\sigma$-conjugacy classes} of $H$.

\begin{lem}\label{lem:tw-conju}
	Let $X$ be a finite group and $\sigma\in\Aut(X)$.
\begin{enumerate}[\rm(1)]
		\item The number of $\sigma$-conjugacy classes of $X$ is equal to the number of the conjugacy classes of $X$ fixed by $\sigma$.
\item If $\sigma$ induces an inner automorphism on $X$, then the number of $\sigma$-conjugacy classes of $X$ is equal to the number of conjugacy classes of $X$.
\end{enumerate}
\end{lem}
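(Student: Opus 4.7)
The plan is to prove both statements via a standard counting argument based on Burnside's orbit-counting lemma applied to a twisted conjugation action. For (1), I would consider the action of $X$ on itself defined by $g \cdot x := \sigma(g) x g^{-1}$. A direct check shows this is a group action, and its orbits are precisely the $\sigma$-conjugacy classes of $X$. So it suffices to count these orbits by Burnside's lemma, namely as $\frac{1}{|X|} \sum_{g \in X} |\mathrm{Fix}(g)|$, where $\mathrm{Fix}(g) = \{x \in X : \sigma(g) = x g x^{-1}\}$.

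The key observation is then that for a fixed $g \in X$, the set $\mathrm{Fix}(g)$ is non-empty if and only if $\sigma(g)$ is $X$-conjugate to $g$, equivalently the conjugacy class $[g]$ of $g$ in $X$ is $\sigma$-stable, in which case $|\mathrm{Fix}(g)| = |Z_X(g)|$ (as $\mathrm{Fix}(g)$ is a $Z_X(g)$-torsor). Hence
\[
\frac{1}{|X|} \sum_{g \in X} |\mathrm{Fix}(g)| = \frac{1}{|X|} \sum_{\substack{[g] \\ \sigma\text{-stable}}} |[g]| \cdot |Z_X(g)| = \#\{[g] : \sigma([g]) = [g]\},
\]
which gives (1). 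An alternative approach, which gives a cleaner conceptual picture, is to form the semidirect product $\tilde X = X \rtimes \langle \sigma \rangle$ and observe that the map $x \mapsto x\sigma$ sets up a bijection between $\sigma$-conjugacy classes of $X$ and $\tilde X$-conjugacy classes contained in the coset $X\sigma$; these in turn biject with $\langle\sigma\rangle$-fixed conjugacy classes of $X$ via an argument with Clifford theory, but the Burnside argument above is more self-contained.

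For (2), if $\sigma$ is an inner automorphism of $X$, say $\sigma = \mathrm{Int}(h)$ for some $h \in X$, then every conjugacy class of $X$ is $\sigma$-stable, so part (1) immediately yields that the number of $\sigma$-conjugacy classes equals the number of conjugacy classes of $X$. (Concretely, $x \mapsto xh$ gives an explicit bijection between $\sigma$-conjugacy classes and ordinary conjugacy classes.)

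Neither step poses any real obstacle; this is a standard and short argument, and I would expect the proof to occupy only a few lines in the paper. The only minor subtlety is verifying that the twisted formula $g \cdot x = \sigma(g) x g^{-1}$ does define a left action (one must use that $\sigma$ is a homomorphism), which is a routine check.
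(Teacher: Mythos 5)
Your proposal is correct and is essentially the same argument as the paper's. The paper first identifies $\sigma$-conjugacy classes of $X$ with $X$-conjugacy classes in the coset $X\sigma\subset X\rtimes\langle\sigma\rangle$ and then runs Burnside's lemma on the conjugation action of $X$ on that coset, showing $\operatorname{Fix}_{X\sigma}(g)$ is empty unless $[g]$ is $\sigma$-fixed and otherwise has size $|Z_X(g)|$; you run the identical Burnside computation directly on the twisted action $g\cdot x=\sigma(g)xg^{-1}$ on $X$, with the same fixed-point analysis. The two are related by the translation $x\mapsto x\sigma$, so there is no substantive difference, and your deduction of (2) from (1) matches the paper.
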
	

\begin{proof}
Let $x_1,x_2\in X$. Then $x_1$ and $x_2$ are $\sigma$-conjugate in $X$ if and only if $x_1\sigma$ and $x_2\sigma$ are $X$-conjugate in the semidirect product $X\rtimes\langle\sigma\rangle$.
So the number of $\sigma$-conjugacy classes of $H$ is equal to the number of $X$-conjugacy classes in the coset $X\sigma$.

(1) Let $g\in X$. Denote by $\textup{Fix}_{X\sigma}(g)$ the fixed point under the conjugacy action of $g$ on $X\sigma$. If $x_1\sigma,x_2\sigma\in\textup{Fix}_{X\sigma}(g)$. Then $x_1x_2^{-1}=(x_1\sigma)(x_2\sigma)^{-1}=(gx_1\sigma g^{-1})(gx_2\sigma g^{-1})^{-1}=g(x_1x_2^{-1})g^{-1}$, which implies that $x_1x_2^{-1}\in Z_X(g)$. 
So $|\textup{Fix}_{X\sigma}(g)|\le |Z_X(g)|$.
On the other hand, if $x\sigma\in\textup{Fix}_{X\sigma}(g)$ and $c\in Z_X(g)$, then $cx\sigma\in\textup{Fix}_{X\sigma}(g)$, and thus $|Z_X(g)|\le |\textup{Fix}_{X\sigma}(g)|$.
Therefore, $|\textup{Fix}_{X\sigma}(g)|=|Z_X(g)|$ if $\textup{Fix}_{X\sigma}(g)\ne\emptyset$.

For $x\in X$, we have that $x\sigma\in\textup{Fix}_{X\sigma}(g)$ if and only if $x\sigma=g(x\sigma)g^{-1}=(gxg^{-1})(g\sigma g^{-1}\sigma^{-1})\sigma$, and thus if and only if $x^{-1}gx=\sigma g\sigma^{-1}$.
This implies that $\textup{Fix}_{X\sigma}(g)\ne\emptyset$ if and only if the conjugacy class of $X$ containing $g$ is fixed by $\sigma$.
Hence the assertion follows by Burnside's Lemma.

(2) This follows by (1) immediately.
\end{proof}

\begin{lem}\label{lem:dh}
Let $X$ be a dihedral group of order~8. 
\begin{enumerate}[\rm(1)]
	\item  Let $W$ be a set acted by $X$ such that $Z(X)$ acts transitively on $W$. Then $W$ is a singleton.
	\item  Write $X=\langle a,b\mid a^4=b^2=(ab)^2=1\rangle$. Let $W_1$ and $W_2$ be two sets acted by $X$ with $|W_1|=|W_2|<\infty$. Suppose that there is a bijection $f$ between the $Z(X)$-orbits on $W_1$ and $W_2$ which preserves $Z(X)$-orbit lengths. Then there exists an $X$-equivariant bijection between $W_1$ and $W_2$ if the following conditions hold.
	\begin{enumerate}[\rm(a)]
		\item If $O$ is a $Z(X)$-orbit on $W_1$, then $f(\{ x.O\mid x\in X\})=\{x.f(O)\mid x\in X\}$, that is, $f$ preserves the $X$-orbits on the $Z(X)$-orbits.
		\item If $O$ is a $Z(X)$-orbit on $W_1$ with length one, then $f(x.O)=x.f(O)$ for any $x\in X$.
		\item If $O$ is a $Z(X)$-orbit on $W_1$ with length two and satisfying that $|X_w|=2$ for $w\in O$, then $O$ is $\langle a^2,b \rangle$-stable if and only if $f(O)$ is $\langle a^2,b \rangle$-stable. 
\end{enumerate}
\end{enumerate}
\end{lem}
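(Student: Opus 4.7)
The plan is to prove (1) by direct contradiction and (2) by showing that the $X$-orbit decompositions of $W_1$ and $W_2$ agree as multisets of orbit types, so that an $X$-equivariant bijection automatically exists.

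For (1), since $|Z(X)|=2$, transitivity forces $|W|\leq 2$. Assume $|W|=2$, so $a^2$ swaps the two points $w_1,w_2$. Then $a\cdot w_1\in\{w_1,w_2\}$, and I would derive a contradiction in each case from $a^2=a\cdot a$: if $a\cdot w_1=w_1$ then $a^2\cdot w_1=w_1$; if $a\cdot w_1=w_2$ then $a\cdot w_2=a^2\cdot w_1=w_2$, whence $a^2\cdot w_2=w_2$. Both contradict that $a^2$ acts as a transposition on $W$.

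For (2), the underlying observation is that two finite $X$-sets admit an $X$-equivariant bijection if and only if they have the same multiset of orbit types, indexed by $X$-conjugacy classes of subgroups. First I would enumerate the subgroups of $X=D_8$ up to $X$-conjugacy: $\{1\}$, $Z(X)$, $\langle b\rangle$, $\langle ab\rangle$, $\langle a\rangle$, $\langle a^2,b\rangle$, $\langle a^2,ab\rangle$, and $X$. A point $w\in W_i$ lies in a length-one $Z(X)$-orbit precisely when $Z(X)\subset X_w$.

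On the set of length-one $Z(X)$-orbits, which is canonically identified with the set of $Z(X)$-fixed points of $W_i$, condition~(b) directly gives an $X$-equivariant bijection; this matches all $X$-orbits whose stabilizer contains $Z(X)$. For the length-two $Z(X)$-orbits, condition~(a) provides a bijection between the $X$-orbits of $Z(X)$-orbits on the two sides, and since $f$ preserves $Z(X)$-orbit lengths, the matched $X$-orbits have equal cardinality, namely $4$ or $8$. The size-$8$ $X$-orbits have trivial stabilizer and are automatically isomorphic as $X$-sets. The size-$4$ $X$-orbits have non-central order-two stabilizers, falling into the two $X$-conjugacy classes represented by $\langle b\rangle$ and $\langle ab\rangle$. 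A direct computation on the coset spaces shows that the $Z(X)$-orbits of $X/\langle b\rangle$ are $\langle a^2,b\rangle$-stable, whereas those of $X/\langle ab\rangle$ are not; this is exactly the property that condition~(c) preserves.

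The main obstacle, and the reason for the specific form of condition~(c), is distinguishing the two $X$-conjugacy classes of order-two subgroups not containing $a^2$. Once this stabilizer matching is established, choosing any $X$-equivariant bijection on each matched pair of $X$-orbits assembles the desired global bijection $W_1\to W_2$.
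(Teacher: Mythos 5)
Your proof is correct, and the overall approach for part~(2) is essentially the paper's: identify the length-one $Z(X)$-orbits with the $Z(X)$-fixed points and handle them by condition~(b), use condition~(a) to match $X$-orbits of the remaining, purely length-two portion, and use condition~(c) together with the observation that $\langle a^2,b\rangle$-stability of a length-two $Z(X)$-orbit detects whether the stabilizer of a point lies in the class of $\langle b\rangle$ or of $\langle ab\rangle$. Your computation that the $Z(X)$-orbits of $X/\langle b\rangle$ are $\langle a^2,b\rangle$-stable while those of $X/\langle ab\rangle$ are not is exactly the stabilizer distinction the paper makes via ``$X_{w_1}=\langle b\rangle$ or $\langle a^2 b\rangle$'' versus ``$X_{w_1}=\langle ab\rangle$''. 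Framing things via the multiset of orbit types is just a tidier way of saying the same thing as the paper's reduction to transitive constituents. The only genuine divergence is in part~(1): you argue by a direct two-case computation on $a\cdot w_1$, whereas the paper notes that $X_w$ would have index $2$, hence be a nontrivial normal subgroup, hence contain $Z(X)$ (as every nontrivial normal subgroup of a $p$-group does), contradicting $X=Z(X)X_w$. Your argument is more elementary and self-contained; the paper's is shorter and uses a structural fact that would generalize beyond $D_8$. Either is fine here.
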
	

\begin{proof}
Let $w\in W$. If $W$ is not a singleton, then $|W|=2$ and the stabilizer $X_w$ of $w$ in $X$ has four elements. Moreover, $X=Z(X)X_w$. However, this is impossible as $Z(X)$ is contained in any normal subgroup of $X$. Thus (1) holds.

For (2), we mention that $Z(X)=\langle a^2\rangle$.
By (a), it suffices to prove this lemma for the situation that the actions of $X$ on $W_1$ and $W_2$ are transitive.

Now we assume that $X$ acts transitively on $W_1$ and $W_2$. By (b), we may assume that $\langle a^2\rangle$ acts non-trivially on $W_1$ (and then non-trivially on $W_2$).
By (1), one has that $|W_1|=|W_2|=4$ or~$8$.
If $|W_1|=|W_2|=8$, then it is clear that there exists an $X$-equivariant bijection between $W_1$ and $W_2$, as the actions are regular for this situation.
Now we suppose that $|W_1|=|W_2|=4$. If  $O$ is not $\langle a^2,b \rangle$-stable, then $W_1=O\coprod b.O$, $W_2=f(O)\coprod b.f(O)$ by (c). 
Recall that $X$ has three conjugacy classes of involutions, i.e., $\{a^2\}$, $\{b,a^2b\}$, $\{ab,a^3b\}$.
So there exist $w_1\in W_1$, $w_2\in W_2$ such that $X_{w_1}=X_{w_2}=\langle ab\rangle$.
If $O$ and $f(O)$ are $\langle a^2,b \rangle$-stable, then $X_{w_1}=\langle b\rangle$ or $\langle a^2b\rangle$ for $w_1\in O$, and $X_{w_2}=\langle b\rangle$ or $\langle a^2b\rangle$ for $w_2\in f(O)$. 
Thus we conclude this assertion, since $\langle b\rangle$ and $\langle a^2b\rangle$ are conjugate in $X$.
\end{proof}	

\begin{lem}\label{lem:F-conju}
	Let $\mathbf M\in\{\bG,\bH\}$ and $u$ be a unipotent element of $\mathbf M$ such that the conjugacy class of $\mathbf M$ containing $u$ is $F$-fixed. Assume further that $F$ acts trivially on $Z(\mathbf M)$.
Then the number of $F$-conjugacy classes of $A_{\mathbf M}(u)$ is equal to the number of conjugacy classes of $A_{\mathbf M}(u)$.
\end{lem}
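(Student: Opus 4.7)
The strategy is to apply Lemma~\ref{lem:tw-conju}(2), so it suffices to show that $F$ induces an inner automorphism on $A_{\mathbf M}(u)$. Since the $\mathbf M$-class of $u$ is $F$-stable, choose $g\in\mathbf M$ with $F(u)=gug^{-1}$, and set $F':=\Int(g)^{-1}\circ F$. Then $F'$ fixes $u$, so it stabilises $Z_{\mathbf M}(u)$ and descends to an automorphism $\bar F'$ of $A_{\mathbf M}(u)$; moreover $\bar F'$ differs from the automorphism of $A_{\mathbf M}(u)$ induced by $F$ only by conjugation by the image of $g$. It thus suffices to prove $\bar F'=\id_{A_{\mathbf M}(u)}$.

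For $\mathbf M=\bH=\SO(\mathbf V)$, I will invoke the explicit description of $A_{\bH,\la}$ from \cite[IV, 2.26--2.27]{SS70}: it is elementary abelian of order $2^{b(\la)}$ and is generated by the images of the block elements $\epsilon_i=(-1_{\mathbf V_i})\oplus 1_{\mathbf V_i^\perp}$, one for each odd part $i$ of $\la$, subject to $\prod_i\epsilon_i=1$. The subspace $\mathbf V_i\subseteq\mathbf V$ is canonically determined by the $u$-module structure as the $i$-isotypic part, so any automorphism of $\bH$ fixing $u$ sends $\epsilon_i$ into the same connected component of $Z_{\bH}(u)$ as $\epsilon_i$ itself. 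In particular $\bar F'(\bar\epsilon_i)=\bar\epsilon_i$ for every odd $i$, hence $\bar F'=\id$ on $A_{\bH,\la}$.

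For $\mathbf M=\bG=\Spin(\mathbf V)$, I will use the description of $A_{\bG,\la}$ in \cite[\S14]{Lu84} and \cite[Lemma~2.39]{Tay12}. The image in $\bG$ of $Z_{\bH}(u)$ (lifted through the isogeny $\bG\twoheadrightarrow\bH$) produces the subgroup $A_{\bH,\la}\le A_{\bG,\la}$, on which $\bar F'$ is already the identity by the previous step. The additional generator(s) of $A_{\bG,\la}/A_{\bH,\la}$ (arising exactly when $a(\la)=0$ or $\ka(\la)=1$, including the non-abelian case $a(\la)>2$, $\ka(\la)=1$) are spin lifts that are canonical modulo $Z(\bG)$; since the relevant coset of $A_{\bG,\la}/A_{\bH,\la}$ is $F'$-stable and the lift is unique up to $Z(\bG)$, the $F'$-action on each extra generator is captured by the $F$-action on $Z(\bG)$. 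The hypothesis that $F$ acts trivially on $Z(\mathbf M)$ then forces $\bar F'$ to fix each extra generator, yielding $\bar F'=\id_{A_{\bG,\la}}$.

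The principal obstacle I foresee is the non-abelian case $a(\la)>2$, $\ka(\la)=1$ for $\mathbf M=\bG$, where the extra generator is a genuine spin-theoretic object rather than a block-diagonal element; pinning down the ambiguity of this lift as precisely $Z(\bG)$, rather than merely $Z(\bG)\cap Z_{\bG}^\circ(u)$, is the delicate point, but it is exactly what the classification of $A_{\bG,\la}$ in \cite{Lu84,Tay12} provides. Combining the three steps via Lemma~\ref{lem:tw-conju}(2) then gives the desired equality.
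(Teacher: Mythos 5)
Your reduction to Lemma~\ref{lem:tw-conju}(2) (replace $F$ by $F':=\Int(g)^{-1}\circ F$ fixing $u$, then show the induced automorphism $\bar F'$ of $A_{\mathbf M}(u)$ is inner) is a legitimate alternative to the paper's route, which invokes Lemma~\ref{lem:tw-conju}(1) and checks directly that $\bar F'$ preserves every conjugacy class of $A_{\mathbf M}(u)$ against the explicit list of classes in \cite[Lemma~2.39]{Tay12}. Your case $\mathbf M=\bH$ is correct and is exactly what the paper dismisses as trivial: the $\epsilon_i$'s are canonical, so $\bar F'$ fixes them, and since they generate the abelian group $A_{\bH}(u)$, $\bar F'=\id$.

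The spin case, however, has a real gap. The natural map is a \emph{surjection} $A_{\bG}(u)\twoheadrightarrow A_{\bH}(u)$ with kernel the image of $\ker(\bG\to\bH)=\{\pm1\}$; the phrasing ``contains $A_{\bH,\la}$ as a subgroup'' refers to a section of that surjection, and an arbitrary section has no reason to be $F'$-equivariant. So the assertion that ``$\bar F'$ is already the identity on the subgroup $A_{\bH,\la}\le A_{\bG,\la}$ by the previous step'' does not follow: the previous step gives triviality on the \emph{quotient}, which only says $\bar F'(\bar\epsilon_i)\in\{\bar\epsilon_i,\overline{-1}\,\bar\epsilon_i\}$, and when $\overline{-1}\ne1$ the two options genuinely differ. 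The same ambiguity infects the ``extra generator'': a lift of a chosen coset generator of $A_{\bG}(u)/\ker$ is only well defined up to $\ker=\{\bar1,\overline{-1}\}$ (or worse, up to whatever normal subgroup you mod out by), not up to the full $Z(\bG)$-image as claimed, and you have not said why \cite[\S14]{Lu84} or \cite[Lemma~2.39]{Tay12} rigidifies the choice to that extent -- you only flag this as the delicate point. As written, the argument establishes that $\bar F'$ acts trivially on the quotient $A_{\bH}(u)$ and on the central subgroup $\ker$, but ``central'' automorphisms of an extension need not be inner in general, so you still must inspect the explicit groups $A_{\bG,\la}$. (For the actually occurring groups -- elementary abelian, or, in the non-abelian case $a(\la)>2$, $\ka(\la)=1$, a dihedral-type $2$-group -- one can check by hand that an automorphism trivial on both $\ker$ and the quotient is inner, which closes the gap; but this verification against \cite[Lemma~2.39]{Tay12} is precisely what the paper's proof records and what your write-up currently asserts rather than proves.) A minor further point: since replacing $g$ by $gz$ with $z\in Z_{\mathbf M}(u)$ changes $\bar F'$ by $\Int(\bar z)$, the conclusion one should aim for is that $\bar F'$ is \emph{inner}, not $\bar F'=\id$ for an unspecified choice of~$g$.
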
	

\begin{proof}
For special orthogonal group, this is trivial. Thus we only consider spin groups here and we also assume that $\disc(V)=+$.
By Lemma~\ref{lem:tw-conju}, it suffices to prove that $F$ fixes every conjugacy class of $A_{\mathbf M}(u)$.
The conjugacy classes of $A_{\mathbf M}(u)$ are described in the proof of \cite[Lemma~2.39]{Tay12}, and this assertion can be deduced by the results there directly.
\end{proof}

\begin{prop}\label{BAWC-Spin}
The blockwise Alperin weight conjecture holds for the principal 2-block of $G=\Spin(V)$.
\end{prop}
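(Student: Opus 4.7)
The plan is to parallel the argument for $\SO(V)$ in Lemma~\ref{BAWC-SO}, but with the extra bookkeeping forced by $Z(\bG)$ being disconnected. First, I would identify $|\IBr(B_0(G))|$ with the number $u(G)$ of unipotent conjugacy classes of $G$. One way is to apply (the analogue of) \cite[Prop.~2.7]{Ch20} together with Assumption~\ref{A-infinity}; another is to pass through the central quotient $p\colon G\to\Omega(V)$ using Lemma~\ref{lem:weight-quo} and compare $\IBr(B_0(G))$ with $\IBr(B_0(\Omega(V)))$. Under the isogeny $\bG\to\bH$, rational unipotent classes of $G$ above a fixed algebraic class $\mathbf C_\la$ are enumerated by $F$-conjugacy classes of $A_{\bG,\la}$, and by Lemma~\ref{lem:F-conju} this equals $k(A_{\bG,\la})$ (the number of ordinary conjugacy classes). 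So
\[
u(G)=\sum_{\la\in\mathscr P_w}n(\la)\cdot k(A_{\bG,\la}),
\]
where $n(\la)$ accounts for the splitting of $\mathbf C_\la$ into $F$-stable algebraic classes (as in \S\ref{S:BAW}).

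Second, I would count $\omega_0(G)$ using the structure uncovered in \S\ref{SS:Spin9}–\S\ref{SS:Spin8}: every radical $2$-subgroup of $G$ is of the form $R=p^{-1}(R'\cap\Omega(V))$ for a radical $2$-subgroup $R'$ of $\rO(V)$, and every principal weight of $G$ arises from such an $R$. Applying Corollary~\ref{cor:wei-cov-solquo} to the central extension $\ker(p)\hookrightarrow G\twoheadrightarrow\Omega(V)$ (and then Lemma~\ref{lem:act-wei}, Remark~\ref{rmk:extension} to pass from $\Omega(V)$ to $\rO(V)$), the principal weights of $G$ are parametrized by the tuples $(\la_1,\la_2,\la_3,\la_4,\ka_+,\ka_-,\ka)$ of Proposition~\ref{P:Ta1} satisfying (\ref{equ-5.2}), each weighted by an explicit multiplicity that records (i) how many $\Omega(V)$-orbits of $\rO(V)$-principal weights lift to $G$-weights, and (ii) how the parities computed in \S\ref{SS:parity} cause these orbits to split under $p$.

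The third step is the numerical identification: show that the multiplicity attached to each tuple coincides, after regrouping by $\la\in\mathscr P_w$, with $k(A_{\bG,\la})\cdot n(\la)$. For $\la$ with $a(\la)=0$ or with $\ka(\la)>1$ this should be a direct check, since $A_{\bG,\la}$ is abelian of order $2^{\delta(\la)}$ and the extra factor $2$ over $A_{\bH,\la}$ is accounted for by the kernel $\ker(p)$ producing one further split in the weight count (matching the extra central generator $z$ in \S\ref{SS:Spin8}). The main obstacle is the family with $a(\la)>2$ and $\ka(\la)=1$, where by \cite[Lemma~2.39]{Tay12} the group $A_{\bG,\la}$ is non-abelian; when $a(\la)=3$ it is dihedral of order~$8$, and one has $k(A_{\bG,\la})=2^{b(\la)}+1$ or $2^{b(\la)}+2$. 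Here Lemma~\ref{lem:dh} is tailor-made to convert the combinatorics of these dihedral orbits (acting on the $2^{b(\la)}$ principal $\rO(V)$-weights in each orbit, together with the $\{\pm 1,\pm z\}$-action from $\ker(p)$) into the correct count of principal weights of $G$ covering each $\rO(V)$-weight. Once this dihedral matching is established—together with the direct check that the sum over all $\la$ reproduces the generating-function identity obtained from (\ref{equ-5.1})–(\ref{equ-5.3-2})—the equality $|\IBr(B_0(G))|=\omega_0(G)$ follows, proving the proposition.
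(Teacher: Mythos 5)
Your proposal has the right outer skeleton (compute $u(G)$ via $F$-conjugacy classes of $A_{\bG,\la}$ using Lemma~\ref{lem:F-conju}; compute $\omega_0(G)$ via the covering chain $\Omega(V)\to\SO(V)\to\rO(V)$ and parity data from Table~\ref{Ta1}; compare). But the core step goes wrong, and one key lemma is misdeployed.

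The fatal step is the ``regrouping by $\la$'' in your third paragraph. There is no natural way to distribute the set of principal weights of $G$ --- which Table~\ref{Ta1} indexes by tuples $(\la_1,\la_2,\la_3,\la_4,\ka_+,\ka_-,\ka)$ subject to (\ref{equ-5.2}) --- among partitions $\la\in\mathscr P_w$ so that the multiplicity of $\la$ equals $k(A_{\bG,\la})\cdot n(\la)$. Achieving that would amount to a Springer-type bijection between unipotent classes and weight data, which is much stronger than the counting statement required and is nowhere constructed. The paper's actual proof sidesteps this entirely: it first reduces to $\disc(V)=+$ (when $\disc(V)=-$, $\SO(V)=\{\pm I_V\}\ti\Omega(V)$ and Lemma~\ref{BAWC-SO} settles everything), then computes the \emph{differences} $u(G)-u(H)$ and $\omega_0(G)-\omega_0(H)$ as coefficient identities, matching (\ref{equ-5.4}) with (\ref{equ-5.5}), and finally invokes Lemma~\ref{BAWC-SO} (which already gives $u(H)=\omega_0(H)$) to conclude. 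Your proposal never exploits Lemma~\ref{BAWC-SO} in this pivotal way, and it also omits the case split $w$ odd versus $w$ even (with the further $\ty(V)=\pm$ dichotomy) that the paper carries out.

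The appeal to Lemma~\ref{lem:dh} is likewise a misfire: that lemma constructs a $D_8$-equivariant bijection between two sets of the \emph{same} cardinality with compatible orbit data; equality of cardinalities is an input, not an output, so it cannot help prove $|\IBr(B_0(G))|=\omega_0(G)$. In the paper it is used only later, in Theorem~\ref{thm:eq-bij-spin}, where the acting $D_8$ is $\Out(\Omega(V))$ (coming from the conformal and field automorphisms), not the component group $A_{\bG,\la}$, and only after the numerical equality is already in hand. You have conflated the counting step with the equivariance step. A smaller issue: your separate factor $n(\la)$ double-handles the $F$-stable splitting, since when $a(\la)=0$ the paper treats $\mathbf C_\la^\pm$ directly and the rational class count is otherwise entirely captured by $F$-conjugacy classes of $A_{\bG,\la}$.
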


\begin{proof}
If $w$ is even and $\disc(V)=-$, then $\SO(V)=\{\pm I_V\}\ti \Omega(V)$ and thus this proposition follows from Lemma~\ref{BAWC-SO} immediately. So we assume that $\disc(V)=+$.

By \cite[Prop.~2.7]{Ch20}, $|\IBr(B_0(G))|=u(G)$.
Let $\mathbf C_\la$ be a unipotent class of $\bG$. 
By \cite[Thm.~21.11]{MT11}, the number of $G$-conjugacy classes in $\mathbf C_\la^F$ is equal to the number of $F$-conjugacy classes in $A_{\bG,\la}$.
So by Lemma~\ref{lem:F-conju} and \cite[Lemma~2.39]{Tay12}, the number of $G$-conjugacy classes in $\mathbf C_\la^F$ is $2^{\delta(\la)}$ if $a(\la)=0$ or $\ka(\la)>1$, and if $a(\la)>0$ and $\ka(\la)=1$, then $\mathbf C_\la^F$ splits into $2^{b(\la)}+1$ or $2^{b(\la)}+2$ conjugacy classes in $G$ according as $w$ is odd or even.

We first consider the situation that $w$ is odd. Then $|\IBr(B_0(G))|-|\IBr(B_0(H))|=u(G)-u(H)$ is equal to the number of partitions $\la$ of $w$ with $\ka(\la)=1$, which is thus equal to the coefficient of $t^{w}$ in the generating function
\begin{equation}\label{equ-5.4}
	(1+t)(\sum_{k=0}^\infty t^{4k})(1+t^{3})(\sum_{k=0}^\infty t^{8k})\cdots=\prod_{k=1}^\infty \frac{1+t^{2k-1}}{1-t^{4k}}.
	\addtocounter{thm}{1}\tag{\thethm}
\end{equation}
Now we count the number of weights $\omega_0(G)=\omega_0(\Omega(V))$.
Let $(T,\eta)$ be a principal weight of $\Omega(V)$. Then $(T,\eta)$ is covered by a unique weight $(T_1,\eta_1)$ of $H$, which is also a principal weight.
Also, $(T_1,\eta_1)$ is covered by a unique weight $(R,\varphi)$ of $I$, which is a principal weight.
Note that the parity of $R$ always contains $(1,0)$ as a subgroup and a weight of $H$ covers one or two weights of $\Omega(V)$.
By Corollary~\ref{cor:wei-cov-solquo}, $(T_1,\eta_1)$ covers two weights  (up to conjugacy) of $\Omega(V)$ if and only if $|H/\Omega(V)T_1|=2$, and thus if and only if the parity of $R$ is just $ (1,0)$, in which case the basic subgroup appearing in $R$ only comes from lines 1, 2, 6 of Table~\ref{Ta1}.
Using \cite[(1A)]{AF90} again, we conclude that the principal weights of $H$ covering  two weights (up to conjugacy) of $\Omega(V)$ are in bijection with the triples $(\la_1,\la_2,\ka)$ such that $\la_1$, $\la_2$ are partitions, $\ka$ is a 2-core satisfying that 
\[4(|\la_1|+|\la_2|)+|\ka|=w.\]
Therefore, $\omega_0(G)-\omega_0(H)$ is equal to the coefficient of $t^{w}$ in the generating function
\begin{equation}\label{equ-5.5}
	(\sum_{k=0}^\infty t^{4k})^2(\sum_{k=0}^\infty t^{8k})^2\cdots \vartheta(t)=\prod_{k=1}^\infty \frac{1+t^{2k-1}}{1-t^{4k}}.
	\addtocounter{thm}{1}\tag{\thethm}
\end{equation}
Therefore, by Lemma~\ref{BAWC-SO} and (\ref{equ-5.4}), (\ref{equ-5.5}), we get $|\IBr(B_0(G))|=\omega_0(G)$.

Now we suppose that $w$ is even and $\disc(V)=+$.
If $\ty(V)=-$, then $4\nmid w$ and thus $a(\la)>0$ for all $\la\in\mathscr P_w$ with $\mathbf C_\la^F\ne\emptyset$.
Thus $u(G)-u(H)$ is equal to the number of $\la\in\mathscr P_w$ with $\ka(\la)=1$, which is thus equal to the double of the coefficient of $t^w$ in (\ref{equ-5.4}).
If $\ty(V)=+$, then $u(G)-u(H)$ is equal to the double of the number of $\la\in\mathscr P_w$ with $a(\la)=0$ or $\ka(\la)=1$, which is thus equal to the double of the coefficient of $t^w$ in (\ref{equ-5.4}).
Now we count the number of weights $\omega_0(G)=\omega_0(\Omega(V))$.
Similar as for the case that $w$ is odd, we know that $\omega_0(G)-\omega_0(H)$ is equal to the number of the conjugacy classes of principal weight subgroups of $I$ with parities $(1,0)$, $(0,1)$ and $0$ and such  principal weight subgroups are counted once, once, twice respectively.
From this, $\omega_0(G)-\omega_0(H)$ is equal to the double of the coefficient of $t^{w}$ in (\ref{equ-5.5}).
Therefore, by Lemma~\ref{BAWC-SO} and (\ref{equ-5.4}), (\ref{equ-5.5}), we also get $|\IBr(B_0(G))|=\omega_0(G)$ for the situation that $w$ is even and $\disc(V)=+$.
\end{proof}

Let $F_0$ be the field automorphism defined as in \S \ref{SS:prin-wei}.

\begin{lem}\label{prop:act-quasi-SO}
Let $s\in{\bH^*}^F$ such that the image of $s$ in the adjoint quotient $\bH^*/Z(\bH^*)$ is quasi-isolated.
Then every character $\chi\in\cE(\bH^F,s)$ is $\langle F_0\rangle$-invariant.
\end{lem}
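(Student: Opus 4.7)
The plan is to prove this via Jordan decomposition of characters, combined with the known $F_0$-invariance of unipotent characters and the very concrete form of quasi-isolated elements in types $\tp B_n$, $\tp C_n$, $\tp D_n$.

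First, I would invoke Bonnaf\'e's classification of quasi-isolated semisimple elements \cite{Bo05}. Since $\bH$ is of type $\tp B_n$, $\tp D_n$ or $^2\tp D_n$, the dual $\bH^*$ is $\Sp_{2n}$ or $\SO_{2n}$. The images of quasi-isolated elements in the adjoint quotient of such classical groups can be represented by block-diagonal matrices whose blocks are scalar multiples of identity by $\pm 1$. As $F_0$ acts on $\bH^*$ by the standard Frobenius $x\mapsto x^r$, any such representative is $F_0$-fixed, so the $\bH^{*F}$-conjugacy class of $s$ is $F_0$-stable. Consequently $F_0$ preserves the Lusztig series $\cE(\bH^F,s)$ as a set.

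Next, to upgrade setwise invariance to pointwise invariance, I would pass to a regular embedding $\bH\hookrightarrow\tilde\bH$ with $Z(\tilde\bH)$ connected, inducing a surjection $\tilde\bH^*\twoheadrightarrow\bH^*$. Lift $s$ to $\tilde s\in\tilde\bH^{*F}$; the centralizer $Z_{\tilde\bH^*}(\tilde s)$ is connected and isomorphic to a product of smaller classical groups (of types matching the centralizers of quasi-isolated elements). Lusztig's Jordan decomposition gives a bijection
\[
J_{\tilde s}:\cE(\tilde\bH^F,\tilde s)\xrightarrow{\sim}\cE(Z_{\tilde\bH^*}(\tilde s)^F,1),
\]
which is equivariant under any automorphism of $\tilde\bH^F$ stabilising the $\tilde\bH^{*F}$-conjugacy class of $\tilde s$. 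Since unipotent characters of finite reductive groups are $F_0$-invariant (a classical result, see \cite{Lu84}), every character in $\cE(\tilde\bH^F,\tilde s)$ is $F_0$-invariant.

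Finally, I would descend from $\tilde\bH^F$ to $\bH^F$. Each $\chi\in\cE(\bH^F,s)$ is a constituent of $\tilde\chi|_{\bH^F}$ for some $\tilde\chi\in\cE(\tilde\bH^F,\tilde s)$. Since restriction commutes with $F_0$ and $\tilde\chi$ is $F_0$-invariant, the set of constituents of $\tilde\chi|_{\bH^F}$ is $F_0$-stable. By Clifford theory this set is an orbit under the group $Z:=\widehat{\tilde\bH^F/\bH^F}$, which is identified (via duality) with a subgroup of $Z(\bH^*)^F$, again represented by matrices with entries in roots of unity fixed by $F_0$. Since $F_0$ acts trivially on $Z$, the $F_0$-action on this orbit commutes with the free $Z$-action, hence must be trivial, giving $F_0(\chi)=\chi$.

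The main obstacle will be the final Clifford-theoretic step: ensuring individual $F_0$-invariance rather than merely orbitwise invariance when $Z(\bH)$ is disconnected. The essential input is that the linear characters of $\tilde\bH^F/\bH^F$ correspond to elements of $Z(\bH^*)^F$ which, for our orthogonal and spin groups, consist only of $\pm 1$-matrices fixed by $F_0$; together with the transitivity and freeness of the $Z$-action on constituents of $\tilde\chi|_{\bH^F}$, this forces triviality of the $F_0$-action and completes the proof.
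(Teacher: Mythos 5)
Your high-level strategy (Bonnaf\'e's classification to handle semisimple classes, equivariant Jordan decomposition, regular embedding, Clifford-theoretic descent) matches the shape of the paper's proof, but two of your steps contain genuine gaps that the paper addresses differently.

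First, the lifting step. You assume that because the $\bH^{*F}$-class of $s$ is $F_0^*$-stable, a chosen lift $\tilde s\in\tilde\bH^{*F}$ is also $F_0^*$-stable up to $\tilde\bH^{*F}$-conjugacy, and hence every $\tilde\chi\in\cE(\tilde\bH^F,\tilde s)$ is $F_0$-invariant. This is not automatic: $F_0^*(\tilde s)$ is $\tilde\bH^{*F}$-conjugate only to $z\tilde s$ for some $z\in Z(\tilde\bH^*)^F$ lying in the (positive-dimensional) kernel of $\tilde\bH^*\twoheadrightarrow\bH^*$, and $z$ need not be $1$. The correct deduction via the equivariant Jordan decomposition of Digne--Michel is $\tilde\chi^{F_0}=\hat z\,\tilde\chi$, not $\tilde\chi^{F_0}=\tilde\chi$. (Also, your description of the quasi-isolated semisimple part as a $\pm1$-block matrix is too restrictive --- primitive fourth roots of unity also occur --- though the $F_0^*$-stability of the class still holds because conjugate eigenvalues are swapped together.)

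Second, and more seriously, the descent from $\tilde\bH^F$ to $\bH^F$ is wrong. You claim the constituents of $\tilde\chi|_{\bH^F}$ form an orbit under $Z=\widehat{\tilde\bH^F/\bH^F}$; in fact Clifford theory says they form a single $\tilde\bH^F$-conjugation orbit, while $Z$ acts on the set of irreducible characters of $\tilde\bH^F$ \emph{above} a fixed $\chi$ (and multiplying $\tilde\chi$ by $z\in Z$ does not change its restriction). Moreover, even in a context where an automorphism commuted with a transitive group action on a finite set, this would not force the automorphism to act trivially --- it only pins it down to be ``multiplication by some fixed element.'' So this step cannot produce $F_0(\chi)=\chi$. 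What it does produce, combined with $\tilde\chi^{F_0}=\hat z\tilde\chi$ and the fact that $\hat z$ restricts trivially to $\bH^F$, is $\chi^{F_0}=\chi^g$ for some $g\in\tilde\bH^F$. Converting that to $\chi^{F_0}=\chi$ is exactly where the paper invokes Sp\"ath's result \cite[Thm.~A]{Sp23}, a nontrivial theorem on extensions of characters in type $\tp D$; your proposal has no substitute for this ingredient. (The paper also disposes of the $w$ odd, i.e.~type $\tp B$, case directly via the connectedness of $Z(\bH)$, which avoids the regular embedding altogether in that case.)
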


\begin{proof}
Let $F_0^*$ is the standard field automorphism of $\bH^*$. We first prove that the conjugacy class of ${\bH^*}^F$ containing $s$ is $\langle F_0^*\rangle$-stable.
Quasi-isolated elements of adjoint simple groups were classified in \cite[\S5]{Bo05}.
Under the notation in the proof of Lemma~\ref{prop:2-defect-zero-SO}, $m_\Ga(s)>0$ only when $\Ga=x-1$, $x+1$ or $\Delta$, where $\Delta\in\cF$ is the polynomial with roots  $\zeta$ and $-\zeta$ where $\zeta$ is a primitive 4th root of unity in $\overline{\mathbb F}_q^\ti$.
Thus $F_0^*(s)$ is ${\bH^*}^F$-conjugate to $s$.
Moreover, we note that the action of automorphism groups on unipotent characters is given in \cite[Prop.~3.7 and 3.9]{Ma07}; from this we see that filed automorphisms act trivially on unipotent characters.

Assume that $w$ is odd.	
Since $\bH$ has connected center, by \cite[Thm.~7.1]{DM90}, the action of $\langle F_0\rangle$ on $\Irr(G)$ is determined by its action on semisimple elements as well as on unipotent characters, via an equivariant Jordan decomposition.
Thus every character $\chi\in\cE(\bH^F,s)$ is $\langle F_0\rangle$-invariant.
Now assume that $w$ is even.
We take a regular embedding $\bH\to\tilde\bH$, so that $\tilde\bH$ is the special conformal orthogonal group $\textup{CSO}(V)$ over $V$.
Let $\tilde s\in\tilde\bH^*$ such that its image is $s$ under $\tilde\bH^*\twoheadrightarrow\bH^*$ and let $\tilde\chi\in\cE(\tilde\bH^F,\tilde s)\cap \Irr(\tilde\bH^F\mid\chi)$.
Then $F_0^*(\tilde s)$ is $(\tilde\bH^*)^F$-conjugate to $z\tilde s$ with $z\in Z(\tilde\bH^*)^F$ and from \cite[Thm.~7.1]{DM90} we deduce that $\tilde\chi^{F_0^*}=\hat z \tilde\chi$.
Thus $\chi\in\cE(\bH^F,s)$ is $\langle F_0\rangle$-invariant by \cite[Thm.~A]{Sp23}.
\end{proof}	

\begin{cor}\label{cor:act-Br-SO}
Every Brauer character in $\IBr(B_0(H))$ is $\langle F_0\rangle$-invariant.
\end{cor}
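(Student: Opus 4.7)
The plan is to reduce the statement on Brauer characters to one on ordinary unipotent characters via a basic set argument, and then to apply Lemma~\ref{prop:act-quasi-SO} at $s=1$.

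First, I would invoke Chaneb's work \cite{Ch20} (already used in the proof of Proposition~\ref{BAWC-Spin} to obtain $|\IBr(B_0(H))|=u(H)$), which provides a unitriangular basic set of $B_0(H)$ consisting of $F$-stable unipotent characters of $H=\SO(V)$. In particular, the restrictions of these unipotent characters to $2$-regular classes form a $\bbZ$-basis for the lattice spanned by $\IBr(B_0(H))$, and the basic set has cardinality $|\IBr(B_0(H))|$.

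Second, I would appeal to Lemma~\ref{prop:act-quasi-SO} with $s=1$: the image of $1$ in the adjoint quotient $\bH^*/Z(\bH^*)$ is trivial and hence quasi-isolated, so every character in $\cE(\bH^F,1)$ is $\langle F_0\rangle$-invariant; in particular every element of the basic set is fixed by $F_0$. (Equivalently, one can cite \cite[Prop.~3.7 and 3.9]{Ma07} directly for the action of field automorphisms on unipotent characters, as was done inside the proof of Lemma~\ref{prop:act-quasi-SO}.)

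Finally, I would use the $\langle F_0\rangle$-equivariance of the decomposition map to transfer the invariance from ordinary to Brauer characters. Writing $\chi|_{2\text{-reg}}=\sum_{\varphi\in\IBr(B_0(H))}d_{\chi\varphi}\varphi$ for each $\chi$ in the basic set and applying $F_0$, one obtains $d_{\chi\varphi}=d_{\chi,F_0^{-1}(\varphi)}$ for every such $\chi$. Since the restricted decomposition matrix has full column rank $|\IBr(B_0(H))|$, this forces $F_0(\varphi)=\varphi$ for every $\varphi\in\IBr(B_0(H))$. The argument is essentially soft; the only non-trivial inputs are Chaneb's basic set theorem (already cited in the paper) and Lemma~\ref{prop:act-quasi-SO}, so no serious obstacle is anticipated.
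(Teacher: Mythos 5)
You have the right overall strategy, but there is a factual misstatement about Chaneb's basic set that creates a genuine gap. You claim the unitriangular basic set of $B_0(H)$ from~\cite{Ch20} consists of ``$F$-stable unipotent characters of $H$,'' and accordingly you invoke Lemma~\ref{prop:act-quasi-SO} only at $s=1$. This is not what Chaneb's theorem gives. As used throughout this paper (see the proofs of Proposition~\ref{prop:act-IBr-SO} and Corollary~\ref{cor-BAW-Spin7}, and the parenthetical about H\'ezard--Lusztig--Taylor's map $\mathbf C_\la\mapsto\chi_\la$), the basic set $I$ consists of characters $\rho_{\la,i}$ lying in Lusztig series $\cE(\bH^F,\tilde s_\la)$ where $\tilde s_\la$ is a semisimple $2$-element whose \emph{image in the adjoint quotient $\bH^*/Z(\bH^*)$} is quasi-isolated, and $\tilde s_\la\ne 1$ for most $\la$. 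Indeed one can see the basic set cannot be unipotent: the cardinality of the basic set equals $u(H)$, the number of unipotent conjugacy classes of $H$, which does not match the number of unipotent characters in $B_0(H)$ in general.

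Because of this, your reduction to $s=1$ leaves the non-unipotent members of the basic set untouched, and the argument does not close. The fix is simply to cite Lemma~\ref{prop:act-quasi-SO} in its full generality, which is stated precisely for all $s\in{\bH^*}^F$ whose image in $\bH^*/Z(\bH^*)$ is quasi-isolated; this immediately covers every element of Chaneb's basic set. Your third step (using the $\langle F_0\rangle$-equivariance of the decomposition map and the invertibility of the restricted square decomposition matrix to propagate $F_0$-invariance to $\IBr(B_0(H))$) is correct and is the detail the paper compresses into ``this assertion follows immediately.'' Once you replace ``unipotent characters, $s=1$'' with ``characters of the quasi-isolated Lusztig series from Chaneb's theorem,'' your argument coincides with the paper's.
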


\begin{proof}	
By \cite[Thm.2.9]{Ch20}, there is unitriangular basic set $I$ for the principal block $B_0(H)$ of $H$ such that every character in $I$ lies in a $\chi\in\cE(\bH^F,s)$ such that the image of $s$ in the adjoint quotient $\bH^*/Z(\bH^*)$ is quasi-isolated.
So this assertion follows from Lemma~\ref{prop:act-quasi-SO} immediately.
\end{proof}	

\begin{thm}\label{thm:IBAW-B}
	Suppose that $\dim(V)$ is odd and $\Omega(V)$ is simple.
If every element in $\IBr(B_0(G))$ is $\langle F_0\rangle$-invariant, then the inductive BAW condition holds for the principal 2-block of $\Omega(V)$.
\end{thm}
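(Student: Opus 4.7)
The plan is to verify the three requirements of the inductive BAW condition in Sp\"ath's formulation \cite{Sp13} for $B_0(\Omega(V))$: an $\Out(\Omega(V))$-equivariant bijection $\IBr(B_0(G)) \to \Alp(B_0(G))$, extendibility of the Brauer characters to their stabilizers, and extendibility of the weight characters to theirs. Set $G = \Spin(V)$, so that $G$ is the universal covering group of $\Omega(V)$ with $Z(G) \cong Z_2$. Since $\dim V$ is odd, $\Out(\Omega(V)) = \langle\delta\rangle \ti \langle F_0\rangle$ with $\delta$ a diagonal automorphism of order two coming from $\SO(V)$; both generators act trivially on $Z(G)$, both fix $B_0(G)$, and the cocycle obstruction simplifies because $\Out(\Omega(V))$ is abelian. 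Proposition~\ref{BAWC-Spin} already provides $|\IBr(B_0(G))| = |\Alp(B_0(G))|$.

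For $F_0$-equivariance of the bijection, the hypothesis gives pointwise fixation on $\IBr(B_0(G))$ and Corollary~\ref{cor:act-field-cla} gives pointwise fixation on $\Alp(B_0(G))$, so any bijection is automatically $F_0$-equivariant. For the $\delta$-action, I would track orbits on both sides: on the weight side via Proposition~\ref{prop:act-fie-auto} together with the analysis of normalizers of principal weight subgroups from \S\ref{S:radical}, which shows that each principal weight of $\Omega(V)$ either arises from a $\delta$-invariant weight of $\SO(V)$ by Clifford covering or is fused in pairs with another by $\delta$; on the Brauer side via a regular embedding $G \hookrightarrow \tG$, where $\delta$ becomes inner and $\delta$-orbits on $\IBr(B_0(G))$ correspond to Clifford orbits under $\tG$. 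Matching these orbits would produce a $\delta$-equivariant bijection. For the extensions, the Brauer side uses Assumption~\ref{A-infinity} (the $A(\infty)$ property of \cite{CS19, Sp23}), which provides extensions of each $\psi \in \IBr(B_0(G))$ to $G \rtimes \Out(\Omega(V))_\psi$. The weight side uses Lemma~\ref{lem:act-wei} and Remark~\ref{rmk:voe} applied to the normalizer structure of Proposition~\ref{prop:act-fie-auto}, noting that the quotients $N_G(R)/RZ_G(R)$ are (projective) orthogonal groups over $\bbF_2$ on which $F_0$ acts trivially.

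The main obstacle will be the coherent handling of $\delta$: one must check not only that $\delta$-orbit sizes on $\IBr(B_0(G))$ and $\Alp(B_0(G))$ agree, but that a bijection can be chosen respecting both $\delta$- and $F_0$-actions simultaneously, and that the extensions produced on the two sides are compatible. The natural route is through the regular embedding $G \hookrightarrow \tG$, where $\delta$ becomes inner, reducing the equivariance question to a Clifford-theoretic descent between the principal blocks of $\tG$ and $G$; this also automatically yields the correct cocycles, since the regular embedding endows the relevant Schur multipliers with a canonical trivialization over the principal block.
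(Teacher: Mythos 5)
Your proposal is correct and follows the same route as the paper: both prove Theorem~\ref{thm:IBAW-B} by invoking a Brough--Sp\"ath-type criterion (the paper uses the type-$\tp B$ specialization in \cite[Thm.~3.3]{FM22}, taking $X=\Omega_{2n+1}(q)$, $\tilde X=\SO_{2n+1}(q)$, $D=\langle F_0\rangle$), handling the $F_0$-action trivially on both sides via the hypothesis and Corollary~\ref{cor:act-field-cla}, handling the diagonal automorphism through the order-two quotient $\SO(V)/\Omega(V)$ (equivalently, your regular embedding), invoking Assumption~\ref{A-infinity} for the Brauer-side extendibility, and finally matching the counts of fused pairs on the Brauer side and the weight side using the computation inside the proof of Proposition~\ref{BAWC-Spin}. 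The only cosmetic difference is that you phrase the diagonal-automorphism analysis via a regular embedding $G\hookrightarrow\tG$ while the paper works directly with $\Omega(V)\unlhd\SO(V)$; for odd orthogonal dimension these encode the same information.
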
	

\begin{proof}	
	To verify the inductive BAW condition for the principal block of $G$, we use the criterion of Brough--Sp\"ath \cite{BS22}. For groups of type $\tp B$, we refer to \cite[Thm.~3.3]{FM22} for convenience, and under the notation there, we let $X=S=\Omega_{2n+1}(q)$, $\tilde X=H=\SO_{2n+1}(q)$ and $D=\langle F_0\rangle$.
	Then the condition (1) of \cite[Thm.~3.3]{FM22} holds.
	Suppose that $B$ is the principal 2-block of $X$ and $\tilde{\mathcal B}$ is the principal 2-block of $\tilde X$.
	The conditions (3) and (4) of \cite[Thm.~3.3]{FM22} follows by Assumption~\ref{A-infinity} and Corollary~\ref{cor:act-field-cla} respectively, as every element of $\IBr(B)\cup\Alp(B)$ is $\langle F_0\rangle$-invariant.
	
	By the arguments preceding Lemma~\ref{lem:tw-conju}, $|\IBr(\tilde{\mathcal B})|=|\Alp(\tilde{\mathcal B})|$, and by Corollary~\ref{cor:act-field-cla} and \ref{cor:act-Br-SO}, every element of $\IBr(\tilde{\mathcal B})\cup\Alp(\tilde{\mathcal B})$ is $\langle F_0\rangle$-invariant.
	As $|\tilde X/X|=2$, one element in $\IBr(\tilde{\mathcal B})$ (resp. $\Alp(\tilde{\mathcal B})$) covers one or two elements $\IBr(B)$ (resp. $\Alp(B)$).
	Therefore, to prove condition (3) of \cite[Thm.~3.3]{FM22}, it suffices to show that the number of irreducible Brauer characters of $\tilde{\mathcal B}$ covering two irreducible Brauer characters of $B$ is equal to the number of weights (up to conjugacy) of $\tilde{\mathcal B}$ covering two weights of $B$, which was prove in the proof of Proposition~\ref{BAWC-Spin}.
	Thus we complete the proof.
\end{proof}

\begin{cor}\label{cor-BAW-Spin7}
	The inductive BAW condition holds for the principal 2-block of $\Spin_7(q)$ (with odd $q$).
\end{cor}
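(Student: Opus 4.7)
The plan is to apply Theorem \ref{thm:IBAW-B} with $V$ a $7$-dimensional orthogonal space over $\bbF_q$; then $G=\Spin_7(q)$, $\Omega(V)=\Omega_7(q)$ is simple, and the only remaining hypothesis is $\langle F_0\rangle$-invariance of every $\varphi\in\IBr(B_0(\Spin_7(q)))$. Since $Z(\Spin_7(q))=\{\pm 1\}$ is a $2$-group, it is contained in $O_2(\Spin_7(q))$, so every irreducible $2$-Brauer character of $\Spin_7(q)$ is trivial on the centre and inflates from $\Omega_7(q)=\Spin_7(q)/\{\pm 1\}$ via an $\langle F_0\rangle$-equivariant block-preserving bijection. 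It thus suffices to show that each $\varphi\in\IBr(B_0(\Omega_7(q)))$ is fixed by $F_0$.

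To do this I would invoke Chaneb's unitriangular basic set \cite[Thm.~2.9]{Ch20}, which applies provided every component group $A_{\bG,\la}$ attached to a unipotent class of $\bG=\Spin_7$ is abelian. By the description recalled before Proposition \ref{BAWC-Spin}, non-abelianness requires $a(\la)>2$ and $\kappa(\la)=1$; enumerating the partitions $\la\vdash 7$ in which every even part occurs an even number of times---namely $7$, $5+1+1$, $3+3+1$, $3+2+2$, $3+1^{4}$, $2+2+1^{3}$, $1^{7}$---and directly computing the pair $(a(\la),\kappa(\la))$ shows that none satisfies both conditions. Chaneb's theorem therefore furnishes an $\Aut(\Omega_7(q))$-stable unitriangular basic set $\mathcal I$ for $B_0(\Omega_7(q))$ whose members all lie in Lusztig series $\cE(\Omega_7(q),\bar s)$ for $\bar s$ a quasi-isolated $2'$-element of the adjoint dual.

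Finally I would adapt the proof of Lemma \ref{prop:act-quasi-SO} to the $\Omega$-setting: quasi-isolated $2'$-classes in the adjoint dual (of type $C_3$) are stable under $F_0^*$ by the classification in \cite{Bo05}; unipotent characters are fixed by field automorphisms by \cite[Prop.~3.7 and~3.9]{Ma07}; and Jordan decomposition through a regular embedding, together with \cite[Thm.~7.1]{DM90} and \cite[Thm.~A]{Sp23}, absorbs the possible central twists exactly as in the $\SO$-case, yielding $F_0$-invariance for each character in $\cE(\Omega_7(q),\bar s)$. Unitriangularity of $\mathcal I$ then propagates $F_0$-invariance to every element of $\IBr(B_0(\Omega_7(q)))$, hence to $\IBr(B_0(\Spin_7(q)))$, and Theorem \ref{thm:IBAW-B} applies. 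The main obstacle is the central-twist bookkeeping through the regular envelope; but for $\Spin_7$ this reduces to a finite explicit check on the handful of quasi-isolated $2'$-classes of $\mathrm{PSp}_6(q)$, which makes the verification tractable.
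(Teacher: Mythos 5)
Your proposal follows essentially the same route as the paper: reduce via Theorem~\ref{thm:IBAW-B} to the $\langle F_0\rangle$-invariance of $\IBr(B_0)$, observe that Chaneb's exclusion of spin groups in \cite[Thm.~2.9]{Ch20} can be circumvented here because the component groups $A_{\bG,\la}$ are abelian, and then transfer $F_0$-invariance from the Lusztig series containing the basic set. Your explicit enumeration of the seven admissible partitions of $7$ and the observation that $a(\la)\le 2$ for all of them is a clean and correct way to get the abelianness; in fact it is slightly more careful than the paper's justification (the paper asserts $a(\la)=1$, whereas $a(\la)$ can equal $2$; what saves the argument is that every $\la\vdash 7$ with $\kappa(\la)=1$ has $a(\la)=1$, so $a(\la)>2$ with $\kappa(\la)=1$ never occurs). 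The paper instead handles the $F_0$-invariance step by invoking Corollary~\ref{cor:act-Br-SO} (which already packages the adaptation of Lemma~\ref{prop:act-quasi-SO}), while you re-derive the same content from scratch; these are the same argument.

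Two small points to correct. First, the semisimple labels $\bar s$ of the Lusztig series carrying the basic set for $B_0$ are quasi-isolated \emph{$2$-elements}, not $2'$-elements: $B_0=\cE_2(G,1)$, so its constituents lie in $\cE(G,t)$ with $t$ a $2$-element, and it is the $2$-power quasi-isolated classes whose $F_0$-stability one checks via \cite{Bo05}. Second, Chaneb's theorem produces the basic set for $\Spin_7(q)$ itself (not directly for $\Omega_7(q)$, which is not of the form $\bG^F$); you do need, as you implicitly use, the inflation identification $\IBr(B_0(\Spin_7(q)))\cong\IBr(B_0(\Omega_7(q)))$ coming from $Z(\Spin_7(q))=O_2(\Spin_7(q))$ to move between the two. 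Neither issue affects the validity of the argument.
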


\begin{proof}
	Keep the notation and setup as above.
	Let $\bG\hookrightarrow\tilde\bG$ be a regular embedding. 	Following H\'ezard \cite{He04} and Lusztig \cite{Lu09}, Taylor \cite[\S5.2]{Tay12} describes a map $\mathbf C_\la\mapsto \chi_\la$ from the set of unipotent classes of $\bG$ to $\Irr(G)$ with the following properties: $\chi_\la$ lies in the Lusztig series of a
	quasi-isolated 2-element $\tilde s_\la\in\tilde \bG^*$ with centralizer of type $\tp C_{a} \tp C_b$ with $a+b=n$ if $\kappa(\la)\ge2$, and of type $\tp C_a \tp A_b\tp C_a$ with $2a+b=n$ if $\kappa(\la)=1$.
	Let $\cF_\la$ denote the Lusztig family of the $Z_{\tilde\bG^*}(\tilde s_\la)$ containing the Jordan correspondent of $\chi_\la$.
	
	According to \cite[Prop.~4.3]{GH08} for each class $\mathbf C_\la$ there exist
	$|A_{\tilde\bG}(u)|$ (for $u\in\mathbf C_\la$) characters $\tilde\rho_{\la,i}$ in the Lusztig series $\cE(\tilde\bG^F,\tilde s_\la)$ with Jordan
	correspondents in the family $\cF_\la$, such that the matrix of multiplicities
	of their Alvis--Curtis--Kawanaka--Lusztig duals in the generalised
	Gelfand--Graev characters corresponding to $\mathbf C_\la$ is the identity matrix.
	Further, Taylor \cite[Prop.~5.4]{Tay13} shows that for each class $\mathbf C_\la$
	there exist $|A_{\la}|$ characters $\rho_{\la,i}\in\Irr(\bG^F)$ (with $1\le i\le |A_{\la}|$), which are just the characters lying below the $\tilde\rho_{\la,i}$, such that again the matrix of multiplicities of their duals in the generalised
	Gelfand--Graev characters corresponding to $\mathbf C_\la$
	is the identity matrix.  Let $\Xi$ be the set of characters $\rho_{\la,i}$, where $\mathbf C_\la$ runs through the unipotent characters of $\bG$, and $1\le i\le |A_{\la}|$. 
	By a result of Chaneb \cite[Thm.2.9]{Ch20}, $\Xi$ is a unitriangular basic set of the principal 2-block of $\Spin_7(q)$.
	Note that, in \cite[Thm.2.9]{Ch20} spin and half spin groups are excluded as $A_{\la}$ may be non-abelian in general. However, for our situation $n=3$, the group $A_{\la}$ is an elementary 2-group of rank $\delta(\la)$ as $a(\la)=1$, and the proof of \cite[Thm.2.9]{Ch20} also applies.
	
	By Corollary~\ref{cor:act-Br-SO}, those unitriangular basic sets are $\langle F_0\rangle$-stable.
	This means, $B_0(H)$ has an $\langle F_0\rangle$-stable unitriangular basic set, such that the irreducible constituents of the restriction of those irreducible characters to $S$ form an $(H\rtimes\langle F_0\rangle)$-stable unitriangular basic set for $B_0(S)$.
	Therefore, by Corollary~\ref{cor:act-Br-SO}, every element in $\IBr(B_0(S))$ is $\langle F_0\rangle$-invariant and thus this assertion follows from Theorem~\ref{thm:IBAW-B}.
\end{proof}	

Let $J$ (resp. $\tH$) be the conformal orthogonal group $\textup{CO}(V)$ (resp. $\textup{CSO}(V)$) over $V$.

\begin{prop}\label{prop:act-IBr-SO}
	Suppose that $\dim(V)$ is even.
There is a $(J\rtimes\langle F_0\rangle)$-equivariant bijection between $\IBr(B_0(H))$ and the unipotent conjugacy classes of $H$.
\end{prop}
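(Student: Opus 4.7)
The plan is to build the bijection from a unitriangular basic set of $B_0(H)$ in the style of Chaneb's work for type $\tp D$, and then to verify equivariance for $\langle F_0\rangle$ and for $J$ separately. Both sides already have the correct cardinality: by \cite[Prop.~2.7]{Ch20} one has $|\IBr(B_0(H))|=u(H)$, while $H$-conjugacy classes inside each $F$-stable unipotent class $\mathbf C_\la$ of $\bH$ are parametrized by the $F$-conjugacy classes of $A_{\bH,\la}$ via \cite[Thm.~21.11]{MT11}.

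To build the bijection I would, for each $F$-stable unipotent class $\mathbf C_\la$, attach a quasi-isolated $2$-element $\tilde s_\la\in\tilde\bH^*$ as in the proof of Corollary~\ref{cor-BAW-Spin7} and then, via \cite[Prop.~4.3]{GH08} and \cite[Prop.~5.4]{Tay13}, produce characters $\rho_{\la,i}\in\Irr(H)$ indexed by the $F$-conjugacy classes $i$ of $A_{\bH,\la}$ whose Alvis--Curtis--Kawanaka--Lusztig duals pair as the identity against the generalized Gelfand--Graev characters attached to $\mathbf C_\la$. By \cite[Thm.~2.9]{Ch20} the family $\{\rho_{\la,i}\}$ is a unitriangular basic set for $B_0(H)$, so sending $(\mathbf C_\la,i)$ to the unique element of $\IBr(B_0(H))$ appearing as its triangular partner gives a bijection $\Phi$. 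For $\langle F_0\rangle$-equivariance, Corollary~\ref{cor:act-Br-SO} says that every element of $\IBr(B_0(H))$ is $F_0$-fixed, and a Lang--Steinberg-type argument together with Lemma~\ref{lem:F-conju} gives the same for every unipotent class of $H$, so $F_0$-equivariance is automatic.

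The substantive step is $J$-equivariance. The quotient $J/H$ decomposes into the ``diagonal'' part coming from $\tilde H$ and an outer involution coming from $\rO(V)\setminus\SO(V)$. The $\tilde H$-equivariance of $\Phi$ follows from the compatibility of the Jordan decomposition and of generalized Gelfand--Graev characters with the $\tilde H$-action: this action permutes the Lusztig series via $\tilde s_\la\mapsto z\tilde s_\la$ for $z\in Z(\tilde\bH^*)^F$, and permutes the parametrizing data through the inclusion $A_{\bH,\la}\hookrightarrow A_{\tilde\bH,\la}$. The hard part will be the outer generator of $J/\tilde H$: it must interchange, compatibly on both sides of $\Phi$, the two pieces into which each ``very even'' partition $\la$ (with $a(\la)=0$) splits on passing from $\rO(V)$-classes to $H$-classes. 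Matching these splittings reduces to showing that the generalized Gelfand--Graev characters attached to $\mathbf C_\la^+$ and $\mathbf C_\la^-$ are exchanged by any element of $\rO(V)\setminus\SO(V)$ and combining this with the known action of the outer involution on unipotent characters of $H$.
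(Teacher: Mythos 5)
Your proposal follows essentially the same route as the paper: build a unitriangular basic set $\{\rho_{\la,i}\}$ for $B_0(H)$ from the H\'ezard--Lusztig--Taylor parametrization via generalized Gelfand--Graev characters and \cite{GH08,Tay13,Ch20}, take $\langle F_0\rangle$-stability from Lemma~\ref{prop:act-quasi-SO}, and then match $J$-orbit structures on both sides, with the outer involution exchanging the two parts of each very-even class. The paper executes this by partitioning both $\Xi$ and the set $\mathscr U$ of unipotent classes of $H$ into four pieces according to $a(\la)$, $\kappa(\la)$, $\iota(\la)$, reading the exact $J$-orbit structure of $\Xi$ from~\cite{DM90} and~\cite[Appendix~B]{FLZ19} rather than via the GGGC-exchange argument you sketch for the very-even case, but the underlying strategy is the same.
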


\begin{proof}
	Following H\'ezard \cite{He04} and Lusztig \cite{Lu09}, Taylor \cite[\S6]{Tay12} describes a map $\mathbf C_\la\mapsto \chi_\la$ from the set of unipotent classes of $\bH$ to $\Irr(H)$ such that $\chi_\la$ lies below the Lusztig series of a semisimple 2-element $\tilde s_\la\in\tilde \bH^*$ satisfying that the image of $\tilde s_\la$ in the adjoint quotient $\tilde\bH^*/Z(\tilde\bH^*)$ is quasi-isolated and the following properties hold:
$\tilde s_\la=1$ and $\chi_\la$ is a unipotent character parametrised by a non-degenerate Lusztig symbol if $a(\la)>0$, $\kappa(\la)>1$ and $\iota(\la)=0$, 
$\tilde s_\la$ has centralizer of type $\tp D_\mu \tp D_\nu$ with $\mu+\nu=w/2$ and the image of $\chi_\la$ under Lusztig’s Jordan decomposition is a special unipotent character parametrised by non-degenerate Lusztig symbols if $a(\la)>0$, $\kappa(\la)>1$ and $\iota(\la)=1$,
$\tilde s_\la$ has centralizer of type $\tp D_\mu \tp A_\nu \tp D_\mu$ with $2\mu+\nu=w/2$ and the image of $\chi_\la$ under Lusztig’s Jordan decomposition is a special unipotent character parametrised by non-degenerate Lusztig symbols if $a(\la)>0$ and $\kappa(\la)=1$,
$\tilde s_\la$ has centralizer of type $\tp A_{n-1}$ if $a(\la)=0$.
Here, $\iota(\la)=1$ if there exists an odd $i$ with $c_i>0$, and  $\iota(\la)=0$ otherwise.
	For the partition $\la\in\mathscr P_w$ with $a(\la)=0$, we also write $\chi_\la$ as $\chi_\la^\pm$ to distinguish $\mathbf C_\la^\pm$. 

	According to \cite[Prop.~4.3]{GH08} for each class $\mathbf C_\la$ there exist
		$|A_{\tilde\bH,\la}|$ irreducible characters $\tilde\rho_{\la,i}$ in the Lusztig series $\cE(\tilde\bH^F,\tilde s_\la)$ with Jordan
		correspondents in the same family as $\chi_\la$, such that the matrix of multiplicities
		of their Alvis--Curtis--Kawanaka--Lusztig duals in the generalised
		Gelfand--Graev characters corresponding to $\mathbf C_\la$ is the identity matrix.
		Further, Taylor \cite[Prop.~5.4]{Tay13} shows that for each class $\mathbf C_\la$
		there exist $|A_{\bH,\la}|$ characters $\rho_{\la,i}\in\Irr(\bG^F)$ (with $1\le i\le |A_{\la}|$), which are just the characters lying below the $\tilde\rho_{\la,i}$, such that again the matrix of multiplicities of their duals in the generalised
		Gelfand--Graev characters corresponding to $\mathbf C_\la$
		is the identity matrix.  Let $\Xi$ be the set of those characters $\rho_{\la,i}$, where $\mathbf C_\la$ runs through the unipotent characters of $\bH$, and $1\le i\le |A_{\bH,\la}|$. 
		By a result of Chaneb \cite[Thm.~2.9]{Ch20}, $\Xi$ is a unitriangular basic set of the principal 2-block of $H$.

	By Lemma~\ref{prop:act-quasi-SO}, every character of $\Xi$ is $\langle F_0\rangle$-invariant.
We partition \[\Xi=\Xi_1\coprod \Xi_2\coprod \Xi_3\coprod \Xi_4,\] where $\Xi_1$ consists those $\rho_{\la,i}$ with $a(\la)>0$, $\kappa(\la)>1$ and $\iota(\la)=0$, $\Xi_2$ consists those $\rho_{\la,i}$ with $a(\la)>0$, $\kappa(\la)>1$ and $\iota(\la)=1$, $\Xi_3$ consists those $\rho_{\la,i}$ with $a(\la)>0$ and $\kappa(\la)=1$, and $\Xi_4$ consists those $\rho_{\la,i}$ with $a(\la)=0$.
The action of $J$ on $\Irr(H)$ is determined in \cite[Thm.~7.1]{DM90} and \cite[Appendix~B]{FLZ19}.
Thus we deduce that every character in $\Xi_1$ is $J$-invariant, every character in $\Xi_2\coprod\Xi_3$ is $I$-invariant, every character in $\Xi_4$ is $\tH$-invariant.
For $a(\la)=0$, we have $\rho_{\la,i}=\tilde\rho_{\la,i}$ and $i=1$ and if we write $\rho_{\la,i}$ as $\rho_{\la}^\pm$ to distinguish $\mathbf C_\la^\pm$, then $\rho_{\la}^+$ and $\rho_{\la}^-$ are fused by $I$.
Moreover, the characters in $\Xi_2$ (or $\Xi_3$) present in pairs such that for each pair those two characters are fused by $\tH$.
In particular, $\Xi$ is $(J\rtimes\langle F_0\rangle)$-stable and this implies that there exists a  $(J\rtimes\langle F_0\rangle)$-equivariant bijection between $\Xi$ and $\IBr(B_0(H))$.

Denote by $\mathscr U$ the set of unipotent conjugacy classes of $H$, and we also partition \[\mathscr U=\mathscr U_1\coprod \mathscr U_2\coprod \mathscr U_3\coprod \mathscr U_4,\] where $\mathscr U_1$ consists of conjugacy classes of $H$ contained in some $\mathbf C_\la$ with with $a(\la)>0$, $\kappa(\la)>1$ and $\iota(\la)=0$, $\mathscr U_2$ consists of conjugacy classes of $H$ contained in some $\mathbf C_\la$ with $a(\la)>0$, $\kappa(\la)>1$ and $\iota(\la)=1$, $\mathscr U_3$ consists of conjugacy classes of $H$ contained in some $\mathbf C_\la$ with $a(\la)>0$ and $\kappa(\la)=1$, and $\mathscr U_4$ consists of conjugacy classes of $H$ contained in some $\mathbf C_\la^\pm$ with with $a(\la)=0$.
We note that every unipotent class of $H$ is $(J\rtimes\langle F_0\rangle)$-stable unless the following situations: 
$\mathbf C_\la^+$ and $\mathbf C_\la^-$ are $I$-fused if $a(\la)=0$, and the elements in $\mathscr U_2$ (or $\mathscr U_3$) present in pairs such that for each pair those two elements are fused by $\tH$.

By construction $|\Xi_k|=|\mathscr U_k|$ for $k=1,2,3,4$.
Precisely, for each $k$, we have a bijection $\Xi_k\to\mathscr U_k$ such that the image of $\rho_{\la,i}$ is a conjugacy class of $H$ contained in $\mathbf C_\la$.
Therefore, we can get a bijection $\Xi\to\mathscr U$ which is $(J\rtimes\langle F_0\rangle)$-equivariant by the above arguments, and this completes the proof.
\end{proof}

\begin{cor}\label{cor:act-Brauer}
	Suppose that $\dim(V)$ is even. For any $\psi\in\IBr(H)$, $\sigma\in\tH$ and $\sigma'\in I\rtimes\langle F_0\rangle$, if $\psi^\sigma=\psi^{\sigma'}$, then $\psi^\sigma=\psi^{\sigma'}=\psi$ and $\psi$ extends to $I_\psi\rtimes\langle F_0\rangle_\psi$.
\end{cor}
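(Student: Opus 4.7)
My plan is to transport the question to the combinatorics of unipotent conjugacy classes of $H$ via an equivariant bijection extending the one in Proposition~\ref{prop:act-IBr-SO}. Although that proposition is stated only for the principal $2$-block, the same argument (Taylor's characters $\rho_{\la,i}$ together with Chaneb's unitriangular basic set \cite[Thm.~2.9]{Ch20}) provides a $(J\rtimes\langle F_0\rangle)$-equivariant bijection between $\IBr(H)$ and the set of unipotent conjugacy classes of $H$ for any fixed $2$-block of $H$, where $J=\textup{CO}(V)=I\tH$. In particular this bijection is simultaneously $\tH$-equivariant and $(I\rtimes\langle F_0\rangle)$-equivariant, so each of $\psi^\sigma$ and $\psi^{\sigma'}$ is computable on the class side.

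With this in hand, the first assertion becomes the following combinatorial fact: if $\sigma\in\tH$ and $\sigma'\in I\rtimes\langle F_0\rangle$ carry a unipotent class $C$ of $H$ to the same class, then both fix $C$. This will follow from the description recalled in the proof of Proposition~\ref{prop:act-IBr-SO}: $F_0$ acts trivially on unipotent classes, the non-trivial $\tH$-fusion occurs only inside $\mathscr U_2\cup\mathscr U_3$ (where $a(\la)>0$), and the non-trivial $I$-fusion occurs only on the pairs $\mathbf{C}_\la^\pm$ (where $a(\la)=0$). These loci are disjoint, so $\sigma$ and $\sigma'$ cannot move $C$ to a common non-trivial translate, forcing $\sigma.C=\sigma'.C=C$.

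For the extension statement, the quotient $(I_\psi\rtimes\langle F_0\rangle_\psi)/H$ is abelian, with an index-at-most-$2$ part coming from $I/H$ and a cyclic quotient from $\langle F_0\rangle$. Extension along the cyclic $\langle F_0\rangle$-part is automatic once $\psi$ is $\langle F_0\rangle_\psi$-invariant, which is now known by the previous paragraph. For the $I/H$-part, which is non-trivial exactly in the cases $\Xi_1\cup\Xi_2\cup\Xi_3$, I would descend the canonical extension of $\tilde\rho_{\la,i}\in\IBr(\tH)$, available because $\tilde{\mathbf H}$ has connected centre, via the unitriangular basic-set transition matrix to an extension of $\rho_{\la,i}$, and hence of $\psi$. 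The main obstacle is combining these two extensions into a single extension to all of $I_\psi\rtimes\langle F_0\rangle_\psi$, i.e.\ killing a potential cohomology class in $H^2((I_\psi\rtimes\langle F_0\rangle_\psi)/H,\bbC^\ti)$; I expect to handle this by invoking Assumption~\ref{A-infinity} for $\tp D_n$ and $^2\tp D_n$ together with the $A(\infty)$ results of Cabanes--Sp\"ath \cite{CS19,Sp23}, which are precisely designed to produce the required $(\tilde H\rtimes\langle F_0\rangle)$-equivariant extensions.
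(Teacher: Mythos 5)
Your proposal diverges from the paper's proof and contains two genuine gaps.

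First, you claim the argument of Proposition~\ref{prop:act-IBr-SO} gives a $(J\rtimes\langle F_0\rangle)$-equivariant bijection between $\IBr(H)$ and the unipotent conjugacy classes of $H$ ``for any fixed $2$-block of $H$.'' This is false as stated: for a non-principal $2$-block $\cE_2(H,s)$ with $s\ne 1$, the Jordan correspondent lives in a centralizer $\C_{\bH^*}(s)$, and there is no bijection between the Brauer characters of that block and the unipotent classes of $H$ itself. The paper handles non-principal blocks differently --- by invoking the Bonnaf\'e--Rouquier Morita equivalence \cite{BR03} together with \cite[Cor.~4.6 and Prop.~4.5]{FM22}, \cite[Thm.~16.4]{ST23} and \cite[Lemma~B.1]{FLZ19} --- precisely because a direct transport to unipotent classes of $H$ is unavailable there. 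Your disjoint-fusion-loci argument ($\tH$-fusion in $\mathscr U_2\cup\mathscr U_3$ versus $I$-fusion on the $\mathbf C_\la^\pm$ pairs) is fine for the principal block, but does not carry over. Moreover, the mechanism that actually delivers the stabilizer equality $(\tH\ti I\langle F_0\rangle)_\psi=\tH_\psi\ti(I\langle F_0\rangle)_\psi$ in the paper is \cite[Thm.~A]{Sp23}, which reduces the statement to producing a $J\langle F_0\rangle$-stable unitriangular basic set; you do not invoke this tool and therefore never actually close the reduction.

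Second, and more seriously, the extension step is circular. You propose to ``kill a potential cohomology class'' by ``invoking Assumption~\ref{A-infinity}'', but Corollary~\ref{cor:act-Brauer} is stated unconditionally (only under $\dim(V)$ even), not under Assumption~\ref{A-infinity} --- indeed it must be, since it is used to establish Assumption~\ref{A-infinity} for $^2\tp D_n$ in the corollary following Theorem~\ref{mainthm:BAW-BD}. The paper's actual extension argument avoids any cohomological obstruction: $I\langle F_0\rangle_\psi/H$ has cyclic Sylow $\ell$-subgroups for every odd prime $\ell$, and since we work with $2$-Brauer characters the $\ell=2$ obstruction is automatic, so extendibility follows from \cite[Thm.~8.29]{Na98}. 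This is both shorter and does not rely on the very assumption being established.
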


\begin{proof}
Since $I\langle F_0\rangle_\psi/H$ has cyclic Sylow $\ell$-subgroups for any odd prime $\ell$, the extendibility of $\psi$ to $I_\psi\rtimes\langle F_0\rangle_\psi$ is clear (see e.g. \cite[Thm.~8.29]{Na98}).
Thanks to \cite[Thm.~A]{Sp23}, it suffices to show that $H$ has a $J\langle F_0\rangle$-stable unitriangular basic set.
By the proof of Proposition~\ref{prop:act-IBr-SO}, the principal 2-block of $H$ has a $J\langle F_0\rangle$-stable unitriangular basic set.
For non-principal 2-blocks of $H$, the proof of \cite[Cor.~4.6]{FM22} also applies for our situation, as a Jordan decomposition for Brauer character as in \cite[Prop.~4.5]{FM22} also holds by Bonnaf\'e--Rouquier Morita equivalence \cite{BR03}; 
using \cite[Thm.~16.4]{ST23} and \cite[Lemma~B.1]{FLZ19}, we can argue as in the proof of \cite[Prop.~4.5]{FM22}.
\end{proof}

Next, we determine the action of $J$ on $\Alp(B_0(\Omega(V)))$ and establish an equivariant bijection.

\begin{thm}\label{thm:eq-bij-spin}
	Suppose that $\dim(V)$ is even.
There exists a $J$-equivariant bijection between $\IBr(B_0(\Omega(V)))$ and $\Alp(B_0(\Omega(V)))$.
	\end{thm}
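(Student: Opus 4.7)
The plan is to build the $J$-equivariant bijection of the theorem by chaining three correspondences: one between $\IBr(B_0(H))$ and unipotent classes of $H=\SO(V)$, one between those unipotent classes and the principal weights $\Alp(B_0(I))$ of $I=\rO(V)$, and finally Clifford descent from $H$ and $I$ down to $\Omega(V)$.

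The first correspondence is exactly Proposition~\ref{prop:act-IBr-SO}, which supplies a $(J\rtimes\langle F_0\rangle)$-equivariant bijection $\Psi_H\colon\IBr(B_0(H))\to\mathscr U$ via the Jordan-decomposition basic set $\Xi=\Xi_1\cup\Xi_2\cup\Xi_3\cup\Xi_4$. The second correspondence is combinatorial: by Proposition~\ref{P:Ta1}, Table~\ref{Ta1} and Remark~\ref{rmk:wei-sub-orth}, together with the 2-core tower argument used in the proof of Proposition~\ref{BAWC-Spin}, the principal weights $\Alp(B_0(I))$ are in bijection with tuples $(\la_1,\la_2,\la_3,\la_4,\ka_+,\ka_-,\ka)$ satisfying $4(|\la_1|+\cdots+|\la_4|)+|\ka_+|+|\ka_-|+2|\ka|=w$ and $(-1)^{|\ka_-|}=\disc(V)$; the same data indexes $\mathscr U$ via the 2-core tower of $\la\in\mathscr P_w$ together with the splittings of $\mathbf C_\la^F$ into $H$-classes recorded in the paragraph before Lemma~\ref{BAWC-SO}. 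Composing these steps gives a bijection $\IBr(B_0(H))\to\Alp(B_0(I))$ which I would check is $J$-equivariant by comparing the action of $J/H$ on basic subgroups in Table~\ref{Ta1} (controlled by Lemma~\ref{L:Spin8-conjugacy}) with the action of $J/H$ on unipotent classes and on $\Xi$ (computed via \cite[Thm.~7.1]{DM90} and \cite[Appendix~B]{FLZ19}).

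The third step is Clifford descent to $\Omega(V)$ on both sides. On the Brauer side, each $\psi\in\IBr(B_0(H))$ covers one or two irreducible Brauer characters of $\Omega(V)$, and by Corollary~\ref{cor:act-Brauer} the splitting is controlled by the $\tH$-stabiliser of $\psi$, so by the proof of Proposition~\ref{prop:act-IBr-SO} the characters that split are exactly those in $\Xi_4$ together with the $\tH$-fused pairs in $\Xi_2\cup\Xi_3$. On the weight side, Corollary~\ref{cor:wei-cov-solquo} together with the parity calculations of Lemmas~\ref{L:parity2}--\ref{L:parity6} shows that a principal weight $(R,\varphi)$ of $I$ covers one or two principal weights of $\Omega(V)$ according to whether $N_I(R)$ meets $\Omega(V)$ with index one or two, and this is determined by the parity of $R$ recorded in the fifth column of Table~\ref{Ta1}. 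A direct case check matches the splitting patterns on the two sides under the bijection of the previous paragraph.

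Finally, I would apply Lemma~\ref{lem:dh} to the 2-part of $J/\Omega(V)$ (the odd part acts trivially on the principal 2-block data) to upgrade the descended bijection to a $J$-equivariant one. The hypotheses (a) and (b) of Lemma~\ref{lem:dh} follow from the $H$-equivariance of the first two steps. The main obstacle will be verifying hypothesis (c): for those $Z(J/\Omega(V))$-orbits of length two with point stabiliser of order two, one must check that the pair of cosets in $\langle a^2,b\rangle$ stabilising a given weight orbit matches the pair stabilising the corresponding Brauer-character orbit. This reduces to comparing, for each row of Table~\ref{Ta1} whose parity is exactly $\{(0,0),(1,0)\}$ or $\{(0,0),(0,1)\}$, the $I/\Omega(V)$-action computed via Lemma~\ref{L:parity1} and Lemma~\ref{L:D4} with the $\tH/H$-action computed via the Jordan decomposition formulas in \cite[Thm.~7.1]{DM90} and \cite[Appendix~B]{FLZ19}, and concluding that the two stabilisers coincide.
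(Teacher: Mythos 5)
Your chain breaks at the second step. You propose a $J$-equivariant bijection $\IBr(B_0(H))\to\Alp(B_0(I))$ built by matching tuples $(\la_1,\la_2,\la_3,\la_4,\ka_+,\ka_-,\ka)$ on both sides, but these two sets do not have the same cardinality in general. By \cite[Prop.~2.7]{Ch20} (as used in the proof of Lemma~\ref{BAWC-SO}), $|\IBr(B_0(H))|=u(H)$, while the counting preceding Lemma~\ref{BAWC-SO} shows $|\Alp(B_0(I))|=\omega_0(I)=u(I)$; and the proof of Lemma~\ref{BAWC-SO} explicitly records that $u(H)-u(I)$ equals the number of $\la\in\mathscr P_w$ with $a(\la)=0$ when $w$ is even and $\ty(V)=+$, which is positive whenever $4\mid w$. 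The tuple-counting you cite (equation~(\ref{equ-5.2})) parametrises $\Alp(B_0(I))$, not $\mathscr U$: the splittings of $\mathbf C_\la^F$ into $H$-classes add $2^{b(\la)}-1$ extra classes with no partner on the weight side of $I$. Consequently the map you want to compose with $\Psi_H$ does not exist in the very cases that make the theorem nontrivial.

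Even granting a bijection at that stage, the final descent would be mismatched: $\IBr(B_0(\Omega(V)))$ is reached from $\IBr(B_0(H))$ by a single index-$2$ Clifford descent, whereas $\Alp(B_0(\Omega(V)))$ sits under $\Alp(B_0(I))$ at index~$4$, so the one-or-two splitting patterns you invoke on each side are governed by different group extensions and need not correspond. The paper avoids both problems by moving to $\Alp(B_0(H))$ as the weight-side set, so that both descents to $\Omega(V)$ are the same $Z_2$-extension. It then partitions $\IBr(B_0(H))=\mathscr B_1\sqcup\mathscr B_2\sqcup\mathscr B_3$ (via $\Xi_1$, $\Xi_2\sqcup\Xi_3$, $\Xi_4$) and $\Alp(B_0(H))=\mathscr W_1\sqcup\mathscr W_2\sqcup\mathscr W_3$ according to $I$-stability and $J$-invariance, checks $|\mathscr B_i|=|\mathscr W_i|$ piecewise, and uses the equality $|\IBr(B_0(G))|=\omega_0(G)$ from Proposition~\ref{BAWC-Spin} precisely to match the one-or-two splitting count inside $\mathscr B_2\leftrightarrow\mathscr W_2$. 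Only after that calibration is Lemma~\ref{lem:dh} invoked; its hypothesis~(c) is exactly the point your sketch flags as the ``main obstacle,'' and without the piecewise set-up it cannot be verified. If you want to salvage your route, you must replace $\Alp(B_0(I))$ by $\Alp(B_0(H))$ throughout and import the counting equality from Proposition~\ref{BAWC-Spin} before attempting the dihedral-group matching.
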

	
	\begin{rmk}
		Suppose that $\dim(V)$ is even.
		Then there is a $J$-equivariant bijection between $\IBr(B_0(G))$ and the unipotent conjugacy classes of $G$ (which follows from the proof below).
		\end{rmk}

	\begin{proof}
First we determine the action of $J$ on $\Alp(B_0(I))$.
In fact, the proof of Lemma~\ref{L:D4} also applies for arbitrary dimension, not only when $w=8$. In Tables~\ref{Ta1}, the basic subgroups are $J$-stable except the group $R_{1,0,0,\bc}$ in line 2 and the group $R_{1,0,0,\bc}$ in line 3 are $J$-fused, and the groups in line 6 and 7 are $J$-fused.
Thus the $J$-invariant conjugacy classes of principal 2-weights of $I$ are in bijection with the tuples $(\la_1,\la_2,\la_3,\ka_1,\ka_2)$ where $\la_1$, $\la_2$ and $\la_3$ are partitions, $\ka_1$ and $\ka_2$ are 2-cores such that $\disc(V)=(-1)^{|\ka_2|}$ and
\[ 4(|\la_1|+|\la_2|)+8|\la_3|+2(|\ka_1|+|\ka_2|)=w.\]
We denote by $\omega_{w,+}'$ and $\omega_{w,-}'$ the numbers of $J$-invariant conjugacy classes of principal 2-weights of $I$ for the case $\disc(V)=+$ and $-$ respectively.
Then $\omega_{w,+}'+\omega_{w,-}'$ is equal to the coefficient of $t^w$ in 
\begin{equation}\label{equ-J-inv}
	((\sum_{k=0}^\infty t^{4k})(\sum_{k=0}^\infty t^{8k})\cdots)^2 (\sum_{k=0}^\infty t^{8k})(\sum_{k=0}^\infty t^{16k})\cdots\vartheta(t^2)^2=\prod_{k=1}^\infty \frac{(1+t^{2k})^2}{1-t^{8k}},
	\addtocounter{thm}{1}\tag{\thethm}
\end{equation}
while $\omega_{w,+}'-\omega_{w,-}'$ is equal to the coefficient of $t^w$ in 
\begin{equation}\label{equ-J-inv-1}
	((\sum_{k=0}^\infty t^{4k})(\sum_{k=0}^\infty t^{8k})\cdots)^2 (\sum_{k=0}^\infty t^{8k})(\sum_{k=0}^\infty t^{16k})\cdots\vartheta(t^2) \vartheta(-t^2) 
=\prod_{k=1}^\infty \frac{1}{1-t^{4k}}.
	\addtocounter{thm}{1}\tag{\thethm}
\end{equation}
Here we use identities $\vartheta(t)=\prod\limits_{k=1}^\infty\frac{(1-t^{2k})^2}{1-t^k}$ and $\prod\limits_{k=1}^\infty \frac{1}{(1-t^k)(1-(-t)^k)}=\prod\limits_{k=1}^\infty \frac{1+t^{2k}}{(1-t^{2k})^2}$.
In particular, $\omega_{w,+}'\ne\omega_{w,-}'$ occurs only when $4\mid w$.

The unipotent conjugacy classes of $I$ are just those in $\mathbf C_\la$ such that ``$a(\la)=0$'' or ``$a(\la)\ge 1$, $\ka(\la)\ge 2$ and $\iota(\la)=0$'', i.e., $\la\in\mathscr P_w$ such that $c_i$ is even for all $i$.
Let $u'_{w,+}$ (resp. $u'_{w,-}$) denote the numbers of unipotent conjugacy classes of $I$ if $\ty(V)=+$ (resp. $-$).
Then $u'_{w,+}+u'_{w,-}$ is equal to the coefficient of $t^w$ in 
\begin{equation}\label{equ-J-inv-2}
(1+2\sum_{k=1}^\infty t^{2k} )(\sum_{k=0}^\infty t^{4k})(1+2\sum_{k=1}^\infty t^{6k} )(\sum_{k=0}^\infty t^{8k})\cdots=\prod_{k=1}^\infty \frac{1}{(1-t^{4k-2})^2(1-t^{8k})},
	\addtocounter{thm}{1}\tag{\thethm}
\end{equation}
while $u'_{w,+}-u'_{w,-}$ is equal to the coefficient of $t^w$ in (\ref{equ-5.1-1}).
In particular, $u'_{w,+}\ne u'_{w,-}$ occurs only when $4\mid w$.
Note that $\prod\limits_{k=1}^\infty \frac{1}{1-t^{4k-2}}=\prod\limits_{k=1}^\infty (1+t^{2k})$, therefore, by (\ref{equ-J-inv}), (\ref{equ-J-inv-1}), (\ref{equ-J-inv-2}) and (\ref{equ-5.1-1}), we have that $\omega_{w,\eps}'=u'_{w,\eps}$ for $\eps\in\{\pm\}$.

Now we consider the principal 2-weights of the group $H$, which is determined in the proof of Lemma~\ref{BAWC-SO}.
Note that a principal 2-weight of $I$ either covers a unique weight of $H$ or covers four weights of $\Omega(V)$.
Let $\mathscr W_1$ denote the set of conjugacy classes of principal 2-weights of $H$ which is $I$-stable and covered by a $J$-invariant weight of $I$, let $\mathscr W_3$ denote the set of conjugacy classes of principal 2-weights of $H$ which is not $I$-stable, and let $\mathscr W_2=\Alp(B_0(H))\setminus(\mathscr W_1\coprod\mathscr W_3)$.
Then by above arguments, we have that $|\Xi_1\coprod \Xi_4|=|\mathscr W_1\coprod \mathscr W_3|$.
On the other hand, by the proof of Lemma~\ref{BAWC-SO}, one gets  $|\Xi_1|=|\mathscr W_1|$, and thus $|\Xi_4|=|\mathscr W_3|$ and $|\Xi_2\coprod\Xi_3|=|\mathscr W_2|$.

Up to conjugacy, every weight in $\mathscr W_1$ is $J$-invariant and every weight in $\mathscr W_1$ covers a unique weight of $\Omega(V)$, the weights in $\mathscr W_2$ present in pairs such that for each pair those two weights are fused by $J$ and every weight in $\mathscr W_2$ is $I$-stable, and the weights in $\mathscr W_3$ present in pairs such that for each pair those two weights are fused by $I$ and every weight in $\mathscr W_3$ covered two weights of $\Omega(V)$.

Let $\Theta$ be the canonical bijection between $\Xi$ and $\IBr(B_0(H))$ as in \cite[Lemma~2.1]{Na23}. Of course, $\Theta$ is $J$-equivariant.
Let $\mathscr B_1$, $\mathscr B_2$, $\mathscr B_3$  be the images of $\Xi_1$, $\Xi_2\coprod\Xi_3$, $\Xi_4$ under $\Theta$ respectively.
Then every Brauer character in $\mathscr B_1$ is $J$-invariant, every Brauer character in $\mathscr B_2$ is $I$-invariant, every Brauer character in $\mathscr B_3$ is $\tH$-invariant.
Moreover, the Brauer characters in $\mathscr B_2$  present in pairs such that for each pair those two characters are fused by $\tH$, and the Brauer characters in $\mathscr B_3$  present in pairs such that for each pair those two characters are fused by $I$.
By \cite[Remark~2.6]{De17}, $|\Irr(\Omega(V)\mid\rho)|$ divides $|\IBr(\Omega(V)\mid\Theta(\rho))|$ for any $\rho\in\Xi$.
Thus every Brauer character in $\mathscr B_3$ covers two irreducible Brauer characters of $\Omega(V)$.
Note that the outer automorphism group of $\Omega(V)$ induced by $J$ is isomorphic to a dihedral group of order 8 or a Klein four group according as $\disc(V)=+$ or $-$.
If $\disc(V)=-$, then the restriction of every irreducible Brauer character of $H$ to $\Omega(V)$ is also irreducible. If $\disc(V)=+$, then by Lemma~\ref{lem:dh} (1), the restriction of every irreducible Brauer character of $\mathscr B_1$ to $\Omega(V)$ is also irreducible.

It is clear that $|\mathscr B_i|=|\mathscr W_i|$ for every $i=1,2,3$. Now we establish bijections between those sets.
Let $f_1$ be arbitrary bijection between $\mathscr B_1$ and $\mathscr W_1$ and let $f_3$ be any $I$-equivariant bijection between $\mathscr B_3$ and $\mathscr W_3$.
By Proposition~\ref{BAWC-Spin}, the number of Brauer character in $\mathscr B_2$ which covers two irreducible Brauer characters of $\Omega(V)$ is equal to the number of conjugacy classes of weights in $\mathscr W_2$ that covers two weights of $\Omega(V)$.
Therefore, there exists a $\tH$-equivariant bijection $f_3:\mathscr B_2\to\mathscr W_2$ such that $\Res^{H}_{\Omega(V)}(\psi)$ is irreducible if and only if $f(\psi)$ covers a unique weight of $\Omega(V)$ for any $\psi\in\mathscr B_2$.
Combing $f_1$, $f_2$ and $f_3$, we get a bijection $f$ between $\IBr(B_0(H))$ and $\Alp(B_0(H))$.

Let $\disc(V)=-$. Then $H=\{\pm I_V\}\ti \Omega(V)$ and so it suffices to prove that $f$ is $J$-equivariant.
Note that $\mathscr B_3$ and $\mathscr W_3$ are empty, and by the proof of Proposition~\ref{prop:act-IBr-SO}, the group $I$ acts trivially on $\IBr(B_0(H))$ and $\Alp(B_0(H))$.
So $f$ is $J$-equivariant.

Now suppose that $\disc(V)=+$, then outer automorphism group of $\Omega(V)$ induced by $J$ is isomorphic to a dihedral group of order 8.
We will use Lemma~\ref{lem:dh} (2) to establish this assertion, where we take $W_1$ and $W_2$ as $\IBr(B_0(H))$ and $\Alp(B_0(H))$ respectively.
The outer automorphism group of $\Omega(V)$ induced by $H$ (resp. $I$) is interpreted as $\langle a^2\rangle$ (resp. $\langle a^2,b\rangle$).
Then conditions (a) and (b) of Lemma~\ref{lem:dh} (2) is clear.
For (c), we note that the every element of  $\mathscr B_1\cup\mathscr B_2\cup\mathscr W_1\cup\mathscr W_2$ is $I$-invariant, while every element of $\mathscr B_3\cup\mathscr W_3$ is not $I$-invariant.
This completes the proof.
\end{proof}

\begin{thm}\label{thm:IBAW-D}
	Suppose that $\dim(V)$ is even and $P\Omega(V)$ is a simple group satisfying that $w\ne 8$ or $\disc(V)=-$.
If every element of $\IBr(B_0(G))$ is $\langle F_0\rangle$-invariant, then the inductive BAW condition holds for the principal 2-block of $P\Omega(V)$.
\end{thm}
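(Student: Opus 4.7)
The plan is to mirror the proof of Theorem~\ref{thm:IBAW-B} by applying the Brough--Sp\"ath criterion (the type $\tp D$ analogue of \cite[Thm.~3.3]{FM22}) with $X=\Omega(V)$, $\tilde X$ the overgroup of $X$ realizing the diagonal-plus-graph automorphisms through $J=\textup{CO}(V)$, and $D=\langle F_0\rangle$, but replacing the numerical counting used in the type $\tp B$ argument by the $J$-equivariant bijection produced in Theorem~\ref{thm:eq-bij-spin}. The excluded case $w=8$, $\disc(V)=+$ is precisely the triality situation, in which $J\rtimes\langle F_0\rangle$ fails to surject onto $\Out(P\Omega(V))$; for every other pair $(w,\disc(V))$ covered by the statement this surjection does hold, so the criterion applies, and the standard passage through the center via Lemma~\ref{lem:weight-quo} descends the conclusion from $\Omega(V)$ to the simple group $P\Omega(V)$.

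First I would verify the global equivariant bijection. By hypothesis every element of $\IBr(B_0(G))$ is $\langle F_0\rangle$-invariant, and by Corollary~\ref{cor:act-field-cla} every conjugacy class in $\Alp(B_0(G))$ is $\langle F_0\rangle$-invariant. Combined with the $J$-equivariant bijection of Theorem~\ref{thm:eq-bij-spin} between $\IBr(B_0(\Omega(V)))$ and $\Alp(B_0(\Omega(V)))$, this produces the $(J\rtimes\langle F_0\rangle)$-equivariant bijection required by the criterion.

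Next I would handle the local extendibility and stabilizer-equality conditions on each side. On the Brauer character side, Assumption~\ref{A-infinity} supplies extendibility of Brauer characters of $G=\Spin(V)$ to their stabilizers in $\tilde G\rtimes D$, and Corollary~\ref{cor:act-Brauer} gives exactly the stabilizer equality $(\tilde X\rtimes D)_\psi=\tilde X_\psi\rtimes D_\psi$ needed for descent to $X$. On the weight side, Proposition~\ref{prop:act-fie-auto} shows that in each conjugacy class of principal weight subgroups one can choose a representative $R$ which is $\langle F_0\rangle$-stable and such that either $\langle F_0\rangle$ acts trivially on $N_I(R)/RZ_I(R)$ or the parity of $N_I(R)$ is $\{(0,0)\}$; combined with Lemma~\ref{lem:act-wei} and the Dade--Glauberman--Nagao correspondence this yields both the $F_0$-invariance of the weights involved and the extendibility of their canonical characters. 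The residual Clifford-theoretic block-covering count, asserting that a weight of $\tilde X$ covers one or two weights of $X$ in accordance with the splitting of the covering Brauer character, is already encoded in the parity analysis in the proof of Theorem~\ref{thm:eq-bij-spin} together with Proposition~\ref{BAWC-Spin}.

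The main obstacle I expect is the weight-side stabilizer-equality condition. While Proposition~\ref{prop:act-fie-auto} controls the $F_0$-action on each principal weight subgroup individually, the criterion requires a \emph{simultaneous} choice of representative $(R,\varphi)$ in each $(J\rtimes\langle F_0\rangle)$-orbit whose full stabilizer splits as $(J\rtimes\langle F_0\rangle)_{(R,\varphi)}=J_{(R,\varphi)}\rtimes\langle F_0\rangle_{(R,\varphi)}$. This will require combining the $J$-orbit analysis of Table~\ref{Ta1} carried out in Theorem~\ref{thm:eq-bij-spin} with the $F_0$-stable representatives of Proposition~\ref{prop:act-fie-auto}, tracking how $F_0$ permutes the triangular-number multiplicities appearing in the $2$-core tower parametrisation of principal weights, and mimicking the argument of Lemma~\ref{lem:dh} to reconcile the $\langle F_0\rangle$-action with the residual Klein-four or dihedral-of-order-$8$ action induced by $J$ on $P\Omega(V)$.
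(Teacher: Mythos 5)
Your overall architecture is correct — run the Brough--Sp\"ath criterion with the conformal orthogonal group supplying diagonal and graph automorphisms and feed it the $J$-equivariant bijection of Theorem~\ref{thm:eq-bij-spin} together with the $\langle F_0\rangle$-invariance statements of Corollary~\ref{cor:act-field-cla} — and it is structurally the same route the paper takes. But the proposal has a genuine gap at exactly the point you flag, and the speculated fix there is wrong.

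You correctly identify that the delicate part is the stabilizer-splitting condition $(\tG\rtimes D)_{(R,\varphi)}=\tG_{(R,\varphi)}\rtimes D_{(R,\varphi)}$ for weights (condition (iv.a) of \cite[Thm.~2.2]{FLZ23b}, which is the form of the criterion the paper actually invokes, rather than the type-$\tp B/\tp C$ form \cite[Thm.~3.3]{FM22} that you cite). However, your proposed resolution — choosing $F_0$-stable representatives via Proposition~\ref{prop:act-fie-auto}, tracking the $F_0$-action on the $2$-core tower multiplicities, and mimicking Lemma~\ref{lem:dh} to reconcile with the residual dihedral/Klein-four outer action — misses the much simpler mechanism that actually closes the argument. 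Under the theorem's hypothesis, $\langle F_0\rangle$ acts \emph{trivially} on $\IBr(B_0(\Omega(V)))$ (by assumption, pushed down through the central quotient), and Corollary~\ref{cor:act-field-cla} says it also acts trivially on $\Alp(B_0(\Omega(V)))$. So for every weight and every Brauer character the $F_0$-part of the stabilizer is the whole of $\langle F_0\rangle$, and the splitting conditions (iii.a), (iv.a) collapse to a statement about the action of the single graph involution $\tau$ — which was already determined, set by set, in the proof of Theorem~\ref{thm:eq-bij-spin}. No further analysis of how $F_0$ interacts with the $2$-core tower or the dihedral group is needed. The machinery of Proposition~\ref{prop:act-fie-auto} and Lemma~\ref{lem:dh} is indeed used, but inside the proofs of Corollary~\ref{cor:act-field-cla} and Theorem~\ref{thm:eq-bij-spin} respectively; you do not need to re-run those constructions here.

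Two further, smaller points. First, you cite Assumption~\ref{A-infinity} as an ingredient of this proof; in the paper it is not invoked inside Theorem~\ref{thm:IBAW-D}, but only in Theorem~\ref{mainthm:BAW-BD} to \emph{verify the hypothesis} "every element of $\IBr(B_0(G))$ is $\langle F_0\rangle$-invariant" before applying Theorem~\ref{thm:IBAW-D}. Inside the criterion, the extendibility clauses (iii.b) and (iv.b) are handled directly by the observation that $J\langle F_0\rangle/Z(J)\Omega(V)$ has cyclic Sylow $\ell$-subgroups for every odd $\ell$, not by $A(\infty)$. Second, rather than proving the statement at the level of $\Omega(V)$ and then "descending by Lemma~\ref{lem:weight-quo}'', the paper sets $G=P\Omega(V)$, $\tG=\tH/Z(\tH)$, $D=\langle\tau,F_0\rangle$ and applies the criterion to the simple group directly; Lemma~\ref{lem:weight-quo} plays no role as a final descent step. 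Your descent step is not wrong in spirit, but it is not how the argument is cleanly packaged, and the criterion's block-theoretic condition (i) is checked by a Clifford-theoretic count of the blocks of $\tH/Z(\tH)$ covering $B_0(P\Omega(V))$, which you do not address.
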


\begin{proof}
Under the assumption, we have that $J\rtimes\langle F_0\rangle$ induces all automorphisms on $\Omega(V)$.
We use the version \cite[Thm.~2.2]{FLZ23b} of the criterion of inductive BAW condition of Brough--Sp\"ath \cite{BS22}. 
We remark that the condition (iv.b) can be replaced by: $(\varphi_0)^0$ extends to $(G\rtimes D)_{\varphi_0}$.
Take $G$, $\tG$, $D$, $\mathcal B$ in \cite[Thm.~2.2]{FLZ23b} to be $P\Omega(V)$, $\tH/Z(\tH)$, $\langle \tau, F_0\rangle$, $B_0(P\Omega(V))$ respectively, where $\tau\in I\setminus H$ is of order~2 so that $J=\langle \tH,\tau\rangle$. 
However, by the hypothesis and Corollary~\ref{cor:act-field-cla}, every element in $\IBr(B_0(P\Omega(V)))\cup\Alp(B_0(P\Omega(V)))$ is $\langle F_0\rangle$-invariant, so is every element in $\IBr(B_0(\tH))\cup\Alp(B_0(\tH))$.
Thus by Theorem~\ref{thm:eq-bij-spin}, there exists a $((\tH/Z(\tH))\rtimes\langle \tau,F_0\rangle)$-equivariant bijection between $\IBr(B_0(P\Omega(V)))$ and $\Alp(B_0(P\Omega(V)))$.

Now $\tilde{\mathcal B}$ is the union of the 2-blocks of $\tH/Z(\tH)$ covering $B_0(P\Omega(V))$.
We also identify $\tilde{\mathcal B}$ as a union of 2-blocks of $\tH$.
Then $\tilde{\mathcal B}=\coprod_{z}\hat z\otimes B_0(\tH)$, where $z$ runs through the $2'$-elements in $Z(\tilde \bH^*)^F$. In particular, $\tilde{\mathcal B}$ consisting of $|\tH/\Omega(V)|_{2'}$ blocks.
As the $J\langle F_0\rangle/Z(J)\Omega(V)$ has cyclic Sylow $\ell$-subgroups for any odd prime $\ell$, the maximal extendibility properties of Brauer characters hold (see e.g. \cite[Thm.~8.11 and Thm.~8.29]{Na98}).
Therefore, the condition (i) of \cite[Thm.~2.2]{FLZ23b} holds.
By similar arguments as in the proof of \cite[Lemma~2.6]{FLZ19}, every irreducible Brauer character $\psi$ of $B_0(P\Omega(V))$, the set $\IBr(\tH/Z(\tH)\mid\psi)\cap\IBr(B_0(\tH/Z(\tH)))$ is a singleton.
On the other hand, each weight of $B_0(P\Omega(V))$ is covered by a unique weight of $B_0(\tH/Z(\tH))$.
From this, there exists an $(\IBr(\tH/\Omega(V))\rtimes\langle \tau,F_0\rangle)$-equivariant bijection between $\tilde{\mathcal B}$ and $\tilde{\mathcal B}$ which preserves blocks, and thus the condition (ii) of \cite[Thm.~2.2]{FLZ23b} is satisfied.
Moreover, by \cite[Prop.~2.5]{FLZ23b},  the condition (v) of \cite[Thm.~2.2]{FLZ23b} holds.

For the conditions (iii) and (iv) of \cite[Thm.~2.2]{FLZ23b}, we mention that the maximal extendibility properties of (Brauer) characters hold as above, and so it suffices to consider the conditions (iii.a) and (iv.a).
As the actions of $\langle F_0\rangle$ on $\IBr(B_0(\Omega(V)))$ and $\Alp(B_0(\Omega(V)))$ are trivial, we may omit $F_0$ and just let $D=\langle\tau\rangle$ and $\disc(V)=+$.
The actions  of $\langle\tau\rangle$ on $\IBr(B_0(\Omega(V)))$ and $\Alp(B_0(\Omega(V)))$ are determined in the proof of Theorem~\ref{thm:eq-bij-spin}.
It follows from the results there that the conditions (iii.a) and (iv.a) of \cite[Thm.~2.2]{FLZ23b} are satisfied.
\end{proof}

\begin{prop}\label{prop-BAW-Spin8}
Suppose that Assumption~\ref{A-infinity} holds for groups of type $\tp D_4$ and Brauer characters in the principal 2-block.
	Then the inductive BAW condition holds for the principal 2-block of $P\Omega_8^+(q)$ (with odd $q$).
\end{prop}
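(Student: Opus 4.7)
The plan is to invoke the Brough--Sp\"ath criterion, in the form of \cite[Thm.~2.2]{FLZ23b} used in Theorem~\ref{thm:IBAW-D}, now for $S:=P\Omega_8^+(q)$. The only reason $w=8$ and $\disc(V)=+$ was excluded from Theorem~\ref{thm:IBAW-D} is that $\Aut(S)/\Inn(S)$ is strictly larger than in the generic case: besides diagonal, $\tau$-graph and field automorphisms, it contains an order-3 triality $\sigma$. Write $G=\Spin_8^+(q)$, $H=\SO_8^+(q)$, $\tH=\textup{CSO}_8^+(q)$, $I=\rO_8^+(q)$, $J=\textup{CO}_8^+(q)$, fix $\tau\in I\setminus H$ of order~2 and a triality $\sigma$ of order~3 commuting with $F_0$ and stabilising the principal 2-block $B_0$, so that $\langle\tau,\sigma\rangle\cong\fS_3$ and $\Aut(S)/\Inn(S)$ is generated by the image of $J$, $\sigma$ and $F_0$.

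First I would upgrade the $(J\rtimes\langle F_0\rangle)$-equivariant bijection $\IBr(B_0(\Omega(V)))\leftrightarrow\Alp(B_0(\Omega(V)))$ of Theorem~\ref{thm:eq-bij-spin} to one equivariant also under $\sigma$. On the weight side, Lemma~\ref{P:Ta3} and Proposition~\ref{P:weight-r2} classify the principal 2-weight subgroups with $\rank B(R)=2$ and compute the action of the full $\fS_4$ of diagonal-plus-graph outer automorphisms; restricting to the subgroup $\fS_3=\langle\tau,\sigma\rangle$ yields explicit $\sigma$-orbits and stabilisers, while Corollary~\ref{cor:act-field-cla} gives trivial $F_0$-action, and the analyses of \S\ref{SS:F4-1}--\S\ref{SS:Spin9} dispose of the cases $\rank B(R)\le 1$. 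On the Brauer side, Assumption~\ref{A-infinity} for type $\tp D_4$ at the prime $2$ together with \cite[Thm.~A]{Sp23} ensures that Taylor's basic set constructed in the proof of Proposition~\ref{prop:act-IBr-SO} can be arranged to be $J\rtimes\langle\sigma,F_0\rangle$-stable; the $\sigma$-action on it is then computable from the $\sigma$-permutation of quasi-isolated 2-element Lusztig series of $\tG^*$ and the associated Lusztig families, via an equivariant Jordan decomposition (cf. \cite[Thm.~7.1]{DM90} and \cite[Prop.~3.7, 3.9]{Ma07}). Matching the orbit data on the two sides and descending to $S$ as in Theorems~\ref{thm:eq-bij-spin} and \ref{thm:IBAW-D}, via an $\fS_3$-analogue of Lemma~\ref{lem:dh}, yields the required $\Aut(S)$-equivariant bijection. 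The remaining hypotheses of \cite[Thm.~2.2]{FLZ23b} then follow as in Theorem~\ref{thm:IBAW-D}: maximal extendibility of Brauer and weight characters uses Assumption~\ref{A-infinity}, cyclicity of odd Sylow subgroups of the outer automorphism group and \cite[Thm.~8.11, 8.29]{Na98}; the block-covering condition uses the decomposition $\tilde{\mathcal B}=\coprod_z\hat z\otimes B_0(\tH/Z(\tH))$; and Sp\"ath's central-extension condition follows from \cite[Prop.~2.5]{FLZ23b}.

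The hard part is the $\sigma$-equivariance of the bijection. Triality permutes the three $\Omega(V)$-conjugacy classes of involutions $y_1$, $y_3$, $y_4$ of $P\Omega_8^+(q)$ (see \S\ref{SS:Spin8}) together with the three quasi-isolated $2$-element Lusztig series of $\tG^*$ whose centralisers have $\tp D_4$-subsystem type, while fixing the class of $y_2$; on the weight side it permutes conjugacy classes of basic subgroups accordingly, as recorded by Proposition~\ref{P:weight-r2}. Tracking this permutation compatibly through Taylor's Jordan correspondents and through the tuple-of-partitions-and-2-cores parametrisation of Proposition~\ref{P:Ta1} on the weight side, and reconciling it with the $\tau$-action already fixed in Theorem~\ref{thm:eq-bij-spin} by the parity calculus of Section~\ref{SS:parity}, is the crux of the argument.
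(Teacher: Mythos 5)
Your overall scaffolding --- invoking the Brough--Sp\"ath criterion as in Theorem~\ref{thm:IBAW-D} with the abelian-$D$ hypothesis relaxed to ``cyclic $\times\,\fS_3$'', using Assumption~\ref{A-infinity} for condition~(iii), and identifying the crux as the $\sigma$-equivariance of the bijection $\IBr(B_0)\leftrightarrow\Alp(B_0)$ --- matches the paper. However, on the identified crux you propose a different, and technically problematic, route.

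You want to compute the triality action on Brauer characters via an ``equivariant Jordan decomposition'' citing \cite[Thm.~7.1]{DM90} and \cite[Prop.~3.7, 3.9]{Ma07}. But \cite[Thm.~7.1]{DM90} is for connected-center groups; $\Spin_8$ has disconnected center, and for it an automorphism-equivariant Jordan decomposition under \emph{triality} is exactly the sort of ambiguity one must avoid, not invoke. The paper sidesteps this entirely: it first notes that $A_{\bG}(u)$ is abelian for all unipotent $u$ when $w=8$, so Chaneb's unitriangularity \cite[Thm.~2.9]{Ch20} applies directly to $\Spin_8^+(q)$ (something you do not use and which is the actual source of the basic set, not Assumption~\ref{A-infinity}). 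Then it determines the $D_8$-action on $\IBr(B_0(\Omega(V)))$ from Proposition~\ref{prop:act-IBr-SO} and Theorem~\ref{thm:eq-bij-spin}, computes the sizes $|\mathscr B_1'|=4$, $|\mathscr B_2'|=4$, $|\mathscr B_3'|=12$, $|\mathscr B_4'|=8$, and \emph{forces} the $\Gamma\cong\fS_4$-action from these numbers and the $D_8$-orbit/stabilizer data, by the same style of argument used on the weight side in Proposition~\ref{P:weight-r2}~(a),(c). No equivariant Jordan decomposition and no $\fS_3$-analogue of Lemma~\ref{lem:dh} is needed. Your plan as written leaves a genuine gap at the step ``compute the $\sigma$-action on Taylor's basic set via Jordan decomposition'': as cited, this is not available for $\Spin_8$, and the correct mechanism is the orbit-counting forcing argument.

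One further small correction: the extendibility for condition~(iv) does not follow from Assumption~\ref{A-infinity} but from the cyclicity of odd Sylow subgroups of $\fS_4$ together with the observation (extension of Proposition~\ref{prop:act-fie-auto}) that $\langle F_0\rangle$ acts trivially on $N_G(R)/RZ_G(R)$ for each basic subgroup involved.
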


\begin{proof}
	Let $w=8$ and $\disc(V)=+$.  Then $\Out(P\Omega(V))\cong \fS_4\ti \langle F_0\rangle$.
We use the criterion of inductive BAW condition in the proof of Theorem~\ref{thm:IBAW-D}. We remark that by the arguments of \cite[\S4]{BS22} the assumption that $D$ is abelian in \cite[Thm.~2.2]{FLZ23b} can be replaced by the following statement: $D$ is isomorphic to the direct product of a cyclic group and the symmetric group $\fS_3$.
The condition (iii) of \cite[Thm.~2.2]{FLZ23b} follows by Assumption~\ref{A-infinity}. 
So by the proof of Theorem~\ref{thm:IBAW-D}, we only need to  establish an $\Aut(P\Omega(V))$-equivariant bijection between $\IBr(B_0(P\Omega(V)))$ and $\Alp(B_0(P\Omega(V)))$, as well as verify the condition (iv) of \cite[Thm.~2.2]{FLZ23b}.

By the proof of Proposition~\ref{prop:act-fie-auto}, we also have that $\langle F_0\rangle$ acts trivially on $N_G(R)/RZ_G(R)$ for the basic subgroups $R^4_{1,0,1,\bc}$.
Thus in every conjugacy class of principal weight subgroups of $\rO_8^+(q)$, there exists a subgroup $R$ satisfying that $\langle F_0\rangle$ acts trivially on $N_G(R)/RZ_G(R)$.
So for the extendibility properties, we may omit $\langle F_0\rangle$ and it follows from the fact that $\fS_4$ has cyclic Sylow $\ell$-subgroups for any odd prime $\ell$ that the condition (iv) of \cite[Thm.~2.2]{FLZ23b} holds.

Next, we establish an $\Aut(P\Omega(V))$-equivariant bijection between the sets $\IBr(B_0(P\Omega(V)))$ and  
$\Alp(B_0(P\Omega(V)))$. In the following we will use the notation in \S\ref{SS:Spin8}. By the proofs of 
Proposition~\ref{prop:act-IBr-SO} and Theorem~\ref{thm:eq-bij-spin}, the action of $D\cong D_8$ on $\IBr(B_0(P\Omega(V)))$, 
and we will describe this action as follows. 

Note that $A_{\bG}(u)$ is abelian for all unipotent elements of $\bG$ as $w=8$, and thus the result of Chaneb \cite[Thm.~2.9]{Ch20} also holds for $G=\Spin_8^+(q)$, which is analogous as in the proof of Corollary~\ref{cor-BAW-Spin7}.
We recall the unitriangular basic set $\Xi$ of $H$ from the proof of Proposition~\ref{prop:act-IBr-SO}, and let $\Xi'_i$ be the set of irreducible constituents of the restrictions of the characters in $\Xi_i$ to $\Omega(V)$ for $1\le i\le 4$.
Then $\Xi'=\coprod\limits_{i=1}^4\Xi_i'$ is a unitriangular basic set of $\Omega(V)$.
So by \cite[Thm.~A]{Sp23} and Corollary~\ref{cor:act-Br-SO}, $\langle F_0\rangle$ acts trivially on $\IBr(B_0(\Omega(V)))$.
By Corollary~\ref{cor:act-field-cla}, $\langle F_0\rangle$ acts trivially on $\Alp(B_0(P\Omega(V)))$.

For $1\le i\le 4$, let $\mathscr B'_i$ be the subset of $\IBr(B_0(\Omega(V)))$ corresponding to $\Xi_i'$ under the canonical bijection $\Xi'\to\IBr(B_0(\Omega(V)))$. 
Directly calculation shows that $|\mathscr B_1'|=4$, $|\mathscr B_2'|=4$, $|\mathscr B_3'|=12$, $|\mathscr B_4'|=8$.
Then every Brauer character in $\mathscr B'_1$ is $D$-invariant, the set $\mathscr B_2'$ consists of two $D$-orbits of length two and $E$ acts trivially on $\mathscr B_2'$, the set $\mathscr B_3'$ consists of three $D$-orbits of length 4 and $N$ acts transitively on each $D$-orbits, and the set $\mathscr B_4'$ consists of two $D$-orbits of length 4 and $E$ acts transitively on each $D$-orbits.

Now regard $\mathscr B_i'$ (with $1\le i\le 4$) as Brauer characters of $\IBr(B_0(P\Omega(V)))$.
Similar as in the proof of Proposition~\ref{P:weight-r2} (c) we can show that every Brauer character in $\mathscr B'_1$ is $\Ga$-invariant.
Similar as in the proof of Proposition~\ref{P:weight-r2} (a) we know that $\mathscr B_2'\cup\mathscr B_4'$ consists of two $\Ga$-orbits of length 6, and in each orbit there is a Brauer character $\psi$ with $\Ga_\psi=E$.
Therefore, the $D$-orbits in $\mathscr B_3'$ must be $\Ga$-stable, as otherwise $\mathscr B_3'$ consists of one $\Ga$-orbit and thus there exists at most two Brauer characters $\psi$ in $\mathscr B_3'$ with $\Ga_\psi=\langle(1,2)\rangle$ which contradicts to the fact that in each $D$-orbits in $\mathscr B_3'$ there is at least such one $\psi$.
Since there is a unique conjugacy class of subgroups of $\Ga$ of order 6, the action of $\Ga$ in $\mathscr B_3'$ is clear.
Therefore, comparing with the action of $\Ga$ on $\IBr(B_0(P\Omega(V)))$ which is determined in Proposition~\ref{P:weight-r2}, we get an $\Aut(P\Omega(V))$-equivariant bijection between $\IBr(B_0(P\Omega(V)))$ and $\Alp(B_0(P\Omega(V)))$.	
\end{proof}

According to Theorem~\ref{thm:IBAW-B} and~\ref{thm:IBAW-D}, we propose the following question.

\begin{ques}\label{que:action-Br-uni}
Let $G$ be a finite spin group with rank $\ge 3$. Is there an $\Aut(G)$-equivariant bijection between $\IBr(B_0(G))$ and the unipotent conjugacy classes of $G$?
\end{ques}

By above arguments and analyzing the action of $\Aut(G)$ on the unipotent conjugacy classes of $G$, Question~\ref{que:action-Br-uni} has a positive answer if and only if every Brauer character in $\IBr(B_0(G))$ is $\langle F_0\rangle$-invariant.

\begin{ques}\label{que:unitriangular}
Let $\Xi'$ be the set of irreducible constituents of the restrictions of the characters in $\Xi$ to $\Omega(V)$.
Is $\Xi'$ a unitriangular basic set for $B_0(\Omega(V))$?
\end{ques}

It follows from \cite[Thm.~A]{Sp23} that a positive answer of Question~\ref{que:unitriangular} implies that the group $\langle F_0\rangle$ acts trivially on $\IBr(B_0(G))$, and thus implies a positive answer for Question~\ref{que:action-Br-uni}.

Now we are able to prove Theorem~\ref{mainthm:BAW-BD}.

\begin{proof}[Proof of Theorem~\ref{mainthm:BAW-BD}]
	Thanks to \cite[Thm.~C]{Sp13}, we may assume that $q$ is odd.
	According to \cite[Theorem~5.7]{FLZ22} and Assumption~\ref{A-infinity}, we only need to deal with quasi-isolated 2-blocks, as the inductive BAW condition has been verified for groups of type $\tp A$ in \cite{FLZ23}.
	By \cite{Bo05}, quasi-isolated 2-blocks of $G$ are just unipotent 2-blocks, and by \cite[Thm.~21.14]{CE04}, the principal 2-block of $G$ is the unique unipotent 2-block.
	
We exclude the simple groups $P\Omega_8^+(q)$, as they are dealt with in Proposition~\ref{prop-BAW-Spin8}.
By Theorem~\ref{thm:IBAW-B} and~\ref{thm:IBAW-D}, it suffices to show that every Brauer character in $\IBr(B_0(\Omega(V)))$ is $\langle F_0\rangle$-invariant, which follows by Corollary~\ref{cor:act-Br-SO} and Assumption~\ref{A-infinity} directly.
Thus we complete the proof.	
\end{proof}

\begin{cor}
	If $\dim(V)$ is even and $\disc(V)=-$, then the inductive BAW condition holds for the simple groups $\Omega(V)$ and the prime~2.	
\end{cor}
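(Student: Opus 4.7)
The plan is to deduce this corollary from Theorem~\ref{mainthm:BAW-BD} by verifying Assumption~\ref{A-infinity} for $G=\Spin(V)$ unconditionally in this case, exploiting the direct-product decomposition of $\SO(V)$ that arises when $\disc(V)=-$.

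First I would observe the key structural consequence. By Lemma~\ref{L:parity0}, the involution $-I_V$ has parity $(1,1)$, hence $-I_V\notin\Omega(V)$. Since $-I_V$ is central in $H:=\SO(V)$, this gives a direct-product decomposition $H=\langle -I_V\rangle\times\Omega(V)$; moreover $Z(\Omega(V))=1$, so $P\Omega(V)=\Omega(V)$ and the simple group in question is precisely $\Omega(V)$. The kernel of the natural surjection $p\colon G\twoheadrightarrow\Omega(V)$ is the central subgroup $\{\pm 1\}$, a normal 2-subgroup, so every irreducible 2-modular representation of $G$ factors through $\Omega(V)$. Inflation along $p$ therefore gives a bijection $\IBr(\Omega(V))\xrightarrow{\sim}\IBr(G)$ preserving 2-blocks, and composing with the inclusion $\IBr(\Omega(V))\hookrightarrow\IBr(H)$ arising from the direct factor yields an $\langle F_0\rangle$-equivariant identification of $\IBr(B_0(G))$ with a subset of $\IBr(B_0(H))$.

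Next, by Corollary~\ref{cor:act-Br-SO}, every element of $\IBr(B_0(H))$ is $\langle F_0\rangle$-invariant; transferring this along the identification above shows that every element of $\IBr(B_0(G))$ is $\langle F_0\rangle$-invariant. This verifies the hypothesis of Theorem~\ref{thm:IBAW-D}, so the inductive BAW condition holds for the principal 2-block $B_0(\Omega(V))=B_0(P\Omega(V))$.

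For the non-principal 2-blocks, the plan is to invoke \cite[Theorem~5.7]{FLZ22} to reduce to quasi-isolated 2-blocks of $G$; by \cite{Bo05} and \cite[Theorem~21.14]{CE04}, the only such block is the principal one, which has just been handled. The main obstacle is that this reduction requires Assumption~\ref{A-infinity} for $G=\Spin(V)$, which is not available unconditionally. My strategy is to establish Assumption~\ref{A-infinity} for $G$ directly in our setting by pulling back the extendibility and stabilizer statements of Corollary~\ref{cor:act-Brauer} from $H$ to $\Omega(V)$ via the direct-factor structure, and then inflating along $p$: given $\psi\in\IBr(\Omega(V))\subset\IBr(H)$, a suitable representative in its $\tilde H$-orbit with the required extension to $H\rtimes D_\psi$ provided by Corollary~\ref{cor:act-Brauer} restricts to $\Omega(V)\rtimes D_\psi$ and then inflates to $G\rtimes D_\psi$. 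The subtlety is checking compatibility of the regular embedding $G\hookrightarrow\tilde G$ with $H\hookrightarrow\tilde H$: the diagonal automorphisms of $\Omega(V)$ induced by $\tilde G/G$ must coincide with those induced by $\tilde H/H$, which follows from the fact that both are determined by their action on the center via the standard duality between simply-connected and adjoint forms of type $^2\tp D_n$.
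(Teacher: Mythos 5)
Your proposal is correct and follows essentially the same route as the paper's proof: both use the splitting $\SO(V)=\langle -I_V\rangle\times\Omega(V)$, deduce $\langle F_0\rangle$-invariance of $\IBr(B_0(G))$ from Corollary~\ref{cor:act-Br-SO}, apply Theorem~\ref{thm:IBAW-D} for the principal block, and then feed Corollary~\ref{cor:act-Brauer} into Assumption~\ref{A-infinity} so that Theorem~\ref{mainthm:BAW-BD} handles the remaining blocks. One small slip worth flagging: your closing remark references ``type $^2\tp D_n$,'' but $\disc(V)=-$ does not force $\ty(V)=-$ (one can have $\ty(V)=+$ when $n\equiv 2\pmod 4$ and $q\equiv 3\pmod 4$), so the argument must and does cover both $\tp D_n$ and $^2\tp D_n$; this is cosmetic and does not affect the substance.
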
	

\begin{proof}
Suppose that $\dim(V)$ is even and $\disc(V)=-$. Then $\SO(V)=\{\pm I_V\}\ti\Omega(V)$ and the non-generic Schur multipliers do not occur. 
According to Corollary~\ref{cor:act-Br-SO}, every element in $\IBr(B_0(\Omega(V)))$ is $\langle F_0\rangle$-invariant. It follows from Theorem~\ref{thm:IBAW-D} that the inductive BAW condition holds for the principal 2-block of $\Omega(V)$.
Moreover, by Theorem~\ref{mainthm:BAW-BD}, what is left is to establish Assumption~\ref{A-infinity}, which follows from Corollary~\ref{cor:act-Brauer}.
\end{proof}

\subsection{The principal 2-block of $\tp F_4(q)$}\label{SS:prin}

Now assume further that $\bG$ is of type $\tp F_4$ so that $G=\bG^F=\tp F_4(q)$ and assume that $p=2$.  Here we suppose that $q$ is odd. 
Let $q=r^f$ where $r$ is an odd prime, and let $F_0$ be a field automorphism of $\bG$ such that $\bG^{F_0}=\tp F_4(r)$ and $F=F_0^f$.
Then \[\Aut(G)\cong G\rtimes \langle F_0\rangle.\] Note that $\bG^*=\bG$. 

We will write the principal 2-block $B_0(G)$ of $G$ simply $B_0$ when no confusion can arise.
Keep the notation of \S\ref{S:F4}. If $(R,\varphi)$ is a principal weight of $G$, then by Lemma \ref{P:r=0},  $\rank B(R)=1$ or $2$.
For $r=1,2$, let $\Alp(B_0)_r$ be the subset of $\Alp(B_0)$ consisting of $\overline{(R,\varphi)}$ such that the rank of $B(R)$ is $r$.

\subsubsection{The case $r=1$}
First let $R$ be a radical 2-subgroup of $G$ with $\rank B(R)=1$ and $H:=N_G(B(R))$. From \S\ref{SS:Spin9}, we know that $H=Z_G(B(R))\cong\Spin_{9,+}(q)$, $RZ_H(R)\le H$ and $N_G(R)=N_H(R)$.
Let $\varphi\in\dz(N_G(R)/R)$. Then by Lemma~\ref{lem:B_0-wei}, $(R,\varphi)$ is a principal weight of $G$ if and only if it is a principal weight of $H$.
From this we deduce that up to conjugacy $\Alp(B_0)_1$ is a subset of $\Alp(B_0(H))$ consisting of $\overline{(R,\varphi)}$ such that the rank of $B(R)$ is $1$.
By the arguments preceding \S\ref{SS:Spin8}, $R$ is conjugate to one of $R_i$ with $1\le i\le 21$ and $i\notin\{2,4,5,6,12\}$.
We identify the principal weights of $H$ with the principal weights of $H/Z\cong\Omega_{9,+}(q)$. By \cite[Lemma~2.7 and Cor.~2.12]{BS22}, the principal weights of $\Omega_{9,+}(q)$ are the weights of $\Omega_{9,+}(q)$ covered by the principal weights of $\rO_{9,+}(q)$. According to \cite[\S6]{An93a}, every principal weight subgroup of $\rO_{9,+}(q)$ provide only one principal weight.
So we can count the number of conjugacy classes of principal weights in $\Alp(B_0)_1$ by Corollary \ref{cor:wei-cov-solquo} and Table \ref{Ta2}, i.e., $|\Alp(B_0)_1|=19$.

\subsubsection{The case $r=2$}
Now let $R$ be a radical 2-subgroup of $G$ with $\rank B(R)=2$ and let $H:=N_G(B(R))$ and $L:=Z_G(B(R))$. Then $R$ is conjugate to one of $R_{22}$--$R_{37}$ in Table \ref{Ta3}.
From \S\ref{SS:Spin8}, we know that $L\cong \Spin_{8,+}(q)$, $H/L\cong \fS_3$, $RZ_H(R)\le L$ and $N_G(R)=N_H(R)$. Let $\varphi\in\dz(N_G(R)/R)$. Then by Lemma~\ref{lem:B_0-wei}, $(R,\varphi)$ is a principal weight of $G$ if and only if it is a principal weight of $H$. Therefore up to conjugacy $\Alp(B_0)_2$ is the subset of $\Alp(B_0(H))$ consisting of $\overline{(R,\varphi)}$ with $R\subset L$.
According to \cite[Lemma~2.7]{BS22}, if $R$ is a principal weight subgroup of $H$, then it is also a principal weight subgroup of $L$.
More precisely, if $(R,\varphi)$ is a principal weight of $H$, then by Lemma~\ref{ext-rdz}, $(R,\vartheta)$ is a principal weight of $L$ for any $\vartheta\in\Irr(L\mid\varphi)$.

Conversely, Let $(R,\vartheta)$ be a principal weight of $L$. If $(R,\varphi)$ is a weight of $H$ covering $(R,\vartheta)$, then $\varphi\in\rdz(N_H(R)\mid \vartheta)$, and moreover, by Lemma~\ref{ext-rdz}, $O_2(N_H(R)_\vartheta/N_L(R))=1$.   
As $RZ_H(R)\le L$, if such a weight $(R,\varphi)$ exists, then it is necessarily a principal weight of $H$.

The following Lemma \ref{ext-S3S3} will be also used in the proof of Proposition~\ref{prop:bij-quasi-isolated}.

\begin{lem}\label{ext-S3S3}
	Let $Y\unlhd X$ be arbitrary finite groups such that $X/Y\cong\fS_3$ or $\fS_3\ti \fS_3$, and let $\chi$ be an $X$-invariant irreducible character of $Y$.
Then $|\rdz(X\mid\chi)|=1$.
\end{lem}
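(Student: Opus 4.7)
The plan is to apply Clifford theory to reduce the problem to counting ordinary irreducible characters of $X/Y$ of appropriate degree. The key point is that in both cases the Schur multiplier of the quotient $X/Y$ is trivial, so the $X$-invariant character $\chi$ extends to $X$, and then Gallagher's theorem turns the count into a question about $\Irr(X/Y)$.

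First I would observe that throughout this section we are working at the prime $p=2$, so $v_p = v_2$ and $v_2(|\fS_3|) = 1$, $v_2(|\fS_3\ti\fS_3|) = 2$. Since $\chi$ is $X$-invariant, the obstruction to extending $\chi$ to $X$ lies in $H^2(X/Y,\mathbb{C}^\times)$. Because $\fS_3$ has trivial Schur multiplier (as it is isomorphic to the dihedral group of order $6$), the K\"unneth formula gives that $\fS_3\ti\fS_3$ also has trivial Schur multiplier. Hence in either case $\chi$ extends to some $\tilde\chi\in\Irr(X)$.

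Next, by Gallagher's theorem (see, e.g., \cite[Cor.~6.17]{Na98}) we have
\[
\Irr(X\mid\chi) = \{\,\la\tilde\chi \mid \la\in\Irr(X/Y)\,\},
\]
and $(\la\tilde\chi)(1) = \la(1)\chi(1)$. Consequently $\psi\in\rdz(X\mid\chi)$ if and only if $\psi = \la\tilde\chi$ for some $\la\in\Irr(X/Y)$ with $v_2(\la(1)) = v_2(|X/Y|)$.

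Finally I would verify the count directly. The ordinary character degrees of $\fS_3$ are $1,1,2$, and the only degree whose $2$-part equals $2 = |X/Y|_2$ is the degree $2$, giving a unique such $\la$. For $\fS_3\ti\fS_3$, the degrees are products $1,2,4$ with multiplicities $4,4,1$, and the unique degree whose $2$-part equals $4 = |X/Y|_2$ is $4$, again giving a unique $\la$ (the outer tensor product of the two $2$-dimensional characters). In either case $|\rdz(X\mid\chi)| = 1$, as required. The argument is short; the only point requiring any care is the vanishing of the Schur multiplier, which allows us to bypass projective representations entirely.
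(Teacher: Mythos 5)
Your argument for the case $X/Y\cong\fS_3$ is correct and is essentially the same as the paper's: extend $\chi$ (possible since $M(\fS_3)$ is trivial) and use Gallagher to reduce to counting degrees of $\Irr(\fS_3)$.

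The case $X/Y\cong\fS_3\times\fS_3$ has a genuine gap. You claim that the K\"unneth formula gives $M(\fS_3\times\fS_3)=1$, but this is false. The K\"unneth/Schur formula reads
\[
M(A\times B)\cong M(A)\oplus M(B)\oplus\bigl(A^{\mathrm{ab}}\otimes B^{\mathrm{ab}}\bigr),
\]
and for $A=B=\fS_3$ the cross term $A^{\mathrm{ab}}\otimes B^{\mathrm{ab}}\cong(\bbZ/2)\otimes(\bbZ/2)\cong\bbZ/2$ does not vanish; hence $M(\fS_3\times\fS_3)\cong\bbZ/2$. Consequently you cannot conclude, purely from the vanishing of a cohomology group, that an $X$-invariant $\chi\in\Irr(Y)$ extends to $X$; and indeed it need not. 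For instance, if $X$ is a stem double cover of $\fS_3\times\fS_3$ with $Y=Z(X)\cong\bbZ/2$ and $\chi$ is the faithful character of $Y$, then $Y\subset[X,X]$ so no linear character of $X$ restricts to $\chi$.

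The lemma is still true, but you need to handle the nontrivial obstruction class. Two repairs are possible. (1) Do what the paper does: pick a normal $V$ with $Y\unlhd V\unlhd X$, $V/Y\cong\fS_3$ and $X/V\cong\fS_3$; apply the $\fS_3$ case to $Y\unlhd V$ to get a unique $\eta\in\rdz(V\mid\chi)$, note $\eta$ is $X$-invariant by uniqueness, then apply the $\fS_3$ case to $V\unlhd X$ and use the chain additivity of $v_2(\,\cdot\,/\chi(1))$ to see $\rdz(X\mid\eta)=\rdz(X\mid\chi)$. This sidesteps the Schur multiplier of $\fS_3\times\fS_3$ entirely and only ever needs $M(\fS_3)=1$. (2) Keep your global Gallagher approach but also treat the nontrivial cocycle class: when the obstruction $\alpha\in H^2(\fS_3\times\fS_3,\mathbb{C}^\times)\cong\bbZ/2$ is nontrivial, $\Irr(X\mid\chi)$ is in bijection with the $\alpha^{-1}$-projective irreducibles of $\fS_3\times\fS_3$, equivalently the characters of the Schur cover $\widetilde G$ of order $72$ that are faithful on its centre. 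A direct Clifford computation with $N=A_3\times A_3\unlhd\widetilde G$ and $\widetilde G/N\cong D_8$ shows those faithful characters have degrees $2,2,2,2,2,4$, so again exactly one has $v_2$ of degree equal to $2=v_2(|X/Y|)$, and $|\rdz(X\mid\chi)|=1$ in this case too. Option (1) is shorter and is the route the paper takes.
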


\begin{proof}	
First we assume that $X/Y\cong\fS_3$.
Then by \cite[Thm. 35.10]{Hu98}, $\chi$ extends to $\tilde\chi\in\Irr(X)$ and then $\rdz(X\mid\chi)$ consisting of the unique character $\tilde\chi \ka$ where $\ka$ is the unique character of $\fS_3$ of degree 2.

Now let  $X/Y\cong\fS_3\ti \fS_3$. We let $V$ be a normal subgroup of $X$ containing $Y$ such that $V/Y\cong\fS_3$ and $X/V\cong\fS_3$.
Then by the above argument, $|\rdz(X\mid\chi)|=1$ and we write $\rdz(X\mid\chi)=\{ \eta\}$. Since $\chi$ is $X$-invariant, it follows that $\eta$ is also $X$-invariant.
Thus by the above argument again, $|\rdz(X\mid\eta)|=1$. This completes the proof as $\rdz(X\mid\eta)=\rdz(X\mid\chi)$ by construction.
\end{proof}

\begin{lem}\label{lem:num-S3}
	Let $(R,\vartheta)$ be a principal weight of $L$. Then $|\rdz(N_H(R)\mid \vartheta)|=1,0,3,1$ if $|N_H(R)_\vartheta/N_L(R)|=1,2,3,6$ respectively.
\end{lem}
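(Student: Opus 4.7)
The plan is to apply Clifford theory to reduce the count to one inside the stabilizer $T:=N_H(R)_\vartheta$, and then dispatch each of the four values of $[T:N_L(R)]\in\{1,2,3,6\}$ separately.

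First I would note that since the value $|T/N_L(R)|=6$ occurs in the statement, the whole quotient $N_H(R)/N_L(R)$ must equal the full image of $H/L\cong\fS_3$. Since $N_L(R)\unlhd N_H(R)$, the Clifford correspondence gives a bijection $\Irr(T\mid\vartheta)\to\Irr(N_H(R)\mid\vartheta)$, $\psi\mapsto\psi^{N_H(R)}$. For $\chi=\psi^{N_H(R)}$ the identity $v_2(\chi(1)/\vartheta(1))=v_2([N_H(R):T])+v_2(\psi(1)/\vartheta(1))$, combined with the factorisation $v_2(|N_H(R)/N_L(R)|)=v_2([N_H(R):T])+v_2([T:N_L(R)])$, shows that this bijection restricts to a bijection $\rdz(T\mid\vartheta)\to\rdz(N_H(R)\mid\vartheta)$. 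Hence it suffices to compute $|\rdz(T\mid\vartheta)|$ in each case.

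Next I would handle the four cases. If $|T/N_L(R)|=1$, then $T=N_L(R)$ and the only element of $\Irr(T\mid\vartheta)$ is $\vartheta$ itself, which trivially satisfies the defect-zero condition, yielding count $1$. If $|T/N_L(R)|=2$, then $O_2(T/N_L(R))=T/N_L(R)\ne 1$, so Lemma~\ref{ext-rdz}(2) gives $\rdz(T\mid\vartheta)=\emptyset$ and the count is $0$. If $|T/N_L(R)|=3$, then $T/N_L(R)$ is cyclic, so the $T$-invariant character $\vartheta$ extends to $T$; Gallagher's theorem then produces three elements of $\Irr(T\mid\vartheta)$, each of degree $\vartheta(1)$, and the defect-zero requirement $v_2(\psi(1)/\vartheta(1))=0=v_2(3)$ is automatic, so all three lie in $\rdz(T\mid\vartheta)$ and the count is $3$. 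Finally, if $|T/N_L(R)|=6$ then $T=N_H(R)$, $\vartheta$ is $T$-invariant, and $T/N_L(R)\cong\fS_3$, so Lemma~\ref{ext-S3S3} directly yields count $1$.

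The argument requires no ingredients beyond Clifford theory and Lemma~\ref{ext-S3S3}, and I do not anticipate any serious obstacle. The only piece demanding care is the verification that Clifford induction preserves the rdz-condition, which is simply the additivity of $v_2$ across the factorisation $|N_H(R)/N_L(R)|=[N_H(R):T]\cdot[T:N_L(R)]$; this is what makes the four-case analysis reduce to a short count in $T$.
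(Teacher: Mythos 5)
Your proof is correct and follows essentially the same route as the paper: reduce to the stabilizer $T=N_H(R)_\vartheta$ via Clifford correspondence (observing the 2-valuation bookkeeping that makes the rdz-condition pass through induction), then treat the cyclic cases with extension plus Gallagher and the $\fS_3$ case with Lemma~\ref{ext-S3S3}. The paper's proof is more terse and in particular does not isolate the $|T/N_L(R)|=2$ case; it leaves the reader to observe that the two Gallagher extensions have degree ratio $1$ and so fail the valuation requirement $v_2(1)=v_2(2)$, whereas you dispatch that case more directly by invoking Lemma~\ref{ext-rdz}(2) with $O_2(T/N_L(R))\ne 1$. Both handlings are equivalent; yours is slightly cleaner.
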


\begin{proof}
As $N_H(R)_\vartheta/N_L(R)$ is isomorphic to a subgroup of $\fS_3$. If $|N_H(R)_\vartheta/N_L(R)|=6$, the this follows by Lemma~\ref{ext-S3S3}.
If $|N_H(R)_\vartheta/N_L(R)|<6$, then $N_H(R)_\vartheta/N_L(R)$ is cyclic and $\vartheta$ extends to $N_H(R)_\vartheta$. So this lemma follows by Gallagher's Theorem.
\end{proof}	

By Lemma~\ref{L:Spin8-conjugacy}, we see that: for any principal weight $(R',\vartheta')$ of $\rO_{8,+}$, the parity 
of $N_{\rO_{8,+}}(R'\cap \Omega_{8,+})$ is equal to the parity of $R'$. Thus $R'\cap \Omega_{8,+}$ provides a unique 
weight of $\Omega_{8,+}$ covered by $(R',\vartheta')$. Hence $N_H(R)_\vartheta=N_H(R)$ for any principal weight 
$(R,\vartheta)$ of $L$. Under the notation of \S\ref{SS:Spin8}, one has that $|N_H(R)/N_L(R)|=|S_{[R]}|\in\{2,6\}$. 
By \cite[Lemma~2.7]{BS22} and Lemma~\ref{lem:num-S3}, we have the following.

\begin{prop}
There exists a bijection between the set of the conjugacy classes of principal weight subgroups $R$ of $L$ with $S_{[R]}=S$ and the subset of $\Alp(B_0(H))$ consisting of $\overline{(R,\varphi)}$ with $R\subset L$.
\end{prop}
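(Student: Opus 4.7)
My plan is to prove the bijection by combining the Clifford theory of weights relative to the normal subgroup $L \trianglelefteq H$ (as developed in \cite{BS22}) with the parity considerations established in Lemma~\ref{L:Spin8-conjugacy} and the counting in Lemma~\ref{lem:num-S3}. The key identification to note upfront is that, since $L \unlhd H$ and $H/L \cong S$, one has $S_{[R]} \cong N_H(R)/N_L(R)$, so by the analysis preceding the proposition the condition $S_{[R]} = S$ is equivalent to $|N_H(R)/N_L(R)| = 6$, while otherwise $|N_H(R)/N_L(R)| = 2$.

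For the forward direction, given an $L$-conjugacy class $[R]$ of principal weight subgroups of $L$ with $S_{[R]} = S$, I would pick a representative $R$ and invoke the paragraph immediately preceding the proposition: the parity match in Lemma~\ref{L:Spin8-conjugacy} shows that $R$ provides a unique (up to conjugacy) principal weight $(R, \vartheta)$ of $L$, and moreover $N_H(R)_\vartheta = N_H(R)$. Since $|N_H(R)/N_L(R)| = 6$, Lemma~\ref{lem:num-S3} yields a unique character $\varphi \in \rdz(N_H(R) \mid \vartheta)$, so $(R, \varphi)$ is a well-defined element of $\Alp(B_0(H))$ with $R \subset L$. For the inverse direction, given $\overline{(R, \varphi)} \in \Alp(B_0(H))$ with $R \subset L$, the Clifford theory for weights (\cite[Lemma~2.7, Cor.~2.12]{BS22}) guarantees that $R$ is a principal weight subgroup of $L$ and that there is a principal weight $(R, \vartheta)$ of $L$ covered by $(R, \varphi)$. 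Since $N_H(R)_\vartheta = N_H(R)$ by the parity argument, the nonemptiness of $\rdz(N_H(R) \mid \vartheta)$ combined with Lemma~\ref{lem:num-S3} forces $|N_H(R)/N_L(R)| \in \{1, 3, 6\}$, and together with $|N_H(R)/N_L(R)| \in \{2, 6\}$ this pins down $|N_H(R)/N_L(R)| = 6$, i.e., $S_{[R]} = S$.

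The two assignments are mutually inverse essentially by construction: on one side the uniqueness in Lemma~\ref{lem:num-S3} pins down $\varphi$ from $(R, \vartheta)$, and on the other side the uniqueness of $\vartheta$ (again from the parity match) pins it down from $(R, \varphi)$. I expect the only subtle point to be the bookkeeping of conjugacy classes, namely verifying that the $H$-orbit of $(R, \varphi)$ corresponds unambiguously to a single $L$-class of $R$; but this is automatic once $S_{[R]} = S$, since then the $S$-orbit of the $L$-class $[R]$ is a singleton. No nontrivial new ingredient beyond the parity lemma and Lemma~\ref{lem:num-S3} should be needed.
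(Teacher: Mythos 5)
Your argument is correct and is essentially the same chain of reasoning the paper uses: the paper's proof of this proposition is simply a citation of \cite[Lemma~2.7]{BS22} and Lemma~\ref{lem:num-S3}, relying on the immediately preceding discussion (the parity argument giving $N_H(R)_\vartheta = N_H(R)$ and the identification $|N_H(R)/N_L(R)| = |S_{[R]}|\in\{2,6\}$), and you have reconstructed exactly this logic. You correctly observe that $S_{[R]}\cong N_H(R)/N_L(R)$, that Lemma~\ref{lem:num-S3} rules out $|S_{[R]}|=2$ and gives a unique $\varphi$ when $|S_{[R]}|=6$, and that the well-definedness on conjugacy classes follows automatically from $S_{[R]}=S$.
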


From this, we can count the number of conjugacy classes of principal weights in $\Alp(B_0)_2$ by \S\ref{SS:Spin8}, i.e., $|\Alp(B_0)_2|=7$.

Now we can establish the inductive BAW condition for the block $B_0$. 

\begin{thm}\label{thm-BAW-principal}
The	inductive BAW condition holds for the principal 2-block $B_0$.
\end{thm}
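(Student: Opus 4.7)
The plan is to verify the inductive BAW condition for $B_0$ via the Brough--Sp\"ath criterion \cite{BS22}, exploiting three simplifying features of the $\tp F_4(q)$ situation: $\bG = \bG^*$ has trivial center, the Schur multiplier of $G$ is trivial for $q$ odd, and $\Out(G) = \langle F_0\rangle$ is cyclic. Together these make Assumption~\ref{A-infinity} automatic, identify $\IBr(B_0)$ and $\Alp(B_0)$ with the corresponding sets for $G$ (no central extension to trace), and reduce the extendibility hypotheses of the criterion to statements about characters of cyclic-by-$G$ extensions.

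The first step is a character count. By \S\ref{SS:prin}, $|\Alp(B_0)| = |\Alp(B_0)_1| + |\Alp(B_0)_2| = 19 + 7 = 26$. For $|\IBr(B_0)|$, I would construct a unitriangular basic set of $B_0$ in the spirit of Chaneb \cite{Ch20} and Taylor \cite{Tay13}: for each unipotent class $\mathbf{C}_\lambda$ of $\bG$ of the appropriate shape, exhibit a distinguished family of characters whose duals are unitriangular with respect to the Kawanaka generalized Gelfand--Graev characters attached to $\mathbf{C}_\lambda$, as done in the proofs of Corollary~\ref{cor-BAW-Spin7} and Proposition~\ref{prop:act-IBr-SO}. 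Matching the cardinality of this basic set against the enumeration of $\langle F_0\rangle$-fixed unipotent classes of $\tp F_4$ (available in the literature) should yield $|\IBr(B_0)| = 26$.

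The second step is to show that both $\IBr(B_0)$ and $\Alp(B_0)$ are pointwise $\langle F_0\rangle$-stable. For the Brauer characters this follows once the basic set above is chosen $\langle F_0\rangle$-stably — the Lusztig series that label it are $\langle F_0\rangle$-stable because the associated semisimple 2-elements of $\bG^*=\bG$ are, and \cite[Thm.~A]{Sp23} then propagates invariance from the basic set to all of $\IBr(B_0)$. For the weights I would prove the $\tp F_4$-analogue of Proposition~\ref{prop:act-fie-auto} and Corollary~\ref{cor:act-field-cla}: in each conjugacy class of principal weight subgroups $R$ of $G$ listed in Tables~\ref{Ta2} and~\ref{Ta3}, one can choose a representative which is manifestly built from $F_0$-stable ingredients (diagonal matrices, permutation matrices, standard extraspecial $2$-groups) so that $R$ is $\langle F_0\rangle$-stable and $\langle F_0\rangle$ acts trivially on $N_G(R)/RZ_G(R)$; the associated weight character is then $\langle F_0\rangle$-fixed.

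Given these invariances, any bijection $\IBr(B_0) \to \Alp(B_0)$ is automatically $\langle F_0\rangle$-equivariant, and the remaining extendibility clauses of the Brough--Sp\"ath criterion follow from the cyclicity of the stabilizers $\langle F_0\rangle_\psi$ and $\langle F_0\rangle_{(R,\varphi)}$ (cf.\ \cite[Thm.~8.29]{Na98} and its Brauer-character analogue). The main obstacle will be the case $r = 2$ in the second step: there the weights are constructed via Clifford theory with respect to the $\fS_3$-quotient $N_G(B(R))/Z_G(B(R))$ (Lemma~\ref{lem:num-S3}), and one must verify that the chosen extensions of $\vartheta$ to $N_G(R)_\vartheta$ can be taken $F_0$-equivariantly, in particular for the weights built from $R_{26}$--$R_{29}$ where triality interacts delicately with the field action. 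Once this case-by-case analysis is settled, the theorem follows.
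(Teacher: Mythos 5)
Your overall strategy---count both sides, show everything is $\langle F_0\rangle$-invariant, then apply a criterion reducible to $\Aut(G)$-equivariance because $\Out(G)$ is cyclic and the Schur multiplier is trivial---is the same as the paper's. But the implementation diverges in two places, and one of them has a real obstacle.

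For the count $|\IBr(B_0)|=26$, the paper does not construct anything: it simply cites \cite[Prop.~7.14]{Ge18} to get that the unipotent blocks $\cE_2(G,1)$ contain 28 irreducible Brauer characters, and \cite{En00} to see that there are exactly two unipotent blocks of defect zero besides $B_0$, hence $|\IBr(B_0)|=28-2=26$. Your proposal to build a unitriangular basic set from generalized Gelfand--Graev characters in the style of Chaneb and Taylor, and then enumerate, is not a route you can take directly here: for $\tp F_4$ at the bad prime $2$ there are unipotent classes with \emph{non-abelian} component groups (most notably $F_4(a_3)$ with $A(u)\cong\fS_4$), and Chaneb's results, as the paper itself notes, only cover classical groups with abelian component groups. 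The general exceptional case is Roth's \cite{Ro24}, which is a separate and more recent input, not an adaptation of the spin-group arguments of Corollary~\ref{cor-BAW-Spin7} and Proposition~\ref{prop:act-IBr-SO}. So this step of your proposal, as written, has a gap.

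For $\langle F_0\rangle$-invariance of $\Alp(B_0)$, the paper does not re-do a Proposition~\ref{prop:act-fie-auto} analysis inside $\tp F_4$. Instead it transfers: every class in $\Alp(B_0)_1$ lives inside $\Alp(B_0(\Spin_{9,+}(q)))$ where Corollary~\ref{cor:act-field-cla} already gives $\langle F_0\rangle$-invariance; and for $\Alp(B_0)_2$, the covered weight $(R,\vartheta)$ of $L\cong\Spin_{8,+}(q)$ is $\langle F_0\rangle$-invariant by the same corollary, and then $\overline{(R,\varphi)}$ is the \emph{unique} element of $\Alp(B_0(H))$ covering $\overline{(R,\vartheta)}$ (a consequence of Lemma~\ref{L:Spin8-conjugacy} and Lemma~\ref{lem:num-S3}), so it inherits invariance with no case-by-case analysis. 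Your worry about triality interacting delicately with the field action for $R_{26}$--$R_{29}$ is answered by this uniqueness argument; the case-by-case check you anticipate is unnecessary. Your approach would likely succeed but re-derives facts the paper already has in hand for the classical subgroups, and you should be aware that the $\IBr$ side needs a citation-level input that your proposed construction cannot replace.
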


\begin{proof}
Note that $B_0$ is $\Aut(G)$-stable and $\Out(G)$ is cyclic. 
To verify the inductive BAW condition holds for the block $B_0$, it suffices to establish an $\Aut(G)$-equivariant bijection between $\IBr(B_0)$ and $\Alp(B_0)$ (see for example \cite[Lemma~2.10]{Sc16}). 

According to \cite[Prop.~7.14]{Ge18}, the number of irreducible Brauer characters in the unipotent blocks $\cE_2(G,1)$ is $28$. 
By Th\'eor\`eme A and the table on page 349 of \cite{En00}, the group $G$ has three unipotent 2-blocks: the principal block and two unipotent blocks of defect zero. So $|\IBr(B_0)|=26$.
According to \cite[Prop.~3.7 and 3.9]{Ma07} and \cite[Prop.~7.14]{Ge18}, we see that $\langle F_0\rangle$ acts trivially on $\IBr(B_0)$.

By above arguments, $|\Alp(B_0)|=|\Alp(B_0)_1|+|\Alp(B_0)_2|=26$, and thus $|\IBr(B_0)|=|\Alp(B_0)|$, i.e., the blockwise Alperin weight conjecture holds for the block $B_0$. To verify the inductive BAW condition, we only need to prove that $\langle F_0\rangle$ acts trivially on $\Alp(B_0)$.
First, $\langle F_0\rangle$ acts trivially on $\Alp(B_0)_1$ by Corollary~\ref{cor:act-field-cla}, as $\Alp(B_0)_1$ can be regarded as a subset of $\Alp(B_0(\Spin_{9,+}(q)))$ by construction.
Let $(R,\varphi)$ be a principal weight of $G$ with $\rank B(R)=2$.
Then $(R,\vartheta)$ is a principal weight of $L$ covered by $(R,\varphi)$ for any $\vartheta\in\Irr(L\mid\varphi)$.
By Corollary~\ref{cor:act-field-cla}, $\overline{(R,\vartheta)}$ is $\langle F_0\rangle$-invariant.
On the other hand, $\overline{(R,\varphi)}$ is the unique element of $\Alp(B_0(H))$ covering $\overline{(R,\vartheta)}$ by construction. From this $\overline{(R,\varphi)}$ is also $\langle F_0\rangle$-invariant.
Then $\langle F_0\rangle$ acts trivially on $\Alp(B_0)_2$, and this completes the proof.
\end{proof}

\subsection{Non-unipotent quasi-isolated 2-blocks of $\tp F_4(q)$}

The group $\bG^F=\tp F_4(q)$ possesses a non-unipotent quasi-isolated 2-block only when $3\nmid q$.

Assume that $3\nmid q$. Then there exists an element $s_0\in\bG^F$ of order 3 such that $\C_{\bG}(s_0)^F$ is an extension of 
$(\SL_{3,long}(\eps q)\times\SL_{3,short}(\eps q))/\langle(\delta_{0}I,\delta_{0}^{-1}I)\rangle$ by $Z_3$, where $\eps=\pm 1$ 
is the sign satisfying $3\mid (q-\eps)$, $\delta_{0}$ is a cubic root of unity in $\GL_{1}(\epsilon q)$, and $Z_{3}$ has a 
generator \[(\diag\{\eta,\eta,\eta^{-2}\},\diag\{\eta^{-1},\eta^{-1},\eta^{2}\})\] with $\eta^{q-\epsilon}=\delta_{0}$. By the 
classification of quasi-isolated elements of reductive groups of Bonnaf\'e \cite{Bo05}, up to conjugacy, $s_0$ is the unique 
quasi-isolated element of $\bG$ of odd order. 

Write $\bH=Z_{\bG}(s_0)$, $H=\bH^F$ and $R_0\in\Syl_2(H)$.
Let $B=\cE_2(G,s_0)$. Then by \cite[Table~2]{KM13}, $B$ is a single block of $G$, that is, the unique non-principal quasi-isolated 2-block of $G$, and by \cite[Prop.~6.1]{Ru22}, $R_0$ is a defect group of $B$. Let $B'=\cE_2(H,s_0)$.
Note that $B'=\cE_2(H,s_0)$ is a single block by \cite[Thm.~21.14]{CE04}, as any component of $\bH$ is of classical type.

\subsubsection{The $B'$-weights of $H$}

\begin{lem}\label{lem:rad-SL3}
Every radical 2-subgroup of $\SL_3(\eps q)$ is conjugate to one of the following.
	\begin{enumerate}[\rm(1)]
		\item $P_1=1$,
		\item $P_2=\iota(R_{2,0,0}^1)$,
		   \item $P_3=\iota(R_{1,0,0}^1\ti R_{1,0,0}^1)$,
		   \item $P_4=\iota(R_{1,1,0}^1)$,
		   \item $P_5=\iota(R_{1,0,1}^{1,-})$,
		   \item $P_6=\iota(R_{1,0,0,(1)}^1)$ or $\iota(R_{1,0,1}^2)$ according as $\eps=\varepsilon$ or $\eps\ne\varepsilon$. In this situation, $P_6$ is a Sylow 2-subgroup of $\SL_3(\eps q)$.
	   \end{enumerate}	
Here $\iota$ is the embedding $\GL_2(\eps q)\embed\SL_3(\eps q)$, $g\mapsto\diag(g,\det(g)^{-1})$.
\end{lem}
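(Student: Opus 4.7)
The plan is to reduce the classification of radical 2-subgroups of $\SL_3(\eps q)$ to that of $\GL_2(\eps q)$ via the embedding $\iota$, and then apply Theorem~\ref{SS:2-rad-GL}.

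Since $|Z(\SL_3(\eps q))|=\gcd(3,q-\eps)$ is odd, we have $O_2(\SL_3(\eps q))=1$; hence the trivial subgroup $P_1=1$ is radical, and by Lemma~\ref{L:R2} every non-trivial radical 2-subgroup $R$ of $\SL_3(\eps q)$ has $A(R):=\Omega_1(Z(R))\ne 1$. As the rank of $\SL_3$ over $\overline{\bbF}_q$ is $2$, one has $|A(R)|\in\{2,4\}$. If $|A(R)|=2$, then up to $\SL_3(\eps q)$-conjugacy $A(R)=\langle x_0\rangle$ with $x_0=\diag(-1,-1,1)$, whose centralizer in $\SL_3(\eps q)$ is exactly $\iota(\GL_2(\eps q))$; as $A(R)$ is $N_{\SL_3(\eps q)}(R)$-stable, Lemma~\ref{L:R5} places $R$ inside $\iota(\GL_2(\eps q))$ as a radical 2-subgroup, so $R=\iota(R')$ for a radical 2-subgroup $R'$ of $\GL_2(\eps q)$ with $A(R')=\{\pm I_2\}$. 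If $|A(R)|=4$, then the three non-trivial involutions of $A(R)$ are pairwise $\SL_3(\eps q)$-conjugate, and $A(R)$ lies inside a maximal torus; up to conjugacy $A(R)$ is the Klein four subgroup of the standard diagonal torus $T\cap\SL_3(\eps q)$, whence Lemma~\ref{L:R5} forces $R\le Z_{\SL_3(\eps q)}(A(R))=T\cap\SL_3(\eps q)$, so $R=O_2(T\cap\SL_3(\eps q))$.

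The enumeration is then direct. Applying Theorem~\ref{SS:2-rad-GL} to $\GL_2(\eps q)$ and selecting those $R'$ whose centre contains a unique involution yields, up to conjugacy, the four subgroups $\{\pm I_2\}$, the cyclic group $R_{1,1,0}^1$, the extraspecial group $R_{1,0,1}^{1,-}$ of order $8$, and the Sylow 2-subgroup of $\GL_2(\eps q)$, which equals $R_{1,0,0,(1)}^1$ when $4\mid q-\eps$ (i.e., $\eps=\varepsilon$) and $R_{1,0,1}^2$ when $4\mid q+\eps$ (i.e., $\eps\ne\varepsilon$). Applying $\iota$ gives $P_2$, $P_4$, $P_5$, $P_6$. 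For the Klein-four case the $2$-Sylow of the torus is identified as $\iota(R_{1,0,0}^1\ti R_{1,0,0}^1)=P_3$, and an order count confirms that $P_6$ is indeed a Sylow 2-subgroup of $\SL_3(\eps q)$.

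It remains to check radicality of each $\iota(R')$ in the full group $\SL_3(\eps q)$. For $P_2$, $P_4$, $P_5$, $P_6$ the unique non-trivial involution of $Z(\iota(R'))$ is $x_0$, so $N_{\SL_3(\eps q)}(\iota(R'))\subseteq Z_{\SL_3(\eps q)}(x_0)=\iota(\GL_2(\eps q))$; thus this normalizer coincides with $\iota(N_{\GL_2(\eps q)}(R'))$, and radicality transfers from $\GL_2(\eps q)$ via $O_2$-formation. The main obstacle is the Klein-four case $P_3$: here the three involutions of $A(P_3)$ are pairwise $\SL_3(\eps q)$-conjugate, so the full normalizer $N_{\SL_3(\eps q)}(P_3)$ is the monomial subgroup $M=(T\cap\SL_3(\eps q))\rtimes\fS_3$, and one must argue directly that $O_2(M)=O_2(T\cap\SL_3(\eps q))=P_3$, using $O_2(\fS_3)=1$ so that any normal 2-subgroup of $M$ lies in $T\cap\SL_3(\eps q)$.
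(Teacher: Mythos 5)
Your approach is correct but genuinely different from the paper's. The paper's proof is a one-liner: it invokes Lemma~\ref{L:R3}, which says the intersection map $R'\mapsto R'\cap\SL_3(\eps q)$ is a surjection from radical 2-subgroups of $\GL_3(\eps q)$ onto those of the normal subgroup $\SL_3(\eps q)$, and then reads off the list by intersecting the basic subgroups of $\GL_3(\eps q)$ (from Theorem~\ref{SS:2-rad-GL}) with $\SL_3$. You instead implement the paper's general $A(R)$-reduction strategy directly inside $\SL_3(\eps q)$: splitting into $|A(R)|\in\{2,4\}$, identifying $Z_{\SL_3(\eps q)}(A(R))$ as $\iota(\GL_2(\eps q))$ or a maximal torus, reducing there via Lemma~\ref{L:R5}, and closing the loop with an explicit radicality check. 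The trade-off is that the paper's route hands you radicality for free from Lemma~\ref{L:R3} but requires computing intersections $R'\cap\SL_3(\eps q)$ and sorting out which of them coincide (e.g.\ $R_{3,0,0}^1\cap\SL_3(\eps q)=1$), whereas your route avoids any intersection bookkeeping but forces you to check radicality directly in $\SL_3$ — which you do correctly, with the $P_3$ case handled by a separate and sound $O_2(M)$ argument about the monomial normalizer.

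One descriptional imprecision worth fixing: in your enumeration you describe $R_{2,0,0}^1$ as ``$\{\pm I_2\}$'' and $R_{1,0,1}^{1,-}$ as ``the extraspecial group of order $8$.'' These descriptions are correct only when $4\mid(q+\eps)$ (i.e.\ $\eps\ne\varepsilon$). When $4\mid(q-\eps)$, $R_{2,0,0}^1$ is the full Sylow $2$-subgroup $\GL_1(\eps q)_2\cdot I_2$ of $Z(\GL_2(\eps q))$, which is cyclic of order $(q-\eps)_2=2^a\ge 4$, and $R_{1,0,1}^{1,-}$ (to be read as $R_{1,0,1}^1$ in that case) is the central product $Z_0\circ_2 Q_8$ of order $2^{a+2}\ge 16$, not $Q_8$. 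You clearly know the $\eps=\varepsilon$ versus $\eps\ne\varepsilon$ dichotomy since you split $P_6$ accordingly; applying the same care to $P_2$ and $P_5$ would remove the error. It does not affect the validity of the structural argument — the four subgroups of $\GL_2(\eps q)$ with $\Omega_1(Z(R'))=\{\pm I_2\}$ are indeed the four basic subgroups of degree $2$ — but the groups are not always the ones you name.
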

 
\begin{proof}
This follows from Lemma~\ref{L:R3} and the construction of radical subgroups of  $\GL_3(\eps q)$ in \S \ref{SS:GL2}. 
See also \cite[\S4]{FLZ21}.
\end{proof}

Let $L=\SL_3(\eps q)$. Then by the explicit description of the weights of special linear and unitary groups $\SL_n(\eps q)$ in \cite[\S5]{FLZ21}, one gets $|\Alp(B_0(L))|=3$. Precisely, each of the radical subgroups $P_3$, $P_5$ and $P_6$ (in Lemma~\ref{lem:rad-SL3}) provides a unique principal weight of $L$.	

Note that $B'=\hat s_0\otimes \cE_2(H,1)$ and $\cE_2(H,1)$ is the principal block of $H$.  In particular, $R_0=P_6\ti P_6$ is a defect group of $B'$. Write $H=J\rtimes Z_3$ with $J\cong \SL_3(\eps q)\circ_3\SL_3(\eps q)$, then every radical 2-subgroup of $H$ is contained in $J$.

\begin{lem}\label{lem:wei-B'-iso}
$|\Alp(B')|=9$. Precisely, each of the radical subgroups $R=R_1\ti R_2$, where $R_1,R_2\in\{P_3, P_5, P_6\}$, provides a $B'$-weight of $H$.

In particular, if $(R,\varphi)$ and $(R',\varphi')$ are two $B'$-weights of $H$ that are not $H$-conjugate, then the weight subgroups $R$ and $R'$ are not $H$-conjugate.
\end{lem}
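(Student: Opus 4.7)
The strategy is to reduce the count of $\Alp(B')$ to weights of the principal $2$-block of $H$, then further to weights along the chain $L\ti L\twoheadrightarrow J\hookrightarrow H$, using the classification in Lemma~\ref{lem:rad-SL3} and the known structure $|\Alp(B_0(L))|=3$ recalled before the lemma.

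First I would reduce to the principal $2$-block of $H$. Since $s_0$ has order $3$, $\hat s_0$ is a linear character of $G$ of $2'$-order, and tensoring with the restriction $\hat s_0|_{N_H(R)}$ gives a bijection $\Alp(B_0(H))\to\Alp(B')$ preserving weight subgroups (noting $\cE_2(H,1)=B_0(H)$ by \cite[Thm.~21.14]{CE04}). So it is enough to show $|\Alp(B_0(H))|=9$ with the nine weight subgroups as stated. Next, because $|H/J|=3$ is odd, every radical $2$-subgroup of $H$ is contained in $J$ and is radical in $H$ if and only if it is radical in $J$ (since $O_2(N_H(R))$ is a normal $2$-subgroup, hence contained in $N_H(R)\cap J=N_J(R)$ and normal there). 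By Lemma~\ref{L:R4}, applied to the central $2'$-subgroup $\langle(\delta_0I,\delta_0^{-1}I)\rangle$, together with Lemma~\ref{L:R6}, the radical $2$-subgroups of $J$ correspond bijectively to products $R_1\ti R_2$ of radical $2$-subgroups of $L=\SL_3(\eps q)$, which by Lemma~\ref{lem:rad-SL3} lie among $P_1,\ldots,P_6$.

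Since each of $P_3,P_5,P_6$ provides exactly one principal weight of $L$, the product structure and Lemma~\ref{lem:B_0-wei} yield nine principal weights of $L\ti L$, with weight subgroups $R_1\ti R_2$, $R_i\in\{P_3,P_5,P_6\}$. These descend to nine principal weights of $J$, as the central $2'$-subgroup of $L\ti L$ acts trivially on principal weight characters. To lift from $J$ to $H$, observe that the generator of $Z_3=H/J$ is diagonal in each $\SL_3$-factor, so it normalizes every $R_1\ti R_2$ considered and fixes every principal weight character $\varphi\in\dz(N_J(R)/R)$ (which factors through $N_J(R)/(R\cdot Z_J(R))$, a quotient on which a diagonal element acts trivially modulo inner automorphisms). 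Each such $\varphi$ therefore admits exactly three extensions to $N_H(R)/R$; by Brauer's Third Main Theorem together with the fact that $1_H,\hat s_0,\hat s_0^2$ yield three pairwise distinct $2$-blocks $B_0(H),\hat s_0\otimes B_0(H),\hat s_0^2\otimes B_0(H)$ of $H$, tensoring with $\hat s_0|_{N_H(R)}$ cycles these three extensions through the three blocks. Thus exactly one extension per $\varphi$ contributes to $B_0(H)$, yielding nine principal $B_0(H)$-weights with the claimed weight subgroups; the final ``in particular'' statement then follows, since each weight subgroup supports only one weight up to conjugacy.

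\textbf{Main obstacle.} The most delicate step is to verify rigorously the $Z_3$-invariance of each of the nine principal weight characters of $J$ and the correct block-theoretic distribution of its three extensions to~$H$. This requires a case-by-case analysis using the explicit structure of $N_L(P_i)/P_i$ for $i=3,5,6$, the action of the diagonal generator of $Z_3$ on these normalizers, and the value of the central character $\hat s_0|_{Z(J)}$; these inputs together show that $\hat s_0|_{N_H(R)}$ is non-trivial on $N_H(R)/N_J(R)\cdot R$, which is exactly what is needed for the blocks to be distinguished by tensoring.
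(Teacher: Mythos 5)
Your argument is sound and reaches the correct conclusion, but it is structured differently from the paper's. The paper works top--down: it observes that $J\subset\ker\hat s_0$ forces $B'$ to cover $B_0(J)$, that exactly three blocks of $H$ cover $B_0(J)$, and that therefore restriction $(R,\varphi)\mapsto(R,\Res^{N_H(R)}_{N_J(R)}\varphi)$ gives a bijection from the $H$-classes of $B'$-weights onto the $J$-classes of $B_0(J)$-weights, at which point Lemma~\ref{lem:rad-SL3} finishes. You instead work bottom--up: you first pass from $B'$ to $B_0(H)$ by tensoring with $\hat s_0$, push radical $2$-subgroups through $H\supset J\twoheadleftarrow L\ti L$ with Lemmas~\ref{L:R4} and~\ref{L:R6}, obtain nine $B_0(J)$-weights, and then lift each across $H/J\cong Z_3$. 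Both proofs rest on the same two facts ($J\subset\ker\hat s_0$ and the three covering blocks), so the routes are genuinely parallel; your version trades the paper's concise Clifford-theoretic assertion for an explicit extension-and-block-distribution count.

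One improvement: the case-by-case verification you flag as the ``main obstacle'' is unnecessary. Once you have checked that the diagonal $Z_3$-generator $g$ centralizes $R=R_1\times R_2$ (as you do), the invariance of $\varphi$ follows from a completely general observation: for any $n\in N_J(R)$, the commutator $[g,n]=g^{-1}ng\,n^{-1}$ lies in $J$ and acts trivially on $R$ (since both $g^{-1}ng$ and $n$ induce the same automorphism of $R$), hence $[g,n]\in Z_J(R)$. Thus $g$ acts trivially on $N_J(R)/Z_J(R)$ and \emph{a fortiori} on $N_J(R)/RZ_J(R)$, so every principal-block weight character of $N_J(R)$ is automatically $g$-fixed, without any reference to the explicit structure of $N_L(P_i)/P_i$. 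This uniform argument eliminates the delicate step entirely and brings your proof to the same level of efficiency as the paper's.
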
	

\begin{proof}
Note that $J$ is contained in the kernel of the linear character $\hat s_0$. So the block $B'$ covers the principal block $B_0(J)$ of $J$.
In addition, there are exactly three blocks of $H$ covering $B_0(J)$.
So $(R,\Res^{N_H(R)}_{N_J(R)}(\varphi))$ form a complete set of representatives of the $J$-conjugacy classes of $B_0(J)$-weights of $J$ if $(R,\varphi)$ runs through a complete set of representatives of the $H$-conjugacy classes of $B'$-weights of $H$.
In particular, $|\Alp(B')|=|\Alp(B_0(J))|$.
Therefore, this lemma follows from Lemma~\ref{lem:rad-SL3} immediately.
\end{proof}	

\begin{rmk}\label{rmk-wei-B'-iso}
In Lemma~\ref{lem:wei-B'-iso}, one shows that  if $(R,\varphi)$ and $(R',\varphi')$ are two $B'$-weights of $H$ that are not $H$-conjugate, then $R$ and $R'$ are not $\Aut(G)$-conjugate.
\end{rmk}	

\subsubsection{Classifying the $B$-weights of $G$}

Take $R_{0}$ a Sylow 2-subgroup of \[\GL_{2,long}(\epsilon q)\cdot\GL_{2,short}(\epsilon q)\subset\SL_{3,long}(\epsilon q)\cdot
\SL_{3,short}(\epsilon q)\subset Z_{G}(s_{0}).\] Let \[A_{0}=\Omega_{1}(Z(R_{0})),\quad D_{0}:=Z_{G}(R_{0}),\quad D_{1}=
Z_{G}(Z(R_{0})).\]    

Note that the invariants $r$ and $s$ for $A_{0}$ are both equal to 1, i.e., $A_{0}\sim F_{1,1}$ (cf. beginning of \S \ref{S:F4}). 
We have \[Z_{G}(A_{0})\cong((\Sp_{4}(q)\times\Sp_{2}(q)\times\Sp_{2}(q))/\langle(-I_{4},-I_{2},-I_{2})\rangle)
\rtimes\langle\tau\rangle,\] where \[\tau=(\left(\begin{array}{cc}&\eta I_{2}\\-\eta^{-1}I_{2}&\\\end{array}\right),
\left(\begin{array}{cc}&\eta\\-\eta^{-1}&\\\end{array}\right),\left(\begin{array}{cc}&\eta\\-\eta^{-1}&\\\end{array}\right))\] 
with $\eta\in\GL_{1}(q^{2})_{2}$ and $\eta^{2}$ a generator of $\GL_{1}(q)_{2}$. Thus, $\tau^{2}=1$ and 
\[(\Sp_{4}(q)\times\Sp_{2}(q)\times\Sp_{2}(q))^{\tau}=\GL_{2}(-\varepsilon q)\times\GL_{1}(-\varepsilon q)\times
\GL_{1}(-\varepsilon q).\] Note that $s_{0}\in Z_{G}(A_{0})$ and $Z_{Z_{G}(A_{0})}(s_{0})=Z_{Z_{G}(s_{0})}(A_{0})$ is a central 
extension of $\GL_{2,long}(\epsilon q)\cdot\GL_{2,short}(\epsilon q)$ by $Z_{3}$. Up to a permutation of the second and the 
third components, we may assume that $s_{0}$ is conjugate to \[(\left(\begin{array}{cc}&I_{2}\\-I_{2}&-I_{2}\\\end{array}\right),
\left(\begin{array}{cc}&1\\-1&-1\\\end{array}\right),I_{2}).\]  

\begin{lem}\label{lem:cen-ele}
Let $A$ be an elementary abelian 2-subgroup with $A_0\subset A\subset R_0$.
Then $Z_{\bH}(A)$ is connected, and $Z_{\bH}(A)$ is a Levi subgroup of $Z_{\bG}^\circ(A)$.
\end{lem}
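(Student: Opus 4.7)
The plan is to deduce both assertions from Steinberg's theorem on centralizers in simply connected groups, applied inside $\bG$ of type $\tp F_4$, and then to recover the Levi structure via the centralizer of a subtorus.

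First I would verify connectedness. Since $p=2$ while $\mathrm{char}(\mathbb F_q)$ is odd, every element of the $2$-group $A$ is semisimple in $\bG$, as is the order-$3$ element $s_0$. Because $A\subseteq \bH = Z_{\bG}(s_0)$, the abelian group $A\langle s_0\rangle$ consists of pairwise commuting semisimple elements, hence lies in a common maximal torus $\bT$ of $\bG$. As $\bG$ is simply connected, Steinberg's theorem asserts that $Z_{\bG}(S)$ is connected for any subset $S\subseteq \bT$. Applying this with $S=A$ gives $Z_{\bG}^\circ(A) = Z_{\bG}(A)$, while with $S=A\langle s_0\rangle$ it yields
\begin{equation*}
Z_{\bG}(A\langle s_0\rangle) \;=\; Z_{\bG}(A)\cap Z_{\bG}(s_0) \;=\; Z_{\bH}(A),
\end{equation*}
whose connectedness is the first assertion.

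For the Levi statement, set $\mathbf{M} := Z_{\bG}^\circ(A) = Z_{\bG}(A)$, a connected reductive subgroup of $\bG$ containing $\bT$ and $s_0$; with respect to $\bT$ its root system is $\Phi(\mathbf{M}) = \{\alpha \in \Phi(\bG) : \alpha|_A = 1\}$, and $Z_{\bH}(A) = Z_{\mathbf{M}}(s_0)$ has root system $\Phi(\mathbf{M}) \cap \Phi(\bH)$. To conclude that $Z_{\bH}(A)$ is a Levi subgroup of $\mathbf{M}$, I would exhibit a subtorus $\mathbf{S}$ of $\bT$ with $Z_{\mathbf{M}}(\mathbf{S}) = Z_{\bH}(A)$: centralizers of subtori in connected reductive groups are automatically Levi subgroups. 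Taking $\mathbf{S}$ to be the connected centre of $Z_{\bH}(A)$ gives $Z_{\mathbf{M}}(\mathbf{S}) \supseteq Z_{\bH}(A)$ for free, and the hypothesis $A\subseteq R_0\subseteq \mathbf{L}_{\bH}$, where $\mathbf{L}_{\bH}\cong\GL_2\times\GL_2$ is the standard Levi of $\bH$ through which $R_0$ is built, contributes the expected controlled torus to $\mathbf{S}$.

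The main obstacle is the reverse inclusion $Z_{\mathbf{M}}(\mathbf{S})\subseteq Z_{\bH}(A)$: every root of $\mathbf{M}$ vanishing on $\mathbf{S}$ must also vanish on $s_0$. I expect this to reduce to a short root-system verification inside $\Phi(\tp F_4)$. The possible root systems $\Phi(\mathbf{M})$ arise from the finitely many subgroups $A$ with $A_0\subseteq A\subseteq R_0$ described in Section~\ref{S:F4}, and $\Phi(\mathbf{M})\cap\Phi(\bH)$ is cut out of $\Phi(\mathbf{M})$ by the character of $\bT$ associated to $s_0$, which is constant on $Z(\mathbf{L}_{\bH})^\circ$. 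Carrying out this case analysis, with the classification of pseudo-Levi subsystems of $\Phi(\tp F_4)$ and the fact that $\Phi(\bH)$ is of type $\tp A_2 + \tp A_2$, should confirm the Levi claim uniformly and thereby complete the proof.
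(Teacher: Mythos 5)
Your appeal to Steinberg's connectedness theorem is a genuine error. Steinberg's theorem concerns centralizers of a \emph{single} semisimple element in a group with simply connected derived subgroup; it does not extend to centralizers of arbitrary subsets (or subgroups) of a maximal torus, even a toral elementary abelian $2$-group. Indeed, the paper's own proof of this lemma begins by recalling precisely the contrary fact: for $\bG$ of type $\tp F_4$, the centralizer $Z_{\bG}(F_{r,s})$ is connected if and only if $s\le 1$. Under the hypothesis $A_0\subset A\subset R_0$ one has $A\sim F_{r,s}$ with $r,s\in\{1,2\}$, so the case $s=2$ occurs and $Z_{\bG}(A)$ is then disconnected; your claimed identity $Z_{\bG}^\circ(A)=Z_{\bG}(A)$ fails. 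The disconnectedness arises exactly because centralizing iteratively leaves the ``simply connected derived group'' hypothesis behind after the first step. For the same reason the passage to $A\langle s_0\rangle$ does not give connectedness of $Z_{\bH}(A)$ for free: $A\langle s_0\rangle$ is again a set, not an element, so Steinberg has no purchase. The paper establishes connectedness of $Z_{\bH}(A)$ by a direct case analysis over the (finitely many) conjugacy classes of the relevant $A$, and there is no shortcut through Steinberg here.

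Your strategy for the Levi claim---realizing $Z_{\bH}(A)$ as the centralizer in $\mathbf M:=Z_{\bG}^\circ(A)$ of a subtorus built from the connected center of $Z_{\bH}(A)$---is the right general mechanism (centralizers of subtori in connected reductive groups are Levi subgroups), but as written it is a plan, not a proof: you acknowledge that the inclusion $Z_{\mathbf M}(\mathbf S)\subseteq Z_{\bH}(A)$ is ``the main obstacle'' and that you ``expect'' a root-system check to close it. Since, in addition, the connectedness input $Z_{\bG}^\circ(A)=Z_{\bG}(A)$ feeding into this step is itself false in the cases $s=2$, the Levi argument would need to be rebuilt on the correct identification of $Z_{\bG}^\circ(A)$. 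In short, the proposal needs to be replaced by the case-by-case verification (as the paper does), with both the connectedness of $Z_{\bH}(A)$ and the Levi property of $Z_{\bH}(A)\le Z_{\bG}^\circ(A)$ checked against the explicit list of $A$ with $A_0\subseteq A\subset R_0$.
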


\begin{proof}
Under the notaion of \S\ref{S:F4}, the centralizer of $F_{r,s}$ (with $0\le r\le 2$ and $0\le s\le 3$) in $\bG$ is connected if and only if $s=0$ or 1. 
Moreover, $A_0\sim F_{1,1}$. 

Let $A$ be an elementary abelian 2-subgroup with $A_0\subset A\subset R_0$.
Then $Z_{\bH}(A)$ is connected, and $A\sim F_{r,s}$ with $r,s\in\{1,2\}$.
This lemma can be checked case by case.
\end{proof}

\begin{lem}\label{lem:weight-subgp}
	If $R$ is a $B$-weight subgroup of $G$ with $R\subset R_0$, then $R$ is a $B'$-weight subgroup of $H$. 
\end{lem}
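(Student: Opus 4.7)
Let $(R,\varphi)$ be a $B$-weight of $G$ with $R\subset R_0$. By Construction~\ref{construction-of-weights}, this corresponds to a self-centralizing $B$-Brauer pair $(R,b_R)$, where $b_R$ is a $2$-block of $Z_G(R)$ of defect group $Z(R)$, and $\varphi$ is built from the canonical character of $b_R$. The strategy is to produce from $(R,b_R)$ a self-centralizing $B'$-Brauer pair of $H$ with the same first component $R$; Construction~\ref{construction-of-weights} applied in $H$ will then yield a $B'$-weight of $H$ whose first component is $R$, proving the claim.

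The heart of the argument is to establish $Z_G(R)=Z_H(R)$, i.e., $s_0\in Z(Z_G(R))$. Lemma~\ref{lem:center-weisub} applied to the defect group $R_0$ of $B$ gives $Z(R_0)\subset Z(R)$, so $A_0\subset A\subset R_0$ where $A:=\Omega_1(Z(R))$. By Lemma~\ref{lem:cen-ele}, $Z_{\bH}(A)$ is connected and is a Levi subgroup of $Z_{\bG}^{\circ}(A)$, with $s_0$ lying in its center. Using the classification in \cite{Bo05}, $s_0$ is, up to $\bG$-conjugacy, the unique quasi-isolated $3$-element of $\bG$. A Lang--Steinberg argument applied to $Z_{\bG}^{\circ}(A)$, combined with this uniqueness, will show that any $g\in Z_G(R)$, viewed as an $F$-fixed element of $Z_{\bG}(A)$, must centralize $s_0$, so $g\in H$. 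Consequently $Z_G(R)=Z_H(R)$ and $N_G(R)=N_H(R)$, from which $R=O_2(N_G(R))=O_2(N_H(R))$ is a radical $2$-subgroup of $H$.

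Once this structural step is in place, the remainder is formal. The block $b_R$ of $Z_G(R)=Z_H(R)$ is simultaneously a block of $Z_H(R)$ with defect group $Z(R)$, so $(R,b_R)$ is a self-centralizing Brauer pair of $H$. Since $s_0$ is now central in $Z_G(R)=Z_H(R)$, the block $b_R$ lies in $\cE_2(Z_G(R),s_0)=\cE_2(Z_H(R),s_0)$, and its Brauer induction to $H$ lies in $\cE_2(H,s_0)=B'$. Hence $(R,b_R)$ is a self-centralizing $B'$-Brauer pair in $H$, and Construction~\ref{construction-of-weights} then produces a $B'$-weight $(R,\psi)$ of $H$ with first component $R$.

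The main obstacle is the equality $Z_G(R)=Z_H(R)$. While this is morally forced by the Levi rigidity in Lemma~\ref{lem:cen-ele} together with the quasi-isolated classification \cite{Bo05}, the precise argument demands a careful analysis of the (possibly disconnected) algebraic group $Z_{\bG}(A)$, its component group, and the $F$-action on the $\bG$-conjugates of $s_0$ contained in $Z_{\bG}(A)$; in particular, one must rule out that some $F$-fixed element outside $\bH$ could centralize $R$ while moving $s_0$. Once this rigidity is established, the rest is a direct translation between the Brauer pair descriptions of weights in $G$ and in $H$.
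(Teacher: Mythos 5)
Your plan hinges on the claim $Z_G(R)=Z_H(R)$ (equivalently, that every $F$-fixed element of $Z_{\bG}(R)$ already centralizes $s_0$). You flag this yourself as "the main obstacle," but in fact it is not merely difficult — it is false in general. Recall $\varepsilon$ is the sign with $4\mid(q-\varepsilon)$ and $\epsilon$ the sign with $3\mid(q-\epsilon)$; these need not coincide. When $\varepsilon\ne\epsilon$, the paper's Lemma~\ref{lem:nor-brauer-pair} and its proof record precisely that $Z_{\bH}(R)$ is only a \emph{maximal torus} of $Z_{\bG}(R)$, and the passage from a block of $Z_G(R)$ to one of $Z_H(R)$ requires the Morita equivalence furnished by Lusztig induction $\textup{R}^{Z_{\bG}(R)}_{Z_{\bH}(R)}$, not an equality of groups. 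So the rigidity step you want cannot be established by Lang--Steinberg plus the quasi-isolated classification: the centralizers genuinely differ, and $b_R$ is not a block of $Z_H(R)$ in that case. Consequently the derivation of a self-centralizing $B'$-Brauer pair of $H$ with first component $R$ does not go through along the lines you propose.

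The paper sidesteps this by working one step higher, at $A=\Omega_1(Z(R))$ (where $A_0\subset A\subset R_0$ by Lemma~\ref{lem:center-weisub}): it identifies $b_A$ via the derived equivalence of \cite[Thm.~4.14]{BDR17}, applies Lemma~\ref{lem:weight-subgp-sub} twice to reduce to a weight subgroup of $Z_{\bG}^\circ(A)^F$, then invokes the \emph{Jordan decomposition for weights} of \cite[Thm.~4.15]{FLZ22}, together with Lemma~\ref{lem:cen-ele} (that $Z_{\bH}(A)$ is connected and a Levi of $Z_{\bG}^\circ(A)$), to pass to $Z_H(A)$. The conclusion is then read off from the explicit classification of $2$-weight subgroups of the general linear/unitary factors of $Z_H(A_0)$ and compared against Lemma~\ref{lem:wei-B'-iso}. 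This indirect route, comparing weight \emph{subgroups} of $Z_{\bG}^\circ(A)^F$ and $Z_{\bH}(A)^F$ rather than blocks of $Z_G(R)$ and $Z_H(R)$, is exactly what accommodates the case $\varepsilon\ne\epsilon$ where the centralizers are not equal.
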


\begin{proof}
Let $(R_0,b_{R_0})$ be a maximal $B$-Brauer pair of $G$. By Lemma~\ref{lem:center-weisub}, we have $Z(R_0)\subset R\subset R_0$.
Let $A=\Omega_1(Z(R))$. 
We have unique Brauer pairs $(A, b_{A})$ and $(R,b_R)$ such that 
\[ (1,B)\le (A, b_{A})\le (R,b_R)\le  (R_0,b_{R_0}).\]
Then by \cite[Thm.~4.14]{BDR17}, up to conjugacy, we may assume that $b_{A}$ is the unique block of $Z_G(A)$ covering $\cE_2(Z_{\bG}^\circ(A)^F,s_0)$.
Note that $\cE_2(Z_{\bG}^\circ(A)^F,s_0)$ is a single block since any component of $Z_{\bG}^\circ(A)$ is of classical type.

By construction, $N_{Z_G(A)}(R)\unlhd N_G(R)$, and thus by Lemma~\ref{lem:weight-subgp-sub}, $R$ is also a $b_A$-weight subgroup of $Z_G(A)$.
By Lemma~\ref{lem:weight-subgp-sub} again, $R$ is also a $\cE_2(Z_{\bG}^\circ(A)^F,s_0)$-weight subgroup of $Z_{\bG}^\circ(A)^F$.
According to the Jordan decomposition of weights (cf. \cite[Thm.~4.15]{FLZ22}) and Lemma~\ref{lem:cen-ele}, this implies that $R$ is a $\cE_2(Z_H(A),s_0)$-weight subgroup of $Z_H(A)$.
Now $A_0\subset A$, and $Z_H(A_0)$ is a direct product of $\GL_{2,long}(\epsilon q)\cdot\GL_{2,short}(\epsilon q)$ and $Z_3$.
The 2-weights of general linear and unitary groups were classified in \cite{An92,An93b}, from which one gets the 2-weight subgroups of $Z_H(A)$.
Thus this lemma follows from Lemma~\ref{lem:wei-B'-iso} and direct calculation.
\end{proof}

Let  $(R_0,b_{R_0})$ be a maximal $B$-Brauer pair of $G$, and let $(R,b_R)$ be any $B$-Brauer pair of $G$ with $A_0\subset R$ and $(R,b_R)\le  (R_0,b_{R_0})$.
We have unique Brauer pair $(A_0, b_{A_0})$ such that 
\[ (1,B)\le (A_0, b_{A_0})\le (R,b_R)\le  (R_0,b_{R_0}).\]
As in the proof of Lemma~\ref{lem:weight-subgp}, we may assume that $b_{A_0}=\cE_2(Z_{G}(A_0),s_0)$ up to $G$-conjugacy.
Thus $(R,b_R)$ is a $B$-Brauer pair of $G$ if and only if it is a $\cE_2(Z_{G}(A_0),s_0)$-Brauer pair of $Z_{G}(A_0)$. 
By \cite[Thm.~7.7]{BDR17}, the block algebras of $\cE_2(Z_{G}(A_0),s_0)$ and $\cE_2(Z_H(A_0),s_0)$ are splendid Rickard equivalent.
Applying Brauer functor and by \cite[Thm.~1.7]{Ha99}, we deduce that for any $\cE_2(Z_{G}(A_0),s_0)$-Brauer pair $(R,b_R)$ of $Z_{G}(A_0)$, there exists a unique $\cE_2(Z_H(A_0),s_0)$-Brauer pair $(R, b_R')$ of $Z_H(A_0)$ such that the block algebras of $b_R$ and $b_R'$ are Rickard equivalent.
Also, $(R,b_R')$ is a $B'$-Brauer pair of $H$.

\begin{lem}\label{lem:nor-brauer-pair}
Keep the hypotheses and setup as above.
\begin{enumerate}[\rm(1)]
	\item 	$(R,b_R)$ is a self-centralizing $B$-Brauer pair of $G$ if and only if $(R,b_R')$ is a self-centralizing $B'$-Brauer pair of $H$.
	\item If $(R,b_R)$ is a self-centralizing $B$-Brauer pair of $G$ such that $R$ is a $B$-weight subgroup of $G$, then $N_G(R,b_R)=Z_G(R)N_{H}(R)$.
\end{enumerate}
\end{lem}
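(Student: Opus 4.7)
The plan for both parts is to transport information between $(R,b_R)$ and $(R,b_R')$ through the splendid Rickard equivalence of \cite[Thm.~7.7]{BDR17}, and then exploit the Lusztig-series parametrization of blocks of centralizers in $\bG$.

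\textbf{Plan for (1).} By the discussion immediately preceding the lemma, the splendid Rickard equivalence between the block algebras $\cE_2(Z_G(A_0),s_0)$ and $\cE_2(Z_H(A_0),s_0)$ given by \cite[Thm.~7.7]{BDR17}, together with \cite[Thm.~1.7]{Ha99}, yields upon applying the Brauer functor at $R$ a splendid Rickard equivalence between the block algebras of $b_R$ (a block of $Z_{Z_G(A_0)}(R)=Z_G(R)$) and $b_R'$ (a block of $Z_{Z_H(A_0)}(R)=Z_H(R)$). Since splendid Rickard equivalences preserve defect groups, the condition that $Z(R)$ is a defect group of $b_R$ is equivalent to the condition that $Z(R)$ is a defect group of $b_R'$, which is what self-centralizing means.

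\textbf{Plan for (2).} For the inclusion $Z_G(R)N_H(R)\subset N_G(R,b_R)$, I would argue that $Z_G(R)$ trivially acts on its own blocks; and for $n\in N_H(R)\subset H=Z_G(s_0)$, conjugation by $n$ fixes $s_0$ and hence preserves the Lusztig series $\cE_2(Z_G(R),s_0)$ in which $b_R$ lies (by the compatibility of Lusztig series with the partial order on Brauer pairs, after Broué–Michel). Since $(R,b_R)$ is self-centralizing and $R$ is a $B$-weight subgroup, Lemma~\ref{lem:weight-subgp} guarantees that $(R,b_R')$ is self-centralizing too, and the Rickard correspondence of (1) pins down $b_R$ uniquely from $b_R'$; combining this canonicity with $n$ acting compatibly on both sides (as $n\in H$) shows $n\cdot b_R=b_R$. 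For the reverse inclusion, let $g\in N_G(R,b_R)$. Then $g\cdot b_R\in\cE_2(Z_G(R),gs_0g^{-1})$, so the equality $g\cdot b_R=b_R$ forces $gs_0g^{-1}$ to be $Z_G(R)$-conjugate to $s_0$. Choosing $z\in Z_G(R)$ with $gs_0g^{-1}=zs_0z^{-1}$, I get $z^{-1}g\in Z_G(s_0)\cap N_G(R)=N_H(R)$, hence $g\in Z_G(R)N_H(R)$.

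\textbf{Main obstacle.} The subtle point is the uniqueness step in the forward inclusion: namely, that $b_R$ is uniquely singled out inside $\cE_2(Z_G(R),s_0)$ in a way that is respected by conjugation by elements of $N_H(R)$. Because $Z_{\bG}(R)$ need not be connected, one cannot simply invoke \cite[Thm.~21.14]{CE04} directly; instead one has to use that every component of $Z_{\bG}(R)$ is of classical type (the argument already invoked for $R=1$), together with the fact that the Rickard equivalence of (1) is compatible with the $H$-conjugation action. Granting this, both directions of (2) reduce to the short calculation above, and the remaining work is purely bookkeeping.
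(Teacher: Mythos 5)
Part (1) of your proposal is fine and matches the paper's (one-line) argument: the Rickard equivalence between $b_R$ and $b_R'$ preserves defect groups, hence the self-centralizing property.

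In part (2), however, you have correctly identified the difficulty but not actually overcome it; the ``bookkeeping'' you defer is where the substance of the lemma lies. Specifically, the step ``$g\cdot b_R\in\cE_2(Z_G(R),gs_0g^{-1})$, so $g\cdot b_R=b_R$ forces $gs_0g^{-1}$ to be $Z_G(R)$-conjugate to $s_0$'' is not available at this level of generality. There are two issues. First, $s_0$ lives in $\bG^{*F}$, not in any dual of $Z_{\bG}(R)$; since $Z_{\bG}(R)$ is generally not a Levi of $\bG$, there is no canonical semisimple element of its dual playing the role of $s_0$, so the expression $\cE_2(Z_G(R),s_0)$ requires definition, and for disconnected $Z_{\bG}(R)$ it is not clear that Lusztig series partition blocks or that distinct series give distinct unions of blocks. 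Second, even if a series label were available, passing from equality of series to $gs_0g^{-1}=zs_0z^{-1}$ with $z\in Z_G(R)$ identifies conjugation in $G$ with conjugation in the dual of $Z^\circ_{\bG}(R)$, a translation that needs its own argument. The paper's proof makes all of this concrete rather than appealing to a Jordan/Lusztig partition. It splits into $\varepsilon=\epsilon$ (where $Z_{\bG}(R)\subset\bH$ and $Z_{\bG}(R)$ turns out to be a torus, so $Z_G(R)\subset N_H(R)$ and one identifies the canonical character of $b_R$ as the restricted linear character $\hat s_{0,R}$) and $\varepsilon\ne\epsilon$ (where $Z_{\bH}(R)$ is a maximal torus of $Z_{\bG}(R)$, the canonical character of $b_R$ is $\pm\mathrm{R}^{Z_{\bG}(R)}_{Z_{\bH}(R)}(\hat s_{0,R})$ via a Morita equivalence from Lusztig induction, and the stabilizer of the pair $(Z_{\bH}(R),\hat s_{0,R})$ is controlled through a Levi subgroup $\mathbf L=Z_{\bG}(Z_{\bH}(R))\subset\bH$ using \cite[Prop.~1.9(i)]{CE99}). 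Your argument, as written, does not produce the case split, does not use the torus/maximal-torus structure of $Z_{\bG}(R)$, and does not establish the crucial identification of the canonical character; so the reverse inclusion in (2) remains unproved.
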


\begin{proof}
(1) follows from the fact that $b_R$ and $b_R'$ are Rickard equivalent.
Now we consider (2) and suppose that $(R,b_R)$ is a self-centralizing and $R$ is a $B$-weight subgroup of $G$ with $R\subset R_0$.
By Lemma~\ref{lem:weight-subgp}, $R$ is a $B'$-weight subgroup of $H$, and thus we can that  $R$ is listed in Lemma~\ref{lem:wei-B'-iso}.

First let $\varepsilon=\epsilon$. 
Then $Z_{\bG}(R)\subset Z_{\bG}(Z(R_0))\subset \bH$.
Now assume that $(R,b_R)$ is a $B$-Brauer pair of $G$; we also regard it is a $B'$-Brauer pair of $H$.
One checks that $Z_{\bG}(R)$ is a torus case by case.
Let $\hat b_R$ be the block of $Z_J(R)$ covered by $b_R$. Then by \cite[Lemma~2.3]{KS15}, $(R,\hat b_R)$ is a $B_0(J)$-Brauer pair of $J$ by Brauer Third Main Theorem, which implies that $\hat b_R$ is the principal block of $Z_J(R)$.
Since $b_R$ is not the principal block of $Z_H(R)$, we can assume that $b_R=\cE_2(Z_G(R),s_0)$ and the linear character $\Res^{H}_{Z_{H}(R)}(\hat s_0)$ is the canonical character of $b_R$.
So using the fact that $Z_{\bG}(R)$ is a torus, we deduce that $N_G(R,b_R)=N_{H}(R)$. 

Next, assume that $\varepsilon\ne\epsilon$.  
The Lusztig induction $\textup{R}^{Z_{\bG}(R)}_{Z_{\bH}(R)}$ induces a Morita equivalence between the block algebras of $b_R$ and $b_R'$ (cf. \cite[Prop.~3.9]{Ru22a}).
In addition, the linear character $\hat s_{0,R}=\Res^{H}_{Z_{H}(R)}(\hat s_0)$ is the canonical character of $b_R'$ and $\pm\textup{R}^{Z_{\bG}(R)}_{Z_{\bH}(R)}(\hat s_{0,R})$ is the canonical character of $b_R$.
We can check case by case that $Z_{\bH}(R)$ is a maximal torus of $Z_{\bG}(R)$.
Similar as in the proof of the case $\varepsilon=\epsilon$, one shows that $b_R'=\cE_2(Z_{H}(R),s_0)$ and thus $b_R=\cE_2(Z_{G}(R),s_0)$. 
Note that the linear character $\hat s_0$ is stabilized by all elements of $H$.
So $N_{H}(R)\subset N_G(R,b_R)$ and we have $Z_G(R)N_{H}(R)\subset N_G(R,b_R)$.
Conversely, for any $g\in N_G(R,b_R)$, we have that $g$ stabilizes the $Z_G(R)$-conjugacy class of the pair $(Z_{\bH}(R),\hat s_{0,R})$. 
So there exists $c\in Z_G(R)$ such that $gc^{-1}$ stabilizes the pair $(Z_{\bH}(R),\hat s_{0,R})$. 
Let $\mathbf{L}=Z_{\bG}(Z_{\bH}(R))$.
Then $\mathbf{L}$ is a Levi subgroup of $\bG$ contained in $\bH$.
By \cite[(8.19)]{CE04}, $\hat s_0$ defines a linear character of $\mathbf{L}^F$ as $\hat s_0\in Z(\mathbf{L}^*)^F$.
Also, $gc^{-1}$ stabilizes $\hat s_0$ by construction.
Therefore, according to \cite[Prop.~1.9 (i)]{CE99}, $gc^{-1}\in H$ since $\mathbf{L}\subset\bH$.
This implies that $N_G(R,b_R)\subset Z_G(R)N_{H}(R)$, which completes the proof.
\end{proof}

\begin{prop}\label{prop:bij-quasi-isolated}
	$|\Alp(B)|=9$.  Precisely, up to conjugacy, each of the $B'$-weight subgroup of $H$ provides one $B$-weight of $G$.
\end{prop}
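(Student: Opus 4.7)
The plan is to establish a bijection between the $G$-conjugacy classes of $B$-weights and the $H$-conjugacy classes of $B'$-weights, using the construction of weights from self-centralizing Brauer pairs together with the correspondence between $B$- and $B'$-Brauer pairs already set up in Lemma~\ref{lem:nor-brauer-pair}. First I would observe that if $(R,\varphi)$ is a $B$-weight of $G$, then $R$ is a $B$-weight subgroup and, by Lemma~\ref{lem:weight-subgp}, also a $B'$-weight subgroup of $H$; by Lemma~\ref{lem:wei-B'-iso} and Remark~\ref{rmk-wei-B'-iso}, up to $G$-conjugacy there are exactly $9$ such $R=R_1\times R_2$ with $R_1,R_2\in\{P_3,P_5,P_6\}$.

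Next, I would fix such an $R$ and apply Construction~\ref{construction-of-weights}. Lemma~\ref{lem:nor-brauer-pair}(1) gives, up to $G$-conjugacy, a unique self-centralizing $B$-Brauer pair $(R,b_R)$ corresponding to the self-centralizing $B'$-Brauer pair $(R,b_R')$ afforded by the $B'$-weight on $R$. Lemma~\ref{lem:nor-brauer-pair}(2) then yields $N_G(R,b_R)=Z_G(R)N_H(R)$; combined with the easy identity $N_H(R)\cap Z_G(R)R=Z_H(R)R$, this produces a canonical isomorphism
\[
N_G(R,b_R)/RZ_G(R)\ \cong\ N_H(R)/RZ_H(R).
\]
In parallel, the analysis of canonical characters done inside Lemma~\ref{lem:nor-brauer-pair} identifies the canonical character $\vartheta$ of $b_R$ with (a sign times) the Lusztig induction of the canonical character $\vartheta'$ of $b_R'$, which here reduces to $\hat s_{0,R}$; in both $G$ and $H$ the corresponding inflated characters on $RZ_G(R)$ and $RZ_H(R)$ are linear and pull back compatibly under the quotient isomorphism above.

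Using this compatibility, inflation/deflation along $RZ_G(R)$ and $RZ_H(R)$ should give a bijection between $\rdz(N_G(R,b_R)\mid\vartheta)$ and $\rdz(N_H(R)\mid\vartheta')$; the defect-zero condition transfers because the quotients are isomorphic, and the Brauer-induction condition to the right block is automatic once $(R,b_R)$ is chosen inside the coherent system of Brauer pairs. By Lemma~\ref{lem:wei-B'-iso} the latter set has cardinality $1$ for each of the nine $R$. Summing over the nine classes gives $|\Alp(B)|=9$ with one $B$-weight per conjugacy class of weight subgroup, as asserted. The main obstacle I anticipate is step three: verifying carefully that the canonical-character identification survives the passage between $G$ and $H$ (in particular handling the sign in the Lusztig induction formula and the disconnectedness of $Z_{\bG}(s_0)$), so that the $\rdz$-sets truly biject and the block of the induced character is indeed $B$ rather than some other block in $\cE_2(G,s_0)$.
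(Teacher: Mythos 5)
Your overall plan matches the paper's proof: Lemma~\ref{lem:weight-subgp} to reduce to the nine radical subgroups from Lemma~\ref{lem:wei-B'-iso}, Lemma~\ref{lem:nor-brauer-pair}(1) to pass between self-centralizing $B$- and $B'$-Brauer pairs, Lemma~\ref{lem:nor-brauer-pair}(2) to identify $N_G(R,b_R)=Z_G(R)N_H(R)$, and from there the quotient isomorphism $N_G(R,b_R)/RZ_G(R)\cong N_H(R)/RZ_H(R)$ (your computation $N_H(R)\cap RZ_G(R)=RZ_H(R)$ is correct).

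However, your third step contains a genuine gap: you assert that the inflated canonical characters $\vartheta$ on $RZ_G(R)$ and $\vartheta'$ on $RZ_H(R)$ are both linear and pull back compatibly, and that an ``inflation/deflation'' then bijects the two $\rdz$-sets. This fails when $\varepsilon\neq\epsilon$. In that case the proof of Lemma~\ref{lem:nor-brauer-pair} shows that $Z_{\bH}(R)$ is a \emph{proper} maximal torus of $Z_{\bG}(R)$, so $Z_{\bG}(R)$ is a nonabelian connected reductive group and the canonical character of $b_R$ is a Deligne--Lusztig character $\pm\textup{R}^{Z_{\bG}(R)}_{Z_{\bH}(R)}(\hat s_{0,R})$, which has degree $>1$. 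Thus $\vartheta$ is not linear (only $\vartheta'$ is), and ``inflation/deflation'' is not the right operation: $N_H(R)$ is not obtained from $N_G(R,b_R)$ by factoring out $RZ_G(R)$ (in particular $N_H(R)$ need not be normal in $N_G(R,b_R)$), so there is no canonical character-theoretic transfer between the two $\rdz$-sets without further input. Your final-paragraph worry about the sign in Lusztig induction is actually already absorbed into Lemma~\ref{lem:nor-brauer-pair}; the substantive issue is the non-linearity of $\vartheta$.

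The argument is easily repaired and then coincides with the paper's: $\vartheta$ is invariant under all of $N_G(R,b_R)$ because that group normalizes $b_R$ and hence fixes its canonical character (invariance does not require linearity). Combined with the quotient isomorphism, the computation that $N_H(R)/RZ_H(R)\cong 1$, $\fS_3$ or $\fS_3\times\fS_3$ for each of the nine $R$, and Lemma~\ref{ext-S3S3} applied to $RZ_G(R)\unlhd N_G(R,b_R)$, one gets $|\rdz(N_G(R,b_R)\mid\vartheta)|=1$ directly, and Construction~\ref{construction-of-weights} then delivers exactly one $B$-weight per $R$. This bypasses the need to biject $\rdz$-sets between $G$ and $H$ at all, and it does require the small explicit computation of $N_H(R)/RZ_H(R)$ that your version was trying to avoid.
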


\begin{proof}	
	Let $R$ be a $B$-weight subgroup of $G$.
	Then by Lemma~\ref{lem:weight-subgp}, we can assume that $R$ is listed in Lemma~\ref{lem:wei-B'-iso}. 
By Lemma~\ref{lem:nor-brauer-pair}, $(R,b_R)$ is self-centralizing if and only if $(R,b_R')$ is self-centralizing.
Each of these radical 2-subgroups provides a unique self-centralizing $B$-Brauer pair $(R,b_R)$ of $G$.  Denote by $\theta$ the canonical character of $b_R$ and by $\vartheta$ the inflation of $\theta\in\Irr(RZ_G(R)/R)$ to $RZ_G(R)$. By Lemma~\ref{lem:nor-brauer-pair}, $N_G(R)_\vartheta=N_G(R,b_R)=Z_G(R)N_{H}(R)$ which implies that $N_G(R)_\vartheta/RZ_G(R)\cong N_{H}(R)/RZ_G(R)$.
Direct calculations show that $N_H(R)/RZ_H(R)\cong 1$, $\fS_3$ or $\fS_3\ti\fS_3$, and then by Lemma~\ref{ext-S3S3}, one sees that $\rdz(N_G(R)_\vartheta\mid\vartheta)$ has exactly one element, which implies that $R$ provides exactly one $B$-weight of $G$ by Construction~\ref{construction-of-weights}.  This completes the proof.
\end{proof}

\begin{rmk}\label{rmk:Jordan-weights}
In the proof of Proposition~\ref{prop:bij-quasi-isolated}, we indeed get a correspondence between the weights of $B$ and $\cE_2(H,1)$.
Comparing with the Jordan decomposition for characters (\cite[\S15]{CE04}) and for blocks (\cite{BDR17}, \cite{En08}), this can be viewed as a Jordan decomposition for weights of quasi-isolated 2-blocks of groups of type $\tp F_4$.
\end{rmk}

Now we are ready to establish the inductive BAW condition for the quasi-isolated block~$B$.

\begin{thm}\label{thm-BAW-quasi-isolated}
The	inductive BAW condition holds for the block $B$.
\end{thm}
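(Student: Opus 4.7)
The plan is to follow the same strategy as in Theorem~\ref{thm-BAW-principal}: since $\Out(G)=\langle F_0\rangle$ is cyclic and $B$ is $\Aut(G)$-stable, verifying the inductive BAW condition reduces (by \cite[Lemma~2.10]{Sc16}) to exhibiting an $\Aut(G)_B$-equivariant bijection between $\IBr(B)$ and $\Alp(B)$. The key input on the weight side is already in hand from Proposition~\ref{prop:bij-quasi-isolated}, namely $|\Alp(B)|=9$, with an explicit bijection to $\Alp(B')$ via the Jordan decomposition of weights; this is what makes the block amenable to the cyclic-$\Out$ criterion.

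First, I would check that $|\IBr(B)|=9$. Since $s_0$ is quasi-isolated and $\bH=Z_{\bG}(s_0)$ has only classical components, \cite[Thm.~7.7]{BDR17} supplies a splendid Rickard equivalence between $B$ and $B'=\hat s_0\otimes\cE_2(H,1)$; in particular $|\IBr(B)|=|\IBr(B')|=|\IBr(\cE_2(H,1))|$. The block $\cE_2(H,1)$ is the principal block of an extension of $(\SL_3(\eps q)\times\SL_3(\eps q))/\langle(\delta_0 I,\delta_0^{-1}I)\rangle$ by $Z_3$, so the blockwise Alperin weight conjecture holds for it (by the type-$\tp A$ case, cf.\ \cite{FLZ21,AF90}, together with Lemma~\ref{lem:wei-B'-iso}), giving $|\IBr(\cE_2(H,1))|=|\Alp(\cE_2(H,1))|=9$. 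Combined with Proposition~\ref{prop:bij-quasi-isolated} this yields $|\IBr(B)|=|\Alp(B)|=9$.

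Second, I would establish $\langle F_0\rangle$-equivariance of a bijection $\IBr(B)\to\Alp(B)$; because $\Out(G)_B=\langle F_0\rangle$ is cyclic, it is enough to prove that $F_0$ acts trivially on both $\IBr(B)$ and $\Alp(B)$, after which any bijection may be modified to become equivariant. On the weight side, the quasi-isolated element $s_0$ and the elementary abelian $A_0$ can be chosen $F_0$-stable, and every $B$-weight subgroup of $G$ is conjugate to one built from the basic subgroups $P_3,P_5,P_6\times P_3,P_5,P_6$ of the type-$\tp A$ factors in $H$; arguing as in Proposition~\ref{prop:act-fie-auto} and Corollary~\ref{cor:act-field-cla} (with Remark~\ref{rmk-wei-B'-iso} distinguishing the $H$-classes from the $\Aut(G)$-classes of the underlying subgroups), one sees that each class in $\Alp(B)$ has a representative $(R,\varphi)$ with $R$ $F_0$-stable and $\varphi$ $F_0$-fixed, exactly as in the proof of Theorem~\ref{thm-BAW-principal}. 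On the Brauer-character side, the splendid Rickard equivalence of \cite{BDR17} is equivariant with respect to $\Aut(G)_B$, so the action on $\IBr(B)$ transports to the action on $\IBr(B')=\hat s_0\otimes\IBr(\cE_2(H,1))$; since $\hat s_0$ is $F_0$-stable and the action of field automorphisms on the unipotent (and hence principal-block) Brauer characters of a product of $\SL_3(\pm q)$'s is trivial by \cite[Prop.~3.7, 3.9]{Ma07} and the type-$\tp A$ analysis of \cite{FLZ21}, the action of $F_0$ on $\IBr(B)$ is trivial as well.

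Third, I would handle the extendibility input to the criterion: since $\Aut(G)_\psi/G_\psi$ is cyclic for every $\psi\in\IBr(B)$ and every weight character, \cite[Thm.~8.11 and~8.29]{Na98} yield extensions to the relevant inertia groups, exactly as at the end of the proof of Theorem~\ref{thm-BAW-principal}. The main obstacle I anticipate is the equivariance step: one must verify that the Bonnaf\'e--Dat--Rouquier splendid Rickard equivalence and the Jordan-type bijection of Proposition~\ref{prop:bij-quasi-isolated} are compatible in the precise sense required by \cite[Lemma~2.10]{Sc16}, i.e.\ that a \emph{single} bijection $\IBr(B)\to\Alp(B)$ can be chosen $\langle F_0\rangle$-equivariant. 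Because both sides have trivial $F_0$-action, this compatibility reduces to matching cardinalities within $F_0$-orbits, which is automatic.
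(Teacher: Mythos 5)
Your overall strategy matches the paper's: both reduce the inductive BAW condition to producing an $\Aut(G)$-equivariant bijection between $\IBr(B)$ and $\Alp(B)$ (via the cyclic-$\Out$ criterion), using Proposition~\ref{prop:bij-quasi-isolated} for $|\Alp(B)|=9$. But you take a genuinely different route on the Brauer-character side. The paper simply invokes \cite[Prop.~5.3]{AHL21} ($\cE(G,s_0)$ is a basic set, so $|\IBr(B)|=9$) and \cite[Prop.~5.13]{AHL21} (every element of $\cE(G,s_0)$ is $\Aut(G)$-invariant, so $F_0$ fixes $\IBr(B)$ pointwise); you instead pass through the splendid Rickard equivalence of \cite{BDR17} to $B'=\hat s_0\otimes B_0(H)$ and argue there via the type-$\tp A$ analysis of $\SL_3$. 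Your route is more structural but has two spots that would need shoring up: (i) the claim that the Bonnaf\'e--Dat--Rouquier equivalence is $\Aut(G)_B$-equivariant is not automatic from \cite[Thm.~7.7]{BDR17} and would need a citation to an explicitly equivariant version (such as \cite{Ru22a}); and (ii) the phrase ``by the type-$\tp A$ case'' glosses over the fact that $H$ is an extension of a central product of two $\SL_3$'s by $Z_3$, so counting $\IBr(B_0(H))$ requires a Clifford-theoretic step analogous to Lemma~\ref{lem:wei-B'-iso}, not just a direct appeal to $\SL_n$. On the weight side, you re-derive the $F_0$-triviality by arguments in the spirit of Proposition~\ref{prop:act-fie-auto} and Corollary~\ref{cor:act-field-cla}; the paper instead notes (Remark~\ref{rmk-wei-B'-iso}) that the nine $B$-weight subgroups are pairwise non-$\Aut(G)$-conjugate, which immediately forces $\Aut(G)$-invariance of every class in $\Alp(B)$. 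The paper's approach is substantially shorter because it draws on ready-made inputs from \cite{AHL21}; yours is more self-contained but requires you to certify the equivariance of the derived equivalence, which is the main technical gap left open in your sketch.
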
	

\begin{proof}	
	Note that $B$ is $\Aut(G)$-stable.
As in the proof of Theorem \ref{thm-BAW-principal}, it is sufficient to show that there is an $\Aut(G)$-equivariant bijection between $\IBr(B)$ and $\Alp(B)$.

By \cite[Prop.~5.3]{AHL21}, $\cE(G,s_0)$ is a basic set of the block $B$ and so $|\IBr(B)|=9$.
In addition, by \cite[Prop.~5.13]{AHL21}, every character in $\cE(G,s_0)$ is $\Aut(G)$-invariant. 
Thus every Brauer character in $\IBr(B)$ is $\Aut(G)$-invariant. 

By Proposition~\ref{prop:bij-quasi-isolated}, $|\Alp(B)|=9$.
Moreover, if $(R,\varphi)$ and $(R',\varphi')$ are $B$-weights that are not $G$-conjugate, then by Remark~\ref{rmk-wei-B'-iso}, $R$ and $R'$ are not $\Aut(G)$-conjugate.
Thus every element in $\Alp(B)$ is $\Aut(G)$-invariant. 

Therefore, there exists an $\Aut(G)$-equivariant bijection between $\IBr(B)$ and $\Alp(B)$ and thus $B$ satisfy the inductive BAW condition.
\end{proof}

Finally, we are able to prove Theorem \ref{mainthm:BAW-F4}.

\begin{proof}[Proof of Theorem \ref{mainthm:BAW-F4}]	
Let $G=\tp F_4(q)$ (with odd $q$).	
By \cite[Thm.~5.7]{FLZ22}, to establish the inductive BAW condition for 2-blocks of $G$, it suffices to consider 
the quasi-isolated 2-blocks for groups of types $\tp F_4$, $\tp A_1$, $\tp A_2$, $^2\tp A_2$, $\tp C_2$, $\tp C_3$, 
$\tp B_3$. Note that the papers \cite{FLZ23}, \cite{FM22} treat the types $\tp A_n$, $^2\tp A_n$ and $\tp C_n$, while groups of type $\tp B_3$ are dealt with in Corollary~\ref{cor-BAW-Spin7}.
We mention that for classical groups of types $\tp B$, $\tp C$ and $\tp D$, the quasi-isolated 2-blocks are just the principal 2-blocks.

Therefore, we are left with the task of establishing the inductive BAW condition for the quasi-isolated 2-blocks of $G$.
By Th\'eor\`eme A and the table on page 349 of \cite{En00}, the group $G$ has three unipotent 2-blocks: the principal block and two unipotent blocks of defect zero.
Thus by Theorem \ref{thm-BAW-principal}, the inductive BAW condition holds for all unipotent 2-blocks.
By \cite[\S3]{KM13}, the group $G$ possesses a non-unipotent quasi-isolated 2-block only when $3\nmid q$, and in which situation, the non-unipotent quasi-isolated 2-block is unique and dealt with in Theorem~\ref{thm-BAW-quasi-isolated}.
Thus we complete the proof of Theorem~\ref{mainthm:BAW-F4}.
\end{proof}


\section*{Acknowledgements} 
The authors are strongly indebted to Gunter Malle and Damiano Rossi for comments on an earlier version.


\end{document}